\begin{document}
\newtheorem{thm}{Theorem}[section]
\newtheorem{prop}[thm]{Proposition}
\newtheorem{lem}[thm]{Lemma}
\newtheorem{cor}[thm]{Corollary}
\newtheorem{con}[thm]{Conjecture}
\newtheorem{defn}[thm]{Definition}
\newtheorem{example}[thm]{Example}
\newtheorem{remark}[thm]{Remark}
\newcommand{\R}{\mathbb{R}}  
\newcommand{\C}{\mathbb{C}}
\newcommand{\N}{\mathbb{N}}
\newcommand{\T}{\mathbb{T}}
\newcommand{\Z}{\mathbb{Z}}
\newcommand{\TA}{\partial_\theta^\alpha}
\newcommand{\OB}{\partial_\omega^\beta}
\newcommand{\TD}{\partial_t^\delta}
\newcommand{\IB}{\partial_I^\beta}
\newcommand{\Ek}{E_\kappa}
\newcommand{\Cm}{\mathcal{C}^\infty_{M,\tilde{M}}(X\times Y)}
\newcommand{\CM}{\mathcal{C}^\infty_{M,\tilde{M}}}
\numberwithin{equation}{section}

  \renewcommand{\bottomfraction}{0.7}

\pagenumbering{roman}  

\begin{titlepage}
\title{\textbf{Quantum ergodicity in mixed and KAM Hamiltonian systems}\\[2cm]}
 \author{\textbf{Se\'{a}n P Gomes}\\[6cm]
 \textbf{A thesis submitted for the degree of}\\
 \textbf{Doctor of Philosophy of} \\
 \textbf{The Australian National University}\\[1cm]}
 \date{\textbf{May, 2017}}
\maketitle
 \end{titlepage}
 
 \sloppy
 
\chapter*{Declaration}
\addcontentsline{toc}{chapter}{Declaration}

This thesis is an account of research undertaken between August 2012 and 
May 2017 at the Mathematical Sciences Institute,
The Australian National University, Canberra, Australia.

Except where acknowledged in the customary manner, the material 
presented in this thesis is, to the best of my knowledge, original and 
has not been submitted in whole or part for a degree in any 
university.

The bulk of the original work in this thesis is contained in Chapters 3 and 6. Chapters 1 and 2 are a summary of known background material and Chapters 4 and 5 closely follow the work in \cite{popovkam} and \cite{popovquasis}.

\vspace{20mm}  

\hspace{80mm}\rule{40mm}{.15mm}\par   
\hspace{80mm} Se\'{a}n P Gomes\par
\hspace{80mm} May, 2016


\chapter*{Acknowledgements}
\addcontentsline{toc}{chapter}{Acknowledgements}

First and foremeost I would like to express my immense appreciation and gratitude to my thesis adviser, Professor Andrew Hassell, you have been a fantastic mentor and role model in these early stages of academic life. Your advice and support on matters both related to mathematics and career progress has been of immeasurable value to me. \\

\noindent I would also like to thank my committee members, Professors Ben Andrews and Xu-Jia Wang and the the HDR convenor Associate Professor Scott Morrison for making my thesis defence an enjoyable experience, and for providing useful feedback.\\

\noindent Thanks also go to Professor Georgi Popov for several enlightening email exchanges elaborating on aspects of his work in the KAM setting, and to Assistant Professor Semyon Dyatlov for a fruitful discussion that motivated a weakening of the ``slow torus" condition in the the results of Chapter 6.\\

\noindent Thanks to the Australian federal government, whose funding via the Australian postgraduate award and research training program stipend have made this research possible. \\

\noindent Thanks to all of my friends and colleagues, for always encouraging me to strive for my goals.\\

\noindent A special thanks to Frank, Ronette, Karen and Tanya. The unconditional love, support, and encouragement  that a family like ours provides is something that cannot be overstated.\\

\noindent Last but certainly not least, I must acknowledge my partner, Adeline. You have been a boundless source of personal support at the times when it was needed most. Being a thesis-widow is no easy burden, yet you have done everything in your power to support me on this journey and have rode the highs and lows alongside me, making even the most challenging obstacles seem surmountable.  

\chapter*{Abstract}  
\addcontentsline{toc}{chapter}{Abstract} 

In this thesis, we investigate quantum ergodicity for two 
classes of Hamiltonian systems satisfying intermediate dynamical hypotheses between the well 
understood extremes of ergodic flow and quantum  completely integrable flow. These two classes are 
mixed Hamiltonian systems and KAM Hamiltonian systems.

Hamiltonian systems with mixed phase space decompose into finitely many invariant subsets, only some of which are of ergodic character. It has been conjectured by Percival that the 
eigenfunctions of the quantisation of this system decompose into associated families of analogous 
character. The first project in this thesis proves a weak form of this 
conjecture for a class of dynamical billiards, namely the mushroom billiards of Bunimovich for a full measure subset of a shape parameter $t\in (0,2]$.

KAM Hamiltonian systems arise as perturbations of completely integrable Hamiltonian systems. The 
dynamics of these systems are well understood and have near-integrable character. The classical-quantum correspondence suggests that the quantisation of KAM systems will not have quantum ergodic 
character. The second project in this thesis proves an initial negative quantum ergodicity result for a class of positive Gevrey perturbations of a Gevrey Hamiltonian that satisfy a mild \emph{slow torus} condition.  

\tableofcontents
\listoffigures 

\pagenumbering{arabic} 
\setcounter{page}{1}  

\pagenumbering{arabic} 
\setcounter{page}{1}  

\chapter{Introduction to Quantum Ergodicity}
\label{chpqe}

The central objective in quantum chaos is to understand how chaotic dynamical assumptions about a classical mechanical system manifest themselves in the behaviour of its quantum mechanical analogue.

A natural setting for studying this correspondence is that of Hamiltonian flow on a compact Riemannian manifold $M$, and this is the setting of the original work in this thesis. In this setting, our dynamical assumption is based on the measure-theoretic concept of \emph{ergodicity}.

We shall begin in Section \ref{hamflowsec} by summarising the aspects of the Hamiltonian formalism relevant to our work. A more comprehensive treatment can be found in the book \cite{arnoldmechanics}. In particular, we shall highlight the opposing concepts of ergodicity and complete integrability.

In Section \ref{qdynsec} we introduce Schr\"{o}dinger's equation, the quantum mechanical counterpart to Hamilton's equations. We shall then discuss the semiclassical formalism and its relevance to studying the classical-quantum correspondence.

In Section \ref{qerdsec} we define the quantum mechanical analogue to ergodicity of Hamiltonian flow and survey the major results and conjectures in this field.

In Section \ref{nressec} we discuss the quantisations of Hamiltonian systems that are either completely integrable, or are small perturbations of completely integrable systems. As the Hamiltonian flow in these settings is far from ergodic, intuition suggests that the eigenfunctions for such a system will be far from equidistributed.

\section{Hamiltonian flow}
\label{hamflowsec}
Suppose that we have a smooth $n$-dimensional compact Riemannian manifold $(M,g)$ (possibly with boundary). Given a smooth Hamiltonian function $H:T^*M\rightarrow \rightarrow \mathbb{R}$ which we interpret as an energy, we obtain the Hamiltonian flow $\Phi^t:T^*M\times [0,\infty)\rightarrow T^*M$ generated by Hamilton's equations
\begin{equation}
\label{hamfloweq}
\dot{\xi}=-\nabla_x H(x,\xi)\qquad
\dot{x}=\nabla_\xi H(x,\xi)
\end{equation}
with coordinates $(x,\xi)$ corresponding to the cotangent vector $\sum_j \xi_j \, dx_j$. In this work we shall assume that our Hamiltonians are such that the flow $\Phi^t$ does not blow up in finite time. We denote the Hamiltonian vector field given by \eqref{hamfloweq} as $X_H$.

The primary Hamiltonians of interest in this thesis will be Schr\"{o}dinger Hamiltonians of the form
\begin{equation}
\label{schrodhamil}
H(x,\xi)=\|\xi\|_g^2+V(x,\xi),
\end{equation}
however the results of Chapter 6 could be generalised to symbols of more general classes of self-adjoint pseudodifferential operators.
In Chapter 5, the function $V$ is a compactly supported symbol in the Gevrey class $S_\ell$ from Definition \ref{selldef} with self-adjoint quantisation. In the special case of $V=0$ as in Chapter 3, this Hamiltonian system is referred to as \emph{billiards} on $M$. The trajectories of billiard flow can be identified with the geodesics of $M$ under the canonical isomorphism between the tangent and co-tangent bundles of Riemannian manifolds.

A major advantage of the Hamiltonian formulation of mechanics over the Newtonian and Lagrangian formulations is the duality of the variables $x$ and $\xi$, best highlighted through the lens of symplectic geometry.

\begin{defn}[Symplectic Form]
A \emph{symplectic form} $\omega$ on a smooth manifold $M$ is a closed non-degenerate differential $2$-form.
\end{defn}

\begin{defn}[Symplectomorphism]
A \emph{symplectomorphism} $\chi:N_1\rightarrow N_2$ between two symplectic manifolds $(N_1,\omega_1)$ and $(N_2,\omega_2)$ is a diffeomorphism such that $\chi^*\omega_2 = \omega_1$.
\end{defn}

\begin{defn}
An \emph{exact symplectic form} $\omega$ on a smooth manifold $M$ is an exact non-degenerate differential $2$-form. 
\end{defn}

\begin{defn}[Exact Symplectomorphism]
A \emph{symplectomorphism} $\chi:N_1\rightarrow N_2$ between two exact symplectic manifolds $(N_1,d\alpha_1)$ and $(N_2,d\alpha_2)$ is a diffeomorphism such that $\chi^*\alpha_2 - \alpha_1$ is an exact $1$-form.
\end{defn}

Given a smooth function $H$ on a smooth manifold $N$ equipped with a symplectic form $\omega$, we can obtain a Hamiltonian vector field $X_H$ on $N$ defined implicitly by

\begin{equation}
dH(Y)=\omega(X_H,Y).
\end{equation}

In the case $N=T^*M$ there is a canonical choice of symplectic form 
\begin{equation}
\label{canonicalcoords}
\omega=d\xi\wedge dx=\sum_j d\xi_j \wedge dx_j
\end{equation}
and the vector field $X_H$ generates the flow given by Hamilton's equations \eqref{hamfloweq}.

In fact for a general symplectic manifold $(N,\omega)$, Darboux's theorem asserts that local coordinates $(x,\xi)$ can be chosen such that \eqref{canonicalcoords} holds. Such coordinates are said to be \emph{canonical coordinates}.

Writing $z=(x,\xi),w=(y,\eta)$ in a canonical coordinate system on $(N,\omega)$ allows us to write 
\begin{equation}
\omega(z,w)=\langle Jz,w\rangle
\end{equation}
where
\begin{equation}
J:=\begin{pmatrix}
0 & I \\ 
-I & 0
\end{pmatrix} \in \R^{2n\times 2n}.
\end{equation}

Indeed we say that a matrix $A$ is symplectic if 
\begin{equation}
\label{sympmatrix}
JA^TJ=A^{-1},
\end{equation}
and a diffeomorphism on a symplectic manifolds is a symplectomorphism if and only if its Jacobian with respect to canonical coordinates is a symplectic matrix.

Symplectomorphisms are the natural class of coordinate transformations of a Hamiltonian system to work with as they preserve Hamilton's equations.

\begin{prop}
If $\chi:N_1\rightarrow N_2$ is a symplectomorphism, and $H:N_2\rightarrow \R$ is a smooth Hamiltonian then the transformed Hamiltonian $\tilde{H}:=H\circ \chi:N_1\rightarrow \R$ generates a Hamiltonian flow in the coordinates $(y,\eta)=\chi^{-1}(x,\xi)$ given by
\begin{equation}
\dot{\eta}=-\nabla_y \tilde{H}(y,\eta)\qquad
\dot{y}=\nabla_\eta \tilde{H}(y,\eta)
\end{equation}

The Hamiltonian vector field $X_{\tilde{H}}$ on $N_1$ is the pullback of the Hamiltonian vector field $X_{H}$ on $N_2$. 
\end{prop}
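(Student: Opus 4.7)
The plan is to prove the pullback identity $X_{\tilde{H}} = \chi^* X_H$ first and then read off Hamilton's equations as an immediate corollary, using the fact that since $\chi$ is a symplectomorphism the coordinates $(y,\eta) = \chi^{-1}(x,\xi)$ are automatically canonical on $N_1$.

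First I would start from the implicit definition of the Hamiltonian vector field on $N_1$, which asserts that for every smooth vector field $Y$ on $N_1$,
\[
d\tilde{H}(Y) = \omega_1(X_{\tilde{H}}, Y).
\]
Applying the chain rule to $\tilde{H} = H\circ \chi$ rewrites the left-hand side as $dH(d\chi\, Y) = \omega_2(X_H, d\chi\, Y)$, with the second equality being the defining relation for $X_H$ on $N_2$, evaluated at $\chi(p)$ whenever $Y$ is evaluated at $p$.

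Next I would exploit the hypothesis $\chi^*\omega_2 = \omega_1$ to translate this identity back to $N_1$. Writing $\chi^* X_H := (d\chi)^{-1}(X_H\circ \chi)$ for the pullback vector field, one has
\[
\omega_2(X_H\circ \chi,\, d\chi\, Y) = (\chi^*\omega_2)(\chi^* X_H, Y) = \omega_1(\chi^* X_H, Y),
\]
so combining with the previous display gives $\omega_1(X_{\tilde{H}} - \chi^* X_H, Y) = 0$ for every $Y$; non-degeneracy of $\omega_1$ forces $X_{\tilde{H}} = \chi^* X_H$, establishing the second part of the proposition.

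Finally, to recover the coordinate equations, I would observe that because $\chi$ is a symplectomorphism, the Jacobian condition \eqref{sympmatrix} guarantees that $(y,\eta) = \chi^{-1}(x,\xi)$ are canonical coordinates, whence $\omega_1 = \sum_j d\eta_j\wedge dy_j$. The defining relation then forces $X_{\tilde{H}}$ to take the canonical form $\sum_j (\partial_{\eta_j}\tilde{H})\partial_{y_j} - (\partial_{y_j}\tilde{H})\partial_{\eta_j}$, so its integral curves satisfy $\dot y = \nabla_\eta \tilde{H}$ and $\dot\eta = -\nabla_y \tilde{H}$ as claimed. The argument is essentially formal: the only care needed is in keeping the conventions for pullbacks of vector fields versus differentials straight and in tracking base points on either side of $\chi$. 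There is no analytic difficulty.
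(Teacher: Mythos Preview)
Your argument is correct and is the standard proof of this fact. Note, however, that the paper does not actually supply its own proof of this proposition: it appears in the introductory chapter, which the author explicitly describes as a summary of known background material, and the statement is given without proof. So there is nothing in the paper to compare against, but your reasoning via the defining relation $d\tilde H(Y)=\omega_1(X_{\tilde H},Y)$, the chain rule, and $\chi^*\omega_2=\omega_1$ is exactly the expected derivation.
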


Hamiltonian flows give rise to symplectomorphisms in a natural way.
\begin{prop}
\label{flowissymp}
If $H$ is a Hamiltonian on the symplectic manifold $N$, then the Hamiltonian flow $\Phi^t$ on $N$ is a one-parameter family of symplectomorphisms.
\end{prop}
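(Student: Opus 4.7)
The plan is to show that $(\Phi^t)^*\omega = \omega$ for every $t$ in the domain of the flow, by differentiating with respect to $t$ and showing that the derivative vanishes identically. Since $\Phi^0$ is the identity, this will force $(\Phi^t)^*\omega = \omega$ for all $t$, which is exactly the symplectomorphism condition.

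The key computation is the Lie derivative $\mathcal{L}_{X_H}\omega$. By the standard identity
\begin{equation*}
\frac{d}{dt}\bigl((\Phi^t)^*\omega\bigr) = (\Phi^t)^*\mathcal{L}_{X_H}\omega,
\end{equation*}
it suffices to prove that $\mathcal{L}_{X_H}\omega = 0$. For this I would invoke Cartan's magic formula
\begin{equation*}
\mathcal{L}_{X_H}\omega = \iota_{X_H}\,d\omega + d\,\iota_{X_H}\omega.
\end{equation*}
The first term vanishes because $\omega$ is a symplectic, hence closed, $2$-form. For the second term, the defining relation of the Hamiltonian vector field, $dH(Y) = \omega(X_H, Y)$, says exactly that $\iota_{X_H}\omega = dH$, so $d\,\iota_{X_H}\omega = d^2 H = 0$. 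Combining these gives $\mathcal{L}_{X_H}\omega = 0$.

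From $\mathcal{L}_{X_H}\omega = 0$ we conclude that $(\Phi^t)^*\omega$ is independent of $t$, and since $(\Phi^0)^*\omega = \omega$, this gives $(\Phi^t)^*\omega = \omega$ for all $t$. Also, $\Phi^t$ is a diffeomorphism onto its image (for $t$ in the flow interval), because it has smooth inverse $\Phi^{-t}$, and the one-parameter group property $\Phi^{t+s} = \Phi^t \circ \Phi^s$ is immediate from uniqueness for the ODE \eqref{hamfloweq}. This completes the proof.

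There is no real obstacle here; the only mild subtlety is verifying the differentiation-under-pullback identity for $(\Phi^t)^*\omega$, which is a standard fact about Lie derivatives along complete (or locally complete) vector fields and applies in our setting since we have assumed $\Phi^t$ does not blow up in finite time.
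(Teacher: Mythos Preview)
Your proof is correct and is the standard argument via Cartan's magic formula. The paper itself does not supply a proof of this proposition; it is stated without proof as a well-known background fact in the introductory survey of the Hamiltonian formalism (with \cite{arnoldmechanics} cited as a general reference for this material). Your argument is exactly the one found in standard references.
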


Another useful method of constructing symplectomorphisms is through the use of a generating function.
\begin{prop}
\label{genfunc}
If $\Phi(x,\xi)\in \mathcal{C}^\infty(N)$ is such that the Hessian $\partial^2_{x,\xi}\Phi$ is non-singular, then a solution to the implicit equation
\begin{equation}
\chi(\partial_\xi \Phi(x,\xi),\xi)=(x,\partial_x\Phi(x,\xi))
\end{equation}
is symplectic on its domain. 
\end{prop}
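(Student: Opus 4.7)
The plan has two parts: first, confirm via the implicit function theorem that the displayed relation genuinely defines a local diffeomorphism $\chi$; second, establish the symplectic property by exploiting the canonical $1$-form together with a Legendre-type identity. To fix notation cleanly, write source coordinates as $(y,\eta)$ and target coordinates as $(\tilde x,\tilde\xi)$, so that $\chi(y,\eta) = (\tilde x,\tilde\xi)$ is characterised by
\begin{equation}
y = \partial_\xi \Phi(\tilde x, \eta), \qquad \tilde\xi = \partial_x \Phi(\tilde x, \eta).
\end{equation}
The first equation determines $\tilde x$ as a smooth function of $(y,\eta)$ via the implicit function theorem, precisely when the mixed block $\partial_x\partial_\xi \Phi$ is invertible --- which is exactly the non-singularity hypothesis in force. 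This produces the desired local diffeomorphism on the open set where the hypothesis holds.

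For the symplectic property, I would work at the level of the canonical $1$-form $\alpha = \xi\, dx$, whose exterior derivative is the canonical symplectic form $\omega = d\xi\wedge dx$. Differentiating $\Phi$ along the graph of $\chi$ gives
\begin{equation}
d\Phi = \partial_x \Phi\, dx + \partial_\xi \Phi\, d\xi = \tilde\xi\, d\tilde x + y\, d\eta,
\end{equation}
and subtracting the product-rule expansion $d(y\cdot\eta) = \eta\, dy + y\, d\eta$ produces the Legendre-type identity
\begin{equation}
\tilde\xi\, d\tilde x - \eta\, dy = d(\Phi - y\cdot\eta).
\end{equation}
The left-hand side is precisely $\chi^*\alpha_{\mathrm{tgt}} - \alpha_{\mathrm{src}}$, so this difference is exact; applying $d$ and using $d\circ d = 0$ immediately yields $\chi^*\omega_{\mathrm{tgt}} = \omega_{\mathrm{src}}$. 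Hence $\chi$ is symplectic, and in fact exact symplectic in the sense introduced in the preceding definition.

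The principal obstacle is notational rather than conceptual: the statement reuses the symbols $(x,\xi)$ both for the intermediate variables parameterising the graph and as generic symplectic coordinates on $N$, so some care is required to track which differentials refer to the source copy versus the target copy. Separating the two coordinate systems at the outset, and being explicit about what is a pullback versus an intermediate quantity on the graph, is where I would concentrate attention; once that bookkeeping is in place, the whole content of the proposition reduces to the one-line Legendre manipulation displayed above, with the implicit function theorem providing existence and smoothness of $\chi$.
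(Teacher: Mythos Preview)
Your argument is correct and is the standard proof via the canonical $1$-form: the Legendre-type identity $\tilde\xi\,d\tilde x - \eta\,dy = d(\Phi - y\cdot\eta)$ shows that $\chi^*\alpha - \alpha$ is exact, whence $\chi^*\omega = \omega$. Your remark that this in fact yields exact symplecticity is also apt, and matches the paper's subsequent observation about the $\T^n\times D$ case.

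Note, however, that the paper does not supply its own proof of this proposition: it is stated without proof in Chapter~1 as standard background from Hamiltonian mechanics (the author explicitly flags Chapters~1--2 as a summary of known material). So there is nothing to compare against beyond confirming that your argument is sound, which it is.
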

For the particularly simple symplectic manifold $\T^n\times D$, with $D\subset \R^n$, we can make this construction global, provided that $\Phi(x,\xi)-\langle x,\xi \rangle$ is $2\pi$-periodic in $x$. The resulting symplectomorphism $\chi$ is then an exact symplectomorphism.

Provided that all $E\in[a,b]$ are regular values for the Hamiltonian $H$, the canonical symplectic form $d\xi\wedge dx$ on $T^*M$ determines a family of measures $\mu_E$ on each of the energy hypersurfaces
\begin{equation}
\Sigma_E=\{z=(x,\xi)\in T^*M:H(z)=E\}
\end{equation}
defined implicitly by
\begin{equation}
\int_a^b\int_{\Sigma_E} f\, d\mu_E \, dE = \int_{|(x,\xi)|_{g^{-1}}\in [a,b]}f\, d\xi\, dx
\end{equation}
for $f\in \mathcal{C}_c^\infty(T^* M)$. In the special case of of $H=\|\xi\|_g^2$ and $E=1$, upon normalisation we obtain the Liouville measure $\mu_L$ on $S^*M$. 

The measures $\mu_E$ allow us to study the ergodic properties of the Hamiltonian flow $\phi_t$.
\begin{defn}
\label{ergdefn}
If $\phi^t:X\rightarrow X$ denotes a measure preserving flow on a finite measure space $(X,\mathcal{A},\mu)$, we say that $\phi^t$ is ergodic if
\begin{equation}
\lim_{T\rightarrow\infty}\frac{m(\{t\in [0,T]:\phi^t(x)\in A\})}{T}=\frac{\mu(A)}{\mu(X)}\quad \textrm{for each }A\in\mathcal{A}
\end{equation}
for $\mu$-almost all $x\in X$.
\end{defn}

That is to say, a flow is ergodic if and only if almost all trajectories equidistribute in the measure space. An equivalent characterisation can be made in terms of flow invariant subsets.

\begin{prop}
A measure preserving flow $\phi_t$ on a finite measure space $(X,\mathcal{A},\mu)$ is ergodic if and only if the only $\phi_t$-invariant measurable sets are of full measure or null measure.
\end{prop}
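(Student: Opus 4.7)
The plan is to prove the two implications of the biconditional separately. The forward implication follows directly from Definition~\ref{ergdefn}, while the reverse relies crucially on Birkhoff's pointwise ergodic theorem.

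For the ``$\Rightarrow$'' direction, suppose $\phi^t$ is ergodic and let $A \in \mathcal{A}$ be $\phi^t$-invariant. After a harmless null-set modification I may take $\phi^{-t}(A) = A$ for every $t \geq 0$, so every $x \in A$ has its entire forward orbit contained in $A$ and the visit-time fraction $T^{-1} m(\{t \in [0,T] : \phi^t(x) \in A\})$ from Definition~\ref{ergdefn} is identically $1$ on $A$. Since ergodicity forces this fraction to tend to $\mu(A)/\mu(X)$ on a set of full $\mu$-measure, as long as $\mu(A) > 0$ one can pick a point of $A$ at which the limit is simultaneously $1$ and $\mu(A)/\mu(X)$, yielding $\mu(A) = \mu(X)$. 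This rules out $0 < \mu(A) < \mu(X)$.

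The ``$\Leftarrow$'' direction is where the content lies. The plan is to invoke Birkhoff's pointwise ergodic theorem to produce, for each $f \in L^1(X, \mu)$, an almost-everywhere time average
\begin{equation*}
\bar{f}(x) = \lim_{T\to\infty}\frac{1}{T}\int_0^T f(\phi^t(x))\, dt
\end{equation*}
which is itself $\phi^t$-invariant and satisfies $\int_X \bar{f}\, d\mu = \int_X f\, d\mu$. Under the triviality hypothesis, each super-level set $\{\bar{f} > c\}$ is a $\phi^t$-invariant measurable set and is therefore null or co-null, which forces $\bar{f}$ to be $\mu$-a.e.\ a constant. Integrating identifies this constant as $\mu(X)^{-1}\int_X f\, d\mu$, and specialising to the indicator $f = \mathbf{1}_A$ recovers exactly the limit in Definition~\ref{ergdefn}.

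The main obstacle is concentrated entirely in Birkhoff's theorem itself, whose proof via the maximal ergodic inequality is a substantial standalone result in classical ergodic theory. In a thesis of this scope it would be cited as a black box from a standard reference on ergodic theory rather than reproved, so the body of the argument amounts to little more than invariant-function-is-constant applied carefully to indicators.
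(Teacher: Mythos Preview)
Your argument is correct and is the standard textbook proof of this equivalence. However, there is nothing to compare against: the paper states this proposition without proof, treating it as a well-known background fact in the introductory survey of Hamiltonian flow (Chapter~1 is explicitly described in the Declaration as ``a summary of known background material''). Your sketch---the direct contradiction for the forward direction and Birkhoff's theorem plus ``invariant functions are constant'' for the converse---is exactly what one would find in any standard ergodic theory reference, and your remark that Birkhoff's theorem would be cited rather than reproved matches the spirit in which the paper presents this material.
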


In particular, we say that the Hamiltonian flow $\Phi^t$ generated by $H:T^*M\rightarrow \R$ is ergodic on the energy surface $\Sigma_E=H^{-1}(E)$ if $\Phi^t$ satisfies \ref{ergdefn} on $\Sigma_E$ with respect to the measure $\mu_E.$

Two particularly famous examples of ergodic Hamiltonian systems are the Bunimovich stadium billiard \cite{bunimovichstadium} and the Sinai billiard \cite{sinai}.

\begin{figure}[h]
\begin{center}
\includegraphics[scale=0.5]{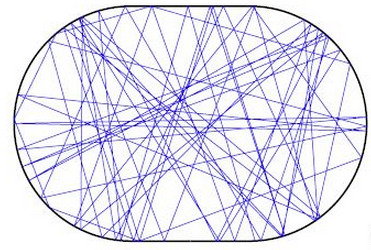}
\caption[]{An ergodic trajectory on the Bunimovich stadium billiard. Image from \cite{scholar}.}
\end{center}
\end{figure}

From Definition \ref{ergdefn}, we can see that for ergodic flows, the time average of a smooth classical observable $a:T^*M \rightarrow \mathbb{R}$ tends to its space average. That is, we have
\begin{equation}
\lim_{T\rightarrow \infty}\frac{1}{T}\int_0^T (a\circ\phi_t)(x_0,\xi_0)\, dt \rightarrow \int_{\Sigma_E} a \, d\mu_E
\end{equation}
for $\mu_E$-almost all $(x_0,\xi_0)\in\Sigma_E$.

A strictly stronger property of a flow is the mixing property which asserts that for smooth classical observables $a,b:T^*M\rightarrow \mathbb{R}$ we have
\begin{equation}
\label{mixing}
\int_{\Sigma_E} (a\circ\phi_t)\cdot b\, d\mu_E\rightarrow\left(\int_{\Sigma_E}a\, d\mu_E\right)\cdot\left(\int_{\Sigma_E}b\, d\mu_E\right)
\end{equation}
as $t\rightarrow\infty$.
The strong assumption of Anosov flow leads to \eqref{mixing} with an exponential rate of convergence. Thus, manifolds with non-positive sectional curvature also give rise to ergodic billiards. In this thesis we shall not discuss the stronger property of \eqref{mixing}, and restrict ourself to the study of ergodicity.

In order to study the Hamiltonian evolution of functions on our phase space $T^*M$, we define the Poisson bracket.
\begin{defn}[Poisson Bracket]
If $f,g\in\mathcal{C}^\infty (T^*M)$, we define
\begin{equation}
\{f,g\}:=\omega(X_f,X_g)=\sum_{j=1}^n \frac{\partial f}{\partial x_j}\frac{\partial g}{\partial \xi_j}-\frac{\partial f}{\partial \xi_j}\frac{\partial g}{\partial x_j}
\end{equation}
\end{defn}

An immediate consequence of the chain rule is that if $(\xi(t),x(t))$ is a trajectory of the Hamiltonian flow, then for a smooth function $f:T^*M\rightarrow \mathbb{C}$, we have 
\begin{equation}
\label{poissonderiv}
\frac{d}{dt}(f(\xi(t),x(t)))=\{f,H\}(\xi(t),x(t)).
\end{equation}

Motivated by this calculation we can define invariants of our flow.
\begin{defn}
An \emph{invariant} or \emph{first integral} of the Hamiltonian flow $X_H$ is a smooth function $f$ such that $\{f,H\}=0$.
\end{defn}

The Hamiltonian $H$ itself is of course an invariant of the flow that it generates. Hence Hamiltonian flow is constrained to energy shells $\Sigma_E=H^{-1}(\{E\})$. Often we can find additional invariants that are mathematical manifestations of conservation laws from physics, such as that of angular momentum. Symmetries in a system lead to an abundance of such flow invariants, as follows from Noether's theorem (See Chapter 4, Section 20 of \cite{arnoldmechanics}).

We say that a collection of flow invariants $\{f_i\}$ are in involution if their pairwise Poisson brackets vanish and we say that they are independent if their differentials $df_i$ are linearly independent.

\begin{defn}
If an invariant subset of Hamiltonian flow admits $n$ independent invariants that are in involution, we say that the corresponding Hamiltonian system is \emph{completely integrable.}
\end{defn}
In a completely integrable Hamiltonian system, trajectories are constrained to $n$-dimensional submanifolds and are thus far from equidistributed on the $(2n-1)$-dimensional energy surfaces. The Liouville--Arnold theorem from classical mechanics asserts that for completely integrable systems, we can find a neighbourhood $U$ of an arbitrary invariant manifold $\Lambda\subset T^*M$ and a symplectomorphism $\chi:\Lambda\rightarrow \T^n \times D$ for some $D\subset \R^n$ such that the transformed Hamiltonian $\tilde{H}(\theta,I)=H(\chi(\theta,I))$ is independent of $\theta$. Thus the invariant manifolds are diffeomorphic to $n$-dimensional tori.
Moreover, the Hamiltonian flow is quasiperiodic, with trajectories given by
\begin{equation}
\label{quasiperiodicflow}
I(t)=I(t_0);\qquad \theta(t)=\theta(t_0)+t\nabla_{I}\tilde{H}
\end{equation}
in the $(\theta,I)$-coordinates, referred to as \emph{action-angle} variables. A construction of these coordinates can be found in Section 50 of \cite{arnoldmechanics}.

We note that the invariant tori of a completely integrable Hamiltonian system are \emph{Lagrangian}, that is the restriction of the symplectic form to any of the invariant tori $\Lambda_{\mathbf{c}}\{(x,\xi):\mathbf{f}(x,\xi)=\mathbf{c}\in\R^n\}$ vanishes. 

An example of a completely integrable Hamiltonian system is geodesic billiards on an ellipsoid, pictured in Figure 1.2.

\begin{figure}[h]
\label{ellipsoidflow}
\begin{center}
\includegraphics[scale=0.5]{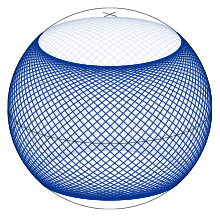}
\caption[]{Geodesic flow on an ellipsoid. Image generated using GeographicLib.}
\end{center}
\end{figure}

We can extend the above discussion to manifolds with boundary by extending Hamiltonian flow by reflection at non-tangential boundary collisions. We shall postpone this somewhat technical discussion until we require it in Chapter 3.

\section{Quantum dynamics}
\label{qdynsec}
The quantum mechanical analogue of the system \eqref{hameq} is the evolution of a \emph{wave-function} $\psi\in L^2(M)$ governed by Schr\"{o}dinger's equation.

\begin{equation}
\label{refschrod}
(ih\frac{\partial}{\partial t}-h^2\Delta)\psi=0
\end{equation}
where 
\begin{equation}
h\approx 6.626\times 10^{-34}\, m^2 \, kg \, s^{-1}
\end{equation} 
is Planck's constant and 
\begin{equation}
\Delta=-\textrm{div}\,\textrm{grad}
\end{equation}
is the Laplace--Beltrami operator with the positive sign convention.

The Bohr correspondence principle asserts that a classical Hamiltonian system is in a vague sense the macroscopic or high-energy limit of the corresponding quantum dynamical system.
By scaling the units in \eqref{refschrod}, we may instead consider $h$ to be a small parameter in our problem. This is known as the \emph{semiclassical formalism}. In the semiclassical formalism we replace differential operators with semiclassical differential operators $h\frac{\partial}{\partial x_j}$ to account for the scaling factor. By the Bohr correspondence principle, we then expect chaotic behaviour of the classical system to be manifest in the quantum system in the limit $h\rightarrow 0$.

In solving \eqref{refschrod}, we can expand $\psi$ in terms of the basis of $L^2(M)$ comprised by eigenfunctions of $h^2\Delta$, and so localisation of quantum dynamics can be understood by the study of the localisation of these eigenfunctions.

For Hamiltonian systems that are more general than billiards, such as the Schr\"{o}dinger type Hamiltonians in \eqref{schrodhamil}, we consider eigenfunction of the semiclassical Schr\"{o}dinger operator
\begin{equation}
P_h=H(x,hD)
\end{equation}
obtained by formally applying the Hamiltonian function to the operators $x$ and $hD=\frac{h\partial}{i}$.

One can then ask how the localisation properties of the Hamiltonian flow on $M$ are reflected by the spectral theory of the associated Schrodinger operator $P_h$ in the limit $h\rightarrow 0$.

Typically it is not possible to find exact, or even approximate expressions for chaotic eigenfunctions. Nevertheless, the machinery of microlocal analysis allows us to rigorously prove state and prove phase space equidistribution properties. The key tools here are pseudodifferential operators and Fourier integral operators, which correspond to quantisations of classical observables and symplectomorphisms respectively.

\section{Quantum ergodicity}
\label{qerdsec}

Under the assumption of ergodic Hamiltonian flow, one can make a remarkable statement of phase space equidistribution of the steady-state solutions to the corresponding Schr\"{o}dinger's equation \eqref{schrod}.

A primitive version of this theorem asserts that the sequence of probability measures $|u_j|^2$ on $M$ must have a full density subsequence which tends to the uniform measure.

The stronger statement of phase space equidistribution requires some additional machinery to state, such as the pseudodifferential calculus which we introduce in Chapter 2.

\begin{thm}[Quantum Ergodicity]
\label{qethm}
If the Hamiltonian $H(x,hD)=h^2\Delta_g+V(x)$ on the smooth compact boundaryless Riemannian manifold $(M,g)$ generates ergodic flow on the regular energy band $H^{-1}([a,b])$ for a smooth real potential $V(x)$ that is bounded below, then there exists a family of subsets $\Lambda(h)\subset [a,b]$ of eigenvalues of $P_h$ such that 
\begin{equation}
\lim_{h\rightarrow 0}\frac{\#\Lambda(h)}{\# \{E_j \in [a,b]\}}=1
\end{equation} 
and
\begin{equation}
\label{equidistributionqe}
\langle A(x,hD) u_j(h),u_j(h) \rangle \rightarrow \int_{H^{-1}([a,b])} A(x,\xi)\, d\xi\, dx
\end{equation}
uniformly for $j\in S(h)$ for any zero-th order semiclassical pseudodifferential operator $A(x,hD)$ with the property that
\begin{equation}
\frac{1}{\mu_E}\int_{\Sigma_E} A(x,\xi)\, d\mu_E
\end{equation}
is independent of $E$.
\end{thm}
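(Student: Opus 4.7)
The plan is to follow the classical Schnirelman--Zelditch--Colin de Verdi\`ere strategy, adapted to the semiclassical setting. Let $A=\mathrm{Op}_h(a)$ be the given zeroth order semiclassical pseudodifferential operator, and set $\bar{a}$ to be the common value of the energy surface averages $\frac{1}{\mu_E(\Sigma_E)}\int_{\Sigma_E}a\,d\mu_E$, so that $a_0:=a-\bar{a}$ has vanishing mean on every $\Sigma_E$ with $E\in[a,b]$. After this reduction, it suffices to prove that the diagonal matrix elements $\langle \mathrm{Op}_h(a_0)u_j(h),u_j(h)\rangle$ tend to zero along a density one subsequence of eigenvalues in $[a,b]$.

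First, I would invoke the local semiclassical Weyl law, which provides the asymptotic $\#\{E_j(h)\in[a,b]\}\sim (2\pi h)^{-n}\mathrm{Vol}(H^{-1}([a,b]))$ together with a matching trace bound on zeroth order operators, namely
\begin{equation}
(2\pi h)^n\sum_{E_j\in[a,b]}\langle B u_j,u_j\rangle \;\longrightarrow\; \int_{H^{-1}([a,b])} b(x,\xi)\,dx\,d\xi
\end{equation}
for $B=\mathrm{Op}_h(b)$ with $b$ compactly supported near the energy band. Applying this to $B^*B$ gives the key Hilbert--Schmidt style variance bound
\begin{equation}
(2\pi h)^n\sum_{E_j\in[a,b]} |\langle \mathrm{Op}_h(b)u_j,u_j\rangle|^2 \;\leq\; \int_{H^{-1}([a,b])}|b(x,\xi)|^2\,dx\,d\xi + o(1).
\end{equation}

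Next I would bring in Egorov's theorem. Since the $u_j(h)$ are eigenfunctions of $P_h$, the unitary propagator $U_h(t)=e^{-itP_h/h}$ fixes the diagonal matrix elements, so for every fixed $T>0$,
\begin{equation}
\langle \mathrm{Op}_h(a_0)u_j,u_j\rangle = \Big\langle \mathrm{Op}_h\!\Big(\tfrac{1}{T}\!\int_0^T a_0\circ\Phi^t\,dt\Big) u_j,u_j\Big\rangle + O_T(h).
\end{equation}
Combining this identity with the variance bound applied to $b_T:=\tfrac{1}{T}\int_0^T a_0\circ\Phi^t\,dt$ yields
\begin{equation}
(2\pi h)^n\sum_{E_j\in[a,b]}|\langle \mathrm{Op}_h(a_0)u_j,u_j\rangle|^2 \;\leq\; \int_{H^{-1}([a,b])}|b_T|^2\,dx\,d\xi + o_h(1).
\end{equation}
By the $L^2$ ergodic theorem of von Neumann applied on each energy surface $\Sigma_E$, ergodicity of $\Phi^t$ on $\Sigma_E$ forces $b_T\to 0$ in $L^2(\Sigma_E,\mu_E)$ as $T\to\infty$, and integrating in $E$ with dominated convergence gives the same convergence in $L^2(H^{-1}([a,b]))$. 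Sending first $h\to 0$ and then $T\to\infty$ gives
\begin{equation}
(2\pi h)^n \sum_{E_j\in[a,b]}|\langle \mathrm{Op}_h(a_0)u_j,u_j\rangle|^2 \;\longrightarrow\; 0.
\end{equation}

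Finally, a standard Chebyshev extraction produces the density one set $\Lambda(h)$: by the Weyl law the number of eigenvalues with $|\langle \mathrm{Op}_h(a_0)u_j,u_j\rangle|>\varepsilon$ is at most $\varepsilon^{-2}$ times the variance sum and is therefore $o((2\pi h)^{-n})$, so removing these indices for a sequence $\varepsilon_k\downarrow 0$ via a diagonal argument over a countable dense family of symbols yields a single density one subsequence along which \eqref{equidistributionqe} holds uniformly. The main obstacle, as always in this argument, is cleanly coupling the two limits $h\to 0$ and $T\to\infty$: Egorov's theorem produces an error of size $O_T(h)$ whose implicit constant grows with $T$, so one must be careful to quantify the variance estimate with enough uniformity to let $T=T(h)$ tend to infinity slowly as $h\to 0$. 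The remaining technicalities are the standard verification that the local Weyl law and Egorov's theorem remain valid for the Schr\"odinger Hamiltonian $h^2\Delta_g+V(x)$ on a regular energy band, both of which follow from the pseudodifferential calculus to be developed in Chapter 2.
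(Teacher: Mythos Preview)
The paper does not give its own proof of this theorem: immediately after the statement it simply cites \cite{zworski} for a semiclassical proof and \cite{schnirelman}, \cite{zelditch}, \cite{verdiere} for the original result. Your outline is precisely the Schnirelman--Zelditch--Colin de Verdi\`ere argument as presented in those references, so there is nothing to compare; your proposal is correct and matches the standard proof the paper is pointing to.

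One small remark: the ``main obstacle'' you flag at the end is not actually an obstacle in this argument. You do not need to let $T=T(h)\to\infty$. The correct order of limits is to fix $T$, take $h\to 0$ (using Egorov and the local Weyl law, both valid for fixed $T$), obtain $\limsup_{h\to 0}(2\pi h)^n\sum|\langle\cdot\rangle|^2\le\int|b_T|^2$, and only then send $T\to\infty$ using the mean ergodic theorem. No uniformity in $T$ of the $O_T(h)$ Egorov error is required.
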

A semiclassical proof of this theorem can be found in \cite{zworski}, whilst the initial result goes back to \cite{schnirelman},\cite{zelditch},\cite{verdiere}. For billiards on manifolds with boundary, there are additional technical considerations even from the purely dynamical perspective. Nevertheless, the quantum ergodicity theorem generalises to this setting \cite{zelditch-zworski},\cite{gerard-leichtnam}.

We can also define a notion of quantum ergodicity localised to an individual energy surface that is motivated by the results of \cite{HRM}. We will make use of this definition in the proof of the negative quantum ergodicity result  Theorem \ref{nonqe}, in Chapter 6.

Suppose the semiclassical pseudodifferential operator $P_h=h^2\Delta+V(x,hD)$ on the smooth manifold $M$ has principal symbol $p(x,\xi)\in \mathcal{C}^\infty(T^*M)$ and has purely point spectrum, with eigenpairs $(u_j(h),E_j(h))$ in increasing order. 

If $E\in \R$ is a regular value of $p$ with nonempty preimage, then we can define quantum ergodicity localised to the energy surface $\Sigma_E$ as follows.

\begin{defn}
\label{qesurface}
We say that $P_h$ is quantum ergodic at energy $E$ if for each sufficiently small $h>0$, there exists a family $S(h)\subset \{j\in \N:E_j(h)\in [E-h,E+h]\}$ such that
\begin{equation}
\frac{\#S(h)}{\# \{j\in \N:E_j(h)\in [E-h,E+h\}}\rightarrow 1
\end{equation}
and
\begin{equation}
\langle A(x,hD) u_j(h),u_j(h) \rangle \rightarrow \int_{\Sigma_E} A(x,\xi)\, d\mu_E
\end{equation}
uniformly for $j\in S(h)$ for any zero-th order semiclassical pseudodifferential operator $A(x,hD)$.
\end{defn}

\begin{remark}
We could replace the $O(h)$ width energy windows in Definition \ref{qesurface} with $O(h^\beta)$ for $0<\beta<1$. 
The $O(h)$ size energy windows in Definition \ref{qesurface} are the smallest windows in which quantum ergodicity results are possible because the Weyl asymptotics break down in smaller energy windows.
\end{remark}

An alternate formulation of quantum ergodicity can be made in the language of \emph{semiclassical measures}. We shall state this version of quantum ergodicity in the special case $H=\|\xi\|_g^2$, as we only make use of it in this setting of billiards in Chapter 3.

To each subsequence of $(u_j)$, we can associate at least one non-negative Radon measure $\mu$ on $S^*M$ which provides a notion of phase space concentration in the semiclassical limit. 
We say that the eigenfunction subsequence $(u_{j_k})$ has unique \emph{semiclassical measure} $\mu$ if 
\begin{equation}
\label{semimeaseqn}
\lim_{k\rightarrow \infty}\langle a(x,E_{j_k}^{-1/2}D)u_{j_k},u_{j_k} \rangle=\int_{S^*M} a(x,\xi)\, d\mu
\end{equation}
for each semiclassical pseudodifferential operator with principal symbol $a$ compactly supported supported away from the boundary of $S^*M$. In Chapter 5 of \cite{zworski}, the existence and basic properties of semiclassical measures are established using the calculus of semiclassical pseudodifferential operators (see also \cite{gerard-leichtnam}).

A billiard $M$ can then be said to be \emph{quantum ergodic} if there is a full density subsequence of eigenfunctions $(u_{n_k})$ such that the the Liouville measure on $S^*M$ is the unique semiclassical measure associated to the sequence $(u_{n_k})$. This statement can be interpreted as saying that the sequence of eigenfunctions equidistributes in phase space with the possible exception of a sparse subsequence.

Under the stronger assumption of Anosov flow, it is conjectured that the full sequence of eigenfunctions equidistributes in the sense of Theorem \ref{qethm}. This is known as the quantum unique ergodicity conjecture.

The prize-winning work of Lindenstrauss \cite{linden} verified this conjecture in certain arithmetic cases where we work with the Hecke joint eigenfunctions. The study of quantum ergodicity where we have this additional arithmetic structure is known as arithmetic QE. Sarnak \cite{sarnak} has written a survey on the recent developments in this field.

On the other hand, it is known that quantum ergodicity is strictly weaker than quantum unique ergodicity. Indeed, Hassell \cite{hassellque} showed that on the Bunimovich stadium there exist semiclassical measures that have positive mass on the union of the bouncing ball trajectories in phase space. 

\section{Negative results}
\label{nressec}

In the extreme case of quantum unique ergodicity, there is a unique semiclassical measure, which is the Liouville measure. It is natural to ask what we can say about the semiclassical measures associated with sequences of eigenfunctions of Hamiltonian systems that are not quantum uniquely ergodic.

For the Bunimovich stadium, the quantum ergodicity theorem implies that any non-uniform limit can only arise from a density-zero subsequence. 

Whilst Burq--Zworski \cite{burqzworski} showed that concentration in a strict subrectangle is not possible, numerical evidence suggests that there could well be a sparse sequence of eigenfunctions with semiclassical limit supported in the rectangle itself. Rigorous proof of this phenomenon remains an open problem, with the most notable progress being Hassell's proof that a semiclassical measure exists with positive mass on the union of bouncing ball trajectories \cite{hassellque}.

\begin{figure}[h]
\begin{center}
\includegraphics[scale=0.5]{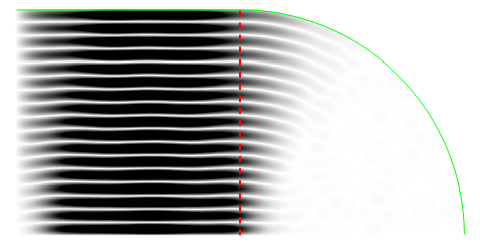}
\caption[]{An apparent ``bouncing ball" eigenfunction in the quarter stadium corresponding to the eigenvalue $\approx 2859.47$. Image courtesy of Dr. Barnett.}
\end{center}
\end{figure}
On the other hand, if a Hamiltonian system is assumed to be completely integrable, any trajectory is constrained to a single invariant torus corresponding to the intersection of the level sets of the $n$ conserved quantities. The intuition stemming from the classical-quantum correspondence suggests that this extreme concentration of trajectories should manifest itself in a statement about the concentration of eigenfunctions onto the Lagrangian tori.

Such a result is proven in \cite{toth} for systems satisfying a stronger notion of quantum integrability and tori satisfying a certain nonresonance condition, however rigorous results in the general setting of complete integrability seem to be elusive. 

At this point we introduce the notion of approximate eigenfunctions, or quasimodes.
\begin{defn}
Given a semiclassical pseudodifferential operator $P(h)$, a $O(h^\beta)$ \emph{quasimode} is a family of functions $u_h\in L^2(M)$ such that
\begin{equation}
\|(P(h)-E)u_h\|_{L^2(M)}=O(h^\beta)
\end{equation}
for some $\beta>0$ and some real $E$, referred to as the quasi-eigenvalue.
\end{defn}

\begin{remark}
As a consequence of the semiclassical rescaling, it should be noted that $O(h^2)$ for the semiclassical Laplacian $h^2\Delta$ correspond to $O(1)$ quasimodes of the Laplacian $\Delta$.
\end{remark}

We can of course replace the $O(h^\beta)$ with a stronger bound in this definition. The uses of quasimodes are plentiful. Most results about eigenfunctions apply just as well to quasimodes, and it is easier to construct quasimodes than exact eigenfunctions. In fact we can often construct quasimodes with desirable localisation properties, as they are better behaved with respect to taking cutoffs than exact eigenfunctions which are generally destroyed. The 

In \cite{colin}, Colin de Verdi\`{e}re established that for completely integrable Hamiltonian systems, there exist quasimodes with exponentially small error term that localise onto the certain individual invariant Langrangian tori. This result relies on the construction of a quantum Birkhoff normal form.

\section{Mixed and KAM systems}

Between the extremes of completely integrable Hamiltonian dynamics and ergodic Hamiltonian dynamics, results are rather sparse. The original work in this thesis explores two intermediate classes of Hamiltonian dynamical systems for which questions of quantum ergodicity are tractable.

The two main original results in this thesis, Theorem \ref{mainthm} and Theorem \ref{nonqe}, both make use of known quasimode constructions and perturbation arguments in order to prove eigenfunction localisation statements in the  cases of mixed billiards and KAM system respectively. We now outline these two classes of Hamiltonian systems.

\subsection*{Mixed billards}

If a dynamical billiard can be separated into multiple invariant subsets, only some of which are ergodic, it is said that the billiard is \emph{mixed}. In this case, it is conjectured that we can divide the sequence of eigenfunctions into corresponding families, with the eigenfunctions corresponding to an ergodic invariant subset satisfying a suitable equidistribution property. For the sake of simplicity, we shall state the conjecture in the case of billiards with exactly two invariant subsets, one ergodic and one completely integrable.

\begin{con}[Percival's Conjecture]
\label{percivalintrostrong}
For every compact Riemannian manifold $M$ such that $S^*M$ is the disjoint union of two invariant subsets $U,S^*M\setminus U$, with $U$ ergodic and $S^*M\setminus U$ completely integrable, we can find two subsets $A,B\subset \N$ such that

\begin{enumerate}
\item $A\cup B$ has density $1$ ;
\item $(u_k)_{k\in A}$ equidistributes in the ergodic region $U$ ;
\item Each semiclassical measure associated to the subset $B$ is supported in the completely integrable region $\mathcal{D} \setminus U$;
\item The density of $A$ is equal to $\mu_L(U)$.
\end{enumerate}
\end{con}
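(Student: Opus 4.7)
The plan is to combine a microlocal quantum ergodicity argument restricted to the ergodic component $U$ with a Colin de Verdi\`ere--type quasimode construction on the integrable complement, reconciling the two families through a Weyl counting argument. Throughout, I would work with the semiclassical Laplacian $h^2\Delta_g$ and invoke the reformulation of quantum ergodicity via semiclassical measures on $S^*M$ described after Theorem \ref{qethm}.

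First I would construct a smooth flow-invariant cutoff $\chi\in\mathcal{C}^\infty(S^*M)$ supported in a compact subset of $U$ and approximating $\mathbf{1}_U$ from below in $L^1(\mu_L)$; because $U$ is open and flow-invariant this is harmless. Quantising $\chi$ as $\mathrm{Op}_h(\chi)$ and running the Schnirelman--Zelditch--Colin de Verdi\`ere argument \emph{weighted} by $\mathrm{Op}_h(\chi)$ should yield, via Egorov and the Birkhoff ergodic theorem applied to the ergodic flow on $U$, a density-$\mu_L(\mathrm{supp}\,\chi)$ subsequence of eigenfunctions whose semiclassical measures, restricted to $\mathrm{supp}\,\chi$, agree with normalised Liouville measure on $U$. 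Exhausting $U$ by a sequence of such cutoffs and passing to a diagonal subsequence produces the set $A$ and establishes items (2) and the lower bound in (4).

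For the complement, $S^*M\setminus U$ being completely integrable allows us to pass to action--angle coordinates on a full-measure set of invariant Lagrangian tori. On each Diophantine such torus one invokes Colin de Verdi\`ere's construction \cite{colin} (cf.\ Section \ref{nressec}) of exponentially accurate quasimodes localised on the torus; a standard spectral stability estimate converts each tight cluster of quasimodes into a genuine eigenfunction cluster whose semiclassical limits sit in $S^*M\setminus U$. A Weyl count over $S^*M\setminus U$ of the resulting quasi-eigenvalues in windows of width $h$ produces the family $B$ with property (3) and density at least $1-\mu_L(U)$. Because eigenfunctions in $A$ carry almost all their semiclassical mass in $U$ while those produced from quasimodes in $B$ carry it in the complement, $A$ and $B$ are essentially disjoint and item (1) follows; matching Weyl totals then upgrade the density inequalities to the equality in (4).

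The main obstacle is the interface $\partial U \cap S^*M$. The flow-invariant cutoff construction loses control there, and so does the Liouville--Arnold chart, because the boundary of an ergodic region typically carries delicate dynamics (KAM cantori, parabolic orbits, sticky trajectories) on which neither an Egorov-plus-Birkhoff argument nor a torus-quasimode construction applies. A sparse but positive-density family of eigenfunctions could in principle have semiclassical mass straddling both regions, which would destroy both the disjointness of $A,B$ and the sharp density equality in (4). Ruling this out in full generality is what keeps the statement a conjecture; the strategy of Chapter 3 is to establish only items (1), (2) and a weakened version of (4) for mushroom billiards, precisely because the interface --- the boundary of the hemicircular cap --- can be handled only for a full-measure set of the shape parameter $t\in(0,2]$, not uniformly.
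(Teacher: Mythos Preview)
This statement is a \emph{conjecture}, and the paper does not prove it; it proves only the weak version (Conjecture~\ref{percivalintro}) for mushroom billiards at almost every stalk length. Your closing paragraph acknowledges as much, so your proposal is best read as a heuristic road map rather than a proof. Even so, the sketch contains a genuine gap that is worth naming because it is precisely the obstacle the paper's actual argument is designed to overcome.

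The step that fails is ``a standard spectral stability estimate converts each tight cluster of quasimodes into a genuine eigenfunction cluster whose semiclassical limits sit in $S^*M\setminus U$''. Spectral stability tells you only that there exist eigenvalues within $O(h^\infty)$ of each quasi-eigenvalue; it says nothing about where the corresponding \emph{eigenfunctions} localise. If many eigenvalues pile up near a single quasi-eigenvalue, the associated eigenfunctions can carry mass anywhere, including inside $U$. Converting quasimode localisation into eigenfunction localisation requires a \emph{spectral non-concentration} hypothesis: that the number of true eigenvalues in the union of quasi-eigenvalue windows does not exceed the number of quasimodes by more than a small proportion. This is the content of the paper's Proposition~\ref{spectral}, and verifying its hypothesis is the hard part --- it is exactly what forces the paper to introduce the parameter $t$, track eigenvalue flow via the Hadamard variational formula (Section~\ref{eigflowsec}), and argue that non-concentration holds for a full-measure set of $t$ rather than for every $t$.

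You locate the main obstruction at the dynamical interface $\partial U$, but the paper's experience suggests the obstruction is spectral rather than dynamical: even with a clean sharp boundary between integrable and ergodic regions (as in the mushroom), one still cannot rule out anomalous eigenvalue clustering near quasi-eigenvalues without the variational argument. Note also that the paper's logic runs in the opposite order to yours: it builds $B$ first from quasimodes plus spectral non-concentration, and only then deduces equidistribution of the complementary family $A$ via Galkowski's Theorem~\ref{galkowski} and the local Weyl law (Proposition~\ref{ergodicprop}), rather than constructing $A$ directly from a weighted Schnirelman argument.
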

Numerical evidence \cite{barnett} strongly supports this conjecture, but no rigorous proofs have been discovered, even for concrete examples.

A weaker version of Percival's conjecture is formulated by slightly relaxing the density requirements of the subsets $A$ and $B$.
\begin{con}
[Weak Percival's Conjecture]
\label{percivalintro}
For every compact Riemannian manifold $M$ such that $S^*M$ is the disjoint union of two invariant subsets $U,S^*M\setminus U$, with $U$ ergodic and $S^*M\setminus U$ completely integrable, we can find two subsets $A,B\subset \N$ such that

\begin{enumerate}
\item $A\cup B$ has upper density $1$ 
\item $(u_k)_{k\in A}$ equidistributes in the ergodic region $U$ 
\item Each semiclassical measure associated to the subset $B$ is supported in the completely integrable region $S^*M \setminus U$
\item The upper densities of $A$ and $B$ are equal to $\mu_L(U)$ and $1-\mu_L(U)$ respectively.
\end{enumerate}
\end{con}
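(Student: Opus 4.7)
The central idea would be a bifurcation of the diagonal matrix elements of a smoothed indicator of $U$. Let $\chi \in \mathcal{C}^\infty(S^*M)$ be supported in $U$ and approximately equal to $1_U$. Since $\chi \equiv 0$ on the invariant set $S^*M\setminus U$ and $\Phi^t$ is ergodic on $(U,\mu_L)$, the Birkhoff time-average $\langle \chi \rangle_\infty$ equals the constant $\mu_L(U)^{-1}\int \chi\, d\mu_L$ on $U$ and $0$ on $S^*M \setminus U$. The standard Schnirelman--Zelditch--Colin de Verdi\`ere variance identity, obtained from Egorov's theorem, the Weyl law, and Cauchy--Schwarz, then yields
\begin{equation}
\frac{1}{N(h)}\sum_{j}\bigl|\langle \mathrm{Op}_h(\chi) u_j, u_j\rangle - m(\chi)\bigr|^2 \longrightarrow \int |\langle \chi \rangle_\infty|^2 \, d\mu_L - m(\chi)^2,
\end{equation}
where $m(\chi) = \int \chi\, d\mu_L$ and the right-hand side is $m(\chi)(1-m(\chi)) + o(1)$ as $\chi \nearrow 1_U$. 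Writing $X_j := \langle \mathrm{Op}_h(\chi) u_j, u_j\rangle \in [0,1]$ and combining with the Weyl mean $N(h)^{-1}\sum_j X_j \to m(\chi)$ gives $N(h)^{-1}\sum_j X_j(1-X_j) \to 0$; since $X_j(1-X_j) \geq 0$, Chebyshev forces the $X_j$ to cluster near $0$ or near $1$ up to a density-zero subsequence.

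Define $A := \{j : X_j \to 1\}$ and $B := \{j : X_j \to 0\}$ by diagonalising across a sequence of smoothings $\chi \nearrow 1_U$. The Weyl mean identifies their upper densities as $\mu_L(U)$ and $1 - \mu_L(U)$, giving conditions (1) and (4); and for $j \in B$, any semiclassical measure $\nu_j$ satisfies $\nu_j(U) = 0$, hence is supported in $\overline{S^*M \setminus U}$, giving (3). Applying the same variance identity with an arbitrary smooth observable $a$ compactly supported in $U$, and restricting to $A$, where ergodicity on $U$ gives $\langle a \rangle_\infty = \mu_L(U)^{-1}\int_U a\, d\mu_L$ on $U$, yields the equidistribution statement (2). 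An independent corroborating route for (3) would be Colin de Verdi\`ere's construction \cite{colin} of quasimodes localised on invariant Lagrangian tori in $S^*M \setminus U$, combined with a Bessel-inequality pairing against the true eigenbasis.

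The principal obstacle is the dynamical interface $\partial U$. The variance identity tacitly requires a clean symbolic decomposition of the flow into its ergodic and integrable pieces, yet the boundary between $U$ and $S^*M \setminus U$ may carry \emph{sticky} orbits, marginally unstable periodic trajectories, KAM-cantori remnants, or tangential billiard trajectories, each capable of supporting its own semiclassical measure. These produce a sparse subsequence of eigenfunctions with intermediate values of $X_j$ that fail to land in either $A$ or $B$; this is precisely why the conjecture is stated with \emph{upper} densities rather than strict densities, and why the stronger Conjecture on strict densities is more delicate. Addressing the interface systematically appears to require case-specific geometric input: in the mushroom billiards of Chapter 3 the interface is a codimension-one family of trajectories for which a separation-of-variables quasimode construction is available, but no general microlocal tool for isolating and controlling interface contributions in mixed systems is currently known, which is what leaves the conjecture open in full generality.
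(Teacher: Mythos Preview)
The statement you address is a \emph{conjecture}, and the paper does not prove it in generality; Chapter~3 verifies it only for the mushroom billiards $M_t$ for almost all stalk lengths $t\in(0,2]$, by methods that are entirely specific to that geometry (explicit Bessel-function quasimodes in the integrable cap, Hadamard variation of eigenvalues in the stalk parameter $t$, Galkowski's structure theorem for semiclassical measures on the ergodic piece, and a spectral non-concentration argument). There is therefore no general proof in the paper to compare your proposal to.

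More importantly, your argument has a gap well before the interface issues you flag at the end. The ``variance identity'' you invoke is only an \emph{inequality}: the standard Egorov\,/\,local-Weyl\,/\,Cauchy--Schwarz argument yields
\[
\limsup_{h\to 0}\frac{1}{N(h)}\sum_j \bigl|\langle \mathrm{Op}_h(\chi)u_j,u_j\rangle - m(\chi)\bigr|^2 \;\le\; \int_{S^*M} |\langle\chi\rangle_\infty|^2\,d\mu_L - m(\chi)^2,
\]
the loss occurring at the step $|\langle B u_j,u_j\rangle|^2 \le \langle B^*B u_j,u_j\rangle$. In the globally ergodic case the right-hand side vanishes and the inequality is harmless; here it equals $m(\chi)(1-m(\chi))+o(1)$, and an \emph{upper} bound of that size on the quantum variance carries no information in the direction you need. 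Your deduction $N(h)^{-1}\sum_j X_j(1-X_j)\to 0$ requires the matching \emph{lower} bound on the variance, which is equivalent to asymptotic saturation of Cauchy--Schwarz, i.e.\ to $\mathrm{Op}_h(\langle\chi\rangle_T)u_j$ being nearly parallel to $u_j$ for most $j$. That is precisely the bimodal localisation you set out to prove, so the argument is circular at that step. This circularity---rather than the behaviour of trajectories near $\partial U$---is the actual reason the conjecture is open; the paper's mushroom argument avoids the variance question altogether by producing enough explicit quasimodes to account for the entire integrable density and then matching them to true eigenfunctions through a quantitative spectral-projection lemma.
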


In Chapter 3, I provide the first verification of the weak Percival's conjecture for a family of ``mushroom" billiards, defined in Section \ref{chp3introsec}. The main result is
\begin{thm}
Conjecture \ref{percivalintro} holds for the mushroom billiard $M_t$ for any fixed inner and outer radii, and almost all ``stalk lengths" $t\in(0,2]$.
\end{thm}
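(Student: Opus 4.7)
The plan is to verify the four conclusions of Conjecture \ref{percivalintro} by handling the two invariant components of $S^*M_t$ separately and then combining the resulting families of eigenfunctions via Weyl density counts. The first step is to make the decomposition of $S^*M_t$ explicit: the mushroom is a semicircular cap joined to a rectangular stalk of length $t$, and a unit covector belongs to the integrable component $S^*M_t \setminus U$ precisely when its angular momentum about the centre of the cap exceeds the critical value needed to keep the corresponding geodesic trajectory out of the stalk opening. On this integrable component the flow foliates into Lagrangian invariant tori parametrised by angular momentum; the complement $U$ consists of trajectories that enter the stalk at least once, and is the ergodic component.

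For the integrable part, I would construct quasimodes from whispering gallery modes of the half disk. The separation-of-variables eigenfunctions of the half disk Laplacian with Dirichlet boundary conditions are products of Bessel functions with angular modes, and for angular momenta above the critical threshold they are exponentially concentrated near the semicircular arc, with exponentially small trace on the diameter away from its endpoints. Restricting such a half-disk eigenfunction to $M_t$ therefore produces a quasimode of $\Delta_{M_t}$ whose error is supported in the stalk and is exponentially small in $h$. Counting the (angular quantum number, radial quantum number) pairs above the critical threshold that fall in a fixed semiclassical window of width $O(h)$ and applying Weyl's law shows that the number of such whispering-gallery quasimodes equals $\mu_L(S^*M_t \setminus U)$ times the leading Weyl asymptotic for $M_t$.

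To promote these quasimodes to genuine eigenfunctions I would run a spectral projection argument: a quasimode $u$ with $\|(\Delta - E^{(0)})u\| \le \varepsilon$ lies within $L^2$-distance $\varepsilon$ of the spectral subspace of $\Delta_{M_t}$ associated with $[E^{(0)} - \varepsilon, E^{(0)} + \varepsilon]$, so provided the whispering-gallery quasi-eigenvalues are separated from each other and from the rest of the mushroom spectrum by more than the exponentially small error, we obtain a single true eigenfunction that is exponentially close to the quasimode. This is where the parameter $t$ enters: the whispering-gallery quasi-eigenvalues are independent of $t$, the remainder of the mushroom spectrum varies real-analytically with $t$ by standard domain perturbation theory, and the set of bad parameters is therefore a countable union of analytic coincidence sets, each of Lebesgue measure zero. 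Excluding this null set, the resulting subset $B \subset \N$ of eigenfunctions has upper density $\mu_L(S^*M_t \setminus U)$ and its associated semiclassical measures are supported in $S^*M_t \setminus U$.

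For the ergodic component, I would invoke an invariant-subset version of the Schnirelman--Zelditch--Colin de Verdi\`ere theorem, obtained by adapting the boundary quantum ergodicity arguments of \cite{zelditch-zworski} and \cite{gerard-leichtnam} with a microlocal cutoff to $U$. The classical input is Birkhoff's ergodic theorem applied to the flow on $U$, and Egorov's theorem in the billiard setting ports this to matrix elements of semiclassical observables microlocally supported in $U$. This gives a subset $A \subset \N$ of upper density $\mu_L(U)$ along which matrix elements $\langle Q u_j, u_j \rangle$ against any zero-order $Q$ microlocally supported in $U$ converge to the normalised Liouville average on $U$. Combining with $B$ yields all four items of the conjecture. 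The main obstacle is the third step: showing that the exponentially small quasimode errors are genuinely matched by spectral gaps in a way that is robust under perturbation in $t$, and that the set of bad $t$ is genuinely of measure zero, will require either a quantitative Borel--Cantelli argument or fine analytic control over coincidences within a one-parameter family of elliptic eigenvalue problems on varying domains.
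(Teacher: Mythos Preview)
Your quasimode construction is essentially the same as the paper's (Section~3.2), and you correctly identify that the heart of the matter is matching these quasimodes to actual eigenfunctions. However, both your proposed mechanisms for handling the almost-every-$t$ statement are gaps.

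\textbf{The spectral matching.} The argument ``quasi-eigenvalues are $t$-independent, the rest of the spectrum is analytic in $t$, so coincidences form a null set'' does not work. What you need is not merely that no mushroom eigenvalue lands \emph{exactly} on a quasi-eigenvalue, but that the number of eigenvalues falling in the union of windows $\bigcup_i[\alpha_i^2-c,\alpha_i^2+c]$ does not exceed the number of quasi-eigenvalues by more than a negligible proportion. This is a spectral non-concentration statement (Proposition~3.3.1 and the definition of the ``good'' set $\mathcal{G}$ in Section~3.5), and it is \emph{not} a countable-coincidence phenomenon: for every $t$ there will be many eigenvalues in these windows, and the question is purely about counting. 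The paper's route is entirely different from Borel--Cantelli or analytic-coincidence arguments. It uses the Hadamard variational formula (Proposition~3.4.1) to express $\dot E_j(t)$ as a quadratic form in $u_j$, then invokes Galkowski's theorem to bound $E_j^{-1}\dot E_j$ for a full-density subsequence (Propositions~3.4.3--3.4.6). This speed bound, combined with Weyl asymptotics, forces the time-average of the fraction of eigenvalues lingering near quasi-eigenvalues to be at most $d(t)=1-\mu_L(U_t)$ (Propositions~3.5.7--3.5.8), and a Lebesgue-density contradiction argument (Proposition~3.5.6) then shows $\mathcal{G}$ has full measure.

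\textbf{The ergodic side.} An ``invariant-subset version'' of Schnirelman--Zelditch--Colin de Verdi\`ere of the strength you describe is not available by simply inserting a microlocal cutoff into the standard proof: the local Weyl law integrates over all of $S^*M_t$, not just $U$, so the usual variance argument does not directly yield density $\mu_L(U)$ for the equidistributing subsequence. What the paper actually uses is Galkowski's weaker result (Theorem~3.1.1), which only says that for a full-density subsequence, $\mu|_U=a\,\mu_L|_U$ for some constant $a\in[0,\mu_L(U)^{-1}]$. The equidistribution of $A_t$ with the correct density then follows \emph{a posteriori} from the localization of $B_t$ combined with the local Weyl law (Proposition~3.5.4), not from an independent QE argument on $U$.

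In short, your outline has the right architecture but is missing both load-bearing walls: the eigenvalue-flow mechanism for the a.e.-$t$ statement, and the interplay between Galkowski's theorem and the $B_t$-localization for the ergodic conclusion.
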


\subsection*{KAM Hamiltonian systems}
A particularly interesting class of Hamiltonian systems arise if we apply a small perturbation to a completely integrable real analytic Hamiltonian in action-angle coordinates.
\begin{equation}
H(\theta,I):=H^0(I)+\epsilon H^1(\theta,I).
\end{equation}

Motivated by the geometry of completely integrable systems, where our phase space $\T^n\times D$ is foliated by the Lagrangian tori 
\begin{equation}
\Lambda_\omega:= \{(\theta,I)\in \T^n\times D:\theta\in \T^n\}
\end{equation}
where $\nabla_IH^0=\omega$, it is natural to ask whether there are any such invariant Lagrangian tori that survive the perturbation. This problem is one of real physical significane, as one application is the study of celestial stability by viewing the dynamics of the solar system as a small perturbation of the completely integrable system that results from neglecting forces between pairs of planets. This perturbation is of course ``small" because of the considerably greater mass of the sun compared to the planets.

The initial significant breakthrough in this problem was due to Kolmogorov \cite{kolmogorov}, with the conclusion that although a dense set of tori is indeed generally destroyed by such a perturbation, a large measure collection of the invariant tori survive, precisely those whose frequency $\omega=\nabla_I H^0$ of quasiperiodic flow \eqref{quasiperiodicflow} satisfy the Diophantine condition
\begin{equation}
|\langle\omega,k\rangle|\geq \frac{\kappa}{|k|^\tau}
\end{equation}
for all nonzero $k\in \mathbb{Z}^n$ and fixed $\kappa >0$ and $\tau>n-1$.
The tori satisfying this Diophantine condition are said to be nonresonant.

The field of KAM theory developed from this problem as a broad class of techniques applicable to perturbation problems in classical mechanics, founded by Kolmogorov, Arnold and Moser.

More recent work by Popov \cite{popovkam} proved a version of the KAM theorem for Hamiltonian systems in the Gevrey regularity class, with the purpose of constructing a Birkhoff normal form. This led to a quantum Birkhoff normal form construction, and a proof of the existence of quasimodes with exponentially small error localising onto the nonresonant tori in \cite{popovquasis}.

The details of Popov's construction are summarised in Chapter 4 and Chapter 5 for families of Hamiltonians of the form
\begin{equation}
\label{1paramfamyo}
H(x,\xi;t)=\|\xi\|_g^2+V(x,\xi)+tQ(x,\xi)
\end{equation}
for smooth real-valued symbols $V,Q$ in a suitable Gevrey class $S_\ell(T^*M)$ in the notation of \eqref{selldef}.

In Chapter 6, we prove the following main result. The formal statement is Theorem \ref{nonqe}.
\begin{thm}
Suppose $M$ is a compact boundaryless Gevrey smooth Riemannian manifold, and the perturbation $Q$ is such that $Q(x,hD)$ is a positive operator and there exists a \emph{slow torus} in the energy band $[a,b]$.

Then there exists $\delta>0$ such that for almost all $t\in[0,\delta]$ the quantisation $P_h(t)$ of $H(x,\xi;t)$ is non-quantum ergodic over the energy surface $\Sigma_E$ for a positive Lebesgue measure subset of energies $E\in [a,b]$.
\end{thm}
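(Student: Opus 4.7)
The plan is to combine Popov's quasimode construction on the nonresonant tori (summarised in Chapter 5) with a parameter-dependent spectral perturbation argument exploiting the positivity of $Q(x,hD)$, in order to produce, for almost every $t \in [0,\delta]$, a positive density subsequence of true eigenfunctions of $P_h(t)$ whose semiclassical mass concentrates on (a neighbourhood of) the slow torus. Since that torus is a Lagrangian submanifold, hence of Liouville measure zero inside each energy surface it meets, this concentration directly contradicts Definition \ref{qesurface} at a positive measure set of energies.

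First I would apply Popov's Gevrey KAM theorem and quantum Birkhoff normal form to the family $H(x,\xi;t) = \|\xi\|_g^2 + V(x,\xi) + tQ(x,\xi)$ to obtain, uniformly in $t$ in a small interval $[0,\delta]$, a family of quasimodes $(u_{j}^{t}, \mu_{j}^{t})$ with errors of order $\exp(-c h^{-\rho})$, each microlocalised on an invariant KAM torus $\Lambda_{\omega(t)}$ surviving the perturbation. The \emph{slow torus} hypothesis is used here to single out a distinguished persistent torus whose frequency $\omega(t)$ varies slowly in $t$, so that the associated quasi-eigenvalues $\mu_{j}^{t}$ have derivative in $t$ much smaller than the mean spacing between eigenvalues of $P_h(t)$ in a window of size $O(h)$ around the relevant energy.

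Next I would use the assumption $Q(x,hD) \geq 0$ to invoke the Hellmann--Feynman formula, which gives $\frac{d}{dt}E_j(t) = \langle Q(x,hD) v_j(t), v_j(t) \rangle \geq 0$ for each simple eigenvalue; so eigenvalues are monotonically non-decreasing in $t$, and by Weyl's law the average speed $\frac{1}{N(h)} \sum_{E_j(t) \in I} \frac{d}{dt} E_j(t)$ over any fixed energy window $I \subset [a,b]$ is bounded below by a positive constant determined by the symbolic average of $Q$ on $\Sigma_E$. Comparing this with the slow evolution of the $\mu_{j}^{t}$, a counting/sweeping argument in $t$ shows that, for almost every $t \in [0,\delta]$, a density $\geq c > 0$ subsequence of true eigenvalues $E_j(t)$ (in a window around $E$) falls within exponential distance of some slow quasi-eigenvalue $\mu_{k}^{t}$. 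The standard quasimode--to--mode lemma (comparing $\exp(-c h^{-\rho})$ quasimode error with the polynomial-in-$h$ spacing guaranteed by Weyl) then produces a genuine eigenfunction $v_j(t)$ within $o(1)$ in $L^2$ of the quasimode, hence concentrating its semiclassical measure on the slow torus.

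The main obstacle will be the measure-theoretic bookkeeping in $t$: one must ensure that distinct quasimodes are assigned to distinct eigenfunctions (so that the density survives), and exclude a null set of exceptional $t$ where the quasi-eigenvalue sits unfavourably inside a spectral cluster. This is handled by a Fubini-type argument combined with the monotonicity from $Q \geq 0$: the set of $(t,h)$ for which the crossing is degenerate has controlled measure, and since the derivative $\frac{d}{dt}\mu_{k}^{t}$ is strictly smaller than the mean level speed, an injective matching between a positive proportion of quasimodes and true eigenfunctions persists after removing a set of $t$ of Lebesgue measure zero. Finally, running this for a range of energies $E$ and observing that the slow torus intersects $\Sigma_E$ for $E$ in an open subinterval of $[a,b]$ yields the positive measure conclusion in the energy parameter.
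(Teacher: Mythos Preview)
Your proposal has the right building blocks (Popov's quasimodes, positivity of $Q$, the Hellmann--Feynman/Hadamard formula $\dot E_j=\langle Q(x,hD)u_j,u_j\rangle$) but the logical direction of the argument is reversed, and the meaning of ``slow torus'' is misread.

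First, a slow torus in Definition~\ref{slowtorusdef} is \emph{not} one whose frequency $\omega(t)$ varies slowly in $t$; it is a nonresonant torus $\Lambda_{\omega_0}$ on which the torus average of $Q$ is strictly smaller than the surface average $\mu_E(\Sigma_E)^{-1}\int_{\Sigma_E}Q\,d\mu_E$. Via the Birkhoff normal form (Proposition~\ref{bnf2}) this translates into the quasi-eigenvalue speed bound $\partial_t\mu_m(t;h)<Q_- -\epsilon_1/4$, i.e.\ quasi-eigenvalues move strictly \emph{slower} than the ergodic eigenvalue speed, not ``slower than the mean spacing''. Also, a single torus gives only $O(1)$ quasimodes per $O(h)$ energy window; the paper first thickens $\Lambda_{\omega_0}$ to a positive-measure family $\overline{\Omega}$ of nonresonant tori (Proposition~\ref{fattori}) so that $\#\mathcal{M}_h(t)\sim(2\pi h)^{-n}\mu(\T^n\times S)$ is comparable to the eigenvalue count.

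The decisive gap is your ``sweeping argument''. You claim that comparing slow quasi-eigenvalues with fast eigenvalues shows that a positive density of eigenvalues falls within exponential distance of some $\mu_k^t$, and then you want to upgrade this via a quasimode-to-mode lemma to concentration on the torus. The paper's argument runs the other way and is by contradiction. Assume QE holds for too many $(t,E)$; then for a full-density set of eigenfunctions $\dot E_k(t;h)=\langle Q u_k,u_k\rangle\in[Q_--\epsilon_2,Q_++\epsilon_2]$ (Propositions~\ref{pointwiseband}, \ref{almostuniformgevrey}). Since eigenvalues travel \emph{faster} than quasi-eigenvalues, each $E_k$ spends only a small fraction of $t\in J$ inside the union $W(t;h)$ of $h^\alpha$-windows around the $\mu_m$; integrating and pigeonholing gives a time $t_*$ at which $N(t_*;h)<\tfrac12\#\mathcal{M}_h(t_*)$ (Proposition~\ref{specnonconckamprop}). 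But the almost-orthogonal quasimodes have projections onto $U=\mathrm{span}\{u_j:E_j\in W(t_*;h)\}$ that are linearly independent (Proposition~\ref{spectralcont}), forcing $\dim U\geq\#\mathcal{M}_h(t_*)$ --- a contradiction. In short, the paper never produces concentrating eigenfunctions; it shows that QE would force \emph{too few} eigenvalues near the quasi-eigenvalues, violating a dimension count. Your matching step, as written, would require a spectral non-concentration input (\`a la Proposition~\ref{spectral}) that you have not established here, and the slow/fast comparison by itself pushes in the opposite direction.
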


A \emph{slow torus}, defined formally in \ref{slowtorusdef}, is an invariant nonresonant Lagrangian torus $\Lambda$ in the energy surface $\mathcal{E}:=\{(x,\xi)\in T^*M:\|\xi\|_g^2+V(x,\xi)=E\}$ such that the average of $Q$ over $\Lambda$ is strictly smaller than the average of $Q$ over $\mathcal{E}$. The assumption of the existence of such a torus is a mild one, and will typically be satisfied by perturbations whose symbols are nonconstant on energy surfaces.  

\chapter{Semiclassical Analysis}

In this chapter, we briefly collect some of the necessary machinery of the semiclassical pseudodifferential calculus necessary for the results in the remainder of this thesis. The Fourier transform on $\R^n$ from classical harmonic analysis allows us to pass from the $n$-dimensional position space to the $n$-dimensional frequency space. Pseudodifferential operators allow us to formulate and prove statements in the full $2n$-dimensional phase space, and generalise the procedure of using a Fourier multiplier as a frequency cutoff. Some standard references for the classical pseudodifferential calculus include \cite{gs} \cite{shubin} \cite{ho3}. Our presentation shall be in the semiclassical formalism, for which an extensive account can be found in \cite{zworski} and \cite{sjostrand}.

A key application of the pseudodifferential calculus to spectral theory is Weyl's law, which provides an asymptotic for eigenvalues of a Schr\"{o}dinger operator.

\section{Semiclassical pseudodifferential operators}
\label{pseudosec}
We begin by presenting the semiclassical pseudodifferential calculus on $\R^{2n}$.

The semiclassical pseudodifferential calculus provides a correspondence between \emph{classical observables} (smooth functions on the phase space $\R^{2n}$) and \emph{quantum observables} (integral operators on position space $\R^n$).

The classical observables in this correspondence are traditionally referred to as symbols, and estimates on their derivatives are required to obtain desirable mapping properties for their quantisations.

For such operators, we can define semiclassical pseudodifferential operators on $\R^n$.
\begin{equation}
\label{standard}
a(x,hD)u(x):=(2\pi h)^{-n}\int\int e^{i(x-y)\cdot \xi/h}a(x,\xi;h)u(y)\, dy \, d\xi
\end{equation}

The quantisation \eqref{standard} is referred to as the standard quantisation. It is sometimes more convenient to work with a formally self-adjoint operator however. This motivates the definition of the Weyl quantisation.
\begin{equation}
a^w(x,hD)u(x)=(2\pi h)^{-n}\int\int e^{i(x-y)\cdot \xi/h}a(\frac{x+y}{2},\xi;h)u(y)\, dy \, d\xi
\end{equation}
which is a formally self-adjoint operator if $a$ is real.

These formally defined integral operators are clearly convergent if $a$ and $u$ are of Schwartz class, but is otherwise understood in the sense of oscillatory integrals (\cite{zworski} Theorem 3.8). In this fashion, the Kohn--Nirenberg symbol class leads to a class of semiclassical pseudodifferential operators bounded on semiclassical Sobolev spaces.

The standard class of $m$-th order Kohn--Nirenberg symbols on $\R^{2n}$ is given by:
\begin{equation}
\label{kohndef}
S^m:= \{a(x,\xi)\in \mathcal{C}^\infty(\R^{2n}):|\partial_x^\alpha\partial_\xi^\beta a|\leq C_{\alpha,\beta}(1+|\xi|^2)^{\frac{m-|\beta|}{2}}\textrm{ for all }\alpha,\beta\}.
\end{equation}
\begin{remark}
It is also sometimes useful to consider classes $S_\delta^m$, where the right-hand side of \eqref{kohndef} is multiplied by $h^{-\delta}$ with each differentiation, but we shall not require these symbol classes.
\end{remark}

This is a sufficiently broad symbol class for most applications, and includes semiclassical differential operators
\begin{equation}
\sum_{|\alpha|\leq n} a_\alpha(x)(hD)^\alpha 
\end{equation}
as a special case by quantising polynomials in $\xi$. 

The index $m$ in Definition \ref{kohndef} corresponds to the mapping properties of the associated pseudodifferential operator.
\begin{prop}
If $a\in S^m$, then
\begin{equation}
a(x,hD):H^{s}\rightarrow H^{s-m}
\end{equation}
is a bounded operator for any $s\in \R$, where $H^s$ denotes the Sobolev space of order $s$. In particular, zero-th order semiclassical pseudodifferential operators are bounded on $L^2$, and negative order semiclassical pseudodifferential operators are compact on $L^2$.
\end{prop}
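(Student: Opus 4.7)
The plan is to reduce the claim to the $L^2\to L^2$ boundedness of quantisations of order-zero symbols, the semiclassical Calder\'on--Vaillancourt theorem, by exploiting the symbolic calculus developed earlier in this section. First I would introduce the elliptic symbol $\langle\xi\rangle^s := (1+|\xi|^2)^{s/2}\in S^s$, whose quantisation $\Lambda_h^s := \langle hD\rangle^s$ is a topological isomorphism from the semiclassical Sobolev space $H^s$ onto $L^2$, with inverse $\Lambda_h^{-s}$. The composition formula for pseudodifferential operators then ensures that
\begin{equation}
B := \Lambda_h^{s-m}\, a(x,hD)\, \Lambda_h^{-s}
\end{equation}
is the quantisation of a symbol of total order $(s-m)+m+(-s)=0$. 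Thus boundedness of $a(x,hD):H^s\to H^{s-m}$ is equivalent to $L^2$-boundedness of $B$, and it suffices to prove the latter for an arbitrary $b\in S^0$.

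To establish the $L^2$-boundedness of $b(x,hD)$ for $b\in S^0$, I would use the Cotlar--Stein almost orthogonality lemma. Fixing a smooth partition of unity of $\R^{2n}$ subordinate to unit cubes centred at $\nu\in\Z^{2n}$, one decomposes $b=\sum_\nu b_\nu$ with $b_\nu$ supported near the $\nu$-th cube, uniformly in $S^0$. The symbolic composition formula produces symbols for $b_\nu(x,hD)\,b_{\nu'}(x,hD)^*$ and $b_\nu(x,hD)^*\,b_{\nu'}(x,hD)$ whose integrands contain the oscillatory phase arising from the phase-space separation $|\nu-\nu'|$; non-stationary phase integration by parts gives operator-norm bounds $C_N(1+|\nu-\nu'|)^{-N}$ for any $N$. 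Summing a geometric series in $|\nu-\nu'|^{-N/2}$ verifies the Cotlar--Stein hypothesis and yields a uniform $L^2$ bound. The compactness claim for $m<0$ is then immediate in the compact manifold setting: $a(x,hD):L^2\to H^{-m}$ is bounded by what has been proven, and the Rellich--Kondrachov inclusion $H^{-m}\hookrightarrow L^2$ is compact for $-m>0$, so the composition is compact on $L^2$.

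The principal obstacle will be the Calder\'on--Vaillancourt step itself. Symbols in $S^0$ need not decay in either phase-space variable, so naive kernel estimates via Schur's test fail and one genuinely needs the almost-orthogonality improvement. The subtlety lies in tracking how many derivatives of $b$ enter the integration-by-parts argument, and in verifying that the $(1+|\nu-\nu'|)^{-N}$ decay more than compensates for the infinitely many lattice points $\nu$ contributing to $\sum_\nu b_\nu$, so that the final bound depends only on finitely many symbol seminorms of $b$.
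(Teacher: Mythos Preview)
The paper does not actually supply a proof of this proposition. Chapter~2 is explicitly described in the Declaration as ``a summary of known background material,'' and this result is simply stated and then the text moves on; the implicit reference is to the standard sources \cite{zworski}, \cite{gs}, \cite{shubin}, \cite{ho3} cited at the chapter's opening. So there is nothing in the paper to compare your argument against.

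Your proof plan is the standard and correct one: order-reduce via $\Lambda_h^s=\langle hD\rangle^s$ and the composition formula to turn the $H^s\to H^{s-m}$ statement into an $L^2\to L^2$ statement for a zeroth-order symbol, then prove Calder\'on--Vaillancourt by a Cotlar--Stein decomposition over a phase-space lattice. This is exactly the route taken in, e.g., Zworski's book.

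One caution on the compactness clause. You correctly hedge by saying ``in the compact manifold setting,'' and that hedge is essential: as literally stated in the paper (for the Kohn--Nirenberg class on $\R^{2n}$ with no $x$-decay imposed), the compactness assertion is false --- $\langle hD\rangle^{-1}\in\Psi^{-1}$ is bounded but not compact on $L^2(\R^n)$, since Rellich's theorem fails without spatial localisation. The paper is being slightly informal here; the claim is valid once one passes to the compact-manifold calculus introduced a few paragraphs later, which is the setting actually used in the thesis. Your Rellich--Kondrachov argument is fine there.
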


In practice, symbols of semiclassical pseudodifferential operators are often constructed using formal power series in $h$. Indeed, if $a\in S^m(\R^{2n})$ and $a_j\in S^m(\R^{2n})$ for each $j$, we introduce the notation
\begin{equation}
\label{symbolexp}
a(x,\xi)\sim\sum_{j=0}^\infty a_j(x,\xi)h^j,
\end{equation}
to mean that
\begin{equation}
\left|\partial_x^\alpha\partial_\xi^\beta \left(a-\sum_{j=0}^N a_jh^j\right)\right|\leq C_{\alpha,\beta,N}h^{N+1}(1+|\xi|^2)^{\frac{m-|\beta|}{2}}
\end{equation}
for each $n$ uniformly in some interval $h\in (0,h_{N,\alpha,\beta}]$.

The key point is that for an arbitrary formal series $\sum_j a_jh^j$  with symbols $a_j\in S^m$, we can find a $\sim$-equivalent symbol $a\in S^m$.
\begin{prop}
\label{borelprop}
Given an arbitrary sequence of symbols $a_j\in S^m(\R^{2n})$, there exists a symbol $a\in S^m(\R^{2n})$ satisfying \eqref{symbolexp}. We call $a$ the \emph{Borel resummation} of the formal series $\sum_j a_jh^j$.
\end{prop}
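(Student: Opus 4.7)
The plan is to adapt Borel's classical construction for formal power series of smooth functions to the symbol setting, using a cutoff scheme tailored so that each term in the formal sum contributes only when $h$ is small enough to tame its symbol seminorms.

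First I would fix a cutoff $\chi\in \mathcal{C}^\infty(\R)$ with $\chi(t)=1$ for $|t|\le 1$, $\chi(t)=0$ for $|t|\ge 2$, and $0\le \chi\le 1$. For a sequence of positive numbers $\epsilon_j\searrow 0$ to be chosen, I would set
\begin{equation}
a(x,\xi;h):=\sum_{j=0}^\infty \chi\!\left(\tfrac{h}{\epsilon_j}\right) a_j(x,\xi)\, h^j.
\end{equation}
For any fixed $h>0$, only indices $j$ with $\epsilon_j\ge h/2$ contribute, so pointwise the sum is finite and the definition is unambiguous. The issue is to choose the $\epsilon_j$ so that $a$ lies in $S^m(\R^{2n})$ and so that truncating the series at any order $N$ yields the claimed remainder estimate.

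The key step is a diagonal choice of $\epsilon_j$. For each $j\ge 1$, let $M_j$ denote the maximum of all symbol seminorms
\begin{equation}
\sup_{(x,\xi)\in\R^{2n}}(1+|\xi|^2)^{-\frac{m-|\beta|}{2}}\bigl|\partial_x^\alpha\partial_\xi^\beta a_j(x,\xi)\bigr|
\end{equation}
over the finitely many multiindices with $|\alpha|+|\beta|\le j$. Choose $\epsilon_j>0$ so that $\epsilon_j M_j\le 2^{-j}$ and $\epsilon_j\le 1/j$. With this choice, and using that on $\mathrm{supp}\,\chi(\cdot/\epsilon_j)$ one has $h\le 2\epsilon_j$, a direct estimate on each summand shows that for any fixed $\alpha,\beta$, every term with $j\ge |\alpha|+|\beta|$ contributes at most $C\, 2^{-j}(1+|\xi|^2)^{(m-|\beta|)/2}$ to $\partial_x^\alpha\partial_\xi^\beta a$. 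Summability then gives $a\in S^m(\R^{2n})$ after absorbing the finitely many low-order terms, each of which is itself a Schwartz-class multiplier of a symbol in $S^m$.

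To verify the asymptotic \eqref{symbolexp}, I would write
\begin{equation}
a-\sum_{j=0}^N a_j h^j=\sum_{j=0}^N\Bigl(\chi\!\bigl(\tfrac{h}{\epsilon_j}\bigr)-1\Bigr)a_j h^j+\sum_{j=N+1}^\infty \chi\!\bigl(\tfrac{h}{\epsilon_j}\bigr)a_j h^j.
\end{equation}
For $h\le \min_{0\le j\le N}\epsilon_j$ the first finite sum vanishes identically, so it contributes nothing for sufficiently small $h$. For the infinite tail, using $h\le 2\epsilon_j$ on each support one writes $h^j=h^{N+1}\cdot h^{j-N-1}\le h^{N+1}(2\epsilon_j)^{j-N-1}$; combined with the bound $\epsilon_j M_j\le 2^{-j}$ and the constraint $\epsilon_j\le 1/j$, each term is controlled by $C_{\alpha,\beta,N}\, 2^{-j}h^{N+1}(1+|\xi|^2)^{(m-|\beta|)/2}$ once $j\ge |\alpha|+|\beta|+N+1$, and the remaining finitely many terms are handled individually. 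Summing yields the required remainder estimate.

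The main obstacle is bookkeeping: ensuring that a single sequence $(\epsilon_j)$ works simultaneously for all multiindices $\alpha,\beta$ and all truncation orders $N$. The diagonal choice above resolves this because for fixed $\alpha,\beta,N$ only the tail $j\ge \max(|\alpha|+|\beta|, N+1)$ matters, and the decay $2^{-j}$ from the diagonal condition dominates the finitely many quantities $C_{\alpha,\beta,j}$ that depend on $\alpha,\beta,N$.
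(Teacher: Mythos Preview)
Your argument is correct and is precisely the standard Borel-lemma construction; the paper does not supply its own proof but simply refers to Theorem~4.15 of \cite{zworski}, where the same cutoff scheme $a=\sum_j \chi(h/\epsilon_j)a_j h^j$ with a diagonal choice of $\epsilon_j$ is used. One small remark: your claimed bound ``$C\cdot 2^{-j}$'' on each tail term is not literally what drops out of the condition $\epsilon_j M_j\le 2^{-j}$ alone, but writing $h^j M_j\le h^{j-1}\cdot (2\epsilon_j M_j)\le 2^{1-j}h^{j-1}$ on the support of $\chi(h/\epsilon_j)$ does give the summable decay you need, so the argument goes through as written.
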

A proof of Proposition \ref{borelprop} can be found in Theorem 4.15 of \cite{zworski}. 

We refer the the leading term $a_0$ in \eqref{symbolexp} as the \emph{principal symbol} of $a$, and write $a_0=\sigma(a)$. This is of course only well-defined modulo $O(h)$.

From repeated integration by parts in \eqref{standard}, we see that if the $\textrm{dist}(\textrm{spt}(u),\textrm{spt}(a))=c>0$, then the function $(a(x,hD)u)(x)$ is of size $O(h^n)$ for any $n\in\N$. We denote such a size estimate by $O(h^\infty)$, and note that these terms can be regarded as negligible in the semiclassical limit $h\rightarrow 0$.

At this point we introduce the notion of a semiclassical wavefront set for $L^2$ functions.
\begin{defn}
\label{microsupport}
Suppose $u(x;h)$ is a collection of smooth functions on $\R^n$ for $h\in (0,h_0]$. Then the  semiclassical wavefront set $WF_h(u)\subset \R^{2n}$ is defined as follows.
$(x_0,\xi_0)\in (WF_h(U))^c$ if there exists a $v\in \mathcal{C}_0^\infty(\R^{2n})$ with $|v(x_0,\xi_0)|>0$ such that we have
\begin{equation}
v(x,hD)u=O_{L^2}(h^k).
\end{equation}
for any $k\in \N$.
\end{defn}
Such a definition is possible in considerably more general classes of distributions (See Section 8.4 of \cite{zworski}), but we shall not require it in this generality.
\begin{remark}
In fact, it suffices to prove that $v(x,h_jD)u=O_{L^2}(h_j^k)$ for any $k\in \N$ and a single sequence $h_j\rightarrow 0$.
\end{remark}

A crucial formula in the pseudodifferential calculus is the composition formula, which assets that if $a\in S^{m_1}$ and $b\in S^{m_2}$, then the composition 
\begin{equation}
a(x,hD)\circ b(x,hD)
\end{equation}
is a semiclassical pseudodifferential operator of order $m_1+m_2$, and its symbol is given by
\begin{equation}
\label{ogcompform}
a\circ b:= \exp(\frac{h}{i}\langle\partial_\xi,\partial_y  \rangle)(a(x,\xi)b(y,\eta))_{(y,\eta)=(x,\xi)}.
\end{equation}
as is shown in Theorem 9.5 of \cite{zworski}.

Expanding the symbols $a,b,$ and $a\circ b$ in \eqref{ogcompform} as semiclassical series yields the following
\begin{prop}
Given two symbols $a\in S^{m_1}$ and $b\in S^{m_2}$, their composition $p\circ q\in S_{m_1+m_2}$ as the Borel  resummation of
\begin{equation}
a\circ b \sim \sum_{j=0}^\infty c_j h^j
\end{equation}
where
\begin{equation}
\label{compform}
\sum_{r+s+|\gamma|=j}\frac{1}{\gamma!}\partial_\xi^\gamma a_r(x,\xi)\partial_x^\gamma b_s(x,\xi).
\end{equation}
\end{prop}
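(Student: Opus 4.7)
The plan is to derive the claimed asymptotic expansion as a formal consequence of the already-established composition formula \eqref{ogcompform}, and then verify that the formal series genuinely constitutes an asymptotic expansion in $S^{m_1+m_2}$. First, I would expand the exponential operator as its Taylor series,
\begin{equation*}
\exp\!\left(\tfrac{h}{i}\langle \partial_\xi, \partial_y\rangle\right) = \sum_{k=0}^\infty \frac{1}{k!}\!\left(\tfrac{h}{i}\right)^{\!k}\!\langle \partial_\xi,\partial_y\rangle^k,
\end{equation*}
and then apply the multinomial identity
\begin{equation*}
\langle \partial_\xi,\partial_y\rangle^k = \sum_{|\gamma|=k}\frac{k!}{\gamma!}\,\partial_\xi^\gamma \partial_y^\gamma
\end{equation*}
to rewrite the $k$-th term as a sum over multi-indices $\gamma$. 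After applying this operator to $a(x,\xi)b(y,\eta)$ and restricting to $(y,\eta)=(x,\xi)$, only the $\partial_\xi^\gamma$ falls on the $a$ factor and only the $\partial_y^\gamma = \partial_x^\gamma$ falls on the $b$ factor, since $b$ is evaluated at $\eta=\xi$ rather than at an independent variable.

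Next I would substitute the semiclassical expansions $a \sim \sum_r h^r a_r$ and $b \sim \sum_s h^s b_s$, treating the derivatives termwise, and regroup by total powers of $h$. The coefficient of $h^j$ then picks up precisely those triples $(r,s,\gamma)$ with $r+s+|\gamma|=j$, yielding formula \eqref{compform} up to the overall $i^{-|\gamma|}$ factor inherited from $(h/i)^{|\gamma|}$ (which should be absorbed into the coefficient convention used in the proposition).

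To justify that this formal rearrangement is a genuine asymptotic expansion rather than a divergent series, I would truncate the exponential at order $N$ and bound the remainder. Writing
\begin{equation*}
\exp\!\left(\tfrac{h}{i}\langle \partial_\xi,\partial_y\rangle\right) = \sum_{k=0}^{N}\frac{1}{k!}\!\left(\tfrac{h}{i}\right)^{\!k}\!\langle \partial_\xi,\partial_y\rangle^k + R_N(h),
\end{equation*}
one applies the method of stationary phase (as in Theorem 3.15 of \cite{zworski}) to control $R_N(h)(a(x,\xi)b(y,\eta))|_{(y,\eta)=(x,\xi)}$ by $C_{\alpha,\beta,N}h^{N+1}(1+|\xi|^2)^{(m_1+m_2-|\beta|)/2}$ in each seminorm. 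Combining this with the analogous truncation of the semiclassical expansions of $a$ and $b$, together with the Leibniz rule, delivers the required $S^{m_1+m_2}$ symbol estimates on the remainder after truncation at any order $j$. Finally, Proposition \ref{borelprop} (Borel resummation) is invoked to package the formal sum $\sum_j c_j h^j$ as an honest symbol in $S^{m_1+m_2}$ equivalent to $a\circ b$ modulo $O(h^\infty)S^{m_1+m_2}$.

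The main obstacle in this argument is the remainder estimate for the truncated exponential: one must keep careful track of how the combined derivatives in $\xi$ and $x$ interact with the Kohn--Nirenberg weights $(1+|\xi|^2)^{(m-|\beta|)/2}$, so that the $h^{N+1}$ gain from Taylor expansion is not offset by growth in $\xi$. This is the content of the oscillatory-integral bounds underlying Theorem 9.5 of \cite{zworski}, and once that input is available the rest of the argument is routine combinatorial bookkeeping.
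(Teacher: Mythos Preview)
Your proposal is correct and follows exactly the approach the paper indicates: the paper gives no formal proof of this proposition at all, simply remarking that ``Expanding the symbols $a,b,$ and $a\circ b$ in \eqref{ogcompform} as semiclassical series yields the following'' and then stating the result. Your write-up is a careful unpacking of precisely that expansion, including the multinomial identity, the regrouping by total $h$-power, and the stationary-phase remainder estimate (referred to Theorem~9.5 of \cite{zworski}); you also correctly flag the suppressed $i^{-|\gamma|}$ factor, which the paper's formula \eqref{compform} has silently absorbed.
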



A key feature of the pseudodifferential calculus that immediately lends itself to PDE applications is that of the invertibility of elliptic operators. 
\begin{prop}
\label{ellipticinverse}
If we have a symbol $a\in S^m$ with
\begin{equation}
a(x,\xi)\geq c(1+|\xi|^2)^{m/2}
\end{equation}
for some $c>0$, then there exists a symbol $b\in S^{-m}$ with
\begin{equation}
a(x,hD)\circ b(x,hD)=O(h^\infty).
\end{equation}
\end{prop}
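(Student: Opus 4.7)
The plan is to construct $b$ as the Borel resummation of a formal series $b\sim\sum_{j\geq 0}b_jh^j$ with $b_j\in S^{-m}$, where the coefficients are chosen inductively so that the asymptotic expansion of $a\circ b$ is the constant symbol $1$ to all orders. This makes the quantisation satisfy $a(x,hD)\circ b(x,hD)=I+O(h^\infty)$, and I interpret the stated identity in that sense (the standard parametrix relation for elliptic symbols).

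For the principal term, the ellipticity bound $a_0(x,\xi)\geq c(1+|\xi|^2)^{m/2}$ forces $a_0$ to be pointwise bounded away from zero, so $b_0:=1/a_0$ is smooth. Repeated application of the quotient rule expresses $\partial_x^\alpha\partial_\xi^\beta b_0$ as a finite sum of terms of the form (product of derivatives of $a_0$) divided by a power of $a_0$. The Kohn--Nirenberg estimates on $a_0$ combined with the pointwise lower bound then yield $|\partial_x^\alpha\partial_\xi^\beta b_0|\leq C_{\alpha\beta}(1+|\xi|^2)^{(-m-|\beta|)/2}$, i.e.\ $b_0\in S^{-m}$.

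For $j\geq 1$, isolating the contribution $(r,s,\gamma)=(0,j,0)$ in the composition formula \eqref{compform} gives
\begin{equation}
c_j=a_0\,b_j+R_j,
\end{equation}
where $R_j$ is a finite sum of terms of the form $(\gamma!)^{-1}\partial_\xi^\gamma a_r\cdot \partial_x^\gamma b_s$ with $r+s+|\gamma|=j$ and $s<j$, so $R_j$ depends only on data already constructed. Setting $b_j:=-R_j/a_0$ forces $c_j=0$. Each summand of $R_j$ is of the form $\partial_\xi^\gamma a_r\cdot\partial_x^\gamma b_s\in S^{m-|\gamma|}\cdot S^{-m}=S^{-|\gamma|}$ by the inductive hypothesis, and multiplication by $1/a_0\in S^{-m}$ drops the order by $m$, so $b_j\in S^{-m-|\gamma|}\subset S^{-m}$. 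Proposition \ref{borelprop} then produces an actual symbol $b\in S^{-m}$ asymptotic to the formal series, and the vanishing of every coefficient in $a\circ b-1$ yields $a\circ b-1\in h^N S^{-N}$ for every $N$, hence $a(x,hD)b(x,hD)-I=O_{H^s\to H^{s'}}(h^\infty)$ for all $s,s'\in\R$.

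The main obstacle is the single analytic input that the ellipticity of $a$ together with the $S^m$ estimates on its derivatives imply $1/a_0\in S^{-m}$ and that multiplication by $1/a_0$ is a bounded operation $S^k\to S^{k-m}$; once that symbol-class lemma is in place, the rest of the construction is purely algebraic bookkeeping of the finitely many terms contributing to each $c_j$, and all questions of convergence of the formal series $\sum b_j h^j$ are sidestepped by invoking Borel resummation.
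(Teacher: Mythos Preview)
Your argument is correct and is exactly the standard parametrix construction via the composition formula \eqref{ogcompform}/\eqref{compform}, which is all the paper itself invokes (it does not give a proof but simply says the result ``is an application of \eqref{ogcompform}'' and cites Martinez). Your observation that the displayed conclusion should read $a(x,hD)\circ b(x,hD)=I+O(h^\infty)$ is also right; as written the statement is a typo.
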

The proof of Proposition \ref{ellipticinverse} is an application of \eqref{ogcompform} and can be found in  Proposition 2.6.10 of \cite{martinez}.

Importantly, for an arbitrary diffeomorphism from $\gamma:U\rightarrow V$ with $U,V\subset \R$ open, the symbols classes $S^m$ and invariant under the pullback of the lift of $\gamma$ to a symplectomorphism $\tilde{\gamma}:U\times \R^n\rightarrow V\times \R^n$. This invariance allows for the construction of semiclassical pseudodifferential operators on compact manifolds, as is done in (\cite{zworski} Chapter 14).

\begin{defn}
We write $S^m(T^*M)$ to denote the class of $m$-th order Kohn-Nirenberg symbols on a compact manifold $M$ and we write $S^{m,k}(T^*M):=h^{-k}S^{m,k}(T^*M):=h^{-k}S^m(T^*M)$ to denote the class of Kohn-Nirenberg symbols of differential order $m$ and semiclassical order $k$.
\end{defn}

\begin{defn}
We write $\Psi^m(M)$ to denote the class of $m$-th order semiclassical pseudodifferential operators on $M$ in the sense of (\cite{zworski} Chapter 14).
We write $\Psi^{m,k}(M):=h^{-k}\Psi^m(M)$ to denote the class of semiclassical pseudodifferential operators of differential order $m$ and semiclassical order $k$.
\end{defn}
One significant difference between the calculus on compact manifolds and on Euclidean space however, is that the symbol of a semiclassical pseudodifferential operator $A_h\in \Psi^{m,k}(M)$ is only invariantly defined modulo $S^{m-1,k-1}$.

One can also define semiclassical pseudodifferential operators on the space of half-densities on a compact manifold $M$.

\begin{defn}
A half-density $\rho$ on an $n$ dimensional vector space $V$ is a map $\rho:V^n\rightarrow \R$ such that
\begin{equation}
\rho(Av_1,Av_2,\ldots,Av_n)=|\det(A)|^{1/2}\rho(v_1,v_2,\ldots,v_n).
\end{equation}
for any $A$ is a linear transformation on $V$.
We denote the space of half-densities on $V$ by $\Omega^{1/2}(TM)$.
\end{defn}

\begin{defn}
The space $\mathcal{C}^\infty(M,\Omega^{1/2})$ of smooth half-densities on a compact Riemannian manifold is given by the collection of maps $u:M\rightarrow \cup_{x\in M}\Omega^{1/2}(TM)$ such that $u(x)\in \Omega^{1/2}(T_xM)$ for each $x\in M$, and $u(X_1,X_2,\ldots,X_n)\in \mathcal{C}^\infty(M)$ for any $n$ smooth vector fields $X_1,X_2,\cdots,X_n$.
\end{defn}

Since half-densities are given in local coordinates on a Riemannian manifold by
\begin{equation}
\mathcal{C}^\infty(M;\Omega^{1/2}):= \{f|dx|^{1/2}:f\in \mathcal{C}^\infty(M)\}
\end{equation}
where $dx$ is the Riemannian volume form, we can identify half-densities with functions in this setting, however note that their pullbacks as half-densities will involve a Jacobian factor.

Thus we can locally define semiclassical pseudodifferential operators on half-densities by setting
\begin{equation}
P(x,hD)(u|dx|^{1/2})=(P(x,hD)u)|dx|^{1/2},
\end{equation}
and they can be defined globally in a similar fashion to semiclassical pseudodifferential operators acting on functions. (See Section 14.2.5 of \cite{zworski}).

An advantage of working with half-densities is that principal symbols of semiclassical pseudodiffential operators on half-densities are invariantly in $S^{m,k}/S^{m-2,k-2}$, and subprincipal symbols of operators are thus invariantly defined. (See Section 1.3 of \cite{sternberg} for a further discussion of this invariance).

In Section 5.2, we work with semiclassical \emph{Fourier integral operators}, which are generalisations of semiclassical pseudodifferential operators obtained locally by replacing the phase function $i(x-y)\cdot \xi/h$ in the oscillatory integral expression \eqref{standard} with more general phase functions $i\psi(x,y,\xi)/h$.

The kernels of such operators are then special cases of Fourier integrals
\begin{equation}
\label{lagdist}
u(x;h)=(2\pi h)^{-m}\int_{\R^m} e^{i\phi(x,\xi)/h}a(x,\xi)\, d\xi
\end{equation}
with $m=2n$. For such a phase function, we can associate a Lagrangian submanifold of $\R^{2m}$ given by
\begin{equation}
\Lambda_\phi=\{(x,\partial_x\phi(x,\xi)):\partial_\xi(x,\xi)=0\}.
\end{equation}
Indeed, stationary phase asymptotics show that $WF_h(u)\subset \Lambda_\phi$ as in \cite{hormander}.

Defining a canonical relation $\chi:T^*M_1\rightarrow T^*M_2$ to be a relation with flipped graph
\begin{equation}
\Gamma_\chi:=\{(x,\xi,y,-\eta):(x,\xi,y,\eta)\in \chi\}
\end{equation}
a Lagrangian submanifold of $T^*(M_1\times M_2)$, we can then define a Fourier integral operator associated to a given relation $\mathcal{C}^\infty(M_1)\rightarrow\mathcal{C}^\infty(M_2)$ to be a finite sum of Fourier integrals associated to $\Gamma_\chi$.

The global theory of Fourier integrals is complicated by the fact that different phase functions $\phi$ can parametrise the same Lagrangian manifold $\Lambda$ locally, yet for different $\phi$ a different symbol $a$ will be required in \eqref{lagdist} in order to represent the same Fourier integral $u(x;h)$. In order to invariantly define the notion of a principal symbol for a Fourier integral operator, it must be defined as an object on a certain line bundle over the Lagrandigan submanifold $\Lambda$, known as the Maslov bundle.

A thorough account of Fourier integral operators can be found in the seminal paper \cite{hormander} in the classical setting, and in \cite{sternberg} in the semiclassical setting. We shall summarise the relevant details in our exposition of Popov's construction of the quantum Birkhoff normal form for KAM Hamiltonians \cite{popov2}\cite{popovquasis} in Section \ref{fioconj}.

\section{Gevrey class symbols}
\label{gevsymbsec}
Our application of the semiclassical pseudodifferential calculus in Chapter 5 involves working with Gevrey class  symbols. We outline the relevant differences from the theory in Section \ref{pseudosec} here.

We suppose $D$ is a bounded domain in $\mathbb{R}^n$, and take $X=\mathbb{T}^n$ or a bounded domain in $\mathbb{R}^m$.
We fix the parameters $\sigma,\mu>1$ and $\varrho\geq \sigma+\mu-1$, and denote the triple $(\sigma,\mu,\varrho)$ by $\ell$.

\begin{defn}
\label{formalsymboldefn}
A formal Gevrey symbol on $X\times D$ is a formal sum
\begin{equation}
\label{formalsymbol}
\sum_{j=0}^\infty p_j(\theta,I)h^j
\end{equation}
where the $p_j\in\mathcal{C}_0^\infty(X\times D)$ are all supported in a fixed compact set and there exists a $C>0$ such that
\begin{equation}
\sup_{X\times D} |\partial_\theta^\beta \partial_I^\alpha p_j(\theta,I)|\leq C^{j+|\alpha|+|\beta|+1}\beta!^\sigma\alpha!^\mu j!^\varrho.
\end{equation}
\end{defn}

\begin{defn}
A \textsf{realisation} of the formal symbol \eqref{formalsymbol} is a function $p(\theta,I;h)\in\mathcal{C}_0^\infty(X\times D)$ for $0<h\leq h_0$ with
\begin{equation}
\sup_{X\times D \times (0,h_0]} \left|\partial_\theta^\beta \partial_I^\alpha \left(p(\theta,I;h)-\sum_{j=0}^N  p_j(\theta,I)h^j\right)\right|\leq h^{N+1}C_1^{N+|\alpha|+|\beta|+2}\beta!^\sigma\alpha!^\mu (N+1)!^\varrho.
\end{equation}
\end{defn}

\begin{lem}
Given a formal symbol \eqref{formalsymbol}, one choice of realisation is
\begin{equation}
p(\theta,I;h):= \sum_{j\leq \epsilon h^{-1/\varrho}} p_j(\theta,I)h^j
\end{equation}
where $\epsilon$ depends only on $n$ and $C_1$.
\end{lem}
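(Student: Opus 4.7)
The plan is a direct estimate on the tail or remainder of the truncated series, split into two cases according to whether $N$ is smaller or larger than the truncation index $K:=\lfloor\epsilon h^{-1/\varrho}\rfloor$. Writing
\begin{equation*}
p(\theta,I;h)-\sum_{j=0}^{N}p_j(\theta,I)h^j \;=\; \begin{cases} \;\;\;\sum_{j=N+1}^{K}p_j(\theta,I)h^j, & N<K,\\[2pt] -\sum_{j=K+1}^{N}p_j(\theta,I)h^j, & N\geq K, \end{cases}
\end{equation*}
(with the convention that an empty sum is zero, which covers the case $N=K$ automatically) we apply $\partial_\theta^\beta\partial_I^\alpha$ termwise and invoke the formal Gevrey estimate to get each term controlled by $C^{j+|\alpha|+|\beta|+1}\beta!^\sigma\alpha!^\mu j!^\varrho h^j$. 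Both cases are then reduced to summing geometric series in $j$, and the factor $\beta!^\sigma\alpha!^\mu$ pulls out cleanly, so the derivatives play a purely cosmetic role.

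The heart of the argument is the ratio test for consecutive terms, which equals $C(j+1)^\varrho h$. The cutoff is chosen precisely so that this crosses $1$ near $j=K$: for $j+1\leq K$ we have $(j+1)^\varrho h\leq \epsilon^\varrho$, while for $j\geq K+1$ we have $j^\varrho h\geq \epsilon^\varrho(1+o(1))$. Thus if $\epsilon$ is chosen small enough that $C\epsilon^\varrho\leq 1/2$, the terms in the $N<K$ case decay geometrically. In this case the total sum is bounded by twice the leading ($j=N+1$) term, giving
\begin{equation*}
\Bigl|\partial_\theta^\beta\partial_I^\alpha\bigl(p-\textstyle\sum_{j=0}^N p_jh^j\bigr)\Bigr| \;\leq\; 2C^{N+|\alpha|+|\beta|+2}\beta!^\sigma\alpha!^\mu (N+1)!^\varrho h^{N+1},
\end{equation*}
which is of the required form once $C_1\geq 2C$.

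For $N\geq K$ the ratios exceed one, so the sum is dominated by its last term $j=N$, up to a factor of $N-K\leq N+1$. The key observation is that here $N\geq K+1$ forces $h(N+1)^\varrho\geq \epsilon^\varrho$, so
\begin{equation*}
C^{N+|\alpha|+|\beta|+1}\beta!^\sigma\alpha!^\mu N!^\varrho h^N \;\leq\; \epsilon^{-\varrho}\,C^{N+|\alpha|+|\beta|+1}\beta!^\sigma\alpha!^\mu (N+1)!^\varrho h^{N+1},
\end{equation*}
and multiplying by $N+1$ to account for the number of terms still gives the desired bound provided $C_1$ is chosen slightly larger than $C$ (say $C_1=3C$), so that the extra polynomial factor $N+1$ and the constant $\epsilon^{-\varrho}$ are absorbed by $(C_1/C)^{N}$. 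The only subtle point is verifying that a single pair $(\epsilon,C_1)$ works uniformly in $N$, $h$, $\alpha$, $\beta$; the estimates above show that $\epsilon$ need only satisfy $C\epsilon^\varrho\leq 1/2$ while $C_1$ depends only on $C$ and $\epsilon$, so this uniformity is indeed automatic. The main (minor) obstacle is just book-keeping: organising the bookkeeping of constants $C$ vs. $C_1$ and the number of terms into the clean exponential form $C_1^{N+|\alpha|+|\beta|+2}$.
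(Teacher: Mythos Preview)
The paper does not actually give a proof of this lemma, so there is nothing to compare against directly. Your overall strategy --- splitting into $N<K$ and $N\geq K$ and controlling the tail as a geometric sum --- is the standard one and your Case~1 argument is correct.

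There is, however, a genuine slip in Case~2. You choose $\epsilon$ so that $C\epsilon^\varrho\leq 1/2$, and then claim that for $N\geq K$ ``the ratios exceed one'' so the sum is dominated by its last term $a_N$. But for $j$ just above $K$, the ratio $a_{j+1}/a_j=C(j+1)^\varrho h$ is approximately $C\epsilon^\varrho\leq 1/2$, not $\geq 1$; the ratio only crosses $1$ near $j\approx C^{-1/\varrho}h^{-1/\varrho}$, which is strictly larger than $K\approx \epsilon h^{-1/\varrho}$ precisely because $\epsilon<C^{-1/\varrho}$. So on the range $[K+1,N]$ the terms $a_j$ may first decrease and then increase, and the bound $\sum a_j\leq (N+1)a_N$ is not justified.

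The fix is to bound each term directly by the target rather than by $a_N$. For $K+1\leq j\leq N$ every factor $i\in\{j+1,\dots,N+1\}$ satisfies $i^\varrho h>\epsilon^\varrho$, so
\[
j!^\varrho h^j \;=\; \frac{(N+1)!^\varrho h^{N+1}}{\prod_{i=j+1}^{N+1}(i^\varrho h)} \;<\; (N+1)!^\varrho h^{N+1}\,\epsilon^{-\varrho(N+1-j)}.
\]
Hence $a_j\leq C^j\epsilon^{-\varrho(N+1-j)}(N+1)!^\varrho h^{N+1}$, and summing the geometric series in $m=N+1-j$ (ratio $(C\epsilon^\varrho)^{-1}\geq 2$) gives the required bound with $C_1$ a fixed multiple of $C\epsilon^{-\varrho}$. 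This replaces your ``last term dominates'' step and makes the argument go through uniformly.
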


\begin{defn}
We define the residual class of symbols $S_\ell^{-\infty}$ as the collection of realisations of the zero formal symbol.
\end{defn}

\begin{defn}
\label{selldef}
We write $f\sim g$ if $f-g\in S_\ell^{-\infty}$. It then follows that any two realisations of the same formal symbol are $\sim$-equivalent.
We denote the set of equivalence classes by $S_\ell(X\times D)$.
\end{defn}

An important feature of the Gevrey symbol calculus is that the symbol class $S_\ell(X\times D)$ is closed under composition. 

We can now discuss the pseudodifferential operators corresponding to these symbols.
\begin{defn}
\label{gevpseudo}
To each symbol $p\in S_\ell(X\times D)$, we associate a semiclassical pseudodifferential operator defined by
\begin{equation}
(2\pi h)^{-n}\int_{X\times \mathbb{R}^n}e^{i(x-y)\cdot \xi/h}p(x,\xi;h)u(y)\, d\xi\, dy.
\end{equation}
for $u\in \mathcal{C}_0^\infty(X)$.
\end{defn}

The above construction is well defined modulo $\exp(-ch^{-1/\varrho})$, as for any $p\in S_\ell^{-\infty}(X\times D)$ we have
\begin{equation}
\|P_hu\|=O_{L^2}(\exp(-ch^{-1/\varrho}))
\end{equation}
for some constant $c>0$.
\begin{remark}
The exponential decay of residual symbols is a key strengthening that comes from working in a Gevrey symbol class.
\end{remark}

The operations of symbol composition and conjugation then correspond to composing operators and taking adjoints respectively.
Moreover, if $p\in S_{(\sigma,\sigma,2\sigma-1)}$, then $G^\sigma$-smooth changes of variable preserve the symbol class of $p$.
This coordinate invariance allows us to extend the Gevrey pseudodifferential calculus to compact Gevrey manifolds.\\

At this point we introduce the notion of a microsupport in the Gevrey sense.
\begin{defn}
\label{gevmicro}
Suppose $u(x;h)$ is a collection of smooth functions on the $G^\sigma$-manifold $M$ for $h\in (0,h_0]$. Then the $G^\varrho$ microsupport $MS^\varrho(u)\subset T^*M$ is defined as follows.\\ 

$(x_0,\xi_0)\in (MS^\varrho)^c$ if there exists a product of compact sets $U\times V\subset X\times \mathbb{R}^n$ with $(x_0,\xi_0)\in U\times V$ inside a single coordinate chart and there exists a $c>0$ such that for any $v\in G^\varrho\cap\mathcal{C}_0^\infty(M)$ we have
\begin{equation}
\int_{\mathbb{R}^n}e^{ix\cdot \xi /h}v(x)u(x;h)\, dx=O(e^{-ch^{-1/\varrho}})
\end{equation}
uniformly in $V$.
\end{defn}

It follows from stationary phase that if a symbol $p$ is $S_\ell^{-\infty}$ in a neighbourhood of a point $(x_0,\xi_0)$, then the point $(x_0,\xi_0,x_0,-\xi_0)$ lies outside the $G^\varrho$ microsupport of the distribution kernel of $P_h$.\\

\section{Weyl law}

An application of the semiclassical pseudodifferential calculus that is particularly important to us is the semiclassical Weyl law, which provides asymptotics for the counting functions of eigenvalues for suitable semiclassical pseudodifferential operators $P(h)$ in fixed energy bands $[a,b]$ or shrinking energy bands $[a,a+\epsilon(h)]$ with $\epsilon(h)\rightarrow 0$ as $h\rightarrow 0$. 

We consider semiclassical pseudodifferential operators of the form
\begin{equation}
P(h):=h^2\Delta_g+Q(x,hD)\in \Psi^2(M)
\end{equation}
on a compact Riemannian manifold $M$, where $Q(x,\xi)\in S^0(M)$ is real valued.

For each fixed $h>0$, the operator $P(h)$ is a self-adjoint operator
\begin{equation}
P(h):H^2(M)\subset L^2(M)\rightarrow L^2(M)
\end{equation}
with compact inverse, where $H^2(M)$ is the Sobolev space of order $2$.

Basic spectral theory then tells us that the spectrum of $P(h)$ is real and discrete, consisting of a countable orthonormal basis of eigenpairs $(u_j(h),E_j(h))$, with $E_j\rightarrow \infty$ as $j\rightarrow \infty$.

Weyl's law is then the statement that
\begin{equation}
\label{zworskiweyl}
(2\pi h)^{-n}\#\{j\in \N:E_j(h)\in [a,b]\}\rightarrow \mu(\{(x,\xi)\in T^*M:P(x,\xi)\in [a,b]\}).
\end{equation}
where $\mu$ denotes the symplectic measure $d\xi \, dx$ on $T^*M$.

A standard proof relies on a trace formula for a Schwartz class functional calculus for semiclassical  pseudodifferential operators.
If $f\in\mathcal{S}(\R)$, then we can define
\begin{equation}
f(P(h))(u):=\sum_{j=1}^\infty f(E_j)\langle u,u_j\rangle u_j.
\end{equation}
The rapid decay of $f$ in fact implies that $f(P(h))$ is a semiclassical pseudodifferential operator in the class
\begin{equation}
\Psi^{-\infty}(M)=\bigcap_{m\in \Z} \Psi^m(M).
\end{equation}

In fact it can be shown that $f(P(h))$ is a trace-class operator on $L^2(M)$, with principal symbol
\begin{equation}
\sigma(f(P(h)))=f(P(x,\xi))
\end{equation}
and trace
\begin{equation}
\label{traceform}
\textrm{tr}(f(P(h)))=(2\pi h)^{-n}\int_{T^*M} f(P(x,\xi))\, d\xi\,dx .
\end{equation}

The equation \eqref{zworskiweyl} then follows from \eqref{traceform} and regularisation of the indicator functions. Full details can be found in Chapter 14 of \cite{zworski}.
\begin{remark}
In the special case of the Laplace--Beltrami operator $p(x,hD)=h^2\Delta_g$, rescaling yields the classical Weyl law which gives counting asymptotics for the Laplacian eigenvalues.
\end{remark}

The Weyl law can also be localised in phase space by a semiclassical pseudodifferential operator. That is, for any $B\in \Psi^0(M)$, we have
\begin{equation}
(2\pi h)^n\sum_{E_j(h)\in [a,b]}\langle B(x,hD)u_j(h),u_j(h)\rangle\rightarrow \int_{P^{-1}([a,b])} B(x,\xi) \, d\xi\,dx.
\end{equation}
as $h\rightarrow 0$.
The proof of this generalisation again makes use of \eqref{traceform}, and can be found in Section 15.3 of \cite{zworski}.

A version of the semiclassical Weyl law was proven by Petkov and Robert \cite{robert} for boundaryless manifolds  that is localised to $O(h)$ sized energy bands. That is, for regular values $E$ of the Hamiltonian $P$, we have
\begin{equation}
\#\{E_j(h)\in [E-ch,E-ch]\} \sim\int_{p^{-1}([E-ch,E+ch])} P(x,\xi)\, d\xi\,dx.
\end{equation}
\begin{remark}
This result requires the dynamical assumption that the set of trapped trajectories is of measure zero. Without this assumption, we only obtain a uniform upper bound for $\#\{E_j(h)\in [E-ch,E-ch]\}$.
\end{remark}


\chapter{Quantum Ergodicity in Mixed Systems}

\section{Introduction}
\label{chp3introsec}



In this chapter, we turn our attention to mixed billiards. We begin by recalling the relevant definitions.

If $(M,g)$ is a compact boundaryless Riemannian manifold, we define \emph{dynamical billiards} on $M$ to be the Hamiltonian flow $\phi_t$ on the cotangent  bundle $T^*M$ of the manifold given by Hamilton's equations
\begin{equation}
\label{hameq}
\dot{x}_j=\frac{\partial H}{\partial \xi_j},\quad \dot{\xi}_j=\frac{\partial H}{\partial x_j}
\end{equation}
for the Hamiltonian $H(x,\xi):=|(x,\xi)|_{g^{-1}}^2$ where $g^{-1}$ is the dual metric tensor.

Since the Hamiltonian is an invariant of motion for the flow $\phi_t$, it is natural to restrict the domain of this flow to the cosphere bundle \begin{equation} {S^*M:=\{z=(x,\xi)\in T^*M:|z|_{g^{-1}}=1\}}.\end{equation}

More generally, one can define billiards on compact Riemannian manifolds with piecewise smooth boundary in the sense of Chapter 6 of \cite{cfs}, see also \cite{zelditch-zworski}.

To be precise, we assume that we can smoothly embed $M$ in a boundaryless manifold $\tilde{M}$ of the same dimension and that there exist finitely many smooth functions $f_j\in \mathcal{C}^\infty(\tilde{M})$ such that the following conditions are satisfied.
\begin{enumerate}
\item $df_i|_{f_i^{-1}(0)}\neq 0$,
\item $df_i, df_j$ are linearly independent on $f_i^{-1}(0)\cap f_j^{-1}(0)$,
\item $M=\{x\in \tilde{M}:f_j(x)\geq 0 \textrm{ for all }j\}.$
\end{enumerate}

We can then write $$\partial M=\cup_j \partial M_j:=\cup_j (f_j^{-1}(0)\cap M)$$ and denote by $\mathcal{S}\subset \partial M$ the set of points that lie in $\partial M_j$ for multiple $j$.\\

We define the broken Hamiltonian flow $\phi_t$ on $S^*M$ locally by extending the boundaryless Hamiltonian flow by reflection at non-tangential and non-singular boundary collisions.

That is, if $\phi_{t_0}(z)=(x,\xi_+)$ with $x\in\partial M\setminus \mathcal{S}$ and $\langle\xi_+, N_x\rangle >0$ where $N_x$ is the outgoing unit normal covector, we extend $\phi_t$ to sufficiently small $t>t_0$ by defining $\phi_t(z)=\phi_{t-t_0}(x,\xi_-)$, where $\xi_-\in S^*_x M$ is the unique covector such that $\xi_+ + \xi_- \in T^*\partial M$ and $\pi(\xi_+)=\pi(\xi_-)$ where $\pi: T^*_{\partial M}M\rightarrow T^*\partial M$ is the canonical projection. We terminate all trajectories that meet $\partial M$ in any other manner.\\

There are four subsets $\{\mathcal{B}_j\}_{j=1}^4$ of phase space for this class of manifolds which present an obstruction to obtaining a globally defined broken Hamiltonian flow or to the application of tools from microlocal analysis. We enumerate these sets below.
\begin{enumerate}
\item $\mathcal{B}_1=\{z\in S^*M: \phi_t(z)\in\mathcal{S}\}$
\item $\mathcal{B}_2=\{z\in S^*M: \phi_t(z) \in \partial M \textrm{ for infinitely many }t\textrm{ in a bounded interval}\}$
\item $\mathcal{B}_3=\{z\in S^*M: \phi_t(z)\notin \partial M \textrm{ for any }t>0 \textrm{ or }\phi_t(z)\notin \partial M \textrm{ for any }t<0\}$
\item $\mathcal{B}_4=\{z\in S^*M: \phi_t(z)\textrm{ meets }\partial M \textrm{ tangentially for some }t\in \mathbb{R}.\}$
\end{enumerate}
Removing these sets from our flow domain, we then obtain a globally defined billiard flow on $\mathcal{D}=S^*M\setminus (\cup_{j=1}^4\mathcal{B}_j)$. For manifolds without boundary, we simply take $\mathcal{D}=S^*M$.\\

The canonical symplectic form $d\xi\wedge dx$ on $T^*M$ determines a family of measures $\mu_c$ on each of the energy hypersurfaces
\begin{equation}
\Sigma_c=\{z=(x,\xi)\in T^*M:|z|_{g^{-1}}=c\}
\end{equation}
defined implicitly by
\begin{equation}
\int_a^b\int_{\Sigma_c} f\, d\mu_c \, dc = \int_{|(x,\xi)|_{g^{-1}}\in [a,b]}f\, d\xi\wedge dx
\end{equation}
for $f\in \mathcal{C}_c^\infty(T^* M)$.

Upon normalisation of $\mu_1$ we then obtain the \emph{Liouville measure} $\mu_L$ on $S^*M$, which allows us to define ergodicity for the billiard flow $\phi_t$ as in Definition \ref{ergdefn}.
\begin{remark}
It is shown in Section 6.2 of \cite{cfs} that the sets $\mathcal{B}_1,\mathcal{B}_2$ are of Liouville measure zero, and it is shown in \cite{zelditch-zworski} that the set $\mathcal{B}_4$ is of Liouville measure zero for the class of manifolds considered. That the remaining set $\mathcal{B}_3$ is Liouville null is usually taken as an assumption. In particular, it is clear that this assumption is satisfied by bounded domains in $\mathbb{R}^n$.
\end{remark}




The quantum mechanical analogue of the system \eqref{hameq} is the evolution of a wave function $\psi\in L^2(M)$ according to the rescaled Schrodinger's equation
\begin{equation}
\label{schrod}
-\Delta_g \psi=i\frac{\partial \psi}{\partial t}
\end{equation}
with boundary conditions to ensure self-adjointness of the Laplacian. We shall work with the most studied and technically easiest choice of Dirichlet boundary conditions.

Since the boundary of $M$ is Lipschitz, it follows that the Laplacian $-\Delta_g$ is self adjoint on $L^2$ when given the standard domain $H^2(M)\cap H_0^1(M)$. Standard spectral theory then shows that $-\Delta_g$ has purely discrete spectrum (counting multiplicity) $\{0<E_1 \leq E_2 \leq \ldots \}\subset \mathbb{R}^+$.

 
The phase space localisation of the high energy eigenfunctions of $-\Delta_g$ can then be described using the calculus of semiclassical pseudodifferential operators, as defined in Chapters 4 and 14 of \cite{zworski}.

For the reader's sake, we recap the definition of semiclassical measures from Section 1.3. To each subsequence of $(u_j)$, we can associate at least one non-negative Radon measure $\mu$ on $S^*M$ which provides a notion of phase space concentration in the semiclassical limit. 

We say that the eigenfunction subsequence $(u_{j_k})$ has unique \emph{semiclassical measure} $\mu$ if 
\begin{equation}
\lim_{k\rightarrow \infty}\langle a(x,E_{j_k}^{-1/2}D)u_{j_k},u_{j_k} \rangle=\int_{S^*M} a(x,\xi)\, d\mu
\end{equation}
for each semiclassical pseudodifferential operator with principal symbol $a$ compactly supported supported away from the boundary of $S^*M$. In Chapter 5 of \cite{zworski}, the existence and basic properties of semiclassical measures are established using the calculus of semiclassical pseudodifferential operators (see also \cite{gerard-leichtnam}).\\

A billiard $M$ is then said to be \emph{quantum ergodic} if there is a full density subsequence of eigenfunctions $(u_{n_k})$ such that the the Liouville measure on $S^*M$ is the unique semiclassical measure associated to the sequence $u_{n_k}$. This statement can be interpreted as saying that the sequence of eigenfunctions equidistributes in phase space with the possible exception of a sparse subsequence.


In this chapter we consider the family of mushroom billiards $M_{t}=R_t\cup S\subset \R^2$ where $R_t=[-r_1,r_1]\times[-t,0]$ and $S$ is the closed upper semidisk of radius $r_2>r_1$ centred at the origin. We denote the area of $M_t$ by $A(t)$.

\vspace{15pt}
\begin{figure}
\begin{center}
\includegraphics[scale=0.5]{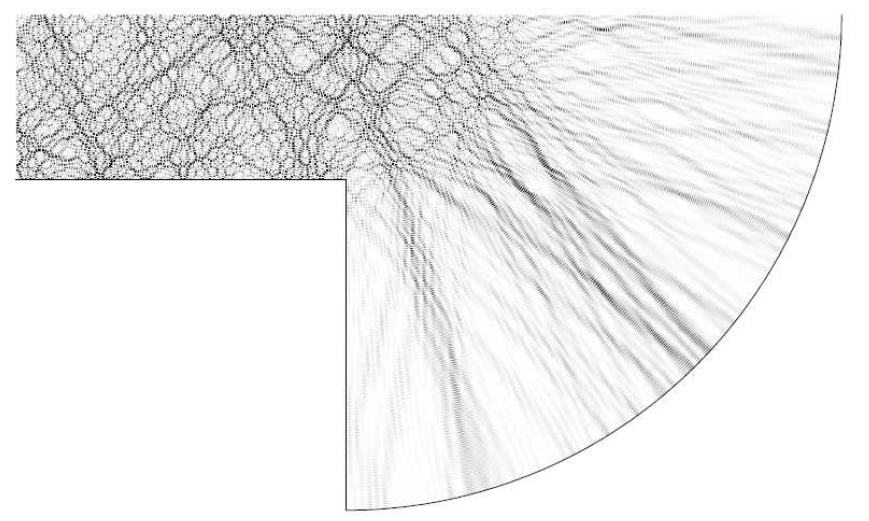}
\caption[]{The half-mushroom billiard, with a high energy eigenfunction that extends by odd symmetry to the mushroom billiard. This particular eigenfunction appears to live in the ergodic region of phase space. Image courtesy of Dr Barnett.}
\end{center}
\end{figure}
\vspace{15pt}

This billiard, proposed by Bunimovich \cite{bunimovich} is neither classically ergodic nor completely integrable for $t >0$ and is rather one of the simplest billiards that satisfies the following mixed dynamical assumptions. 

\begin{itemize}
\label{conditions}
\item $M$ is a smooth Riemannian manifold with piecewise smooth boundary
\item The flow domain $\mathcal{D}$ is the union of two invariant subsets, each of positive Liouville measure and one of which, $U$, has ergodic geodesic flow
\item The billiard flow is completely integrable on $\mathcal{D}\setminus U$.
\end{itemize}

In the mushroom billiard, $U_t$ consists of $\mu_L$-almost all trajectories that enter $R_t\cup \overline{B(0,r_1)}$ before their first boundary collision. The trajectories that do not enter $R_t\cup \overline{B(0,r_1)}$ before their first boundary collision lie entirely within the upper semi-annulus $S\setminus \overline{B(0,r_1)}$ and are just reflected trajectories of the disk billiard. The integrability of the geodesic flow on $\mathcal{D}\setminus U_t$ then follows from the integrability of the disk billiard.

In the case of such mixed systems, we do not yet have a satisfactory analogue to the quantum ergodicity theorem. It is a long-standing conjecture of Percival \cite{percival} that a full density  subset of a complete system of eigenfunctions of the Laplace--Beltrami operator can be divided into two disjoint subsets, one corresponding to the ergodic region of phase space and the other corresponding to the completely integrable region. Moreover, the natural density of these subsets is conjectured to be in proportion to the Liouville measures of the corresponding flow-invariant subsets of $\mathcal{D}$. 

\begin{con}[Percival's Conjecture]
\label{strongpercival}
For every compact Riemannian manifold $M$ such that $\mathcal{D}$ is the disjoint union of two invariant subsets $U,\mathcal{D}\setminus U$, with $U$ ergodic and $\mathcal{D}\setminus U$ completely integrable, we can find two subsets $A,B\subset \N$ such that

\begin{enumerate}
\item $A\cup B$ has density $1$ 
\item $(u_k)_{k\in A}$ equidistributes in the ergodic region $U$ 
\item Each semiclassical measure associated to the subset $B$ is supported in the completely integrable region $\mathcal{D} \setminus U$
\item The density of $A$ is equal to $\mu_L(U)$.
\end{enumerate}
\end{con}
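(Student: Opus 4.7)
The plan is to exploit the reflection symmetry of $M_t$ together with the explicit integrable structure of the disk billiard to separate the Laplace eigenfunctions into two families that match the flow decomposition. First I would decompose $L^2(M_t) = L^2_+ \oplus L^2_-$ along the vertical reflection $\sigma : (x,y) \mapsto (-x,y)$; the Laplacian preserves this splitting and both sectors can be treated in parallel. Within each sector, the integrable component $\mathcal{D}\setminus U_t$ is parametrised by the conserved angular momentum $\ell$ of the disk billiard restricted to $|\ell| > r_1$, i.e.\ by whispering-gallery trajectories whose classical caustic lies strictly inside the semi-annulus $S \setminus \overline{B(0,r_1)}$.

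Next I would construct, for each admissible $\ell$, a quasimode for $-\Delta_g$ by taking the corresponding semidisk eigenfunction and multiplying by a Gevrey cut-off that vanishes in a neighbourhood of $\overline{B(0,r_1)}$. Since the exact semidisk eigenfunction is exponentially concentrated above the caustic $|y| = \ell$, such a cut-off introduces an $L^2$ error of size $O(\exp(-cE^{1/(2\varrho)}))$, producing a quasimode with exponentially small defect that is supported away from the stalk. Counting such quasimodes via the asymptotics of Bessel zeros and the Weyl law on the semidisk yields a family whose density among Laplace eigenvalues in $[0,E]$ converges to $\mu_L(\mathcal{D}\setminus U_t)$. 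A standard minimax/spectral-projection argument then lifts each quasimode to a true eigenfunction and furnishes the subfamily $B \subset \mathbb{N}$ localising in the integrable region.

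For the complementary subfamily $A$ I would rerun the variance-sum proof of Theorem \ref{qethm} with observables of the form $\pi_U\,a(x,hD)\,\pi_U$, where $\pi_U$ is a microlocal projector onto $U_t$, and with the eigenfunctions orthogonally projected off the span of the integrable quasimodes produced above. Ergodicity of the flow on $(U_t,\mu_L|_{U_t})$ drives the resulting Egorov-type variance to zero, giving equidistribution on $U_t$ along a density-one subfamily of $A$. A final appeal to Weyl's law then pins down the natural densities of $A$ and $B$ as $\mu_L(U_t)$ and $1-\mu_L(U_t)$ respectively, in agreement with the conjecture.

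The main obstacle is step two: matching quasi-eigenvalues with true eigenvalues in a one-to-one, density-preserving manner. Quasimodes with exponentially small error can in principle correspond to large-dimensional clusters of true eigenfunctions in each quasi-eigenvalue window, which would inflate $|B|$ and spoil the density count. This is precisely where the hypothesis ``almost all $t\in(0,2]$'' must enter: a parameter-elimination argument should exhibit the set of stalk lengths for which resonant coincidences between the semidisk spectrum and pairs of mushroom eigenvalues occur as a Lebesgue-null subset, using the real-analytic dependence of the mushroom spectrum on $t$. A secondary difficulty is the phase-space boundary layer around the tangent circle $|y|=r_1$, where integrable quasimodes meet trajectories in $U_t$; the Gevrey cut-offs and exponentially small remainders supplied by the calculus of Section \ref{gevsymbsec} are essential to prevent semiclassical mass from leaking between the two regions.
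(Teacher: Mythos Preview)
The statement you are attempting to prove is a \emph{conjecture}, not a theorem: the paper explicitly does not prove Conjecture~\ref{strongpercival} in any setting. What the paper establishes is the strictly weaker Conjecture~\ref{percival} (upper densities in place of densities), and only for the mushroom billiard $M_t$ for almost all $t\in(0,2]$. Your proposal, which is written for the mushroom specifically, could at best aim at that weaker result.

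Within that narrower target your outline shares its skeleton with the paper --- semidisk quasimodes cut off above the caustic, Bessel-zero counting, a spectral matching step, and a parameter-elimination argument in $t$ --- but the two places where you are vague are exactly the places where the paper has to do real work, and where your claimed conclusions overshoot what those techniques deliver.

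First, the ``standard minimax/spectral-projection argument'' you invoke to lift quasimodes to eigenfunctions one-to-one is not standard; this is the content of Proposition~\ref{spectral}, and it requires as input a spectral non-concentration hypothesis (that the number of eigenvalues in the union of quasi-eigenvalue windows does not exceed the number of quasimodes by more than a small proportion). Even granting that hypothesis, the output is only that \emph{most} eigenfunctions in those windows are close to the quasimode span --- it does not give a bijection, and it yields only an \emph{upper} density for $B$, not a density. This is precisely why the paper proves the weak conjecture rather than the strong one; your proposal does not indicate any mechanism to upgrade this.

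Second, your treatment of the ergodic family $A$ via an Egorov-type variance argument with microlocal projectors $\pi_U$ is not what the paper does and is not obviously workable: there is no flow-invariant pseudodifferential cutoff onto $U_t$ (the boundary of $U_t$ in phase space is the tangent set $|\ell|=r_1$, which is exactly the delicate layer you flag). The paper instead relies on Galkowski's structural result (Theorem~\ref{galkowski}) that any semiclassical limit restricted to $U_t$ is a multiple of Liouville, and then uses the local Weyl law to force the multiple to be $\mu_L(U_t)^{-1}$ once $B$ has been shown to carry all the integrable mass. Finally, your parameter-elimination step is correct in spirit but the paper's mechanism is dynamical rather than analytic: it controls the \emph{speed} of eigenvalue variation via an interior Hadamard formula and Galkowski's theorem, not via real-analytic dependence on $t$.
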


Numerical evidence due to Barnett-Betcke \cite{barnett} has strongly supported this conjecture for the mushroom billiard, yet rigorous proof has remained elusive.

A weaker version of the conjecture can be formulated by slightly relaxing the density requirements of the subsets $A$ and $B$.

\begin{con}
[Weak Percival's Conjecture]
\label{percival}
For every compact Riemannian manifold $M$ such that $\mathcal{D}$ is the disjoint union of two invariant subsets $U,\mathcal{D}\setminus U$, with $U$ ergodic and $\mathcal{D}\setminus U$ completely integrable, we can find two subsets $A,B\subset \N$ such that

\begin{enumerate}
\item $A\cup B$ has upper density $1$ 
\item $(u_k)_{k\in A}$ equidistributes in the ergodic region $U$ 
\item Each semiclassical measure associated to the subset $B$ is supported in the completely integrable region $\mathcal{D} \setminus U$
\item The upper densities of $A$ and $B$ are equal to $\mu_L(U)$ and $1-\mu_L(U)$ respectively.
\end{enumerate}
\end{con}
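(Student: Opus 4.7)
The plan is to construct the set $B$ of ``integrable'' eigenfunction indices directly from a family of quasimodes supported in the semidisk cap, and to define $A$ as its complement, then verify the four conditions of Conjecture \ref{percival} in turn. Working with the half-mushroom and using odd/even extension across $y = 0$ lets us reduce to Dirichlet boundary conditions throughout; the ergodic and integrable invariant sets $U_t, \mathcal{D}\setminus U_t$ are preserved under this symmetry reduction.

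Step 1: quasimode construction on the cap. Since the semidisk $S$ admits separation of variables in polar coordinates, its Dirichlet Laplacian has eigenfunctions $\psi_{n,m}(r,\theta) = c_{n,m} J_m(k_{n,m} r) \sin(m\theta)$ with eigenvalues $k_{n,m}^2$, where $k_{n,m} r_2$ is the $n$-th positive zero of $J_m$. When $m / k_{n,m} > r_1$, the classical turning point of the radial equation lies outside $[0, r_1]$, so $\psi_{n,m}$ is exponentially small on the inner disk $\overline{B(0,r_1)}$. Multiplying such a $\psi_{n,m}$ by a cutoff supported in $S \setminus \overline{B(0, r_1-\delta)}$ and extending by zero into the stalk yields $\tilde\psi_{n,m} \in H^1_0(M_t)$ satisfying
\begin{equation*}
\bigl\|(-\Delta_{M_t} - k_{n,m}^2)\tilde\psi_{n,m}\bigr\|_{L^2(M_t)} = O\bigl(k_{n,m}^{-N}\bigr) \quad \text{for every } N.
\end{equation*}
A Weyl count for the separated semidisk problem, restricted to the whispering-gallery region $Q := \{(n,m) : m/k_{n,m} > r_1\}$, gives
\begin{equation*}
\#\{(n,m)\in Q : k_{n,m}^2 \leq E\} \sim \bigl(1 - \mu_L(U_t)\bigr) \frac{A(t)\, E}{4\pi} \qquad (E \to \infty),
\end{equation*}
matching the predicted proportion. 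The microsupport of each $\tilde\psi_{n,m}$ lies inside $\mathcal{D}\setminus U_t$.

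Step 2: matching quasimodes to genuine eigenvalues for almost every $t$. For each $(n,m)\in Q$, the spectral theorem places $\tilde\psi_{n,m}$ within polynomially small $L^2$-distance of the spectral projector of $-\Delta_{M_t}$ onto any window $[k_{n,m}^2 - \varepsilon, k_{n,m}^2 + \varepsilon]$ with $\varepsilon \to 0$ polynomially in $k_{n,m}$. The decisive point is that the quasi-eigenvalues $k_{n,m}^2$ are \emph{independent} of the stalk length $t$, while the eigenvalues $\lambda_k(t)$ of $-\Delta_{M_t}$ depend real-analytically and strictly monotonically on $t$ by a Hadamard domain variation. Treating $(t, k) \mapsto \lambda_k(t)$ as a measurable function on $(0,2] \times \mathbb{N}$ and using lower bounds on $|\partial_t \lambda_k|$, we can estimate the measure of the set of $t$ for which any given quasi-eigenvalue $k_{n,m}^2$ falls close to a large cluster of eigenvalues. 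A Borel--Cantelli summation then produces a full-measure set of $t \in (0,2]$ on which the quasimodes $\{\tilde\psi_{n,m}\}_{(n,m)\in Q}$ admit an almost-orthogonality-preserving \emph{injection} into the eigenbasis of $-\Delta_{M_t}$, each $\tilde\psi_{n,m}$ matched to a unique eigenfunction $u_{k(n,m)}$ with $\|u_{k(n,m)} - \tilde\psi_{n,m}\|_{L^2} = o(1)$. Let $B = B(t)$ be the image of this injection; condition (3) and the upper density $1 - \mu_L(U_t)$ for $B$ both follow from the counting of Step 1.

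Step 3: quantum ergodicity on the complement. Set $A = \mathbb{N}\setminus B$. For any zero-th order semiclassical pseudodifferential operator $a(x,hD)$ with principal symbol supported in $U_t$, combine Egorov's theorem with Bunimovich's classical ergodicity of $\phi^t$ on $U_t$ to obtain the microlocalised variance bound
\begin{equation*}
\frac{1}{N(E)}\sum_{\lambda_k(t)\leq E} \bigl|\langle a(x,hD) u_k, u_k\rangle - \bar a\bigr|^2 \longrightarrow 0, \quad \bar a = \mu_L(U_t)^{-1}\!\int_{U_t} a\, d\mu_L,
\end{equation*}
in the style of Zelditch--Zworski and G\'erard--Leichtnam for piecewise smooth billiards. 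The eigenfunctions indexed by $B$ contribute asymptotically nothing because their microsupports avoid $U_t$. A diagonal extraction over a countable family of test operators then yields a density-one subset of $A$ on which $(u_k)$ equidistributes in $U_t$, and $A$ itself has upper density $\mu_L(U_t)$, completing conditions (1), (2), (4).

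The principal obstacle is Step 2: ruling out the pathological scenario where many quasimodes at nearby $k_{n,m}^2$ project onto the same short cluster of genuine eigenvalues and thereby fail to contribute the full density $1 - \mu_L(U_t)$ to $B$. The ``almost every $t$'' hypothesis is precisely what is needed here; the uniform non-degeneracy $\partial_t \lambda_k \neq 0$ lets us convert the fixed-position of the quasi-eigenvalues into a transversality statement and control the exceptional $t$-set through a careful counting of near-collisions. Pushing this Fubini argument through with summable error terms, while retaining enough quantitative control to identify the matched eigenfunctions as genuinely distinct, is the technical heart of the chapter.
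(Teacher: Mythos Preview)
Your architecture matches the paper's: semidisk quasimodes, Hadamard variation in $t$, a Fubini argument for almost every $t$, then equidistribution on the complement via Galkowski-type input. But Steps~2 and~3 both contain genuine gaps.

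\textbf{Step 2.} You invoke \emph{lower} bounds on $|\partial_t\lambda_k|$ and a Borel--Cantelli summation to produce an injection. No such uniform lower bound exists: for eigenfunctions concentrated in the cap annulus (exactly the ones you want in $B$), the normal derivative on the expanding boundary is exponentially small, so $|\dot E_k|$ can be arbitrarily close to zero; strict monotonicity alone yields no summable measure estimate. The paper runs the transversality in the opposite direction. Galkowski's theorem (Theorem~\ref{galkowski}) implies that for a full-density subsequence $\mu|_{U_t}=a\,\mu_L|_{U_t}$ with $a\le\mu_L(U_t)^{-1}$; via the interior Hadamard formula this gives an \emph{upper} bound $|\dot E_k|\lesssim E_k\,\dot A(t)/\bigl(A(t)(1-d(t))\bigr)$ for most $k$. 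Combined with the Weyl asymptotic for the total fall $E_j(t_1)-E_j(t_2)$, this forces each such eigenvalue to spend at most proportion $d(t)$ of the time interval inside the quasi-eigenvalue windows, and an averaging argument shows the set of bad $t$ is null. No one-to-one injection is ever obtained; instead the abstract spectral lemma Proposition~\ref{spectral} shows that when the number of eigenvalues in the union of windows exceeds the number of quasimodes by at most a factor $1+\epsilon$, most of those eigenfunctions lie $O(\epsilon^{1/4})$-close to the \emph{span} of the quasimodes.

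\textbf{Step 3.} The variance estimate centred at the constant $\bar a=\mu_L(U_t)^{-1}\int_{U_t}a\,d\mu_L$ is false. Ergodicity on $U_t$ only gives $\langle a\rangle_T\to\bar a\,\mathbf{1}_{U_t}$ in $L^2(d\mu_L)$, not convergence to the constant $\bar a$; hence Egorov plus local Weyl yields $\langle a\,u_k,u_k\rangle\approx\bar a\,\mu_k(U_t)$ for most $k$, which is precisely Theorem~\ref{galkowski}. Since $\mu_k(U_t)$ ranges over $[0,1]$, the variance about $\bar a$ stays bounded away from zero. Equidistribution on $A=\mathbb{N}\setminus B$ follows only \emph{after} $B$ has been shown to have upper density $1-\mu_L(U_t)$: local Weyl then forces most $k\in A$ to carry full mass in $U_t$, and Galkowski pins the constant to $\mu_L(U_t)^{-1}$.
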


In this chapter, we prove Conjecture \ref{percival} is indeed true for the mushroom billiard, at least for almost all $t\in (0,2]$. Essential in our work is the following result due to Galkowski \cite{galkowski}.

\begin{thm}
\label{galkowski}
For any compact Riemannian manifold with boundary satisfying \eqref{conditions}, there exists a full density subsequence of $(u_j)$, such that every associated semiclassical measure $\mu$ satisfies
\begin{equation}
\mu|_U = a\mu_L|_U
\end{equation}
for some constant $a$.
\end{thm}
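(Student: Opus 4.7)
The plan is to adapt the classical Schnirelman--Zelditch--Verdi\`{e}re quantum ergodicity proof to symbols microlocalised in the invariant subset $U$, using the fact that the flow, although not ergodic on all of $\mathcal{D}$, is ergodic when restricted to $U$. Observe first that the conclusion $\mu|_U = a\mu_L|_U$ for some constant $a$ is equivalent to the assertion that $\int b\, d\mu = 0$ for every symbol $b \in \mathcal{C}_c^\infty(\mathring{U})$ with vanishing mean $\int_U b\, d\mu_L = 0$, where $\mathring{U}$ denotes the interior of $U$ in $S^*M$ (away from the obstruction sets $\mathcal{B}_1\cup\mathcal{B}_2\cup\mathcal{B}_3\cup\mathcal{B}_4$, which are all $\mu_L$-null). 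So the target reduces to a Zelditch-type variance estimate for operators whose principal symbols are such zero-mean $b$, followed by a Chebyshev plus diagonal extraction yielding a universal full-density subsequence.

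Fix a zero-mean $b\in\mathcal{C}_c^\infty(\mathring{U})$ and, for each eigenvalue $E_j$, consider $B_j = b(x, E_j^{-1/2}D)$. The standard flow-invariance trick yields $\langle B_j u_j, u_j\rangle = \langle B_{j,T} u_j, u_j\rangle$ with
\begin{equation}
B_{j,T} := \frac{1}{T}\int_0^T e^{it\sqrt{-\Delta_g}}B_j e^{-it\sqrt{-\Delta_g}}\, dt.
\end{equation}
Egorov's theorem in the billiard setting, as developed in \cite{zelditch-zworski} and \cite{gerard-leichtnam}, identifies $B_{j,T}$ modulo $O_T(h)$ with the semiclassical quantisation of the time-averaged symbol
\begin{equation}
\langle b\rangle_T(z) := \frac{1}{T}\int_0^T b(\phi_s(z))\, ds,
\end{equation}
which is still supported in $U$ by $\phi_t$-invariance of $U$. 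Because $b$ is compactly supported inside $\mathring{U}$ and the flow restricted to $U$ is ergodic, Birkhoff's theorem gives $\langle b\rangle_T \to 0$ pointwise $\mu_L$-almost everywhere on $U$.

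Now introduce the variance over a fixed energy window $I$,
\begin{equation}
V_b(h) := (2\pi h)^n \sum_{E_j(h)^{1/2}\in I}\bigl|\langle B_j u_j, u_j\rangle\bigr|^2.
\end{equation}
Cauchy--Schwarz combined with the phase-space localised Weyl law from Section 2.3 yields, for each fixed $T>0$,
\begin{equation}
\limsup_{h\to 0}V_b(h) \;\leq\; \int_{S^*M}\bigl|\langle b\rangle_T(z)\bigr|^2\, d\mu_L(z).
\end{equation}
The dominated convergence theorem, with $|\langle b\rangle_T|\leq \|b\|_{L^\infty}$ and finiteness of $\mu_L$, then drives the right-hand side to zero as $T\to\infty$, so $V_b(h)\to 0$. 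A Chebyshev-type argument produces a density-one subset $S_b\subset\mathbb{N}$ along which $\langle B_j u_j, u_j\rangle \to 0$, and a diagonal argument over a countable dense collection of zero-mean symbols in $\mathcal{C}_c^\infty(\mathring{U})$ assembles a single universal density-one subsequence. Any semiclassical measure $\mu$ of this subsequence then annihilates every zero-mean $b\in \mathcal{C}_c^\infty(\mathring{U})$, which is exactly the statement that $\mu|_{\mathring{U}}$ is a constant multiple of $\mu_L|_{\mathring{U}}$, and the constant may depend on the subsequence.

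The main technical obstacle is the rigorous implementation of Egorov's theorem and the local Weyl law in the broken-flow billiard setting: conjugation by $e^{it\sqrt{-\Delta_g}}$ does not cleanly intertwine with $\phi_t$ near boundary reflections, tangential collisions, singular trajectories, or orbits that never meet $\partial M$ in one time direction. The compact support of $b$ in $\mathring{U}$ is essential, keeping the symbol away from the union of $\mathcal{B}_j$'s, and the resulting error terms are absorbed using the billiard semiclassical calculus of \cite{zelditch-zworski} and \cite{gerard-leichtnam}; this is precisely the step that requires care but is where the assumed Liouville-nullness of $\mathcal{B}_1\cup\mathcal{B}_2\cup\mathcal{B}_3\cup\mathcal{B}_4$ pays off.
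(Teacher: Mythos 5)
The thesis does not actually prove Theorem \ref{galkowski}: it is imported as a black box from Galkowski's paper \cite{galkowski}, so there is no internal proof to compare against. Your sketch is, in substance, the argument that Galkowski's proof follows --- the Schnirelman--Zelditch--Colin de Verdi\`ere variance scheme localised to zero-mean symbols supported in the flow-invariant ergodic set $U$, with the boundary Egorov theorem of \cite{zelditch-zworski} and \cite{gerard-leichtnam} carrying the technical weight --- and it is correct in outline. If you wanted to turn it into a complete proof, the two points to make precise are (i) that testing only against $b\in\mathcal{C}_c^\infty(\mathring{U})$ recovers $\mu|_U=a\mu_L|_U$, which requires knowing $\mu(U\setminus\mathring{U})=0$ (in the mushroom this boundary consists of grazing trajectories and is handled by the same approximation as the bad sets), and (ii) that for each fixed $T$ one may replace $b$ by a symbol, close in $L^2(\mu_L)$, whose time-$[0,T]$ flow-out avoids $\mathcal{B}_1\cup\cdots\cup\mathcal{B}_4$ so that Egorov applies; the Liouville-nullness of these sets is used exactly there, not merely to define the flow.
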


Our strategy for this proof is motivated by that used by Hassell in constructing the first known example of a non-QUE ergodic billiard \cite{hassellque}. \\

We begin in Section 2 by using the Dirichlet eigenfunctions on the semicircle to construct a family $(v_n,\alpha_n^2)$ of $O(n^{-\infty})$ quasimodes that are almost orthogonal and are microlocally supported in the completely integrable region $S^*M_t\setminus U_t$.

Using the well-known asympotics of the Bessel function zeroes, we obtain a lower bound \eqref{quasiasymptoticeqn} for the counting function of this quasimode family.\\

In Section 3, the main result is Proposition \ref{spectral}, an abstract spectral theoretic result that allows us to approximate certain  eigenfunctions by linear combinations of quasimodes of similar energy given that the numbers of each are comparable. This is the essential ingredient for passing from localisation properties about our explicit family of quasimodes to localisation properties of a family of eigenfunctions with asymptotically equivalent counting function.\\

In Section 4 we commence our study of the variation of eigenvalues as the stalk length $t$ varies in $(0,2]$. In order to simplify the nomenclature, we often interpret $t$ as a time parameter. 

The Hadamard variational formula asserts that
\begin{equation}
\label{classhad}
\dot{E}(t)=-\int_{\partial M_t} \rho_t(s)(d_n u(t)(s))^2 \, ds
\end{equation}
where $\rho_t(s)$ is the unit normal variation of the domain at a boundary point $s$. 
For normally expanding domains such as ours, \eqref{classhad} directly implies that individual eigenvalues are non-increasing in $t$.

However, using an interior formulation of the Hadamard variational formula from Proposition \ref{hadamard}, we can also quantify the variation of the  eigenvalue $E_j(t)$ by
\begin{equation}
E_j^{-1}(t)\dot{E_j}(t)=\langle Qu_j(t),u_j(t)\rangle
\end{equation}
for an appropriate pseudodifferential operator $Q$ supported in the stalk $R_t\subset M_t$. 

Proposition \ref{heatkernelprop} then establishes that for a full density subset of the eigenvalues, the quantity $\langle Qu_j(t),u_j(t)\rangle$ can be approximated up to an error of $o(E_j)$ by cutting off $Q$ sufficiently close to the boundary $\partial M_t$. This result is shown by using analysis of the wave kernel to establish the key spectral projector estimates \eqref{projest} and \eqref{gradprojest}. 

We can then use the equidistribution result of Galkowski's Theorem \ref{galkowski} to asymptotically  control $\langle Qu_j(t),u_j(t)\rangle$ and hence provide us with an upper bound \eqref{flowspeedeq} on the speed of eigenvalue variation for almost all eigenvalues.\\

Section 5 completes the argument in two parts. 

In the first of these parts, we define a set $\mathcal{G}\subset (0,2]$ such that for $t\in\mathcal{G}$, we have a certain spectral non-concentration property on $M_t$. This property implies that that 
\begin{equation}\begin{gathered}
\text{
the number of eigenvalues lying in the union $\cup_{j=1}^n [\alpha_j^2-c,\alpha_j^2+c]$} \\ 
\text{can exceed $n$ by at most a small proportion, for infinitely many $n$.}
\end{gathered}\label{snc}\end{equation}

Proposition \ref{spectral} then implies that for $t\in\mathcal{G}$ and $n$ such that \eqref{snc} is satisfied, a large proportion of the corresponding eigenfunctions are asymptotically well-approximated by linear combinations of the previously constructed family of quasimodes $(v_n)$, which are microlocally supported in the completely integrable region $S^*M_t\setminus U_t$ of phase space.


In fact, the explicit computation \eqref{quasiasymptoticeqn} of the counting function of these quasimodes leads to a proof that the corresponding family of eigenfunctions must have the maximal upper density allowed by the contraint of Weyl's law. We show this in Theorem \ref{mainthm}.


Consequently, we show in Proposition \ref{ergodicprop} that a subset of the complementary family of eigenfunctions with full upper density must have all semiclassical mass in the ergodic region $U_t$. From Theorem \ref{galkowski}, this family must then equidistribute in $U_t$ as required.\\

The final part of the chapter establishes via contradiction that $(0,2]\setminus \mathcal{G}$ is Lebesgue-null. 
As in \cite{hassellque} we can choose the eigenvalue branches $E_j(t)$ to be in increasing order and piecewise smooth in $t$. The crucial ingredient here is then the asymptotic bound \eqref{flowspeedeq} on the speed of eigenvalue variation. 

If $\mathcal{G}$ is not of full measure, we can construct a small interval $\mathcal{I}=[t_1,t_2]$ in which the average number of eigenvalues $E_j(t)$ lingering near quasi-eigenvalues $\alpha_i^2$ exceeds $d=d(t_1)=1-\mu_L(U_{t_1})$ by using the negation of \eqref{snc}. 

Now Weyl's law
\begin{equation}
N_t(\lambda^2)\sim \frac{\lambda^2 A(t)}{4\pi}
\end{equation}
implies that the decrease of eigenvalues over $\mathcal{I}$ is asymptotically given by
\begin{equation}
\label{weyl}
E_j(t_1)-E_j(t_2)\sim 4\pi j (A(t_1)^{-1}-A(t_2)^{-1})
\end{equation}
in $\mathcal{I}$ as $j\rightarrow \infty$ where $A(t)$ denotes the area of the mushroom $M_t$.

We can use \eqref{weyl} together with the fact that the small windows about quasi-eigenvalues are comparatively sparse in the interval $[E_j(t_2),E_j(t_1)]$ to show that the upper bound \eqref{flowspeedeq} on eigenvalue speed provides a lower bound of $(1-d)$ on the time they must spend travelling outside of quasi-eigenvalue windows. 

This implies that the average proportion of time spent by large eigenvalues lingering near quasi-eigenvalues for $t\in\mathcal{I}$ cannot exceed $d$, and consequently that the proportion of lingering eigenvalues cannot exceed $d$. This contradiction concludes the proof.\\

\section{Quasimodes}

In polar coordinates, the Dirichlet eigenfunctions for the semidisk are given by
\begin{equation}
u_{n,k}:=\sin(n\theta)J_n(\alpha_{n,k}r/r_2)
\end{equation}
where $\alpha_{n,k}$ is the $k$-th positive zero of the $n$-th order Bessel function $J_n$.

\begin{prop}
If we define
\begin{equation}
v_{n,k}:=\frac{\chi(r)u_{n,k}}{\|\chi u_{n,k}\|_{L^2}},
\end{equation}
where
\begin{equation}
\chi(r)=\begin{cases}
0 &\mbox{for }r \leq r_1\\
1 &\mbox{for }r\geq (r_1+\epsilon)\sqrt{1-\epsilon^2}>r_1.
\end{cases}
\end{equation}
then the family
\begin{equation}
\label{family}
\{(v_{n,k},\alpha_{n,k}^2/r_2^2):\alpha_{n,k}<\frac{nr_2}{r_1+\epsilon}\}
\end{equation}
forms an $O(n^{-\infty})$ family of quasimodes, with all semiclassical mass contained in the completely integrable region $S^{*}M_t\setminus U_t$ of the billiard.

Moreover, these quasimodes are almost orthogonal, in the sense that
\begin{equation}
|\langle v_{n,k},v_{m,l}\rangle |=O(\min(n,m)^{-\infty})=O(\min(\alpha_{n,k},\alpha_{m,l})^{-\infty}).
\end{equation}
\end{prop}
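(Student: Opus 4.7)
The plan is to use the fact that $u_{n,k}$ are exact Dirichlet eigenfunctions of the semidisk, so the quasimode error arises purely from commutators with the cutoff $\chi$, and the geometry guarantees that these commutators are evaluated in a region where the underlying Bessel function is exponentially small in $n$.

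First, I would record the exact eigenvalue relation $(-\Delta - \alpha_{n,k}^2/r_2^2)u_{n,k} = 0$ and compute
\begin{equation*}
(-\Delta - \alpha_{n,k}^2/r_2^2)v_{n,k} = \frac{[-\Delta,\chi]u_{n,k}}{\|\chi u_{n,k}\|_{L^2}},
\end{equation*}
noting that $[-\Delta,\chi] = -\chi'' - r^{-1}\chi' - 2\chi'\partial_r$ is a first-order operator with coefficients supported in the annulus $r_1 \leq r \leq (r_1+\epsilon)\sqrt{1-\epsilon^2}$. On this support, the hypothesis $\alpha_{n,k} < nr_2/(r_1+\epsilon)$ forces $\alpha_{n,k}r/r_2 \leq n\sqrt{1-\epsilon^2}$, which places the Bessel function argument strictly below the turning point at $n$. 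The main analytic input is then the uniform Olver-type asymptotic $J_n(nz) = O(e^{-n\eta(z)})$ for $z \in [0, \sqrt{1-\epsilon^2}]$ with $\eta(z) > 0$, together with the analogous bound for $J_n'$, both of which give $\|[-\Delta,\chi]u_{n,k}\|_{L^2} = O(e^{-cn})$ uniformly for all allowed $k$.

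Next I would bound $\|\chi u_{n,k}\|_{L^2}$ from below. Since $\chi^2 u_{n,k}^2$ agrees with $u_{n,k}^2$ outside the forbidden region and differs by an exponentially small quantity inside it, we have $\|\chi u_{n,k}\|_{L^2}^2 = \|u_{n,k}\|_{L^2}^2 + O(e^{-cn})$, and the standard identity $\|u_{n,k}\|_{L^2}^2 = (\pi r_2^2/4) J_{n+1}(\alpha_{n,k})^2$ combined with the Bessel asymptotics near the turning point gives a lower bound polynomial in $n$. Dividing yields the $O(n^{-\infty})$ quasimode estimate.

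For the microlocal support, I would use the WKB form of $J_n$ in the oscillatory regime $r > nr_2/\alpha_{n,k}$. With semiclassical parameter $h = 1/\alpha_{n,k}$, stationary phase or a direct oscillatory integral computation shows that $v_{n,k}$ is concentrated, in the sense of Definition \ref{microsupport}, on the whispering-gallery Lagrangian torus with angular momentum $|\xi_\theta| = n/\alpha_{n,k}$ and total momentum $1/r_2$. These tori correspond to trajectories tangent to the circle of radius $nr_2/\alpha_{n,k} > r_1 + \epsilon$, hence are confined to the upper semi-annulus and lie entirely in the integrable region $S^*M_t \setminus U_t$. One verifies this by testing against $a(x,hD)$ for symbols $a$ vanishing on the putative Lagrangian and integrating by parts in $\theta$ (to gain factors of $n/\alpha_{n,k}$) and in $r$ (to exploit the rapid decay of the Airy-transition region beyond the caustic).

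Finally, almost orthogonality splits into two regimes. For $n \neq m$, the angular integral $\int_0^\pi \sin(n\theta)\sin(m\theta)\,d\theta$ vanishes and $\chi$ is radial, so $\langle v_{n,k},v_{m,l}\rangle = 0$ exactly. For $n = m$ and $k \neq l$, orthogonality of $u_{n,k}$ and $u_{n,l}$ on the full semidisk gives $\langle \chi u_{n,k},\chi u_{n,l}\rangle = \langle(\chi^2-1)u_{n,k},u_{n,l}\rangle$, and $\chi^2 - 1$ is supported in the forbidden region, so the same Bessel decay bound yields $O(n^{-\infty})$. The main obstacle will be establishing the microlocal support claim with sufficient uniformity in $k$: the WKB expansion of $J_n$ is cleanest away from the turning point, so some care is needed to handle the transition annulus where $\alpha_{n,k}r/r_2$ is close to $n$, either by a direct Airy-function analysis or by observing that our cutoff $\chi$ can be chosen to vanish slightly past the turning point of every quasimode in the family.
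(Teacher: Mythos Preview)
Your quasimode error estimate and almost-orthogonality arguments are essentially identical to the paper's: both compute the commutator $[-\Delta,\chi]u_{n,k}$, observe its support lies in the forbidden annulus where $\alpha_{n,k}r/r_2 \le n\sqrt{1-\epsilon^2}$, and invoke the exponential Bessel bounds (the paper cites the explicit Debye-type inequalities from \cite{absteg} rather than Olver asymptotics, but the content is the same). Your treatment of the normalisation and the $n=m$, $k\ne l$ orthogonality via $\langle(\chi^2-1)u_{n,k},u_{n,l}\rangle$ is in fact slightly more explicit than what the paper writes down.

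The genuine difference is in the semiclassical-mass claim. You propose a WKB/stationary-phase analysis identifying the whispering-gallery Lagrangian, and you correctly flag the turning-point annulus as the awkward part of that argument. The paper sidesteps this entirely: it simply observes that $v_{n,k}$ is \emph{spatially} supported in the semi-annulus $\{r>r_1\}$, so for any symbol $a$ with spatial support in $R_t\cup B(0,r_1)$ one has $\langle a(x,hD)v_{n,k},v_{n,k}\rangle = O(n^{-\infty})$ by disjointness of supports alone. This shows any semiclassical measure vanishes on $\{(x,\xi):x\in R_t\cup B(0,r_1)\}$, and then flow invariance of semiclassical measures (Theorem~5.4 of \cite{zworski}) propagates this to all of $U_t$, since every ergodic trajectory enters that spatial region. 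Your route would yield the finer information of which Lagrangian torus carries the mass, but for the stated proposition the paper's two-line argument is all that is needed and avoids the Airy-transition issue you anticipated.
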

\begin{proof}

The restriction on $k$ in our family implies that the error incurred in cutting off only depends on the values of the Bessel function $J_n(x)$ for $x\in[0,n\sqrt{1-\epsilon^2}]$.

Then from \cite{absteg}, we have the estimates
\begin{equation}
|J_n(nx)|\leq \frac{x^ne^{\sqrt{1-x^2}}}{(1+\sqrt{1-x^2})^n}\quad \textrm{ for $x\leq 1$}
\end{equation}
and
\begin{equation}
|J_n'(nx)|\leq \frac{(1+x^2)^{1/4}x^ne^{\sqrt{1-x^2}}}{x\sqrt{2\pi n}(1+\sqrt{1-x^2})^n}\quad \textrm{ for $x\leq 1$}
\end{equation}
for bounding the Bessel function near $0$.

Together these estimates imply that the error incurred by cutting off is $O(n^{-\infty})$. Furthermore, as the $u_{n,k}$ are pairwise orthogonal, these bounds also show that the $v_{n,k}$ are almost orthogonal in the sense claimed.\\

Now for any smooth compactly supported symbol $a$ spatially supported in $R_t\cup B(0,r_1)$, the disjointness of supports from our family of quasimodes implies that
\begin{equation}
\langle a(x,(r_2/\alpha_{n,k})D)v_{n,k},v_{n,k}\rangle=O(n^{-\infty}).
\end{equation}
In particular, we have that any semiclassical measure $\mu$ associated to these quasimodes cannot have mass in the region $\{(x,\xi)\in S^*M_t:x\in R_t\cup B(0,r_1)\}$.

Moreover, by the flow invariance of semiclassical measures (See Theorem 5.4 in \cite{zworski}), this implies that any corresponding semiclassical measure cannot have mass in the ergodic region $U_t$ because the pre-images under geodesic flow of \\ \noindent ${\{(x,\xi)\in \mathcal{D}_t:x\in R_t\cup B(0,r_1)\}}$ cover $U_t$.
\end{proof}

\begin{prop}
\label{decay}
We can index these quasimodes as $(v_n,\alpha_n^2)$ so that the quasi-eigenvalues are in increasing order, whilst having
\begin{equation}
(\Delta+\alpha_n^2)v_n=O(n^{-\infty})=O(\alpha_n^{-\infty})
\end{equation}
and
\begin{equation}
|\langle v_n,v_k\rangle|=O(\min(n,k)^{-\infty})=O(\min(\alpha_n,\alpha_k)^{-\infty}).
\end{equation}
\end{prop}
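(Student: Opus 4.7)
The statement is essentially a re-indexing of the doubly indexed family $\{(v_{n,k},\alpha_{n,k}^2/r_2^2):\alpha_{n,k}<nr_2/(r_1+\epsilon)\}$ produced in the previous proposition. The plan is to enumerate this family in non-decreasing order of quasi-frequency as $(v_m,\alpha_m^2)$ (absorbing the factor of $r_2$ into the new $\alpha_m$) and then to verify that each of the $O(n^{-\infty})$ bounds established there becomes an $O(m^{-\infty}) = O(\alpha_m^{-\infty})$ bound under the re-enumeration. First I would check that the ordering is well-defined: for any fixed bound $R$, the admissibility inequality $\alpha_{n,k}<nr_2/(r_1+\epsilon)$ together with the fact that the first positive zero of $J_n$ grows at least linearly in $n$ forces only finitely many admissible pairs with $\alpha_{n,k}\leq R$, so the listing terminates at each finite stage and $\alpha_m\to\infty$.

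The crux is a single observation: rearranging the admissibility condition gives
\begin{equation}
n(m) > \tfrac{r_1+\epsilon}{r_2}\,\alpha_m,
\end{equation}
so the Bessel order of each quasimode in the family is bounded below by a positive multiple of its quasi-frequency. This promotes $(\Delta+\alpha_m^2)v_m = O(n(m)^{-\infty})$ to $(\Delta+\alpha_m^2)v_m = O(\alpha_m^{-\infty})$, and likewise turns the pairwise overlap estimate into $|\langle v_m,v_j\rangle| = O(\min(\alpha_m,\alpha_j)^{-\infty})$.

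To match $\alpha_m^{-\infty}$ with the desired $m^{-\infty}$, I would apply a Weyl-type counting step. The number of admissible pairs with $\alpha_{n,k}\leq R$ is bounded above by the number of Dirichlet Laplacian eigenvalues of the semidisk of radius $r_2$ below $R^2/r_2^2$, which by the classical Weyl law is $O(R^2)$. Hence $m \leq C\alpha_m^2$, giving $\alpha_m \geq c\sqrt{m}$, so $\alpha_m^{-N} \leq c^{-N}m^{-N/2}$ for every $N$. Consequently $O(\alpha_m^{-\infty})$ and $O(m^{-\infty})$ coincide, and the same conversion applied to the increasing sequence $\alpha_1\leq \alpha_2\leq \cdots$ turns $O(\min(\alpha_m,\alpha_j)^{-\infty})$ into $O(\min(m,j)^{-\infty})$.

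No serious obstacle appears here, since the substantive analytic content lives entirely in the previous proposition. The only care required is to keep straight the three scales involved (the original Bessel order $n(m)$, the new sequence index $m$, and the quasi-frequency $\alpha_m$) and to notice that the admissibility cutoff is exactly what pins the Bessel order below by a multiple of $\alpha_m$ so that residuals and overlaps inherit superpolynomial decay in both $m$ and $\alpha_m$.
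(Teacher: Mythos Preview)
Your argument is correct and is exactly the intended one. The paper in fact states this proposition without proof, treating it as an immediate consequence of the preceding proposition; your write-up supplies precisely the two missing observations --- that the admissibility constraint $\alpha_{n,k}<nr_2/(r_1+\epsilon)$ forces the Bessel order to dominate a fixed multiple of the quasi-frequency, and that Weyl counting for the semidisk gives $\alpha_m\gtrsim\sqrt{m}$ --- which together convert the $O(n^{-\infty})$ bounds into $O(m^{-\infty})=O(\alpha_m^{-\infty})$.
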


Moreover, as $\epsilon\rightarrow 0$, the counting function of these quasimodes has the following asymptotic bound.

\begin{prop}
\label{quasiasymptotic}
\begin{equation}
\label{quasiasymptoticeqn}
\lim_{\epsilon\rightarrow 0}\liminf_{\lambda\rightarrow \infty}\frac{\#\{(n,k):\alpha_{n,k}/r_2 < \lambda,\alpha_{n,k}<\frac{nr_2}{r_1+\epsilon}\}}{\lambda^2}\geq \left(1-\frac{\mu_L(U_t)}{\mu_L(\mathcal{D}_t)}\right)\cdot \frac{A(t)}{4\pi}.
\end{equation}
where $A(t)$ is the area of the mushroom $M_t$.
\end{prop}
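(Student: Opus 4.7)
The strategy is to recognise the pairs $(u_{n,k}, \alpha_{n,k}^2/r_2^2)$ as the full Dirichlet spectrum of the semidisk $S$ of radius $r_2$, and to count
\begin{equation}
N_\epsilon(\lambda) := \#\{(n,k) : \alpha_{n,k}/r_2 < \lambda,\ \alpha_{n,k} < nr_2/(r_1+\epsilon)\}
\end{equation}
via a semiclassical Weyl asymptotic summed over the angular index $n$. Since $u_{n,k}$ diagonalises both $-\Delta$ and the angular momentum operator $-i\partial_\theta$, the second restriction is equivalent to $n/\sqrt{E_{n,k}} > r_1+\epsilon$, i.e.\ the semiclassical angular momentum of the corresponding Bohr--Sommerfeld torus exceeds $r_1+\epsilon$. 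I would split the count according to whether $n \geq (r_1+\epsilon)\lambda$ (active constraint: $\alpha_{n,k} \leq \lambda r_2$) or $n < (r_1+\epsilon)\lambda$ (active constraint: $\alpha_{n,k} \leq nr_2/(r_1+\epsilon)$).

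For each fixed $n$, the radial Dirichlet problem $-R'' - r^{-1}R' + n^2 R/r^2 = \beta^2 R$ on $[0,r_2]$ is equivalent under $R = v/\sqrt{r}$ to the one-dimensional Schr\"odinger problem $-v'' + (n^2-1/4)v/r^2 = \beta^2 v$, whose standard WKB count yields
\begin{equation}
N_n(\beta) := \#\{k : \alpha_{n,k} \leq \beta r_2\} \sim \tfrac{1}{\pi}\int_{n/\beta}^{r_2}\sqrt{\beta^2 - n^2/r^2}\,dr
\end{equation}
uniformly in $n$ (equivalently, this is the uniform Debye asymptotic of Bessel zeros from \cite{absteg}, the same source that supplies the Bessel bounds used in the previous proposition). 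Summing $N_n(\beta_n)$ with $\beta_n = \min(\lambda, n/(r_1+\epsilon))$, exchanging orders of integration, and using the identity $\int_c^{r_2}\sqrt{1 - c^2/r^2}\,dr = \sqrt{r_2^2 - c^2} - c\arccos(c/r_2)$ to cleanly collapse the two contributions produces
\begin{equation}
N_\epsilon(\lambda) \sim \frac{\lambda^2}{2\pi}\int_{r_1+\epsilon}^{r_2} r\arccos((r_1+\epsilon)/r)\,dr.
\end{equation}

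To match this against the right-hand side of the claim I would compute the Liouville measure of the non-ergodic region directly. The trajectories in $\mathcal{D}_t \setminus U_t$ are exactly those confined to $S \setminus \overline{B(0,r_1)}$, which by conservation of angular momentum on the rotationally invariant semiannulus are precisely the unit covectors with $|\xi_\theta| \geq r_1$. Parametrising each cosphere fibre over $(r,\theta) \in S$ by $\xi_r = \cos\phi$, $\xi_\theta = r\sin\phi$ and using the local form $r\,dr\,d\theta\,d\phi$ of Liouville measure gives
\begin{equation}
\frac{\mu_L(\mathcal{D}_t\setminus U_t)}{\mu_L(\mathcal{D}_t)} = \frac{4\pi\int_{r_1}^{r_2} r\arccos(r_1/r)\,dr}{2\pi A(t)},
\end{equation}
so that $\left(1-\mu_L(U_t)/\mu_L(\mathcal{D}_t)\right)\cdot A(t)/(4\pi) = \frac{1}{2\pi}\int_{r_1}^{r_2} r\arccos(r_1/r)\,dr$, which is exactly the $\epsilon\to 0$ limit of the previous display; the claimed lower bound in fact holds as an equality.

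The main obstacle is obtaining the uniformity in $n$ of the WKB count $N_n(\beta)$, needed for the sum over $1 \leq n \leq r_2 \lambda$ to yield the correct $\lambda^2$-order leading term rather than merely a weaker bound. This uniformity is classical and follows either from a direct semiclassical argument for the one-parameter family of radial operators (with careful handling of the turning point $r = n/\beta$ and the regular singular point at $r = 0$) or from the Debye/Olver asymptotic for Bessel zeros; the remaining steps of the proof are elementary trigonometric integrals.
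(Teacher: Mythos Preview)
Your proposal is correct and follows the same overall route as the paper: split the count at $n=(r_1+\epsilon)\lambda$ into the two regimes where one constraint dominates, invoke uniform-in-$n$ asymptotics for Bessel zeros (the paper uses the Airy/implicit-function form $\alpha_{n,k}\sim n z(n^{-2/3}a_k)$ from \cite{absteg}, which is equivalent to your WKB count $N_n(\beta)\sim\tfrac1\pi\bigl(\sqrt{(\beta r_2)^2-n^2}-n\arccos(n/\beta r_2)\bigr)$), sum over $n$, and compare with the Liouville measure of the integrable region computed by the angular-momentum criterion $|\xi_\theta|>r_1$.

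The only notable differences are packaging. Your WKB integral form collapses the two regimes into the single closed expression $\tfrac{\lambda^2}{2\pi}\int_{r_1+\epsilon}^{r_2} r\arccos((r_1+\epsilon)/r)\,dr$, whereas the paper carries the two pieces $N_A,N_B$ separately with explicit slack parameters $\hat C_\epsilon=C_\epsilon-\epsilon$ and only establishes the lower bound stated; you observe (correctly) that the asymptotic is in fact an equality. Your Liouville computation via $\arccos(r_1/r)$ is the identity $2\pi-4\sin^{-1}(r_1/r)=4\arccos(r_1/r)$ applied to the paper's. The one point you rightly flag as needing care---uniformity in $n$ of the radial count so that summing $O(\lambda)$ terms still gives the correct $\lambda^2$ leading order---is exactly what the paper handles by citing the uniform Debye/Olver asymptotics, so there is no gap.
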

\begin{proof}
To simplify our calculations, we scale $\mu_L$ so that $\mu_L(M_t)=2\pi A(t)$.

From an arbitrary point $(r,\theta)$ in the annulus, the trajectories that never enter the stalk have measure $(2\pi-4\sin^{-1}(r_1/r))$ out of the full measure $2\pi$ of the unit cosphere at that point.

Hence

\begin{eqnarray*}
\mu_L(\mathcal{D}_t)-\mu_L(U_t)&=&\int_0^\pi\int_{r_1}^{r_2} r(2\pi-4\sin^{-1}(r_1/r))\, dr \, d\theta\\
&=& \pi^2(r_2^2-r_1^2)-4\pi\int_{r_1}^{r_2} r\sin^{-1}(r_1/r)\, dr \\
&=& \pi^2r_2^2-2\pi r_1^2\sqrt{C^2-1}-2\pi r_2^2\sin^{-1}(C^{-1})
\end{eqnarray*}
where $C=r_2/r_1$.

This implies that 
\begin{equation}
\left(1-\frac{\mu_L(U_t)}{\mu_L(\mathcal{D}_t)}\right)\cdot \frac{A(t) \lambda^2}{4\pi}=\frac{r_2^2}{8}\left(1-\frac{2}{\pi C^2}\sqrt{C^2-1}-\frac{2}{\pi}\sin^{-1}(C^{-1})\right)\lambda^2.
\end{equation}

To estimate the left hand side of \eqref{quasiasymptoticeqn}, we use the leading order uniform asymptotics for Bessel function zeros found in \cite{absteg}. 

As $n\rightarrow \infty$, we have 
\begin{equation}
\alpha_{n,k}=nz(n^{-2/3}a_k)+o(n)
\end{equation}

uniformly in $k$, where $z:(-\infty,0]\rightarrow [1,\infty)$ is defined implicitly by 

\begin{equation}\frac{2}{3}(-\zeta)^{3/2}=\sqrt{z(\zeta)^2-1}-\sec^{-1}(z(\zeta))\end{equation}

and the $a_k$ are the negative zeros of the Airy function, which have asymptotic
\begin{equation}
a_k=-\left(\frac{3\pi k}{2}\right)^{2/3}+O(k^{-1/3}).
\end{equation} 

We now write $C_\epsilon=r_2/(r_1+\epsilon)$.

We count the left hand side of \eqref{quasiasymptoticeqn} by separating into two regimes based on the size of $n/\lambda$. In each of these two regimes, a single one of the inequalities defining our family \eqref{family} implies the other. More precisely, we have
\begin{eqnarray}\nonumber & &|\{(n,k):\alpha_{n,k}/r_2\leq\lambda,\alpha_{n,k}\leq C_\epsilon n\}|\\
\nonumber &= &|\{(n,k):n\leq r_2\lambda/C_\epsilon,\alpha_{n,k}\leq C_\epsilon n\}|\\ 
\nonumber &+&|\{(n,k):r_2\lambda/C_\epsilon < n \leq r_2\lambda,\alpha_{n,k}/r_2 \leq \lambda\}|\\
\label{regime} &=&N_A(\lambda;\epsilon)+N_B(\lambda;\epsilon)\end{eqnarray}
where
\begin{equation}
N_A(\lambda;\epsilon):=|\{(n,k):n\leq r_2\lambda/C_\epsilon,\alpha_{n,k}\leq C_\epsilon n\}|
\end{equation}
and
\begin{equation}
N_B(\lambda;\epsilon):=|\{(n,k):r_2\lambda/C_\epsilon < n \leq r_2\lambda,\alpha_{n,k}/r_2 \leq \lambda\}|
\end{equation}
respectively.

For $n,k$ sufficiently large, a sufficient condition for being in regime $A$ of \eqref{regime} is to have
\begin{equation}
z(n^{-2/3}a_k)\leq C_\epsilon-\epsilon=\hat{C_\epsilon}
\end{equation}
and
\begin{equation}
n\leq r_2\lambda/C_\epsilon.
\end{equation}

Also, from the Airy function asymptotics we have
\begin{equation}
\frac{2}{3}(-n^{-2/3}a_k)^{3/2}=\frac{2}{3n}\left(\left(\frac{3\pi k}{2}\right)^{2/3}+O(k^{-1/3})\right)^{3/2}=\frac{\pi k}{n}+O(n^{-1})
\end{equation}
where the error is uniform in $k$.

Hence from the monotonicity of $z$, for all $n,k$ sufficiently large with $n\leq r_2\lambda/C_\epsilon$, a sufficient condition for being in regime $A$ of \eqref{regime} is
\begin{equation}
k \leq \frac{\sqrt{\hat{C_\epsilon}^2-1}-\sec^{-1}(\hat{C_\epsilon})-\epsilon}{\pi}n.
\end{equation}

Noting that the contribution from small $n$ and $k$ is finite, we can conclude that
\begin{eqnarray*}
\liminf_\lambda\frac{N_A(\lambda^2)}{\lambda^2}&\geq &\liminf_\lambda (\frac{1}{\lambda^2}\sum_{n\leq r_2\lambda/C_\epsilon} n)\cdot \frac{\sqrt{\hat{C_\epsilon}^2-1}-\sec^{-1}(\hat{C_\epsilon})-\epsilon}{\pi}\\
&=& \frac{r_2^2(\sqrt{\hat{C_\epsilon}^2-1}-\sec^{-1}(\hat{C_\epsilon})-\epsilon)}{2C_\epsilon^2\pi}.
\end{eqnarray*}

Similarly, for sufficiently large $n,k$, a sufficient condition for being in regime $B$ of \eqref{regime} is to have
\begin{equation}
z(n^{-2/3}a_k)\leq \lambda r_2/n -\epsilon=D_\epsilon(\lambda,n).
\end{equation}
and
\begin{equation}
r_2\lambda/C_\epsilon < n \leq r_2\lambda.
\end{equation}

Hence, for all $n,k$ sufficiently large with $r_2\lambda/C_\epsilon < n \leq r_2\lambda/(1+\epsilon)$, a sufficient condition for being in regime $B$ of \eqref{regime} is
\begin{equation}
k\leq \frac{\sqrt{D_\epsilon(\lambda,n)^2-1}-\sec^{-1}(D_\epsilon(\lambda,n))-\epsilon}{\pi}n.
\end{equation}

Again throwing away a finite number of small pairs, we obtain
\begin{eqnarray*}
& & \liminf_\lambda \frac{N_B(\lambda^2)}{\lambda^2}\\
&\geq & \liminf_\lambda \frac{1}{\pi\lambda^2} \sum_{\frac{r_2\lambda}{C_\epsilon} < n \leq \frac{r_2\lambda}{1+\epsilon}}\left(n\sqrt{D_\epsilon(\lambda,n)^2-1}-  n\sec^{-1}(D_\epsilon(\lambda,n)) -\epsilon n\right)\\
&=& \liminf_\lambda\frac{1}{\pi\lambda^2}\int_{\frac{r_2\lambda}{C_\epsilon}}^{\frac{r_2\lambda}{1+\epsilon}} \left(t\sqrt{D_\epsilon(\lambda,t)^2-1} -  t\sec^{-1}(D_\epsilon(\lambda,t))-\epsilon t\right)\, dt.
\end{eqnarray*}
Each of the three summands in the integrand has elementary primitive, so we can explicitly compute this quantity.

Noting that $C_\epsilon,\hat{C_\epsilon}\rightarrow C$, we compute
\begin{eqnarray*}
& &\lim_{\epsilon\rightarrow 0}\liminf_\lambda \frac{N_A(\lambda^2)+N_B(\lambda^2)}{\lambda^2}\\
&\geq & \left(\frac{r_1^2\sqrt{C^2-1}}{2\pi}-\frac{r_1^2}{2\pi}(\frac{\pi}{2}-\sin^{-1}(C^{-1}))\right)\\
&+& \frac{1}{\pi \lambda^2}\int_{r_1\lambda}^{r_2\lambda} \sqrt{\lambda^2 r_2^2-t^2}\, dt-\frac{1}{\pi\lambda^2}\int_{r_1\lambda}^{r_2\lambda}t\sec^{-1}(\frac{\lambda r_2}{t})\, dt\\
&=& \frac{r_2^2}{8}\left(1-\frac{2}{\pi C^2}\sqrt{C^2-1}-\frac{2}{\pi}\sin^{-1}(C^{-1})\right).\\
\end{eqnarray*}
as required.

\end{proof}

\section{Spectral theory}

We next establish the following key spectral theoretic result.

\begin{prop}
\label{spectral}
Let $\mathcal{H}$ be a Hilbert space. Suppose $T\in\mathcal{L}(\mathcal{H})$ has a complete orthonormal system of eigenvectors $(u_i,E_i)_{i\in\N}$ with the sequence $(E_i)$ non-negative and increasing without bound. Suppose further that we have a family of normalised quasimodes $(v_i,E_i')_{i=1}^n$ with 
\begin{equation}\label{epsilonone}\|(T-E_i')v_i\| < \epsilon_1\end{equation}
and
\begin{equation}\label{epsilontwo}|\langle v_i,v_j\rangle| < \epsilon_2 \quad \textrm{for }i\neq j\end{equation}
for some positive $\epsilon_1,\epsilon_2>0.$

We write 
\begin{equation}
V=\textrm{Span}\{v_i\}_{i=1}^n
\end{equation}
and
\begin{equation}
U=\textrm{Span}\{u_j:E_j\in\bigcup_{i=1}^n [E_i'-c,E_i'+c]\}.
\end{equation}

We denote the orthogonal projection onto a subspace $S\subseteq \mathcal{H}$ by $\pi_S$.

If for some $c>0$ and some $0<\epsilon,\delta<1/2$ we have

\begin{equation}
m=\#\{j\in\N:E_j\in\bigcup_{i=1}^n [E_i'-c,E_i'+c] \}<n(1+\epsilon)
\end{equation}
and
\begin{equation}
\label{errormatrixbound}
\frac{\epsilon_1^2}{c^2}+\epsilon_2 < \frac{\delta}{n}
\end{equation}

then at least $n(1-\sqrt{\epsilon})$ of the corresponding eigenvectors $u_i$ satisfy

\begin{equation}
\label{eigenestimate}
\|u_i-\pi_V(u_i)\|<\epsilon^{1/4}+2\delta^{3/2}. 
\end{equation}

\end{prop}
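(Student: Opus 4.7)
The plan is to reduce the claim to finite-dimensional linear algebra by introducing an auxiliary space $\tilde V \subseteq U$ built from the projected quasimodes, doing the counting of eigenvectors inside $U$, and then transferring the approximation back to $V$.

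First, I would use the quasimode bound to confine each $v_i$ to $U$. Expanding $v_i = \sum_k \langle v_i, u_k\rangle u_k$ in the eigenbasis of $T$, the estimate (\ref{epsilonone}) reads $\sum_k |\langle v_i, u_k\rangle|^2 (E_k - E_i')^2 < \epsilon_1^2$; since every $u_k$ outside $U$ has $|E_k - E_i'| \geq c$, this yields $\|(I - \pi_U) v_i\|^2 \leq \epsilon_1^2/c^2$. Setting $\tilde v_i := \pi_U v_i$, the identity $\langle \tilde v_i, \tilde v_j\rangle = \langle v_i, v_j\rangle - \langle (I-\pi_U) v_i, (I-\pi_U) v_j\rangle$ together with (\ref{epsilontwo}) gives that the Gram matrix $\tilde G$ of $\{\tilde v_i\}$ satisfies $|\tilde G_{ij} - \delta_{ij}| \leq \epsilon_2 + \epsilon_1^2/c^2 < \delta/n$ entrywise, whence $\|\tilde G - I\|_{\textrm{op}} < \delta$. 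Consequently the $\tilde v_i$ are linearly independent and span an $n$-dimensional subspace $\tilde V$ of $U$.

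The counting of good eigenvectors then follows from a trace identity. Since $\pi_{\tilde V}$ is a rank-$n$ orthogonal projection with range in $U$, $\textrm{tr}(\pi_{\tilde V}|_U) = n$, and expanding in the orthonormal basis $\{u_j\}$ of $U$ gives $\sum_j \|\pi_{\tilde V} u_j\|^2 = n$. Therefore $\sum_j (1 - \|\pi_{\tilde V} u_j\|^2) = m - n < n \epsilon$, and Chebyshev's inequality produces a subset of at least $n(1 - \sqrt{\epsilon})$ of the $u_j$ satisfying $\|u_j - \pi_{\tilde V} u_j\| < \epsilon^{1/4}$.

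Finally, I would transfer these approximations from $\tilde V$ to $V$. For each good $u_j$, expand $\pi_{\tilde V} u_j = \sum_i a_i \tilde v_i$ with $\|a\|^2 \leq 1/(1 - \delta)$ (from the Gram matrix bound), and use $w := \sum_i a_i v_i \in V$ as a trial approximant. The triangle inequality gives
\begin{equation*}
\|u_j - \pi_V u_j\| \leq \|u_j - \pi_{\tilde V} u_j\| + \Big\|\sum_i a_i (v_i - \tilde v_i)\Big\|.
\end{equation*}
The hard part is controlling the second summand: a direct Cauchy--Schwarz using $\|v_i - \tilde v_i\| < \sqrt{\delta/n}$ together with $\|a\|^2 \leq 1/(1-\delta)$ delivers only $O(\sqrt{\delta})$, whereas the statement requires $O(\delta^{3/2})$. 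This refinement is the main obstacle; the idea is to exploit that the vectors $v_i - \tilde v_i$ lie in $U^\perp$ while $\pi_{\tilde V} u_j \in U$, so that the Gram matrix of the differences, together with the near-normalisation $\|a\|^2 \approx 1$ on good indices, yields the extra factor of $\delta$ needed to reach the claimed bound (\ref{eigenestimate}).
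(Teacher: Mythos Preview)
Your approach is the paper's: project the quasimodes into $U$, run the Gram-matrix/trace argument to count good eigenvectors relative to the intermediate space $\tilde V$ (the paper calls it $W$), then transfer back to $V$. You have correctly located the only nontrivial step and correctly observe that Cauchy--Schwarz yields only $O(\sqrt\delta)$. Your proposed refinement, however, does not recover the missing factor. The orthogonality $v_i-\tilde v_i\in U^\perp$, $u_j\in U$ buys you Pythagoras,
\[
\|u_j-w\|^2=\|u_j-\pi_{\tilde V}u_j\|^2+\Bigl\|\sum_i a_i(v_i-\tilde v_i)\Bigr\|^2,
\]
and the second summand equals $a^*Ea$ with $E_{ik}=\langle v_i-\tilde v_i,\,v_k-\tilde v_k\rangle$; every entry of $E$ is bounded by $\epsilon_1^2/c^2<\delta/n$, so $\|E\|_{\mathrm{op}}<\delta$ and the term is $O(\delta)$ in norm-squared, i.e.\ $O(\sqrt\delta)$ in norm --- no improvement.

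In fact the exponent $\delta^{3/2}$ is unattainable from the hypotheses. Take $n=1$ with $E_1=E'$, $E_2=E'+2c$, and $v=\sqrt{1-\eta^2}\,u_1+\eta\,u_2$; then $\|(T-E')v\|=2c\eta$, so $\epsilon_1^2/c^2$ can be taken just above $4\eta^2$, and choosing $\eta$ just below $\tfrac12\sqrt\delta$ gives $\|u_1-\pi_V u_1\|=\eta$, which exceeds $\epsilon^{1/4}+2\delta^{3/2}$ once $\epsilon$ is taken small. The paper's own proof slips at precisely this point: it bounds the transfer error by $\|B_i\|_{\ell^2}\,\|A\|_{HS}\,\|\pi_U v-v\|_{\mathcal{H}^n}$ and then substitutes $2\delta$ for $\|A\|_{HS}$, whereas what was established is $\|A-I\|_{HS}<2\delta$. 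Both your argument and the paper's actually prove $\|u_i-\pi_V u_i\|<\epsilon^{1/4}+C\sqrt\delta$, and this is all that the applications in Section~\ref{mushymainresultsec} require, since there $\delta\to 0$ faster than any polynomial in the energy.
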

\begin{proof}
The idea behind the proof of the estimate \eqref{eigenestimate} consists of several successive approximations. 

We first show that the projections $\pi_U(v_i)$ are almost orthogonal and can be transformed into an orthonormal basis $(w_i)_{i=1}^n$ of their span by a matrix $A$ that is approximately the identity. 

We then show that excluding some exceptional eigenvectors, the remaining eigenvectors are necessarily rather close to the space $W$. This implies that the non-exceptional eigenvectors can be well approximated by their projections, which leads us to conclude $u\approx\pi_W(u)=Bw=BA\pi_U(v)\approx BAv$ for some matrix $B$.\\

To begin, we reindex the eigenpairs $(u_i,E_i)$ so that $E_j\in \cup_{i=1}^n [E_i'-c,E_i'+c]$ precisely for $j=1,2,\ldots,m.$

The assumptions \eqref{epsilonone} and \eqref{epsilontwo} then imply

\begin{eqnarray*}
\|(T-E_i')\sum_{j\in\mathbb{N}}\langle v_i,u_j\rangle u_j\|^2 & < & \epsilon_1^2\\ 
\Rightarrow \sum_{j=m+1}^\infty |E_j-E_i'|^2|\langle v_i,u_j\rangle|^2 &<& \epsilon_1^2\\
\Rightarrow \sum_{j=m+1}^\infty |\langle v_i,u_j\rangle|^2 &<& \frac{\epsilon_1^2}{c^2}\\
\Rightarrow \|\pi_U(v_i)\|^2 &>& 1-\frac{\epsilon_1^2}{c^2}.
\end{eqnarray*}
and
\begin{eqnarray*}
|\langle\pi_U(v_i),\pi_U(v_j)\rangle|&\leq& |\langle v_i,v_j\rangle|+|\langle\pi_{U^\perp}(v_i),\pi_{U^\perp}(v_j)\rangle|\\
& < & \epsilon_2 + \sqrt{(1-\|\pi_U(v_i)\|^2)(1-\|\pi_U(v_j)\|^2)}\\
& < & \epsilon_2+\frac{\epsilon_1^2}{c^2}
\end{eqnarray*}
for $i\neq j$.

Together with \eqref{errormatrixbound} we obtain
\begin{equation}
\label{ineqone}
\|\pi_U(v_i)\|^2 > 1-\frac{\delta}{n}
\end{equation}
and
\begin{equation}
\label{ineqtwo}
|\langle \pi_U(v_i),\pi_U(v_j)\rangle|< \frac{\delta}{n}
\end{equation}
for $i\neq j$.

The Gram matrix $M$ with entries $M_{ij}=\langle\pi_U(v_i),\pi_U(v_j)\rangle$ satisfies

\begin{equation}
\label{approxid}
\|M-I\|_{HS}=:\|E\|_{HS}<\delta<1/2.
\end{equation}

Note that if the collection  $\{\pi_U(v_i)\}$ were linearly dependent, then the matrix $M$ would be singular. The estimate \eqref{approxid} precludes this possibility, because we can invert $M=I-(I-M)$ as a Neumann series. In particular, this implies that $m\geq n$.\\

We now write $W=\textrm{Span}\{\pi_U(v_i)\}_{i=1}^n$ and suppose that $(w_i)_{i=1}^n$ is an orthonormal basis for $W$ which can be given by the transformation $w=A\pi_U(v)$, where $A$ is an $n\times n$ real matrix that acts on the Hilbert space $\mathcal{H}^n$ via matrix multiplication.

Expanding out the matrix equation $\langle w_i,w_j\rangle=\delta_{ij}$ we obtain
\begin{equation}
AMA^*=\sum_{k=1}^n\sum_{l=1}^na_{ik}\overline{a_{jl}}\langle \pi_U(v_k),\pi_U(v_l)\rangle=I
\end{equation}
which has a solution
\begin{equation}
A=M^{-1/2}=(I+E)^{-1/2}=\sum_{k=0}^\infty \binom{-1/2}{k}E^k=\sum_{k=0}^\infty (-1)^k \binom{2k}{k}4^{-k}E^k.
\end{equation}

From \eqref{approxid}, we then deduce
\begin{equation}
\|A-I\|_{HS}\leq \sum_{k=1}^\infty \|E\|^k = \|E\|(1-\|E\|)^{-1}<2\delta.
\end{equation}

In the case $m=n$, that is when $W=U$, we can find a unitary matrix $B$ with $Bw=u$.

We now assume $m>n$, recalling that the assumptions of the proposition imply that this excess is small as a proportion of $n$.

We have
\begin{eqnarray*}
\sum_{i=1}^m \|\pi_{W^\perp}(u_i)\|^2=m-\sum_{i=1}^m \|\pi_{W}(u_i)\|^2=m-\sum_{i=1}^m\sum_{j=1}^n |\langle u_i,w_j\rangle|^2=m-n<n\epsilon
\end{eqnarray*}

which implies that
\begin{equation}
\#\{i:\|\pi_{W^\perp}(u_i)\|^2\geq\sqrt{\epsilon}\}<n\sqrt{\epsilon}
\end{equation}
and consequently
\begin{equation}
\label{closest}
\#\{i:\|\pi_{W}(u_i)\|^2>1-\sqrt{\epsilon}\}\geq m-n\sqrt{\epsilon}>n(1-\sqrt{\epsilon}).
\end{equation}

We again re-index the eigenpairs for convenience, so that the first $n'=\lceil n(1-\sqrt{\epsilon}) \rceil$ eigenvectors $u_i$ satisfy the estimate in \eqref{closest}.

In this case, we define the $n' \times n$ matrix $B$ to have entries
\begin{equation}
B_{ij}=\langle\pi_W(u_i),w_j\rangle
\end{equation}
and the vector $u\in\mathcal{H}^{n'}$ by
\begin{equation}
(u_i)_{i=1}^{n'}.
\end{equation}

We then have
\begin{equation}
\pi_W(u)=Bw
\end{equation}
and the $i$-th row $B_i$ of $B$ has $\ell^2$ norm trivially bounded by $1$.

This leaves us with
\begin{equation}
u=(u-\pi_W(u))+BA\pi_U(v)=(u-\pi_W(u))+BAv+BA(\pi_U(v)-v).
\end{equation}

which implies
\begin{eqnarray*}
\|(u-BAv)_i\|_{\mathcal{H}}&\leq& \|u_i-\pi_W(u_i)\|_{\mathcal{H}}+\|B_i\|_{\ell^2}\|A\|_{HS}\|\pi_U(v)-v\|_{\mathcal{H}^n}\\
&<& \epsilon^{1/4}+\cdot (2\delta) \cdot \sqrt{\delta}\\
&<& \epsilon^{1/4}+2\delta^{3/2}.
\end{eqnarray*}

This estimate shows that each $u_i$ has distance less than $\epsilon^{1/4}+2\delta^{3/2}$ to some element of $V$. 

Consequently 
\begin{equation} 
\|u_i-\pi_V(u_i)\| < \epsilon^{1/4}+2\delta^{3/2} 
\end{equation}
as required.
\end{proof}

Our strategy to prove the main theorem is to control the number of eigenvalues in most clusters formed by finite unions of overlapping intervals of the form ${[\alpha_i^2-c,\alpha_i^2+c]}$, and then repeatedly employ Proposition \ref{spectral} to establish the existence of a subsequence of these eigenfunctions with upper density $d$ that localises in the semidisk.

\section{Results on eigenvalue flow}
\label{eigflowsec}

Central to the argument is the analysis of how eigenvalues flow as we vary $t$. 

Weyl's law provides us with the asymptotic
\begin{equation}
N_t(\lambda^2)\sim \frac{\lambda^2|M_t|}{4\pi}
\end{equation}

where $N_t$ is the counting function of the Dirichlet eigenvalues on $M_t$.

To obtain a more precise statement about the change of individual eigenvalues, we employ an interior version of the Hadamard variational formula and Theorem \ref{galkowski}.

In order to make use of Theorem \ref{galkowski}, we choose $\phi(y)\in \mathcal{C}^\infty_c (\mathbb{\R})$ non-negative, supported near $y=-1/2$, and with integral $1$, and we define the family of metrics
\begin{equation}g_t=dx^2+(1+(t-1)\phi)^2 dy^2\end{equation}

on $M_1$.

This metric induces a natural isometry $I_t: (M_1,g_t)\rightarrow (M_t,g_1)$.

If we define $R_t=(1+(t-1)\phi)^{-1/2}$, then we have the following result from Proposition 7 of the appendix of \cite{hassellque}.

\begin{prop}
\label{hadamard}
Let $u(t)$ be an $L^2$-normalised real Dirichlet eigenfunction of $\Delta$ on $M_t$ with corresponding eigenvalue $E(t)$. We then have
\begin{equation}
\dot{E}(t)=-\frac{1}{2}\langle Qu(t),u(t)\rangle
\end{equation}

where the operator $Q$ is given by
\begin{equation}
Q=-4\partial_y\phi_t\partial_y + [\partial_y,[\partial_y,\phi_t]]=\phi_t''-4(\phi_t'\partial_y+\phi_t\partial_y^2)
\end{equation}

on $M_t$.

Here, $\phi_t:M_t\rightarrow \R$ is given by:
\begin{equation}
\phi_t=(\phi R_t^2)\circ I_t^{-1}.
\end{equation}
\end{prop}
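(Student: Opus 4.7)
The plan is to use the standard Hadamard strategy in its ``moving metric'' form: transfer the $t$-dependence from the domain to a family of self-adjoint operators on a fixed Hilbert space, and then apply first-order perturbation theory. First, I would pull back via the isometry $I_t$: setting $\tilde{u}(t) := u(t)\circ I_t$, the eigenvalue equation $\Delta_{g_1} u(t) = E(t)u(t)$ on $M_t$ becomes $\Delta_{g_t}\tilde{u}(t) = E(t)\tilde{u}(t)$ on $M_1$, with Dirichlet boundary conditions on the fixed boundary $\partial M_1$. Here $\tilde{u}(t)$ is normalised in $L^2(M_1, g_t)$, whose volume density with respect to $dx\,dy$ equals $R_t^{-2} = 1+(t-1)\phi$.

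Next, I would conjugate by the multiplication operator $R_t$ to land on the $t$-independent Hilbert space $L^2(M_1, dx\,dy)$: set $w(t) := R_t^{-1}\tilde{u}(t)$ and $P(t) := R_t^{-1}\Delta_{g_t}R_t$. Since conjugation is by the square root of the volume Jacobian, $P(t)$ is self-adjoint on $L^2(M_1, dx\,dy)$, the function $w(t)$ is normalised there, and $P(t)w(t) = E(t)w(t)$ with Dirichlet conditions preserved. Differentiating in $t$, pairing with $w(t)$, and using self-adjointness of $P(t)$ together with $\frac{d}{dt}\|w(t)\|^2 = 0$, the $\dot{w}(t)$ contributions cancel and one is left with the identity $\dot{E}(t) = \langle \dot{P}(t)w(t), w(t)\rangle_{L^2(M_1, dx\,dy)}$.

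The remaining task is to identify this pairing with $-\tfrac{1}{2}\langle Qu(t), u(t)\rangle_{L^2(M_t)}$. Using that $\phi$ depends only on the $y$-coordinate, a direct computation gives $\Delta_{g_t} = -\partial_x^2 - R_t^2\,\partial_y\bigl(R_t^2 \partial_y \,\cdot\,\bigr)$ on $M_1$, so $P(t)$ is the sum of $-\partial_x^2$, multiplication terms coming from the conjugation by $R_t$, and a symmetric second-order piece in $\partial_y$. Differentiating in $t$ introduces $\dot{R}_t^2 = -\phi R_t^4$ factors at each occurrence of $R_t^2$. Collecting terms, the result reorganises symmetrically into $-\tfrac{1}{2}\bigl(-4\partial_y\tilde{\phi}_t\partial_y + [\partial_y,[\partial_y,\tilde{\phi}_t]]\bigr)$, where $\tilde{\phi}_t = \phi R_t^2$ is the pullback of $\phi_t$ to $M_1$. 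Changing variables back via $I_t$, which converts the $L^2(M_1, g_t)$ inner product into the $L^2(M_t)$ inner product, converts $\tilde{\phi}_t$ into $\phi_t$ and $\partial_y$ on $M_1$ into the stated $\partial_y$ on $M_t$ (up to the chain-rule factor $R_t^2$ absorbed consistently through $Q$), yielding the claimed formula.

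The hard part will be the bookkeeping in this final identification: one must carefully gather the first-order $\partial_y$ terms coming both from differentiating $\Delta_{g_t}$ and from differentiating the conjugation by $R_t^{\pm 1}$ so that they combine into the manifestly symmetric form $-4\partial_y\tilde{\phi}_t\partial_y$ rather than appearing as a non-symmetric first-order operator, and the zeroth-order remainder must be verified to equal $\tilde{\phi}_t''$. Once done, the equivalence of the two displayed forms of $Q$ follows from expanding $[\partial_y,[\partial_y,\phi_t]] = \phi_t''$ and $\partial_y\phi_t\partial_y = \phi_t'\partial_y + \phi_t\partial_y^2$. All other ingredients --- self-adjointness of $P(t)$, the normalisation cancellation, and the isometric change of variables --- are standard.
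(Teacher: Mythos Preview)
The paper does not actually prove this proposition; it simply quotes it as ``Proposition 7 of the appendix of \cite{hassellque}''. Your proposal is precisely the standard moving-metric Hadamard argument used in that reference: pull back by $I_t$ to a fixed domain with varying metric, conjugate by the square root of the volume density to obtain a self-adjoint family on a fixed Hilbert space, apply Feynman--Hellmann, and then unwind the algebra. So your approach is correct and matches the cited source; there is nothing to contrast with the paper itself, since the paper offers no independent argument.

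One small remark on your bookkeeping: when you write that the change of variables back via $I_t$ converts $\partial_y$ on $M_1$ into ``the stated $\partial_y$ on $M_t$ up to the chain-rule factor $R_t^2$ absorbed consistently through $Q$'', be careful. The map $I_t$ stretches the $y$-coordinate, so $\partial_y^{M_1} = (\partial y_{M_t}/\partial y_{M_1})\,\partial_y^{M_t}$, and this Jacobian factor is exactly $R_t^{-2}$ evaluated at the right point. You will find it cleanest to keep the whole computation on $M_1$, obtain $\dot P(t)$ in the symmetric form $-\tfrac12\bigl(-4\partial_y \tilde\phi_t \partial_y + \tilde\phi_t''\bigr)$ there, and only then observe that $\langle \dot P(t) w, w\rangle_{L^2(M_1,dx\,dy)} = \langle (\text{pushforward})\,u, u\rangle_{L^2(M_t)}$, with the pushforward being exactly $Q$ as written. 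Trying to track the chain-rule factors term by term after the fact is where sign and factor errors tend to creep in.
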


We now cut $Q$ off away from the vertical sides of the stalk so that we can use the interior equidistribution result Theorem \ref{galkowski} to control the quantity $E_k^{-1}\dot{E_k}$.

We do this by defining
\begin{equation}Q_\delta=\chi_\delta Q \end{equation}

where $\chi_\delta\in\mathcal{C}^\infty$ satisfies
\begin{equation}
\chi_\delta(x)=\begin{cases}
0 &\mbox{for }x\in [-r_1,-r_1+\delta]\cup [r_1-\delta, r_1]\\
1 &\mbox{for }x \in [-r_1+2\delta,r_1-2\delta].
\end{cases}
\end{equation}

\begin{prop}
\label{heatkernelprop}
For any $\epsilon > 0$ and any $t\in (0,2]$, there exists $\delta>0$ such that
\begin{equation}
\label{heatkernel}
|E_{n_k}^{-1}\langle (Q_\delta-Q)u_{n_k}(t),u_{n_k}(t)\rangle|<\epsilon
\end{equation}
for all $k$, where $(n_k)$ is a $t$-dependent subsequence of the positive integers with lower density bounded below by $1-\epsilon$.
\end{prop}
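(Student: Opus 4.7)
The plan is to recast the matrix element as a semiclassical quantity with compactly supported symbol and then apply Galkowski's equidistribution result on the ergodic region $U_t$. Setting $h_k=E_{n_k}^{-1/2}$, I first expand
\[ E_{n_k}^{-1}(Q-Q_\delta) = 4(1-\chi_\delta(x))\phi_t(x,y)\,(h_kD_y)^2 + E_{n_k}^{-1}\bigl((1-\chi_\delta)\phi_t'' - 4(1-\chi_\delta)\phi_t'\partial_y\bigr), \]
in which the second group of terms has operator norm $O(h_k)$ on $L^2(M_t)$ and therefore contributes at most $O(h_k)$ to the matrix element. Since $-h_k^2\Delta u_{n_k}=u_{n_k}$ with Dirichlet boundary conditions, the semiclassical functional calculus cutoff $\psi(-h_k^2\Delta)$, for any $\psi\in\mathcal{C}_c^\infty((0,2))$ equal to $1$ near $1$, fixes $u_{n_k}$ exactly. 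Up to an $O(h_k)$ error the matrix element thus equals $\langle B_\delta u_{n_k},u_{n_k}\rangle$, where $B_\delta$ is the Weyl quantisation at scale $h_k$ of the zeroth-order symbol
\[ b_\delta(x,y,\xi,\eta):=4(1-\chi_\delta(x))\phi_t(x,y)\,\eta^2\,\psi(|\xi|^2). \]

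The key geometric observation is that $\mathrm{supp}(b_\delta)$ projects into the open stalk, and every covector based at a point of the open stalk belongs to the ergodic region $U_t$, since such a trajectory is in $R_t\subset R_t\cup\overline{B(0,r_1)}$ at time zero. By Theorem \ref{galkowski}, there is a density-one subsequence $(n_k)$ along which every associated semiclassical measure $\mu$ satisfies $\mu|_{U_t}=\alpha\,\mu_L|_{U_t}$ with $\alpha$ bounded by a constant depending only on $t$ (as $\mu$ has total mass at most $1$ and $\mu_L(U_t)>0$). Any weak-$*$ subsequential limit of $\langle B_\delta u_{n_k},u_{n_k}\rangle$ is therefore bounded by
\[ \alpha\int b_\delta\,d\mu_L\;\leq\; C\cdot\mathrm{Leb}\bigl(\{(x,y)\in R_t:|x|\in[r_1-2\delta,r_1]\}\bigr)\;\leq\; C'\delta, \]
with $C'$ independent of $\delta$, since $\eta^2\psi(|\xi|^2)$ is uniformly bounded on the cosphere bundle.

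Given $\epsilon>0$, I then choose $\delta$ so that $C'\delta<\epsilon/2$. A diagonal extraction over all weak-$*$ subsequential limits yields $\limsup_k|\langle B_\delta u_{n_k},u_{n_k}\rangle|\leq\epsilon/2$, and combining this with the $O(h_k)$ errors from the lower-order terms of $Q-Q_\delta$ gives $|E_{n_k}^{-1}\langle(Q_\delta-Q)u_{n_k},u_{n_k}\rangle|<\epsilon$ for all $k$ beyond some threshold $K$. Dropping the finitely many indices with $k\leq K$ leaves a subsequence whose lower density remains $1$, hence exceeds $1-\epsilon$ as required.

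The main obstacle is the careful semiclassical treatment at the vertical stalk walls $x=\pm r_1$, which lie in $\partial M_t$ and intersect $\mathrm{supp}(1-\chi_\delta)$. The spectral cutoff $\psi(|\xi|^2)$ microlocalises us away from the glancing set $T^*\partial M_t\cap\{|\xi|=1\}$, so the standard interior semiclassical calculus applies on $\mathrm{supp}(b_\delta)$; the remaining boundary analysis of semiclassical measures is precisely the content of Theorem \ref{galkowski}, which is why the argument reduces to invoking that result.
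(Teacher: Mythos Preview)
Your approach has a genuine gap at exactly the point you flag in your last paragraph. The support of $1-\chi_\delta$ is the set $\{x\in[-r_1,-r_1+2\delta]\cup[r_1-2\delta,r_1]\}$, which touches the vertical stalk walls $x=\pm r_1\subset\partial M_t$. Semiclassical measures on a manifold with boundary are defined (see \eqref{semimeaseqn}) by testing against symbols compactly supported \emph{away from the boundary}; the symbol $b_\delta$ fails this, so $\langle B_\delta u_{n_k},u_{n_k}\rangle$ is simply not governed by the semiclassical measure $\mu$. Theorem~\ref{galkowski} tells you the structure of $\mu|_{U_t}$, but it says nothing about mass that may concentrate at $\partial M_t$ and escape detection by interior symbols. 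Your claim that $\psi(|\xi|^2)$ microlocalises away from the glancing set is also incorrect: that cutoff is in the \emph{magnitude} of $\xi$, whereas the glancing set on the flat wall $x=r_1$ is where the normal component $\xi_1$ vanishes, which is untouched by $\psi$. Relatedly, $\psi(-h_k^2\Delta)$ is defined via the Dirichlet Laplacian and is not a standard interior pseudodifferential operator near $\partial M_t$, so the reduction to a Weyl quantisation with symbol $b_\delta$ is not justified.

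This is precisely why the paper takes a completely different route: it establishes \emph{pointwise} spectral projector bounds $\|\eta_t\,1_{[\lambda,\lambda+1)}(\sqrt{-\Delta})\|_{L^2\to L^\infty}=O(\lambda^{1/2})$ and $\|\eta_t\nabla\,1_{[\lambda,\lambda+1)}(\sqrt{-\Delta})\|_{L^2\to L^\infty}=O(\lambda^{3/2})$ near the flat boundary via small-time wave kernel analysis (reflection principle on the half-plane), integrates the resulting bounds against the thin strip $\{|x|\in[r_1-2\delta,r_1]\}$ to get $\frac{1}{n}\sum_{j\leq n}E_j^{-1}|\langle(Q_\delta-Q)u_j,u_j\rangle|\leq C_\delta\to 0$, and then uses Chebyshev to extract the high-density subsequence. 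The boundary control comes from the projector estimates, not from any semiclassical measure argument; indeed Proposition~\ref{heatkernelprop} is what \emph{allows} the later application of Theorem~\ref{galkowski} in Proposition~\ref{flowspeed1}, by first replacing $Q$ with the interior-supported $Q_\delta$.
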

\begin{proof}
First we show that it suffices for each $t$ to establish the spectral projector estimates
\begin{equation}
\label{projest}
\|\eta_t 1_{[\lambda,\lambda+1)}(\sqrt{-\Delta})\|_{L^2(M_t)\rightarrow L^\infty(M_t)}=O(\lambda^{1/2})
\end{equation}

and
\begin{equation}
\label{gradprojest}
\|\eta_t \nabla 1_{[\lambda,\lambda+1)}(\sqrt{-\Delta})\|_{L^2(M_t)\rightarrow L^\infty(M_t)}=O(\lambda^{3/2}).
\end{equation}

Here $\eta_t=\eta\circ I_t^{-1}$ where $\eta:M_1\rightarrow \mathbb{R}$ is an fixed smooth cutoff function supported and equal to $1$ in a neighbourhood of ${\partial M_1 \cap \textrm{spt}(\phi)}$ such that $\eta$ vanishes in a neigbourhood of the semidisk.


Applying $\eta_t\nabla 1_{[\lambda,\lambda+1)}(\sqrt{\Delta})$ to $\sum_{\lambda_j\in[\lambda,\lambda+1)} a_j u_j$ and using the estimate \eqref{gradprojest} then yields

\begin{eqnarray*}
\left|\eta_t(x)\sum_{\lambda_j\in[\lambda,\lambda+1)}a_j\nabla u_j(x)\right|&\leq& C\lambda^{3/2}\left\|\sum_{\lambda_j\in[\lambda,\lambda+1)}a_j u_j\right\|_{L^2}\\ 
&\leq& C\lambda^{3/2}\left(\sum_{\lambda_j\in[\lambda,\lambda+1)} |a_j|^2\right)^{1/2}
\end{eqnarray*}

for each $x\in M_t$.

Setting $a_j=\nabla u_j(x)$ then yields the estimate
\begin{equation}
\label{projest2}
\eta_t(x)^2\sum_{\lambda_j\in[\lambda,\lambda+1)} |\nabla u_j(x)|^2\leq C \lambda^3.
\end{equation}

Similarly, we obtain
\begin{equation}
\label{gradprojest2}
\eta_t(x)^2\sum_{\lambda_j\in[\lambda,\lambda+1)} |u_j(x)|^2\leq C \lambda.
\end{equation}

The estimates \eqref{projest2} and \eqref{gradprojest2} then allow us to control each term of \eqref{heatkernel} in an average sense.

For example, as only the the horizontal component $1-\chi_\delta$ of the cutoff function in \eqref{heatkernel} is $\delta$-dependent, we can integrate by parts in the second order term in \eqref{heatkernel} without loss. 

Then, by writing $\eta_{\delta}$ to denote the cutoff function in the new second order term and choosing $\delta >0$ sufficiently small so that $\eta_\delta=\eta\eta_\delta$, the contribution of these terms to \eqref{heatkernel} is controlled by
\begin{eqnarray*}
& &E^{-1}\sum_{E_j\leq E} E_j^{-1}\int_M \eta_\delta(x)|\nabla u_j(x)|^2 \, dx\\
&\sim &E^{-1}\sum_{k=1}^{E^{1/2}-1}\sum_{\lambda_j\in [k,k+1)} E_j^{-1}\int_M \eta_\delta(x)|\nabla u_j(x)|^2 \, dx\\
&\leq & E^{-1}\sum_{k=1}^{E^{1/2}-1}k^{-2} \int_M \eta_\delta(x)\eta(x)\sum_{\lambda_j\in [k,k+1)}|\nabla u_j(x)|^2 \, dx\\
& \leq & CE^{-1}\left(\sum_{k=1}^{E^{1/2}-1} k\right) \int_M \eta_\delta(x)\, dx\\
&\leq & \hat{C_\delta}
\end{eqnarray*}
for sufficiently small $\delta>0$, where $\hat{C_\delta}\rightarrow 0$ as $\delta\rightarrow 0$.

Together with analogous estimates for lower order terms, we obtain the estimate
\begin{equation}
\frac{1}{n}\sum_{j=1}^n E_j^{-1}|\langle(Q_\delta-Q)u_j,u_j)\rangle|<C_\delta
\end{equation}
where $C_\delta\rightarrow 0$ as $\delta\rightarrow 0$.

By taking $\delta$ sufficiently small that $C_\delta < \epsilon^2$, we ensure that the collection of $j$ with $E_j^{-1}|\langle(Q_\delta-Q)u_j,u_j)\rangle|\geq \epsilon$ has upper density at most $\epsilon$.\\

The estimate \eqref{projest} follows from Proposition 8.1 in \cite{restrict}. Note that the finite propagation speed of the operator $\cos(t\sqrt{-\Delta})$, the post-composition with a cutoff near a flat boundary, and the small-time nature of the argument together imply that $M_t$ can be treated as the half-plane, which certainly satisfies the geometric assumptions of the cited result.

By inserting the gradient operator in the dual estimate, it remains to control  $\|1_{[\lambda,\lambda+1)}(\sqrt{-\Delta})\nabla \eta_t\|_{L^1(M_t)\rightarrow L^2(M_t)}$ in order to give us \eqref{gradprojest}. 

The argument in the proof of Proposition 8.1 in \cite{restrict} allows us to replace the spectral projector by a smooth spectral projector $\rho_\lambda^{ev}(\sqrt{-\Delta})$ where ${\rho_\lambda^{ev}(s)=\rho(s - \lambda)+\rho(-s-\lambda)}$ and $\rho\in\mathcal{S}(\R)$ has non-negative Fourier transform supported in $[\epsilon/2,\epsilon]$ for some sufficiently small $\epsilon$.

So it suffices to estimate the $L^1\rightarrow L^2$ norm of the operator
\begin{equation}
\rho_\lambda^{ev}(\sqrt{-\Delta})\nabla=\frac{1}{\pi}\int_\R \cos(t\sqrt{-\Delta})(e^{-it\lambda}\hat{\chi}(t)+e^{it\lambda}\hat{\chi}(-t))\nabla \eta(x) \, dt.
\end{equation}

This integral is supported close to $t=0$, and hence by finite propagation speed, the kernel of the wave equation solution operator $\cos(t\sqrt{-\Delta})$ on $M$ is identical to that of the half-plane.

Moreover, the kernel of the wave equation solution operator on the half-plane can be obtained from the free space wave kernel by the reflection principle, and their $L^1\rightarrow L^2$ norms are identical.

This implies that it suffices to prove the estimate with the kernel for $\cos(t\sqrt{-\Delta})$ replaced by the free space wave kernel.

So the kernel to be estimated is
\begin{eqnarray*}
& &\frac{1}{4\pi^3}\int_\R \int_{\R^2}\int_{\R^2} e^{i(x-y)\cdot \xi}(e^{-it\lambda}\hat{\chi}(t)+e^{it\lambda}\hat{\chi}(-t))\xi \cos(|\xi|t)\eta(x)\,dy \,d\xi \, dt\\
&=& \nabla_x(K_\lambda(x,y))\eta(x)\\
&=& \nabla_x(\lambda^{(n-1)/2}a_\lambda(x,y)e^{i\lambda\psi(x,y)})\eta(x)\\
&=& O(\lambda^{(n+1)/2})
\end{eqnarray*}
where $K_\lambda,a_\lambda,\psi$ are as in Lemma 5.13 from \cite{sogge}, which we make use of in our penultimate line.

Duality then completes the proof of \eqref{gradprojest} and the proposition.
\end{proof}
Proposition \ref{hadamard} allows us to use Theorem \ref{galkowski} and Proposition \ref{heatkernelprop} to control the flow speed of a full density subsequence of the eigenfunctions for any fixed $t$.

\begin{prop}
\label{flowspeed1}
For each $t\in(0,2]$ there exists a full density subsequence $(n_k)$ of the positive integers such that
\begin{equation}
\liminf_{k\rightarrow\infty} E_{n_k}^{-1}(t)\dot{E}_{n_k}(t) \geq -\frac{\dot{A}(t)}{A(t)(1-d(t))}
\end{equation}
where $d(t)$ denotes the proportion of the phase space volume that is in the completely integrable region $S^{*}M_t\setminus U_t$.
\end{prop}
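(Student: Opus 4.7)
The plan is to combine Proposition \ref{hadamard}, Proposition \ref{heatkernelprop}, and Theorem \ref{galkowski}. By the Hadamard formula, with $h = E_{n_k}^{-1/2}$,
\[
E_{n_k}^{-1}(t)\dot E_{n_k}(t) = -\frac{1}{2}\langle h^2 Q u_{n_k}(t), u_{n_k}(t)\rangle.
\]
For each $\epsilon > 0$, Proposition \ref{heatkernelprop} supplies a $\delta > 0$ and a subsequence of lower density at least $1-\epsilon$ along which $Q$ may be replaced by $Q_\delta = \chi_\delta Q$ at an additive cost of $\epsilon$. The spatial support of $Q_\delta$ then sits strictly in the interior of the stalk $R_t$, hence modulo $\mu_L$-null sets inside the ergodic region $U_t$, since every trajectory through the open stalk equidistributes.

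Writing the $\xi$ variables as $(\zeta,\eta)$, the dominant piece of $Q$ is $-4\phi_t \partial_y^2$, and so
\[
\sigma(h^2 Q_\delta)(x,y,\xi) = 4\chi_\delta(x)\phi_t(x,y)\eta^2 \geq 0.
\]
Intersecting the Hadamard subsequence with the full-density subsequence from Theorem \ref{galkowski}, I may assume every associated semiclassical measure $\mu$ satisfies $\mu|_{U_t} = a\mu_L|_{U_t}$ for some $a = a(\mu) \geq 0$. Normalising $\mu_L$ so that $\mu_L(U_t) = 1 - d(t)$ and using that $\mu$ is a probability measure on $\mathcal{D}_t$ forces $a(1-d(t)) = \mu(U_t) \leq 1$, hence $a \leq (1-d(t))^{-1}$. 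Combining this with the non-negativity and support of $\sigma(h^2 Q_\delta)$,
\[
\limsup_k \langle h^2 Q_\delta u_{n_k}, u_{n_k}\rangle \leq \frac{1}{1-d(t)}\int_{\mathcal{D}_t} 4\chi_\delta \phi_t \eta^2\, d\mu_L.
\]

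The final step is the explicit evaluation of this Liouville integral. Parametrising covectors by $\xi = (\cos\theta,\sin\theta)$ gives $d\mu_L = (2\pi A(t))^{-1}dx\,dy\,d\theta$, and $\int_0^{2\pi}\sin^2\theta\, d\theta = \pi$; meanwhile a change of variables through $I_t$ (Jacobian $1+(t-1)\phi$) using $\phi_t = (\phi R_t^2)\circ I_t^{-1}$ with $R_t^2 = (1+(t-1)\phi)^{-1}$ collapses neatly to $\int_{M_t}\phi_t\, dx\, dy = \int_{M_1}\phi(y)\, dx\, dy = 2r_1 = \dot A(t)$. Hence the integral tends to $2\dot A(t)/A(t)$ as $\delta \to 0$, and a standard diagonalisation over $\epsilon, \delta \to 0$ produces one full-density subsequence realising
\[
\liminf_k E_{n_k}^{-1}(t)\dot E_{n_k}(t) \geq -\frac{\dot A(t)}{A(t)(1-d(t))}.
\]
The main care is threefold: diagonalising over the Hadamard/Galkowski/$\delta$-cutoff selections to land on a single full-density sequence, verifying that $\chi_\delta \phi_t$ is genuinely supported in $U_t$ (which is what permits Galkowski's structure theorem to \emph{compute} rather than merely bound the limiting integral), and keeping the $\leq 1/(1-d(t))$ constraint on the Galkowski constant $a$ tight enough to yield the sharp bound.
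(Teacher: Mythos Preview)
Your proof is correct and follows essentially the same route as the paper's: Hadamard's formula reduces $E_{n_k}^{-1}\dot E_{n_k}$ to a matrix element of $h^2Q$, Proposition~\ref{heatkernelprop} allows the replacement $Q\to Q_\delta$ on a density $\geq 1-\epsilon$ subsequence, Theorem~\ref{galkowski} forces every associated semiclassical measure to be $a\mu_L$ on $U_t$ with $a\leq (1-d(t))^{-1}$, and the explicit Liouville integral of $4\phi_t\eta^2$ yields $2\dot A(t)/A(t)$. The paper packages the final diagonalisation over $\epsilon\to 0$ as an appeal to Lemma~\ref{densitylemma}, which is exactly the ``standard diagonalisation'' you invoke.
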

\begin{proof}
From Proposition \ref{hadamard}, Proposition \ref{heatkernelprop} and Theorem \ref{galkowski}, for all $\epsilon>0$ we may choose a $\delta>0$ and a subsequence of eigenfunctions with lower density bounded below by $1-\epsilon$ such that we have the estimate \eqref{heatkernel} and such that we have a unique semiclassical measure $\mu$ with $\mu|_{U_t}=a\mu_L|_{U_t}$ for some non-negative constant $a$.
We immediately have
\begin{equation}
a=\frac{\mu(U_t)}{1-d(t)}\leq \frac{1}{1-d(t)}.
\end{equation}

The definition of semiclassical measures then implies
\begin{eqnarray*}
\liminf_{k\rightarrow\infty} E_{n_k}^{-1}(t)\dot{E}_{n_k}  &\geq & -\frac{1}{2}\int_{S^*M} \sigma(Q_\delta) \, d\mu-\epsilon\\
&\geq & -\frac{1}{2}\int_{S^*M} 4\phi_t\xi_2^2 \, d\mu-\epsilon \\
&\geq & -\frac{2}{1-d}\int_{S^*M} \phi_t\xi_2^2 \, d\mu_L-\epsilon\\
&\geq & -\frac{1}{\pi(1-d(t))A(t)}\int_M \int_0^{2\pi}\phi_t(x)\sin^2(\theta)\,d\theta\, dx -\epsilon\\
&\geq & -\frac{\dot{A}(t)}{(1-d)A(t)}-\epsilon.
\end{eqnarray*}

We can then apply Lemma \ref{densitylemma} to obtain a full density subsequence of eigenfunctions satisfying the estimate \eqref{flowspeedeq}.
\end{proof}

Moreover, we can strengthen the above to an almost-uniform result.

\begin{prop}
\label{flowspeed}
For any $\epsilon>0$, there exists a full density subsequence $(n_k)$ of positive integers and a family of sets $B_{n_k}\subseteq (0,2]$ with $m(B_{n_k})\rightarrow 0$ such that
\begin{equation}
\label{flowspeedeq}
E_{n_k}^{-1}(t)\dot{E}_{n_k}(t) > -\frac{\dot{A}(t)}{A(t)(1-d(t))}-\epsilon
\end{equation}
for each $t\in (0,2]\setminus B_{n_k}$.
\end{prop}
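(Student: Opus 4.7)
My plan is to upgrade the pointwise-in-$t$ full-density statement of Proposition \ref{flowspeed1} to the almost-uniform-in-$t$ version via a Fubini/dominated convergence argument on the Cesàro averages of the ``bad set'' indicators, followed by a density extraction in the spirit of Lemma \ref{densitylemma}.

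First, fix $\epsilon > 0$ and, for each $n \in \N$, define
\[
B_n := \left\{t \in (0,2] : E_n^{-1}(t)\dot{E}_n(t) \leq -\frac{\dot{A}(t)}{A(t)(1-d(t))} - \epsilon\right\}.
\]
These are Lebesgue measurable: the eigenvalue branches $E_n(t)$ can be chosen piecewise smooth in $t$ as in \cite{hassellque}, so $\dot{E}_n(t)$ exists outside a finite set and the function $f_n(t) := E_n^{-1}(t)\dot{E}_n(t) + \dot{A}(t)/(A(t)(1-d(t)))$ is continuous on each smooth piece. Proposition \ref{flowspeed1} asserts that for every fixed $t \in (0,2]$ there is a full-density subsequence of $\N$ along which $f_n(t) > -\epsilon$ eventually; equivalently, the index set $\{n : t \in B_n\}$ has density $0$, so
\[
\frac{1}{N}\sum_{n=1}^N \mathbf{1}_{B_n}(t) \longrightarrow 0 \quad \text{for each } t \in (0,2].
\]

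Next, since $0 \leq \mathbf{1}_{B_n}(t) \leq 1$ and $(0,2]$ has finite Lebesgue measure, the dominated convergence theorem yields
\[
\frac{1}{N}\sum_{n=1}^N m(B_n) = \int_0^2 \frac{1}{N}\sum_{n=1}^N \mathbf{1}_{B_n}(t)\,dt \longrightarrow 0.
\]
Thus the Cesàro averages of the bounded non-negative sequence $m(B_n)$ tend to zero. A standard consequence, essentially the content of Lemma \ref{densitylemma}, is that for each $\delta > 0$ the set $\{n \in \N : m(B_n) > \delta\}$ has density zero, and a diagonal argument over $\delta = 1/k$ then produces a single full-density subsequence $(n_k) \subset \N$ along which $m(B_{n_k}) \to 0$. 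This is the desired subsequence: the inequality \eqref{flowspeedeq} holds by construction for every $t \in (0,2] \setminus B_{n_k}$.

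The whole argument is really just a quantifier swap allowing one $n$-sequence to work simultaneously for almost every $t$; purely measure-theoretic once the pointwise version is in hand. I do not anticipate any serious obstacle beyond the routine measurability verification noted above, since all of the genuine semiclassical input, namely the interior Hadamard formula (Proposition \ref{hadamard}), the heat-kernel-based cut-off estimate (Proposition \ref{heatkernelprop}), and Galkowski's interior equidistribution theorem (Theorem \ref{galkowski}), has already been absorbed into Proposition \ref{flowspeed1}.
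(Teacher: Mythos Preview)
Your proof is correct and is essentially the same argument as the paper's, just organized as a direct proof rather than by contradiction: both hinge on applying dominated/bounded convergence to the Ces\`aro averages $\frac{1}{N}\sum_{n\le N}\mathbf{1}_{B_n}(t)$, using Proposition~\ref{flowspeed1} for the pointwise input and Lemma~\ref{densitylemma} for the final extraction. The paper instead supposes some $S_\delta=\{n:m(B_n)>\delta\}$ has positive upper density and uses the same bounded convergence step to locate a single $t$ lying in a positive-density family of $B_n$'s, contradicting Proposition~\ref{flowspeed1}; your Chebyshev-on-Ces\`aro step is the contrapositive of that.
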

\begin{proof}
For each $\delta>0$ we define the subset $S_\delta\subseteq \mathbb{N}$ as the collection of $n\in\mathbb{N}$ such that
\begin{equation}
m(\{t\in (0,2]:E_{n}^{-1}(t)\dot{E}_{n}(t) \leq -\frac{\dot{A}(t)}{A(t)(1-d(t))} -\epsilon\})>\delta.
\end{equation}

If every $S_\delta$ were of zero density, we could write $S'_\delta:=\mathbb{N}\setminus S_\delta$ and use Lemma \ref{densitylemma} to assemble a full density set satisfying the claims of the proposition.

Now suppose that $S_\delta$ has positive upper density for some $\delta>0$. 

Since for every $n\in S_\delta$, the sets $B_n=\{t:E_{n}^{-1}(t)\dot{E}_{n}(t) \leq -\frac{\dot{A}(t)}{A(t)(1-d(t))} -\epsilon\}$ have measure bounded below and are subsets of a set with finite measure, there must exist a further positive density subset $\hat{S}_\delta\subseteq S_\delta$ such that 
\begin{equation}
\bigcap_{n\in\hat{S}_\delta} B_n\neq \emptyset.
\end{equation} 
This can be seen for instance by applying the bounded convergence theorem to the function
\begin{equation}
\frac{1}{n}\sum_{j=1}^n 1_{B_j}.
\end{equation}
The existence of a $t$ in this intersection contradicts Proposition \ref{flowspeed1} and hence completes the proof.
\end{proof}

The almost-uniform result in Proposition \ref{flowspeed} can for our purposes be treated as a uniform bound on speed for large $E_j$, in light of the following weaker bound for $t$ in the sets $B_j$ of diminishing measure for which \eqref{flowspeedeq} does not hold.

\begin{prop}
\label{almostuniform}
There exists a positive constant $C$ such that for every $t\in (0,2]$ and every $j$, we have
\begin{equation}
\dot{E}_j(t)=-\frac{1}{2}\langle Qu_j,u_j\rangle\geq -CE_j(t)
\end{equation}
\end{prop}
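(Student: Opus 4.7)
The plan is to establish a uniform bound $|\langle Q u_j, u_j \rangle| \leq C E_j(t)$ over all $t \in (0,2]$ and all $j \in \N$; the inequality in the proposition then follows directly from the Hadamard formula of Proposition \ref{hadamard}. I would expand using the explicit expression $Q = \phi_t'' - 4\phi_t' \partial_y - 4\phi_t \partial_y^2$ from Proposition \ref{hadamard} and estimate each of the three terms separately.

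The zeroth-order term satisfies $|\langle \phi_t'' u_j, u_j\rangle| \leq \|\phi_t''\|_\infty$. For the first-order term, Cauchy--Schwarz and the standard identity $\|\nabla u_j\|_{L^2}^2 = E_j$ (obtained by pairing $-\Delta u_j = E_j u_j$ against $u_j$ and using the Dirichlet condition) give $|\langle \phi_t' \partial_y u_j, u_j\rangle| \leq \|\phi_t'\|_\infty \sqrt{E_j}$. For the second-order term, I would integrate by parts in $y$: since $\phi$ is supported near $y = -1/2$ in $M_1$, the pushforward $\phi_t$ is supported strictly in the interior of the vertical range of the stalk, disjoint from the horizontal boundary components at $y = 0$ and $y = -t$, while $u_j$ vanishes on the vertical sides by the Dirichlet condition. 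Hence no boundary contributions arise, and we obtain
\begin{equation}
\langle \phi_t \partial_y^2 u_j, u_j\rangle = -\langle \phi_t' u_j, \partial_y u_j\rangle - \langle \phi_t \partial_y u_j, \partial_y u_j\rangle,
\end{equation}
which is bounded in absolute value by $\|\phi_t'\|_\infty \sqrt{E_j} + \|\phi_t\|_\infty E_j$. Summing, $|\langle Q u_j, u_j\rangle| \leq C_1 + C_2 \sqrt{E_j} + C_3 E_j$ with constants depending only on $\|\phi_t\|_\infty$, $\|\phi_t'\|_\infty$, and $\|\phi_t''\|_\infty$.

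It remains to absorb the lower-order contributions. The factor $\phi_t = (\phi R_t^2) \circ I_t^{-1}$ depends smoothly on $t$, and the non-degeneracy of the metric $g_t = dx^2 + (1+(t-1)\phi)^2 dy^2$ keeps $1 + (t-1)\phi$ bounded away from zero uniformly on $t \in [0,2]$, so the three $L^\infty$ norms above are uniformly bounded in $t \in (0,2]$. Moreover, $E_1(t)$ is continuous on $[0,2]$ and strictly positive (dominated by the first Dirichlet eigenvalue of any fixed subdomain), hence bounded below by some $E_{\min} > 0$, and for every $j$ we have $E_j(t) \geq E_{\min}$. This lets us absorb the constant and $\sqrt{E_j}$ contributions into the $E_j$ term, yielding $|\langle Q u_j, u_j\rangle| \leq C E_j(t)$ as required. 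The argument is elementary; the only point that needs care is confirming that the support of $\phi_t$ avoids the horizontal parts of $\partial M_t$ so that integration by parts in $y$ introduces no boundary terms, after which everything reduces to Cauchy--Schwarz and the eigenvalue identity for $\|\nabla u_j\|_{L^2}$.
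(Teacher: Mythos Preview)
Your argument is correct and follows essentially the same approach as the paper: expand $Q$ into its three terms, bound the coefficients $\phi_t,\phi_t',\phi_t''$ uniformly in $t$, and control the derivative terms via Cauchy--Schwarz and the identity $\|\nabla u_j\|_{L^2}^2=E_j$. Your treatment is in fact more careful than the paper's sketch: you explicitly integrate the $\phi_t\partial_y^2$ term by parts and justify why no boundary terms appear, whereas the paper writes the bound with $|(\partial_y^2 u_j)u_j|$ inside the integral and passes directly to $\langle -\Delta u_j,u_j\rangle$ without spelling this out; you also make explicit the uniform lower bound $E_j(t)\geq E_{\min}>0$ needed to absorb the $1$ and $\sqrt{E_j}$ contributions, which the paper leaves implicit.
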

\begin{proof}
Integration by parts and Cauchy--Schwartz on the left-hand side provides us with a lower bound of 
\begin{eqnarray*}
& &-\hat{C} \iint_M |(\partial_y^2 u_j)u_j| +|(\partial_y u_j)u_j|+|u_j|^2 \, dx\, dy\\
&\geq & -\hat{C}(\langle -\Delta u_j,u_j \rangle+\langle -\Delta u_j, u_j \rangle^{1/2}+1 )\\
& \geq & -CE_j
\end{eqnarray*}
for a positive constant $C$ that is uniform in time.
\end{proof}

\begin{cor}
\label{speedcor}
For any $\epsilon>0$, there exists $\delta>0$ and a full density subsequence $(n_k)$ such that we have
\begin{equation}
-\int_S \dot{E}_{n_k}(t)\, dt\leq E_{n_k}(t_1)\left(\frac{\dot{A}(t_1)}{A(t_1)(1-d(t_1))}+\epsilon\right)(t_2-t_1)
\end{equation}
for any measurable set $S\subseteq (0,2]$ with measure greater than $\delta$.
\end{cor}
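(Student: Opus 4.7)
The plan is to integrate the pointwise lower bound on $\dot E_{n_k}$ from Proposition \ref{flowspeed}, using the crude uniform estimate of Proposition \ref{almostuniform} to handle the exceptional sets $B_{n_k}$ on which Proposition \ref{flowspeed} fails. Since $-\dot E_{n_k}(t)\geq 0$ (eigenvalues are non-increasing under normally expanding domain variations, by the Hadamard formula \eqref{classhad}), and the hypothesis $m(S)>\delta$ forces $t_2-t_1>\delta$ when $t_1=\inf S$ and $t_2=\sup S$, it suffices to prove the claimed bound with $S$ replaced by $[t_1,t_2]$.

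I would first apply Proposition \ref{flowspeed} with parameter $\epsilon/2$, obtaining a full-density subsequence $(n_k)$ together with sets $B_{n_k} \subseteq (0,2]$ with $m(B_{n_k}) \to 0$ on whose complement
\begin{equation}
-\dot{E}_{n_k}(t) \leq E_{n_k}(t)\left(\frac{\dot{A}(t)}{A(t)(1-d(t))} + \frac{\epsilon}{2}\right).
\end{equation}
Splitting $-\int_{[t_1,t_2]}\dot E_{n_k}\, dt$ into the contributions from $[t_1,t_2]\setminus B_{n_k}$ and $[t_1,t_2]\cap B_{n_k}$, and using monotonicity of $E_{n_k}$ in $t$ to replace $E_{n_k}(t)$ by $E_{n_k}(t_1)$, the good contribution is bounded by $\int_{t_1}^{t_2}E_{n_k}(t_1)\left(\dot A(t)/(A(t)(1-d(t)))+\epsilon/2\right)\, dt$. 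Choosing $\delta$ small enough that the continuous coefficient $\dot A/(A(1-d))$ varies by at most $\epsilon/4$ over any subinterval of $(0,2]$ of length $\delta$, and restricting attention to intervals $[t_1,t_2]$ of length at most $\delta$ (the regime relevant to the later argument), gives an upper bound of $E_{n_k}(t_1)\bigl(\dot A(t_1)/(A(t_1)(1-d(t_1)))+3\epsilon/4\bigr)(t_2-t_1)$ on the good contribution.

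For the contribution from $[t_1,t_2]\cap B_{n_k}$, Proposition \ref{almostuniform} yields $-\dot E_{n_k}(t)\leq CE_{n_k}(t)\leq CE_{n_k}(t_1)$, so this piece is bounded by $CE_{n_k}(t_1)m(B_{n_k})$. Since $m(B_{n_k})\to 0$, after discarding finitely many initial terms of $(n_k)$ (which preserves full density) we can arrange $Cm(B_{n_k})<(\epsilon/4)\delta\leq (\epsilon/4)(t_2-t_1)$, so this contribution is at most $E_{n_k}(t_1)(\epsilon/4)(t_2-t_1)$. Summing the two bounds yields the claimed inequality. The main obstacle is controlling the bad set $B_{n_k}$: the Proposition \ref{almostuniform} bound scales like $E_{n_k}$ itself rather than like $E_{n_k}|\dot A|/(A(1-d))$, so absorbing its contribution into the $\epsilon$-slack requires both the vanishing of $m(B_{n_k})$ and a lower bound on the interval length $t_2-t_1$, the latter supplied precisely by the hypothesis $m(S)>\delta$.
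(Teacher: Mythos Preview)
Your strategy matches the paper's one-line proof: combine Proposition~\ref{flowspeed} on the complement of $B_{n_k}$ with Proposition~\ref{almostuniform} on $B_{n_k}$, then discard finitely many terms so that $m(B_{n_k})$ is small enough relative to the lower bound $\delta$ on $m(S)$.

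There is, however, a logical tangle in your uniform-continuity step. You use the same $\delta$ for two incompatible purposes: the hypothesis $m(S)>\delta$ forces $t_2-t_1>\delta$, yet you then ``restrict attention to intervals $[t_1,t_2]$ of length at most $\delta$''. The clean fix avoids continuity altogether. The coefficient $\dot A(t)/\bigl(A(t)(1-d(t))\bigr)$ is monotone decreasing in $t$: indeed $\dot A(t)=2r_1$ is constant, $A(t)$ is increasing, and $d(t)=\mu_L(\mathcal D_t\setminus U_t)/\mu_L(\mathcal D_t)$ is decreasing because the integrable phase-space volume is $t$-independent while the total volume scales with $A(t)$. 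Hence for every $t\in[t_1,t_2]$ the coefficient is bounded by its value at $t_1$, and the good-set contribution integrates directly to $E_{n_k}(t_1)\bigl(\dot A(t_1)/(A(t_1)(1-d(t_1)))+\epsilon/2\bigr)(t_2-t_1)$ with no size restriction on the interval. The bad-set contribution is then absorbed exactly as you describe.
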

\begin{proof}
This follows from Proposition \ref{flowspeed} and Proposition \ref{almostuniform} by removing finitely many elements from the subsequence constructed in Proposition \ref{flowspeed}.
\end{proof}

\section{Main results}
\label{mushymainresultsec}

We are now in a position to prove the main result of the chapter.
\begin{thm}
\label{percivalconclusion}
The weak Percival's conjecture \ref{percival} holds for the mushroom billiard $M_t$ for any fixed inner and outer radii, and almost all ``stalk lengths" $t\in(0,2]$.
\end{thm}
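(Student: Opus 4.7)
The plan is to split the argument into two stages. First I fix a \emph{good} set $\mathcal{G} \subset (0,2]$ of stalk lengths for which spectral non-concentration holds: for infinitely many $n$, the quasi-eigenvalue windows $\bigcup_{j=1}^n [\alpha_j^2 - c, \alpha_j^2 + c]$ contain at most $n(1+\epsilon)$ Dirichlet eigenvalues of $M_t$. For such $t$ and $n$, Proposition \ref{spectral} applied with the quasimode family of Section 3.2 and the error estimates from Proposition \ref{decay} produces at least $n(1-\sqrt{\epsilon})$ eigenfunctions $u_i$ that are approximately contained in $V = \mathrm{Span}\{v_j\}_{j=1}^n$. Since each $v_j$ is microlocally supported in the completely integrable region $S^*M_t \setminus U_t$, the semiclassical measures of these trapped eigenfunctions are also supported there, producing the candidate set $B$ for Conjecture \ref{percival}.

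Second, I compare this construction to Weyl's law. The counting asymptotic \eqref{quasiasymptoticeqn} shows that the number of quasi-eigenvalues below $\lambda^2$ is asymptotically at least $\bigl(1 - \mu_L(U_t)/\mu_L(\mathcal{D}_t)\bigr)A(t)\lambda^2/(4\pi)$, so the trapped family $B$ attains the maximal upper density $d(t) := 1 - \mu_L(U_t)/\mu_L(\mathcal{D}_t)$ allowed by the Weyl total $A(t)\lambda^2/(4\pi)$. The complementary subset of a full density subsequence must then have upper density at least $\mu_L(U_t)/\mu_L(\mathcal{D}_t)$, and by Galkowski's Theorem \ref{galkowski} its semiclassical measures restricted to $U_t$ are proportional to Liouville. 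Since this complementary family $A$ carries all the remaining semiclassical mass, none of which is available to concentrate in the integrable region that is already saturated by $B$, it must equidistribute in $U_t$, delivering conditions (1)--(4) of the conjecture on $\mathcal{G}$.

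The third and hardest step is to prove that $(0,2] \setminus \mathcal{G}$ is Lebesgue null. I would argue by contradiction: if this set had positive measure, I could localize to a small interval $\mathcal{I} = [t_1,t_2]$ on which the number of eigenvalues lingering in quasi-eigenvalue windows exceeds $n(1+\epsilon)$ for all sufficiently large $n$. Weyl's law gives the average eigenvalue sweep $E_j(t_1) - E_j(t_2) \sim 4\pi j\bigl(A(t_1)^{-1} - A(t_2)^{-1}\bigr)$, and the flow speed estimate \eqref{flowspeedeq} from Corollary \ref{speedcor} provides a near-uniform upper bound on $-\dot{E}_j/E_j$. Since the quasi-eigenvalue windows are sparse within the swept energy interval $[E_j(t_2), E_j(t_1)]$ — their total length being a proportion $d(t_1)$ of the whole — the speed bound forces the proportion of $t \in \mathcal{I}$ during which $E_j(t)$ actually lies in a window to be bounded by $d(t_1)$, contradicting the failure of spectral non-concentration on $\mathcal{I}$.

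The main obstacle is executing this third step cleanly. The flow speed bound \eqref{flowspeedeq} is not pointwise but holds only off exceptional $t$-sets $B_{n_k}$ whose measures tend to zero along a full density subsequence of eigenvalue indices. Combining this with the deterministic Weyl-law bookkeeping therefore requires the weaker uniform bound from Proposition \ref{almostuniform} to absorb excursions inside the exceptional sets, together with a Fubini-type argument that trades averaging over the eigenvalue index against averaging over $t \in \mathcal{I}$. Turning the resulting inequality into a clean contradiction with the failure of spectral non-concentration — and in particular choosing $\mathcal{I}$ short enough that $A(t)$ and $d(t)$ are nearly constant, yet long enough that the Weyl sweep comfortably exceeds the typical spacing of the $\alpha_j^2$ — is the technical heart of the argument.
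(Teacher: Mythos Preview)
Your proposal is correct and follows essentially the same three-stage architecture as the paper: Theorem \ref{mainthm} and Proposition \ref{ergodicprop} carry out your first two stages (with the cluster-by-cluster application of Proposition \ref{spectral} via Proposition \ref{throwaway}), and Propositions \ref{propo} and \ref{propo2} implement exactly the Fubini/speed-bound contradiction you describe for the third stage. One small imprecision: the quasi-eigenvalue windows are not literally a proportion $d(t_1)$ of the swept interval $[E_j(t_2),E_j(t_1)]$---they are much thinner, of total length $O(c)$---and the bound $q_j < d(t_1)+\epsilon$ emerges instead from dividing the length of the \emph{complement} of the windows by the speed cap $\dot A(t_1)/\bigl(A(t_1)(1-d(t_1))\bigr)$, which is where $d(t_1)$ actually enters.
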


We prove Theorem \ref{percivalconclusion} by establishing the claim for $t\in \mathcal{G}\subset(0,2]$ satisfying a certain spectral nonconcentration property, and then prove that this set is of full measure.

For a given $c>0$ we define \emph{$c$-clusters} to be the connected components of $\cup_{i\in\N} [\alpha_i^2-c,\alpha_i^2+c]$, indexed $\mathcal{C}_k$ in increasing order.

\begin{defn}
We call $t\in(0,2]$ \emph{good} if for every $\epsilon>0$, there exists some $c>0$ with
\begin{equation}
\label{goodtime}
\liminf_{n\rightarrow \infty} \frac{\#\{j\in\N:E_j(t)\in \cup_{k=1}^n \mathcal{C}_k\}}{\#\{j\in\N:\alpha_j^2 \in \cup_{k=1}^n \mathcal{C}_k\}}<1+\epsilon^2
\end{equation}
We denote the set of good times by $\mathcal{G}$.
\end{defn}

We begin by proving the claim for fixed $t\in\mathcal{G}$.

We write $N_\textrm{semidisk}(\mathcal{C}),N_\textrm{mushroom}(\mathcal{C})$ to denote the number of quasi-eigenvalues and eigenvalues respectively contained in a given $c$-cluster $\mathcal{C}$. 

The assumption \eqref{goodtime} then implies the following Proposition.

\begin{prop}
\label{throwaway}
Suppose $t$ and $0<c<2/r_2^2$ are such that \eqref{goodtime} holds, and all but finitely many $c$-clusters contain at least as many eigenvalues as quasi-eigenvalues. Then there exists a subset $J$ of quasi-eigenvalues with upper  density at least $(1-\epsilon)$ such that each quasi-eigenvalue in $J$ is contained in a cluster $\mathcal{C}$ with 
\begin{equation}
\label{throwawayeq}
N_\textrm{semidisk}(\mathcal{C})\leq N_\textrm{mushroom}(\mathcal{C}) \leq (1+\epsilon)N_\textrm{semidisk}(\mathcal{C})
\end{equation}
\end{prop}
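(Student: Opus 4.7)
The proof is a counting/pigeonhole argument that exploits the hypothesis \eqref{goodtime}: the ``good time'' condition forces the total number of eigenvalues in the first $n$ clusters to be only marginally larger than the total number of quasi-eigenvalues there, and consequently the clusters where the excess is substantial must themselves be few.

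First, call a cluster $\mathcal{C}$ \emph{bad} if $N_\text{mushroom}(\mathcal{C}) > (1+\epsilon) N_\text{semidisk}(\mathcal{C})$, and let $J$ be the set of indices $j$ such that $\alpha_j^2$ lies in a cluster that is neither bad nor one of the finitely many exceptional clusters where $N_\text{mushroom} < N_\text{semidisk}$. By construction every $\alpha_j^2 \in J$ satisfies \eqref{throwawayeq}, so the entire task reduces to showing $J$ has upper density at least $1-\epsilon$.

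Next, introduce the bookkeeping
\begin{equation}
Q_n := \#\{j : \alpha_j^2 \in \textstyle\bigcup_{k=1}^n \mathcal{C}_k\}, \qquad M_n := \#\{j : E_j(t) \in \textstyle\bigcup_{k=1}^n \mathcal{C}_k\},
\end{equation}
and let $Q_n^B$ be the number of quasi-eigenvalues in bad clusters among $\mathcal{C}_1,\dots,\mathcal{C}_n$. Summing the inequality $N_\text{mushroom}(\mathcal{C}_k) - N_\text{semidisk}(\mathcal{C}_k) > \epsilon N_\text{semidisk}(\mathcal{C}_k)$ over bad clusters with $k\leq n$, and using $N_\text{mushroom} \geq N_\text{semidisk}$ on the remaining clusters outside a fixed finite set, yields
\begin{equation}
M_n - Q_n \;\geq\; \epsilon\, Q_n^B - O(1),
\end{equation}
so $Q_n^B \leq \epsilon^{-1}(M_n - Q_n) + O(1) = \epsilon^{-1} Q_n\bigl(M_n/Q_n - 1\bigr) + O(1)$.

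Finally, pick a subsequence $n_\nu\to\infty$ realising the $\liminf$ in \eqref{goodtime}, so $M_{n_\nu}/Q_{n_\nu} \to L < 1+\epsilon^2$. Along this subsequence $Q_{n_\nu}^B/Q_{n_\nu} < \epsilon + o(1)$. Because the $\alpha_j^2$ are enumerated in increasing order and the clusters are disjoint intervals indexed in the same order, the quasi-eigenvalues in $\bigcup_{k=1}^{n_\nu}\mathcal{C}_k$ are precisely $\alpha_1^2,\dots,\alpha_{Q_{n_\nu}}^2$; hence the set $B=\N\setminus J$ of ``bad'' indices satisfies $|B\cap[1,Q_{n_\nu}]| = Q_{n_\nu}^B + O(1)$. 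Therefore $\liminf_{N\to\infty}|B\cap[1,N]|/N \leq \epsilon$, which gives the required upper density bound for $J$. The only delicate point is the indexing identification between cluster counts and natural-number density, but this is immediate from the monotonicity of both the enumeration of $\alpha_j^2$ and the enumeration of the $\mathcal{C}_k$.
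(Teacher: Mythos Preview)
Your proof is correct and follows essentially the same approach as the paper's: both bound the total eigenvalue excess $M_n-Q_n$ from below by $\epsilon$ times the quasi-eigenvalue count in the ``bad'' clusters (the paper packages this as a single weighted inequality involving indicator functions $1_S$ and $1_F$, while you unpack it into the explicit quantities $Q_n^B$, $M_n$, $Q_n$), and then both invoke \eqref{goodtime} along a subsequence realising the $\liminf$. Your explicit remark about identifying cluster-weighted counts with natural density via the monotone enumeration of the $\alpha_j^2$ is a point the paper leaves implicit.
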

\begin{proof}
Index the $c$-clusters $\mathcal{C}_k$ in increasing order. 

Let $$S=\{k:N_\textrm{semidisk}(\mathcal{C}_k)\leq N_\textrm{mushroom}(\mathcal{C}_k)\leq(1+\epsilon)N_\textrm{semidisk}(\mathcal{C}_k)\}$$ and $$F=\{k:N_\textrm{mushroom}(\mathcal{C}_k)<N_\textrm{semidisk}(\mathcal{C}_k)\}.$$ 
From the defining property \eqref{goodtime} of $t\in\mathcal{G}$, we have:
\begin{equation}
\liminf_{n\rightarrow\infty}\frac{\sum_{k\leq n} N_\textrm{mushroom}(k)}{\sum_{k\leq n}N_\textrm{semidisk}(k)} < 1+\epsilon^2
\end{equation}
The definition of $S$ implies
\begin{multline*}
\liminf_{n\rightarrow\infty}\left(1-\frac{\sum_{k\leq n} N_\textrm{semidisk}(k)1_F(k)}{\sum_{k\leq n} N_\textrm{semidisk}(k)}\right.\\
+\left.\epsilon\frac{\sum_{k\leq n} N_\textrm{semidisk}(k)(1-1_F(k)-1_S(k))}{\sum_{k\leq n} N_\textrm{semidisk}(k)}\right)< 1+\epsilon^2.
\end{multline*}
The second term on the left-hand side is $o(1)$ from the finiteness assumption. Hence we obtain that the lower  density of $\N\setminus(S\cup F)$ is bounded above by $\epsilon$. As $F$ is finite, and consequently of density $0$, we can conclude that the upper density of $S$ is bounded below by $1-\epsilon$ as required. 
\end{proof}

\begin{thm}[Main Theorem]
\label{mainthm}
For each $t\in\mathcal{G}$, there exists $B_t\subset \N$ of density $d(t)$ such that any semiclassical measure associated to the eigenfunctions $(u_n)_{n\in B_t}$ is supported inside the completely integrable region.
\end{thm}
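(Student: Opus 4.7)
The plan is to combine the quasimode family of Proposition \ref{decay}, the spectral approximation Proposition \ref{spectral}, the cluster-counting Proposition \ref{throwaway}, and the asymptotic lower bound Proposition \ref{quasiasymptotic}, finishing with a diagonal extraction as $\epsilon \to 0$.

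First I would fix $t \in \mathcal{G}$ and $\epsilon > 0$, and invoke the good-time property with threshold $\epsilon^2$ to select $c > 0$ for which \eqref{goodtime} holds. Since Proposition \ref{decay} gives $\epsilon_1 = O(\alpha_i^{-\infty})$ and $\epsilon_2 = O(\min(\alpha_i,\alpha_j)^{-\infty})$, the hypothesis \eqref{errormatrixbound} of Proposition \ref{spectral} is satisfied for any prescribed $\delta > 0$ once the quasimode energies in a cluster are large enough. Finitely many low-energy clusters can be discarded, so the finiteness exception in Proposition \ref{throwaway} is harmless.

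Next, for each large $n$ I would apply Proposition \ref{throwaway} to obtain a subset $J^\epsilon_n$ of the first $n$ quasi-eigenvalues, of upper density at least $1-\epsilon$, that lie inside $c$-clusters $\mathcal{C}$ satisfying \eqref{throwawayeq}. In each such cluster I apply Proposition \ref{spectral}, choosing $\delta$ small enough that $\epsilon^{1/4}+2\delta^{3/2} < 2\epsilon^{1/4}$: at least a fraction $1-\sqrt{\epsilon}$ of the eigenfunctions whose eigenvalues lie in $\mathcal{C}$ satisfy $\|u_j-\pi_V u_j\| < 2\epsilon^{1/4}$, where $V$ is the span of the quasimodes in that cluster. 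Since each quasimode $v_i$ has all semiclassical mass in the completely integrable region $S^*M_t\setminus U_t$, the almost-orthogonality of the $v_i$ implies the same for every normalised element of $V$. Hence for any zeroth-order semiclassical pseudodifferential operator $A$ with principal symbol supported in $U_t$, the selected eigenfunctions satisfy
\begin{equation}
\langle A(x,E_j^{-1/2}D)u_j,u_j\rangle = O(\epsilon^{1/4}) + o(1)
\end{equation}
as $j\to\infty$ through this family.

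For the density count, the number of selected eigenfunctions whose eigenvalues lie below $\lambda^2$ is at least $(1-\sqrt{\epsilon})(1-\epsilon)$ times the number of quasi-eigenvalues in clusters indexed by $J^\epsilon_n$. By Proposition \ref{quasiasymptotic} this is at least $(1-\sqrt{\epsilon})(1-\epsilon)d(t)\cdot A(t)\lambda^2/(4\pi)$ infinitely often, and dividing by Weyl's law $N_t(\lambda^2)\sim A(t)\lambda^2/(4\pi)$ gives upper density at least $(1-\sqrt{\epsilon})(1-\epsilon)d(t)$ for the set $B_t^\epsilon\subset\mathbb{N}$ of chosen indices.

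Finally I would let $\epsilon_k\to 0$ and extract diagonally. Choose $N_k\uparrow\infty$ so that the proportion of $B_t^{\epsilon_k}$ in $[N_k,N_{k+1})$ exceeds $(1-1/k)d(t)$ and so that $|\langle A(x,E_j^{-1/2}D)u_j,u_j\rangle| < 1/k$ for all $j\in B_t^{\epsilon_k}\cap[N_k,\infty)$ and every $A$ in a fixed countable dense family of symbols supported in $U_t$. Setting $B_t:=\bigcup_k\bigl(B_t^{\epsilon_k}\cap[N_k,N_{k+1})\bigr)$ produces a subset of upper density $d(t)$ such that every semiclassical measure associated to $(u_n)_{n\in B_t}$ assigns zero mass to $U_t$, hence is supported in the completely integrable region $S^*M_t\setminus U_t$. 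The main obstacle is precisely this diagonal step: one must ensure that both the density bound and the vanishing of the matrix coefficients on the ergodic side survive simultaneously, which relies essentially on the fact that the approximation error $\epsilon^{1/4}+2\delta^{3/2}$ in Proposition \ref{spectral} can be driven to zero independently of the chosen cluster width $c$ and the intra-cluster inner-product errors.
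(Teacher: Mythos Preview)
Your argument correctly assembles the lower bound: the chain Proposition~\ref{throwaway} $\to$ Proposition~\ref{spectral} $\to$ Proposition~\ref{quasiasymptotic} $\to$ diagonal extraction does produce a set $B_t$ of upper density at least $d(t)$ whose associated semiclassical measures vanish on $U_t$. This part matches the paper's approach closely (the paper packages the diagonal step via Lemma~\ref{densitylemma2} rather than a countable dense family of symbols, but that is cosmetic).

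However, there is a genuine gap. The theorem asserts that $B_t$ has (upper) density \emph{equal to} $d(t)$, and your construction only gives the lower bound. Nothing in your argument prevents $B_t$ from having upper density strictly larger than $d(t)$: the quasimode counting of Proposition~\ref{quasiasymptotic} is itself only a lower bound, and Proposition~\ref{spectral} could in principle capture more eigenfunctions than quasimodes in the selected clusters. The paper closes this with a separate argument you have omitted entirely: one applies the local Weyl law to a cutoff $\chi_\epsilon$ supported in $U_t$ with $\int_{\mathcal{D}_t}\chi_\epsilon\,d\mu_L > (1-\epsilon)\mu_L(U_t)$, splits the Weyl sum over $B_t$ and $B_t^c$, uses the localisation property \eqref{local} to kill the $B_t$ contribution, and then invokes Theorem~\ref{galkowski} (which bounds $a \le \mu_L(U_t)^{-1}$) to show the remaining summands over $B_t^c$ are at most $1$ for a full-density subset. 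This forces $\limsup_n n^{-1}\#\{j\le n : j\in B_t^c\} \ge \mu_L(U_t)$, i.e.\ the upper density of $B_t$ is at most $d(t)$. Without this step the theorem is not proved.
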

\begin{proof}

First, we fix $\epsilon >0$ and choose $c>0$ small enough so that the inequality \eqref{goodtime} holds.

Proposition \ref{decay} implies that we may choose the $\epsilon_1,\epsilon_2$ in  applications of Proposition \ref{spectral} to the increasing sequence of $c$-clusters to decay faster than any polynomial in the energy infima of these $c$-clusters.

By Weyl's law, this ensures that for all but possibly finitely many $c$-clusters, we have \eqref{errormatrixbound} with $\delta$ decaying faster than any polynomial in energy. We remove the exceptional $c$-clusters, without any loss in density of our subset.\\

In light of Proposition \ref{throwaway}, we can then select a subset of the remaining $c$-clusters such that the included subset of quasi-eigenvalues has upper density exceeding $1-\epsilon$ and such that \eqref{throwawayeq} holds for each cluster.

We can now apply Proposition \ref{spectral} on a cluster-by-cluster basis, with parameter $\delta\rightarrow 0$ faster than any polynomial in energy.

From the $L^2$ boundedness of pseudodifferential operators with compactly supported symbols, this implies that for each $\epsilon$, we get a subsequence of eigenfunctions $u_{j_k}$ such that any associated semiclassical measure $\mu$ satisfies
\begin{equation}
\mu(\mathcal{D}_t\setminus U_t)\geq 1-\epsilon^{1/4}.
\end{equation}

Moreover, by comparison to Proposition \ref{quasiasymptotic} and Weyl's law for the mushroom, we obtain a lower bound of $d(t)-h(\epsilon)$ for the upper density of this eigenfunction subsequence with  $h(\epsilon)\rightarrow 0$ as $\epsilon \rightarrow 0$. So for each $\epsilon>0$, we can find a $c$ and a subsequence of $(u_n)$ with upper  density at least $d(t)-h(\epsilon)$ which concentrates in the completely integrable region up to $\epsilon^{1/4}$ of its semiclassical mass.

We now take a sequence $\epsilon_j\rightarrow 0$ and denote the corresponding eigenvalue window widths by $c_j$. We write $B_{j,t}$ to denote the corresponding  concentrating eigenfunction subsequences.

Lemma \ref{densitylemma2} then allows us to obtain a subsequence $B_t$ of lower density at least $d(t)$ such that any associated semiclassical measure $\mu$ satisfies
\begin{equation}
\label{local}
\mu(\mathcal{D}_t\setminus U_t)= 1.
\end{equation}

To show that the upper density of $B_t$ cannot exceed $d(t)$, we choose a function $\chi_\epsilon\in\mathcal{C}_c^{\infty}(\R^2\times \R^2)$ supported in the interior of $M$ such that the following properties are satisfied.
\begin{itemize}
\item $0\leq \chi_\epsilon \leq 1$
\item $\chi_\epsilon | _{\mathcal{D}_t\setminus U_t}=0$
\item $\int_{\mathcal{D}_t} \chi_\epsilon \, d\mu_L > (1-\epsilon)\mu_L(U_t)$.
\end{itemize}

Applying the local Weyl law (Lemma 4 from \cite{zelditch-zworski}) to the corresponding semiclassical  pseudodifferential operator $\chi_\epsilon(x,hD)$, we obtain

\begin{equation}
\frac{1}{n}\sum_{j\in [1,n]\cap B_t} \langle \chi_\epsilon(x,E_j^{-1/2}D) u_j,u_j \rangle+\frac{1}{n}\sum_{j\in [1,n]\cap B_t^c}\langle \chi_\epsilon(x,E_j^{-1/2}D) u_j,u_j \rangle > (1-\epsilon)\mu_L(U_t) \end{equation}

for sufficiently large $n$.

The localisation property \eqref{local} implies that the first summand is $o(1)$ in $n$. Hence we have
\begin{equation}
\frac{1}{n}\sum_{j\in [1,n]\cap B_t^c}\langle \chi_\epsilon(x,E_j^{-1/2}D) u_j,u_j \rangle > (1-2\epsilon)\mu_L(U_t)
\end{equation}
for sufficiently large $n$.

From Theorem \ref{galkowski} and the bound $a\leq \mu_L(U_t)^{-1}$ that is immediate from semiclassical measures being probability measures, it follows that a full density subset of the remaining summands must be bounded above by $1$.

This implies that
\begin{equation}
\frac{\#\{j\leq n: j\in B_t^c\}}{n} > (1-2\epsilon)\mu_L(U_t)
\end{equation}
for sufficiently large $n$.

Rearranging and passing to the limit $n\rightarrow \infty$ and then $\epsilon\rightarrow 0$, we obtain the required upper bound of
\begin{equation}
\limsup_{n\rightarrow\infty} \frac{\#\{j\leq n: j\in B_t\}}{n}\leq 1-\mu_L(U_t)=d(t).
\end{equation}

Hence $B_t$ is a sequence of eigenfunctions of upper density $d(t)$ with semiclassical mass supported in the completely integrable region.
\end{proof}

\begin{prop}
\label{ergodicprop}
Let $A_t=\N\setminus B_t$. Then for each $t\in \mathcal{G}$, a full density subsequence of $(u_n)_{n\in A_t}$ equidistributes in $U_t$.
\end{prop}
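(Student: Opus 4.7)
The plan is to combine Galkowski's Theorem \ref{galkowski} with the conclusion of Theorem \ref{mainthm} and a local Weyl law, using a Chebyshev-type density argument to upgrade Cesaro averages into pointwise asymptotics along a full density subsequence of $A_t$.

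Observe first that Theorem \ref{mainthm} shows $B_t$ has density $d(t) = 1 - \mu_L(U_t)$, so $A_t$ has density $\mu_L(U_t)$. By Theorem \ref{galkowski}, there exists a full density subset $G \subset \N$ along which every semiclassical measure $\mu$ has the form $\mu|_{U_t} = c\,\mu_L|_{U_t}$ for some constant $c$ depending on the convergent subsequence. Since each such $\mu$ is a probability measure we have $c \leq \mu_L(U_t)^{-1}$. Set $A_t' := A_t \cap G$, which still has full density in $A_t$.

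Next, fix a non-negative symbol $\chi \in \mathcal{C}_c^\infty(T^*M_t)$ supported in $U_t$. The local Weyl law gives $N^{-1}\sum_{j\leq N} \langle \chi(x,E_j^{-1/2}D)u_j,u_j\rangle \to \int \chi\, d\mu_L$. For $j \in B_t$ the associated semiclassical measures vanish on $U_t$ by Theorem \ref{mainthm}, so the $B_t$ contribution to this Cesaro average is $o(1)$. Dividing by $\#(A_t\cap[1,N])/N\to \mu_L(U_t)$ yields
\begin{equation}
\frac{1}{\#(A_t'\cap[1,N])}\sum_{j\in A_t',\,j\leq N}\langle \chi u_j,u_j\rangle \to \frac{\int \chi\, d\mu_L}{\mu_L(U_t)}.
\end{equation}
On the other hand, along any convergent sub-subsequence of $A_t'$ the limit is $\int \chi\, d\mu = c\int \chi\, d\mu_L \leq \int\chi\, d\mu_L/\mu_L(U_t)$. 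Hence the Cesaro average equals its asymptotic pointwise upper bound.

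A standard Chebyshev argument (if $f_j$ is bounded, $\limsup f_j \leq M$, and the Cesaro average tends to $M$, then $f_j \to M$ in density) then produces a full density subsequence of $A_t'$ on which $\langle \chi u_j,u_j\rangle \to \int\chi\,d\mu_L/\mu_L(U_t)$. A diagonal extraction over a countable family of non-negative symbols $\chi_n$ supported in $U_t$ with $\chi_n \nearrow 1_{U_t}$ produces a single full density subsequence whose associated semiclassical measures $\mu$ satisfy $\mu(U_t) = 1$; combining with Galkowski's structural statement then forces $c = \mu_L(U_t)^{-1}$ and $\mu(\mathcal{D}_t \setminus U_t) = 0$, so $\mu = \mu_L(U_t)^{-1}\mu_L|_{U_t}$, which is equidistribution in $U_t$. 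The main obstacle is arranging the Chebyshev step and diagonal procedure coherently: one needs that Cesaro convergence together with an asymptotic upper bound gives density-one pointwise convergence for each $\chi_n$, and that the density-one subsets thus produced can be diagonalised into a single full density subsequence on which the limiting semiclassical measure is simultaneously pinned down in every test direction.
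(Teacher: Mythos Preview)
Your proposal is correct and follows essentially the same route as the paper's proof: local Weyl law plus the vanishing of the $B_t$ contribution gives a Cesaro average over $A_t$ matching the pointwise upper bound coming from Theorem \ref{galkowski}, and a Chebyshev argument upgrades this to density-one pointwise convergence. The paper makes the Chebyshev step explicit (threshold $1-\sqrt{\epsilon}$, yielding a set of upper density $\geq 1-O(\sqrt\epsilon)$ on which $\mu(U_t)\geq 1-O(\sqrt\epsilon)$) and invokes Lemma \ref{densitylemma2} in place of your ``diagonal extraction over $\chi_n$''; these are the same mechanism.
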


\begin{proof}
From Theorem \ref{mainthm}, the sequence of eigenfunctions $(u_n)_{n\in B_t}$ has all semiclassical mass in the completely integrable region and $B_t$ has natural density $d(t)$.

Applying the local Weyl law again with the function $\chi_\epsilon$ from the proof of Theorem \ref{mainthm}, we obtain
\begin{equation}
\label{jt}
\frac{1}{n}\sum_{j\in [1,n]\cap B_t^c}\langle \chi_\epsilon(x,E_j^{-1/2}D) u_j,u_j \rangle > (1-2\epsilon)\mu_L(U_t)
\end{equation}
for sufficiently large $n$.

Then, splitting the summation into the set 
\begin{equation}
A_{\epsilon,t}=\{j\in B_t^c:\langle \chi_\epsilon(x,E_j^{-1/2}D) u_j,u_j \rangle < 1-\sqrt{\epsilon}\}
\end{equation} 
and its complement, the upper bound of $1$ for a full density subset of the summands in \eqref{jt} then implies:
\begin{equation}
\liminf_{n\rightarrow \infty} d_n(A_{\epsilon,t})< 2\sqrt{\epsilon}\mu_L(U_t)
\end{equation}
using the notation $d_n$ from Lemma \ref{densitylemma}.
Thus we obtain a subset of upper density exceeding $1-O(\sqrt{\epsilon})$ of $A_t$ with at least $\mu(U_t)>1-O(\sqrt{\epsilon})$ for any corresponding semiclassical measure.

An application of Lemma \ref{densitylemma2} then gives us a subsequence of $A_t$ with full upper density and all semiclassical mass in $U_t$.

Together with Theorem \ref{galkowski}, this implies that we can find a subsequence $(u_{n_k})$ of $A_t$ with full  upper density such that every associated semiclassical measure is of the form $1_{U_t}\cdot \mu_L(U_t)^{-1}\mu_L$.
\end{proof}

We now show that the set $(0,2]\setminus \mathcal{G}$ has Lebesgue measure $0$.

\begin{prop}
\label{propo}
If $(0,2]\setminus \mathcal{G}$ has positive Lebesgue measure, then there exists some $\epsilon>0$ and some interval $\mathcal{I}=[t_1,t_2]\subset (0,2]$ such that
\begin{equation}
\label{play1}
\frac{1}{|\mathcal{I}|}\int_\mathcal{I}\liminf_{n\rightarrow\infty}\left(\frac{\#\{j\in\N:E_j(t)\in \cup_{i=1}^n [\alpha_i^2-c,\alpha_i^2+c]\}}{n}\right) \, dt > 1+\epsilon
\end{equation}
for all $c>0$. Moreover, we can find such $\mathcal{I}$ with arbitrarily small length.
\end{prop}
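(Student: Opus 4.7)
My plan is to extract a quantitatively bad positive-measure subset of $(0,2]\setminus\mathcal{G}$, apply the Lebesgue density theorem to produce small intervals of nearly full density, and transfer the cluster-indexed bound \eqref{goodtime} to the window-indexed bound \eqref{play1}. Throughout, write $\rho_n(t,c)$ for the cluster ratio appearing in \eqref{goodtime} and $\tilde\rho_n(t,c)$ for the window ratio whose $\liminf$ appears inside \eqref{play1}.

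First I would set, for each $\epsilon>0$,
\[
B_\epsilon := \{t \in (0,2] : \liminf_n \rho_n(t,c) \ge 1 + \epsilon^2 \text{ for every } c > 0\}.
\]
Since $c \mapsto \rho_n(t,c)$ is piecewise constant in $c$ (the cluster decomposition only changes at half-gaps of the quasi-eigenvalue sequence), $B_\epsilon$ is measurable via a countable intersection over rational $c$. Because $(0,2]\setminus\mathcal{G}$ equals the increasing union $\bigcup_k B_{1/k}$ and is assumed to have positive measure, $B_{\epsilon_1}$ has positive measure for some $\epsilon_1>0$. Pick a Lebesgue density point $t_0 \in B_{\epsilon_1}$: for any $\eta>0$ and all sufficiently small $r>0$ the interval $\mathcal{I} := [t_0-r, t_0+r] \cap (0,2]$ satisfies $|\mathcal{I} \cap B_{\epsilon_1}| > (1-\eta)|\mathcal{I}|$, and shrinking $r$ later produces intervals of arbitrarily small length.

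The heart of the argument is the claim that for every $t \in B_{\epsilon_1}$ and every $c>0$, $\liminf_n \tilde\rho_n(t,c) \ge 1 + \epsilon_1^2$. Since $\tilde\rho_n$ is non-decreasing in $c$, it suffices to treat $c < 1/(2C)$, where $C$ is the asymptotic upper-density constant for the quasi-eigenvalue counting function (obtainable from Weyl's law on the semidisk, since our quasi-eigenvalues are a subfamily of Dirichlet eigenvalues). For such $c$, a cluster $\mathcal{C}_K$ containing $s_K$ quasi-eigenvalues has total length at most $2cs_K$, so Weyl gives $s_K \le 2cCs_K + O(1)$, yielding a uniform eventual bound $s_K \le S$. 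Writing $N(K) := \sum_{k \le K} s_k$ and $K(m,c)$ for the cluster containing $\alpha_m^2$, the inclusion $\bigcup_{i=1}^m [\alpha_i^2-c,\alpha_i^2+c] \supseteq \bigcup_{k < K(m,c)} \mathcal{C}_k$ gives
\[
\tilde\rho_m(t,c) \ge \rho_{K(m,c)-1}(t,c) \cdot \frac{N(K(m,c)-1)}{m} \ge \rho_{K(m,c)-1}(t,c)\left(1 - \frac{S}{N(K(m,c)-1)}\right).
\]
Since $m \le N(K(m,c)) \le N(K(m,c)-1) + S$, both $K(m,c)$ and $N(K(m,c)-1)$ tend to infinity with $m$, so taking $\liminf$ and using $t \in B_{\epsilon_1}$ yields the claim.

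Combining the two steps, for every $c>0$,
\[
\frac{1}{|\mathcal{I}|}\int_\mathcal{I} \liminf_n \tilde\rho_n(t,c)\,dt \ge (1+\epsilon_1^2)\frac{|\mathcal{I} \cap B_{\epsilon_1}|}{|\mathcal{I}|} > (1+\epsilon_1^2)(1-\eta),
\]
which exceeds $1+\epsilon$ for some $\epsilon>0$ depending only on $\epsilon_1$ once $\eta$ is chosen small enough (e.g.\ $\eta = \epsilon_1^2/(2+2\epsilon_1^2)$ gives $\epsilon = \epsilon_1^2/2$). The hard part will be the cluster-size bound and its use in converting the cluster-indexed control afforded by membership in $B_{\epsilon_1}$ into the window-indexed control needed for \eqref{play1}; the Lebesgue density argument, the $c$-monotonicity reduction, and the measurability check are standard.
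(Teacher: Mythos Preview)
Your approach is correct and takes a genuinely different route from the paper. Where the paper uses outer regularity of Lebesgue measure to enclose the bad set $S$ in an open $U$ with $m(U)<m(S)+\delta$, invokes the pointwise lower bound $\liminf_n\tilde\rho_n(t,c)\ge 1$ on $U\setminus S$ (which ultimately rests on the spectral fact that $n$ nearly-orthonormal quasimodes force at least $n$ eigenvalues in the union of their windows), and then selects a component interval of $U$, you instead go straight to a Lebesgue density point and discard the complement entirely using only $\tilde\rho_n\ge 0$. Your localisation is cleaner and more self-contained; the paper's has the mild advantage of only needing positive relative measure of the bad set in the interval, at the cost of the extra spectral input. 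You also spell out the passage from the cluster-indexed ratio $\rho_K$ in the definition of $\mathcal G$ to the window-indexed ratio $\tilde\rho_m$ appearing in \eqref{play1}; the paper passes over this identification without comment.

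One correction in that passage: your cluster-size bound reads ``Weyl gives $s_K\le 2cCs_K+O(1)$'', but the remainder in Weyl's law on a planar domain is $O(\sqrt E)$ at energy $E$, not $O(1)$, so what you actually obtain for small $c$ is $s_K=O(\sqrt{\alpha_{N(K)}^2})$. Combining this with the positive liminf for the quasi-eigenvalue counting function in Proposition~\ref{quasiasymptotic} (which yields $\alpha_m^2\le C'm$ for large $m$) gives $s_K=O(\sqrt{N(K)})$ rather than a uniform constant. That is still ample: your displayed inequality only needs $s_{K(m,c)}/N(K(m,c)-1)\to 0$ as $m\to\infty$, and $O(N(K)^{-1/2})$ delivers this. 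With that adjustment the argument goes through unchanged.
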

\begin{proof}
By the monotone convergence property of measures, if $m((0,2]\setminus \mathcal{G})>0$ then there must exist $\epsilon>0$ and a positive measure set $S\subseteq (0,2]$ on which we have
\begin{equation}
\liminf_{n\rightarrow\infty}\left(\frac{\#\{j\in\N:E_j(t)\in \cup_{i=1}^n [\alpha_i^2-c,\alpha_i^2+c]\}}{n}\right)>1+2\epsilon
\end{equation}
for all $t\in S$ and for all $0<c<2/r_2^2$.
From the regularity of the Lebesgue measure, we can find an open set $S\subseteq U\subseteq (0,2]$ with $m(U)<m(S)+\delta$ for an arbitrarily small $\delta$.
We then have
\begin{equation}
\frac{1}{|S|}\int_S\liminf_{n\rightarrow\infty}\left(\frac{\#\{j\in\N:E_j(t)\in \cup_{i=1}^n [\alpha_i^2-c,\alpha_i^2+c]\}}{n}\right) \, dt > 1+2\epsilon
\end{equation}
and
\begin{equation}
\frac{1}{|U\setminus S|}\int_{U\setminus S}\liminf_{n\rightarrow\infty}\left(\frac{\#\{j\in\N:E_j(t)\in \cup_{i=1}^n [\alpha_i^2-c,\alpha_i^2+c]\}}{n}\right) \, dt \geq 1.
\end{equation}
from our pointwise bounds on the integrands.
By choosing $\delta$ sufficiently small, we are thus guaranteed the estimate
\begin{equation}
\frac{1}{|U|}\int_U \liminf_{n\rightarrow\infty}\left(\frac{\#\{j\in\N:E_j(t)\in \cup_{i=1}^n [\alpha_i^2-c,\alpha_i^2+c]\}}{n}\right) \, dt >1+\epsilon.
\end{equation}
Writing the open set $U$ as a countable union of disjoint open intervals, the average of the integrand over one such interval must exceed $1+\epsilon$, as claimed.

To complete the proof, we observe if we partition $\mathcal{I}$ into arbitrarily many intervals of equal length, at least one of them must also satisfy \eqref{play1}.
\end{proof}

To culminate the argument, we seek out a contradiction coming from the upper bound  \eqref{flowspeedeq} on speed of eigenvalue variation and the lower bound \eqref{play1} on the average proportion of eigenvalues lying in $c$-clusters.

\begin{prop}
For any $\epsilon>0$, there exists $c>0$ such that
\begin{equation}
\limsup_{m\rightarrow \infty} \frac{1}{m}\sum_{j=1}^m\frac{|\{t\in \mathcal{I}:E_j\in \cup_i [\alpha_i^2-c,\alpha_i^2+c]\}|}{|\mathcal{I}|}<d(t_1)+\epsilon
\end{equation}
for any sufficiently small interval $\mathcal{I}$.
\end{prop}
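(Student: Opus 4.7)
The plan is to bound the time each individual eigenvalue branch $E_j(t)$ spends inside $W_c := \bigcup_i [\alpha_i^2 - c, \alpha_i^2 + c]$, then average. Fix a small auxiliary $\epsilon' > 0$ (to be chosen in terms of $\epsilon$). By Proposition \ref{flowspeed} together with Proposition \ref{almostuniform}, there exists a full-density subsequence $(n_k) \subset \N$ and sets $B_{n_k} \subset (0,2]$ with $m(B_{n_k}) \to 0$ such that on $\mathcal{I} \setminus B_{n_k}$ we have the sharp pointwise bound $|\dot{E}_{n_k}(t)| \leq V_1 E_{n_k}(t_1)$, where $V_1 = \dot{A}(t_1)/(A(t_1)(1-d(t_1))) + 2\epsilon'$ provided $\mathcal{I}$ is small enough, while on $B_{n_k}$ we retain the crude bound $|\dot{E}_{n_k}(t)| \leq C E_{n_k}(t_1)$.

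The central geometric fact I will exploit is that each branch $E_{n_k}(t)$ is monotonically decreasing in $t$ by Hadamard (Proposition \ref{hadamard}), so as $t$ ranges over $\mathcal{I}$, the branch sweeps the interval $[E_{n_k}(t_2), E_{n_k}(t_1)]$ bijectively. Setting $T_{n_k} := |\{t \in \mathcal{I} : E_{n_k}(t) \in W_c\}|$ and writing $L_{\mathrm{gaps}} := |[E_{n_k}(t_2), E_{n_k}(t_1)] \setminus W_c|$, the total arc traversed through gaps equals $L_{\mathrm{gaps}}$. Splitting the integral of $|\dot{E}_{n_k}|$ over the time-in-gaps into its parts on $\mathcal{I} \setminus B_{n_k}$ and on $B_{n_k}$ and applying the two speed bounds gives
\begin{equation}
L_{\mathrm{gaps}} \leq V_1 E_{n_k}(t_1) \bigl(|\mathcal{I}| - T_{n_k}\bigr) + C E_{n_k}(t_1) m(B_{n_k}),
\end{equation}
which rearranges to a \emph{lower} bound on $|\mathcal{I}| - T_{n_k}$.

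To estimate $L_{\mathrm{gaps}}$ from below I use that $W_c \cap [E_{n_k}(t_2), E_{n_k}(t_1)]$ has length at most $2c$ times the number of quasi-eigenvalues in the interval, which in turn is bounded by the number of semidisk Dirichlet eigenvalues there. Weyl's law on the semidisk yields this count as $(r_2^2/8)(E_{n_k}(t_1) - E_{n_k}(t_2))(1+o(1))$, so $L_{\mathrm{gaps}} \geq (E_{n_k}(t_1) - E_{n_k}(t_2))(1 - cr_2^2/4)(1+o(1))$. On the other hand, Weyl's law on $M_t$ gives $E_{n_k}(t_1) - E_{n_k}(t_2) \sim E_{n_k}(t_1) \cdot \dot{A}(t_1)/A(t_1) \cdot |\mathcal{I}|$. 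Substituting and dividing by $E_{n_k}(t_1)$, the dependence on $n_k$ cancels and one obtains
\begin{equation}
\liminf_{k \to \infty} \frac{|\mathcal{I}| - T_{n_k}}{|\mathcal{I}|} \geq \frac{(1-d(t_1))(1 - cr_2^2/4)}{1 + C_0 \epsilon'},
\end{equation}
so that $\limsup_k T_{n_k}/|\mathcal{I}| \leq d(t_1) + O(\epsilon') + O(c)$. Choosing $\epsilon'$ and $c$ small enough, this is strictly less than $d(t_1) + \epsilon/2$ for all sufficiently large $n_k$.

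To finish, I decompose the Cesàro average as
\begin{equation}
\frac{1}{m}\sum_{j=1}^m \frac{T_j}{|\mathcal{I}|} \leq \frac{1}{m}\sum_{\substack{n_k \leq m \\ n_k \text{ large}}} \frac{T_{n_k}}{|\mathcal{I}|} + \frac{\#\{j \leq m : j \notin (n_k)\} + O(1)}{m}.
\end{equation}
The second term vanishes as $m \to \infty$ because $(n_k)$ has full density, and the first is bounded by $d(t_1) + \epsilon/2$ in the limsup, giving the claimed bound $d(t_1) + \epsilon$. The main technical obstacle is juggling the several small parameters in the correct order: one must first shrink $|\mathcal{I}|$ so that $V(t)$, $\dot{A}(t)/A(t)$ and $d(t)$ are nearly constant on $\mathcal{I}$; then choose $\epsilon'$ so that $V_1$ is close to the ideal value; then choose $c$ small relative to $\epsilon$; and finally one must use the full-density machinery (e.g.\ Lemma \ref{densitylemma}) and the fact that $m(B_{n_k}) \to 0$ to absorb the exceptional-set contribution \emph{inside} the $\limsup$ rather than pointwise in $n_k$.
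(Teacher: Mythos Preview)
Your proof is correct and follows essentially the same strategy as the paper's: bound the length of the gap region in $[E_j(t_2),E_j(t_1)]$ from below via the semidisk Weyl law, divide by the eigenvalue speed bound to get a lower bound on time-in-gaps, and then pass to the Ces\`aro average using the full-density subsequence. The only cosmetic difference is that the paper packages the exceptional-set handling into Corollary~\ref{speedcor} (and expresses everything in terms of $j$ and $X=A(t_1)^{-1}-A(t_2)^{-1}$ rather than $E_{n_k}(t_1)$), whereas you treat the sets $B_{n_k}$ by hand; the underlying arithmetic is identical.
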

\begin{proof}
Note that we have the flow speed bound $\eqref{flowspeedeq}$ for a full density subsequence of eigenvalues, so if we can establish the claimed inequality for each summand with a sufficiently large index that obeys the flow speed bound, density will allow us to draw the desired conclusion.

We now suppose $E_j$ is a large eigenvalue that lies in this full density subsequence.

Writing $X=(A(t_1)^{-1}-A(t_2)^{-1})$ for brevity, Weyl's law applied to the mushroom gives
\begin{equation}E_j(t_1)-E_j(t_2)> (4\pi X -2\delta)j \end{equation}
for $\delta>0$ and all sufficiently large $j$.

Weyl's law for the semidisk (recalling that we constructed the completely integrable region quasimodes from semidisk eigenfunctions) gives us an upper bound of
\begin{equation}
\left(\frac{\pi r_2^2X}{2}+\delta\right) j
\end{equation}
for the number of quasi-eigenvalues in $[E_j(t_2),E_j(t_1)]$
and hence an upper bound of
\begin{equation}
2c\left(\frac{\pi r_2^2X}{2}+\delta\right) j
\end{equation}
for the length of $[E_j(t_2),E_j(t_1)]$ that lies within $\cup_i [\alpha_i^2-c,\alpha_i^2+c]$.

Now suppose that $E_j(t)$ spends proportion $q_j$ of $t\in\mathcal{I}$ in $\cup_i [\alpha_i^2-c,\alpha_i^2+c]$. 

From Proposition \ref{almostuniform}, it follows that the $q_j$ are uniformly bounded above by some $1-\delta$.

This means that we apply Corollary \ref{speedcor} to find a lower bound for the time taken by an eigenvalue $E_j$ in our full density subsequence to traverse the set \\ \noindent${[E_j(t_2),E_j(t_1)]\setminus \cup_i [\alpha_i^2-c,\alpha_i^2+c]}$. Heuristically, we can think of this as dividing the size of this set by an upper bound for the speed of the eigenvalue's variation.


Precisely, we have
\begin{eqnarray}
(1-q_j)(t_2-t_1)&>& \frac{jX(4\pi-\pi r_2^2 c)-j\delta(1+2c) }{E_j(t_1)(\frac{\dot{A}(t_1)}{A(t_1)(1-d(t_1))}+\delta)}\\
&=&\frac{j}{E_j(t_1)}\cdot\left(\frac{X(4\pi-\pi r_2^2 c)-\delta(1+2c) }{\frac{\dot{A}(t_1)}{A(t_1)(1-d(t_1))}+\delta}\right)\\
&>& \left(\frac{4\pi}{A(t_1)}+\delta\right)^{-1}\cdot\left(\frac{X(4\pi-\pi r_2^2 c)-\delta(1+2c) }{\frac{\dot{A}(t_1)}{A(t_1)(1-d(t_1))}+\delta}\right)\\
&>& \frac{XA(t_1)^2(1-d(t_1))}{\dot{A}(t_1)}-\frac{\epsilon}{2}
\end{eqnarray}
where the final two lines follow from Weyl's law and passing to sufficiently small $\delta$ and $c$ respectively.

Additionally, since $A(t)$ is a linear polynomial in $t$, we have 
\begin{equation}
X=\frac{A(t_2)-A(t_1)}{A(t_1)A(t_2)}=\frac{(t_2-t_1)\dot{A}(t_1)}{A(t_1)A(t_2)}
\end{equation}
which implies that
\begin{eqnarray*}
1-q_j &>& \frac{A(t_1)}{A(t_2)}(1-d(t_1))-\frac{\epsilon}{2}\\
q_j &<& d(t_1)+(d(t_1)-1)\left(\frac{A(t_1)}{A(t_2)}-1\right)+\frac{\epsilon}{2}\\
&<& d(t_1)+\epsilon
\end{eqnarray*}
for sufficiently small $|\mathcal{I}|$, using the uniform continuity of $A$.

Thus we have the required inequality for all sufficiently small intervals $\mathcal{I}$ and all sufficiently large $j$ in a full density subsequence on which \eqref{flowspeedeq} holds.

\end{proof}

\begin{prop}
\label{propo2}
For any $\epsilon>0$ there exists $c>0$, such that
\begin{equation}
\frac{1}{|\mathcal{I}|}\int_\mathcal{I}\liminf_{n\rightarrow\infty}\left(\frac{\#\{j\in\N:E_j(t)\in \cup_{i=1}^n [\alpha_i^2-c,\alpha_i^2+c]\}}{n}\right)\, dt<1+\epsilon
\end{equation}
for all sufficiently small $|\mathcal{I}|$.
\end{prop}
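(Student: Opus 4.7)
The plan is to convert the preceding proposition (an average over eigenvalue indices $j\leq m$) into the target bound (an average over quasi-eigenvalue indices $i\leq n$) by combining Fubini, Fatou, and Weyl's law to relate the two index normalizations.

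Abbreviate
\[
N(n,t):=\#\Bigl\{j\in\N:E_j(t)\in\bigcup_{i=1}^n [\alpha_i^2-c,\alpha_i^2+c]\Bigr\},\qquad \bar{q}_j:=\frac{1}{|\mathcal{I}|}\Bigl|\{t\in\mathcal{I}:E_j(t)\in\bigcup_i[\alpha_i^2-c,\alpha_i^2+c]\}\Bigr|.
\]
For $E_j(t)$ to contribute to $N(n,t)$, it must lie in $[0,\alpha_n^2+c]$, so Weyl's law for the mushroom gives $j\leq A(t)\alpha_n^2/(4\pi)(1+o(1))$. Proposition \ref{quasiasymptotic} provides $\alpha_n^2\leq 4\pi n/(d(t)A(t))(1+o(1))$, and the key observation is that the product $d(t)A(t)$ is constant in $t$: the integrable region $S\setminus\overline{B(0,r_1)}$ is independent of the stalk length in both base geometry and phase volume, while only the ergodic stalk contribution to $A(t)$ varies. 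Hence, setting $M_n:=\lceil(1+\delta)n/d_*\rceil$ with $d_*:=\min_{t\in\mathcal{I}}d(t)$, every $j$ contributing to $N(n,t)$ for some $t\in\mathcal{I}$ satisfies $j\leq M_n$ for all sufficiently large $n$.

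Next, I would invoke Fubini to get
\[
\frac{1}{|\mathcal{I}|}\int_\mathcal{I}\frac{N(n,t)}{n}\,dt\ \leq\ \frac{1}{n}\sum_{j=1}^{M_n}\bar{q}_j\ =\ \frac{M_n}{n}\cdot\frac{1}{M_n}\sum_{j=1}^{M_n}\bar{q}_j.
\]
Taking $\limsup_n$, the first factor tends to $(1+\delta)/d_*$, and since a $\limsup$ along the subsequence $(M_n)$ is dominated by the full $\limsup$ as $M\to\infty$, the preceding proposition (applied with a tolerance $\epsilon'$ in place of $\epsilon$) bounds the right-hand side by $(1+\delta)(d(t_1)+\epsilon')/d_*$. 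Fatou's lemma then yields
\[
\frac{1}{|\mathcal{I}|}\int_\mathcal{I}\liminf_{n\to\infty}\frac{N(n,t)}{n}\,dt\ \leq\ \liminf_{n\to\infty}\frac{1}{|\mathcal{I}|}\int_\mathcal{I}\frac{N(n,t)}{n}\,dt\ \leq\ \frac{(1+\delta)(d(t_1)+\epsilon')}{d_*}.
\]
Shrinking $|\mathcal{I}|$ sends $d_*\to d(t_1)$, and choosing $\delta,\epsilon'$ sufficiently small relative to $\epsilon$ produces the strict inequality $<1+\epsilon$.

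The main technical subtlety is the bookkeeping between the mushroom Weyl density $A(t)/(4\pi)$ and the quasi-eigenvalue density $d(t)A(t)/(4\pi)$, which differ by the factor $1/d(t)$. The entire argument rests on $d(t)A(t)$ being constant in $t$---a feature specific to the mushroom, where stretching the stalk only adds ergodic phase volume while leaving the integrable region invariant---and on shrinking $\mathcal{I}$ so that $d(t)$ is essentially $d(t_1)$ throughout. A minor care-point is that the direction of Fatou and of the subsequence inequality for $\limsup$ both go the ``right way'' for an upper bound; the reverse combinations would leave us unable to close the argument.
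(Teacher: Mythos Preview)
Your proof is correct and follows essentially the same route as the paper: Fatou to pass the $\liminf$ outside, Fubini to convert the $t$-integral into a sum of $\bar q_j$'s, the preceding proposition to bound that Ces\`aro average by $d(t_1)+\epsilon'$, and Weyl together with the quasi-eigenvalue asymptotic to control the index cutoff $M_n/n\to 1/d(t_2)$. The one point you make explicit that the paper leaves implicit is that $d(t)A(t)$ is constant in $t$ (the integrable phase volume in the cap is unaffected by the stalk length), which is precisely what makes the final ratio collapse to $1$; the paper's last line ``inverting the estimate \eqref{quasiasymptoticeqn} provides an upper bound of $1+\epsilon/4$'' is using exactly this fact.
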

\begin{proof}
By Fatou's lemma, it suffices to show that we can find $c$ such that
\begin{equation}
\frac{1}{|\mathcal{I}|}\int_\mathcal{I} \frac{\#\{j:E_j(t)\in \cup_{i=1}^n[\alpha_i^2-c,\alpha_i^2+c]\}}{n}\, dt < 1+\frac{\epsilon}{2}
\end{equation}
for sufficiently large $n$.
This quantity is bounded above by
\begin{equation}
\frac{1}{n}\sum_{j:E_j(t_2)<\alpha_n^2+c} \frac{|\{t\in \mathcal{I}:E_j\in \cup_i [\alpha_i^2-c,\alpha_i^2+c]\}|}{|\mathcal{I}|}=\frac{1}{n}\sum_{j:E_j(t_2)<\alpha_n^2+c} q_j.
\end{equation}
The sum is controlled by the previous proposition, giving us an upper bound of
\begin{equation}
\frac{1}{n}\cdot \max \{j:E_j(t_2)<\alpha_n^2+c\}(d(t_1)+\delta)
\end{equation}
for sufficiently large $n$.

From Weyl's law, we have
\begin{equation}
\max \{j:E_j(t_2)<\alpha_n^2+c\}<(\alpha_n^2+c)\left( \frac{A(t_2)}{4\pi}+\delta\right)
\end{equation}
for sufficiently large n.

By taking $\delta$, $c$, and $\mathcal{I}$ sufficiently small, we then obtain 
\begin{equation}
\frac{1}{n}\sum_{j:E_j(t_2)<\alpha_n^2+c} q_j < \left(\frac{\alpha_n^2}{n}\cdot\frac{A(t_2)d(t_2)}{4\pi}\right)+\frac{\epsilon}{4}.
\end{equation}

Inverting the estimate \eqref{quasiasymptoticeqn} provides an upper bound of $1+\frac{\epsilon}{4}$ for the first summand on the right-hand side for all sufficiently large $n$, thus completing the proof.
\end{proof}

\begin{cor}
The set $\mathcal{G}$ has full measure in $(0,2]$.
\end{cor}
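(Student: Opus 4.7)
The plan is to derive the corollary by contradiction, using Proposition \ref{propo} and Proposition \ref{propo2} as the two opposing pincers that have been prepared for exactly this purpose. Assume for contradiction that $(0,2] \setminus \mathcal{G}$ has positive Lebesgue measure. Then Proposition \ref{propo} furnishes an $\epsilon > 0$ and, crucially, an interval $\mathcal{I} = [t_1,t_2]$ of arbitrarily small length on which
\begin{equation}
\frac{1}{|\mathcal{I}|}\int_\mathcal{I}\liminf_{n\to\infty}\left(\frac{\#\{j\in\N:E_j(t)\in \cup_{i=1}^n [\alpha_i^2-c,\alpha_i^2+c]\}}{n}\right) \, dt > 1 + \epsilon
\end{equation}
holds for every $c > 0$.

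On the other hand, Proposition \ref{propo2}, applied with this same $\epsilon$, produces a specific $c > 0$ such that
\begin{equation}
\frac{1}{|\mathcal{I}|}\int_\mathcal{I}\liminf_{n\to\infty}\left(\frac{\#\{j\in\N:E_j(t)\in \cup_{i=1}^n [\alpha_i^2-c,\alpha_i^2+c]\}}{n}\right) \, dt < 1 + \epsilon
\end{equation}
for all sufficiently small $|\mathcal{I}|$. The freedom in Proposition \ref{propo} to take $|\mathcal{I}|$ as small as we wish is what lets the two estimates be compared on the same interval with the same value of $c$, and this yields the required contradiction. Hence $(0,2]\setminus \mathcal{G}$ is a Lebesgue null set, and $\mathcal{G}$ has full measure in $(0,2]$.

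There is essentially no hard step left: the substance has already been carried out in Propositions \ref{propo} and \ref{propo2}, whose combined content was precisely designed to pinch this statement. The only subtlety worth flagging is that the order of quantifiers must be respected, namely that Proposition \ref{propo} gives a \emph{single} $\epsilon$ but \emph{arbitrarily small} intervals $\mathcal{I}$ (with the inequality holding for every $c$), while Proposition \ref{propo2} requires one to \emph{first} choose $c$ depending on $\epsilon$ and only \emph{then} shrink $\mathcal{I}$; taking $|\mathcal{I}|$ small enough to satisfy Proposition \ref{propo2}'s requirement for the $c$ it produces is what makes the contradiction go through.

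Combining this with Theorem \ref{mainthm} and Proposition \ref{ergodicprop}, which verified the Weak Percival Conjecture on every $t \in \mathcal{G}$, completes the proof of Theorem \ref{percivalconclusion} for almost every stalk length $t \in (0,2]$, as promised.
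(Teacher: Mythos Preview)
Your proof is correct and follows exactly the same approach as the paper, which simply states that the corollary is an immediate consequence of Propositions \ref{propo} and \ref{propo2}. You have merely spelled out in detail the contradiction argument (and the quantifier bookkeeping) that the paper leaves implicit.
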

\begin{proof}
This is an immediate consequence of Propositions \ref{propo} and \ref{propo2}.
\end{proof}
This also completes the proof of Theorem \ref{percivalconclusion}.

\chapter{KAM Theory}
\label{chp4}






The KAM theory studied by Kolmogorov, Arnold and Moser in the 60's led to an improved understanding of the classical dynamics of a Hamiltonian $H$ that is a small perturbation of a completely integrable Hamiltonian $H^0$. In particular, they established that the Lagrangian invariant tori corresponding to all but a $o(1)$ measure subset of frequencies survive this perturbation as the size of the perturbation tends to zero. We denote the union of these tori by $\Lambda$.\\

The paper \cite{popovkam} uses a local version of the KAM theorem to construct a Birkhoff normal form for Gevrey class Hamiltonians $H$ about $\Lambda$. This normal form generalises the notion of ``action-angle" variables of a completely integrable Hamiltonian as discussed in \cite{arnoldmechanics}, and as a consequence of the normal form construction, Popov obtains an effective stability result for the Hamiltonian flow near the union of remaining invariant tori. The natural setting for the estimates is that of Gevrey regularity. This work generalises Popov's earlier work in \cite{popov1} and \cite{popov2} where he constructs a Birkhoff normal form for real analytic Hamiltonians.\\

The paper \cite{popovquasis} uses semiclassical Fourier integral operators to construct a corresponding quantum normal form for a class of semiclassical pseudodifferential operators $\mathcal{P}_h$, and uses this normal form to obtain a family of quasimodes microlocalised near the union of preserved invariant tori of a Hamiltonian associated to $\mathcal{P}_h$. In particular, this result is applicable to the Schr\"{o}dinger operator. \\

This can be regarded as an extension of the main result in \cite{colin}, which establishes the existence of quasimodes microlocalised near the Lagrangian tori of a completely integrable Hamiltonian on a compact smooth manifold. These results were obtained by a similar quantum normal form.\\

In this chapter we construct a family of Birkhoff normal forms corresponding to the convex family of Gevrey smooth Hamiltonians $H=H^0+tQ$, real-analytic in the parameter $t\in (-1,1)$. We shall complete the construction of quasimodes in Chapter 5 and use them to obtain results about the localisation of exact eigenfunctions.

Our treatment runs along the same lines as that of Popov \cite{popovkam}, yet we shall be fairly explicit in our presentation to ensure that the presence of the parameter $t$ does not affect any of the essential details in \cite{popovkam}.

\section{Notations}
We begin by introducing some notation that will be frequently used throughout Chapters 4 and 5.

The following function spaces will be the natural ones for much of this topic.
\begin{defn}
\label{gevrey}
For $\rho\geq 1$ and $X\subset \mathbb{R}^n$ open, the \textsf{Gevrey class of order }$\rho$ is given by
\begin{equation}
\label{gevreyeqn}
G^\rho_L(X):=\{f\in\mathcal{C}^\infty(X): \sup_\alpha\sup_{x\in X} |\partial_x^\alpha f(x)|L^{-|\alpha|}\alpha!^{-\rho}< \infty\}.
\end{equation}
\end{defn}

If $f\in G^\rho_L(X)$, the supremum in \eqref{gevreyeqn} is denoted by $\|f\|_L$. We will frequently suppress the $L$ in our notation. Equipped with this norm, $G^\rho_L(X)$ is a Banach space.

Gevrey regularity is generally weaker the real analyticity (they coincide when $\rho=1$ as can be seen by using the Cauchy--Hadamard theorem to characterise analytic functions by the growth of their Taylor coefficients) and importantly, there exist bump functions in the Gevrey class for $\rho>1$.  \\

An important property of the Gevrey class that follows from Taylor's theorem is that if a Gevrey function has vanishing derivatives, then locally it is super-exponentially small.

\begin{prop}
\label{flatissmall}
Suppose $f\in G^\rho(X)$, and $\rho>1$.
Then there exist positive constants $c,C,\eta$ and $r_0$ only dependent on the Gevrey constant $L$, the norm $\|f\|_L$, and the set $X$ such that
\begin{equation}
f(x_0+r)=\sum_{|\alpha|\leq \eta |r|^{1/(1-\rho)}}f_\alpha(x_0)r^\alpha +R(x_0,r)
\end{equation}
where $f_\alpha=(\partial^\alpha f)/\alpha!$ and
\begin{equation}
|\partial_x^\beta R(x_0,r)|\leq C^{1+|\beta|}\beta!^\rho e^{-c|r|^{-1/(\rho-1)}}\quad \forall 0<|r|\leq \min(r_0,d(x_0,\R^n\setminus X)).
\end{equation}
\end{prop}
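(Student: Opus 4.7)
The plan is to establish the estimate by taking a truncated Taylor expansion of $f$ about $x_0$ with the truncation order chosen to optimally balance the Gevrey factorial growth against the powers of $|r|$.

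First, I would apply Taylor's theorem with integral remainder at order $N$:
\begin{equation}
f(x_0+r) = \sum_{|\alpha| < N} f_\alpha(x_0) r^\alpha + \sum_{|\alpha|=N} \frac{N}{\alpha!}r^\alpha \int_0^1 (1-s)^{N-1}(\partial^\alpha f)(x_0+sr)\, ds.
\end{equation}
The Gevrey hypothesis $|\partial^\alpha f| \leq \|f\|_L L^{|\alpha|}\alpha!^\rho$ together with the multinomial bound $\sum_{|\alpha|=N}\frac{N!}{\alpha!} = n^N$ and $|r^\alpha|\leq |r|^{|\alpha|}$ yields an estimate of the form
\begin{equation}
|R_N(x_0,r)| \leq A\, (BL|r|)^N N!^{\rho-1}
\end{equation}
for dimensional constants $A,B$, using that $\alpha!^\rho/\alpha! \leq N!^{\rho-1}$ after summation.

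Second, I would invoke Stirling's formula in the form $N!^{\rho-1} \leq D^N N^{N(\rho-1)} e^{-N(\rho-1)}$ and choose the truncation $N = \lfloor \eta |r|^{-1/(\rho-1)}\rfloor$ — this is the value that makes $\log\bigl((BL|r|)^N N!^{\rho-1}\bigr)$ of order $-|r|^{-1/(\rho-1)}$. A direct computation shows
\begin{equation}
\log\bigl((BL|r|)^N N!^{\rho-1}\bigr) \leq \eta|r|^{-1/(\rho-1)}\bigl[(\rho-1)\log(D\eta/e) + \log(BL)\bigr] + O(1),
\end{equation}
and since $\rho > 1$, the bracketed expression can be made arbitrarily negative by shrinking $\eta$. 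Fixing such an $\eta$ gives the desired bound $|R(x_0,r)| \leq C' e^{-c|r|^{-1/(\rho-1)}}$, valid provided $|r| \leq r_0$ and $x_0 + sr \in X$ for $s\in[0,1]$, which is ensured by $|r| \leq d(x_0,\R^n\setminus X)$.

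Third, for the derivative estimate, I would observe that
\begin{equation}
\partial_{x_0}^\beta R(x_0,r) = (\partial^\beta f)(x_0+r) - \sum_{|\alpha|<N}\frac{(\partial^{\alpha+\beta}f)(x_0)}{\alpha!} r^\alpha
\end{equation}
is precisely the Taylor remainder of the function $\partial^\beta f$. Using the elementary inequality $(\alpha+\beta)! \leq 2^{|\alpha|+|\beta|}\alpha!\beta!$, we find that $\partial^\beta f$ satisfies Gevrey bounds
\begin{equation}
|\partial^\alpha(\partial^\beta f)(x)| \leq \|f\|_L (2^\rho L)^{|\alpha|+|\beta|}\alpha!^\rho\beta!^\rho,
\end{equation}
so the preceding argument applies verbatim with $L$ replaced by $2^\rho L$ and an overall prefactor $(2^\rho L)^{|\beta|}\beta!^\rho$. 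Choosing $\eta$ small enough for this enlarged Gevrey constant then yields the bound with constants $C, c > 0$ uniform in $\beta$.

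The main obstacle is the optimisation step: one has to track the dependence of the implicit constants carefully through Stirling's estimate to confirm that a \emph{single} choice of $\eta$ (independent of $\beta$) works once the Gevrey constant is replaced by $2^\rho L$. The rest is bookkeeping.
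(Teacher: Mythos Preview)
The paper does not actually prove this proposition: it is stated immediately after the remark that it ``follows from Taylor's theorem'' and no further argument is given. Your proof is correct and is precisely the standard way to flesh out that remark --- Taylor remainder at order $N$, Gevrey bound, Stirling, then optimise $N\sim\eta|r|^{-1/(\rho-1)}$. The paper in fact uses the same Stirling-based inequality $s^m m!^{\rho-1}\le C(\rho)m^{(\rho-1)/2}e^{-m(\rho-1)}$ for $m\lesssim s^{-1/(\rho-1)}$ later on (in the approximation lemma of Section~\ref{approxsec}), so your argument is fully in the spirit of the surrounding material. The observation that $\partial_{x_0}^\beta R$ is the Taylor remainder of $\partial^\beta f$, combined with $(\alpha+\beta)!\le 2^{|\alpha+\beta|}\alpha!\beta!$ to pass to Gevrey constant $2^\rho L$ uniformly in $\beta$, is the right way to handle the derivative estimate and correctly yields a $\beta$-independent $\eta$.
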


In the study of KAM systems, we will need to work on functions on products of domains in Euclidean spaces with different degrees of Gevrey regularity in each variable.

To this end, we introduce the class of anisotropic Gevrey spaces.

\begin{defn}
\label{gevrey2}
Suppose $X$ and $Y$ are open subsets of Euclidean spaces. Suppose that $\rho_1,\rho_2\geq 1$ and $L_1,L_2>0$. Then
\begin{equation}
\label{anisotropicgevreyeqn}
G^{\rho_1,\rho_2}_{L_1,L_2}(X\times Y)=\{f\in\mathcal{C}^\infty(X\times Y):\sup_{(x,y)\in X\times Y} |\partial_x^\alpha\partial_y^\beta f|  L_1^{-|\alpha|} L_2^{-|\beta|}\alpha!^{-\rho_1} \beta!^{-\rho_2}<\infty \}.
\end{equation}
\end{defn}

If $f\in G^{\rho_1,\rho_2}_{L_1,L_2}$, then we denote the supremum in \eqref{anisotropicgevreyeqn} by $\|f\|_{L_1,L_2}$. Equipped with this norm, $G^{\rho_1,\rho_2}_{L_1,L_2}$ is a Banach space. This definition extends in the natural way to $k\geq 3$ variables. Furthermore, some of these variables might lie in complex domains.


For the statement of the KAM theorem in the next section, we record the following notation.
\begin{defn}
\label{domains1}
If $D\subset \mathbb{R}^n$ and $s,r>0$ we write
\begin{equation}
\mathbb{T}^n+s :=\{z\in\mathbb{C}^n/2\pi\mathbb{Z}^n:|\textrm{Im}(z)|\leq s\}
\end{equation}
and
\begin{equation}
D_{s,r}:=\{\theta\in\mathbb{C}^n / 2\pi\mathbb{Z}^n:|\textrm{Im}(\theta)|<s\}\times \{I\in \mathbb{C}^n:|I|<r\},
\end{equation}
where $|\cdot|$ denotes the sup-norm on $\C^n$ induced by the $2$-dimensional $\ell^\infty$ norm on $\C$.
\end{defn}

These domains arise from considering the analytic extension of real analytic Hamiltonians in action-angle variables. In this topic we frequently bound derivatives of analytic functions using Cauchy estimates, which requires the use of shrinking sequences of domains. 

For simplicity of nomenclature, we call an analytic function of several complex variables real analytic if its restriction to a function of $n$ real variables is real-valued.

As a final notational convenience, we use $|\cdot|$ to denote the $\ell^1$ norm when applied to elements of $\Z^n$ throughout this chapter, as well as the matrix norm induced by the sup norm on $\C^n$.

\section{Formulation of the KAM theorem}
Let $D^0\subset \mathbb{R}^n$ be a bounded domain, and consider a completely integrable Hamiltonian $H^0(I)=H^0(\theta,I):\mathbb{T}^n\times D^0 \rightarrow \mathbb{R}$ in action-angle coordinates. To begin with, we shall assume that this Hamiltonian is real analytic.

In addition, we assume the non-degeneracy condition $\det\left(\frac{\partial^2 H}{\partial I^2}\right)\neq 0$. This assumption implies that the map relating the action variable $I$ to the frequency $\omega=\nabla H^0(I)$ is locally invertible.

In fact, we assume that 
\begin{equation}
\label{nondegen}
I\mapsto \nabla H^0(I)
\end{equation}
is a diffeomorphism from $D^0$ to $\Omega^0\subset \mathbb{R}^n$. The inverse to this map is given by $\nabla g^0$, where $g^0$ is the Legendre transform of $H^0$.\\

We now let $D\subset D^0$ be a subdomain, and denote by $\Omega=\nabla H^0(D)$ the corresponding frequency set.

The phase space $\mathbb{T}^n\times D$ is then foliated by the family of Lagrangian tori $\{\mathbb{T}^n\times \{I\}:I\in D\}$ that are invariant under Hamiltonian flow associated to $H^0$.\\

The KAM theorem asserts that small perturbations of $H(\theta,I)=H^0(I)+H^1(\theta,I)$ on $\mathbb{T}^n\times D$ still possess a family of Lagrangian tori which fill up phase space up to a set of Liouville volume $o(1)$ in the size of the perturbation.

More precisely, if $\Omega:=\{\omega:\omega=\nabla_I H^0\}$ is the set of frequencies for the quasi-periodic flow of $H^0$, the frequencies satisfying:
\begin{equation}
\label{diophantine}
|\langle\omega,k\rangle|\geq \frac{\kappa}{|k|^\tau}
\end{equation}
for all nonzero $k\in \mathbb{Z}^n$ and fixed $\kappa >0$ and $\tau>n-1$ also correspond to Lagrangian tori for the perturbed Hamiltonian $H$, provided $\|H-H^0\| < \epsilon(\kappa)$ in a suitable norm.\\

Such frequencies are said to be non-resonant, and we denote the set of non-resonant frequencies by $\Omega^*_\kappa$, suppressing the dependence on $\tau$ from our notation. These sets are obtained by taking the intersection of the sets
\begin{equation}
\{\omega\in \Omega:|\langle\omega,k\rangle|\geq \frac{\kappa}{|k|^\tau}\}
\end{equation}
over all nonzero $k\in \Z^n$, and hence $\cap_{\kappa>0}\Omega^*_\kappa$ is closed and perfect like the Cantor set. Unlike the Cantor set however, $\cap_{\kappa>0}\Omega^*_\kappa$ is of full measure in $\Omega$, as can be seen from the observation that 
\begin{equation}
m(\{\omega\in \R^n: |\langle k,\omega\rangle|< \frac{\kappa}{|k|^\tau}\})=O(\frac{\kappa}{|k|^{\tau+1}}).
\end{equation}
\begin{remark}
Such sets occur frequently in perturbation theory as a method of handling the infamous ``small divisor" problem that we shall encounter shortly in Section \ref{kamstepsec}.
\end{remark}

We work with the sets
\begin{equation}
\Omega_\kappa:=\{\omega\in\Omega_\kappa^*:\textrm{dist}(\omega,\partial\Omega) \geq\kappa\}
\end{equation}
which has positive measure for sufficiently small $\kappa$.\\

It is convenient to work with the set of points of Lebesgue density in $\Omega_\kappa$, which we denote by
\begin{equation}
\tilde{\Omega}_\kappa:=\{\omega\in \Omega:\frac{m(B(\omega,r)\cap \Omega_\kappa}{m(B(\omega,r))})\rightarrow 1 \textrm{ as }r\rightarrow 0\}.
\end{equation}

From the Lebesgue density theorem we have that $m(\tilde{\Omega}_\kappa)=m(\Omega_\kappa)$. We also note that a smooth function vanishing on $\Omega_\kappa$ is necessarily flat on $\tilde{\Omega}_{\kappa}$\\

The construction of the Birkhoff normal form stems from Theorem \ref{kamwithparams}, which is a version of the KAM theorem localised around the frequency $\omega$ which is taken as an independent parameter. This version is particularly useful for our application as it makes it an easier task to check the regularity of the invariant tori with respect to the  frequency parameter.\\

To illustrate the setup of this theorem, we set 
\begin{equation}
\label{domains}
\Omega'=\{\omega\in \Omega: \textrm{dist}(\omega,\Omega_\kappa)\leq \kappa/2\},\quad D'=\nabla g^0(\Omega').
\end{equation}

Taking $z_0\in D'$ we let $I=z-z_0$ lie in a small ball of radius $R$ about $0$. That is, $R$ is chosen such that $B_R(z_0)\subset D$.\\

Taylor expanding gives us the expression
\begin{equation}
\label{taylorexpand}
H^0(z)=H^0(z_0)+\langle \nabla_z H^0(z_0),I\rangle+\int_0^1 (1-t)\langle \nabla_z^2 H^0(z_0+tI)I,I\rangle\, dt.
\end{equation}

We now take $\omega\in\Omega^0$ to be the corresponding frequency $\nabla H^0(z_0)$. The inverse of the frequency map is 
\begin{equation}
\label{psidef}
\psi_0(\omega)=\nabla g^0(\omega),
\end{equation}
where $g^0$ is the Legendre transform of $H^0$.\\

Hence we can write
\begin{equation}
H^0(z)=H^0(\psi_0(\omega))+\langle \omega, I\rangle +\langle P^0(I;\omega)I,I\rangle
\end{equation}
where $P^0$ is the quadratic remainder term in \eqref{taylorexpand}.\\

Expanding about the point $z_0=\nabla g^0(\omega)$, we can write our perturbation $H^1$ locally as
\begin{equation}
H^1(\theta,z)=H^1(\theta,\nabla g^0(\omega)+I)=P^1(\theta,I;\omega).
\end{equation}

This leads us to consider perturbed real analytic Hamiltonians in the form
\begin{equation}
\label{expnearfreq}
H(\theta,I;\omega)=H^0(\psi_0(\omega))+\langle\omega,I\rangle+P(\theta,I;\omega)=:N(I;\omega)+P(\theta,I;\omega).
\end{equation}

where
\begin{equation}
N(I;\omega)=H^0(\psi_0(\omega))+\langle \omega, I\rangle
\end{equation}
and
\begin{equation}
\label{defnofPzero}
P(\theta,I;\omega)=\langle P^0(I;\omega)I,I\rangle+P^1(\theta,I;\omega).
\end{equation}

The traditional formulations of the KAM theorem assert the existence of a Cantor family of tori that persist under small perturbations of a single Hamiltonian $H^0$ with domain $D$. In the framework laid out above, we now have a Cantor family of Hamiltonians parametrised by $\omega\in \Omega_\kappa$. Note that each of these Hamiltonians is only linear in $I$.

The essence of our version of the frequency localised KAM theorem in Theorem \ref{kamwithparams} is then that for sufficiently small $P$, we can find a symplectic change of variables that transforms $H$ to a linear normal form in $I$ with remainder quadratic in $I$ for $\omega\in \Omega_\kappa$. This establishes the persistence of the Lagrangian torus  with frequency $\omega$. In Section \ref{mainresultssec} we will pass from this result to a version of the KAM theorem on $\mathbb{T}^n\times D$ in Theorem \ref{kamcons} that establishes the existence of a Cantor family of invariant tori for the original Hamiltonian $H$.\\

The proof of Theorem \ref{kamwithparams} is based on a rapidly converging iterative procedure introduced by Kolmogorov \cite{kolmogorov}. \\

To work with Gevrey smooth Hamiltonians, we fix $L_0\geq 1$ and $A_0>1$, and assume that $H^0\in G^\rho_{L_0}(D^0),g^0\in G^\rho_{L_0}(\Omega^0)$ with the estimates
\begin{equation}
\label{gevhamest}
\|H^0\|_{L_0},\|g^0\|_{L_0}\leq A_0.
\end{equation} 

For $L_2\geq L_1\geq 1$ with $L_2\geq L_0$, we now consider the analytic family of Gevrey perturbation ${H^1\in G^{\rho,\rho,1}_{L_1,L_2,L_2}(\mathbb{T}^n\times D\times (-1,1))}$
with the perturbation norm
\begin{equation}
\label{gevpertest}
\epsilon_H:=\kappa^{-2}\|H^1\|_{L_1,L_2,L_2}.
\end{equation}

The estimate \eqref{gevhamest} implies that there is a constant $C(n,\rho)$ dependent only on $n$ and $\rho$ such that taking
\begin{equation}
\label{Radbound}
R\leq \frac{C(n,\rho)\kappa}{A_0L_0^2}
\end{equation}
is sufficient to ensure that $B_R(z_0)\subset D$ for any $z_0\in D'$.\\

At this point we introduce the notational convention for this chapter that $C$ represents an arbitrary positive constant, dependent only on $n,\tau,\rho$ and $L_0$. Similarly, $c$ will represent a positive constant strictly less than $1$, also only dependent on $n,\tau,\rho$ and $L_0$. We will be explicit when we stray from this convention.\\

The  estimates \eqref{gevhamest} and \eqref{gevpertest}, together with Proposition A.3 in \cite{popovkam} show that our constructed functions $P^0$ and $P^1$ are in the Gevrey classes ${G^\rho_{CL_0,CL_2}(B_R\times \overline{\Omega'})\subset G^\rho_{CL_2,CL_2}(B_R\times \overline{\Omega'})}$ and $G^{\rho,\rho,\rho,1}_{L_1,L_2,CL_2,L_2}(\mathbb{T}^n\times B_R\times \overline{\Omega'}\times (-1,1))$ respectively, where the $C$ in $G^\rho_{CL_0,CL_2}$ does not depend on $L_0$ or $L_2$.

Additionally we have the estimate
\begin{equation}
\|P^1\|_{L_1,CL_2,CL_2,CL_2}\leq \kappa^{-2}\epsilon_H.
\end{equation}

Dropping the factors in our Gevrey constants dependent only on $n,\tau,\rho,L_0$ for brevity of notation, we are in a position to state the local KAM theorem in terms of the weighted norm
\begin{equation}
\langle P\rangle_r:=r^2\|P^0\|_{L_2,L_2}+\|P^1\|_{L_1,L_2,L_2,L_2}
\end{equation}
for $0<r<R$.

Our proofs in this chapter shall largely follow those of Popov \cite{popovkam}, but we shall be fairly explicit in order to keep the presentation relatively self-contained as well as to demonstrate regularity in the analytic parameter $t\in (-1,1)$.

\begin{thm}
\label{kamwithparams}
Suppose $0<\zeta\leq 1$ is fixed and $\kappa < L_2^{-1-\zeta}$. Then there exists ${N(n,\rho,\tau)>0}$ and $\epsilon>0$ independent of $\kappa,L_1,L_2,R,\Omega$ such that whenever the Hamiltonian 
\begin{equation}
H(\theta,I;\omega,t)=H^0(\psi_0(\omega))+\langle \omega,I\rangle+\langle P^0(I,\omega)I,I\rangle +P^1(\theta,I;\omega,t)
\end{equation} 
and $0<r<R$ are such that 
\begin{equation}
\label{kamwithparamasassump}
\langle P\rangle_r <\epsilon\kappa r L_1^{-N}
\end{equation}
we can find $$\phi\in G^{\rho(\tau+1)+1,1}(\Omega\times (-3/4,3/4),\Omega)$$ and $$\Phi=(U,V)\in G^{\rho,\rho(\tau+1)+1,1}(\mathbb{T}^n\times \Omega\times (-3/4,3/4),\mathbb{T}^n\times B_R)$$ such that
\begin{enumerate}
\item For all $\omega \in \Omega_\kappa$ and all $t\in (-3/4,3/4)$, the map $\Phi_{\omega,t}=\Phi(\cdot;\omega,t):\mathbb{T}^n\rightarrow \mathbb{T}^n\times B_R$ is a $G^\rho$ embedding, with image $\Lambda_{\omega,t}$ an invariant Lagrangian torus with respect to the Hamiltonian $H_{\phi(\omega,t),t}(\theta,I)=H(\theta,I;\phi(\omega,t),t)$. The Hamiltonian vector field restricted to this torus is given by
\begin{equation}
X_{H_{\phi(\omega,t),t}}\circ \Phi_{\omega,t}=D\Phi_{\omega,t}\cdot \mathcal{L}_\omega
\end{equation}
where 
\begin{equation}
\mathcal{L}_\omega=\sum_{j=1}^n \omega_j\frac{\partial}{\partial \theta_j}\in T\mathbb{T}^n.\\
\end{equation}

\item There exist positive constants $A$ and $C$ dependent only on $n,\tau,\rho,L_0$ such that
\begin{eqnarray}
\label{kamwithparamsbound}
& &|\partial_\theta^\alpha \partial_\omega^\beta (U(\theta;\omega,t)-\theta)|+r^{-1}|\partial_\theta^\alpha \partial_\omega^\beta V(\theta;\omega,t)|+\kappa^{-1}| \partial_\omega^\beta(\phi(\omega;t)-\omega)|\nonumber \\
&\leq& A(CL_1)^{|\alpha|}(CL_1^{\tau+1}/\kappa)^{|\beta|}\alpha!^\rho\beta!^{\rho(\tau+1)+1}\frac{\langle P\rangle_r}{\kappa r}L_1^N
\end{eqnarray}
uniformly in $\mathbb{T}^n\times \Omega\times (-3/4,3/4)$.
\end{enumerate}

We remark that at the endpoint $t=0$, this result is trivial by taking $\phi(\omega,0)=\omega,U(\theta,\omega,0)=\theta$ and $V(\theta,\omega,0)=\nabla g^0(\omega)$.
\end{thm}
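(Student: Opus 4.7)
The plan is to carry out a Newton-type KAM iteration closely paralleling the scheme in \cite{popovkam}, but tracking uniformity in the analytic parameter $t$ throughout. At the $\nu$-th step one has a Hamiltonian $H_\nu(\theta,I;\omega,t) = N_\nu(I;\omega,t) + P_\nu(\theta,I;\omega,t)$ on a shrinking complex domain $D_{s_\nu,r_\nu}\times \Omega_{\sigma_\nu}\times \{|t|<\tau_\nu\}$, with $N_\nu$ affine in $I$ and $P_\nu$ satisfying a weighted bound $\langle P_\nu\rangle_{r_\nu}\leq \epsilon_\nu$, where $\epsilon_\nu$ decays quadratically in $\nu$. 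One produces a symplectic change of variables $\chi_\nu = X_{F_\nu}^1$ (the time-$1$ map of the Hamiltonian vector field of an auxiliary generating function $F_\nu$) and a frequency correction $\omega \mapsto \omega + \varphi_\nu(\omega,t)$ so that $H_\nu\circ \chi_\nu$, after the frequency reparametrisation, has the same structural form with a new perturbation $P_{\nu+1}$ that is quadratic in $P_\nu$, plus unavoidable error terms coming from Fourier truncation and the loss-of-domain in the Cauchy estimates.

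First I would carry out the single KAM step. Decomposing $P_\nu = \langle P_\nu^0 I, I\rangle + P_\nu^1(\theta,I;\omega,t)$ and projecting the angular Fourier expansion onto frequencies $|k|\leq K_\nu$, one solves the homological equation $\{F_\nu, N_\nu\} = T_{K_\nu}(\overline{P_\nu^1} - P_\nu^1)$. Here $T_{K_\nu}$ is the truncation and $\overline{P_\nu^1}$ the $\theta$-mean; the Fourier coefficients of $F_\nu$ are $\widehat{P_\nu^1}(k,I;\omega,t)/(i\langle k,\omega\rangle)$, which on $\Omega_\kappa$ are controlled using the Diophantine bound $|\langle k,\omega\rangle|\geq \kappa/|k|^\tau$. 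The averaged part $\overline{P_\nu^1}(0,\cdot;\omega,t)$ is absorbed into the frequency shift $\varphi_\nu$, solved for $\omega$ via an implicit-function argument in the Gevrey category (this is where the regularity index $\rho(\tau+1)+1$ in $\omega$ is produced by the small-divisor losses accumulated over the iteration). Parameters $K_\nu$, the domain shrinkings $\delta s_\nu, \delta r_\nu, \delta\sigma_\nu, \delta \tau_\nu$, and the bounds are chosen so that $\epsilon_{\nu+1} \leq C\,\epsilon_\nu^{3/2}$ (or similar superlinear rate), with $\sum \delta s_\nu, \sum \delta \tau_\nu$ bounded, leaving a non-degenerate limit complex domain. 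Smallness condition \eqref{kamwithparamasassump} is precisely what is needed to initialise this iteration.

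Second I would take the limit and assemble the output. The compositions $\Xi_\nu := \chi_1\circ\cdots\circ\chi_\nu$ converge on a fixed real domain to a symplectic map $\Xi_\infty$; evaluating on $\T^n\times \{I=0\}$ gives the Lagrangian embedding $\Phi_{\omega,t} = \Xi_\infty|_{I=0}$, and the iterated frequency reparametrisations give $\phi(\omega,t) = \lim \phi_\nu(\omega,t)$. Analytic dependence on $t\in(-3/4,3/4)$ is preserved at every step because each $F_\nu$ and each $\chi_\nu$ is constructed by operations (solving the homological equation with $\omega$-independent Fourier projection, and flowing along a Hamiltonian vector field) that are holomorphic in $t$ on the slightly larger complex $t$-disk $\{|t|<\tau_\nu\}$; a Vitali-type argument then gives analytic dependence on $(-3/4,3/4)$ of the limit. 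To globalise in $\omega\in\Omega$, the limit is initially defined only on the Cantor set $\Omega_\kappa$; one extends to an anisotropic Gevrey function on $\Omega\times (-3/4,3/4)$ using a Whitney-type extension theorem adapted to Gevrey classes (as in Appendix A of \cite{popovkam}), with the Gevrey index determined by the $\kappa^{-|\beta|}|k|^{\tau|\beta|}$ losses in each differentiation. The bound \eqref{kamwithparamsbound} follows by telescoping the one-step estimates. Finally, the trivial solution at $t=0$ drops out because $P^1\equiv 0$ there so every $F_\nu$ vanishes, leaving $\Phi_{\omega,0}(\theta) = (\theta,\nabla g^0(\omega))$ and $\phi(\omega,0)=\omega$.

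The main obstacle is the simultaneous management of four shrinking complex domains (in $\theta$, $I$, $\omega$, and $t$) together with the Fourier truncation parameter $K_\nu$, in such a way that the Gevrey and analytic regularities in the respective variables survive in the limit with the correct quantitative constants. In particular, the $\omega$-domain must be shrunk in a manner consistent with the Gevrey (not merely analytic) bookkeeping required to feed Whitney extension, and the $t$-domain must be shrunk only geometrically to retain analyticity on the open interval $(-3/4,3/4)$; reconciling these two very different regimes within one convergent scheme is the essential delicate point. Every other ingredient, including the bound \eqref{Radbound} ensuring $B_R(z_0)\subset D$ and the identification of the exponents $N$ and $\rho(\tau+1)+1$, is bookkeeping on top of this core construction.
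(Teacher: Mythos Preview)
Your architecture (single analytic KAM step, iterate, take limits on $\Omega_\kappa$, then Whitney-extend to $\Omega$) matches the paper, but you have glossed over the one genuinely new ingredient, and as written your scheme would not run. The perturbation $P=\langle P^0 I,I\rangle+P^1$ is only Gevrey in $(\theta,I,\omega)$, not real analytic: $P^0\in G^\rho$ and $P^1\in G^{\rho,\rho,\rho,1}$. Hence there is no complex thickening $D_{s_\nu,r_\nu}\times\Omega_{\sigma_\nu}$ on which $P$ is holomorphic, and you cannot apply Cauchy estimates or the Fourier-truncation bounds needed in the KAM step directly to $P$. The paper deals with this by an explicit preprocessing step (Proposition~\ref{approxlemma}): one builds almost-analytic extensions $F_j$ of $P$ on shrinking polystrips $U_j$ and then produces genuinely real-analytic approximants $P_j$ with $|P_j-P_{j-1}|_{U_j}$ controlled by $\exp(-c(L_1u_j)^{-1/(\rho-1)})$. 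The iteration (Proposition~\ref{iterative}) is then performed on the analytic Hamiltonians $H_j=N_0+P_j$, absorbing the approximation error $|P_j-P_{j-1}|$ into the new remainder at each step via the carefully arranged inequality $\tilde\epsilon_j\leq\tfrac12\epsilon_{j+1}$. Without this bridge your ``shrinking complex domains in $\theta,I,\omega$'' do not exist.

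Two smaller points. First, the $t$-domain is \emph{not} shrunk during the iteration: every step in Theorem~\ref{kamstep} and Proposition~\ref{iterative} is uniform and analytic on $t\in(-1,1)$, and the passage to $(-3/4,3/4)$ occurs only once, at the end, when the Cauchy estimate in $t$ is applied to upgrade Lemma~\ref{gevest} to Corollary~\ref{gevestcor}. Second, the Gevrey index $\rho(\tau+1)+1$ in $\omega$ is not produced by a Gevrey implicit-function argument at each step; at each step the frequency map $\phi_j$ is merely analytic on a complex $h_j$-neighbourhood of $\Omega_\kappa$. The index emerges a posteriori from the Cauchy estimate combined with the decay rate $h_j\sim \kappa\sigma_j^{\tau+1}/x_j^{\tau+1}$, as in Lemma~\ref{gevest}, and it is these jet estimates that feed the anisotropic Whitney extension Theorem~\ref{whitneythm}.
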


KAM theory can be viewed as a collection of techniques for dealing with perturbative problems, rather than just the specific theorem describing the dynamics of a perturbed completely integrable Hamiltonian system. In this spirit we shall first present an application to Arnold's theorem on analytic diffeomorphisms of the circle  before moving on to the proof of Theorem \ref{kamwithparams}. Arnold's theorem is simpler than the KAM theorem from a technical standpoint, nonetheless it serves as an excellent model to provide the intuition behind its proof. An exposition of this problem can be found in \cite{wayne}, from which this account is drawn.\\

\section{Arnold's theorem on analytic circle diffeomorphisms}

Suppose $f:\R\rightarrow \R$ is the lift of an orientation preserving analytic diffeomorphism $\tilde{f}$ of $\T$. Then we have
\begin{equation}
f(x+1)=f(x)+1 \textrm{ and } f'(x)>0
\end{equation}
for each $x\in  \R$.

The simplest examples of such functions are the rotation maps $R_\theta(x):=x+\theta$, whose dynamics are well understood. In particular, for rational $\theta$, the corresponding dynamical system obtained by iterating $\tilde{R}_\theta$ is a periodic map, whilst for irrational $\theta$ the orbits are dense and equidistributed on $\T$.

Denjoy's theorem \cite[p. 301]{rotnum} shows that these are in fact the only orientation preserving diffeomorphisms up to conjugation by a homeomorphic and $1$-periodic change of variables $\chi:\R\rightarrow\R$.

The question as to whether arbitrary orientation preserving diffeomorphisms were \emph{analytically diffeomorphic} to rotation maps remained wide open until the invention of KAM theory. Arnold used KAM techniques to answer this question in the affirmative for small perturbations of suitably irrational rotation maps.

To make this precise, we first introduce the notion of suitable irrationality or nonresonance. This is a one-dimensional analogue to the nonresonance condition \eqref{diophantine}. 

\begin{defn}
\label{1dnonres}
For $\kappa,\rho >0$, we define 
\begin{equation}
\Omega_{\kappa,\rho}:=\{\theta\in\R:|\theta-m/n|> \frac{\kappa}{|n|^\rho}\textrm{ for all }m,n\in \Z, n\neq 0\}.
\end{equation}
\end{defn}

Intuitively, $\Omega_{\kappa,\rho}$ consists of irrational numbers that can't be approximated well by rationals. Dirichlet's theorem on Diophantine approximation asserts that every irrational number can be approximated by a rational number $m/n$ up to an order $n^{-2}$ error, but it turns out that is the best approximation we can hope for from a typical irrational number.

\begin{prop}
For $\rho >2$ and any interval $[a,b]$, the set 
\begin{equation}
\cup_{\kappa >0}([a,b]\setminus \mathbb{Q})\cap \Omega_{\kappa,\rho}
\end{equation}
is of full measure.
\end{prop}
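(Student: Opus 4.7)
The plan is to show that the complement of the set in $[a,b]$ has Lebesgue measure zero by a standard Borel--Cantelli style estimate, exploiting the summability condition $\rho > 2$.

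First I would unwind the set-theoretic description. A point $x\in[a,b]$ fails to lie in $\cup_{\kappa>0}(([a,b]\setminus\mathbb{Q})\cap \Omega_{\kappa,\rho})$ if and only if either $x\in\mathbb{Q}$, or $x$ is irrational and $x\notin\Omega_{\kappa,\rho}$ for every $\kappa>0$. The rationals in $[a,b]$ are a null set, so it suffices to show that
\begin{equation}
E := [a,b] \cap \bigcap_{\kappa>0}\Omega_{\kappa,\rho}^c
\end{equation}
has Lebesgue measure zero. Since $E = \bigcap_{k\in\mathbb{N}} [a,b]\cap\Omega_{1/k,\rho}^c$ and this is a decreasing intersection, it is enough to establish that $m([a,b]\cap\Omega_{\kappa,\rho}^c)\to 0$ as $\kappa\to 0$.

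Next I would bound $m([a,b]\cap\Omega_{\kappa,\rho}^c)$ directly. By definition,
\begin{equation}
[a,b]\cap\Omega_{\kappa,\rho}^c \subseteq \bigcup_{n\geq 1}\bigcup_{m\in\mathbb{Z}} \left[\tfrac{m}{n}-\tfrac{\kappa}{n^\rho},\tfrac{m}{n}+\tfrac{\kappa}{n^\rho}\right]\cap[a,b].
\end{equation}
For each fixed $n\geq 1$, the number of integers $m$ for which this interval meets $[a,b]$ is at most $n(b-a)+2$, and each such interval has length $2\kappa/n^\rho$. Summing gives
\begin{equation}
m([a,b]\cap\Omega_{\kappa,\rho}^c) \leq \sum_{n=1}^\infty \frac{2\kappa}{n^\rho}\bigl(n(b-a)+2\bigr) = 2\kappa(b-a)\sum_{n=1}^\infty n^{1-\rho} + 4\kappa\sum_{n=1}^\infty n^{-\rho}.
\end{equation}
Here the assumption $\rho>2$ is exactly what is needed for $\sum n^{1-\rho}$ to converge, yielding a bound $C(\rho,b-a)\,\kappa$ that tends to $0$ as $\kappa\to 0$. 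Hence $m(E)=0$ and the claim follows.

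The argument is essentially routine; the only real content is the observation that the Diophantine threshold $\rho>2$ is precisely the exponent needed to make $\sum_n n^{1-\rho}$ summable, which is what matches the Dirichlet approximation heuristic mentioned in the exposition preceding the statement. No serious obstacle arises.
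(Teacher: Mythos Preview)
Your argument is correct and is the standard Borel--Cantelli estimate for this result. The paper does not supply its own proof but simply cites Arnold's \emph{Geometrical Methods in the Theory of Ordinary Differential Equations}; the argument there is essentially the one you have written, so there is nothing to compare.
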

A proof of this fact can be found in \cite[p. 116]{arnold2}.

As we are interested in small $\kappa$, we assume that $\kappa \leq 1$. We also require the notion of rotation number for an arbitrary circle diffeomorphism, which we now introduce.
\begin{prop}
\label{rotationnum}
For $f$ the lift of a circle diffeomorphism, the limit
\begin{equation}
\theta=\lim_{n\rightarrow\infty} \frac{f^n(x)-x}{n}
\end{equation}
exists and is independent of $x$. We say that $\theta$ is the \emph{rotation number} of the diffeomorphism $\tilde{f}$.
\end{prop}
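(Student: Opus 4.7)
The plan is to argue by a classical subadditivity/Fekete-style argument, relying only on the two structural facts $f(x+1)=f(x)+1$ and $f'(x)>0$ (which together make $f$ an orientation-preserving homeomorphism of $\R$ that descends to $\T$). Note that analyticity plays no role for existence of the rotation number; only monotonicity and commutation with the integer shift are used.

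First I would introduce $\phi_n(x):=f^n(x)-x$ and observe that the identity $f(x+1)=f(x)+1$ implies by induction $f^n(x+1)=f^n(x)+1$, so $\phi_n$ is continuous and $1$-periodic. I would then establish the oscillation bound
\begin{equation}
\sup_{x\in\R}\phi_n(x)-\inf_{x\in\R}\phi_n(x)\leq 1.
\end{equation}
This follows because for $x\leq y\leq x+1$, monotonicity gives $f^n(x)\leq f^n(y)\leq f^n(x+1)=f^n(x)+1$, hence $\phi_n(y)-\phi_n(x)=(f^n(y)-f^n(x))-(y-x)$ lies in $(-1,1)$; periodicity of $\phi_n$ then extends this to all $x,y\in \R$.

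Next, I would derive the near-additivity relation
\begin{equation}
\phi_{m+n}(x)=\phi_m(f^n(x))+\phi_n(x),
\end{equation}
which follows directly from $f^{m+n}(x)-x=(f^m(f^n(x))-f^n(x))+(f^n(x)-x)$. Setting $A_n:=\sup_x \phi_n(x)$ and $a_n:=\inf_x \phi_n(x)$, the identity yields the sub- and superadditivity estimates $A_{m+n}\leq A_m+A_n$ and $a_{m+n}\geq a_m+a_n$. By Fekete's lemma, both $A_n/n$ and $a_n/n$ converge, to $\inf_n A_n/n$ and $\sup_n a_n/n$ respectively. The oscillation bound gives $0\leq A_n-a_n\leq 1$, so $(A_n-a_n)/n\to 0$ and the two limits coincide; call the common value $\theta$.

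Finally, the squeeze $a_n/n\leq \phi_n(x)/n\leq A_n/n$, valid for every $x\in\R$, yields $\phi_n(x)/n\to\theta$ independently of $x$, which is exactly the desired conclusion. I do not anticipate a real obstacle: the only subtlety is getting the oscillation bound sharp enough (at most $1$) so that the Fekete limits from above and below must agree, and this follows from monotonicity combined with integer-shift equivariance; neither analyticity nor smoothness beyond continuity of $f$ enters.
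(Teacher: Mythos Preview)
Your argument is correct and is precisely the standard Fekete/subadditivity proof of the existence of the rotation number; the only minor quibble is that the oscillation bound should be stated as $\leq 1$ rather than strictly $<1$ (though either suffices). The paper does not actually supply its own proof of this proposition---it simply refers the reader to \cite[p.~296]{rotnum}---so there is nothing substantive to compare; your self-contained argument is entirely appropriate here.
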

The proof of Proposition \ref{rotationnum} can be found in \cite[p. 296]{rotnum}.

It is easy to show that rotation numbers are preserved by homeomorphisms. Moreover, we have the following.
\begin{cor}
\label{etavanishes}
If $f(x)=x+\theta+\eta(x)$ is the lift of a circle diffeomorphism with rotation number $\theta$, then $\eta(x)$ vanishes at some $x_0\in [0,1]$.
\end{cor}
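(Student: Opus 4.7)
The plan is to argue by contradiction using the translation-invariance telescoping that drives the definition of rotation number. First I would observe that since $f$ is the lift of a circle diffeomorphism, the function $\eta(x) = f(x) - x - \theta$ is $1$-periodic: indeed $f(x+1) = f(x)+1$ gives $\eta(x+1) = \eta(x)$. Since $\eta$ is continuous on $[0,1]$ and hence bounded, if $\eta$ were nowhere zero then by the intermediate value theorem $\eta$ would be of constant sign on $[0,1]$, and by periodicity $\eta$ would be bounded away from zero on all of $\R$: either $\eta(x) \geq \delta$ for all $x\in \R$ or $\eta(x) \leq -\delta$ for all $x\in \R$, for some $\delta > 0$.

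Next I would rule out each case by computing the rotation number directly. Telescoping gives
\begin{equation}
f^n(x) - x = \sum_{k=0}^{n-1}\bigl(f(f^k(x)) - f^k(x)\bigr) = n\theta + \sum_{k=0}^{n-1}\eta(f^k(x)).
\end{equation}
In the first case each summand is at least $\delta$, so $(f^n(x)-x)/n \geq \theta + \delta$; passing to the limit and using Proposition \ref{rotationnum} yields rotation number at least $\theta + \delta$, contradicting the assumption that the rotation number is $\theta$. The second case is symmetric and gives rotation number at most $\theta - \delta$, again a contradiction. Hence $\eta$ must vanish somewhere in $[0,1]$.

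There is really no obstacle here beyond carefully invoking periodicity to upgrade ``nowhere zero on $[0,1]$'' to ``bounded away from zero on $\R$'', which is the only nontrivial ingredient in making the telescoping argument work uniformly in $x$.
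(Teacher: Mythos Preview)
Your proof is correct and is the standard argument. The paper does not actually supply a proof of this corollary; it simply states it as an immediate consequence of Proposition~\ref{rotationnum}, so there is nothing to compare against beyond noting that your telescoping argument is exactly the intended one-line justification.
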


The benefit of working with real analytic functions is that have the Cauchy estimates from Proposition \ref{cauchyappendix} at our disposal. To this end, we define
\begin{equation}
S_\sigma=\{z\in\C:|\textrm{Im}(z)|\leq \sigma\}
\end{equation}
for $\sigma>0$, and
\begin{equation}
H_\sigma=\{f:S_\sigma\rightarrow \C:f \textrm{ holomorphic and $1$-periodic on }S_\sigma\textrm{ and real-valued on }\R\}.
\end{equation}
\begin{thm}[Arnold's Theorem]
\label{arnolds}
Suppose that $\theta\in \Omega_{\kappa,\rho}$ and $f\in H_\sigma$ for some $\sigma>0$. Then there exists $\epsilon>0$, dependent only on $\kappa,\rho$ and $\sigma$, such that for any $f(x)=x+\theta+\eta(x)$ with rotation number $\theta$ and with $\eta$ satisfying $\|\eta\|_\sigma<\epsilon$, there exists a real analytic diffeomorphism $\chi$ such that $\chi^{-1}\circ f \circ \chi=R_\theta$.
\end{thm}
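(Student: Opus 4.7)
The plan is to construct $\chi$ via a rapidly convergent KAM iteration. Starting from $f_0 := f$, I will build a sequence of real analytic near-identity diffeomorphisms $\chi_n = \mathrm{id} + \phi_n \in H_{\sigma_n}$ on strips $S_{\sigma_n}$ with $\sigma_n \searrow \sigma/2$, such that the conjugates $f_{n+1} := \chi_n^{-1}\circ f_n \circ \chi_n$ have error $\epsilon_{n+1} := \|f_{n+1} - R_\theta\|_{\sigma_{n+1}}$ dominated by $\epsilon_n^2$ modulo algebraic losses in $\sigma_n - \sigma_{n+1}$. The infinite composition $\chi = \lim_n \chi_1 \circ \cdots \circ \chi_n$ will then furnish the sought conjugacy.

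For one inductive step, write $f_n(x) = x + \theta + \eta_n(x)$ with $\epsilon_n = \|\eta_n\|_{\sigma_n}$ and look for $\phi_n$ satisfying $f_n \circ \chi_n = \chi_n \circ R_\theta$, which rearranges to $\phi_n(x+\theta) - \phi_n(x) = \eta_n(x + \phi_n(x))$. Linearizing in $\phi_n$ gives the cohomological equation
\begin{equation}
\phi_n(x+\theta) - \phi_n(x) = \eta_n(x) - \bar\eta_n, \qquad \bar\eta_n := \int_0^1 \eta_n(x)\,dx,
\end{equation}
where the mean must be subtracted since the left-hand side has mean zero. Solving via Fourier series, the Diophantine hypothesis $\theta\in \Omega_{\kappa,\rho}$ yields the small-divisor lower bound $|e^{2\pi i k\theta}-1| \geq c\kappa/|k|^{\rho-1}$, and combining this with the exponential decay $|\hat\eta_{n,k}| \leq \epsilon_n e^{-2\pi|k|\sigma_n}$ inherited from analyticity on $S_{\sigma_n}$ gives the Paley--Wiener-type estimate $\|\phi_n\|_{\sigma_{n+1}} + \|\phi_n'\|_{\sigma_{n+1}} \leq C\kappa^{-1}(\sigma_n-\sigma_{n+1})^{-\nu}\epsilon_n$ for some $\nu = \nu(\rho)$. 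A Taylor expansion of the nonlinear conjugation identity then shows that $\chi_n$ is an analytic diffeomorphism onto its image and that
\begin{equation}
f_{n+1}(x) = x + \theta + \bar\eta_n + r_n(x),\qquad \|r_n\|_{\sigma_{n+1}} \leq C\kappa^{-2}(\sigma_n-\sigma_{n+1})^{-2\nu}\epsilon_n^2,
\end{equation}
so the only ``large'' contribution to $\epsilon_{n+1}$ is the constant $\bar\eta_n$, which is only first order in $\epsilon_n$.

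The main obstacle is precisely this constant $\bar\eta_n$; without controlling it the iteration fails to converge. Here the rotation number hypothesis is essential. Since conjugacy preserves rotation number, each $f_{n+1}$ also has rotation number exactly $\theta$. Applying the elementary inequality $|\mathrm{rot}(\mathrm{id}+\alpha+g)-\alpha|\leq \|g\|_\infty$ (obtained directly from the limit defining rotation number) to the expression above gives $|\bar\eta_n| \leq \|r_n\|_\infty \leq \|r_n\|_{\sigma_{n+1}}$, and hence $\epsilon_{n+1} \leq C'\kappa^{-2}(\sigma_n - \sigma_{n+1})^{-2\nu}\epsilon_n^2$.

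Finally, choosing $\sigma_n = \sigma(1/2 + 2^{-n-1})$ so that $\sigma_n - \sigma_{n+1} = \sigma 2^{-n-2}$, the standard super-exponential convergence argument shows that whenever $\epsilon_0 = \|\eta\|_\sigma < \epsilon(\kappa,\rho,\sigma)$ is sufficiently small, the errors $\epsilon_n$ tend to zero doubly exponentially, the partial compositions $\chi_1\circ\cdots\circ\chi_n$ form a Cauchy sequence in $H_{\sigma/2}$, and the limit $\chi$ is a real analytic diffeomorphism with $\chi^{-1}\circ f\circ \chi = R_\theta$.
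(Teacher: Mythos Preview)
Your proposal is correct and follows the same KAM-iteration strategy as the paper: linearise to the cohomological equation, solve by Fourier series with Diophantine small-divisor control, iterate on shrinking analyticity strips, and use preservation of the rotation number to show the mean obstruction is quadratically small. The only differences are cosmetic: to kill the constant $\bar\eta_n$ you invoke the elementary bound $|\mathrm{rot}(\mathrm{id}+\alpha+g)-\alpha|\leq\|g\|_\infty$ directly, whereas the paper uses the equivalent observation (Corollary~\ref{etavanishes}) that the new error $\tilde\eta$ must vanish at some real point and reads off $|\hat\eta(0)|$ from that identity; and you shrink strips geometrically ($\sigma_n-\sigma_{n+1}\sim 2^{-n}$) while the paper takes $\delta_n\sim 1/n^2$ and tracks $(3/2)^n$-superexponential rather than $2^n$-superexponential decay of the error. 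Both pairs of choices work equally well.
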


The proof proceeds in three steps.

\begin{enumerate}
\item First we linearise the equation defining the sought diffeomorphism $\chi$, and use basic Fourier analysis to solve this linearised problem and explicitly bound the solution.
\item The next key step is to note that upon applying the coordinate change constructed from the linear problem,  the function $f(x)$ transforms to $x+\theta+\tilde{\eta}(x)$, with $\tilde{\eta} =O (\|\eta\|^2)$. These two steps are a Banach space analogue of Newton's method for root approximation.
\item Finally, we iterate the process above. The primary challenge is to ensure the the composition of diffeomorphisms converges. This turns out to be a consequence of the rapid (quadratic) convergence.
\end{enumerate}

If the perturbation $\eta(x)$ in the statement of \ref{arnolds} is small, we expect that the sought diffeomorphism $\chi(x)$ is of the form
\begin{equation}
\label{chidefn}
\chi(x)=x+\mu(x)
\end{equation}
with $\mu$ small.

Substituting this into the equation
\begin{equation}
f(\chi(x))=\chi(R_{\theta}(x))
\end{equation}
we arrive at
\begin{equation}
\mu(x+\theta)-\mu(x)=\eta(x+\mu(x)).
\end{equation}

Since $\eta$ and $\mu$ are small, it is natural to drop higher order terms and instead consider the equation
\begin{equation}
\label{lineardenjoy}
\mu(x+\theta)-\mu(x)=\eta(x)
\end{equation}
which is now linear in the unknown function $\mu$.

Additionally, since the functions $\eta$ and $\mu$ are periodic by construction, equation \ref{lineardenjoy} can almost be formally solved up to a constant by simply equating nonzero Fourier coefficients on both sides. Note that the zero-th Fourier coefficient of the left-hand side of \eqref{lineardenjoy} vanishes, and so the resulting Fourier series will be formally equal to $\eta(x)-\hat{\eta}(0)$.

Explicitly, with
\begin{equation}
\label{mufourier}
\mu(x):=\sum_{n\neq 0} \hat{\mu}(n) e^{2\pi i n x}=\sum_{n\neq 0}(\int_0^1 e^{-2\pi i nx}\mu(x)\, dx)e^{2\pi i n x}
\end{equation}
we obtain
\begin{equation}
\label{smalldiv}
\hat{\mu}(n)=\frac{\hat{\eta}(n)}{e^{2\pi i n \theta}-1}
\end{equation}
for nonzero $n$.

For general irrational $\theta$, the denominators in the series \eqref{smalldiv} can be very small. This is a common difficulty in perturbation theory, and one of the major innovations in KAM theory was the usage of Diophantine sets like $\Omega_{\kappa,\rho}$ to control these denominators.

Indeed, we have
\begin{lem}
\label{diophbound}
If $\theta\in \Omega_{\kappa,\rho}$, we have
\begin{equation}
|e^{2\pi i n \theta}-1|\geq 4\kappa |n|^{-(\rho-1)}.
\end{equation}
\end{lem}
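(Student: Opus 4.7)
The plan is to reduce the claim to a standard inequality between $|\sin(\pi x)|$ and $|x|$ on $[-1/2,1/2]$, combined with the Diophantine bound defining $\Omega_{\kappa,\rho}$. The key observation is that $|e^{2\pi i n\theta}-1|$ only depends on $n\theta$ modulo $1$, so we should pick the integer $m$ closest to $n\theta$ and pull the bound through.

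More concretely, I would first rewrite
\begin{equation}
|e^{2\pi i n\theta}-1| = 2|\sin(\pi n \theta)|.
\end{equation}
Since $\sin(\pi \cdot)$ is $1$-periodic up to sign, picking $m \in \mathbb{Z}$ to minimise $|n\theta - m|$ gives $|n\theta - m| \leq 1/2$ and $|\sin(\pi n \theta)| = |\sin(\pi(n\theta - m))|$. On the interval $|x| \leq 1/2$ the elementary inequality $|\sin(\pi x)| \geq 2|x|$ holds (the quotient $\sin(\pi x)/x$ decreases from $\pi$ at $0$ to $2$ at $1/2$), so $|\sin(\pi n\theta)| \geq 2|n\theta - m|$.

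Now the Diophantine condition from Definition \ref{1dnonres} applied with this particular choice of $m$ and with our fixed nonzero $n$ yields
\begin{equation}
|n\theta - m| = |n|\cdot\left|\theta - \frac{m}{n}\right| > |n|\cdot \frac{\kappa}{|n|^\rho} = \frac{\kappa}{|n|^{\rho-1}}.
\end{equation}
Chaining these inequalities gives
\begin{equation}
|e^{2\pi i n \theta} - 1| = 2|\sin(\pi n\theta)| \geq 4|n\theta - m| > \frac{4\kappa}{|n|^{\rho-1}},
\end{equation}
which is the claim. There is no real obstacle here; the point is simply that the Diophantine condition was designed to make this exact small-divisor estimate available, and the only mild subtlety is remembering to apply the condition with $m$ equal to the \emph{nearest} integer to $n\theta$ so that one can use the sine lower bound on $[-1/2,1/2]$.
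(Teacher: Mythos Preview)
Your proof is correct and is exactly the standard argument one would expect here. The paper itself leaves the proof environment empty, so there is nothing to compare against; your write-up would fill the gap cleanly.
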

\begin{proof}

\end{proof}

An immediate consequence of Lemma \ref{diophbound} is that the formal Fourier series \eqref{mufourier} for $\mu$ converges.

Using the estimate on Fourier coefficients of an analytic function from Proposition \ref{fourierappendix}, we obtain
\begin{eqnarray}
\label{denjoymubound}
\|\mu\|_{\sigma-\delta}&=&\sup\left|\sum_{n\neq 0}\frac{\hat{\eta}(n)e^{2\pi i n z}}{e^{2\pi i n \theta}-1}\right|\\&\leq & \sum_{n\neq 0} \frac{|n|^{\rho-1}}{4\kappa}e^{-2\pi \sigma |n|}e^{2\pi|n|(\sigma-\delta)}\|\eta\|_{\sigma}\\
&\leq & \label{lastlinedenjoy}\frac{\Gamma(\rho)}{\kappa(2\pi \delta)^\rho}\|\eta\|_\sigma.
\end{eqnarray}
for $\delta$ sufficiently small dependent only on $\rho$, where \eqref{lastlinedenjoy} follows from estimating the sum in $n$ by  the integral defining the Gamma function.

This estimate allows us to show that $\chi$ is invertible on a suitable domain.

\begin{prop}
\label{denjoyinversedomain}
If $\max\left(\delta,\frac{\|\eta\|_{\sigma}}{\delta^{\rho+1}}\right)$ is sufficiently small (dependent only on the fixed quantities $\rho$ and $\kappa$), then $\chi$ has an analytic inverse on $\chi(S_{\sigma-2\delta})$.

Moreover, $\chi(S_{\sigma-2\delta})\supseteq S_{\sigma-3\delta}$.
\end{prop}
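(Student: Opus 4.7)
The plan is to combine a Cauchy estimate with the smallness hypothesis to show that $\chi'$ is close to $1$ on $S_{\sigma-2\delta}$, and then use this both for local inversion and for a fixed-point argument giving surjectivity onto $S_{\sigma-3\delta}$.

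First I would apply Cauchy's integral formula (Proposition \ref{cauchyappendix}, by analogy with the earlier use) to the bound \eqref{denjoymubound} to obtain
\begin{equation*}
\|\mu'\|_{\sigma-2\delta} \leq \delta^{-1} \|\mu\|_{\sigma-\delta} \leq \frac{\Gamma(\rho)}{\kappa (2\pi)^\rho} \cdot \frac{\|\eta\|_\sigma}{\delta^{\rho+1}}.
\end{equation*}
By the smallness hypothesis, I can make the right-hand side less than $1/2$, so on $S_{\sigma-2\delta}$ we have $|\chi'(z)-1| \leq 1/2$; in particular $\chi$ is locally biholomorphic there. Global injectivity follows from the mean value inequality: if $\chi(z_1) = \chi(z_2)$ with $z_1,z_2 \in S_{\sigma-2\delta}$, then $z_1 - z_2 = -(\mu(z_1)-\mu(z_2))$, and estimating the line integral of $\mu'$ along the segment (which stays in $S_{\sigma-2\delta}$ by convexity) gives $|z_1-z_2| \leq \tfrac{1}{2}|z_1-z_2|$, forcing $z_1 = z_2$. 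The analytic inverse on $\chi(S_{\sigma-2\delta})$ is then immediate.

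For the inclusion $\chi(S_{\sigma-2\delta}) \supseteq S_{\sigma-3\delta}$, fix $w \in S_{\sigma-3\delta}$ and consider the map $T_w(z) := w - \mu(z)$. Note that $\|\mu\|_{\sigma-\delta} \leq \frac{\Gamma(\rho)}{\kappa(2\pi)^\rho}\cdot \frac{\|\eta\|_\sigma}{\delta^\rho} = \delta \cdot \frac{\Gamma(\rho)}{\kappa(2\pi)^\rho}\cdot \frac{\|\eta\|_\sigma}{\delta^{\rho+1}}$, which the smallness hypothesis makes less than $\delta$. Consequently $T_w$ maps $S_{\sigma-2\delta}$ into itself: if $z \in S_{\sigma-2\delta}$ then $|\mathrm{Im}(T_w(z))| \leq (\sigma-3\delta) + \delta = \sigma - 2\delta$. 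Moreover $T_w$ is a contraction on $S_{\sigma-2\delta}$ with Lipschitz constant $\|\mu'\|_{\sigma-2\delta} \leq 1/2$. The Banach fixed-point theorem (applied in the complete metric space $S_{\sigma-2\delta}$) yields a unique $z$ with $\chi(z) = w$, completing the proof.

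The main obstacle is purely bookkeeping: one must ensure that both smallness statements ($\|\mu'\|<1/2$ and $\|\mu\|<\delta$) follow from the single hypothesis that $\max(\delta, \|\eta\|_\sigma/\delta^{\rho+1})$ is small. The first requires smallness of $\|\eta\|_\sigma/\delta^{\rho+1}$; the second requires smallness of $\|\eta\|_\sigma/\delta^\rho$, which follows by writing $\|\eta\|_\sigma/\delta^\rho = \delta \cdot (\|\eta\|_\sigma/\delta^{\rho+1})$ and using smallness of both factors. No serious new analytic difficulty appears beyond the Cauchy estimate already used in deriving \eqref{denjoymubound}.
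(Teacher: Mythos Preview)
Your proof is correct and shares the same backbone as the paper's: a Cauchy estimate on $\mu$ gives $\|\mu'\|_{\sigma-2\delta}$ small, and the smallness hypothesis gives $\|\mu\|_{\sigma-2\delta}<\delta$. The paper simply invokes the analytic inverse function theorem for the first claim and ``a straightforward degree theoretic argument'' for the inclusion, whereas you spell out an explicit injectivity argument via the mean value inequality on the convex strip and replace the degree argument by a Banach fixed-point iteration $z\mapsto w-\mu(z)$. Your version is more self-contained and elementary; the paper's is terser but relies on named black boxes. Both routes are standard and neither offers a real advantage over the other here.
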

\begin{proof}
The analytic inverse function theorem implies that it suffices to show that that $\|\textrm{Id}-\chi'\|_{\sigma-2\delta}=\|\mu'\|_{\sigma-2\delta}<1$, which follows directly from the Cauchy estimate from Proposition \ref{cauchyappendix} and \eqref{denjoymubound}.

Moreover, since the assumptions of this proposition imply that $\|\mu\|_{\sigma-2\delta}<\delta$, we have $\chi(S_{\sigma-2\delta})\supseteq S_{\sigma-3\delta}$ by a straightforward degree theoretic argument.
\end{proof}

\begin{prop}
Under the conditions of Proposition \ref{denjoyinversedomain}, we have
\begin{equation}
\label{chiinvdef}
\chi^{-1}(z)=z-\mu(z)+\nu(z),
\end{equation}
with the estimate
\begin{equation}
\label{denjoynubound}
\|\nu\|_{\sigma-4\delta}\leq \frac{C(\rho,\kappa)}{\delta^{2\rho+1}}\|\eta\|_\sigma^2
\end{equation}
\end{prop}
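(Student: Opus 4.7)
The plan is to derive an explicit identity for $\nu$ from the defining relation of $\chi^{-1}$, and then chain together the a priori bound \eqref{denjoymubound} with a Cauchy estimate to produce a quadratic-in-$\|\eta\|_\sigma$ bound.

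First, I would rearrange the identity $\chi(\chi^{-1}(z)) = z$. Since $\chi(w) = w + \mu(w)$, this reads $\chi^{-1}(z) + \mu(\chi^{-1}(z)) = z$, so $z - \chi^{-1}(z) = \mu(\chi^{-1}(z))$. Comparing with the ansatz \eqref{chiinvdef}, this yields the key identity
\begin{equation}
\nu(z) = \mu(z) - \mu(\chi^{-1}(z)).
\end{equation}

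Second, I would bound the right-hand side via an integral representation of $\mu(z) - \mu(\chi^{-1}(z))$ along the straight segment from $\chi^{-1}(z)$ to $z$. By Proposition \ref{denjoyinversedomain}, $\chi^{-1}(S_{\sigma-3\delta}) \subseteq S_{\sigma-2\delta}$, so for $z \in S_{\sigma-4\delta}$ both endpoints of this segment lie in the convex strip $S_{\sigma-2\delta}$ and the segment itself is contained there. Combined with the identity $|z - \chi^{-1}(z)| = |\mu(\chi^{-1}(z))| \leq \|\mu\|_{\sigma-2\delta}$ from the previous step, this produces
\begin{equation}
|\nu(z)| \leq \|\mu'\|_{\sigma-2\delta} \cdot \|\mu\|_{\sigma-2\delta}.
\end{equation}

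Third, I would consume the remaining $\delta$ of domain with a Cauchy estimate to trade the derivative for a factor of $\delta^{-1}$: for $z \in S_{\sigma-2\delta}$ the disc of radius $\delta$ about $z$ lies in $S_{\sigma-\delta}$, giving $\|\mu'\|_{\sigma-2\delta} \leq \delta^{-1}\|\mu\|_{\sigma-\delta}$. Applying \eqref{denjoymubound} to each remaining factor of $\|\mu\|_{\sigma-\delta}$ then yields
\begin{equation}
\|\nu\|_{\sigma-4\delta} \leq \delta^{-1}\|\mu\|_{\sigma-\delta}^2 \leq \delta^{-1}\left(\frac{\Gamma(\rho)}{\kappa(2\pi\delta)^\rho}\right)^{\!2} \|\eta\|_\sigma^2 = \frac{C(\rho,\kappa)}{\delta^{2\rho+1}}\|\eta\|_\sigma^2,
\end{equation}
which is \eqref{denjoynubound}. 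There is no serious obstacle; the main thing to watch is the bookkeeping of the four successively smaller strips $S_{\sigma-k\delta}$, so that both the preimage inclusion from Proposition \ref{denjoyinversedomain} and the Cauchy estimate for $\mu'$ fit inside the allowed loss of $4\delta$. The smallness assumption already in force in Proposition \ref{denjoyinversedomain} is precisely what guarantees these inclusions.
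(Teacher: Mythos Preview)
Your proof is correct and essentially matches the paper's. The paper starts from the opposite composition $\chi^{-1}\circ\chi=\mathrm{id}$ rather than your $\chi\circ\chi^{-1}=\mathrm{id}$, but after substitution this produces the same identity $\nu(z)=\mu(z)-\mu(\chi^{-1}(z))$ and the same integral representation along the segment $\chi^{-1}(z)+t\mu(\chi^{-1}(z))$; the subsequent Cauchy estimate and application of \eqref{denjoymubound} are identical.
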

\begin{proof}
From \eqref{chiinvdef} and Proposition \ref{denjoyinversedomain}, we have
\begin{eqnarray}
z&=&(\chi^{-1}\circ \chi)(z)\\
&=& z+\mu(z)-\mu(z+\mu(z))+\nu(z+\mu(z))
\end{eqnarray}
for all $z\in S_{\sigma-2\delta}$.

Hence
\begin{equation}
\nu(z)=\mu(\chi^{-1}(z))\left(\int_0^1 \mu'(\chi^{-1}(z)+t\mu(\chi^{-1}(z)))\, dt\right)
\end{equation}
for $z\in S_{\sigma-3\delta}$.

As in the second part of Proposition \ref{denjoyinversedomain}, we obtain $\chi(S_{\sigma-3\delta})\supset S_{\sigma-4\delta}$.

Since we have $\|\mu\|_{\sigma-2\delta}<\delta$, it follows that
\begin{equation}
\chi^{-1}(z)+t\mu(\chi^{-1}(z))\in S_{\sigma-2\delta}
\end{equation}
for all $t\in [0,1]$ and all $z\in S_{\sigma-4\delta}$.

Thus
\begin{eqnarray}
\|\nu\|_{\sigma-4\delta}&\leq& \|\mu\|_{\sigma-3\delta} \cdot \|\mu'\|_{\sigma-2\delta}\\
&\leq  &\delta^{-1}\left(\frac{\Gamma(\rho)\|\eta\|_\sigma}{\kappa(2\pi \delta)^\rho}\right)^2\\
& = & \frac{C(\rho,\kappa)}{\delta^{2\rho+1}}\|\eta\|_\sigma^2.
\end{eqnarray}
\end{proof}

The two estimates \eqref{denjoymubound} and \eqref{denjoynubound} allow us to bound the new error term $\tilde{\eta}$.

\begin{prop}
\label{arnoldkamstep}
Under the conditions of Proposition \ref{denjoyinversedomain}, we can write
\begin{equation}
\tilde{f}(z)=(\chi^{-1}\circ f \circ \chi)(z)=z+\theta+\tilde{\eta}(z)
\end{equation}
with the estimate
\begin{equation}
\|\tilde{\eta}\|_{\sigma-6\delta}\leq \frac{C(\rho,\kappa)}{\delta^{2\rho+1}}\|\eta\|_{\sigma}^2
\end{equation}
\end{prop}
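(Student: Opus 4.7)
The plan is to plug the explicit formulas $\chi(z) = z + \mu(z)$ and $\chi^{-1}(w) = w - \mu(w) + \nu(w)$ from \eqref{chidefn} and \eqref{chiinvdef} directly into $\tilde f(z) = \chi^{-1}(f(\chi(z)))$. Setting $w := z + \mu(z) + \theta + \eta(z + \mu(z))$, one reads off $\tilde f(z) = z + \theta + \tilde\eta(z)$ with
\begin{equation*}
\tilde\eta(z) = \mu(z) + \eta(z+\mu(z)) - \mu(w) + \nu(w).
\end{equation*}

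The next step exploits the linearised equation solved by $\mu$. From the Fourier construction \eqref{mufourier}--\eqref{smalldiv} one has $\mu(z+\theta) - \mu(z) = \eta(z) - \hat\eta(0)$. Writing $w = (z+\theta) + r$ with $r := \mu(z) + \eta(z+\mu(z))$ and Taylor expanding $\mu(w) = \mu(z+\theta) + r\int_0^1 \mu'(z+\theta+tr)\, dt$, the linear-in-$\eta$ terms cancel and I obtain
\begin{equation*}
\tilde\eta(z) = \bigl[\eta(z+\mu(z)) - \eta(z)\bigr] + \hat\eta(0) - r\int_0^1 \mu'(z+\theta+tr)\, dt + \nu(w).
\end{equation*}
The first bracket is $O(\|\eta'\|\cdot\|\mu\|) = O(\|\eta\|_\sigma^2/(\kappa\delta^{\rho+1}))$ by the mean value theorem, \eqref{denjoymubound} and one Cauchy estimate for $\eta'$; the integral term is similarly $O(\|\eta\|_\sigma^2/(\kappa\delta^{\rho+1}))$ since $|r| = O(\|\eta\|_\sigma)$ and Cauchy-bounding $\mu'$ costs one extra $1/\delta$; the $\nu(w)$ term is $O(\|\eta\|_\sigma^2/\delta^{2\rho+1})$ by \eqref{denjoynubound}. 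Tracking six successive $\delta$-shrinkages (two to define $\chi^{-1}$, one more to accommodate $\nu$, further room for Cauchy estimates on derivatives, and to ensure $\chi(z)$ and $w$ remain in $S_\sigma$) delivers the desired quadratic estimate on $\tilde\eta(z) - \hat\eta(0)$ throughout $S_{\sigma-6\delta}$.

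The main obstacle is the constant $\hat\eta(0)$, which is only $O(\|\eta\|_\sigma)$ a priori and would wreck the quadratic bound if nothing further were said. The crucial observation is that $\tilde f = \chi^{-1}\circ f \circ \chi$ is a topological conjugate of $f$ on the real line and hence has the same rotation number $\theta$. Corollary \ref{etavanishes} applied to $\tilde f$ forces $\tilde\eta$ to vanish at some real $z_0\in[0,1]$. Evaluating the identity at $z_0$ yields
\begin{equation*}
|\hat\eta(0)| = \bigl|\tilde\eta(z_0) - \hat\eta(0)\bigr| \leq \frac{C(\rho,\kappa)}{\delta^{2\rho+1}}\|\eta\|_\sigma^2,
\end{equation*}
and a triangle inequality completes the bound on $\|\tilde\eta\|_{\sigma-6\delta}$. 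Without the rotation-number argument the zeroth Fourier mode would persist at linear order and the Newton-style iteration underlying Theorem \ref{arnolds} would fail to converge.
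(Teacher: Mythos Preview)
Your approach is essentially the same as the paper's: expand $\chi^{-1}\circ f\circ\chi$ explicitly, use the linearised equation $\mu(z+\theta)-\mu(z)=\eta(z)-\hat\eta(0)$ to cancel the linear pieces, bound each remaining term, and handle $\hat\eta(0)$ via the rotation-number argument and Corollary~\ref{etavanishes}. One small slip: you write $|r|=O(\|\eta\|_\sigma)$, but $r=\mu(z)+\eta(z+\mu(z))$ and $\|\mu\|$ is only bounded by $C\|\eta\|_\sigma/(\kappa\delta^\rho)$ from \eqref{denjoymubound}, so the integral term is actually $O(\|\eta\|_\sigma^2/\delta^{2\rho+1})$ rather than $O(\|\eta\|_\sigma^2/\delta^{\rho+1})$ (this is exactly the paper's estimate \eqref{RHSest2}); since this is already the order of the $\nu$ term, your final bound is unaffected.
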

\begin{proof}
From \eqref{chidefn}, \eqref{chiinvdef}, and \eqref{lineardenjoy}, we have
\begin{eqnarray}
& &(\chi^{-1}\circ f \circ \chi)(z)\\
& = & z+\theta+\mu(z)+\eta(z+\mu(z))-\mu(z+\theta+\mu(z)+\eta(z+\mu(z)))\\
&+& \nonumber\nu(z+\theta+\mu(z)+\eta(z+\mu(z))).
\end{eqnarray}
Hence
\begin{eqnarray}
\tilde{\eta}(z)&=&\tilde{f}(z)-z-\theta\\
&=& \label{RHSdenjoy}\hat{\eta}_0+(\eta(z+\mu(z))-\eta(z))\\
&+&\nonumber(\mu(z+\theta)-\mu(z+\theta+\mu(z)+\eta(z+\mu(z))))\\
&+& \nonumber\nu(z+\theta+\mu(z)+\eta(z+\mu(z))).
\end{eqnarray}

It remains to bound the right-hand side terms in \eqref{RHSdenjoy} individually.

First, from \eqref{denjoymubound} and  the conditions of Proposition \ref{denjoyinversedomain} we bound
\begin{eqnarray}
\label{RHSest1}
\eta(z+\mu(z))-\eta(z) &= &\mu(z)\int_0^1 \mu'(z+t\mu(z))\, dt\\
\Rightarrow \|\eta(z+\mu(z))-\eta(z)\|_{\sigma-4\delta}& \leq &\delta^{-1}\|\mu\|_{\sigma-4\delta}\|\eta\|_{\sigma-2\delta}\\
&\leq & \frac{C(\rho,\kappa)}{\delta^{\rho+1}}\|\eta\|_\sigma^2. 
\end{eqnarray}

Similarly, we can bound
\begin{eqnarray}
\label{RHSest2}
& &\mu(z+\theta)-\mu(z+\theta+\mu(z)+\eta(z+\mu(z)))\\
&=& -(\mu(z)+\eta(z+\mu(z)))\int_0^1 \mu'(z+\theta+t(\mu(z)+\eta(z+\mu(z))))\, dt \\
\Rightarrow & & \|\mu(z+\theta)-\mu(z+\theta+\mu(z)+\eta(z+\mu(z)))\|_{\sigma-4\delta}\\
& \leq & \|\mu'\|_{\sigma-2\delta}(\|\mu\|_{\sigma-4\delta}+\|\eta\|_{\sigma-3\delta})\\
& \leq & \frac{C(\rho,\kappa)}{\delta^{2\rho+1}}\|\eta\|_\sigma^2.
\end{eqnarray}

Since $\|\eta\|_\sigma<\delta$ can be assumed, we have 
\begin{equation}
z+\theta+\mu(z)+\eta(z+\mu(z))\in S_{\sigma-4\delta}
\end{equation}
whenever $z\in S_{\sigma-6\delta}$. This allows us to apply the estimate \eqref{denjoynubound} for $\nu$ directly.

The only remaining term in \eqref{RHSdenjoy} is the zero-th Fourier coefficient $\hat{\eta}(0)$. To bound this, we observe that Corollary \ref{etavanishes} implies the existence of an $x_0\in [0,1]$ with $\tilde{\eta}(x_0)=0$. So setting $z=x_0$ in \eqref{RHSdenjoy} and rearranging yields

\begin{equation}
\label{RHSest3}
|\hat{\eta}(0)|\leq \frac{C(\rho,\kappa)}{\delta^{\rho+1}}\|\eta\|_\sigma^2+\frac{C(\rho,\kappa)}{\delta^{2\rho+1}}\|\eta\|_\sigma^2+\frac{C(\rho,\kappa)}{\delta^{2\rho+1}}\|\eta\|_\sigma^2.
\end{equation}

Putting the estimates \eqref{RHSest1},\eqref{RHSest2},\eqref{denjoynubound}, and \eqref{RHSest3} into \eqref{RHSdenjoy} completes the proof.
\end{proof}

What we have completed at this stage is frequently referred to as the KAM step, and the remainder of the proof of Theorem \ref{arnolds} is an induction formed by iterating the KAM step with a carefully chosen decreasing sequence of constants $\sigma_n$ and $\delta_n$.

The sequence of lifts of circle diffeomorphisms $(f_n)$ is defined inductively by
\begin{equation}
f_0(x):=f(x),\quad f_{n+1}(x):=\chi_n^{-1}\circ f_n \circ \chi_n.
\end{equation}
We also define
\begin{equation}
\eta_0(x):=\eta(x),\quad \eta_{n+1}(x):=f_{n+1}(x)-x-\theta
\end{equation}
and
\begin{equation}
\label{mudefn}
\mu_n(x)=\chi_n(x)-x.
\end{equation}
By construction, we have that $\mu_n$ satisfies
\begin{equation}
\mu_n(x+\theta)-\mu_n(x)=\eta_n(x)-\hat{\eta}_{n+1}(0).
\end{equation}
The corresponding sequences of $\delta_n,\sigma_n$ are defined by
\begin{equation}
\delta_n:=\frac{\sigma}{36(1+n^2)}
\end{equation}
and
\begin{equation}
\sigma_0:=\sigma,\quad \sigma_{n+1}=\sigma_n-6\delta_n
\end{equation}
for all $n\geq 0$.

The key feature of this choice is that $\lim_{n\rightarrow\infty }\sigma_n>\sigma/2>0$, so the decreasing sequence of domains $S_{\sigma_n}$ have width positively bounded below.

To control the errors, we introduce the sequence
\begin{equation}
\epsilon_0:=\|\eta\|_{\sigma},\quad \epsilon_n:=\epsilon_0^{(3/2)^n}.
\end{equation}

We can now formulate the induction of the KAM step.
\begin{prop}
For sufficiently small $\epsilon_0$, dependent only on $\sigma,\rho,\kappa$, we have $\eta_{n+1}\in H_{\sigma_{n+1}}$ and $\|\eta_{n+1}\|_{\sigma_{n+1}}\leq \epsilon_{n+1}$.
Moreover, we have the estimates
\begin{equation}
\label{indstepmu}
\|\mu_n\|_{\sigma_n-\delta_n}\leq \frac{C(\rho,\kappa)\epsilon_n}{\delta_n^\rho}
\end{equation}
and
\begin{equation}
\label{indstepnu}
\|\nu_n\|_{\sigma_n-4\delta_n}\leq \frac{C(\rho,\kappa)\epsilon_n^2}{\delta_n^{2\rho+1}}
\end{equation}
\end{prop}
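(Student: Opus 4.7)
The plan is a direct induction on $n\geq 0$, applying the KAM step of Proposition \ref{arnoldkamstep} together with the linear bounds \eqref{denjoymubound} and \eqref{denjoynubound} at each stage. The base case $n=0$ is immediate: by hypothesis $\eta_0=\eta\in H_{\sigma_0}$ with $\|\eta_0\|_{\sigma_0}\leq\epsilon_0$, and the estimates \eqref{indstepmu}, \eqref{indstepnu} specialise directly to \eqref{denjoymubound}, \eqref{denjoynubound} applied to the conjugacy $\chi_0$ built from $\eta_0$ with shrinkage $\delta_0$.

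For the inductive step, suppose the conclusion holds through step $n$. Before invoking Propositions \ref{denjoyinversedomain} and \ref{arnoldkamstep} at scale $\sigma_n$ with perturbation $\eta_n$ and shrinkage $\delta_n$, one must verify the smallness hypothesis that $\max(\delta_n,\|\eta_n\|_{\sigma_n}/\delta_n^{\rho+1})$ is bounded by a threshold depending only on $\rho,\kappa$. Since $\delta_n=\sigma/(36(1+n^2))$ and $\|\eta_n\|_{\sigma_n}/\delta_n^{\rho+1}\leq C(\sigma)(1+n^2)^{\rho+1}\epsilon_n$, and since $\epsilon_n=\epsilon_0^{(3/2)^n}$ decays doubly-exponentially while the polynomial factor grows only like $n^{2(\rho+1)}$, this maximum is uniformly bounded in $n$ by any prescribed constant once $\epsilon_0$ is small enough, in a manner depending only on $\sigma,\rho,\kappa$. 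With the hypothesis in force, the estimates \eqref{indstepmu} and \eqref{indstepnu} for $\mu_n,\nu_n$ become verbatim \eqref{denjoymubound} and \eqref{denjoynubound}, and Proposition \ref{arnoldkamstep} yields $\eta_{n+1}\in H_{\sigma_{n+1}}$ with
\begin{equation}
\|\eta_{n+1}\|_{\sigma_{n+1}}\leq \frac{C(\rho,\kappa)}{\delta_n^{2\rho+1}}\|\eta_n\|_{\sigma_n}^2\leq C(\sigma,\rho,\kappa)(1+n^2)^{2\rho+1}\epsilon_n^2.
\end{equation}

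The essential quantitative point --- and the real content of the KAM scheme --- is verifying that the right-hand side above is bounded by $\epsilon_{n+1}=\epsilon_n^{3/2}$, equivalently
\begin{equation}
C(\sigma,\rho,\kappa)(1+n^2)^{2\rho+1}\epsilon_0^{(3/2)^n/2}\leq 1
\end{equation}
for every $n\geq 0$. This is where the Newton-type quadratic convergence rate is indispensable: the doubly-exponential decay of $\epsilon_n^{1/2}$ overwhelms the polynomial small-divisor loss $\delta_n^{-(2\rho+1)}\sim n^{2(2\rho+1)}$ incurred at each step, whereas a merely geometric convergence $\epsilon_n\sim r^n$ would fail to absorb it. For any fixed $\epsilon_0<1$ the left-hand side tends to $0$ as $n\to\infty$, and a direct check on the discrete derivative shows that for $\epsilon_0$ sufficiently small the sequence is in fact decreasing in $n$, so its supremum is attained at $n=0$ with value $C(\sigma,\rho,\kappa)\epsilon_0^{1/2}$. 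This is guaranteed to be at most $1$ by choosing $\epsilon_0\leq c(\sigma,\rho,\kappa)$, closing the induction and establishing the proposition.
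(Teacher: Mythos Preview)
Your proof is correct and follows essentially the same approach as the paper: an induction on $n$ using Proposition~\ref{arnoldkamstep} at each step, together with the observation that the doubly-exponential decay $\epsilon_n=\epsilon_0^{(3/2)^n}$ dominates the polynomial loss $\delta_n^{-(2\rho+1)}\sim n^{4\rho+2}$, so the induction closes once $\epsilon_0$ is small enough depending only on $\sigma,\rho,\kappa$. If anything, your write-up is slightly more careful than the paper's in explicitly verifying the smallness hypothesis of Proposition~\ref{denjoyinversedomain} before invoking the KAM step.
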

\begin{proof}
The estimates \eqref{indstepmu} and \eqref{indstepnu} follow from \eqref{denjoymubound} and \eqref{denjoynubound} respectively, provided that  that $\|\eta_n\|_{\sigma_{n}}\leq \epsilon_{n}$ and $\delta_n^{-\rho-1}\epsilon_n<c(\rho,\kappa)$  for all $n\geq 0$, which we verify inductively.

If $\|\eta_n\|_{\sigma_{n}}\leq \epsilon_{n}$, then from Proposition \ref{arnoldkamstep}, we have
\begin{eqnarray}
\|\eta_{n+1}\|_{\sigma_{n+1}}&\leq& \frac{C(\rho,\kappa)}{\delta_n^{2\rho+1}}\epsilon_n^2\\
&\leq & C(\sigma,\rho,\kappa)n^{4\rho+2}\epsilon_n^2.
\end{eqnarray}
Hence it suffices to choose $\epsilon_0$ small enough that 
\begin{equation}
\epsilon_0^{(3/2)^n}\leq \frac{C(\sigma,\rho,\kappa)}{n^{8\rho+4}}
\end{equation}
which is possible due to the rapid decay of the left-hand side.

Similarly, $\delta_n^{-\rho-1}\epsilon_n<c(\rho,\kappa)$ can be ensured by taking $\epsilon_0$ sufficiently small, dependent only on $\rho,\kappa$ and $\sigma$, due to the rapid decay of $\epsilon_n$. 
\end{proof}

We can now complete the proof of Theorem \ref{arnolds} by analysing the convergence of $\chi^n(x):=\chi_0\circ\chi_1\circ\ldots\circ \chi_n(x)$.

\begin{proof}[Proof of Theorem \ref{arnolds}]
We first note that the definitions 

From \eqref{mudefn}, it follows that
\begin{equation}
\chi^n(x)=x+\sum_{k=0}^n \mu_{n-k}(h_k(x))
\end{equation}
where the sequence of functions $h_k$ on $S_{\delta_n}$ is defined recursively by 
\begin{equation}
h_0(x):=x
\end{equation}
and
\begin{equation}
h_m(x)=x+\sum_{k=0}^{m-1} \mu_{n-k}(h_k(x)).
\end{equation}

This gives us the estimate
\begin{equation}
\label{chinearid}
\|\chi^n(z)-z\|_{\sigma_n-2\delta_n}\leq \sum_{n=0}^\infty \frac{C(\rho,\kappa)\epsilon_n}{\delta_n^\rho}=\tilde{C}<\infty
\end{equation}
with convergence following from the rapid decay of $\epsilon_n$.

To prove convergence of the $\chi^n(z)$ on the limiting strip $S_{\sigma_\infty}$, we write
\begin{equation}
\chi^{n+1}(z)-\chi^n(z)=\mu_n(z)\int_0^1(\chi^n)'(z+t\mu_n(z))\, dt.
\end{equation}

Using the Cauchy estimate from Proposition \ref{cauchyappendix} and \eqref{chinearid}, we obtain
\begin{eqnarray}
\|\chi^{n+1}(z)-\chi^n(z)\|_{\sigma_{n+1}}&\leq& \|\mu_n\|_{\sigma_n-2\delta_n}\cdot(1+\|(\chi^n)'(z)-1\|_{\sigma_n-5\delta_n})\\
&\leq & \|\mu_n\|_{\sigma_n-2\delta_n}\cdot(1+\frac{3}{\delta_n}\|\chi^n(z)-z\|_{\sigma_n-2\delta_n})\\
&\leq & \left(1+\frac{3\tilde{C}}{\delta_n}\right)\cdot \frac{C(\rho,\kappa)\epsilon_n}{\delta_n^\rho}
\end{eqnarray}
which is summable, again due to the rapid decay of $\epsilon_n$.

This implies that the $\chi^n(z)$ converge uniformly to an analytic limit function $\chi(z)$ on $S_{\sigma_\infty}$.

From the Cauchy estimate from Proposition \ref{cauchyappendix} and \eqref{chinearid}, we have
\begin{equation}
\|\chi'(z)-1\|_{(1-r)\sigma_\infty}\leq \frac{\tilde{C}}{r\sigma_\infty}
\end{equation}
for arbitrary $r\in(0,1/2)$.

Having fixed $r$, we can choose $\epsilon_0$ sufficiently small so that $\tilde{C}<r\sigma_\infty$ from the rapid decay of $\epsilon_n$, and so as in Proposition \ref{denjoyinversedomain}, we obtain that $\chi$ is invertible on $S_{(1-r)\sigma_\infty}$ with $\chi(S_{(1-r)\sigma_\infty})\supseteq S_{(1-2r)\sigma_\infty}$.

Now, we have
\begin{eqnarray}
(f\circ \chi)(z)&=&\lim_{n\rightarrow\infty} (f\circ \chi^n)(z)\\
&= & \lim_{n\rightarrow\infty} \chi^n(z+\theta+\eta_n(z))\\
&=& \chi(z+\theta)
\end{eqnarray}
for all $z\in S_{(1-2r)\sigma_\infty}$. Since $\chi$ is invertible on this domain, we can apply $\chi^{-1}$ to both sides, demonstrating that $\chi$ is the sought diffeomorphism and we are done.
\end{proof}

In the next section, we return to the problem of dynamics in a small perturbation of a completely integrable Hamiltonian system. The largest technical difference from Arnold's theorem is the involvement of functions that are only of Gevrey regularity and are not necessarily analytic. This necessitates the approximation of this sequence of Gevrey functions by a sequence of analytic functions in order to exploit the Cauchy estimates that are essential to the method of proof. 

We begin by proving the KAM step.

\section{The KAM step}
We now prepare for the proof of Theorem $\ref{kamwithparams}$ by first proving the result that will comprise the steps of the iterative argument.
\label{kamstepsec}
Given a Hamiltonian in the form
\begin{equation}
H(\theta,I;\omega,t)=e(\omega;t)+\langle \omega,I \rangle +P(\theta,I;\omega,t)=N(I;\omega,t)+P(\theta,I;\omega,t),
\end{equation}
we aim to construct a $t$-dependent symplectic map $\Phi$ and a $t$-dependent frequency transformation $\phi$ such that for $\mathcal{F}=(\Phi,\phi)$, we have 
\begin{equation}
(H\circ \mathcal{F})(\theta,I;\omega,t)=N_+(I;\omega,t)+P_+(\theta,I;\omega,t)
\end{equation}
where $N_+(I,\omega,t) = e_+(\omega)+\langle I,\omega\rangle$ and with $|P_+|$ controlled by $|P|^r$ for some $r>1$.\\

This construction is analogous to that in \cite{poschel}, although our application requires working with families of Hamiltonians that are also real analytic in the additional parameter $t$.

\begin{thm}
\label{kamstep}
Suppose $\epsilon,h,v,s,r,\eta,\sigma,K$ are positive constants such that
\begin{equation}
\label{kamstepconsts}
s,r<1,\;v<1/6,\;\eta<1/8,\;\sigma<s/5,\;\epsilon\leq c\kappa \eta r \sigma^{\tau+1},\;\epsilon\leq cvhr,\;h\leq \kappa/2K^{\tau+1}.
\end{equation}
where $c$ is a constant dependent only on $n$ and $\tau$.

\noindent Suppose $H(\theta,I;\omega,t)=N(I;\omega,t)+P(\theta,I;\omega,t)$ is real analytic on $D_{s,r}\times O_h \times (-1,1)$, and $|P|_{s,r,h}\leq \epsilon$.
Here, $D_{s,r}$ is as in Definition \ref{domains1} and 
\begin{equation}
O_h:=\{\omega\in \C^n:\textrm{dist}(\omega,\Omega_\kappa)<h\}.
\end{equation}
Then there exists a real analytic map \begin{equation}\mathcal{F}=(\Phi,\phi):D_{s-5\sigma,\eta r}\times O_{(1/2-3v)h}\times (-1,1)\rightarrow D_{s,r}\times O_h \end{equation} where the maps
\begin{equation}
\Phi:D_{s-5\sigma,\eta r}\times O_h\times (-1,1) \rightarrow D_{s,r}
\end{equation}
and
\begin{equation}
\phi: O_{(1/2-3v)h}\times (-1,1)\rightarrow O_h
\end{equation}
are such that 
\begin{equation}
H\circ\mathcal{F}=e_+(\omega,t)+\langle \omega,I\rangle+P_+(\theta,I;\omega,t)=N_+(I;\omega,t)+P_+(\theta,I;\omega,t)
\end{equation}
and we have the new remainder estimate
\begin{equation}
\label{newerrbd}
|P_+|_{s-5\sigma,\eta r,(1/2-2v)h}\leq C\left(\frac{\epsilon^2}{\kappa r \sigma^{\tau+1}}+(\eta^2+K^ne^{-K\sigma})\epsilon\right).
\end{equation}
Moreover $\Phi$ is symplectic for each $(\omega,t)$ and has second component affine in $I$. 
Finally, we have the following uniform estimates on the change of variables.
\begin{equation}
|W(\Phi-id)|,|W(D\Phi-Id)W^{-1}|\leq \frac{C\epsilon}{\kappa r \sigma^{\tau+1}}
\end{equation}
\begin{equation}
|\phi-id|,vh|D\phi-Id|\leq \frac{C\epsilon}{r}
\end{equation}
where $W=\textrm{diag}(\sigma^{-1}Id,r^{-1}Id)$.
All estimates are uniform and analytic in the parameter $t\in (-1,1)$.
\end{thm}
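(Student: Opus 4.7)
The plan is to perform one step of a Newton--Kolmogorov iteration: I would construct the symplectic map $\Phi$ as the time-$1$ flow of a Hamiltonian $F$, together with a frequency shift $\phi$, so that after conjugation $H\circ\mathcal{F}$ returns to the parametrized normal form with a quadratically smaller remainder. First I would linearise the problem by Taylor expanding $P$ in $I$ about $I=0$: write $P = R + Q$ with $R(\theta, I; \omega, t) = P(\theta, 0; \omega, t) + \langle \partial_I P(\theta, 0; \omega, t), I\rangle$ and $Q$ quadratic in $I$. The term $Q$ is harmless, since on the smaller ball $|I|\leq \eta r$ it contributes $O(\eta^2 \epsilon)$, producing the $\eta^2 \epsilon$ summand in \eqref{newerrbd}. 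I would also truncate the Fourier series of $R$ in $\theta$ at modes $|k|\leq K$; discarding the tail $R - R_K$ costs an $O(K^n e^{-K\sigma}\epsilon)$ error on the strip of width $s-\sigma$, giving the third summand.

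Next I would solve the homological equation $\{N, F\} + R_K = [R_K]_\theta$ by Fourier division, seeking $F$ affine in $I$ like $R_K$:
\begin{equation*}
\widehat{F}(k, I; \omega, t) = \frac{\widehat{R_K}(k, I; \omega, t)}{i\langle \omega, k\rangle}, \quad 0 < |k| \leq K.
\end{equation*}
The Diophantine estimate $|\langle \omega, k\rangle| \geq \kappa/(2|k|^\tau)$ extends from $\Omega_\kappa$ to the complex neighbourhood $O_h$ precisely because $h\leq \kappa/(2K^{\tau+1})$ and $|k|\leq K$. Summing $|k|^\tau e^{-|k|\sigma}$ over $|k|\leq K$ loses the factor $\sigma^{-(\tau+1)}$, and a Cauchy estimate yields $|F|_{s-2\sigma, r, h} \leq C\epsilon/(\kappa \sigma^{\tau+1})$, with one additional $\sigma^{-1}$ or $r^{-1}$ for each differentiation in $\theta$ or $I$. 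The smallness condition $\epsilon \leq c\kappa\eta r\sigma^{\tau+1}$ then guarantees, via a Gronwall argument on the Hamiltonian flow, that $\Phi = \varphi_F^1$ is well-defined on $D_{s-5\sigma, \eta r}$, symplectic, and maps into $D_{s,r}$; it is affine in $I$ in its second component because $F$ is, and the weighted bounds on $W(\Phi-\mathrm{id})$ and $W(D\Phi - \mathrm{Id})W^{-1}$ in the theorem are then routine consequences.

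For the estimate of the new remainder I would expand via the Lie--Taylor series
\begin{equation*}
H\circ \varphi_F^1 = N + \{N, F\} + P + \int_0^1 (1-s)\{\{N, F\}, F\}\circ \varphi_F^s\, ds + \int_0^1 \{P, F\}\circ \varphi_F^s\, ds.
\end{equation*}
The homological equation cancels the non-average affine-in-$I$ Fourier modes up to $|k|\leq K$, leaving only $[R_K]_\theta = P^{(0)}(\omega, t) + \langle P^{(1)}(\omega, t), I\rangle$ plus $Q$ and the Fourier tail. I then absorb $P^{(1)}$ into the linear-in-$I$ term by defining $\phi$ implicitly via $\phi(\omega, t) + P^{(1)}(\phi(\omega, t), t) = \omega$; the condition $\epsilon \leq cvhr$ gives $|P^{(1)}| \leq C\epsilon/r \leq Cvh$, so an analytic implicit function theorem produces $\phi$ on $O_{(1/2-3v)h}$ with the claimed bounds, and $e_+(\omega, t)$ is defined by the constant part of $H\circ \mathcal{F}$. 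The two integral terms in the Lie series are bounded via the $F$-estimates by $C\epsilon\cdot (\epsilon/(\kappa r\sigma^{\tau+1}))$ after one extra Cauchy loss, giving the dominant $\epsilon^2/(\kappa r\sigma^{\tau+1})$ contribution to \eqref{newerrbd}.

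The principal obstacle is bookkeeping: one must align four independent sources of error (Fourier truncation, Taylor truncation, Lie-series tail, frequency shift) into the single estimate \eqref{newerrbd}, while tracking the five successive strip shrinkings by $\sigma$ used in the Cauchy estimates on $F$, $X_F$, and the flow map. Analyticity in $t\in(-1,1)$ propagates for free throughout: $F$ is an $\omega$-dependent Fourier-multiplier applied to the $t$-analytic $P$; the flow $\Phi$ solves an ODE with $t$-analytic vector field; and $\phi$ is defined by an analytic implicit function theorem. Thus no new considerations beyond those in \cite{popovkam} arise from the additional parameter $t$; it acts purely as an extra holomorphic variable on which every construction depends analytically, and all estimates carry through uniformly on $t\in(-1,1)$.
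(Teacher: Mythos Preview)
Your proposal is correct and follows essentially the same approach as the paper: linearise $P$ in $I$, Fourier-truncate at order $K$, solve the homological equation by Fourier division using the Diophantine bound extended to $O_h$, realise $\Phi$ as the time-$1$ flow of $F$, expand $H\circ\Phi$ by the Lie--Taylor formula, and recover $\phi$ by an analytic implicit function theorem on the $\partial_I$-average of the truncated remainder. The only discrepancy is notational --- you call the affine-in-$I$ part $R$ and the quadratic remainder $Q$, whereas the paper reverses these names --- and your remark that the parameter $t$ plays a purely passive holomorphic role matches the paper's treatment exactly.
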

\begin{proof}
We first linearise $P$ about $I=0$ and truncate its Fourier series to order $K$.
If we define
\begin{equation}
Q=P(\theta,0;\omega,t)+I\cdot\nabla_I P(\theta,0;\omega,t),
\end{equation}
then the Cauchy estimate from Proposition \ref{cauchyappendix} for analytic functions together with Taylor's theorem yields
\begin{equation}
|Q|_{s,r}\leq C\epsilon
\end{equation}
and
\begin{equation}
\label{PappQ}
|P-Q|_{s,2\eta r}\leq C\eta^2 \epsilon.
\end{equation}
Defining
\begin{equation}
R(\theta,I;\omega,t):=\sum_{|k|\leq K} \langle Q(\cdot,I;\omega,t),e^{i\langle k,\cdot\rangle} \rangle e^{i\langle k,\theta\rangle},
\end{equation}
the Fourier truncation result, Proposition \ref{truncateappendix} yields 
\begin{equation}
\label{RappQ}
|R-Q|_{s-\sigma,r}\leq CK^{-n}e^{-K\sigma}\epsilon.
\end{equation}
When we apply this KAM step, we can assume $K$ is sufficiently large (dependent on $\sigma$), so that we will in fact have
\begin{equation}
\label{Rbound}
|R|_{s-\sigma,r}\leq C\epsilon.
\end{equation}
All estimates thus far are uniform in $(\omega,t)\in O_h\times (-1,1)$, and the function $R(\theta,I;\omega,t)$ is still analytic in all variables.

Recalling the assumption $h\leq \kappa/(2K^{\tau+1})$, we next extend the nonresonance estimate \eqref{diophantine} from the Cantor set $\Omega_\kappa$ to the estimate
\begin{equation}
\label{diophlocal}
|\langle k,\omega\rangle|\geq \frac{\kappa}{2|k|^\tau}
\end{equation}
for $\omega\in O_h$ and $|k|\leq K$. This of course motivates our Fourier truncation of $Q$.

We now make the ansatz that our sought transformation $\Phi$ can be obtained as the time-$1$ Hamiltonian flow generated by an undetermined function $F$ dependent on the parameters $\omega,t$. From Proposition \ref{flowissymp}, such transformations are known to be symplectic, and so it suffices to show that we can choose $F$ in a way that yields a transformation bringing $H$ to normal form up to an error term that is quantifiably smaller than $\epsilon$.

\noindent To this end, we analyse the expression 
\begin{equation}
(N(I;\omega,t)+R(\theta,I;\omega,t))\circ \Phi^\tau_F.
\end{equation}
Recalling \eqref{poissonderiv}, Taylor's theorem yields
\begin{eqnarray}
\label{poisint}
\nonumber & &(N(I;\omega,t)+R(\theta,I;\omega,t))\circ \Phi^1_F\\
\nonumber &=& N+\{N,F\}+\int_0^1 \{(1-\tau)\{N,F\},F\}\circ \Phi^\tau_F \, d\tau+R+\int_0^1 \{R,F\}\circ \Phi_F^\tau \, d\tau\\
&=& N+\{N,F\}+R+\int_0^1 \{(1-\tau)\{N,F\}+R,F\}\circ \Phi^\tau_F \, d\tau.
\end{eqnarray}
The integral in \eqref{poisint} is of second order in $F$ and $R$, and is thus an ideal candidate to absorb into remainder term $P^+$, which will also contain the error from approximating $P$ by $R$. In other words, we need to choose $F$ in such a way that $N+\{N,F\}+R$ is in linear normal form.
This amounts to solving
\begin{equation}
\label{equatefourier}
\{F,N\}+(N_+-N)=R
\end{equation}
for $F$ and then finding a suitable frequency transformation so that $N_+=e_+(\omega;t)+\langle\omega,I\rangle$.

Taking the Fourier expansion
\begin{equation}
F=\sum_{k\in \mathbb{Z}^n} F_ke^{i\langle k,\theta\rangle},
\end{equation}
we formally obtain
\begin{equation}
\{F,N\}=\sum_{j=1}^n \omega_j \frac{\partial F}{\partial \theta_j}=\sum_{k\in \mathbb{Z}^n} i\langle k,\omega\rangle F_ke^{i\langle k,\theta\rangle}.
\end{equation}
Since $\omega$ is a nonresonant frequency, the factors $\langle k,\omega\rangle$ are nonzero for nonzero $k$ which allows us to choose 
\begin{equation}
\label{Ffourier}
F_k=\frac{R_k}{i\langle k,\omega\rangle}
\end{equation}
and define $F$ by the resulting Fourier series. This sum is finite, thanks to the earlier truncation of $R$.

To equate the zero-th Fourier coefficients in \eqref{equatefourier}, we take $N_+:=N+\hat{R}(0)$.
\begin{remark}
The small denominators $\langle k,\omega\rangle$ occurring in \eqref{Ffourier} are a frequent and problematic feature in the formal series that arise in  perturbation theory. The use of nonresonance conditions such as \eqref{diophlocal} to control such denominators in the KAM theorem was a key advance for the field.
\end{remark}
Using \eqref{Ffourier} and the estimate \eqref{diophlocal}, we obtain the bound
\begin{eqnarray}
|F|_{r,s-2\sigma}&\leq & \sum_{|k|\leq K} \frac{|R_k|_r|e^{i\langle k,\theta\rangle}|}{|\langle k,\omega\rangle|}\\
&\leq & \frac{C|R|_{s-\sigma}}{\kappa}\cdot\sum_{|k|\leq K} |k|^\tau e^{-|k|(s-\sigma)}e^{|k|(s-2\sigma)}\\
&= & \frac{C|R|_{s-\sigma}}{\kappa}\\
&\leq &\frac{\epsilon}{\kappa \sigma^\tau},
\end{eqnarray}
where the last line follows from \eqref{Rbound}.
Furthermore, $F$ is analytic in all variables so we may use the Cauchy estimate again to obtain
\begin{equation}
\label{Ftheta}
|\partial_\theta F|_{s-3\sigma,r} \leq \frac{C\epsilon}{\kappa \sigma^{\tau+1}}
\end{equation}
and
\begin{equation}
\label{FI}
|\partial_I F|_{s-2\sigma,r/2} \leq \frac{C\epsilon}{\kappa r \sigma^\tau}.
\end{equation}
We can combine these estimates as 
\begin{equation}
\max\{r^{-1}|\partial_\theta F|, \sigma^{-1}|\partial_I F|\} \leq \frac{C\epsilon}{\kappa r \sigma^{\tau+1}}
\end{equation}
uniformly on $D_{s-3\sigma,r/2}\times O_h \times (-1,1)$.
At this point we can also estimate
\begin{equation}
\label{Ravgbound}
N_+-N=\hat{R}(0)\leq |R|_{s-\sigma,r}\leq C\epsilon.
\end{equation}

Now the bounds on the $(\theta,I)$-derivatives of $F$ control the Hamiltonian flow $\Phi=(U,V)$.
Indeed, the estimates \eqref{kamstepconsts},\eqref{Ftheta} and \eqref{FI} imply that 
\begin{equation}
|\partial_\theta F|\leq \eta r \leq r/8, \;|\partial_I F|\leq \sigma 
\end{equation}
and consequently that the time-$1$ flow is well-defined as a map 
\begin{equation}
\label{Phimapping}
D_{s-4\sigma,r/4}\rightarrow D_{s-3\sigma,r/2},
\end{equation}
with the component bounds
\begin{equation}
|U-id|\leq |\partial_\theta F|\leq \frac{C\epsilon}{\kappa \sigma^{\tau+1}},\; |V-id|\leq |\partial_I F|\leq \frac{C\epsilon}{\kappa r\sigma^{\tau}}.
\end{equation}
By construction, we have that $F$ is affine linear in $I$. Consequently, $\partial_I F$ and $V$ are both $I$-independent, and $\partial_\theta F$ and $U$ are affine linear in $I$.
To complete the estimates of $\Phi$, we use the Cauchy estimate again to yield
\begin{equation}
|\partial_I U-Id|\leq \frac{C\epsilon}{\kappa r \sigma^{\tau+1}},\; |\partial_\theta U|\leq \frac{C\epsilon}{\kappa \sigma^{\tau+2}},\; |\partial_\theta V-Id|\leq \frac{C\epsilon}{\kappa r \sigma^{\tau+1}}
\end{equation}
uniformly on $D_{s-5\sigma,r/8}\supseteq D_{s-5\sigma,\eta r}$.

It remains to bound the new error term $P_+$ given by \eqref{poisint} and to construct the frequency map $\phi$ that transforms $N_+$ to normal form. First we treat the error term $P_+$. By using the Cauchy estimate, we can bound
\begin{equation}
\{R,F\}\leq |\partial_I R||\partial_\theta F|+|\partial_\theta R||\partial_I F|\leq \frac{C\epsilon^2}{\kappa r \sigma^{\tau+1}}
\end{equation}
uniformly on $D_{s-3\sigma,r/2}$.

In exactly the same way, recalling that $N_+-N=\hat{R}(0)$, we use \eqref{Ravgbound} to estimate
\begin{equation}
\{N_+-N,F\}\leq \frac{C\epsilon^2}{\kappa r \sigma^{\tau+1}}.
\end{equation}

Together with the mapping property \eqref{Phimapping}, the discussion of $P_+$ following \eqref{poisint}, and the bounds \eqref{RappQ} and \eqref{PappQ} we obtain
\begin{eqnarray}
& &\left|\int_0^1 \{(1-\tau)(R+\{N,F\})+\tau R,F\} \circ \Phi_F^\tau \, d\tau\right|_{s-5\sigma,\eta r}\\
& \leq & |\{(1-\tau)(R+\{N,F\})+\tau R,F\}|_{s-4\sigma,r/2} \\
&\leq & \frac{C\epsilon^2}{\kappa r \sigma^{\tau+1}},
\end{eqnarray}
and
\begin{equation}
|(P-R)\circ \Phi|_{s-5\sigma,\eta r}\leq |P-R|_{s-4\sigma,2\eta r}\leq C(\eta^2+K^ne^{-K\sigma})\epsilon
\end{equation}
which proves \eqref{newerrbd}.

Finally, we have 
\begin{equation}
\label{Nplus}
N_+=N+\hat{R}(0)=e(\omega,t)+\langle \omega,I\rangle+\hat{R}(0,0;\omega,t)
\end{equation}
which we need to re-write in normal form $e_+(\omega)+\langle \omega,I\rangle$ by finding a suitable frequency transformation $\phi$. Noting that $R$ is linear in $I$ by construction, this amounts to inverting the map ${\omega\mapsto\omega+q(\omega;t)}$.
where
\begin{equation}
q(\omega;t)=\hat{R_I}(0,0;\omega,t)
\end{equation}
which is bounded by $C\epsilon/r\leq vh$ from the Cauchy estimate, \eqref{Ravgbound}, and our assumption \eqref{kamstepconsts}.

An application of a version of the implicit function theorem, Proposition \ref{popovlemma}, then constructs $\phi:O_{(1/2-3v)h}\times (-1,1)\rightarrow O_{(1-4v)h}$ inverse to $q$ which satisfies the claimed estimates. 
\end{proof}
As in \cite{poschel},\cite{popov1}, Theorem \ref{kamstep} can be used to prove the KAM theorem for real analytic Hamiltonians $H(\theta,I;\omega,t)$. However, in order to treat the more general class of Gevrey smooth Hamiltonians $H\in G^{\rho,1}((\mathbb{T}^n\times D\times \Omega)\times (-1,1))$, we require the approximation result Proposition \ref{approxlemma}. This method was used to prove Theorem 2.1 in \cite{popovkam} without the presence of the parameter $t$.

\section{Approximations of Gevrey functions}
\label{approxsec}
It is convenient to extend the $P^j$ to Gevrey functions $\tilde{P}^j\in G^{\rho,\rho,1}_{CL_1,CL_2,CL_2}(\mathbb{T}^n\times \mathbb{R}^{2n}\times (-1,1))$ where $C$ depends only on $n$ and $\rho$ by making use of a Gevrey formulation of the Whitney extension theorem, from Theorem \ref{whitneythm}.
We thus obtain the estimate
\begin{equation}
\label{whitneyapplication}
\|\tilde{P}^j\|\leq AL_1^{n+1}\|P^j\|
\end{equation}
where $A$ also only depends on $n$ and $\rho$.
We can then cut-off $\tilde{P}^j$ without loss to have $(I,\omega)$ support in $B_1\times B_{\bar{R}}\subset \R^{2n}$, where $1\ll\bar{R}$ is such that $\Omega^0\subset B_{\bar{R}-1}$. 
From here, we suppress the tilde in our notation, as well as the factor $C$ in our Gevrey constant.
\begin{prop}
\label{approxlemma}
Suppose $P\in G^{\rho,\rho,1}_{L_1,L_2,L_2}(\mathbb{T}^n\times \R^{2n}\times (-1,1))$ satisfies ${\textrm{supp}_{(I,\omega)}(P)\subset B_1\times B_{\bar{R}}}$.
If $u_j,w_j,v_j$ are positive real sequences monotonically tending to zero such that
\begin{equation}
\label{uvw}
v_jL_2,w_jL_2\leq u_jL_1 \leq 1, \; v_0,w_0\leq L_2^{-1-\zeta} 
\end{equation}
where $1\leq L_1 \leq L_2$ and $0<\zeta \leq 1$ are fixed, then we can find a sequence of real analytic functions $P_j:U_j\rightarrow \mathbb{C}$ such that 
\begin{equation}
|P_{j+1}-P_j|_{U_{j+1}}\leq C(\bar{R}^n+1)L_1^n \exp\left(-\frac{3}{4}(\rho-1)(2L_1 u_j)^{-1/(\rho-1)}\right)\|P\|,
\end{equation}
\begin{equation}
|P_0|_{U_0}\leq C(\bar{R}^n+1)\left(1+L_1^n \exp\left(-\frac{3}{4}(\rho-1)(2L_1 u_0)^{-1/(\rho-1)}\right)\right),
\end{equation}
and
\begin{equation}
|\partial_x^\alpha (P-P_j)(\theta,I;\omega,t)|\leq C(1+\bar{R}^n)L_1^nL_2 \exp\left(-\frac{3}{4}(\rho-1)(2L_1 u_j)^{-1/(\rho-1)}\right)
\end{equation}
in $\mathbb{T}^n\times B_1 \times B_{\bar{R}}\times (-1,1)$ for $|\alpha|\leq 1$,
where 
\begin{multline}
U_j^m:=\{(\theta,I;\omega,t)\in \C^n / 2\pi \mathbb{Z}^n \times \C^n \times \C^n\times \C:\\
|\textrm{Re}(\theta)|\leq\pi,|\textrm{Re}(I)|\leq 2,|\textrm{Re}(\omega)|\leq \bar{R}+1,|\textrm{Re}(t)|\leq 1,\\|\textrm{Im}(\theta)|\leq 2u_j,|\textrm{Im}(I)|\leq 2v_j,|\textrm{Im}(\omega_k)|\leq 2w_j,|\textrm{Im}(t)|\leq (2L_2)^{-1} \}
\end{multline}
and 
\begin{equation}
U_j:= U_j^1
\end{equation}
where we have identified $[-\pi,\pi]^n$ with $\T^n$ for simplicity of notation.
\end{prop}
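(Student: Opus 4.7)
The plan is to construct the approximations $P_j$ via the Dyn'kin method of almost-analytic extension followed by a $\bar\partial$-correction, adapted to the anisotropic Gevrey regularity of $P$ in the groups of variables $\theta$, $I$, $\omega$. Since $P$ is already analytic (Gevrey-$1$) in $t$ on $\{|\Im t| \leq (2L_2)^{-1}\}$, no extension in $t$ is needed: the $t$-dependence is carried through as a parameter throughout the construction.

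For $(x, t) = (\theta, I, \omega, t)$ real in the support of $P$ and imaginary parts $y = (y_\theta, y_I, y_\omega)$, I would define the almost-analytic extension
\begin{equation*}
\tilde P(x + iy, t) := \sum_{|\alpha| \leq N(y)} \frac{\partial_x^\alpha P(x, t)}{\alpha!}(iy)^\alpha,
\end{equation*}
where the multi-index $\alpha = (\alpha_\theta, \alpha_I, \alpha_\omega)$ is truncated anisotropically so that $|\alpha_\theta|$ is bounded of order $(L_1 |y_\theta|)^{-1/(\rho-1)}$ and $|\alpha_I|, |\alpha_\omega|$ of order $(L_2 |y_I|)^{-1/(\rho-1)}$ and $(L_2 |y_\omega|)^{-1/(\rho-1)}$ respectively. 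A direct computation in the spirit of Proposition \ref{flatissmall}, cancelling the inner terms of the Taylor polynomial upon application of $\bar\partial$ in each block, then gives
\begin{equation*}
|\bar\partial \tilde P(z, t)| \leq C \exp\!\Bigl(-\tfrac{3}{4}(\rho-1)(2L_1 u_j)^{-1/(\rho-1)}\Bigr)
\end{equation*}
uniformly on the annular region $U_j \setminus U_{j+1}$, the constraint $v_j L_2,\, w_j L_2 \leq u_j L_1$ ensuring the slowest decay arises from the $\theta$-block and so dominates the overall rate. I would then pick a Gevrey cutoff $\chi_j$ equal to $1$ on a neighbourhood of $U_{j+1}$ and supported in $U_j$, periodic in $\theta$ and compactly supported in $(I,\omega)$ inside $B_2 \times B_{\bar R + 1}$, solve the equation $\bar\partial q_j = \bar\partial(\chi_j \tilde P)$ via the Cauchy--Pompeiu integral over $U_j$, and set $P_j := \chi_j \tilde P - q_j$. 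The function $P_j$ is then holomorphic on $U_{j+1}$; its distance from $P$ on the real section is controlled by the exponential bound above, and the derivative bound for $|\alpha| \leq 1$ comes from a Cauchy estimate on a slightly shrunken strip, which produces the extra factor $L_2$ from the minimal strip width $(2L_2)^{-1}$ in $t$. The telescoping estimate on $|P_{j+1} - P_j|_{U_{j+1}}$ follows by the triangle inequality through $P$, and the prefactors $L_1^n$ and $\bar R^n$ arise from the Gevrey constants entering the Taylor estimates and from the volume of the $(I, \omega)$ support.

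The main technical obstacle will be bookkeeping constants, particularly extracting the factor $\tfrac{3}{4}(\rho-1)$ rather than the ``naive'' optimal $(\rho-1)$ in the exponent: this requires a slightly sub-optimal choice of truncation order $N(y)$ in the almost-analytic extension, with the remaining slack used to absorb the Cauchy--Pompeiu integration over the annular region and the passage from data on $U_j$ to estimates on the strictly smaller strip $U_{j+1}$. A secondary point of care is the consistent treatment of periodicity in $\theta$ throughout the construction, and ensuring that all cutoffs and supports remain strictly inside $B_2 \times B_{\bar R + 1}$, so that the Cauchy--Pompeiu solution does not pick up boundary contributions from outside $\mathrm{supp}(P)$.
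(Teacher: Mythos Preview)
Your high-level strategy --- build an almost-analytic extension and then correct it to a genuine holomorphic function via a Cauchy-type integral --- matches the paper's. But two concrete steps in your implementation differ from the paper and, as written, do not go through.

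First, you truncate the Taylor sum defining $\tilde P$ at an order depending on the imaginary part $y$. This makes $\tilde P$ only piecewise polynomial in $y$, so $\bar\partial\tilde P$ picks up jump contributions and the clean ``only the top-order terms survive'' computation fails. The paper instead fixes, for each $j$, truncation orders $N_1\sim(2L_1u_j)^{-1/(\rho-1)}$, $N_2\sim(2L_2v_j)^{-1/(\rho-1)}$, $N_3\sim(2L_2w_j)^{-1/(\rho-1)}$ and builds a \emph{different} almost-analytic extension $F_j$ on each $U_j^2$; each $F_j$ is then a genuine polynomial in the imaginary parts and the $\bar\partial$ estimate follows directly from the Gevrey bounds and Stirling, with the $3/4$ in the exponent coming from absorbing the polynomial prefactor $(L_1u_j)^{-(3\rho-1)/(2\rho-2)}$ into the exponential rather than from any slack in the truncation.

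Second, and more seriously, your cutoff-plus-$\bar\partial$-correction does not produce the claimed approximation. If $\chi_j$ vanishes near $\partial U_j$ then $\chi_j\tilde P$ has no boundary term in the Cauchy--Pompeiu representation, so the area integral $q_j$ solving $\bar\partial q_j=\bar\partial(\chi_j\tilde P)$ equals $\chi_j\tilde P$ itself and $P_j\equiv 0$. Even short of that degeneracy, the term $(\bar\partial\chi_j)\tilde P$ in $\bar\partial(\chi_j\tilde P)$ is of size $O(\|P\|/(u_j-u_{j+1}))$ on the annulus, not exponentially small, so the resulting $q_j$ (and hence $P-P_j$ on the real section) is not small either. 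The paper avoids this entirely: it takes the \emph{boundary} integral in Green's formula as the analytic object, iterating one variable at a time --- using a periodic kernel $K(\eta,\zeta)=\sum_k(\eta-\zeta+2\pi k)^{-1}$ in the $\theta$ variables and the ordinary Cauchy kernel in $I,\omega$ --- so that at each stage the difference $F_{j,m}-F_{j,m-1}$ is the area integral of $\bar\partial_{\eta_m}F_{j,m-1}$ and inherits the exponential smallness directly. No cutoffs in the complex domain are introduced, and the several-variable $\bar\partial$ problem is never posed globally. The factor $\bar R^n+1$ you anticipate does appear, but it enters from the areas of the rectangles $D_m$ in the $\omega$ variables during this iteration, not from a support-volume argument.
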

\begin{proof}
We first extend $P$ to functions $F_j:U_j^2\rightarrow \mathbb{C}$ that are almost analytic in $(\theta,I,\omega)$ and are analytic in $t$.
The Gevrey estimate on $t$-derivatives of $P$ imply that the Taylor expansions in $t$ have radius of convergence $L_2^{-1}$, and so the expression
\begin{equation}
\label{effjay}
F_j(\theta+i\tilde{\theta},I+i\tilde{I},\omega+i\tilde{\omega},t+i\tilde{t}):=\sum_{\mathcal{M}_j} \partial_\theta^\alpha \partial_I^\beta \partial_\omega^\gamma P(\theta,I;\omega,t)\frac{(i\tilde{\theta})^\alpha (i\tilde{I})^\beta(i\tilde{\omega})^\gamma(i\tilde{t})^\delta}{\alpha!\beta!\gamma!\delta!}
\end{equation}
is convergent on $U_j^2$ where the index set is given by 
\begin{equation}
\mathcal{M}_j=\{(\alpha,\beta,\gamma,\delta):\alpha_k\leq N_1,\; \beta_k\leq N_2,\; \gamma_k\leq N_3 \}
\end{equation}
where
\begin{equation}
N_1=\lfloor (2L_1u_j)^{-1/(\rho-1)}\rfloor+1 ,\; N_2= (2L_2v_2)^{-1/(\rho-1)}\rfloor+1,\; N_3=(2L_2w_j)^{-1/(\rho-1)}\rfloor+1.
\end{equation}
A simple consequence of Stirling's formula is that for $s\in (0,1]$ and $m\in \mathbb{Z}\cap [1,t^{-1/(\rho-1)}+1]$, we have the estimate
\begin{equation}
\label{stirling}
s^m m!^{\rho-1}\leq C(\rho)m^{\frac{\rho-1}{2}}e^{-m(\rho-1)}
\end{equation}
which will be used repeatedly in estimating the almost analyticity of $F_j$.
By using the Gevrey estimates of $P$ to control each summand on $F_j$, we obtain the bound
\begin{equation}
\label{summandsofalmostanalyticextension}
|F_j|_{U_j^2} \leq \|P\|\cdot \sum_{\mathcal{M}_j} (2)^{-\delta}(2L_1u_j)^{|\alpha|}(2L_2v_j)^{|\beta|}(2L_2 \omega_j)^{|\gamma|}(2L_2 t)^{\delta}(\alpha!\beta!\gamma!)^{\rho-1}.
\end{equation}

Now, for $k$ with $\alpha_k >0$, we can apply \eqref{stirling} to obtain
\begin{equation}
(2L_1u_j)^{\alpha_k} \alpha_k!^{\rho-1}\leq C\alpha_k^{\frac{\rho-1}{2}}e^{-\alpha_k(\rho-1)}.
\end{equation}
Bounding the factors $(2L_2v_j)^{\beta_k} \beta_k!^{\rho-1}$ and $(2L_2w_j)^{\gamma_k} \gamma_k!^{\rho-1}$ in the same way, we arrive at the bound
\begin{equation}
|F_j|_{U_j^2}\leq 2\|P\|\cdot\left(1+C(\rho)\sum_{m=1}^\infty m^{\frac{1-\rho}{2}}e^{-m(\rho-1)}\right)^{3n}=C\|P\|.  
\end{equation}

We now consider the Cauchy--Riemann operator $\bar{\partial}_{z_k}=\frac{1}{2}(\partial_{\theta_k}+\partial_{i\tilde{\theta}_k})$ where $z_k=\theta_k+i\tilde{\theta}_k$.
Applying $\bar{\partial}_{z_k}$ to $F_j$, we obtain
\begin{equation}
\label{thetaalmostanalytic}
2\bar{\partial}_{z_k}=\sum_{\mathcal{M}_j,\alpha_k=N_1} \partial_\theta^{\alpha+e_k} \partial_I^\beta \partial_\omega^\gamma P(\theta,I;\omega,t)\frac{(i\tilde{\theta})^\alpha (i\tilde{I})^\beta(i\tilde{\omega})^\gamma(i\tilde{t})^\delta}{\alpha!\beta!\gamma!\delta!}.
\end{equation}
From the Gevrey estimates for $P$, we can estimate each summand by
\begin{equation}
|F_j|_{U_j^2} \leq \|P\| L_1(2L_1u_j)^{|\alpha|}(2L_2v_j)^{|\beta|}(2L_2 \omega_j)^{|\gamma|}(2L_2 t)^{\delta}(\alpha!\beta!\gamma!)^{\rho-1}(\alpha_k+1)^\rho.
\end{equation}
Using the fact that $(2L_1 u_j)^{-1/(\rho-1)}\leq N_1\leq (2L_1 u_j)^{-1/(\rho-1)}+1$ together with \eqref{stirling}, we can bound
\begin{equation}
(2L_1)^{\alpha_k}\alpha_k!^{\rho-1}(\alpha_k+1)^\rho\leq C(L_1u_j)^{-\frac{3\rho-1}{2\rho-2}}\exp(-(\rho-1)(2L_1u_j)^{-1/(\rho-1)}).
\end{equation}
Hence
\begin{eqnarray}
\label{cauchyriemannbound}
|\bar{\partial}_{z_k} F_j|_{U_j^2}&\leq& CL_1(L_1u_j)^{-\frac{3\rho-1}{2\rho-2}}\exp(-(\rho-1)(2L_1u_j)^{-1/(\rho-1)})\|P\|\\ 
&\leq & CL_1\exp\left(-\frac{3}{4}(\rho-1)(2L_1u_j)^{-1/(\rho-1)}\right)\|P\|.
\end{eqnarray}
where the last line follows from absorbing the power term into the exponential. The constant $C$ depends only on $n$ and $\rho$.

Applying the same method of estimation to $F_j$ with an additional differentiation yields
\begin{eqnarray}
\label{betagammader}
& &\max(|\partial_{\theta_l}^\beta \partial_{\tilde{\theta}_l}^\gamma \bar{\partial}_{z_k}F_j|,|\partial_{I_l}^\beta \partial_{\tilde{I}_l}^\gamma \bar{\partial}_{z_k}F_j|,|\partial_{\omega_l}^\beta \partial_{\tilde{\omega}_l}^\gamma \bar{\partial}_{z_k}F_j|,|\partial_{t}^\beta \partial_{\tilde{t}}^\gamma \bar{\partial}_{z_k}F_j|)\\ &\leq& CL_1L_2\exp\left(-\frac{3}{4}(\rho-1)(2L_1u_j)^{-1/(\rho-1)}\right)\|P\|.
\end{eqnarray}
if $\beta+\gamma\leq 1$, where we have also used $1\leq L_1\leq L_2.$

We can also bound $\bar{\partial}_{I_k+i\tilde{I}_k}F_j$ and $\bar{\partial}_{\omega_k+i\tilde{\omega}_k}F_j$ in the same fashion as \eqref{cauchyriemannbound}. Indeed, since $L_2\leq \min((L_2v_j)^{-1/\zeta},(L_2 w_j)^{-1/\zeta})$, we obtain the stronger estimate
\begin{eqnarray}
|\bar{\partial}_{z_k}F_j|&\leq& CL_2(L_2u_j)^{-\frac{3\rho-1}{2\rho-2}}\exp(-(\rho-1)(2L_2u_j)^{-1/(\rho-1)})\|P\|\\
&\leq & C(L_2u_j)^{-\frac{3\rho-1}{2\rho-2}-\frac{1}{\zeta}}\exp(-(\rho-1)(2L_2u_j)^{-1/(\rho-1)})\|P\|\\
&\leq & C\exp\left(-\frac{3}{4}(\rho-1)(2L_2u_j)^{-1/(\rho-1)}\right)\|P\|\\
&\leq & C\exp\left(-\frac{3}{4}(\rho-1)(2L_1u_j)^{-1/(\rho-1)}\right)\|P\|
\end{eqnarray}
where $C$ now depends on $n,\rho,\zeta$ only.

Generalising, we now let $z=(\theta+i\tilde{\theta},I+i\tilde{I},\omega+i\tilde{\omega},t+i\tilde{t})\in \C^n\times \mathbb{C}^{2n}\times \C$. Then if $\alpha_k\leq 1$ for each $k$, applying the operator $\bar{\partial}_{z}^{\alpha}$ to $F_j$ amounts to restricting the index set $M_j$ to the multi-indices with $k$-th component maximal, for each $k$ with $\alpha_k=1$.
Hence we obtain
\begin{equation}
|\bar{\partial}_{z}^\alpha F_j|_{U_j^2} \leq CL_1^n\exp\left(-\frac{3}{4}(\rho-1)(2L_1u_j)^{-1/(\rho-1)}\right)\|P\|.
\end{equation}
We also obtain the derivative bound 
\begin{equation}
\label{almostanalyticend}
|\partial_x^\beta\partial_y^\gamma\bar{\partial}_{z}^\alpha F_j|_{U_j^2} \leq CL_1^nL_2^{|\beta+\gamma|}\exp\left(-\frac{3}{4}(\rho-1)(2L_1u_j)^{-1/(\rho-1)}\right)\|P\|.
\end{equation}
for $|\alpha|\geq 1, |\beta+\gamma|\leq 1$ as in \eqref{betagammader}.
Of course, if $\alpha_{3n+1}=1$, the operator $\bar{\partial}_{z}^\alpha$ will annihilate $F_j$ because of the analyticity in $t$.\\ 

Having constructed a family of almost-analytic extensions $F_j$ of $P$, the next step is to approximate the $F_j$ by functions that are real analytic in $U_j^2$. The key tool here is Green's formula.
\begin{equation}
\label{greens}
\frac{1}{2\pi i}\int_{\partial D} \frac{f(\eta)}{\eta-\zeta}\, d\eta+\frac{1}{2\pi i}\int\int_D \frac{\bar{\partial}f(\eta)}{\eta-\zeta}\, d\eta\wedge d\bar{\eta}=f(\zeta)
\end{equation}
if $D$ is a bounded domain symmetric with respect to the real axis with a piecewise smooth positively oriented boundary $\partial D$, $f\in \mathcal{C}^1(\overline{D})$, and $\zeta\in D$.
We also observe that the first summand in \eqref{greens} is real analytic in D if $f(\eta)=\overline{f(\bar{\eta})}$ on $\partial D$, a property satisfied by our $F_j$ by the nature of their construction.
We define open rectangles in $\mathbb{C}$ given by
\begin{equation}
D_k=\{z: |\textrm{Re}(z)|< a_k, \; |\textrm{Im}(x)|<b_k\}
\end{equation}
where 
$$a_k = 
\begin{cases} 
\pi &\mbox{if } 1\leq k \leq n \\ 
2 & \mbox{if } n+1\leq k \leq 2n\\
\bar{R}+1 &\mbox{if } 2n+1 \leq k \leq 3n
 \end{cases}$$
and
$$b_k = 
\begin{cases} 
2u_j &\mbox{if } 1\leq k \leq n \\ 
2v_j & \mbox{if } n+1\leq k \leq 2n\\
2w_j &\mbox{if } 2n+1 \leq k \leq 3n.
 \end{cases}$$
Additionally, we define the oriented union of line segments 
\begin{equation}
\Gamma=[-\pi-2iu_j,\pi-2iu_j]\cup[\pi+2iu_j,-\pi+2iu_j].
\end{equation}
We also introduce the $2\pi$-periodic meromorphic function
\begin{equation}
K(\eta,\zeta)=\lim_{N\rightarrow\infty}\sum_{|k|\leq N} \frac{1}{\eta-\zeta+2\pi k}=\frac{1}{\eta-\zeta}+K_1(\eta,\zeta).
\end{equation}

Writing $F_{j,0}(z)=F_j(z)$, we define
\begin{equation}
F_{j,1}(z):=\frac{1}{2\pi i}\int_{\Gamma} F_{j,0}(\eta_1,z_2,\ldots,z_{3n+1})K(\eta_1,z_1)\, d\eta_1.
\end{equation}
This function is analytic and $2\pi$-periodic for $z_1$ in the strip $|\textrm{Im}(z_1)|<2u_j$. Moreover, it satisfies the identity $F_{j,1}(z)=\overline{ F_{j,1}(\bar{z})}$.
If $z_1\in D_1$, we can safely avoid poles by integrating about $\partial D_1$. Thus, periodicity gives us
\begin{equation}
F_{j,1}(z)=\frac{1}{2\pi i}\int_{\partial D_1} F_{j,0}(\eta_1,z_2,\ldots,z_{3n+1})K(\eta_1,z_1)\, d\eta_1.
\end{equation}
Green's formula \eqref{greens} then implies
\begin{equation}
F_{j,1}(z)=F_{j,0}(z)-\frac{1}{2\pi i}\int_{D_1} \bar{\partial}_{\eta_1}F_{j,0}(\eta_1,z_2,\ldots,z_{3n+1})K(\eta_1,z_1)\, d\eta_1\wedge d\bar{\eta}_1.
\end{equation}
which extends to $\textrm{Re}(z_1)=\pi$ by continuity.

Defining the set $U_{j,1}^2=U_j^2\cap \{|\textrm{Im}(z_1)|\leq u_j\}$, we claim that for any multi-indices $\alpha,\beta,\gamma$ with $\alpha_1=0$ and $|\beta|+|\gamma|\leq 1$, we have the estimate
\begin{equation}
\label{greensclaim}
|\partial_{z_k}^\beta \bar{\partial}_{z_k}^\gamma \bar{\partial}_z^\alpha(F_{j,1}-F_{j,0})|_{U_{j,1}^2}\leq C L_1^n L_2^{|\beta|+|\gamma|}\exp(-\frac{3}{4}(\rho-1)(2L_1 u_j)^{-1/(\rho-1)})\|P\|.
\end{equation}
For $k\neq 1$, this estimate follows directly from \eqref{almostanalyticend} by differentiating $F_{j,1}$ under the integral.
For $k=1$, we first use $\eqref{greens}$ to write 
\begin{eqnarray}
& &\frac{1}{2\pi i}\int_{D_1} \frac{\bar{\partial}_{\eta_1}F_{j,0}(\eta_1,z_2,\ldots,z_{3n+1})}{\eta_1-z_1}\, d\eta_1\wedge d\bar{\eta}_1\\
&=&\nonumber-\bar{z}_1\bar{\partial}_{z_1}F_j(z)+\frac{1}{2\pi i}\int_{D_1} \frac{\bar{\partial}_{\eta_1}F_{j,0}(\eta_1,z_2,\ldots,z_{3n+1})-\bar{\partial}_{z_1}F_j(z)}{\eta_1-z_1}\, d\eta_1\wedge d\bar{\eta}_1.
\end{eqnarray}
We can then differentiate under the integral as before and use \eqref{almostanalyticend} to establish \eqref{greensclaim}.
Together with \eqref{almostanalyticend}, we arrive at
\begin{equation}
|\partial_{z_k}^\beta \bar{\partial}_{z_k}^\gamma \bar{\partial}_z^\alpha F_{j,1}|_{U_{j,1}^2}\leq C L_1^n L_2^{|\beta|+|\gamma|}\exp(-\frac{3}{4}(\rho-1)(2L_1 u_j)^{-1/(\rho-1)})\|P\|
\end{equation}
for $|\alpha|\geq 1$.

We proceed by defining $F_{j,m}$ inductively using the same contour integral in the $m$-th variable, and taking $U_{j,m}^2=U_{j,m-1}^2\cap \{|\textrm{Im}(z_m)|\leq u_j\}$ for $m\leq n$.
With each step, $F_{j,m}$ becomes analytic in an additional variable, and we obtain the estimate \eqref{greensclaim} in $U_{j,m}^2$ for $\alpha$ with $\alpha_k=0$ for $k\leq m$.

For $n+1 \leq m \leq 3n$, we no longer requre $2\pi$ periodicity of our construction, and can instead define
\begin{equation}
F_{j,m}(z):=\frac{1}{2\pi i}\int_{\partial D_m} \frac{F_{j,0}(z_1,\ldots,\eta_m,\ldots,z_{3n+1})}{\eta_m-z_m}\, d\eta_m
\end{equation}
for $z\in U_j^2$.
We can then proceed as above to estimate $F_{j,m}$ in $U_{j,m}^2$, defined inductively by
\begin{equation}
U_{j,m}^2=U_{j,m-1}^2\cap \{|\textrm{Im}(z_m)|\leq p_m\}
\end{equation}
where $p_m=v_j,w_j$ for $n+1\leq m \leq 2n$ and $2n+1 \leq m \leq 3n$ respectively.

For $2n+1 \leq m \leq 3n$, the constant $C$ in the estimate $\eqref{greensclaim}$ has to be multiplied by $(\bar{R}+1)^{m-2n}$ to account for the measure of $D_m$, but this is the only dependence of $C$ on anything other than $n,\rho,\zeta$. We absorb this dependence into the $C$ for the rest of the proof.

We now set $P_j:=F_{j,3n}$, which is analytic in $U_j$ by construction.
We have shown that for $l\leq 1$ and for any $1\leq k\leq 3n+1$, we have
\begin{equation}
|\partial_{x_k}^l (P_j-F_j)|_{U_j}\leq CL_1^n L_2^l\exp(-\frac{3}{4}\left(\rho-1)(2L_1 u_j)^{-1/(\rho-1)}\right)\|P\|.
\end{equation}
In particular, this implies that
\begin{eqnarray}
|P_{j+1}-P_j|_{U_{j+1}}&\leq& |P_{j+1}-F_{j+1}|_{U_{j+1}}+|P_j-F_j|_{U_{j+1}}+|F_{j+1}-F_j|_{U_{j+1}}\\
&\leq& CL_1^n \exp\left(-\frac{3}{4}(\rho-1)(2L_1 u_j)^{-1/(\rho-1)}\right).
\end{eqnarray}
Since by construction $F_j(z)=P(z)$ for real $z$, we also obtain
\begin{equation}
|\partial_{x_k}^l(P_j(x)-P(x))|_{U_j\cap \mathbb{R}^{3n+1}} \leq CL_1^nL_2^l \exp\left(-\frac{3}{4}(\rho-1)(2L_1 u_j)^{-1/(\rho-1)}\right).
\end{equation}
The final claimed estimate arises as follows
\begin{equation}
|P_0|_{U_0}\leq |F_0|_{U_0}+|P_0-F_0|_{U_0}\leq C\left(1+L_1^n \exp\left(-\frac{3}{4}(\rho-1)(2L_1 u_0)^{-1/(\rho-1)}\right)\right)\|P\|.
\end{equation}
\end{proof}

Our next goal is to set up an iterative scheme based on Theorem \ref{kamstep} that converges in the Gevrey class $G^{\rho,\rho(\tau+1)+1,\rho(\tau+1)+1,1}(\mathbb{T}^n\times D\times \Omega\times (-1,1))$. This involves defining decreasing sequences of our parameters $s_j,r_j,h_j,\eta_j,\epsilon_j,\sigma_j,K_j$ such that the hypotheses of Theorem \ref{kamstep} are always satisfied, as well as decreasing sequences of the the parameters $u_j,v_j,w_j$ such that the hypotheses of the Proposition \ref{approxlemma} are always satisfied.

\section{The iterative scheme}

We begin by examining how the parameters are defined in the first iteration. It is convenient to choose a weighted error $0<E<1/64$ and a fixed $0<\hat{\epsilon}\leq 1$ such that
\begin{equation}
\eta=E^{1/2},\epsilon=\hat{\epsilon}\kappa Er\sigma^{\tau+1}.
\end{equation}
We can then define $K$ and $h$ by
\begin{equation}
K^ne^{-K\sigma}=E,h=\frac{\kappa}{2K^{\tau+1}}.
\end{equation}
The choice for $K$ is motivated by the form of the error term \eqref{newerrbd} and the choice for $h$ is motivated by \eqref{kamstepconsts}.

We define the subsequent values of the parameters $r,s,\sigma$ by
\begin{equation}
r_+=\eta r,s_+=s-5\sigma,\sigma_+=\delta\sigma
\end{equation}
where $\delta(\rho)\in (0,1)$ will subsequently be chosen in a convenient way.

The KAM step together with the definitions of our parameters yields the estimate
\begin{eqnarray*}
|P_+|_{s_+,r_+,(1/2-3v)h}&<& C\hat{\epsilon}\kappa r \sigma^{\tau+1}(E^2+(\eta^2+K^ne^{-K\sigma})E)\\
&=& C\hat{\epsilon}\kappa r \sigma^{\tau+1}E^2\\
&=& C\delta^{-\tau-1}\hat{\epsilon}\kappa r_+ \sigma_+^{\tau+1}E^{3/2}.
\end{eqnarray*}

Hence, having chosen $\delta(\rho)$, we will arrive at the estimate
\begin{equation}
\label{c1def}
|P_+|_{s_+,r_+,(1/2-3v)h}\leq \frac{1}{2}\hat{\epsilon}c_1^{1/2}\kappa r_+ \sigma_+^{\tau+1}E^{3/2}
\end{equation}
for some $c_1>1$ dependent only on $n,\rho,\tau$.

We can then choose our subsequent weighted and unweighted error as 
\begin{equation}
E_+=c_1^{1/2}E^{3/2},\epsilon_+=\hat{\epsilon}\kappa r_+ \sigma_+^{\tau+1}E_+.
\end{equation}

The constants $\eta_+,K_+,h_+$ can then be defined in terms of $E_+$ and $\sigma_+$ as before, and we have $c_1E_+=(c_1E)^{3/2}$ which will give us rapid convergence provided $c_1E<1$. Moreover, if we have
\begin{equation}
h_+ \leq (1/2-3v)h
\end{equation}
then we obtain
\begin{equation}
\label{Pplusbound}
|P_+|_{s_+,r_+,h_+}\leq \epsilon_+/2.
\end{equation}

For brevity of notation, we write $D_j=D_{s_j,r_j}, O_j=O_{h_j},V_j=D_j\times O_j\times (-1,1)$.

Now, in terms of $\delta$ we choose our sequences $s_j$ and $\sigma_j$ to decay at a geometric rate, by setting
\begin{equation}
s_j=s_0 \delta^j,\sigma_j=\sigma_0\delta^j, s_0=5\sigma_0/(1-\delta)
\end{equation}

where $\sigma_0\ll1$ remains to be chosen to be convenient for the subsequent estimates.

A direct consequence of this definition is that
\begin{equation}
s_{j+1}=s_j-5\sigma_j,\sigma_j=(1-\delta)s_j/5.
\end{equation}

For the parameter sequences required in our approximation lemma, we set
\begin{equation}
\label{uvwdefn}
u_j=2s_0\delta^jM(\rho),v_j=2r_0\delta^jM(\rho),w_j=2h_0\delta^jM(\rho)
\end{equation}
where $M(\rho)=(25(1-\delta)^{-2}+2)^{1/2}$ and assume for now that these sequences satisfy \eqref{uvw}.

We write $U_j=U_j^1\cap \{|I|<r\}$ where $U_j^1$ is defined as in Proposition \ref{approxlemma}. Now, by applying Proposition \ref{approxlemma} to the terms $P^0,P^1$ from \eqref{defnofPzero}, we obtain sequences $P_j^0,P_j^1$ of real analytic functions in $U_j^1$ that are good approximations to $P^0$ and $P^1$.

We set
\begin{equation}
P_j(\theta,I;\omega,t):=\langle P_j^0(I;\omega,t)I,I\rangle+P_j^1(\theta,I;\omega,t).
\end{equation}
Proposition \ref{approxlemma}, together with the factors picked up during the Whitney extension of $P^0,P^1$ in \eqref{whitneyapplication} then yields the estimates
\begin{equation}
\label{Pzerobound}
|P_0|_{U_0}\leq C(n,\rho,\zeta)(\bar{R}^n+1)L_1^{2n+1}\langle P\rangle_r
\end{equation}
and
\begin{eqnarray}
\label{Pjbound}
|P_j-P_{j-1}|_{U_j}&\leq& C(n,\rho,\zeta)(\bar{R}^n+1)L_1^{2n+1}\langle P \rangle_r e^{-B_0\sigma_j^{-1/(\rho-1)}}\\
&=&C(n,\rho,\zeta)(\bar{R}^n+1)L_1^{2n+1}\langle P \rangle_r e^{-\tilde{B}_0 s_j^{-1/(\rho-1)}}
\end{eqnarray}
where $B_0,\tilde{B}_0$ are constant multiples of $L_1^{-1/(\rho-1)}$ by a factors depend only on $\rho$ and $\delta$.

We now choose the constant
\begin{equation}
\hat{\epsilon}:=\langle R \rangle_r L_1^{N-n-2}(a\kappa r)^{-1}.
\end{equation} 
and set
\begin{equation}
\tilde{\epsilon}_j:=\hat{\epsilon}\kappa r_0 \sigma_0^{\tau+1}\exp(-B_0\sigma_j^{-1/(\rho-1)})
\end{equation}
where $N(n,\tau,\rho)$ and $a(n,\rho,\tau,\zeta,\bar{R})\in (0,1]$ are constants to be fixed later in such a way that we can bound $P_0$ and $P_j-P_{j-1}$ by $\tilde{\epsilon}_0$ and $\tilde{\epsilon}_j$ respectively, yielding rapid convergence of the $P_j$.

Note that we will have $\hat{\epsilon}\leq 1$ by taking the $\epsilon$ in Theorem \ref{kamwithparams} sufficiently small.

We then define the weighted error $E_j$ by 
\begin{equation}
\label{weightederrordefn}
E_j:=c_1^{-1}\exp(-B\sigma_j^{-1/(\rho-1)})
\end{equation}
where $c_1(n,\rho,\tau)$ is the constant from \eqref{c1def} and
\begin{equation}
B:=B_0(\delta^{-1/(\rho-1)}-1)/2.
\end{equation}
The desired recurrence $E_{j+1}=c_1^{1/2}E_j^{3/2}$ together with the recurrence for $\sigma_j$ then forces $\delta=(2/3)^{\rho-1}$, which in turn forces $B=B_0/4=A_0(\rho)L_1^{-1/{\rho-1}}$.

We now define the remaining parameters using $E_j$ as previously discussed. We set $\eta_j=E_j^{1/2},r_{j+1}=\eta_j r_j$ and write
\begin{equation}
\epsilon_j=\hat{\epsilon}\kappa r_j \sigma_j^{\tau+1}E_j.
\end{equation}

Our definition of $E_j$ in \eqref{weightederrordefn} was chosen precisely so that we can obtain the inequality $\tilde{\epsilon}_j\leq \epsilon_{j+1}/2$ which allows us to conveniently handle the error terms of the analytic approximations in our iterative scheme.

We define $K_j\geq 1$ implicitly by $K_j^ne^{-K_j\sigma_j}=E_j$.

Setting $x_j=K_j\sigma_j$, we get 
\begin{equation}
x_j^ne^{-x_j}=E_j\sigma_j^n=c_1^{-1}\sigma_j^n\exp(-B\sigma_j^{-1/(\rho-1)})
\end{equation}
and taking logarithms this becomes
\begin{equation}
\label{randomlog}
x_j-n\log(x_j)=B\sigma_j^{-1/(\rho-1)}-n\log(\sigma_j)+\log(c_1).
\end{equation}

For convenience in our numerous estimates that require $\sigma_j$ to be small, we now set
\begin{equation}
\label{sigmanaught}
\sigma_0=\sigma L_1^{-1}(\log(L_1+e))^{-(\rho-1)}
\end{equation}
where $\sigma\leq \tilde{\sigma}(n,\rho)\ll 1$.

We can then bound the right-hand side of \eqref{randomlog} using 
\begin{eqnarray}
B\sigma_j^{-1/(\rho-1)}-n\log(\sigma_j)+\log(c_1)&\geq& B\sigma_j^{-1/(\rho-1)}\\
 &\geq& B\sigma_0^{-1/(\rho-1)}\\
 &=&A_0(L_1\sigma_0)^{-1/(\rho-1)}\\
 &>&A_0\sigma^{-1/(\rho-1)}\gg1.
\end{eqnarray}
Hence for each $j\geq 1$ we indeed obtain a unique $x_j(\sigma)$ such that
\begin{equation}
x_j \geq x_j-n\log(x_j) \geq B\sigma_j^{-1/(\rho-1)}\gg1.
\end{equation}  
This implies that we uniformly have
\begin{equation}
x_j-n\log(x_j)=x_j(1+o(1))
\end{equation}
as $\sigma\rightarrow 0$.
On the other hand we can also bound this quantity above by again making use of \eqref{sigmanaught} to arrive at
\begin{eqnarray}
& &\nonumber x_j-n\log(x_j)\\
&\leq & \nonumber B\sigma_j^{-1/(\rho-1)}(1-nA_0^{-1}(L_1\sigma_j)^{1/(\rho-1)}\log(L_1\sigma_j)+nA_0^{-1}(L_1\sigma_0)^{1/(\rho-1)}(\log(L_1)+\log(c_1)))\\ 
&=& \nonumber B\sigma_j^{-1/(\rho-1)}(1+o(1))
\end{eqnarray}
as $\sigma\rightarrow 0$, again uniformly with respect to $j$. Hence
\begin{equation}
\label{xjasymp}
B\sigma_j^{-1/(\rho-1)}\leq x_j \leq B\sigma_j^{-1/(\rho-1)}(1+o(1)) 
\end{equation}
as $\sigma\rightarrow 0.$

Defining $h_j=\kappa/(2K^{\tau+1})$, we also fix $v=1/54$.

Having set up the parameters, our next task is to establish the remaining hypotheses in \eqref{kamstepconsts}, noting that the last is immediate from our definition of $h_j$. First we observe that
\begin{eqnarray}
\epsilon_j &=& \hat{\epsilon}\kappa r_j \sigma_j^{\tau+1}E_j\\
&\leq & \kappa r_j \sigma_j^{\tau+1}E_j \\
& \leq & c\kappa \eta_j r_j \sigma_j^{\tau+1}
\end{eqnarray}
since $\eta_j^2=E_j$ and $E_j=c_1^{-1}\exp(-B\sigma_j^{-1/(\rho-1)})$ is $o(1)$ as $\sigma\rightarrow 0$. This establishes the first remaining hypothesis in \eqref{kamstepconsts}. To prove the only remaining hypothesis, we compute
\begin{eqnarray}
\frac{\epsilon_j}{r_j h_j}&=& \hat{\epsilon} \kappa \sigma_j^{\tau+1}E_j/h_j\\
&\leq & 2E_j x_j^{\tau+1}\\
&\leq & 2c_1^{-1}\exp(-B\sigma_j^{-1/(\rho-1)})(B\sigma_j^{-1/(\rho-1)})^{\tau+1}(1+o(1))\\
&\leq & c(\rho,\tau)\exp(-\frac{A_0}{2}(L_1\sigma_j)^{-1/(\rho-1)})\\
&\leq & c(\rho,\tau)\exp(-\frac{A_0}{2}(\sigma \delta^j)^{-1/(\rho-1)})
\end{eqnarray}
which is $o(1)$ as $\sigma\rightarrow 0$ and hence we have verified the hypotheses of Theorem \ref{kamstep}.

Moreover, for $\tilde{\sigma}(n,\rho,\tau)$ sufficiently small, we have
\begin{equation}
\label{rjhj}
\prod_{j=0}^\infty (1+\frac{C\epsilon_j}{r_jh_j})\leq \exp(\sum_{j=0^\infty}\frac{C\epsilon_j}{r_jh_j})\leq 2
\end{equation}
where the constant $C(n,\tau)$ in \eqref{rjhj} comes from the estimates in Theorem \ref{kamstep}.

From \eqref{xjasymp}, we have $(x_j/x_{j+1})=(\sigma_{j+1}/\sigma_j)^{1/(\rho-1)}(1+o(1))$. Consequently, we have
\begin{eqnarray}
(h_{j+1}/h_j)&=&(x_j/x_{j+1})^{\tau+1}(\sigma_{j+1}/\sigma_j)^{\tau+1}\\
&=&\delta^{(\tau+1)\rho/(\rho-1)}(1+o(1))\\
&=&(2/3)^{\rho(\tau+1)}(1+o(1))\\
&<&(4/9)^\rho\\
&<& 1/2-3v
\end{eqnarray}
for $\tilde{\sigma}(n,\rho,\tau)$ sufficiently small, which verifies $h_+<(1/2-3v)h$.

We now inductively establish the claim
\begin{equation}
\label{epsilonvstilde}
\tilde{\epsilon}_j\leq \frac{1}{2}\epsilon_{j+1}.
\end{equation}
After doing this and choosing the constants $a,N,\tilde{\sigma}$ as required, we will at last be in a position to state and prove the iterative result that arises from $n$ applications of the KAM step.

Unravelling the definitions of $\epsilon_j,\tilde{\epsilon}_j$, we obtain
\begin{equation}
\tilde{\epsilon}_0/\epsilon_1 =(r_0\sigma_0^{\tau+1}\exp(-4B\sigma_0^{-1/(\rho-1)}))/(r_1\sigma_1^{\tau+1}E_1)=C(n,\rho,\tau)\exp(-2B\sigma_0^{-1/(\rho-1)})\leq 1/2
\end{equation}
for sufficiently small $\tilde{\sigma}\ll 1$.

Moreover
\begin{equation}
(\tilde{\epsilon}_j/\epsilon_{j+1})(\epsilon_j/\tilde{\epsilon}_{j-1})=\exp(-\frac{4}{3}B\sigma_j^{-1/(\rho-1)})\cdot \frac{\epsilon_j}{\epsilon_{j+1}}=C(n,\rho,\tau)\exp(-\frac{1}{3}B\sigma_j^{-1/(\rho-1)})\leq 1
\end{equation}
for sufficiently small $\tilde{\sigma}\ll1$. This establishes \eqref{epsilonvstilde}.

Next we show that show that the sequences $u_j,v_j,w_j$ defined in \eqref{uvwdefn} satisfy the hypotheses \eqref{uvw} of Proposition \ref{approxlemma}. First we observe from the definition of $\sigma_0$ that
\begin{equation}
4s_0L_1 =20\sigma_0 (1-\delta)^{-1}L_1 \leq 20 (1-\delta^{-1})\tilde{\sigma}\leq 1
\end{equation}
for sufficiently small $\tilde{\sigma}\ll 1$. At this point, we fix $\tilde{\sigma}(n,\rho,\tau)\ll 1$. For the sequence $w_j$, we estimate
\begin{equation}
h_0=\frac{\kappa \sigma_0^{\tau+1}}{2x_0^{\tau+1}}\leq K\sigma_0^{\tau+1}\leq \kappa \sigma_0 \leq \kappa s_0\leq L_2^{-1-\zeta}s_0.
\end{equation}
This implies that $w_jL_2\leq u_jL_1$ and $w_0\leq L_2^{-1-\zeta}$.

Setting 
\begin{equation}
\label{cdefn}
r_0=c(n,\rho,\tau,\zeta)r
\end{equation}
and using \eqref{Radbound}, we obtain
\begin{equation}
r_0\leq cL_2^{-1-\zeta}<L_2^{-1-\zeta}
\end{equation}
and
\begin{equation}
r_0L_2 < cL_1^{-\zeta}\leq s_0L_1
\end{equation}
by choosing the $c$ in \eqref{cdefn} sufficiently small.

Finally, we need to choose $0<a(n,\rho,\tau,\zeta,\bar{R})\leq 1$ and $N(n,\rho,\tau)$ such that 
\begin{equation}
|P_0|_{U_0}\leq \tilde{\epsilon}_0
\end{equation}
and
\begin{equation}
|P_j-P_{j-1}|_{U_j}\leq \tilde{\epsilon}_j
\end{equation}
for $j\geq 1$. From \eqref{Pzerobound}, we obtain
\begin{equation}
\label{C0def}
|P_0|_{U_0}\leq C_0(\bar{R}^n+1) \langle P\rangle_r L_1^{2n+1}=\hat{\epsilon}\kappa r C_0(\bar{R}^n+1) L_1^{-N+3n+3}a. 
\end{equation}
On the other hand, the definition of $\sigma_0$ in \ref{sigmanaught} implies that
\begin{eqnarray}
\sigma_0^{\tau+1}\exp(-4B\sigma_0^{-1/(\rho-1)})&=& \sigma_0^{\tau+1}\exp(-4A_0(L_1\sigma_0)^{-1/(\rho-1)})\\
&=&L_1^{-\tau-1}\log(L_1+e)^{-(\rho-1)(\tau+1)}(L_1+e)^{-4A_0\sigma_0^{-1/(\rho-1)}}\\
&\geq& \label{Cdef} C(n,\rho,\tau)L_1^{-4A_0\sigma_0^{-1/(\rho-1)}-\tau-2}.
\end{eqnarray}
We now fix
\begin{equation}
a=CC_0^{-1}(\bar{R}^n+1)^{-1}r_0r^{-1}=CC_0^{-1}(\bar{R}^n+1)^{-1}c
\end{equation}
and
\begin{equation}
N=4A_0\sigma_0^{-1/(\rho-1)}+\tau+3n+5
\end{equation}
where the $C(n,\rho,\tau),C_0(n,\rho),c(n,\rho,\tau,\zeta),A_0$ are as in \eqref{Cdef}, \eqref{C0def}. This yields 
\begin{equation}
|P_0|_{U_0}\leq C_0(\bar{R}^n+1) \langle P\rangle_r L_1^{2n+1}\leq \tilde{\epsilon}_0.
\end{equation}
We can then insert this estimate into \eqref{Pjbound} to obtain
\begin{equation}
|P_j-P_{j-1}|_{U_j}\leq C_0(\bar{R}^n+1) \langle P\rangle_r L_1^{2n+1}\exp(-B_0\sigma_j^{-1/(\rho-1)})\leq \tilde{\epsilon}_j.
\end{equation}

We have now set up the necessary sequences for our iterative scheme in addition to the key ingredient of Theorem \ref{kamstep}.

We define the Hamiltonian 
\begin{equation}
H_j(\theta,I;\omega,t)=N_0(I;\omega)+P_j(\theta,I;\omega,t)=\langle \omega, I\rangle+P_j(\theta,I;\omega,t)
\end{equation}
which is real analytic in $U_j$.
For $j\geq 0$ we denote by $\mathcal{D}_j$ the class of real-analytic diffeomorphisms from $D_{j+1}\times O_{j+1}\times (-1,1)\rightarrow D_j\times O_j$ of the form
\begin{equation}
\mathcal{F}(\theta,I;\omega,t)=(\Phi(\theta,I;\omega),\phi(\omega;t))=(U(\theta;\omega,t),V(\theta,I;\omega,t),\phi(\omega;t))
\end{equation}
where $\Phi$ is affine in $I$ and canonical for fixed $(\omega,t)$ and the variable $t\in (-1,1)$ is regarded as a parameter.

\begin{prop}
\label{iterative}
Suppose $P_j$ is real analytic on $U_j$ for each $j\geq 0$, and that we have the estimates
\begin{equation}
|P_0|_{U_0}\leq \tilde{\epsilon}_0
\end{equation}
and
\begin{equation}
|P_j-P_{j-1}|_{U_j}\leq \tilde{\epsilon}_j
\end{equation}
for each $j\geq 1$.

Then for each $j\geq 0$, we can find a real-analytic normal form $N_j(I;\omega,t)=e_j(\omega,t)+\langle\omega,I\rangle$ and a real analytic map $\mathcal{F}^j$ given by
\begin{equation}
\mathcal{F}^{j+1}=\mathcal{F}_0\circ \ldots\circ \mathcal{F}_j: D_{j+1}\times O_{j+1}\times (-1,1) \rightarrow (D_0\times O_0)\cap U_j
\end{equation}
with the convention that the empty composition is the identity and where the $\mathcal{F}_j\in\mathcal{D}_j$ are such that
\begin{equation}
H_j\circ \mathcal{F}^{j+1}=N_{j+1}+R_{j+1}
\end{equation}
\begin{equation}
|R_{j+1}|_{j+1}\leq \epsilon_{j+1}
\end{equation}
\begin{equation}
\label{jacobian}
|\bar{W}_j(\mathcal{F}_j-id)|_{j+1},|\bar{W}_j(D\mathcal{F}_j-Id)\bar{W}_j^{-1}|<\frac{C\epsilon_j}{r_j h_j}
\end{equation}
\begin{equation}
\label{Sjbound}
|\bar{W}_0(\mathcal{F}^{j+1}-\mathcal{F}^j)|_{j+1}<\frac{C\epsilon_j}{r_j h_j}
\end{equation}
where the constants $C$ depend only on $n$ and $\rho$ and $\bar{W}_j=\textrm{diag}(\sigma_j^{-1}\textrm{Id},r_j^{-1}\textrm{Id},h_j^{-1}\textrm{Id})$.
\end{prop}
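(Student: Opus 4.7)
The plan is to prove the proposition by induction on $j$, iterating Theorem \ref{kamstep}. All the parameter sequences $(s_j,r_j,h_j,\sigma_j,\eta_j,K_j,\epsilon_j)$ and approximation parameters $(u_j,v_j,w_j)$ were constructed in the previous section precisely so that the KAM step hypotheses \eqref{kamstepconsts} hold at every level, the Gevrey approximation errors satisfy $\tilde\epsilon_j \leq \epsilon_{j+1}/2$, and the constants in \eqref{newerrbd} combine through the recurrence $E_{j+1}=c_1^{1/2}E_j^{3/2}$ to give output bounded by $\epsilon_{j+1}$ at each stage. With that setup in hand, the induction reduces to mechanical bookkeeping.

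For the base case $j=0$, take $\mathcal{F}^0=\mathrm{id}$ and $N_0(I;\omega,t)=\langle\omega,I\rangle$, so that $H_0\circ\mathcal{F}^0=N_0+P_0$ with $|P_0|_0\leq\tilde\epsilon_0\leq\epsilon_0$. Apply Theorem \ref{kamstep} to $H_0$ on $D_0\times O_0\times(-1,1)$ with parameters $(s_0,r_0,h_0,\sigma_0,\eta_0,K_0,v)$, $v=1/54$. The output is $\mathcal{F}_0\in\mathcal{D}_0$ satisfying \eqref{jacobian} at level $0$, a normal form $N_1(I;\omega,t)=e_1(\omega,t)+\langle\omega,I\rangle$, and $H_0\circ\mathcal{F}_0=N_1+R_1$ with $|R_1|_1\leq\epsilon_1$ by \eqref{newerrbd}; setting $\mathcal{F}^1:=\mathcal{F}_0$ makes \eqref{Sjbound} immediate.

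For the inductive step, assume $H_j\circ\mathcal{F}^{j+1}=N_{j+1}+R_{j+1}$ with $|R_{j+1}|_{j+1}\leq\epsilon_{j+1}$. A direct comparison of the defining geometric-decay formulae shows $D_{j+1}\times O_{j+1}\subset U_{j+1}$, and since $\mathcal{F}^{j+1}$ differs from the identity by $O(\sum_{i\leq j}\epsilon_i/(r_ih_i))=o(1)$ (a telescoping consequence of \eqref{jacobian} and \eqref{rjhj}), its image on $D_{j+1}\times O_{j+1}$ also lies inside $U_{j+1}$. Thus $(P_{j+1}-P_j)\circ\mathcal{F}^{j+1}$ is analytic and bounded by $\tilde\epsilon_{j+1}$, so one may write
\[
H_{j+1}\circ\mathcal{F}^{j+1}=N_{j+1}+\tilde R_{j+1},\qquad \tilde R_{j+1}:=R_{j+1}+(P_{j+1}-P_j)\circ\mathcal{F}^{j+1},
\]
with $|\tilde R_{j+1}|_{j+1}\leq\epsilon_{j+1}+\tilde\epsilon_{j+1}$. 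Applying Theorem \ref{kamstep} to $N_{j+1}+\tilde R_{j+1}$ at level $j+1$ yields $\mathcal{F}_{j+1}\in\mathcal{D}_{j+1}$ satisfying \eqref{jacobian}, the normal form $N_{j+2}$, and a remainder $R_{j+2}$ bounded by $\epsilon_{j+2}$ (the extra $\tilde\epsilon_{j+1}$ contribution is absorbed into the margin built into the parameter setup, since $\tilde\epsilon_{j+1}\ll\epsilon_{j+1}$). Setting $\mathcal{F}^{j+2}:=\mathcal{F}^{j+1}\circ\mathcal{F}_{j+1}$ gives $H_{j+1}\circ\mathcal{F}^{j+2}=N_{j+2}+R_{j+2}$ as required.

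The telescoping bound \eqref{Sjbound} at the new level follows from $\mathcal{F}^{j+2}-\mathcal{F}^{j+1}=\mathcal{F}^{j+1}\circ\mathcal{F}_{j+1}-\mathcal{F}^{j+1}$ by the mean value theorem, combining the near-identity estimate on $\mathcal{F}_{j+1}$ from Theorem \ref{kamstep} with the uniform Jacobian control on $\mathcal{F}^{j+1}$ obtained by telescoping \eqref{jacobian} through \eqref{rjhj}. The main difficulty of the proof is not analytic but notational: one must verify, at every induction step, that the successively shrunken source domain $D_{j+1}\times O_{j+1}$ is carried by $\mathcal{F}^{j+1}$ inside the analyticity domain $U_{j+1}$ of $P_{j+1}$, so that the error $(P_{j+1}-P_j)\circ\mathcal{F}^{j+1}$ can be treated as complex-analytic and subjected to Cauchy estimates. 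This domain bookkeeping has already been arranged by the geometric decay rates in the definitions of $(s_j,r_j,h_j)$ versus $(u_j,v_j,w_j)$, so no further analytic input beyond Theorem \ref{kamstep} itself is needed.
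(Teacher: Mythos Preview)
Your outline is essentially the same inductive scheme as the paper's proof, with one minor reordering: you add the approximation error $(P_{j+1}-P_j)\circ\mathcal{F}^{j+1}$ \emph{before} applying the KAM step at level $j{+}1$, whereas the paper applies the KAM step to $N_j+R_j$ first and only then adds $(P_j-P_{j-1})\circ\mathcal{F}^{j+1}$. Both orderings work, since the parameter setup gives $\tilde\epsilon_j\le\tfrac12\epsilon_{j+1}$ and the KAM-step output is $\tfrac12\epsilon_{j+1}$, so there is enough slack either way.

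The one place where your sketch is too casual is exactly the point you flag as ``notational'': the claim that $\mathcal{F}^{j+1}$ carries $D_{j+1}\times O_{j+1}$ into $U_{j+1}$. Saying $\mathcal{F}^{j+1}$ is close to the identity is not enough, because membership in $U_{j+1}$ is a constraint on the \emph{imaginary parts} at scale $\delta^{j+1}$. The paper handles this by the chain estimate $|\bar W_0\,D\mathcal{F}^{j+1}\,\bar W_j^{-1}|\le 2\delta^j$ (obtained by telescoping \eqref{jacobian} through \eqref{rjhj}) and then Taylor expanding $\mathcal{F}^{j+1}(x+iy)$ about the real point $x$ to bound the imaginary part explicitly. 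Note also that your ordering forces you to land in $U_{j+1}$ rather than $U_j$ (since you need the bound on $P_{j+1}-P_j$, which lives on $U_{j+1}$); this is one $\delta$-factor stronger than what the paper actually proves, so you should check that the constant $M(\rho)$ in the definition \eqref{uvwdefn} of $u_j,v_j,w_j$ still absorbs this. It does with the given choices, but it is not automatic and should be stated rather than asserted as ``already arranged''.
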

\begin{proof}
An immediate application of Theorem \ref{kamstep} provides us with $\mathcal{F}_0\in\mathcal{D}_0$ such that $H_0\circ \mathcal{F}_0=N_1+R_1$, where $|R_1|_1\leq \epsilon_1$. We proceed by induction, assuming that we have $H_{j-1}\circ \mathcal{F}^j=N_j+R_j$, where $N_j(I;\omega,t)=e_j(\omega,t)+\langle \omega,I\rangle$ is a real analytic normal form, $R_j$ is real analytic in $D_j\times O_j$, and $|R_j|_j\leq \epsilon_j$.

We now apply Theorem \ref{kamstep} to the Hamiltonian $N_j+R_j$ in order to find $\mathcal{F}_j\in \mathcal{D}_j$. From \eqref{Pplusbound}, we have $(N_j+R_j)\circ \mathcal{F}_j=N_{j+1}+\tilde{R}_{j+1} $ with the estimate
\begin{equation}
\label{rjbound}
|\tilde{R}_{j+1}|_{j+1}\leq \frac{1}{2}\hat{\epsilon}\kappa r_{j+1}\sigma_{j+1}^{\tau+1}c_1^{1/2}E_j^{3/2}=\frac{1}{2}\epsilon_{j+1}.
\end{equation}
Once we show that
\begin{equation}
\label{mappingfu}
\mathcal{F}^{j+1}:D_{j+1}\times O_{j+1}\times (-1,1)\rightarrow U_j
\end{equation}
we can also establish
\begin{equation}
|(P_j-P_{j-1})\circ \mathcal{F}^{j+1}|_{j+1}\leq |P_j-P_{j-1}|_{U_j}\leq \tilde{\epsilon}_j \leq \frac{1}{2}\epsilon_{j+1}.
\end{equation}

Using our inductive assumption, we can rewrite
\begin{eqnarray}
H_j\circ \mathcal{F}^{j+1}&=& (N_0+P_{j-1})\circ \mathcal{F}^{j+1}+(P_j-P_{j-1})\circ \mathcal{F}^{j+1}\\
&=& (H_{j-1}\circ \mathcal{F}^j)\circ \mathcal{F}_j+(P_j-P_{j-1})\circ \mathcal{F}^{j+1}\\
&=& (N_j+R_j)\circ \mathcal{F}_j+(P_j-P_{j-1})\circ \mathcal{F}^{j+1}.
\end{eqnarray}
This gives us
\begin{equation}
H_j\circ \mathcal{F}^{j+1}=N_{j+1}+R_{j+1}
\end{equation}
with
\begin{equation}
|R_{j+1}|_{j+1}\leq \epsilon_{j+1}.
\end{equation}

It remains to verify \eqref{mappingfu}, which we can do via the estimate \eqref{jacobian}. From \eqref{rjhj} we obtain
\begin{eqnarray}
\label{jacobian2}
|\bar{W}_0 D\mathcal{F}^{j+1}\bar{W}_{j}^{-1}|_{j+1}&=& |(\prod_{k=0}^{j-1}\bar{W}_{k}D\mathcal{F}_k \bar{W}_{k+1}^{-1})(\bar{W}_jD\mathcal{F}_j\bar{W}_{j}^{-1})|\\ &= & |(\prod_{k=0}^{j-1}\bar{W}_{k}D\mathcal{F}_k \bar{W}_k^{-1}(\bar{W}_k\bar{W}_{k+1}^{-1}))(\bar{W}_jD\mathcal{F}_j\bar{W}_{j}^{-1})|\\ 
&=& (\prod_{k=0}^{j-1} |\bar{W}_k\bar{W}_{k+1}^{-1}|)\prod_{k=0}^\infty (1+\frac{C\epsilon_k}{r_k h_k})\\
&\leq & 2(\prod_{k=0}^{j-1} |\bar{W}_k\bar{W}_{k+1}^{-1}|)\\
&\leq & 2\delta^j 
\end{eqnarray}
where 
\begin{equation}
|\bar{W}_k \bar{W}_{k+1}^{-1}|=\sup(s_{k+1}/s_k,r_{k+1}/r_k,h_{k+1}/h_k)=s_{k+1}/s_k=\delta
\end{equation}
follows from the definition of the sequences $s_j,r_j,h_j$.

Writing $x+iy=(\theta,I,\omega)\in D_{j+1}\times O_{j+1}$, we can Taylor expand $\mathcal{F}^{j+1}$ about $x$ to obtain
\begin{equation}
\mathcal{F}^{j+1}(x+iy)=\mathcal{F}^{j+1}(x)+i\bar{W}_0^{-1}\left(\int_0^1 \bar{W}_0 D\mathcal{F}^{j+1}(x+isy)\bar{W}_j^{-1}\, ds \right)\bar{W}_jy.
\end{equation}
From \eqref{jacobian2}, the integral expression is bounded by $2\delta^j$. Since $|\bar{W}_jy|\leq ((s_j/\sigma_j)^2+2)^{1/2}=(25(1-\delta)^{-2}+2)^{1/2}=M(\rho)$, and $\mathcal{F}^{j+1}(x)$ is real, we can conclude that $\mathcal{F}^{j+1}:D_{j+1}\times O_{j+1} \times (-1,1)\rightarrow U_j$. 
This completes the proof.
\end{proof}

The next step is to find Gevrey estimates for the $\mathcal{S}_j:=\mathcal{F}^{j+1}-\mathcal{F}^j$ so that we can show that this iterative scheme does indeed converge in the Gevrey class. We drop the dependence on $t$ from our notation, and remind ourselves that the resulting Gevrey estimates will be uniform in $t$. To this end we introduce the domains
\begin{equation}
\tilde{D}_j:=\{(\theta,I)\in D_j:|\textrm{Im}(\theta)|<s_j/2\},\; \tilde{O}_j:=\{\omega\in\mathbb{C}^n:\textrm{dist}(\omega,\Omega_\kappa)<h_j/2\}
\end{equation}
For multi-indices $\alpha,\beta$ with $|\beta|\leq m$, we also introduce the following notation for the $(m-|\beta|)$-th Taylor remainder in the frequency variable, centred at $\omega$.
\begin{equation}
R^m_\omega (\partial_\theta^\alpha\partial_\omega^\beta \mathcal{S}^j)(\theta,I,\omega'):=\partial_\theta^\alpha\partial_\omega^\beta \mathcal{S}^j-\sum_{|\gamma|\leq m-|\beta|}(\omega'-\omega)^\gamma \partial_\theta^\alpha\partial_\omega^{\beta+\gamma} \mathcal{S}^j(\theta,I,\omega)/\gamma!.
\end{equation}
We then have the following Gevrey estimates.
\begin{lem}
\label{gevest}
\begin{equation}
\label{Mgevbound}
|\bar{W}_0\partial_\theta^\alpha \partial_\omega^\beta \mathcal{S}^j(\theta,0,\omega)|\leq \hat{\epsilon}AC^{|\alpha|+|\beta|}L_1^{|\alpha|+|\beta|(\tau+1)+1}\kappa^{-|\beta|}\alpha!^\rho \beta!^{\rho'}E_j^{1/2}
\end{equation}
for all $(\theta,0;\omega,t)\in \tilde{D}_{j+1}\times \tilde{O}_{j+1}\times (-1,1)$, where $\rho'=\rho(\tau+1)+1$.
\begin{eqnarray}
\label{Lgevbound}
& &|\bar{W}_0 (R_\omega^m\partial_\theta^\alpha \partial_\omega^\beta \mathcal{S}^j)(\theta,0,\omega')|\\
&\leq & \nonumber\hat{\epsilon}AC^{m+|\alpha|+1}L_1^{|\alpha|+(m+1)(\tau+1)+1}\kappa^{-m-1}\frac{|\omega-\omega'|^{m-|\beta|+1}}{(m-|\beta|+1)!}\alpha!^\rho(m+1)!^{\rho'}E_j^{1/2}
\end{eqnarray}
for all $\theta\in\mathbb{T}^n$, $\omega,\omega'\in\Omega_\kappa$ and $|\beta|\leq m$, where the constants $A,C$ only depend on $n,\rho,\tau,\zeta.$
\end{lem}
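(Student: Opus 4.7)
The plan is to derive both estimates from the sup-norm control $|\bar{W}_0\mathcal{S}^j|_{j+1}\leq C\epsilon_j/(r_jh_j)$ established in \eqref{Sjbound}, via Cauchy's inequality applied to the analytic function $\mathcal{S}^j$, followed by an optimisation that trades the polynomial blow-up of the derivative bounds against the super-exponential decay of $E_j$.

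First I would apply Cauchy estimates on polydiscs of radii $s_j/2$, $r_j/2$, $h_j/2$ centred at real points $(\theta,0,\omega)\in \tilde{D}_{j+1}\times \tilde{O}_{j+1}$, which are contained in the analyticity domain $D_{j+1}\times O_{j+1}$ of $\mathcal{S}^j$. This produces
\[
|\bar{W}_0\partial_\theta^\alpha\partial_\omega^\beta\mathcal{S}^j(\theta,0,\omega)|\leq C^{|\alpha|+|\beta|+1}\alpha!\,\beta!\,s_j^{-|\alpha|}h_j^{-|\beta|}\,\frac{\epsilon_j}{r_jh_j}.
\]
Substituting the parameter identities $\epsilon_j=\hat{\epsilon}\kappa r_j\sigma_j^{\tau+1}E_j$, $h_j=\kappa\sigma_j^{\tau+1}/(2x_j^{\tau+1})$, and $s_j\asymp \sigma_j$ converts this into a constant multiple of
\[
\hat{\epsilon}\,C^{|\alpha|+|\beta|}\alpha!\,\beta!\,\kappa^{-|\beta|}\sigma_j^{-|\alpha|-|\beta|(\tau+1)}x_j^{(|\beta|+1)(\tau+1)}E_j.
\]

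To establish \eqref{Mgevbound} I would split $E_j=E_j^{1/2}\cdot E_j^{1/2}$ and bound the remaining $j$-dependent expression $\sigma_j^{-M_1}x_j^{M_2}E_j^{1/2}$ uniformly in $j$, where $M_1=|\alpha|+|\beta|(\tau+1)$ and $M_2=(|\beta|+1)(\tau+1)$. Using \eqref{xjasymp}, which gives $x_j\sim B\sigma_j^{-1/(\rho-1)}$, and $E_j^{1/2}\asymp\exp(-\tfrac{1}{2}B\sigma_j^{-1/(\rho-1)})$, the substitution $y=\sigma_j^{-1/(\rho-1)}$ reduces the estimation to maximising $y^{N}\exp(-By/2)$ for $N=M_1(\rho-1)+M_2=|\alpha|(\rho-1)+|\beta|(\tau+1)\rho+(\tau+1)$. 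The continuous maximum occurs at $y^\ast=2N/B$ with value $\leq C^N N!\,B^{-N+M_2}$ by Stirling. Combining $B\asymp L_1^{-1/(\rho-1)}$ with the elementary inequalities $(mk)!\leq C^{mk}k!^m$ and $(a+b)!\leq 2^{a+b}a!b!$ then converts the $N!$ factor into $C^{|\alpha|+|\beta|}|\alpha|!^{\rho-1}|\beta|!^{\rho'-1}$ (using $\rho'-1=(\tau+1)\rho$) and the $B^{-N+M_2}$ factor into the power of $L_1$ appearing in \eqref{Mgevbound}; multiplying by the $\alpha!\,\beta!$ from the Cauchy step produces the required $\alpha!^\rho\beta!^{\rho'}$.

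For \eqref{Lgevbound} I would split according to whether $|\omega-\omega'|<h_j/4$ or not. In the small-separation case the segment $[\omega,\omega']$ lies in $\tilde{O}_{j+1}$, so the integral form of the Taylor remainder
\[
R_\omega^m(\partial_\theta^\alpha\partial_\omega^\beta\mathcal{S}^j)(\theta,0,\omega')=\frac{1}{(m-|\beta|)!}\int_0^1(1-s)^{m-|\beta|}\partial_\omega^{m-|\beta|+1}[\partial_\theta^\alpha\partial_\omega^\beta\mathcal{S}^j](\theta,0,\omega+s(\omega'-\omega))\,(\omega'-\omega)^{\otimes(m-|\beta|+1)}\,ds
\]
together with the Cauchy bound from the previous step, applied with a multi-index of total order $m+1$ in $\omega$, directly yields the stated bound. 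In the large-separation case the factor $|\omega-\omega'|^{m-|\beta|+1}\gtrsim (h_j/4)^{m-|\beta|+1}$ leaves enough room that the triangle inequality applied to the sum defining $R_\omega^m$, combined term-by-term with \eqref{Mgevbound}, suffices. The main obstacle is the careful combinatorial bookkeeping in the Stirling-optimisation step above, where the exponents of $L_1$, $\kappa$ and the factorial powers must match precisely to produce the anisotropic Gevrey exponent $\rho'=\rho(\tau+1)+1$ for the frequency variable, as dictated by the $\tau$-dependent size of $h_j$ relative to $\sigma_j$.
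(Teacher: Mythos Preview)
Your approach is essentially the same as the paper's: Cauchy estimates from the sup-norm bound \eqref{Sjbound}, followed by substitution of the parameter relations and absorption of the resulting polynomial-in-$B\sigma_j^{-1/(\rho-1)}$ factor into half of the exponential $\exp(-B\sigma_j^{-1/(\rho-1)})$ via Stirling, then a two-case split for the Taylor remainder in \eqref{Lgevbound}. The only slip is the threshold in your case split: with $|\omega-\omega'|<h_j/4$ and $h_{j+1}<h_j/2$ one cannot conclude that the segment $[\omega,\omega']$ lies in $\tilde{O}_{j+1}=\{\omega:\textrm{dist}(\omega,\Omega_\kappa)<h_{j+1}/2\}$; the paper uses $h_{j+1}/8$ instead (and sums the Taylor tail rather than using the integral remainder, which is an inessential difference).
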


\begin{proof}
For ease of notation, we define
\begin{equation}
M_{j,\alpha,\beta}(\theta,0,\omega):=|\bar{W}_0\partial_\theta^\alpha\partial_\omega^\beta\mathcal{S}^j(\theta,0,\omega)|
\end{equation}
and
\begin{equation}
L^m_{j,\alpha,\beta}(\theta,0,\omega'):=|\bar{W}_0(R_\omega^m\partial_\theta^\alpha\partial_\omega^\beta)\mathcal{S}^j(\theta,0,\omega')|.
\end{equation}

Now the Cauchy estimate for the derivative of an analytic function together with \eqref{Sjbound} yields
\begin{equation}
M_{j,\alpha,\beta}(\theta,0,\omega)\leq \frac{2^{|\alpha+\beta|}C\alpha!\beta!\epsilon_j}{r_jh_js_{j+1}^{|\alpha|}h_{j+1}^{|\beta|} }=\frac{2^{|\alpha+\beta|}C(n,\rho)\kappa \hat{\epsilon}E_j \sigma_j^{\tau+1}}{h_j s_{j+1}^{|\alpha|}h_{j+1}^{|\beta|}}
\end{equation}
in $\tilde{D}_{j+1}\times \tilde{O}_{j+1}$.

Now from the definitions at the beginning of this section, we have
\begin{equation}
s_j=5(1-\delta)^{-1}A_0^{\rho-1}L_1^{-1}(B\sigma_j^{-1/(\rho-1)})^{-(\rho-1)}
\end{equation}
and
\begin{equation}
h_j=\frac{\kappa \sigma_j^{\tau+1}}{2x_j^{\tau+1}}.
\end{equation}
Since \eqref{xjasymp} implies 
\begin{equation}
h_{j+1}^{-1}\leq C(\tau,\rho)\kappa^{-1}L_1^{\tau+1}(B\sigma_j^{-1/(\rho-1)})^{\rho(\tau+1)},
\end{equation}
we can bound
\begin{eqnarray}
& &M_{j,\alpha,\beta}\\
&\leq&\nonumber \hat{\epsilon}AC^{|\alpha+\beta|}L_1^{|\alpha|+|\beta|(\tau+1)+1}\kappa^{-|\beta|}\alpha!\beta!(B\sigma_j^{-1/(\rho-1)})^{(\rho-1)(\alpha-\tau)+\rho(\tau+1)(|\beta|+1)}\exp(-B\sigma_j^{-1/(\rho-1)})
\end{eqnarray}
By redefining $A$ and $C$ and absorption into the exponential, we arrive at
\begin{equation}
M_{j,\alpha,\beta}\leq \hat{\epsilon}AC^{|\alpha|+|\beta|}L_1^{|\alpha|+|\beta|(\tau+1)+1}\kappa^{-|\beta|}\alpha!^\rho \beta!^{\rho'}E_j^{1/2}
\end{equation}
where $A$ and $C$ depend only on $n,\rho,\tau$. This is precisely the claim \eqref{Mgevbound}.

To prove the second claimed estimate \eqref{Lgevbound}, we first assume that the $\omega,\omega'\in\Omega_\kappa$ satisfy $|\omega'-\omega|<h_{j+1}/8$.

From the elementary inequality
\begin{equation}
\frac{(\beta+\gamma)!}{\gamma!}\leq 2^{|\beta+\gamma|}\beta!\leq 2^{|\beta+\gamma|}\frac{(m+1)!}{(m-|\beta|+1)!},
\end{equation}
together with the established bound \eqref{Mgevbound}, we can estimate $L_{j,\alpha,\beta}^m$ by Taylor expansion.
\begin{eqnarray}
L^m_{j,\alpha,\beta}&\leq&\nonumber \sum_{|\gamma|\geq m-|\beta|+1}|\omega'-\omega|^{|\gamma|}M_{j,\alpha,\beta+\gamma}(\theta,0,\omega')/\gamma!\\
&\leq &\nonumber C\alpha!(m+1)!\frac{4^{|\alpha|+m+1}|\omega'-\omega|^{m-|\beta|+1}\epsilon_j}{(m-|\beta|+1)!r_jh_js_{j+1}^{|\alpha|}h_{j+1}^{m+1}}\sum_{|\gamma|\geq m-|\beta|+1}(\frac{4|\omega'-\omega|}{h_{j+1}})^{|\beta|+|\gamma|-m-1}\\
&\leq &\nonumber C\alpha!(m+1)!\frac{4^{|\alpha|+m+1}|\omega'-\omega|^{m-|\beta|+1}\epsilon_j}{(m-|\beta|+1)!r_jh_js_{j+1}^{|\alpha|}h_{j+1}^{m+1}}\\
&\leq &\nonumber \hat{\epsilon}AC^{m+|\alpha|+1}L_1^{|\alpha|+(m+1)(\tau+1)+1}\kappa^{-m-1}\frac{|\omega-\omega'|^{m-|\beta|+1}}{(m-|\beta|+1)!}\alpha!^\rho(m+1)!^{\rho'}E_j^{1/2}
\end{eqnarray}
where $A$ and $C$ depend only on $n,\rho$ and $\tau$.

For $|\omega'-\omega|\geq h_{j+1}/8$, we can obtain the same estimate by using \eqref{Mgevbound} to estimate $L_{j,\alpha,\beta}^m$ term by term directly from it's definition.
\end{proof}

The Cauchy estimate from Proposition \ref{cauchyappendix} immediately implies
\begin{cor}
\label{gevestcor}
\begin{equation}
\label{Mgevcorbound}
|\bar{W}_0\partial_\theta^\alpha \partial_\omega^\beta \partial_t^\gamma\mathcal{S}^j(\theta,0;\omega,t)|\leq \hat{\epsilon}AC^{|\alpha|+|\beta|+|\gamma|}L_1^{|\alpha|+|\beta|(\tau+1)+1}\kappa^{-|\beta|}\alpha!^\rho \beta!^{\rho'}\gamma!E_j^{1/2}
\end{equation}
for all $(\theta,0;\omega,t)\in \tilde{D}_{j+1}\times \tilde{O}_{j+1}\times (-3/4,3/4)$, where $\rho'=\rho(\tau+1)+1$.
\begin{eqnarray}
\label{Lgevcorbound}
& &|\bar{W}_0 (R_\omega^m\partial_\theta^\alpha \partial_\omega^\beta\partial_t^\gamma \mathcal{S}^j)(\theta,0,\omega',t)|\\
&\leq &\nonumber \hat{\epsilon}AC^{m+|\alpha|+|\gamma|+1}L_1^{|\alpha|+(m+1)(\tau+1)+1}\kappa^{-m-1}\frac{|\omega-\omega'|^{m-|\beta|+1}}{(m-|\beta|+1)!}\alpha!^\rho(m+1)!^{\rho'}\gamma!E_j^{1/2}
\end{eqnarray}
for all $\theta\in\mathbb{T}^n$,  $\omega,\omega'\in\Omega_\kappa$, $t\in (-3/4,3/4)$ and $|\beta|\leq m$, where the constants $A,C$ only depend on $n,\rho,\tau,\zeta.$
\end{cor}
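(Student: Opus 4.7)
The strategy is direct: apply the one-dimensional Cauchy estimate in the complex $t$-variable to the bounds already proved in Lemma \ref{gevest}. The key point, which justifies calling this an ``immediate" consequence, is that throughout the entire iterative construction of $\mathcal{F}^{j+1}$ the parameter $t$ appears only passively, so the maps $\mathcal{S}^j = \mathcal{F}^{j+1}-\mathcal{F}^j$ extend analytically in $t$ to a complex strip of width bounded below uniformly in $j$.

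First I would verify this uniform $t$-analyticity. The starting data $P_j$ from Proposition \ref{approxlemma} are analytic on $U_j$, which includes $|\textrm{Im}(t)|\leq (2L_2)^{-1}$. Each step of Theorem \ref{kamstep} consists only of Fourier truncation in $\theta$, solving $\{F,N\}=R-\hat{R}(0)$ by term-wise division by $\langle k,\omega\rangle$, taking a time-1 Hamiltonian flow in $(\theta,I)$, and inverting the frequency shift via Proposition \ref{popovlemma} — each of which commutes with analytic continuation in a passive parameter. Thus every $\mathcal{F}_j$, and hence $\mathcal{F}^{j+1}$ and $\mathcal{S}^j$, is holomorphic in $t$ on the strip $|\textrm{Im}(t)|\leq c_0$ for some $c_0>0$ independent of $j$ (we may take $c_0=(2L_2)^{-1}$, or any smaller fixed value such as $1/8$).

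Next, I would note that the proof of Lemma \ref{gevest} is based solely on the sup bound \eqref{Sjbound} together with Cauchy estimates in the $(\theta,I,\omega)$ directions. Since that bound holds uniformly on the complex strip $|\textrm{Im}(t)|\leq c_0$, the estimates
\begin{equation*}
|\bar{W}_0\partial_\theta^\alpha \partial_\omega^\beta \mathcal{S}^j(\theta,0;\omega,\tilde{t})|\leq \hat{\epsilon}AC^{|\alpha|+|\beta|}L_1^{|\alpha|+|\beta|(\tau+1)+1}\kappa^{-|\beta|}\alpha!^\rho \beta!^{\rho'}E_j^{1/2}
\end{equation*}
and the analogous bound for $R^m_\omega \partial_\theta^\alpha \partial_\omega^\beta \mathcal{S}^j$ hold uniformly for $\tilde{t}$ in this strip, not just for real $t$.

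Finally, for any fixed real $t\in(-3/4,3/4)$ and any $\gamma\geq 0$, Proposition \ref{cauchyappendix} applied to the holomorphic function $\tilde{t}\mapsto \bar{W}_0\partial_\theta^\alpha \partial_\omega^\beta \mathcal{S}^j(\theta,0;\omega,\tilde{t})$ on $|\tilde{t}-t|\leq c_0/2$ yields
\begin{equation*}
|\partial_t^\gamma \bar{W}_0\partial_\theta^\alpha \partial_\omega^\beta \mathcal{S}^j(\theta,0;\omega,t)|\leq \gamma!\,(c_0/2)^{-\gamma}\sup_{|\textrm{Im}(\tilde{t})|\leq c_0/2}|\bar{W}_0\partial_\theta^\alpha \partial_\omega^\beta \mathcal{S}^j(\theta,0;\omega,\tilde{t})|.
\end{equation*}
Absorbing $(c_0/2)^{-1}$ into the constant $C$ gives \eqref{Mgevcorbound}, and the identical argument applied to the Taylor remainder gives \eqref{Lgevcorbound}. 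There is essentially no obstacle here: once one accepts that the analyticity strip in $t$ has width bounded below, the Cauchy estimate automatically produces the extra $C^{|\gamma|}\gamma!$ factor without interfering with the existing Gevrey behaviour in $(\theta,\omega)$.
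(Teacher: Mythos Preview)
Your proposal is correct and is exactly the paper's approach: the paper simply states that the corollary follows from Lemma \ref{gevest} by the Cauchy estimate of Proposition \ref{cauchyappendix} in the $t$-variable, relying on the uniform-in-$j$ analyticity of $\mathcal{S}^j$ on a complex $t$-strip inherited from the construction. Your added justification that each step of the KAM iteration preserves holomorphy in the passive parameter $t$ makes explicit what the paper leaves implicit.
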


From Proposition \ref{iterative} and Lemma \ref{gevestcor}, the rapid decay of $E_j$ implies that the limit
\begin{equation}
\label{jetnotation}
\partial_\theta^\alpha \partial_t^\gamma \mathcal{H}^\beta(\theta,\omega;t):=\lim_{j\rightarrow\infty} \partial_\theta^\alpha \partial_\omega^\beta\partial_t^\gamma(\mathcal{F}^j(\theta,0;\omega,t)-(\theta,0,\omega))
\end{equation}
exists for each $(\theta;\omega,t)\in \mathbb{T}^n\times \Omega_\kappa\times (-3/4,3/4)$, and each triple of multi-indices $\alpha,\beta,\gamma$. Convergence is uniform, and the limit is smooth in $\theta$ and $t$ and continuous in $\omega$, with $\partial_\theta^\alpha\partial_t^\gamma(\mathcal{H}^\beta)=\partial_\theta^\alpha\partial_t^\gamma \mathcal{H}^\beta$, justifying the notation in \eqref{jetnotation}. 

\section{Whitney extension}
\label{whitneysec}
We now need to use the jet $\mathcal{H}=(\partial_\theta^\alpha\partial_t^\gamma \mathcal{H}^\beta)$ of continuous functions $\mathbb{T}^n\times \Omega_\kappa\times (-3/4,3/4) \rightarrow \mathbb{T}^n\times D \times \Omega$ to obtain a Gevrey function on $\mathbb{T}^n\times \Omega \times (-3/4,3/4)$ by using a Gevrey version of the Whitney extension theorem.

To this end, we define
\begin{equation}
(R_\omega^m \partial_\theta^\alpha \partial_t^\gamma \mathcal{H})_\beta(\theta,\omega',t):=\partial_\theta^\alpha\partial_t^\gamma \mathcal{H}^\beta(\theta,\omega',t)-\sum_{|\delta|\leq m-|\beta|}(\omega'-\omega)^\delta \partial_\theta^\alpha\partial_t^\gamma\mathcal{H}^{\beta+\delta}(\theta;\omega,t)/\gamma!
\end{equation}

In this notation, the results of Corollary \ref{gevestcor} yield

\begin{equation}
\label{impgevest}
|\bar{W}_0\partial_\theta^\alpha\partial_t^\gamma \mathcal{H}^\beta(\theta;\omega,t)|\leq \hat{\epsilon}AL_1(CL_1)^{|\alpha|}(CL_1^{\tau+1}/\kappa)^{|\beta|}C^\gamma\alpha!^{\rho}\beta!^{\rho'}\gamma!
\end{equation}
and
\begin{equation}
\label{impgevest2}
|\bar{W}_0(R_\omega^m \partial_\theta^\alpha \partial_t^\gamma\mathcal{H})_\beta(\theta,\omega',t)|\leq \hat{\epsilon}AL_1(CL_1)^{|\alpha|}(CL_1^{\tau+1}/\kappa)^{m+1}C^\gamma\frac{|\omega-\omega'|^{m-|\beta|+1}}{(m-|\beta|+1)!}\alpha!^{\rho}(m+1)!^{\rho'}\gamma!
\end{equation}
for $|\beta|\leq m$, and $(\theta,\omega,\omega',t)\in \mathbb{T}^n\times \Omega_\kappa \times \Omega_\kappa\times (-3/4,3/4)$, where $A$ and $C$ depend only on $n,\rho,\tau.$

These estimates allow us to apply the following consequence of Theorem \ref{whitneymainthm}.

\begin{thm}
\label{whitneythm}
Suppose $K\subset \mathbb{R}^n$ is compact, and $1\leq \rho < \rho'$. 
If the jet $(f^{\alpha,\beta,\gamma})$ of functions $f^{\alpha,\beta,\gamma}:\mathbb{T}^n\times K\times (-3/4,3/4) \rightarrow \mathbb{R}$ is continuous on $\mathbb{T}^n\times K\times (-3/4,3/4)$ and is smooth in $(\theta,t)\in\T^n\times (-3/4,3/4)$ for each fixed $\omega\in K$ where
\begin{equation}
\partial_\theta^{\alpha'}\partial_t^{\gamma'}(f^{\alpha,\beta,\gamma})=f^{\alpha+\alpha',\beta,\gamma+\gamma'}
\end{equation}
and we have the estimates
\begin{equation}
| f^{\alpha,\beta,\gamma}(\theta;\omega,t)|\leq AC_1^{|\alpha|}C_2^{|\beta|}C_3^{|\gamma|}\alpha!^{\rho}\beta!^{\rho'}\gamma!
\end{equation}
and
\begin{equation}
|(R_\omega^m \partial_\theta^\alpha \partial_t^\gamma f)_\beta(\theta,\omega',t)|\leq AC_1^{|\alpha|}C_2^{m+1}C_3^{|\gamma|}\frac{|\omega-\omega'|^{m-|\beta|+1}}{(m-|\beta|+1)!}\alpha!^\rho (m+1)!^{\rho'}\gamma!
\end{equation}
then there exist positive constants $A_0,C_0$, dependent only on $(n,\rho,\tau)$ (in particular, independent of the set $K$) such that we can extend $f$ to $\tilde{f}\in G^{\rho,\rho',1}(\mathbb{T}^n\times \mathbb{R}^n\times (-3/4,3/4))$ such that $\partial_\theta^\alpha \partial_\omega^\beta\partial_t^\gamma \tilde{f}= f^{\alpha,\beta,\omega}$ on $\mathbb{T}^n\times K\times (-3/4,3/4)$ and
\begin{equation}
|\partial_{\theta}^\alpha \partial_\omega^\beta \partial_t^\gamma\tilde{f}(\theta,\omega)|\leq A_0 A\max(C_1,1)C_0^{|\alpha|+|\beta|+|\gamma|+n}C_1^{|\alpha|+n}C_2^{|\beta|}C_3^{|\gamma|}\alpha!^\rho \beta!^{\rho'}\gamma!
\end{equation}
\end{thm}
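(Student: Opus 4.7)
The plan is to reduce to the standard Gevrey Whitney extension theorem (Theorem \ref{whitneymainthm}) applied in the $\omega$ variable alone, with $(\theta, t) \in \mathbb{T}^n \times (-3/4, 3/4)$ treated as parameters. The key observation is that the hypotheses are already phrased as jet conditions in $\omega$, while the $\alpha$ and $\gamma$ indices provide jets in $(\theta, t)$ of the same extension. Since the Whitney construction uses data depending only on $\omega$, the extra derivatives pass through to the jet values and produce the required $\alpha!^{\rho}$ and $\gamma!$ factors directly from the hypotheses, with the constants manifestly independent of $K$.

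Concretely, I would fix a Whitney decomposition of $\mathbb{R}^n \setminus K$ into cubes $\{Q_j\}$ with $\operatorname{diam}(Q_j) \sim \operatorname{dist}(Q_j, K)$, together with an associated $C^\infty$ partition of unity $\{\phi_j\}$ satisfying $|\partial^\delta \phi_j| \le C_\delta \operatorname{diam}(Q_j)^{-|\delta|}$, and nearest points $\omega_j \in K$ to each cube. For $\omega \notin K$, define
\begin{equation}
\tilde f(\theta, \omega, t) := \sum_{j} \phi_j(\omega) \, T^{N_j}_{\omega_j}(\theta, \omega, t),
\end{equation}
where
\begin{equation}
T^{N_j}_{\omega_j}(\theta, \omega, t) = \sum_{|\beta| \le N_j} \frac{(\omega - \omega_j)^\beta}{\beta!}\, f^{0, \beta, 0}(\theta, \omega_j, t)
\end{equation}
and the truncation order $N_j$ is chosen proportional to $\operatorname{dist}(Q_j, K)^{-1/(\rho'-1)}$, exactly as in the Gevrey Whitney construction, so as to optimally balance the Taylor remainder and truncation errors. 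On $K$ itself, set $\tilde f(\theta, \omega, t) := f^{0,0,0}(\theta, \omega, t)$.

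For fixed $(\theta, t)$, Theorem \ref{whitneymainthm} applied to the $\omega$-jet $(f^{0, \beta, 0}(\theta, \cdot, t))_\beta$ yields $G^{\rho'}$ regularity of $\tilde f$ in $\omega$ with the optimal constants. For derivatives in $\theta$ and $t$, the operators $\partial_\theta^\alpha \partial_t^\gamma$ act only on the coefficient functions $f^{0, \beta, 0}(\theta, \omega_j, t)$, and the hypothesised compatibility $\partial_\theta^{\alpha} \partial_t^{\gamma}(f^{0, \beta, 0}) = f^{\alpha, \beta, \gamma}$ converts them into the appropriate jet entries. The hypothesised bounds then supply multiplicative factors $C_1^{|\alpha|} \alpha!^\rho$ and $C_3^{|\gamma|} \gamma!$ uniformly over $\omega_j \in K$, yielding the claimed joint Gevrey estimate by multiplicativity.

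The main technical obstacle will be establishing joint Gevrey regularity of $\tilde f$ across $\partial K$, in particular verifying that the $(\theta,t)$-derivatives of the piecewise extension agree with the prescribed jet values as $\omega$ approaches $K$ from outside, and that one may interchange $\partial_\theta^\alpha \partial_t^\gamma$ with the Whitney summation. This is handled by the remainder bound in the hypothesis: because $(R_\omega^m \partial_\theta^\alpha \partial_t^\gamma f)_\beta$ satisfies a uniform Gevrey-type bound in $|\omega - \omega'|^{m - |\beta| + 1}$, the piecewise extension agrees with the jet to all orders on $K$, exactly as in the proof of the classical Gevrey Whitney theorem. Once this joint continuity is established, the Gevrey estimates separate multiplicatively in $(\alpha, \beta, \gamma)$ and are independent of $K$, since the Whitney decomposition and partition of unity are intrinsic to $\mathbb{R}^n$.
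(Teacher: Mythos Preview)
Your strategy---treat $(\theta,t)$ as smooth parameters and apply the anisotropic Gevrey Whitney extension in the $\omega$ variable---is exactly what the paper does; the paper's proof is simply the one-line remark that Theorem~\ref{whitneythm} is an application of Theorem~\ref{whitneymainthm} (with $Y=\mathbb{T}^n\times(-3/4,3/4)$ as the parameter space and $K\subset\mathbb{R}^n$ as the set to extend from). Your observation that the Whitney cubes, partition of unity, and nearest points $\omega_j$ depend only on $K$, so that $\partial_\theta^\alpha\partial_t^\gamma$ commutes with the construction and lands on the jet entries $f^{\alpha,\beta,\gamma}$, is precisely the mechanism by which the anisotropic bounds factorise.

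Two remarks on implementation. First, you patch \emph{truncated} Taylor polynomials of variable order $N_j\sim\operatorname{dist}(Q_j,K)^{-1/(\rho'-1)}$, whereas the paper's proof of Theorem~\ref{whitneymainthm} (following Bruna) instead builds for each $x\in K$ a full Carleman realisation $f_x$ of the jet via Proposition~\ref{carlemanvariant}, patches the $f_{x_j}$, and performs the optimisation over Taylor order only in the estimate through the auxiliary function $h(t)=\sup_k k!/(t^kM_k)$. Both routes are standard and give the same conclusion; the Carleman route has the mild advantage that it avoids tracking $N_j$ explicitly and works uniformly for any non-quasi-analytic weight sequence. Second, for your Leibniz expansion of $\partial_\omega^\beta\tilde f$ to yield Gevrey bounds for \emph{all} $\beta$, the partition of unity must itself be Gevrey with the scale-adapted bounds of Proposition~\ref{pou}, not merely $C^\infty$ with order-dependent constants $C_\delta$ as you wrote; this is a minor but necessary upgrade, since otherwise the factors $\partial_\omega^{\beta_1}\phi_j$ are uncontrolled in $|\beta_1|$.
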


The proof of Theorem \ref{whitneythm} is an application (\cite{popovkam} Theorem 3.7) of a version of the Whitney extension theorem in anisotropic spaces of non quasi-analytic functions. 

We are now ready to prove Theorem \ref{kamwithparams}.

\begin{proof}[Proof of Theorem \ref{kamwithparams}]
From the estimates \eqref{impgevest} and \eqref{impgevest2} (taking $\gamma=0$), we can apply Theorem \ref{whitneythm} to extend the jet $\mathcal{H}$ to a Gevrey function
\begin{equation}
\mathcal{H}=(\mathcal{H}_1,\mathcal{H}_2,\mathcal{H}_3):\mathbb{T}^n\times \mathbb{R}^n\times (-3/4,3/4)\rightarrow \mathbb{T}^n\times \mathbb{R}^n\times \mathbb{R}^n.
\end{equation}
Moreover, we have the estimate 
\begin{equation}
\label{gevrefkam}
|\bar{W}_0\partial_\theta^\alpha \partial_\omega^\beta \mathcal{H}(\theta;\omega,t)|\leq \hat{\epsilon}AL_1^{n+2}(CL_1)^{|\alpha|}(CL_1^{\tau+1}/\kappa)^{|\beta|}\alpha!^\rho \beta!^{\rho'}
\end{equation}
where $\hat{\epsilon}=\langle P\rangle_r L_1^{N-n-2}(a\kappa r)^{-1}\leq 1$ and $A(n,\rho,\tau),C(n,\rho,\tau),a(n,\rho,\tau,\zeta,\bar{R})$ are positive constants.

We define 
\begin{equation}
\mathcal{F}=(\Phi,\phi)=(U,V,\phi):=(\mathcal{H}_1(\theta;\omega,t)+\theta,\mathcal{H}_2(\theta;\omega,t),\mathcal{H}_3(\omega,t)+\omega)
\end{equation}
Recalling that $r_0=cr$, where $c(n,\rho,\tau,\zeta)\ll 1$, and $h_0\leq \kappa \sigma_0^{\tau+1}<\kappa$, we can rewrite the Gevrey estimate \eqref{gevrefkam} as
\begin{eqnarray}
& &|\partial_\theta^\alpha\partial_\omega^\beta(U(\theta;\omega,t)-\theta)|+r^{-1}|\partial_\theta^\alpha \partial_\omega^\beta V(\theta;\omega,t)|+\kappa^{-1}|\partial_{\omega}^\beta (\phi(\omega,t)-\omega)|\\
&\leq & \frac{A\langle P\rangle_r L_1^N}{\kappa r}(CL_1)^{|\alpha|}(CL_1^{\tau+1}/\kappa)^{|\beta|}\alpha!^\rho \beta!^{\rho'}
\end{eqnarray}
where $A$ and $C$ depend only on $n,\rho,\tau,\zeta,\bar{R}$ and $N$ depends only on $n,\rho$, and $\tau$.

If we now choose $\epsilon< A^{-1}$ in the statement of Theorem \ref{kamwithparams}, then we obtain
\begin{equation}
|V(\theta;\omega)|\leq A\epsilon r <r \leq R
\end{equation}
and
\begin{equation}
\phi(\omega)\in\Omega
\end{equation}
for $\omega\in \Omega_\kappa$. Thus we have established the estimate \eqref{kamwithparamsbound}.

It remains to check that $\{\Phi(\theta;\omega,t):\theta\in\mathbb{T}^n\}$ is an embedded invariant Lagrangian torus for the Hamiltonian $H(\theta,I;\phi(\omega,t),t)$ when $\omega\in \Omega_\kappa$.

From the estimate
\begin{equation}
|H_{j-1}\circ \mathcal{F}^j-N_j|\leq C\epsilon_j
\end{equation}
that holds on $D_j\times O_j\times (-3/4,3/4)$, the Cauchy estimate yields
\begin{equation}
|W_j(J(D\Phi^j)^T\nabla H_{j-1}\circ \mathcal{F}^j-J\nabla N_j)|\leq \frac{C\epsilon_j}{r_j\sigma_j}
\end{equation}
for each $j\geq 0$ on 
\begin{equation}
\bigcap_{j\geq 0} D_j\times O_j\times (-3/4,-3/4)= \{|\textrm{Im}(\theta)|<s_0/2\}\times \{0\}\times \Omega_\kappa \times (-3/4,3/4)
\end{equation}
where 
\begin{equation}
W_j:=\textrm{diag}(\sigma_j^{-1},r_j^{-1}).
\end{equation}

Recalling that $J\nabla H=X_H$ and $JM_TJ=M^{-1}$ for symplectic matrices $M$ \eqref{sympmatrix}, we can rewrite the above estimate as
\begin{equation}
|W_j(D\Phi^j)^{-1}X_{H_{j-1}}\circ \mathcal{F}^j-X_{N_j}|\leq \frac{C\epsilon_j}{r_j\sigma_j}.
\end{equation}

From the uniform estimate \eqref{jacobian2}, we then obtain

\begin{equation}
|X_{H_{j-1}}\circ \mathcal{F}^j -D\Phi^j\cdot X_N|\leq \frac{C\epsilon_j}{r_j \sigma_j}
\end{equation}
on $\mathbb{T}^n\times \{0\}\times \Omega_\kappa\times (-3/4,3/4)$ where $N=\langle \omega,I\rangle$ which differs from $N_j$ by a term dependent only on $\omega$.

Since $\nabla H_j \rightarrow \nabla H$ uniformly, the rapid decay of $\epsilon_j$ allows us to conclude that 
\begin{equation}
X_{H(\cdot;\phi(\omega,t),t)}\circ \Phi=D\Phi\cdot X_{N}.
\end{equation}
Hence $\{\Phi(\theta;\omega,t):\theta\in\mathbb{T}^n\}$ is an embedded invariant Lagrangian torus for the Hamiltonian $H(\theta,I;\phi(\omega,t),t)$ with frequency $\omega\in \Omega_\kappa.$
This completes the proof.
\end{proof}

\begin{remark}
A similar result is also obtained for real analytic Hamiltonians in \cite{popovkam}.
\end{remark}

\section{Birkhoff normal form}
\label{mainresultssec}
We obtain a Birkhoff normal form for near-integrable Hamiltonians using a version of the KAM theorem that is a consequence of Theorem \ref{kamwithparams}. The Gevrey index $\rho(\tau+1)+1$ frequently appears in these results, and so we introduce $\rho':=\rho(\tau+1)+1$.

\begin{thm}
\label{kamcons}
Fix $0<\zeta \leq 1$ and let $H^0(I)$ be a real-valued non-degenerate  $G^\rho$ smooth Hamiltonian defined on $D^0$ and let $D$ be a subdomain with $\overline{D}\subset D^0$. We define $\Omega=\nabla H^0(D)$ and fix $L_2\geq L_1 \geq 1$ and $\kappa \leq L_2^{-1-\zeta}$ such that $L_2\geq L_0$ and $\Omega_\kappa \neq \emptyset$. Then there exists $N=N(n,\rho,\tau)$ and $\epsilon>0$ independent of $\kappa,L_1,L_2$ and $D\subset D^0$ such that for any $H\in G^{\rho,\rho,1}_{L_1,L_2,L_2}(\mathbb{T}^n\times D\times (-1,1))$ with norm
\begin{equation}
\label{epsilonhdef}
\epsilon_H:=\kappa^{-2}\|H-H^0\|_{L_1,L_2,L_2}\leq \epsilon L_1^{-N}
\end{equation}
there exists a map 
\begin{equation}
\bar{\Phi}=(\bar{U},\bar{V})\in G^{\rho,\rho',1}(\mathbb{T}^n\times \Omega\times (-3/4,3/4),\mathbb{T}^n \times D)
\end{equation}
such that 
\begin{enumerate}
\item For each $\omega\in \Omega_\kappa$ and each $t\in (-3/4,3/4)$, $\Lambda_\omega=\{\bar{\Phi}(\theta;\omega,t):\theta\in\mathbb{T}^n\}$ is an embedded invariant Lagrangian torus of $H$, and $X_H\circ \bar{\Phi}(\cdot;\omega,t)=D\bar{\Phi}(\cdot;\omega,t)\cdot \mathcal{L}_\omega$.
\item There exist constants $A,C>0$ independent of $\kappa,L_1,L_2$ and $D\subset D^0$ such that
\begin{eqnarray}
& &\nonumber|\partial_\theta^\alpha \partial_\omega^\beta(\bar{U}(\theta;\omega,t)-\theta)|+\kappa^{-1}|\partial_\theta^\alpha\partial_\omega^\beta(\bar{V}(\theta;\omega,t)-\nabla g^0(\omega))|\\
&\leq &\label{kamconsbound} A(CL_1)^{|\alpha|}(CL_1^{\tau+1}/\kappa)^{|\beta|}\alpha!^\rho \beta!^{\rho'}L_1^{N/2}\epsilon_H^{1/2}
\end{eqnarray}
uniformly in $\mathbb{T}^n\times \Omega\times (-3/4,3/4)$.
\end{enumerate}
\end{thm}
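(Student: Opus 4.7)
The strategy is to reduce Theorem \ref{kamcons} to Theorem \ref{kamwithparams} via the frequency-parametrised normal form of \eqref{expnearfreq}. For each $\omega$ in a neighbourhood of $\Omega_\kappa$ I would set $z_0 = \nabla g^0(\omega) \in D'$, substitute $z = z_0 + I$ with $|I| < r$, and Taylor-expand $H^0$ at $z_0$ to write
\begin{equation*}
H(\theta, z_0 + I; t) = H^0(\psi_0(\omega)) + \langle\omega, I\rangle + \langle P^0(I;\omega) I, I\rangle + P^1(\theta, I; \omega, t),
\end{equation*}
with $P^1(\theta, I; \omega, t) = H^1(\theta, \nabla g^0(\omega) + I; t)$ and $P^0$ the quadratic remainder from \eqref{taylorexpand}. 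The Gevrey regularities of $P^0, P^1$ in $(\theta, I, \omega, t)$ follow from \eqref{gevhamest}, \eqref{gevpertest}, and Proposition A.3 of \cite{popovkam}; in particular $\|P^0\|_{L_2,L_2}$ is bounded independently of $\epsilon_H$, while $\|P^1\|_{L_1, L_2, L_2, L_2} \leq C \kappa^2 \epsilon_H$.

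Next I would pick the radius $r = \kappa \epsilon_H^{1/2}$, which respects $r < R$ by \eqref{Radbound} once $\epsilon_H$ is small. With this choice
\begin{equation*}
\langle P\rangle_r = r^2 \|P^0\|_{L_2, L_2} + \|P^1\|_{L_1, L_2, L_2, L_2} \leq C(r^2 + \kappa^2 \epsilon_H) \leq 2C \kappa^2 \epsilon_H,
\end{equation*}
so the smallness hypothesis \eqref{kamwithparamasassump} reads $\epsilon_H^{1/2} \lesssim \epsilon L_1^{-N}$, which follows from the stated bound $\epsilon_H \leq \epsilon L_1^{-N_0}$ provided we set $N_0 \geq 2N + \mathrm{const}$. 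Applying Theorem \ref{kamwithparams} to the $\omega$-parametrised family above produces maps $\phi(\cdot, t)$ and $\Phi = (U, V)$ satisfying \eqref{kamwithparamsbound}, where the error factor is $\langle P\rangle_r / (\kappa r) \cdot L_1^N = O(\epsilon_H^{1/2} L_1^N)$.

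To translate back to the original $(\theta, z)$ variables, I would set
\begin{equation*}
\bar{\Phi}(\theta; \omega, t) := \bigl( U(\theta; \omega, t),\; \nabla g^0(\phi(\omega, t)) + V(\theta; \omega, t) \bigr).
\end{equation*}
Because the affine translation $z \mapsto z - \nabla g^0(\phi(\omega, t))$ in the action slot intertwines the original Hamiltonian $H(\theta, z; t)$ with the localised Hamiltonian $H(\theta, I; \phi(\omega, t), t)$ from \eqref{expnearfreq}, Property (1) of Theorem \ref{kamwithparams} transports directly to Property (1) of Theorem \ref{kamcons}: for $\omega \in \Omega_\kappa$ and $t \in (-3/4, 3/4)$, the image of $\bar{\Phi}(\cdot; \omega, t)$ is an embedded invariant Lagrangian torus of $H$, and the pulled-back vector field is $\mathcal{L}_\omega$.

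For the Gevrey estimate \eqref{kamconsbound}, the $\bar{U} - \theta = U - \theta$ contribution is immediate from \eqref{kamwithparamsbound} (with $L_1^{N} \epsilon_H^{1/2}$ absorbed into $L_1^{N_0/2} \epsilon_H^{1/2}$ by the rename $N_0 = 2N$). For the action component I split
\begin{equation*}
\bar{V}(\theta; \omega, t) - \nabla g^0(\omega) = \bigl(\nabla g^0(\phi(\omega, t)) - \nabla g^0(\omega)\bigr) + V(\theta; \omega, t),
\end{equation*}
where the $V$ summand is controlled directly by \eqref{kamwithparamsbound} (its prefactor of $r = \kappa \epsilon_H^{1/2}$ exactly supplies the $\kappa$ needed to normalise and the extra $\epsilon_H^{1/2}$ in the smallness). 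For the composition term, a Faà di Bruno-type Gevrey composition estimate applied to $\nabla g^0 \in G^\rho$ and $\phi - \omega \in G^{\rho', 1}$ (via Proposition A.3 of \cite{popovkam}) yields a $G^{\rho', 1}$ bound whose constants match the required form \eqref{kamconsbound}. The main technical obstacle is this last step: since $\rho < \rho'$, the composition pushes the Gevrey index from $\rho$ up to $\rho'$, and one must verify carefully that the interlaced factorials $\alpha!^\rho \beta!^{\rho'}$ and the prefactors $(CL_1)^{|\alpha|}$, $(CL_1^{\tau+1}/\kappa)^{|\beta|}$ emerge with constants uniform in $\kappa, L_1, L_2$ and the subdomain $D \subset D^0$.
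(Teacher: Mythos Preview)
Your approach is essentially the paper's own: choose $r=\kappa\epsilon_H^{1/2}$, verify $\langle P\rangle_r\le\epsilon\kappa r L_1^{-N}$, apply Theorem~\ref{kamwithparams}, and set $\bar\Phi(\theta;\omega,t)=(U(\theta;\omega,t),\,V(\theta;\omega,t)+\nabla g^0(\phi(\omega,t)))$, deriving \eqref{kamconsbound} from \eqref{kamwithparamsbound} and the splitting $\bar V-\nabla g^0(\omega)=V+(\nabla g^0(\phi)-\nabla g^0)$. The paper adds only one small step you omit: after defining $\bar\Phi$ it cuts $V$ off in $\omega$ outside $\Omega'$ to ensure $\bar\Phi$ actually lands in $\mathbb T^n\times D$, and it waves the composition estimate for $\nabla g^0\circ\phi-\nabla g^0$ through in one line (``follow readily from \eqref{kamwithparamsbound}'') rather than invoking Fa\`a di Bruno explicitly, so your flagged ``technical obstacle'' is not treated as one.
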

\begin{proof}
We begin by choosing the $\epsilon > 0$ guaranteed by Theorem \ref{kamwithparams} sufficiently small so that $r=R:=\kappa\sqrt{\epsilon_H}$ satisfies \eqref{Radbound} for
\begin{equation}
\sqrt{\epsilon_H}\leq \epsilon(1+\|P^0\|_{L_0,L_0})^{-1}L_1^{-N}
\end{equation}
where $N(n,\rho,\tau)$ is as in Theorem \ref{kamwithparams}.

Writing
\begin{equation}
H(\theta,I;\omega,t)=H^0(\psi_0(\omega))+\langle\omega, I\rangle+P(\theta,I;\omega,t)
\end{equation}
with
\begin{equation}
P(\theta,I;\omega,t)=\langle P^0(I;\omega)I,I\rangle+P^1(\theta,I;\omega,t) 
\end{equation}
as in \eqref{expnearfreq}, we can make use of Theorem \ref{kamwithparams}.

Indeed, we have
\begin{eqnarray}
\langle P\rangle_r &=&r^2\|P^0\|_{L_2,L_2}+\|P^1\|_{L_1,L_2,L_2}\\
&\leq & r^2\|P^0\|_{L_0,L_0}+\kappa^2\epsilon_H\\
&\leq & \kappa^2\epsilon_H (1+\|P^0\|_{L_0,L_0})\\
&\leq & r\kappa \sqrt{\epsilon_H}(1+\|P^0\|_{L_0,L_0})\\
&\leq & \epsilon\kappa r L_1^{-N}
\end{eqnarray}
and hence the assumption \eqref{kamwithparamasassump} is satisfied for the Hamiltonian $H(\theta,I;\omega,t)$.

Upon application of Theorem \ref{kamwithparams}, we obtain the family of transformations 
\begin{equation}
\Phi=(U,V)\in G^{\rho,\rho',1}(\T^n\times \Omega\times (-3/4,3/4),\T^n\times B_R).
\end{equation}

We can now define $\bar{\Phi}:\T^n\times\Omega\times (-3/4,3/4) \rightarrow \T^n\times \R^n$ by
\begin{equation}
\bar{\Phi}(\theta;\omega,t)=(U(\theta;\omega,t),V(\theta;\omega,t)+(\nabla g^0)(\phi(\omega;t))).
\end{equation}
By cutting off in $\omega$, we may assume that $V=0$ outside of the domain $\Omega'$ defined in \eqref{domains}.

Since $\nabla g^0(\phi(\omega;t))\in B_R$ and $R\leq \kappa/4 $, we obtain that $\bar{\Phi}$ maps $\T^n\times \Omega\times (-3/4,3/4)$ into $\T^n\times D$ as required.

Moreover, from Theorem \ref{kamwithparams}, it follows that $\{\bar{\Phi}(\theta;\omega,t):\theta\in \T^n\}$ is an invariant Lagrangian torus for the Hamiltonian $H(\cdot,\cdot,t)$ with frequency $\omega$. The estimates for $\bar{\Phi}$ follow readily from \eqref{kamwithparamsbound}.
\end{proof}
We can now use Theorem \ref{kamcons} to obtain the Birkhoff normal form as done by Popov in \cite{popovkam}.

\begin{thm}
\label{main1}
Suppose the assumptions of Theorem \ref{kamcons} hold. Then there exists $N(n,\rho,\tau)>0$ and $\epsilon>0$ independent of $\kappa,L_1,L_2,D$ such that for any $H\in G^{\rho,\rho,1}_{L_1,L_2,L_2}(\mathbb{T}^n\times D\times (-1,1))$ with 
\begin{equation}
\label{pertsmall2}
\epsilon_H\leq \epsilon L_1^{-N-2(\tau+2)}
\end{equation}
where $\epsilon_H$ is as in \eqref{epsilonhdef}, there is a family of $G^{\rho',\rho'}$ maps $\omega:D\times (-1/2,1,2)\rightarrow \Omega$ and a family of maps $\chi\in G^{\rho,\rho',\rho'}(\mathbb{T}^n\times D\times (-1/2,1,2),\mathbb{T}^n\times D)$ that are diffeomorphisms and exact symplectic transformations respectively for each fixed $t\in (-1/2,1,2)$.
Moreover, we can choose the maps $\omega$ and $\chi$ such that family of transformed Hamiltonians 
\begin{equation}
\tilde{H}(\theta,I;t):=(H\circ \chi)(\theta,I;t)
\end{equation}
is of Gevrey class $G^{\rho,\rho',\rho'}(\mathbb{T}^n\times D\times (-1/2,1,2))$ and can be decomposed as
\begin{equation}
\label{bnfeq}
K(I;t)+R(\theta,I;t):=\tilde{H}(0,I;t)+(\tilde{H}(\theta,I;t)-\tilde{H}(0,I;t))
\end{equation}
such that:
\begin{enumerate}
\item $\mathbb{T}^n\times \{I\}$ is an invariant Lagrangian torus of $\tilde{H}(\cdot,\cdot;t)$ for each $I\in E_\kappa(t)=\omega^{-1}(\tilde{\Omega}_\kappa;t)$ and each $t\in (-1/2,1,2)$. 
\item $
\IB (\nabla K(I;t)-\omega(I;t))=\IB R(\theta,I;t)=0\quad\textrm{for all }(\theta,I;t)\in \T^n\times E_\kappa(t)\times (-1/2,1,2),\beta\in\N^n.
$
\item There exist $A,C>0$ independent of $\kappa,L_1,L_2,$ and $D\subset D^0$ such that we have the estimates
\begin{eqnarray}
& &|\TA\IB\TD \phi(\theta,I;t)|+|\IB\TD(\omega(I;t)-\nabla H^0(I))|+|\TA\IB\TD (\tilde{H}(\theta,I;t)-H^0(I))|\nonumber\\
&\leq & \label{main1est}A\kappa C^{|\alpha|+|\beta|+|\delta|}L_1^{|\alpha|}(L_1^{\tau+1}/\kappa)^{|\beta|}\alpha!^\rho \beta!^{\rho'}\delta!^{\rho'}L_1^{N/2}\epsilon_H^{1/2}
\end{eqnarray}
uniformly in $\T^n\times D\times (-1/2,1,2)$ for all $\alpha,\beta$, where $\phi\in G^{\rho,\rho',\rho'}(\T^n\times D\times (-1/2,1,2))$ is such that $\langle \theta,I\rangle+\phi(\theta,I;t) $ generates the symplectic transformation $\chi$ in the sense of Proposition \ref{genfunc}.
\end{enumerate}
\end{thm}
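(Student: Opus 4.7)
The plan is to derive this Birkhoff normal form directly from Theorem \ref{kamcons}, which already produces the Cantor family of embedded invariant Lagrangian tori $\Lambda_{\omega,t}=\bar{\Phi}(\T^n;\omega,t)$ parametrised by $\omega\in\Omega_\kappa$. The objective is to build a single exact symplectic transformation $\chi$ that simultaneously straightens this family into the flat tori $\T^n\times\{I\}$. The essential tools are an inverse function theorem argument, a primitive construction on each torus, and the Gevrey Whitney extension Theorem \ref{whitneythm}, exactly as in the construction of $\bar{\Phi}$ itself in Section \ref{whitneysec}.

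First, I would define the action jet on $\Omega_\kappa\times(-3/4,3/4)$ by the loop integral
\begin{equation*}
I_j(\omega,t)=\frac{1}{(2\pi)^n}\int_{\T^n}\bar{V}(\theta;\omega,t)\cdot\partial_{\theta_j}\bar{U}(\theta;\omega,t)\,d\theta,
\end{equation*}
which is the $j$-th de~Rham cohomology class of the pulled-back canonical 1-form on $\Lambda_{\omega,t}$. The estimate \eqref{kamconsbound} shows $I(\omega,t)-\nabla g^0(\omega)$ is $O(\kappa\,L_1^{N/2}\epsilon_H^{1/2})$ in the anisotropic Gevrey norm, so provided \eqref{pertsmall2} is satisfied with enough room, a Gevrey inverse function theorem (the same Proposition \ref{popovlemma} used in Theorem \ref{kamstep}) produces an inverse $\omega(I,t)$ on $E_\kappa(t):=I(\Omega_\kappa;t)$. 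On each torus, since $\Lambda_{\omega,t}$ is exact Lagrangian and $I(\omega,t)$ is its cohomology class, the 1-form $\bar{V}\,d\bar{U}-I(\omega,t)\,d\theta$ is closed with vanishing periods, hence admits a primitive $\tilde{\phi}(\theta;\omega,t)$ on $\T^n$, fixed by $\tilde{\phi}(0;\omega,t)=0$. Substituting $\omega=\omega(I,t)$ yields a generating function $\phi(\theta,I;t)$ on $\T^n\times E_\kappa(t)\times(-1/2,1/2)$. The estimates \eqref{impgevest}--\eqref{impgevest2} propagate through the inversion and the Fourier-series primitive to give jets in $I$ satisfying the hypotheses of Theorem \ref{whitneythm}, which we apply to extend both $\omega$ and $\phi$ to $G^{\rho',\rho'}$ and $G^{\rho,\rho',\rho'}$ functions on the full domains. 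The smallness of $\phi$ ensures by Proposition \ref{genfunc} that $\langle\theta,I\rangle+\phi$ generates an exact symplectic near-identity diffeomorphism $\chi$.

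For the dynamical properties, by construction $\chi(\T^n\times\{I\})=\Lambda_{\omega(I,t),t}$ for $I\in E_\kappa(t)$, so Theorem \ref{kamcons}(1) transports to the statement that $\T^n\times\{I\}$ is invariant under the flow of $\tilde{H}=H\circ\chi$ with linear dynamics $\dot{\theta}=\omega(I,t)$. This yields $\partial_\theta\tilde{H}=0$ and $\nabla_I\tilde{H}=\omega(I,t)$ pointwise on $\T^n\times E_\kappa(t)$. The flatness claims $\IB(\nabla K-\omega)=\IB R=0$ on $E_\kappa(t)$ then follow from the general fact that a Gevrey function vanishing identically on a set of Lebesgue density points is flat there (cf.\ the definition of $\tilde{\Omega}_\kappa$ and the remarks preceding Theorem \ref{kamwithparams}): $E_\kappa(t)$ is precisely the image under the Gevrey diffeomorphism $I(\cdot,t)$ of the density set $\tilde{\Omega}_\kappa$.

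The main obstacle is Gevrey bookkeeping with the two different indices $\rho$ and $\rho'=\rho(\tau+1)+1$: the $\theta$-variable retains its $G^\rho$ regularity, but the $I$-variable acquires index $\rho'$ through the Whitney extension step, and compositions of the form $\tilde{\phi}(\theta;\omega(I,t),t)$ and $H\circ\chi$ must respect this anisotropy. The extra factor $L_1^{-2(\tau+2)}$ in \eqref{pertsmall2} compared with \eqref{epsilonhdef} is exactly what is needed to absorb the loss of two $\omega$-derivatives incurred both in the inverse function theorem for $I(\omega,t)$ and in verifying that the Whitney jet satisfies the required estimates of Theorem \ref{whitneythm}. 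The final estimates \eqref{main1est} then follow by tracking the constants $A,C,N$ through each step, starting from \eqref{kamconsbound}, without introducing dependence on $\kappa$, $L_1$, $L_2$ or $D$ beyond what is already recorded.
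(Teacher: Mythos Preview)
Your overall strategy is the same as the paper's: use the tori $\Lambda_{\omega,t}$ supplied by Theorem~\ref{kamcons}, define the action as the cohomology class of the canonical $1$-form, find a primitive, invert to obtain $\omega(I,t)$, and build $\chi$ from a generating function. However, the execution differs in two places, and one of them is a genuine gap.

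First, you skip the step of inverting $\bar U$. Your primitive $\tilde\phi$ satisfies $d_\theta\tilde\phi=\bar V\,d\bar U-I\,d\theta$ in the \emph{parametrisation} variable $\theta$, so $\partial_\theta(\langle\theta,I\rangle+\tilde\phi)=\bar V\cdot\partial_\theta\bar U$, not $\bar V$ itself. A generating function of the type in Proposition~\ref{genfunc} must be written in the old position variable (the action--angle coordinate of $H^0$), which on $\Lambda_{\omega,t}$ is $\gamma=\bar U(\theta;\omega,t)$, not $\theta$. The paper therefore first applies the Komatsu implicit function theorem (Corollary~\ref{komatsucor}, not the analytic Proposition~\ref{popovlemma} you cite) to solve $\bar U(\theta;\omega,t)=\gamma$, expresses the torus as a graph $(\gamma,F(\gamma;\omega,t))$ with $F=\bar V\circ\theta(\gamma;\omega,t)$, and only then integrates $F\,d\gamma$ along radial chains to obtain $\psi$. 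Your $\tilde\phi$ is related to the correct $\psi$ by $\tilde\phi(\theta)=\psi(\bar U(\theta))-\langle I,\theta\rangle$, but this change of variables must be made explicit to get a valid generating function.

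Second, the paper does \emph{not} perform a second Whitney extension from $\Omega_\kappa$. Since $\bar\Phi$ is already defined on all of $\Omega$, the paper constructs $\tilde\psi$ and the action $R(\omega,t)$ on all of $\Omega$; the only defect off $\Omega_\kappa$ is that $\tilde Q=\tilde\psi-\langle x,R\rangle$ fails to be $2\pi$-periodic there, which is fixed by averaging against a $G^\rho$ partition of unity $\sum_k f(x+2\pi k)=1$. This gives a globally defined $\psi$, and one then inverts $R(\omega,t)$ on all of $\Omega$ via Proposition~\ref{komatsuprop}. Your plan of inverting on $E_\kappa(t)=I(\Omega_\kappa;t)$ and then Whitney-extending would also work, but is more delicate because the Cantor set $E_\kappa(t)$ varies with $t$, and you would have to verify that the Whitney jet estimates hold uniformly in that parameter. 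The paper's averaging trick avoids this entirely.
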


\begin{remark}
For our purposes, high regularity in the $t$-parameter is not required, so we have dropped from analyticity to $G^{\rho'}$ regularity in $t$ at this point in order to simplify the proceeding arguments. I expect that analyticity in $t$ could be preserved by using a stronger variant of the Komatsu implicit function theorem than Corollary \ref{komatsucor}
\end{remark}

\begin{proof}
We begin by taking $\epsilon,N$ as in Theorem \ref{kamcons} and noting that $\epsilon_H\leq \epsilon L_1^{-N-2}$ by assumption. 
This implies that the factor $(ACL_1)L_1^{N/2}\sqrt{\epsilon_H}$ occurring in the Gevrey estimate \eqref{kamconsbound} can be bounded above by $AC\sqrt{\epsilon}$.

Hence, taking $\epsilon$ small enough that both the conclusion to Theorem \ref{kamcons} holds as well as $AC\sqrt{\epsilon}<1/2$, we can first apply the Cauchy estimate from Proposition \ref{cauchyappendix} to \eqref{kamconsbound} in $t$, and then apply a variant of the Komatsu implicit function theorem, Corollary \ref{komatsucor}, to obtain a solution $\theta(\gamma;\omega,t):\T^n\times \Omega\times (-1/2,1,2)\rightarrow\T^n$ to the implicit equation
\begin{equation}
\bar{U}(\theta;\omega,t)=\gamma.
\end{equation}
Moreover, this solution satisfies the Gevrey estimate
\begin{equation}
|\partial_\gamma^\alpha\OB\TD(\theta(\gamma;\omega,t)-\gamma)|\leq AC^{|\alpha|+|\beta|+|\delta|}L_1^{|\alpha|}(L_1^{\tau+1}/\kappa)^{|\beta|}\alpha!^\rho\beta!^{\rho'}\delta!^{\rho'}L_1^{N/2}\sqrt{\epsilon_H}
\end{equation}
uniformly on $\T^n\times\Omega\times (-1/2,1,2)$.

We set $F(\gamma;\omega,t):=\bar{V}(\theta(\gamma;\omega,t);\omega,t)$. In terms of $(\gamma;\omega,t)$, the Lagrangian torus $\Lambda_\omega$ is now given by $(\gamma,F(\gamma;\omega,t):\gamma\in\T^n)$ for each $\omega\in\Omega_\kappa$ and each $t\in (-1/2,1,2)$.
Moreover, Proposition \ref{gevcomp2} on the composition of Gevrey functions gives us the estimate
\begin{equation}
\label{Fest}
|\partial_\gamma^\alpha\OB\TD(F(\gamma;\omega,t)-\nabla g^0(\omega))|\leq A\kappa C^{|\alpha|+|\beta|+|\delta|}L_1^{|\alpha|}(L_1^{\tau+1}/\kappa)^{|\beta|}\alpha!^\rho\beta!^{\rho'}\delta!^{\rho'}L_1^{N/2}\sqrt{\epsilon_H}.
\end{equation}
We next construct functions $\psi\in G^{\rho,\rho',\rho'}(\R^n\times \Omega\times (-1/2,1,2))$ and $R\in G^{\rho',\rho'}(\Omega\times (-1/2,1,2))$ such that the function 
\begin{equation}
\label{Qdefn}
Q(x;\omega,t):=\psi(x;\omega,t)-\langle x,R(\omega,t)\rangle
\end{equation}
is $2\pi$-periodic in $x$ and satisfies
\begin{equation}
\nabla_x\psi(x;\omega,t)=F(p(x),\omega,t)
\end{equation}
in $\R^n\times \Omega_\kappa\times (-1/2,1,2)$ where $p:\R^n\rightarrow \T^n$ is the canonical projection as well as the estimate
\begin{eqnarray}
\label{constructest}
& &|\partial_x^\alpha\OB\TD Q(x;\omega,t)|+|\OB \TD (R(\omega,t)-\nabla g^0(\omega))|\\
&\leq & A\kappa C^{|\alpha|+|\beta|+|\delta|}L_1^{|\alpha|}(L_1^{\tau+1}/\kappa)^{|\beta|}\alpha!^\rho\beta!^{\rho'}\delta!^{\rho'}L_1^{N/2}\sqrt{\epsilon_H}
\end{eqnarray}
for $(x;\omega,t)\in \R^n\times\Omega\times (-1/2,1,2)$.

We do this by first integrating the canonical $1$-form $I\, dx$ over the chain
\begin{equation}
c_x:=\{(sx,F(p(sx);\omega,t)):0\leq s \leq 1\}\subset \R^n\times D.
\end{equation}
We define
\begin{equation}
\tilde{\psi}(x;\omega,t):=\int_{c_x}\sigma=\int_0^1 \langle F(p(sx);\omega,t),x\rangle \, ds
\end{equation}
in $\R^n\times \Omega\times (-1/2,1,2)$.
From the estimate \eqref{Fest} it follows that $\tilde{\psi}(x;\omega,t)-\langle\nabla g^0(\omega),x\rangle$ is bounded above by the right hand side of \eqref{constructest} in $[0,4\pi]^n\times \Omega\times (-1/2,1,2)$. Hence if we define $R_j(\omega,t)=(2\pi)^{-1}\tilde{\psi}(2\pi e_j;\omega,t)$, then $R-\nabla g^0$ satisfies the required estimates in \eqref{constructest}.

Since for $\omega\in\Omega_\kappa$ we know that $\Lambda_\omega$ is a Lagrangian torus, it follows that the  integral of the canonical $1$-form over any closed chain in $\Lambda_\omega$ is homotopy invariant. This means that such an integral is a homomorphism from the fundamental group of $\Lambda_\omega$ to $\R$. Hence
\begin{equation}
\tilde{\psi}(x+2\pi m;\omega,t)-\tilde{\psi}(x;\omega,t)=\langle 2\pi m,R(\omega,t)\rangle
\end{equation}
and so the function
\begin{equation}
\tilde{Q}(x;\omega,t):=\tilde{\psi}(x,\omega)-\langle x,R(\omega,t)\rangle
\end{equation}
both satisfies the Gevrey estimate in \eqref{constructest} and is $2\pi$-periodic in $x$ for $(\omega,t)\in \Omega_\kappa \times (-1/2,1,2)$.

To obtain the sought $Q$ in \eqref{Qdefn} from $\tilde{Q}$, we use an averaging trick.
Choosing $f\in G^{\rho}_C(\R^n)$ for some positive constant $C$ such that $f$ is supported in $[\pi/2,7\pi/2]^n$ and 
\begin{equation}
\sum_{k\in\mathbb{Z}^n}f(x+2\pi k)=1
\end{equation}
for each $x\in\R^n$, it then follows that
\begin{equation}
Q(x;\omega,t):=\sum_{k\in\mathbb{Z}^n}f(x+2\pi k)\tilde{Q}(x+2\pi k;\omega,t)
\end{equation}
is $2\pi$-periodic in $x$ for every $\omega\in\Omega$, and coincides with $\tilde{Q}$ for $\omega\in \Omega_\kappa$. Moreover, $Q$ satisfies the same Gevrey estimate \eqref{constructest} as $\tilde{Q}$. We define 
\begin{equation}
\psi(x;\omega,t):=Q(x;\omega,t)+\langle x, R(\omega,t)\rangle.
\end{equation}
Note that by multiplying $Q$ and $R-\nabla g^0$ by a cut-off function $h\in G^{\rho'}_{C/\kappa}$ which is equal to $1$ in a $\omega$-neighbourhood of $\Omega_\kappa$ and vanishes for $\textrm{dist}(\omega,\R^n\setminus\Omega)\leq \kappa/2$ where $C>0$ is independent of $\Omega\subset \Omega^0$, we can assume that $\psi(x;\omega,t)=\langle x, \nabla g^0(\omega)\rangle$ for $\textrm{dist}(\omega,\R^n\setminus\Omega)\leq \kappa/2$. This cutoff preserves the Gevrey estimates on $\psi$.

Now since $\epsilon_HL_1^{N+2(\tau+2)}\leq \epsilon$, we have that $\kappa A(CL_1)(CL_1^{\tau+1}/\kappa)L_1^{N/2}\sqrt{\epsilon_H}\leq AC^2\sqrt{\epsilon}$. By taking $\epsilon$ sufficiently small we have that $\omega\mapsto \nabla_x\psi(x;\omega,t)$ is a diffeomorphism for any fixed $x\in \R^n$ from the Gevrey estimate \eqref{constructest}. Hence we have a $G^{\rho,\rho'}$-foliation of $\T^n\times D$ by Lagrangian tori $\Lambda_\omega=\{(p(x),\nabla_x\psi(x,\omega)):x\in\R^n\}$ where $\omega\in \Omega$.

In the sought coordinate change, the action $I(\omega,t)$ of the Lagrangian torus $\Lambda_\omega$ will be  given by $R(\omega,t)$. Hence from \eqref{constructest} and Proposition \ref{komatsuprop}, it follows that for $\epsilon$ sufficiently small, the map 
\begin{equation}
(\omega,t)\mapsto (I(\omega,t),t)=(R(\omega,t),t)
\end{equation}
is a $G^{\rho',\rho'}$-diffeomorphism and we have the Gevrey estimate
\begin{eqnarray}
& &|\partial_I^\alpha\partial_t^\beta (\omega(I,t)-\nabla H^0(I))|\\
&\leq & A\kappa C^{|\alpha|+|\beta|}(L_1^{\tau+1}/\kappa)^{|\alpha|}\alpha!^{\rho'}\beta!^{\rho'}L_1^{N/2}\sqrt{\epsilon_H}
\end{eqnarray}
uniformly for $(\theta,I,t)\in \T^n\times D\times (-1/2,1,2)$.

We construct the sought symplectomorphism $\chi$ using the generating function $\Phi(x,I;t)$, setting
\begin{equation}
\Phi(x,I;t)=\psi(x,\omega(I;t);t)
\end{equation}
and noting that we have the required $2\pi$-periodicity of $\phi(x,I;t):=\Phi(x,I,t)-\langle x,I\rangle$, and from Proposition \ref{gevcomp2}, we also have the estimate
\begin{equation}
|\partial_x^\alpha \partial_I^\beta\TD(\Phi(x,I;t-\langle x,I\rangle))|\leq A\kappa C^{|\alpha|+|\beta|+|\delta|}L_1^{|\alpha|}(L_1^{\tau+1}/\kappa)^{|\beta|}\alpha!^\rho\beta!^{\rho'}\delta!^{\rho'}L_1^{N/2}\sqrt{\epsilon_H}.
\end{equation}
We can then apply Corollary \ref{komatsucor} to solve the implicit equation 
\begin{equation}
\partial_I\Phi(\gamma,I,t)=\theta
\end{equation}
for $\gamma$ with the estimate
\begin{equation}
|\partial_\theta^\alpha\partial_I^\beta\TD(\gamma(\theta,I,t)-\theta)|\leq  A\kappa C^{|\alpha|+|\beta|+|\delta|}L_1^{|\alpha|}(L_1^{\tau+1}/\kappa)^{|\beta|}\alpha!^\rho\beta!^{\rho'}\delta!^{\rho'}L_1^{N/2}\sqrt{\epsilon_H}.
\end{equation}
This completes the construction of a symplectomorphism $\chi$ satisfying
\begin{equation}
\chi(\partial_I\Phi(\theta,I,t),I)=(\theta,\partial_\theta \Phi(\theta,I,t)).
\end{equation} 
It follows that
\begin{equation}
(\theta,F(\theta;\omega,t))=\chi(\partial_I\Phi(\theta,I(\omega),t),I(\omega))=\chi(\theta,I(\omega),t)
\end{equation}
for $\omega\in\Omega_\kappa$ and so
\begin{equation}
\Lambda_\omega=\{\chi(\theta,I(\omega),t):\theta\in \T^n\}.
\end{equation}
for $(\omega,t)\in \Omega_\kappa\times (-1/2,1,2)$.

We now set $\tilde{H},K,R$ as in the theorem statement in terms of the symplectomorphism $\chi$.
Since $H$ is constant on $\Lambda_\omega$ for each $\omega\in \Omega_\kappa$, it follows that $R(\cdot,I;t)$ is identically zero for each $I=I(\omega)$ with $\omega\in\Omega_\kappa$. Hence $R$ is flat at $I\in E_\kappa(t)$, since each point in $E_\kappa(t)$ is of positive density in $I(\Omega_\kappa)$. 

Finally, the Gevrey estimate in \eqref{main1est} for $\tilde{H}(\theta,I,t)-H(I,t)$ follows from Proposition \ref{gevcomp2}.
This completes the proof.
\end{proof}

\begin{remark}
In addition to the quantum normal form we will construct in Chapter 5, Theorem \ref{main1} can also provide a short proof of effective stability of the Hamiltonian flow near $\Lambda$ (see \cite{popovkam} Corollary 1.3). 
\end{remark}

To conclude this chapter, we compute the integrable term $K$ of a Birkhoff normal form for the Hamiltonian $H(\theta,I)=H^0(I)+t H^1(\theta,I)$ to second order in $t$.

The key observation is that the initial KAM step leaves the completely integrable term unchanged if the perturbation has average zero over every torus $\T^n\times \{I\}$ in the action-angle variables of the completely integrable Hamiltonian.

In Section \ref{kamstep}, we proved a version of the KAM step that is localised by a frequency parameter. In this setting, the assertion is that
\begin{equation}
N_+(I;\omega,t)=N(I;\omega,t)
\end{equation}
if
\begin{equation}
\int_{\T^n} P(\theta,I;\omega,t)\, d\theta=0
\end{equation}
for each $I\in D$ and each $t\in (-1,1)$. This can be seen directly from \eqref{Nplus} for example. 

An analogous fact holds for a version of the KAM step without parameters as in Section 3 of \cite{galavotti}, and this version is most convenient for the proof of our final result in this chapter.

Using this result, we are able to change variables using two initial KAM step symplectic maps in order to increase the order in $t$ of the perturbation size before applying Theorem \ref{main2}. This improves the estimate in \eqref{main2}. 

As in the proof of Theorem \ref{kamwithparams}, tracking the value of the various decreasing sequences of positive constants is necessary to prove convergence of the iterative scheme generated by this step. Fortunately, we shall only require two applications of the KAM step of Galavotti.

We first state the the result for real-analytic Hamiltonians before using the approximation techniques of Section \ref{approxsec} to generalise to the Gevrey setting.

\begin{prop}
\label{galav}
Suppose $H(\theta,I;t)=H^0(I)+H^1(\theta,I;t)$ is a real analytic Hamiltonian in $\T^n\times D\times B_{\delta}$ that has an analytic extension to
\begin{equation}
W_{s,r}(D):= \{(\theta,I)\in \C^n/2\pi\Z \times \C^n:|\textrm{Im}(\theta)|<s,\textrm{dist}(I,D)<r \}.
\end{equation}

Suppose further that the conditions
\begin{equation}
\left|\frac{\partial H^0}{\partial I}\right|\leq E,
\end{equation}
\begin{equation}
\left|\left(\frac{\partial^2 H^0}{\partial I^2}\right)^{-1}\right|\leq \eta,
\end{equation}
and
\begin{equation}
\left(\left|\frac{\partial H^1}{\partial I}\right|+r^{-1}\left|\frac{\partial H^1}{\partial \theta}\right|\right)\leq \epsilon
\end{equation}
are satisfied. 

Taking
\begin{equation}
V_{C,N}=\{I\in D: |\langle \nabla H^0(I),k\rangle| \geq \frac{1}{C|k|^n}\quad \forall 0<|k|\leq N\},
\end{equation}
then for sufficiently small $\epsilon>0$ we consider the set
\begin{equation}
\tilde{D}_+=\{I\in D:\textrm{dist}(I,\partial D)>\tilde{r}/2\textrm{ and }I\in V_{C,N}\}
\end{equation}
and the smoother set
\begin{equation}
D_+:=\bigcup_{I\in \tilde{D}_+} B(I,\tilde{r}/2)
\end{equation}
as a new action domain, where $N$ is chosen in a way that depends on $C,\epsilon$ and $s$ and $\tilde{r}$ is proportional to $r$ and is otherwise dependent on $n,E,C,n$ with $\tilde{r}<r/2$.

There exists a family of real analytic symplectic maps
\begin{equation}
\label{bnfkamstep}
\chi:\T^n\times D_+ \times B_{\delta} \rightarrow \T^n\times D
\end{equation}
that analytically extend to a new domain of holomorphy
\begin{equation}
W_{s_+,r_+}(D_+)
\end{equation} 
such that
\begin{equation}
(H\circ \chi)(\theta,I;t)=H^0(I)+ (2\pi)^{-n}\int_{\T^n} H^1(\theta,I;t)\, d\theta +H^2(\theta,I;t).
\end{equation}
with
\begin{equation}
\sup_{D_{s_+,r_+}}|H^2| = O(\epsilon^{3/2})
\end{equation}
with constant depending only on $n,C$ and $E$.
\end{prop}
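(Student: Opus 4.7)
The plan is to realize $\chi$ as the time-$1$ flow $\Phi_F^1$ of an auxiliary Hamiltonian $F(\theta,I;t)$, chosen to kill only the mean-zero component of $H^1$ while leaving the integrable piece $\bar{H}^1(I;t) := (2\pi)^{-n}\int_{\T^n} H^1\, d\theta$ untouched. Writing $\tilde{H}^1 := H^1 - \bar{H}^1$ and applying the Lie series identity \eqref{poisint} with $N = H^0$ and $R = H^1$, I get
\begin{equation}
H\circ \Phi_F^1 = H^0 + \bar{H}^1 + \bigl(\{H^0,F\} + \tilde{H}^1\bigr) + \int_0^1 \{(1-\tau)\{H^0,F\} + H^1,F\}\circ \Phi_F^\tau\, d\tau.
\end{equation}
So if $F$ nearly solves the homological equation $\{F,H^0\} = \tilde{H}^1$, the bracketed first-order term becomes a small truncation remainder and the integral is a quadratic (in $\epsilon$) remainder.

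First I would Fourier-truncate $\tilde{H}^1$ at order $N$ and set
\begin{equation}
F(\theta,I;t) = \sum_{0<|k|\le N} \frac{\tilde{H}^1_k(I;t)}{i\langle k,\omega(I)\rangle}\, e^{i\langle k,\theta\rangle},\qquad \omega(I):=\nabla H^0(I),
\end{equation}
where $\tilde{H}^1_k$ is the $k$-th Fourier coefficient. On $V_{C,N}$ the Diophantine bound $|\langle k,\omega\rangle|\ge 1/(C|k|^n)$ controls the denominators; combined with the analytic Fourier decay $|\tilde{H}^1_k(I;t)|\lesssim \epsilon\, e^{-|k|s}$ (Proposition \ref{fourierappendix}) and Cauchy estimates on a domain shrunk from $(s,r)$ to $(s_+, r_+)$, this yields $|\partial_\theta F|, r|\partial_I F| = O(C\epsilon)$ with polynomial losses in $N$ and $(s-s_+)^{-1}$. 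For $\epsilon$ small this makes $\Phi_F^1:W_{s_+,r_+}(D_+)\to W_{s,r}(D)$ a genuine analytic symplectomorphism.

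The new remainder $H^2$ then consists of the Fourier tail $\tilde{H}^1 - \tilde{H}^1_{\le N}$, bounded by $C_n\epsilon\, N^n e^{-N(s-s_+)/2}$ via Proposition \ref{truncateappendix}, and the integrated second-order bracket $\{(1-\tau)\{H^0,F\}+H^1,F\}\circ\Phi_F^\tau$, which by the standard estimate $|\{\cdot,\cdot\}|\le |\partial_\theta\cdot||\partial_I\cdot|+|\partial_I\cdot||\partial_\theta\cdot|$ is $O(\epsilon^2)$ times polynomial losses. Choosing $N \asymp |\log\epsilon|/(s-s_+)$ balances these contributions and delivers $|H^2|_{W_{s_+,r_+}} = O(\epsilon^{3/2})$ with implicit constant depending only on $n, C, E$, as claimed. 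The set $D_+$ arises naturally as the $\tilde{r}/2$-thickening of $\tilde D_+ = V_{C,N}\cap \{I : \mathrm{dist}(I,\partial D) > \tilde{r}/2\}$.

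The main obstacle I anticipate is the geometric compatibility between the closed set $V_{C,N}$, on which the small-divisor bound literally holds, and the open neighborhood $D_+$ on which we need the symplectomorphism and the analytic estimates to make sense. Concretely, I must choose $\tilde{r}$ (proportional to $r$, with the proportionality constant depending on $n,E,C,N$ through the Lipschitz constant of $\omega$) so small that over any ball $B(I,\tilde{r}/2)$ with $I\in \tilde D_+$ the Diophantine bound for $|k|\le N$ cannot be degraded by more than a factor of $2$. Once this is secured, and the dependence $N = N(\epsilon,s,C)$ is fixed as above, the rest of the argument is a routine single KAM step estimate.
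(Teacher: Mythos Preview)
Your approach is correct and is exactly the standard single KAM step---Fourier-truncated homological equation, time-$1$ Hamiltonian flow, balancing the exponential tail against the quadratic Poisson bracket via $N\asymp |\log\epsilon|/(s-s_+)$---which is precisely what the paper invokes here: it does not give its own proof but cites this as a restatement of the KAM step in Section~3 of \cite{galavotti}. Your sketch is also closely parallel to the paper's own parametrized KAM step (Theorem~\ref{kamstep}), the main difference being that there the normal form is linear in $I$ so a preliminary linearization in $I$ is inserted, whereas here the full $H^0(I)$ is retained and no such linearization is needed.
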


This result is essentially a restatement of the KAM step in Section 3 of \cite{galavotti}, in which more explicit details are given. We remark that the constant $C$ describes the family of nonresonant frequencies preserved by the iterative scheme, with $V_{C,\infty}$ having large measure for large $C$. 

We now consider the family of real analytic Hamiltonians of the form
\begin{equation}
H(\theta,I;t):=H^0(I)+tH^1(\theta,I)
\end{equation}
still satisfying the hypotheses of Proposition \ref{galav}.

Applying Proposition \ref{galav}, we obtain 
\begin{equation}
(H\circ \chi)(\theta,I;t)=H^0(I)+t\cdot (2\pi)^{-n}\int_{\T^n} H^1(\theta,I)\, d\theta +H^2(\theta,I;t).
\end{equation}

We will require a stronger estimate on the error term $H^2(\theta,I;t)$ that $O(\epsilon^{3/2})$, due to the presence of a square root in \eqref{main1est}. Hence we need another iteration of Proposition \ref{galav}. A second iteration of Proposition \ref{galav} yields a family of symplectic maps $\tilde{\chi}:\T^n\times D_{++}\rightarrow \T^n\times D$ that extend analytically to $D_{s_{++},r_{++}}$ and such that
\begin{equation}
(H\circ \tilde{\chi})(\theta,I;t)=H^0(I)+t\cdot (2\pi)^{-n}\int_{\T^n} H^1(\theta,I)\, d\theta+(2\pi)^{-n}\int_{\T^n} H^2(\theta,I;t)\, d\theta + H^3(\theta,I;t)
\end{equation}
with $|H^3|=O(t^{9/4})$.

This implies that the completely integrable part of $(H\circ\tilde{\chi})(0,I;t)$ is given by $H^0(I)+t\cdot (2\pi)^{-n}\int_{\T^n} H^1(\theta,I)\, d\theta+ O(t^{3/2})$, valid in the domain $\T^n \times D_{++}$, where $D_{++}$ is an open subset of $D$ given by
\begin{equation}
D_{++}:=\bigcup_{I\in \tilde{D}_{++}} B(I,\tilde{r}_+/2)
\end{equation}
where
\begin{equation}
\tilde{D}_{++}=\{I\in D_+:\textrm{dist}(I,\partial D_+)>\tilde{r}_+/2\textrm{ and }I\in V_{C_+,N_+}\}
\end{equation}
where $C_+$ and $N_+$ are defined inductively in \cite{galavotti}. In particular, we can ensure that the set $E_\kappa(t)=\omega^{-1}(\tilde{\Omega}_\kappa;t)$ is contained in $D_{++}$ by fixing $\tau=n$, taking $C$ in Proposition \ref{galav} sufficiently large. Thus we have
\begin{prop}
\label{bnf15}
Suppose $H(\theta,I;t)=H^0(I)+tH^1(\theta,I)$ is a real analytic Hamiltonian in $\T^n\times D\times B_{\delta}$ that has an analytic extension to
\begin{equation}
W_{s,r}(D):= \{(\theta,I)\in \C^n/2\pi\Z \times \C^n:|\textrm{Im}(\theta)|<s,\textrm{dist}(I,D)<r \}.
\end{equation}

Suppose further that the conditions
\begin{equation}
\left|\frac{\partial H^0}{\partial I}\right|\leq E,
\end{equation}
\begin{equation}
\left|\left(\frac{\partial^2 H^0}{\partial I^2}\right)^{-1}\right|\leq \eta,
\end{equation}
and
\begin{equation}
\left(\left|\frac{\partial H^1}{\partial I}\right|+r^{-1}\left|\frac{\partial H^1}{\partial \theta}\right|\right)\leq \epsilon
\end{equation}
are satisfied. 

Then for sufficiently small $t\in (-\delta,\delta)$, there exists a subdomain $\tilde{D}\subset D$ containing $E_\kappa(t)$ and a family of real analytic symplectic maps
\begin{equation}
\chi:\T^n\times \tilde{D} \times (-\delta,\delta) \rightarrow \T^n\times D
\end{equation}
that analytically extend to a new domain of holomorphy
\begin{equation}
W_{s_+,r_+}(\tilde{D})
\end{equation} 
such that
\begin{equation}
(H\circ \chi)(\theta,I;t)=\tilde{H}^0(I;t)+ \tilde{H}^1(\theta,I;t).
\end{equation}
with
\begin{equation}
\partial_t\tilde{H}^0(I;0)=(2\pi)^{-n}\int_{\T^n} H^1(\theta,I)\, d\theta
\end{equation} 
and
\begin{equation}
\sup_{D_{s_+,r_+}}|\tilde{H}^1| = O(t^{9/4})
\end{equation}
with constant depending only on $n,C$ and $E$.
\end{prop}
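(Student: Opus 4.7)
The plan is to apply Proposition \ref{galav} twice in succession, exploiting the key observation stated immediately before the proposition: a single KAM step transforms the Hamiltonian into one whose new integrable part is the old integrable part plus the angular average of the old perturbation, with the new remainder superquadratically smaller than the old one. Since $H - H^0 = tH^1$ has angular average $t\langle H^1\rangle(I) := t(2\pi)^{-n}\int_{\T^n} H^1(\theta,I)\, d\theta$, a single application cannot reduce the error below order $t^{3/2}$, but a second application (because the error is superquadratic) will push it down to $O(t^{9/4})$, which is what the proposition demands.

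\textbf{First iteration.} Taking the perturbation $tH^1$ with $\epsilon = O(t)$ uniformly in the analyticity strip, and fixing the Diophantine constant $C$ in Galavotti's $V_{C,N}$ sets so large (with $\tau = n$) that $E_\kappa(t)$ is contained in the smoothed Diophantine subdomain $D_+$ for all sufficiently small $t$, Proposition \ref{galav} produces a real-analytic symplectic map $\chi_1 : \T^n \times D_+ \times (-\delta,\delta) \to \T^n \times D$, extending holomorphically to $W_{s_1,r_1}(D_+)$, such that
\begin{equation*}
(H\circ\chi_1)(\theta,I;t) \;=\; H^0(I) + t\langle H^1\rangle(I) + R_1(\theta,I;t), \qquad \sup|R_1| = O(t^{3/2}).
\end{equation*}

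\textbf{Second iteration.} Regard $H^0_1(I;t) := H^0(I) + t\langle H^1\rangle(I)$ as a new integrable Hamiltonian (which continues to satisfy the nondegeneracy hypotheses with perturbed constants for small $t$) and $R_1$ as a new perturbation of size $\epsilon' = O(t^{3/2})$. A second application of Proposition \ref{galav} yields $\chi_2 : \T^n \times \tilde D \times (-\delta,\delta) \to \T^n \times D_+$, with $\tilde D \subset D_+$ again arranged to contain $E_\kappa(t)$, such that
\begin{equation*}
(H\circ\chi_1\circ\chi_2)(\theta,I;t) \;=\; H^0_1(I;t) + \langle R_1(\cdot,I;t)\rangle + R_2(\theta,I;t), \qquad \sup|R_2| = O\bigl((t^{3/2})^{3/2}\bigr) = O(t^{9/4}).
\end{equation*}
Setting $\chi := \chi_1\circ\chi_2$, $\tilde H^0(I;t) := H^0(I) + t\langle H^1\rangle(I) + \langle R_1(\cdot,I;t)\rangle$, and $\tilde H^1 := R_2$ yields the asserted decomposition. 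Since $\langle R_1(\cdot,I;t)\rangle = O(t^{3/2})$ is smooth in $t$ and vanishes to order $3/2$ at $t=0$, its $t$-derivative vanishes there, giving $\partial_t \tilde H^0(I;0) = \langle H^1\rangle(I)$, while the $O(t^{9/4})$ bound on $\tilde H^1$ is immediate.

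\textbf{Main obstacle.} The principal technical difficulty is ensuring that the successively shrunken Diophantine subdomains $D \supset D_+ \supset \tilde D$ continue to contain $E_\kappa(t)$ uniformly in small $t$, since the frequency map $\omega(I;t)$ defining $E_\kappa(t)$ itself depends on $t$. This requires choosing Galavotti's Diophantine parameters from the outset with enough slack that the two independent nonresonance conditions (Galavotti's $V_{C,\infty}$ at each step, and the Diophantine condition defining $\tilde\Omega_\kappa$) remain compatible for all $|t| < \delta$. A secondary matter is verifying real-analytic dependence of $\chi_1, \chi_2$ on $t$; this is inherited step-by-step from the KAM iteration in \cite{galavotti}, whose cohomological equations have coefficients depending analytically on $t$ and are solved by contraction arguments uniform in the parameter.
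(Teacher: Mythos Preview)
Your proposal is correct and follows essentially the same route as the paper: two successive applications of Galavotti's KAM step (Proposition \ref{galav}), yielding remainder $O(t^{3/2})$ after the first and $O(t^{9/4})$ after the second, with the Diophantine parameter $C$ chosen large (and $\tau=n$) so that the shrinking domains $D \supset D_+ \supset \tilde D$ continue to contain $E_\kappa(t)$. The paper keeps $H^0$ as the nominal integrable part at both steps rather than absorbing $t\langle H^1\rangle$ into it as you do, but since $t\langle H^1\rangle$ is $\theta$-independent it is inert in the cohomological equation and the two presentations are equivalent.
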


We can now generalise this result to the Gevrey setting.

\begin{prop}
\label{bnf175}
Suppose $H(\theta,I;t)=H^0(I)+t H^1(\theta,I)$ satisfies the assumptions of Theorem \ref{kamcons}. Then for sufficiently small $t\in (-\delta,\delta)$, there exists a subdomain $\tilde{D}\subset D$ containing $E_\kappa(t)$ and a $G^{\rho,\rho,\rho}$ family of  symplectic maps
\begin{equation}
\chi:\T^n\times \tilde{D} \times (-\delta,\delta) \rightarrow \T^n\times D
\end{equation}
such that
\begin{equation}
(H\circ \chi)(\theta,I;t)=\tilde{H}^0(I;t)+ \tilde{H}^1(\theta,I;t).
\end{equation}
with
\begin{equation}
\label{newh0}
\partial_t\tilde{H}^0(I;0)=(2\pi)^{-n}\int_{\T^n} H^1(\theta,I)\, d\theta
\end{equation} 
and
\begin{equation}
\label{why9on4}
\|\tilde{H}^1\|_{CL_1,CL_2,CL_2} = O(t^{9/4})
\end{equation}
with constant independent of $\kappa$ and with $C$ dependent only on $n$ and $\rho$.
\end{prop}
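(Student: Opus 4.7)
The plan is to combine two iterations of the real-analytic construction of Proposition \ref{bnf15} with the Gevrey approximation and Whitney extension machinery already developed in Sections \ref{approxsec}--\ref{whitneysec}. This parallels the derivation of Theorem \ref{main1} but truncates the KAM iteration after two steps, which is precisely the number of steps needed to pass from an $O(t)$ perturbation to the target $O(t^{9/4})$ bound via the Newton-type doubling of the exponent ($1 \mapsto 3/2 \mapsto 9/4$).

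First I would invoke Proposition \ref{approxlemma} to approximate the Gevrey perturbation $H^1$ by a sequence of real-analytic functions $H^1_j$ on the shrinking complex neighborhoods $U_j$, with differences $\|H^1_{j+1} - H^1_j\|_{U_j}$ decaying super-exponentially at the rate $\exp(-c\,s_j^{-1/(\rho-1)})$. For each sufficiently large $j$, I would then apply the two-step real-analytic procedure of Proposition \ref{bnf15} to the analytic Hamiltonian $H_j = H^0 + tH^1_j$, obtaining analytic symplectic maps $\chi_j$ on $\T^n \times \tilde D_j \times (-\delta, \delta)$ together with the decomposition
\begin{equation}
(H_j \circ \chi_j)(\theta, I; t) = \tilde H^0_j(I; t) + \tilde H^1_j(\theta, I; t),
\end{equation}
with $\partial_t \tilde H^0_j(I; 0) = (2\pi)^{-n}\int_{\T^n} H^1_j(\theta, I)\, d\theta$ and $|\tilde H^1_j| = O(t^{9/4})$, uniformly in $j$ on appropriate shrinking complex domains.

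Next I would establish Gevrey-type bounds for the limiting jet by estimating the differences $\chi_{j+1} - \chi_j$, $\tilde H^0_{j+1} - \tilde H^0_j$, and $\tilde H^1_{j+1} - \tilde H^1_j$ on the nested complex domains. Because the analytic KAM step of Proposition \ref{galav} depends in a quantitative and stable way on the Hamiltonian, these differences inherit the super-exponential decay of $\|H^1_{j+1} - H^1_j\|_{U_j}$, with Cauchy estimates producing the factorial derivative bounds characteristic of the anisotropic Gevrey class $G^{\rho,\rho,\rho}$; this follows the same strategy as Lemma \ref{gevest} and Corollary \ref{gevestcor}. Passing to the limit on $\T^n \times E_\kappa(t) \times (-\delta, \delta)$ and applying Theorem \ref{whitneythm} to the resulting jet then furnishes the Gevrey symplectomorphism $\chi \in G^{\rho,\rho,\rho}$ on $\T^n \times \tilde D \times (-\delta, \delta)$, with $\tilde H^0$ and $\tilde H^1$ constructed likewise; the identity (\ref{newh0}) follows by differentiating at $t = 0$ the analytic identity satisfied by each $\tilde H^0_j$.

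The main obstacle will be verifying the bound (\ref{why9on4}) in the anisotropic Gevrey norm $\|\cdot\|_{CL_1, CL_2, CL_2}$, rather than merely in a uniform sense. Since Gevrey norms control all derivatives with $(\alpha!\beta!\gamma!)^\rho$ scaling, one must calibrate the sequences $u_j, v_j, w_j$ of shrinking imaginary widths in Proposition \ref{approxlemma} so that the Cauchy-estimate-induced derivative growth of $\tilde H^1_j$ is dominated by the super-exponential decay of $\|H^1_{j+1} - H^1_j\|_{U_j}$, while keeping $\tilde H^1_j$ bounded in sup norm by a constant multiple of $t^{9/4}$ that is independent of $j$. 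This is precisely the bookkeeping carried out in the iterative scheme of Section \ref{approxsec}, and the two-step truncation makes it considerably simpler: one only needs two compatibility checks rather than an infinite telescoping argument, and the constants $C$ arising in $G^{\rho,\rho,\rho}_{CL_1, CL_2, CL_2}$ absorb the finite overhead from the two analytic KAM steps.
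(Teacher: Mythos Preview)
Your overall strategy of approximating by real-analytic Hamiltonians, applying the two-step construction of Proposition \ref{bnf15}, and passing to a limit is the same as the paper's. However, the paper's execution is considerably simpler than what you propose, and your route through Whitney extension on $E_\kappa(t)$ introduces an unnecessary complication that also creates a gap.

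The paper's key observation, which you miss, is that for a \emph{single} KAM step the generating function $\Phi_j$ is a finite weighted sum of Fourier coefficients of $H^1_j$ (the truncation to $|k|\leq N$ in Proposition \ref{galav}). Once one knows that the analytic approximants $P^k_j$ converge to $H^k$ in the Gevrey topology $G^{\rho,\rho,1}$ (which follows from Cauchy estimates applied to the super-exponentially decaying differences \eqref{pkuj}), the generating functions $\Phi_j$ converge in $G^{\rho,\rho,1}$ automatically, since taking finitely many Fourier coefficients is continuous for this topology. Komatsu's implicit function theorem (Corollary \ref{komatsucor}) then transfers this directly to Gevrey convergence of the symplectic maps $\chi_j$ on the open domain $\tilde D$. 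The same reasoning handles the second step, and composition gives $\chi=\chi^1\circ\chi^2$. No delicate calibration of $u_j,v_j,w_j$ or telescoping derivative estimates in the style of Lemma \ref{gevest} is required.

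Your restriction to $E_\kappa(t)$ followed by Whitney extension is both unnecessary and problematic. It is unnecessary because the two-step transformation of Proposition \ref{bnf15} is already defined on an \emph{open} subdomain $\tilde D\supset E_\kappa(t)$; the retreat to a Cantor set only arises when iterating infinitely many KAM steps, as in Theorem \ref{kamwithparams}. It is problematic because Whitney extension of a jet does not preserve the symplectic structure: you would obtain a Gevrey map agreeing with a symplectomorphism only to infinite order on $E_\kappa(t)$, and recovering a genuine symplectomorphism on $\tilde D$ would require a separate generating-function construction (as in the proof of Theorem \ref{main1}), which you do not address. The paper avoids this entirely by establishing convergence in the Gevrey class on the full open domain from the outset.
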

\begin{proof}
This result is established via the approximation of Gevrey functions by real-analytic functions. First, we use Theorem \ref{whitneythm} to extend $H^0$ and $H^1$ to the domain $\T^n\times \R^n\times (-1,1)$, before cutting off in $I$ to a ball $B_{\tilde{R}}$ with $D^0\subset B_{\tilde{R}-1}$ as done at the start of Section \ref{approxsec}.

From the same methods used in the proof of Proposition \ref{approxlemma}, we may then construct sequences of real analytic functions $P^0_j$ and $P^1_j$ on shrinking $j$ dependent complex domains $U_j$ containing $\T^n\times \R^n\times (-1,1)$ with a corresponding sequence $u_j\rightarrow 0$ such that 
\begin{equation}
\label{pkuj}
|P^k_{j+1}-P^k_j|_{U_{j+1}}\leq C(D^0,L_1,L_2) \exp\left(-\frac{3}{4}(\rho-1)(2L_1 u_j)^{-1/(\rho-1)}\right)\|H^k\|
\end{equation}
and
\begin{equation}
|\partial_x^\alpha (P^k_j-H^k)(\theta,I;t)|\leq C(D^0,L_1,L_2) \exp\left(-\frac{3}{4}(\rho-1)(2L_1 u_j)^{-1/(\rho-1)}\right)
\end{equation}
in $\mathbb{T}^n\times B_{\tilde{R}}\times (-1,1)$ for $|\alpha|\leq 1$. These sequences $P^k_j$ are convergent in $G^{\rho,\rho,1}(\T^n\times\R^n\times (-1,1))$, as is shown in \cite{gevapproxref} Proposition 2.2. (This fact can be readily obtained by applying Cauchy estimates to \eqref{pkuj}.)

Now for each $j\in \N$, we can apply Proposition \ref{bnfkamstep} to obtain a real analytic symplectic map 
\begin{equation}
\chi_j:\T^n\times D_+ \rightarrow \T^n\times
\end{equation}
defined in shrinking holomorphy domains that comprise the first KAM step for $H^0_j+H^1_j$.

Note that for an individual KAM step, the symplectic map $\chi_j$ is defined using a generating function $\Phi_j$ that is a weighted sum of finitely many Fourier components of $H^1_j$ (see \cite{galavotti} Equation 3.14).

This implies that as $P^0_j+tP^1_j\rightarrow H^0+tH^1$ in $G^{\rho,\rho,1}(\T^n\times D_+\times (-1,1))$, the generating functions $\Phi_j$ converges to some
\begin{equation}
\Phi\in G^{\rho,\rho,1}(\T^n\times D_+\times (-1,1))
\end{equation}
in the $G^{\rho,\rho,1}$ sense.

From the Komatsu implicit function theorem, Corollary \ref{komatsucor}, it follows that the corresponding symplectic maps $\chi_j$ converge to some
\begin{equation}
\chi^1\in G^{\rho,\rho,1}(\T^n\times D_+\times (-1,1))
\end{equation} 
in the Gevrey sense.

Similarly, the symplectic maps $\tilde{\chi}_j$ that arise from comprise a single KAM step for the Hamiltonians
\begin{equation}
(P^0_j+tP^1_j)\circ\chi_j
\end{equation}
can also be seen to converge to some
\begin{equation}
\chi^2\in G^{\rho,\rho,1}(\T^n\times D_{++},\T^n\times D_+).
\end{equation}

It follows that the family of symplectic maps $\chi_j\circ\tilde{\chi}_j$ whose existence is asserted by applying Proposition \ref{bnf15} to $H^0_j+H^1_j$ converge to $\chi:=\chi^1\circ\chi^2$ in the $G^{\rho,\rho,1}$-sense.

Moreover, if we write 
\begin{equation}
(P^0_j+tP^1_j)\circ \chi_j\circ \tilde{\chi}_j=\tilde{H}^0_j(I;t)+ \tilde{H}^1_j(\theta,I;t).
\end{equation}
in the notation of Proposition \ref{bnf15}, we have that $\tilde{H}^k_j$ are convergent sequences in $G^{\rho,\rho,1}$, and so it follows that their limits $\tilde{H}^0,\tilde{H}^1$ satisfy
\begin{equation}
\partial_t\tilde{H}^0(I;0)=(2\pi)^{-n}\int_{\T^n} H^1(\theta,I)\, d\theta
\end{equation}
and
\begin{equation}
\|\tilde{H}^1\|_{CL_1,CL_2,CL_2} = O(t^{9/4})
\end{equation}
as required.
\end{proof}

By applying Proposition \ref{bnf175} to $H(\theta,I;t)=H^0(I)+t H^1(\theta,I)$ with $t$ sufficiently small, we can then invoke Theorem \ref{main1} on the Hamiltonian 
\begin{equation}
\tilde{H}(\theta,I;t)=(H\circ\chi)(\theta,I;t)
\end{equation}
with an improved error term.

\begin{prop}
\label{bnf2}
Suppose the assumptions of Theorem \ref{kamcons} hold for the Hamiltonian
\begin{equation}
H(\theta,I;t)=H^0(I)+tH^1(\theta,I;t)\in G^{\rho,\rho,1}(\mathbb{T}^n\times D\times (-1,1)).
\end{equation} 
Then there exists $N(n,\rho,\tau)>0$ and $\epsilon>0$ independent of $L_1,L_2,D$ such that for any $H\in G^{\rho,\rho,1}_{L_1,L_2,L_2}(\mathbb{T}^n\times D\times (-1,1))$ with 
\begin{equation}
\label{pertsmall3}
\kappa^{-2}t\|H^1\|_{L_1,L_2,L_2}=\epsilon_H\leq \epsilon L_1^{-N-2(\tau+2)}
\end{equation}
there is a subdomain $\tilde{D}\subset D$ containing $E_\kappa(0)$ and a family of $G^{\rho',\rho'}$ maps $\omega:\tilde{D}\times (-1/2,1,2)\rightarrow \Omega$ and a family of maps $\chi\in G^{\rho,\rho',\rho'}(\mathbb{T}^n\times \tilde{D}\times (-1/2,1,2),\mathbb{T}^n\times \tilde{D})$ that are diffeomorphisms and exact symplectic transformations respectively for each fixed $t\in (-1/2,1,2)$.
Moreover, we can choose the maps $\omega$ and $\chi$ such that family of transformed Hamiltonians 
\begin{equation}
\tilde{H}(\theta,I;t):=(H\circ \chi)(\theta,I;t)
\end{equation}
is of Gevrey class $G^{\rho,\rho',\rho'}(\mathbb{T}^n\times \tilde{D}\times (-1/2,1,2))$ and can be decomposed as
\begin{equation}
K(I;t)+R(\theta,I;t):=\tilde{H}(0,I;t)+(\tilde{H}(\theta,I;t)-\tilde{H}(0,I;t))
\end{equation}
such that:
\begin{enumerate}
\item $\mathbb{T}^n\times \{I\}$ is an invariant Lagrangian torus of $\tilde{H}(\cdot,\cdot;t)$ for each $I\in E_\kappa(t)=\omega^{-1}(\tilde{\Omega}_\kappa)$ and each $t\in (-1/2,1,2)$. 
\item $
\IB (\nabla K(I;t)-\omega(I;t))=\IB R(\theta,I;t)=0\quad\textrm{for all }(\theta,I;t)\in \T^n\times E_\kappa(t)\times (-1/2,1,2),\beta\in\N^n.
$
\item There exist $A,C>0$ independent of $\kappa,L_1,L_2,$ and $D\subset D^0$ such that we have the estimates
\begin{eqnarray}
& &|\TA\IB\TD \phi(\theta,I;t)|+|\IB\TD(\omega(I;t)-\nabla \tilde{H}^0(I))|+|\TA\IB\TD (\tilde{H}(\theta,I;t)-\tilde{H}^0(I))|\nonumber\\
&\leq & \label{main1est2}A C^{|\alpha|+|\beta|+|\delta|}L_1^{|\alpha|}(L_1^{\tau+1}/\kappa)^{|\beta|}\alpha!^\rho \beta!^{\rho'}\delta!^{\rho'}L_1^{N/2}|t|^{9/8}
\end{eqnarray}
uniformly in $\T^n\times \tilde{D}\times (-1/2,1,2)$ for all $\alpha,\beta$, where $\phi\in G^{\rho,\rho',\rho'}(\T^n\times \tilde{D}\times (-1/2,1,2))$ is such that $\langle \theta,I\rangle+\phi(\theta,I;t) $ generates the symplectic transformation $\chi$ in the sense of Proposition \ref{genfunc} and $\tilde{H}^0,\tilde{H}^1$ are as in Proposition \ref{bnf175}.

\item \begin{equation}
\label{chp4tderiv}
\partial_t K(I;t)=(2\pi)^{-n}\int_{\T^n} H^1(\theta,I;t)+o(1)
\end{equation}
uniformly in $\T^n\times \tilde{D}\times (-1/2,1,2)$.
\end{enumerate}
\end{prop}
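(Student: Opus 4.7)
The plan is a two-stage normal form, combining Proposition \ref{bnf175} with Theorem \ref{main1}. First, apply Proposition \ref{bnf175} to $H(\theta,I;t) = H^0(I) + tH^1(\theta,I)$, producing a $G^{\rho,\rho,\rho}$ symplectic transformation $\chi_0$ and a decomposition $H\circ\chi_0 = \tilde{H}^0(I;t) + \tilde{H}^1(\theta,I;t)$ with $\partial_t \tilde{H}^0(I;0) = (2\pi)^{-n}\int_{\T^n} H^1(\theta,I)\, d\theta$ (from \eqref{newh0}) and $\|\tilde{H}^1\|_{CL_1,CL_2,CL_2} = O(t^{9/4})$ (from \eqref{why9on4}). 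The benefit of this pre-processing is that the $\theta$-dependent part of the perturbation has been compressed from $O(t)$ down to $O(t^{9/4})$, which is precisely where the improved $|t|^{9/8}$ exponent in \eqref{main1est2} will come from.

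The second stage applies Theorem \ref{main1} to $\tilde{H}^0(I;t) + \tilde{H}^1(\theta,I;t)$, yielding a further symplectic map $\chi_1$ and the decomposition $\tilde{H}^0 + \tilde{H}^1 = K + R$. The principal technical obstacle is that the integrable part $\tilde{H}^0(I;t)$ now depends on $t$, whereas Theorem \ref{main1} is stated with a $t$-independent base $H^0(I)$. I would overcome this by writing $\tilde{H}^0(I;t) = \tilde{H}^0(I;0) + [\tilde{H}^0(I;t) - \tilde{H}^0(I;0)]$ and treating $\tilde{H}^0(I;0)$ as the new unperturbed Hamiltonian. The $\theta$-independent $O(t)$ correction $\tilde{H}^0(I;t) - \tilde{H}^0(I;0)$ would naively be too large to be treated as an ordinary perturbation, but inspection of Theorem \ref{kamstep} reveals that purely $\theta$-independent terms contribute only to the running normal form through their zero-th Fourier coefficient (cf.~\eqref{Nplus} and \eqref{Ffourier}) and never enter the generating function $F$ or propagate into the $\theta$-dependent error. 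Consequently, the symplectic transformation $\chi_1$, the remainder $R$, and the frequency correction $\omega(I;t) - \nabla_I \tilde{H}^0(I;t)$ are governed solely by $\tilde{H}^1$, so each occurrence of $\sqrt{\epsilon_H}$ in \eqref{main1est} is replaced by $\sqrt{\kappa^{-2}\|\tilde{H}^1\|} = O(|t|^{9/8}/\kappa)$, giving \eqref{main1est2}.

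Set $\chi := \chi_0 \circ \chi_1$ and compose the frequency maps via Proposition \ref{gevcomp2} to transport the Gevrey estimates through. The subdomain $\tilde{D} \subset D$ is obtained by shrinking $\delta$ so that $E_\kappa(t)$ remains within the domains of validity of both $\chi_0$ and $\chi_1$ for all $|t| < \delta$; this is possible because $\omega(I;t)$ depends continuously on $t$ and $E_\kappa(0)$ is compactly contained in the interior of $D$. The generating-function data and the exact-symplectic property follow by identifying the generating function of $\chi$ with the composition of those of $\chi_0$ and $\chi_1$ via Corollary \ref{komatsucor}.

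Finally, \eqref{chp4tderiv} follows because by construction $K(I;t) = \tilde{H}^0(I;t)$ plus $\theta$-independent corrections accumulated through the second KAM iteration, each of which is controlled by $\tilde{H}^1$ and hence of size $O(|t|^{9/8})$. Differentiating in $t$ and invoking \eqref{newh0} together with continuity of $\partial_t \tilde{H}^0(I;t)$ in $t$ gives
\[
\partial_t K(I;t) = \partial_t \tilde{H}^0(I;t) + o(1) = (2\pi)^{-n}\int_{\T^n} H^1(\theta,I)\, d\theta + o(1)
\]
as $t \to 0$, which is \eqref{chp4tderiv}.
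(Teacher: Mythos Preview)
Your approach is essentially the same as the paper's: first apply Proposition \ref{bnf175} to compress the $\theta$-dependent perturbation to $O(t^{9/4})$, then invoke Theorem \ref{main1}, so that the $\sqrt{\epsilon_H}$ in \eqref{main1est} becomes $O(|t|^{9/8})$; then \eqref{chp4tderiv} follows from \eqref{main1est2} together with \eqref{newh0}. The paper's own proof is a two-line remark to precisely this effect, whereas you have usefully made explicit the subtlety that $\tilde{H}^0(I;t)$ now depends on $t$ and explained (via \eqref{Ffourier}, \eqref{Nplus}) why $\theta$-independent terms do not feed into the generating function or the $\theta$-dependent remainder --- a point the paper leaves implicit.
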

\begin{proof}
The only new claim in this Proposition is \eqref{chp4tderiv}, which follows from \eqref{main1est2} and the expression \eqref{newh0} for $\tilde{H}^0$. Note that the exponent $9/4$ in \eqref{main1est2} comes from \eqref{why9on4} and the square root in \eqref{main1est}.
\end{proof}


\chapter{Quantum Birkhoff normal form in KAM systems}
In this chapter, we continue from the construction of a Birkhoff normal form in Chapter \ref{chp4} and construct a quantum Birkhoff normal form for the quantisations of Gevrey KAM Hamiltonians, following the work of Popov in \cite{popovquasis}. The main consequence of this construction for our purposes is the construction of exponentially accurate quasimodes localising onto the invariant KAM tori.

\section{Quantum Birkhoff normal form}

We let $M$ be a compact $G^\rho$-smooth manifold of dimension $n\geq 2$ and consider the family of formally self-adjoint Schr\"{o}dinger type semiclassical pseudodifferential operator
\begin{equation}
\label{operatorP}
P_h:=h^2\Delta_g+V(x,hD)
\end{equation}
acting on half densities $f|dx|^{1/2}$ where each $V$ is a self adjoint Gevrey symbol in the class $S_\ell(T^*M)$ from Definition \ref{selldef} where $\ell=(\rho,\mu,\eta)$, with $\rho(\tau+n)+1>\mu>\rho'=\rho(\tau+1)+1$ and $\nu=\rho(\tau+n+1)$.

The paradigmal example here is the semiclassical Schr\"{o}dinger operator
\begin{equation}
P_h=h^2\Delta+V(x)
\end{equation}
for a smooth and compactly supported potential $V(x)\in G^\rho$ that is bounded below.

We suppose further that there exists an exact symplectomorphism $\chi_1:\T^n\times D\rightarrow U\subset \T^*M$ such that the transformed Hamiltonian $H(\phi,I)=P_0\circ \chi_1$  can be placed in a $G^{\rho,\rho'}$ Birkhoff normal form \eqref{bnfeq} about a family of invariant tori $\Lambda:=\{\Lambda_\omega:\omega\in \overline{\Omega}\}$ with $\overline{\Omega}\in \Omega_\kappa$.

From Theorem \ref{main1}, we have shown that this is the case if $P_0=H^0+tH^1$ is an analytic one-parameter family of small perturbations of a completely integrable and non-degenerate Hamiltonian $H^0$.

From this point on, we specialise from \eqref{operatorP} to considering one-parameter families of operators of the form
\begin{equation}
P_h(t):=H^0(x,hD)+tH^1(x,hD)
\end{equation}
where the Hamiltonian $H=H^0+tH^1$ satisfies the assumptions of Theorem \ref{main1}.

In this case, the family of maps $\chi_1(t)$ can be taken to be the transformation into ``action-angle" coordinates, the existence of which is guaranteed by the Liouville--Arnold theorem (\cite{arnoldmechanics} Chapter 10, Section 50).

In this chapter, our main goal is to obtain a quantum analogue to the Birkhoff normal form from Theorem \ref{main1} for the operator $P_h(t)$ in the Gevrey classes of semiclassical pseudodifferential operators introduced in Section \ref{gevsymbsec}.

\begin{thm}
\label{main2}
Suppose $P_h(t)$ is as in \eqref{operatorP}. Then there exists a uniformly bounded family of semiclassical Fourier integral operators 
\begin{equation}
U_h(t):L^2(\T^n;L)\times (-1,1)\rightarrow L^2(M)\quad (0<h<h_0)
\end{equation}
that are associated with the canonical relation graph of the Birkhoff normal form transformation $\chi(t)$ such that for each fixed $t\in (-1,1)$, we have
\begin{enumerate}
\item $U_h(t)^*U_h(t)-\textrm{Id}$ is a pseudodifferential operator with symbol in the Gevrey class $S_\ell(\T^n\times D)$ which restricts to an element of $S_\ell^{-\infty}(\T^n\times Y)$ for some subdomain $Y$ of $D$ that contains $E_\kappa(t).$
\item $P_h(t)\circ U_h(t)-U_h(t)\circ P^0_h(t)=\mathcal{R}_h(t)\in S_\ell^{-\infty}$, where the operator $P^0_h(t)$ has symbol 
\begin{equation}
\label{fioconjflatness}
p^0(\theta,I;t,h)=K^0(I;t,h)+R^0(\theta,I;t,h)=\sum_{j\leq \eta h^{-1/\nu}}K_j(I;t)h^j+\sum_{j\leq \eta h^{-1/\nu}}R_j(\theta,I;t)h^j
\end{equation}
with both $K^0$ and $R^0$ in the symbol class $S_\ell(\T^n\times D)$ from Definition \ref{selldef} where $\eta>0$ is a constant, $K_0(I;t),R_0(\theta,I;t)$ are the components of the Birkhoff normal form of the Hamiltonian $P_0\circ \chi_1$ as constructed in Theorem \ref{main1}, and
\begin{equation}
\label{qbnfflatness}
\partial_I^\alpha R_j(\theta,I;t)=0 
\end{equation}
for $(\theta,I;t)\in \T^n\times E_\kappa(t)\times (-1,1)$.
\end{enumerate}
\end{thm}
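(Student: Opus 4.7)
\medskip

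The plan is to follow Popov's quantum normal form construction from \cite{popovquasis}, modified to accommodate the analytic dependence on the parameter $t$. The starting point is the classical Birkhoff normal form: Theorem \ref{main1} (or rather Proposition \ref{bnf2}) provides the exact symplectomorphism $\chi(t) = \chi_1\circ \chi_2(t)$ bringing $P_0(t)$ into Birkhoff normal form $K(I;t)+R(\theta,I;t)$ on a neighbourhood of $\Lambda$, with $R$ flat on $\T^n\times E_\kappa(t)$. The first step is to quantise $\chi(t)$ as a semiclassical Fourier integral operator $U_h(t)$ associated to the graph of $\chi(t)$, using the generating function $\langle \theta,I\rangle + \phi(\theta,I;t)$ from \eqref{main1est}. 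Since $\phi\in G^{\rho,\rho',\rho'}$ with explicit Gevrey bounds, the resulting $U_h(t)$ is a Gevrey FIO in the sense of Section \ref{gevsymbsec}, and a standard computation with a half-density amplitude $a_0$ chosen so that the transport equation holds yields that $U_h(t)^*U_h(t)-\mathrm{Id}$ is a Gevrey pseudodifferential operator whose principal symbol vanishes (by construction of $a_0$ to match the half-density factor), and Egorov's theorem gives that $U_h(t)^*P_h(t)U_h(t)$ has principal symbol $K_0(I;t)+R_0(\theta,I;t)$ equal to the classical Birkhoff normal form.

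The second step is an iterative construction of the lower-order terms $K_j, R_j$ for $j\geq 1$. Assume by induction that we have corrected the symbol through order $h^{j-1}$, producing an operator of the form $\mathrm{Op}^w(K^{(j-1)}+R^{(j-1)}) + h^j\mathrm{Op}^w(S_j) + O(h^{j+1})$, where $S_j\in G^{\rho,\rho',\rho'}$. We seek a Gevrey symbol $b_j\in S_\ell(\T^n\times D)$ so that conjugation by $\mathrm{Op}^w(1+h^j b_j)$ (or by $\exp(ih^{j}\mathrm{Op}^w(b_j)/h)$, depending on presentation) absorbs $S_j$ modulo $h^{j+1}$. Matching symbols reduces to the cohomological equation
\begin{equation}
\{K_0, b_j\} + (K_j+R_j) = S_j,
\end{equation}
which on the tori $\T^n\times\{I\}$ becomes $\omega(I;t)\cdot \partial_\theta b_j + (K_j(I;t)+R_j(\theta,I;t)) = S_j(\theta,I;t)$. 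Taking the $\theta$-average forces $K_j(I;t) = \hat{S}_j(0,I;t)$ on $E_\kappa(t)$, while for $(\theta,I)$ near $\T^n\times E_\kappa(t)$ the Fourier coefficient equation $i\langle k,\omega(I;t)\rangle \hat b_j(k,I;t) = \hat S_j(k,I;t)$ is solved using the Diophantine bound \eqref{diophantine}. The Gevrey control of these small divisors proceeds exactly as in the proof of the classical KAM step in Theorem \ref{kamstep}, with the loss of derivatives absorbed into the $\rho' = \rho(\tau+1)+1$ Gevrey index in the action variables.

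The third step is to ensure the flatness \eqref{qbnfflatness} on $E_\kappa(t)$ and then globalise to all of $D$ via Whitney extension. On the nonresonant set one sets $R_j \equiv 0$ and defines $b_j$ by the Fourier series above; this determines $K_j$ on $E_\kappa(t)$ as the $\theta$-average of $S_j$ restricted there, together with all $I$-derivatives (which are likewise computed from $I$-differentiated homological equations, giving compatible jets in $I$). A Whitney-type extension in the Gevrey class, exactly as in Theorem \ref{whitneythm} applied in Section \ref{whitneysec}, extends the jet $(K_j, \partial_I^\alpha R_j=0)$ from $E_\kappa(t)$ to a Gevrey symbol on all of $\T^n\times D$ with the required bounds, and uniform dependence on $t$ is preserved because the same extension machinery handles the additional $t$-variable. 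After Borel resumming the formal series $\sum K_j h^j$ and $\sum R_j h^j$ via the Gevrey realisation lemma, one obtains the symbol $p^0(\theta,I;t,h)$ as in \eqref{fioconjflatness}, and the residual remainder $\mathcal{R}_h(t) \in S_\ell^{-\infty}$ appears automatically from the tail of the asymptotic expansion.

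The main obstacle will be the third step: maintaining the exponential Gevrey loss estimates through the iterative construction of the $b_j$ at the same time as ensuring the flatness of $R_j$ on $E_\kappa(t)$ for all orders simultaneously. The point is that one cannot solve the cohomological equation globally on $\T^n\times D$, only in a neighbourhood of the Cantor-like set of invariant tori, and the interplay between (i) the small-divisor losses that force the anisotropic Gevrey class $G^{\rho,\rho',\rho'}$, (ii) the Whitney extension from a set of positive Lebesgue density, and (iii) the uniformity in the analytic parameter $t$, must be tracked carefully. This is precisely the content of the argument in \cite{popovquasis}, and the presence of the parameter $t$ enters only through the real-analytic (hence trivially Gevrey) dependence in $t$ already established in Proposition \ref{bnf2}, so no essential new difficulties arise beyond bookkeeping of the $t$-derivatives in each estimate.
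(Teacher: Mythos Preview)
Your high-level plan is right: quantise $\chi(t)$ as an FIO, then correct by an elliptic $h$-PDO whose symbol is built order-by-order from homological equations. But two points diverge from the paper, one methodological and one a genuine gap.

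\textbf{Method for the homological equation.} You propose to solve $\langle\omega(I;t),\partial_\theta\rangle b_j = S_j - K_j$ only on $\T^n\times E_\kappa(t)$ via the Fourier series $\hat b_j(k) = \hat S_j(k)/i\langle k,\omega\rangle$, and then Whitney-extend the resulting jet to all of $D$. The paper takes a different route: Theorem~\ref{homeqthm} solves the homological equation \emph{globally} on $\T^n\times D$ by regularising the denominator, setting
\[
\hat b_j(k,I;t)\;=\;\bigl[\langle\omega(I;t),k\rangle + i\kappa|k|^{-\tau}\psi(|\langle\omega(I;t),k\rangle|\,|k|^\tau)\bigr]^{-1}\hat S_j(k,I;t)
\]
with $\psi$ a Gevrey cutoff. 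This denominator is bounded below by $\kappa|k|^{-\tau}/4$ everywhere on $D$, so one obtains a global $G^{\rho,\mu}$ solution directly, and since the $\psi$-term vanishes on $E_\kappa(t)$ the error $r_j$ is automatically flat there. No Whitney extension is needed in the recursion; the Gevrey constant is fixed once and its growth with $j$ is tracked explicitly in the $\Gamma$-factors of Theorem~\ref{homeqthm}, which is what places the formal symbol in $S_\ell$. Your Whitney-per-step approach is not obviously wrong, but you would need to show that the extension constants do not compound uncontrollably across the iteration, and your reference to ``the classical KAM step in Theorem~\ref{kamstep}'' is misplaced: that argument lives in analytic strips with Cauchy estimates, whereas here one works directly in real Gevrey classes.

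\textbf{Missing unitarisation.} You assert that choosing the half-density amplitude $a_0$ via the transport equation makes $U_h^*U_h - \mathrm{Id}$ vanish to the required order, but this only kills the principal symbol. The paper first builds $V_h(t)=T_h(t)A_h(t)$ satisfying the intertwining relation, then observes that $W_h=V_h^*V_h$ need not be the identity. The point of Lemma~\ref{lemblah} is that each $w_j$ is $\theta$-independent on $E_\kappa(t)$; hence $Q_h=W_h^{-1/2}$ commutes with $P_h^0$ modulo symbols flat on $E_\kappa(t)$, and one sets $U_h=V_hQ_h$. Without this step you have no reason for $U_h^*U_h-\mathrm{Id}$ to lie in $S_\ell^{-\infty}(\T^n\times Y)$, which is exactly claim~(1) of the theorem.
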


As a consequence of Theorem \ref{main2}, we will obtain a family of Gevrey quasimodes smoothly parametrised by $t\in (-1,1)$ in Section \ref{gevquasisec}. Moreover, for each fixed $t\in (-1,1)$ these quasimodes $\mathcal{Q}$ will microlocalise (in the sense of the Gevrey microsupport introduced in Definition \ref{gevmicro}) onto a family $\Lambda$ of the nonresonant invariant Lagrangian tori constructed in Chapter 4.

We sketch the details of the proof of Theorem \ref{main2} in this chapter, following the argument of Popov \cite{popovquasis}.

The construction of $U_h(t)$ can be broken into multiple steps. We begin by constructing a family of semiclassical Fourier integral operators $T_h$ that conjugate $P_h(t)$ to a family of pseudodifferential operators $P_h^1(t):\mathcal{C}^\infty(\T^n;\mathbb{L})$ with principal symbol equal to $K_0(I;t)+R_0(\theta,I;t)$, the Birkhoff normal form of $H$, and vanishing subprincipal symbol. This arises as the quantisation of the symplectomorphism that transforms $H$ into its Birkhoff normal form.

The symbol of the operator $P_h^1(t)$ satisfies the property $\eqref{fioconjflatness}$ to $O(h^2)$, and to improve this, we replace the conjugating Fourier integral operator $T_h$ with $T_hA_h$ for a suitable elliptic pseudodifferential operator $A_h$ whose symbol is determined iteratively on the family of Cantor-like set $\{(\theta,I;t)\in \T^n\times \R^n\times (-1,1):I\in E_\kappa(t)\}$ by solving equations of the form
\begin{equation}
\langle \nabla K_0,\partial_\theta\rangle f(\theta,I;t)=g(\theta,I;t)
\end{equation}
referred to in the literature as homological equations. In this manner the ``flatness condition" of \eqref{qbnfflatness} is obtained for $j>0$, where the $j=0$ statement was established by Theorem \ref{main1}.

\section{Conjugation by a h-FIO}
\label{fioconj}
In this section we first conjugate $P_h(t)$ by an Fourier integral operator to a semiclassical pseudodifferential operator on $\mathbb{T}^n\times D$. 

We do this by quantising the $G^\rho$ symplectic maps $\chi_1:\mathbb{T}^n\times D \rightarrow T^*M$ and $\chi_0:\mathbb{T}^n\times D \rightarrow \mathbb{T}^n\times D$ that transform the unperturbed Hamiltonian $H^0$ to action-angle variables and transform the perturbed Hamiltonian to Birkhoff normal form respectively.\\

We define
\begin{equation}
C_1=\{(\chi_1(y,\eta),y,\eta):(y,\eta)\in \mathbb{T}^n\times D\}
\end{equation}
and the flipped graph
\begin{equation}
C_1'=\{(x,y,\xi,\eta):(x,\xi)=\chi_1(y,-\eta)\}
\end{equation}
which is a Lagrangian submanifold of $T^*M$.

Because $\chi_1$ is exact-symplectic, we can quantise this map as a semiclassical Fourier integral operator which is a semiclassical Lagrangian distribution associated with $C_1'$ \cite{popov2} \cite{duistermaatpaper}.\\

In this construction, the Maslov class of the tori $\{\Lambda_\omega:\omega\in\Omega_\kappa\}$ (as defined in Section 3.4 of \cite{duistbook}) can be identified with elements of $\vartheta\in H^1(\T^n;\Z)$ via the symplectic map $\chi_0\circ\chi_1:\T^n\times D\rightarrow T^*M$.

Following \cite{popov2} and \cite{colin}, we can then associate a smooth line bundle $\L$ over $\T^n$ with the class $\vartheta$, such that smooth sections $f\in\mathcal{C}^\infty(\T^n,\L)$ can be canonically identified with smooth functions $\tilde{f}\in\mathcal{C}^\infty(\R^n,\C)$ satisfying the quasiperiodicity condition
\begin{equation}
\label{maslovcanonical}
\tilde{f}(x+2\pi p)=\exp\left(\frac{i\pi}{2}\langle\vartheta,p\rangle\right)\tilde{f}(x)
\end{equation}
for all $p\in \Z^n$.

We now construct this semiclassical Fourier integral operator microlocally. First we need to parametrise $C_1'$ locally by phase functions. To this end, we consider a fixed $\zeta_0=(x_0,y_0,\xi_0,\eta_0)\in C_1'\subset T^*(M\times \mathbb{T}^n)$.
We fix an analytic chart $U_0$ about $x$. Then the implicit function theorem shows that there exists a unique $\phi\in\mathcal{C}^\omega(U_1,U_2)$, where $U_1$ is a local chart in the torus and $U_1\times U_2$ is a rectangular neighbourhood of $(y_0,\xi_0)$ such that

\begin{equation}
C_1=\{(\partial_\xi \phi,\xi,y,\partial_y\phi)\}
\end{equation}
and
\begin{equation}
\det\left(\partial^2_{y\xi}\right)\neq 0
\end{equation}
and
\begin{equation}
\phi(y_0,\xi_0)=x_0\cdot \xi_0-f(\zeta_0)
\end{equation}

where $f$ is the function on $C_1'$ such that $df=i^*\alpha$, where $i:C_1'\rightarrow T^*(M\times\mathbb{T}^n)$ is the inclusion and $\alpha$ is he canonical one form.

Then $\Psi(x,y,\xi)=x\cdot \xi-\phi(y,\xi)$ parametrises $C_1'$ locally in the sense that on the set $O_\Psi=\{(x,y,\xi):\partial_\xi\Psi=0\}$, we have
\begin{itemize}
\item $\textrm{rank}(\partial_{(x,y,\xi)}\partial_\xi \Psi)=n$
\item $(x,y,\xi)\mapsto (x,y,\partial_x\Psi,\partial_y\Psi)\textrm{ is a local diffeomorphism}$.
\end{itemize}
Additionally, we have $\Psi(x_0,y_0,\xi_0)=f(\zeta_0)$.\\

We are now ready to define semiclassical Fourier integral operators associated to $C_1'$ mapping
\begin{equation}
\mathcal{C}^\infty(\mathbb{T}^n,\Omega^{1/2}\otimes \mathbb{L})\rightarrow \mathcal{C}_0^\infty(M,\Omega^{1/2}).
\end{equation}
We fix $\sigma>1,\ell=(\sigma,\sigma,2\sigma-1)$ and choose $a\in S_\ell((U_0\times U_1)\times U_2)$ as in Definition \ref{selldef}.\\

Motivated by equation \eqref{maslovcanonical}, we extend $a$ to
\begin{equation}
\tilde{a}(x,y+2\pi p,\xi;h)=e^{-\frac{i\pi}{2}\vartheta\cdot p}a(x,y,\xi;h)\textrm{ for }(x,y,\xi)\in U\times \mathbb{R}^n
\end{equation}
and extend $\Psi$ periodically to $U_0\times (U_1+2\pi \mathbb{Z}^n)\times U_2$.\\

Then given a section  $a\in\mathcal{C}^\infty(\mathbb{T}^n,\mathbb{L})$, we define
\begin{equation}
\label{fioint}
T_hu(h):=(2\pi h)^{-n}\int_{\mathbb{R}^n}\int_{U_1}e^{i\Psi(x,y,\xi)/h}\tilde{a}(x,y,\xi;h)\tilde{u}(y)\, dy\, d\xi
\end{equation}
noting the invariance of this integral under the translations
\begin{equation}
U_1\mapsto U_1+2\pi p \textrm{ where }p\in\mathbb{Z}^n.
\end{equation}

The class of semiclassical Fourier integral operators is then the set of finite sums of operators given microlocally by \ref{fioint}. As in the standard theory of H\"{o}rmander \cite{hormander} \cite{duistbook}, the above definition is independent of our choices of parametrising phase functions.\\

The principal symbol of $T_h$ is $e^{if(\zeta)}\Upsilon(\zeta)$, where $\Upsilon$ is a smooth section in 
\begin{equation}
\Omega^{1/2}(C_1')\otimes M_{C_1'}\otimes \pi_{2}^*(\mathbb{L}')
\end{equation}
where $\pi_2$ is the canonical projection onto the torus.\\

We can trivialise the half-density bundle by pulling back the canonical half-density on $\mathbb{T}^n\times D$ by the corresponding canonical projection, and we can canonically identity $\pi_2^*(\mathbb{L}')$ with the dual $M_{C_1'}'$ of the Maslov bundle as is done in \cite{colin}.\\

This allows us to canonically identify the principal symbol for a semiclassical Fourier interal operator in this setting with a function $b\in\mathcal{C}^\infty(C_1')$.\\

Additionally, we have
\begin{equation}
a_0(\partial_\xi\phi(y,\xi),y,\xi,-\partial_y\phi(y,\xi))=a_0(\partial_\xi'(y,\xi),y,\xi)|\det(\partial^2_{y\xi}\phi)|^{-1/2}
\end{equation}
where $a_0$ is the leading term in the amplitude corresponding to a given microlocal expression \ref{fioint} for the operator.\\

We are now in a position to state the main theorems in this section.

\begin{thm}
We can choose a semiclassical Fourier integral operator $T_{1h}:\mathcal{C}^\infty (\mathbb{T}^n,\Omega^{1/2}\otimes \mathbb{L})\rightarrow \mathcal{C}_0^\infty(M,\Omega^{1/2})$ with principal symbol equal to $1$ in a neighbourhood of the pullback of the the invariant tori of $\tilde{H}=H\circ\chi_1(\phi,I)$ under the canonical projection such that
\begin{enumerate}
\item $Q_h=T_{1h}^*T_{1h}$ is a semiclassical pseudodifferential operator in $\mathcal{C}^\infty(\mathbb{T}^n,\mathbb{L})$;
\item $Q_h$ has vanishing sub-principal symbol;
\item $P_h^1=T_{1h}^{*}P_hT_{1h}$ is a semiclassical pseudodifferential operator in $\mathcal{C}^\infty(\mathbb{T}^n,\mathbb{L})$;
\item $T_{1h}$ is microlocally invertible in a neighbourhood of the pullback of the union of invariant tori $\Lambda\subset T^*M$ by $\chi_1$, with $P_h^1=T_{1h}^{-1}P_hT_{1h}+h^2R_h$ where $R_h$ is a semiclassical pseudodifferential operator in $\mathcal{C}^\infty(\mathbb{T}^n,\mathbb{L})$;
\item The principal symbol of $P_h^1$ is equal to $H\circ\chi_1$ and its subprincipal symbol vanishes;
\end{enumerate}
and all of the involved pseudodifferential operators have symbols in $S_\ell(\mathbb{T}^n\times D)$.
\end{thm}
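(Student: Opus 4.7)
The plan is to construct $T_{1h}$ as a semiclassical Fourier integral operator quantising the symplectomorphism $\chi_1$, following the general construction from Section \ref{fioconj} but with the amplitude $\tilde{a}$ chosen carefully to enforce properties (i)--(v). Microlocally near a point of $C_1'$, we parametrise by the phase $\Psi(x,y,\xi)=x\cdot\xi-\phi(y,\xi)$ as described in Section \ref{fioconj} and set
\begin{equation}
T_{1h}u(x;h):=(2\pi h)^{-n}\int\int e^{i\Psi(x,y,\xi)/h}\tilde{a}(x,y,\xi;h)\tilde{u}(y)\, dy\, d\xi
\end{equation}
with $\tilde{a}$ a realisation of a formal Gevrey symbol $\sum_{j\geq 0}a_j(x,y,\xi)h^j\in S_\ell(U_0\times U_1\times U_2)$. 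Using a Gevrey partition of unity subordinate to a finite cover of a neighbourhood of $\chi_1^{-1}(\Lambda)$ in $\T^n\times D$ (which exists because $\rho>1$), we glue these local pieces into a globally defined operator acting $\mathcal{C}^\infty(\T^n,\Omega^{1/2}\otimes\mathbb{L})\to\mathcal{C}^\infty_0(M,\Omega^{1/2})$, and extend trivially outside. The leading coefficient $a_0$ is first chosen so that the invariantly defined principal symbol $\sigma(T_{1h})$ on $C_1'$ equals $1$ in a neighbourhood of the lift of $\chi_1^{-1}(\Lambda)$; the factor $|\det(\partial^2_{y\xi}\phi)|^{-1/2}$ in the identification of principal symbols from Section \ref{fioconj} determines $a_0$ uniquely there.

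Next I would verify (i) and (iii) using the Gevrey analogue of the FIO composition calculus. Because $C_1'$ is the flipped graph of the symplectomorphism $\chi_1$, the composition of canonical relations $C_1'\circ(C_1')^t$ is the identity relation on $T^*\T^n$, and stationary phase in the Gevrey category (which applies in view of our choice $\ell=(\sigma,\sigma,2\sigma-1)$ and Definition \ref{gevpseudo}) shows that $Q_h=T_{1h}^*T_{1h}$ is a semiclassical pseudodifferential operator on $\T^n$ with symbol in $S_\ell(\T^n\times D)$. Its principal symbol is $|\sigma(T_{1h})|^2$, hence identically $1$ near $\chi_1^{-1}(\Lambda)$. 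By Egorov's theorem for semiclassical FIOs, $P_h^1:=T_{1h}^*P_hT_{1h}$ is likewise a pseudodifferential operator in the same symbol class, with principal symbol $\sigma(P_h)\circ\chi_1=H\circ\chi_1=\tilde{H}$ wherever $|\sigma(T_{1h})|^2=1$.

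To arrange the vanishing of the subprincipal symbol of $Q_h$ and of $P_h^1$ in (ii) and (v), I would proceed iteratively in $h$. The subprincipal symbol of $Q_h$ is a first-order differential expression in $a_0$ and a multiplication expression in $a_1$ (the standard transport/homological equation coming from the next term in the stationary phase expansion); choosing $a_1$ to cancel it is a linear equation that can be solved in the Gevrey class. Once this is done, the subprincipal symbol of $P_h^1$ becomes a combination of the subprincipal symbol of $P_h$, a Poisson-bracket term $\{H\circ\chi_1,\log|\sigma(T_{1h})|\}$, and contributions from $a_1$; since the principal symbol of $T_{1h}$ is locally constant equal to $1$ near the set of interest, these bracket terms vanish there, and after a further adjustment of $a_1$ supported away from that neighbourhood (using Borel resummation, Proposition \ref{borelprop}, to combine local choices into a global realisation in $S_\ell$) the subprincipal symbol of $P_h^1$ vanishes identically. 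Property (iv), the microlocal invertibility near $\chi_1^{-1}(\Lambda)$, then follows from ellipticity of $\sigma(T_{1h})$ in the Gevrey category: Proposition \ref{ellipticinverse} adapted to the Gevrey symbol class yields a microlocal parametrix $S_h$ with $S_hT_{1h}-\mathrm{Id}\in S_\ell^{-\infty}$ near $\chi_1^{-1}(\Lambda)$, and the identity $P_h^1=T_{1h}^{-1}P_hT_{1h}+h^2R_h$ follows by comparing $T_{1h}^*$ to $T_{1h}^{-1}$ using $Q_h^{-1}$, the resulting discrepancy being of order $h^2$ because the subprincipal symbol of $Q_h$ has been arranged to vanish.

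The main obstacle I anticipate is the control of the Gevrey norms at every step: solving the transport equations for $a_1$ requires integration along characteristics of the Hamiltonian vector field of $\sigma(P_h)$ and division by nonvanishing elliptic factors, and one must check that these operations stay inside $S_\ell$ with the correct constants, using the composition closure of Gevrey symbols and Proposition \ref{flatissmall} to absorb residual errors in the exponentially small remainder class $S_\ell^{-\infty}$. The same Gevrey bookkeeping must be carried through the stationary phase and Borel resummation steps, but these are exactly the points for which the calculus of Section \ref{gevsymbsec} and the techniques from \cite{popovquasis} were designed, so the argument reduces to a careful application of tools already developed in the thesis.
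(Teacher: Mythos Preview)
The paper does not give its own proof of this theorem: immediately after stating it (together with Theorem \ref{fioconjthm}) it simply writes ``Proof of these two results can be found in \cite{popov2} (Section 1).'' So there is no in-house argument to compare against; your sketch is essentially an outline of the standard construction that Popov carries out, and at that level it is correct.

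One point of emphasis in your sketch is slightly off. You describe the vanishing of the subprincipal symbol of $P_h^1$ as something to be \emph{arranged} by adjusting $a_1$ via transport equations. In fact, once you have fixed the principal symbol of $T_{1h}$ equal to $1$ near the tori and are working on half-densities (as the thesis does, precisely so that subprincipal symbols are invariantly defined), the vanishing of $\mathrm{sub}(P_h^1)$ there is automatic: the Egorov-type transformation law for subprincipal symbols under FIO conjugation gives $\mathrm{sub}(P_h^1)=\mathrm{sub}(P_h)\circ\chi_1+\{\text{term involving }\log|\sigma(T_{1h})|\}$, and both contributions vanish because $P_h=h^2\Delta_g+V$ has vanishing subprincipal symbol on half-densities and $\sigma(T_{1h})\equiv 1$ near the relevant set. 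The transport-equation step is really only needed for $\mathrm{sub}(Q_h)=0$, where you are free to choose $a_1$ to kill the order-$h$ term in $T_{1h}^*T_{1h}$. This is a minor reorganisation rather than a gap, but it clarifies why the half-density setup is doing real work here and not just bookkeeping.
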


\begin{thm}
\label{fioconjthm}
Suppose $T_{2h}(t):L^2(\mathbb{T}^n,\mathbb{L})\rightarrow L^2(\mathbb{T}^n,\mathbb{L})$ is a semiclassical Fourier integral operator associated to the canonical relation graph of the Birkhoff normal form transformation $\chi(t):(y,I)\mapsto (x,\xi)$ with kernel given by
\begin{equation}
(2\pi h)^{-n}\int e^{i((x-y)\cdot I +\phi(x,I;t))/h}b(x,I;h)\, dI
\end{equation}
where
\begin{enumerate}
\item $\phi(x,I;t)=\Phi(x,I;t)-\langle x, I\rangle$;
\item $\Phi\in G^{1,s,s}(\mathbb{T}^n\times D\times (-1,1))$ is a generating function of the Birkhoff normal form canonical transformation;
\item $b\in S_{\tilde{\ell}}(\mathbb{T}^n\times D)$ where $\tilde{\ell}=(\sigma,\mu,\sigma+\mu-1)$ and $\mu>s=\tau'+1 >\sigma>1$;
\item The principal symbol is equal to $1$ in a neigbourhood of $\mathbb{T}^n\times D$.
\end{enumerate}
Then $T_{2h}(t)^*T_{1h}^*P_hT_{1h}T_{2h}(t)$ is a family of semiclassical pseudodifferential operator with symbol in $S_{\tilde{\ell}}$ smoothly varying in the parameter $t$. The principal symbol of $\tilde{P}(t)$ is $\tilde{H}(t)=H(t)\circ\chi(t)$ and its subprincipal symbol vanishes.

Moreover, the conjugating operator $T_h:=T_{1h}T_{2h}(t)$ is microlocally invertible in a neigbourhood of the pullback of the family $\Lambda$ of invariant tori by $\chi_1\circ\chi(t):\T^n\times D \rightarrow T^*M$, and we have
\begin{equation}
T_h^*=T_h^{-1}+O(h^2)
\end{equation}
for this microlocal inverse.
\end{thm}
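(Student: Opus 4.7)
The plan is to build on the previous theorem on $T_{1h}$, so that $P_h^1 = T_{1h}^* P_h T_{1h}$ is already reduced to a semiclassical pseudodifferential operator in $\Psi(\T^n;\mathbb{L})$ with Gevrey symbol in $S_\ell(\T^n\times D)$, principal symbol $H(t)\circ\chi_1$, and vanishing subprincipal symbol. The goal is then to perform a second conjugation, this time by $T_{2h}(t)$, and to verify that the composition $T_h = T_{1h}T_{2h}(t)$ has all the desired properties. First, I would compute $T_{2h}(t)^* T_{2h}(t)$: because $T_{2h}(t)$ has principal symbol identically $1$ near $\T^n\times D$ and the phase function $\Phi(x,I;t)-\langle x,I\rangle$ generates an exact symplectomorphism, the stationary phase principle (applied in the Gevrey category as in Proposition 3.5 of \cite{popovquasis}) gives $T_{2h}(t)^* T_{2h}(t) = \textrm{Id} + h \cdot \Psi_\ell^{-\infty}$ microlocally near $\T^n\times D$, with the remainder in the Gevrey class $S_{\tilde\ell}$. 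This is the analogue of the vanishing subprincipal symbol assertion for $T_{1h}$.

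Next, I would invoke the semiclassical Egorov theorem for h-FIOs associated to exact symplectomorphisms (see Chapter 11 of \cite{zworski} for the smooth theory, with the Gevrey refinement built in \cite{popovquasis}): conjugation of a semiclassical pseudodifferential operator with symbol in $S_{\tilde\ell}$ by a semiclassical h-FIO with Gevrey amplitude yields a pseudodifferential operator in the same Gevrey symbol class, whose principal symbol is the pullback by the underlying symplectomorphism and whose subprincipal symbol picks up a term involving the Poisson bracket of the original principal symbol with $\log$ of the amplitude's principal symbol. Here both $T_{1h}$ and $T_{2h}(t)$ have principal symbol identically $1$ on a neighbourhood of the relevant submanifolds, so the Poisson bracket contribution vanishes. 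Combined with the prior theorem's assertion that $P_h^1$ already has vanishing subprincipal symbol, this forces the subprincipal symbol of $\tilde P_h(t)=T_{2h}(t)^*P_h^1T_{2h}(t)$ to also vanish. The principal symbol computation is immediate: it is $(H(t)\circ\chi_1)\circ\chi(t) = H(t)\circ(\chi_1\circ\chi(t))$, which matches the claim.

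For the smooth dependence on $t$, I would observe that since $\Phi\in G^{1,s,s}(\T^n\times D\times(-1,1))$, the amplitude of $T_{2h}(t)$ and all associated stationary-phase expansions depend in a Gevrey (in particular $C^\infty$) manner on $t$; the Gevrey symbol class is closed under such parameter dependence, and every estimate in the Egorov and composition arguments is uniform on compact subsets of $(-1,1)$. The claim that $T_h=T_{1h}T_{2h}(t)$ is microlocally invertible near the pullback of $\Lambda$ by $\chi_1\circ\chi(t)$ follows by composing the microlocal inverses: $T_{1h}$ is invertible near $\chi_1^{-1}(\Lambda)$ by the prior theorem, and $T_{2h}(t)$ is invertible near $(\chi_1\circ\chi(t))^{-1}(\Lambda)=\chi(t)^{-1}(\chi_1^{-1}(\Lambda))$ because its principal symbol is $1$ there and $\chi(t)$ is a diffeomorphism carrying the latter set to the former.

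Finally, the relation $T_h^* = T_h^{-1} + O(h^2)$ microlocally near the pullback of $\Lambda$ is the payoff of the vanishing subprincipal symbols. From $T_{2h}(t)^* T_{2h}(t) = \textrm{Id} + O(h^2)$ and the corresponding statement $T_{1h}^* T_{1h} = Q_h$ (with $Q_h$ having principal symbol $1$ and vanishing subprincipal symbol near the pullback, so $Q_h = \textrm{Id} + O(h^2)$ there), one has $T_h^* T_h = T_{2h}(t)^* Q_h T_{2h}(t) = T_{2h}(t)^* T_{2h}(t) + O(h^2) = \textrm{Id} + O(h^2)$ microlocally near the pullback of $\Lambda$. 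I expect the main technical obstacle to be the careful tracking of the Gevrey symbol class $S_{\tilde\ell}$ through the composition; both the stationary phase expansion for $T_{2h}(t)^*T_{2h}(t)$ and the Egorov conjugation involve formal Borel resummations whose Gevrey order is dictated by $(\sigma,\mu,\sigma+\mu-1)$, and one must verify that the slight loss incurred by passing through $T_{2h}(t)$ (whose symbol lives in a Gevrey class with different weights than $T_{1h}$) is absorbed by the choice of $\tilde\ell$ relative to $\ell$, using the hypothesis $\mu>s=\tau'+1>\sigma>1$.
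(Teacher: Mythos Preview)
The paper does not actually give a proof of this theorem: immediately after the statement it simply says ``Proof of these two results can be found in \cite{popov2} (Section 1).'' So there is no in-paper argument to compare your proposal against. Your sketch --- reduce to $P_h^1$ via the prior theorem, then apply a Gevrey Egorov theorem to the conjugation by $T_{2h}(t)$, with the vanishing-subprincipal-symbol claim following from the principal symbols of both FIOs being identically $1$ --- is the standard line of argument and is consistent with what Popov does in the cited reference; your identification of the main technical point (tracking the Gevrey index $\tilde\ell$ through stationary phase and composition) is also correct.
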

Proof of these two results can be found in \cite{popov2} (Section 1). In light of Theorem \ref{fioconjthm}, we make the following definition.

\begin{equation}
\label{tildepdef}
\tilde{P}_h(t)=T_h^{-1}(t)P_h(t)T_h(t).
\end{equation}
\section{Conjugation by an elliptic h-PDO}
From Section \ref{fioconj}, we have conjugated $P_h(t)$ to a family of self-adjoint semiclassical operators $\tilde{P}_h(t)$ with symbol $\tilde{p}\in S_{\tilde{\ell}}(\mathbb{T}^n\times D)$ satisfying the flatness condition \eqref{qbnfflatness} to order $h^2$, where $\tilde{\ell}=(\rho,\rho',\rho+\rho'-1)$. That is to say, the formal summation of $\tilde{p}$
\begin{equation}
\sum_{j=0}^\infty \tilde{p}_j(\theta,I;t)h^j
\end{equation}
satisfies
\begin{equation}
\tilde{p}_0(\theta,I;t)=K_0(I;t)+R_0(\theta,I;t)
\end{equation}
and
\begin{equation}
\tilde{p}_1(\theta,I;t)=0.
\end{equation}




The next step of the proof of Theorem \ref{main2} is the improvement of the order of the flatness condition by composition with a suitable elliptic semiclassical pseudodifferential operator $A_h(t)=Id+O(h)$ with symbol
\begin{equation}
a(t)=\sum_{j=1}^\infty a_j(t).
\end{equation}
To motivate the method, we suppose that a quantum Birkhoff normal form $P_h^0$ exists in the sense of Theorem \ref{main2}. Our current operator $\tilde{P}_h$ is equal to $P_h^0$ up to order $h^2$ by construction. Hence, we have
\begin{eqnarray}
T_h(t)A_h(t)\tilde{P}_h(t)&=& T_h(t)\tilde{P}_h(t)A_h(t)+ T_h(t)[A_h(t),\tilde{P}_h(t)]\\
&=& P^1_h(t)T_h(t)A_h(t)+h^2T(t)B(t)A(t)+T_h(t)[A_h(t),\tilde{P}_h(t)].
\end{eqnarray}
for some semiclassical pseudodifferential operator $B_h(t)$ in the symbol class $S_{\tilde{\ell}}(\mathbb{T}^n\times D)$.
From \eqref{compform}, the symbol of the commutator is equal to
\begin{equation}
-(\partial_\theta^\alpha a_1 \partial_I^\alpha \tilde{p}_0)h^2=-\mathcal{L}_{\omega{I;t}}a_1
\end{equation}
where $\mathcal{L}_\omega=\langle \omega,\partial_\theta\rangle a_1(\theta,I;t)$. Thus to improve the order of the flatness condition, it suffices to choose $a_1$ solving the homological equation
\begin{equation}
\label{homeqex}
\mathcal{L}_{\omega(I;t)}a_1=b_0
\end{equation}
where $b_0$ denotes the principal symbol of $B_h(t)$. Indeed, if \eqref{homeqex} is solvable, then we have
\begin{equation}
T_h(t)A_h(t)P_h(t)=P_h^0(t)T_h(t)A_h(t)+O(h^3).
\end{equation}

Extending this idea, it is shown by Popov \cite{popov2} that we can choose higher order terms of the symbol $a$ in an iterative fashion by the solution of such a homological equation for each power of $h$ that we gain. The consequence is the following theorem.
\begin{thm}
\label{symbolconst}
There exists $a,K^0,r\in S_\ell(\mathbb{T}^n\times D)$ where $\ell=(\rho,\mu,\nu)$ such that
\begin{equation}
\label{aexpansion}
a(\theta,I;t,h)\sim \sum_{j=0}^\infty a_j(\theta,I;t)h^j
\end{equation}
\begin{equation}
\label{Kexpansion}
K^0(I;t,h)\sim \sum_{j=0}^\infty K_j(I;t)h^j
\end{equation}
and
\begin{equation}
r(\theta,I;t,h)\sim \sum_{j=0}^\infty r_j(\theta,I;t)h^j
\end{equation}
where $a_0=1,r_0=R_0,K_1=0,$ and 
\begin{equation}
\tilde{p}\circ a-a\circ K^0\sim r.
\end{equation}
where each $r_j(\theta,I;t)$ is flat in $I$ on $\T^n\times E_\kappa(t).$
\end{thm}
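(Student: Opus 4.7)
The plan is to determine the formal coefficients $a_j$, $K_j$, $r_j$ recursively in $j$ using the Gevrey symbol composition formula from \eqref{compform}. First I would expand both $\tilde{p}\circ a$ and $a\circ K^0$ as formal series in $h$ and equate coefficients of $h^j$ in the required relation $\tilde{p}\circ a - a\circ K^0 \sim r$. At order $h^0$, using $\tilde{p}_0 = K_0 + R_0$, $a_0 = 1$, and $r_0 = R_0$, the equation is satisfied tautologically. At order $h^j$ with $j\geq 1$, the new unknowns $a_j$ and $K_j$ appear linearly, and the dominant cross-contribution is the Poisson bracket term $\{\tilde{p}_0, a_{j-1}\}/i$. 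After rearrangement, the equation takes the homological form $\mathcal{L}_{\omega(I;t)} a_j + (K_j - G_j) = r_j - R_0 a_j$, where $\mathcal{L}_\omega = \langle\omega,\partial_\theta\rangle$ and $G_j$ depends only on previously constructed data $a_0,\ldots,a_{j-1}$ and $K_0,\ldots,K_{j-1}$.

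Next I would solve this equation on the Cantor-like set $\T^n \times E_\kappa(t)$. Since $R_0$ is flat in $I$ on $E_\kappa(t)$ by Theorem \ref{main1}, the term $R_0 a_j$ is absorbed into the flat remainder $r_j$. Choosing $K_j(I;t)$ as the $\theta$-average of $G_j$ makes the right-hand side of $\mathcal{L}_{\omega(I;t)} a_j = G_j - K_j$ have zero $\theta$-mean, so a Fourier expansion on the torus yields
\begin{equation}
a_j(\theta,I;t) = \sum_{k\in\Z^n\setminus\{0\}} \frac{\hat{g}_{j,k}(I;t)}{i\langle k,\omega(I;t)\rangle}\, e^{i\langle k,\theta\rangle}
\end{equation}
for $I\in E_\kappa(t)$. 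The Diophantine lower bound $|\langle k,\omega\rangle| \geq \kappa|k|^{-\tau}$ on $\tilde{\Omega}_\kappa$ controls the small divisors and yields the required Gevrey bounds. I would then extend $a_j$ and $K_j$ from $E_\kappa(t)$ to all of $D$ via the Gevrey Whitney extension theorem (Theorem \ref{whitneythm}); defining $r_j$ as the corresponding $h^j$-coefficient of the residual $\tilde{p}\circ a - a\circ K^0$ then satisfies the flatness condition \emph{automatically}, since the homological equation holds exactly on $\T^n\times E_\kappa(t)$ and the Whitney extension is constructed so that all $I$-derivatives agree on $E_\kappa(t)$.

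Finally, the formal sums \eqref{aexpansion}, \eqref{Kexpansion} and the analogous series for $r$ are realised as genuine elements of $S_\ell(\T^n\times D)$ by the Gevrey Borel resummation procedure of Definition \ref{formalsymboldefn}, using that $S_\ell$ is closed under this operation with the index $\nu = \rho(\tau+n+1)$. The main obstacle will be the uniform control of the Gevrey constants across all iterations: each solution of the homological equation costs a factor $\kappa^{-1}|k|^\tau$ in Fourier space, and $I$-differentiation through the frequency map $I\mapsto \omega(I;t)$ produces additional derivative loss of type $L_1^{\tau+1}/\kappa$ inherited from \eqref{main1est}. Tracking these losses and showing that they are absorbed into the $j!^\nu$-growth allowed by $S_\ell$ is precisely the reason for the particular choice of $\nu$ in the hypothesis, and it requires careful bookkeeping in the spirit of the KAM iteration of Chapter \ref{chp4}. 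This is the construction carried out by Popov \cite{popovquasis} in the parameter-free setting; the argument adapts to our one-parameter family $P_h(t)$ without essential change because the smoothness of $\omega(I;t)$, $K_0(I;t)$, and $R_0(\theta,I;t)$ in $t$ established in Theorem \ref{main1} persists through each step of the iteration.
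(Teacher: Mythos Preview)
Your overall scheme---expand $\tilde p\circ a-a\circ K^0$ via the composition formula, solve a homological equation at each order, and realise the resulting formal series by Gevrey Borel summation---is the same strategy as the paper's. Two points deserve comment.

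First, a minor index slip. At the $h^j$ level the Poisson-bracket term from \eqref{compform} is $\mathcal{L}_\omega a_{j-1}$ (as you correctly write), so the homological equation at stage $j$ determines $a_{j-1}$ together with $K_j$, not $a_j$. The term involving the as-yet-undetermined $a_j$ is $R_0 a_j$, which is flat on $E_\kappa(t)$ and passes directly into $r_j$; this is why the paper's recursion \eqref{rjayexp} has $a_{j-1}$ on the left and only $a_s$ with $s\leq j-2$ in the $F_{j1},F_{j2}$ terms. Your equation ``$\mathcal{L}_{\omega}a_j+(K_j-G_j)=r_j-R_0a_j$'' conflates the two indices.

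Second, and more substantively, you propose to solve the homological equation on the Cantor set $E_\kappa(t)$ and then Whitney-extend to $D$. The paper instead invokes Theorem~\ref{homeqthm}, which produces a solution defined directly on all of $\T^n\times D$ by \emph{regularising the small divisors}: the $k$-th Fourier coefficient is divided not by $i\langle k,\omega(I;t)\rangle$ but by
\[
i\langle k,\omega(I;t)\rangle+i\kappa|k|^{-\tau}\psi\bigl(|\langle k,\omega(I;t)\rangle|\,|k|^{\tau}\bigr)
\]
for a Gevrey cutoff $\psi$. On $E_\kappa(t)$ the Diophantine condition forces $\psi=0$, so the true equation is solved there; off $E_\kappa(t)$ the extra imaginary term keeps the denominator uniformly $\gtrsim\kappa|k|^{-\tau}$, so the Fourier series converges in $G^{\rho,\mu}(\T^n\times D)$ with explicit constants. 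This buys a cleaner inductive estimate (the Fa\`a di Bruno bookkeeping for $\partial_I^\beta(g_k^{-1})$ is done once in Theorem~\ref{homeqthm}) and avoids having to assemble an $I$-jet on the Cantor set before extending. Your Whitney route would work in principle, but you would have to verify the jet conditions \eqref{gevcond2} for the formal $I$-derivatives of $\hat g_{j,k}(I;t)/\langle k,\omega(I;t)\rangle$ on $E_\kappa(t)$, which amounts to redoing the same derivative estimates the regularised approach handles automatically.
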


The symbol $K^0$ in the statement of theorem corresponds to the sought symbol $K^0$ in Theorem \ref{main2}, while the symbol $R^0$ is then constructed by solving $a\circ R^0=r$, which is possible by ellipticity.

We shall detail the technicalities of the solution of the homological equation in Gevrey classes in Section \ref{homeqsec}, and consequently outline the full construction of the full symbol of $a$. For now, we are in a position to explain how Theorem \ref{main2} follows from Theorem \ref{symbolconst} and Theorem \ref{fioconjthm}.

\begin{proof}[Proof of Theorem \ref{main2}]
From the definition \eqref{tildepdef} of $\tilde{P}_h(t)$, it follows that

\begin{eqnarray}
P_h(t)T_h(t)A_h(t)&=&T_h(t)T_h^{-1}(t)P_h(t)T_h(t)A_h(t)\\
&=& T_h(t)\tilde{P}_h(t)A_h(t)\\
&=& T_h(t)A_h(t)(K_h^0(t)+R_h^0(t))+O(h^\infty).
\end{eqnarray}

Thus, for
\begin{equation}
V_h(t):=T_h(t)A_h(t),
\end{equation}
$V_h(t)$ satisfies all of the desired properties of $U_h(t)$ in Theorem \ref{main2}, besides the condition of being microlocally unitary.\\

This $V_h$ will not in general be unitary, so we define the semiclassical pseudodifferential operator $W_h=V_h^*V_h$ with formal symbol
\begin{equation}
\sum_{j=0}^\infty w_j(\theta,I)h^j\in FS_\ell(\mathbb{T}^n\times D).
\end{equation}

Then $w_0=1$ and we have
\begin{lem}
\label{lemblah}
For each $j$, $p_j^0(I)$ is real valued on $E_\kappa(t)$ and $w_j(\theta,I)$ is $\theta$-independent for $I\in E_\kappa(t)$.
\end{lem}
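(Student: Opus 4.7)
The plan is joint induction on $j$, deriving both assertions from the single identity $W_h P_h^0 = (P_h^0)^* W_h + O(h^\infty)$, which is an immediate consequence of the self-adjointness of $V_h^* P_h V_h = W_h P_h^0 + O(h^\infty)$. I rewrite this as
\[
    [W_h, P_h^0] = \bigl((P_h^0)^* - P_h^0\bigr) W_h + O(h^\infty)
\]
and compare full Weyl symbols order by order in $h$. The base case $j = 0$ is immediate, since $w_0 \equiv 1$ and $p_0^0 = K_0 + R_0$ is the real Hamiltonian of Theorem \ref{main1}. As a technical preliminary, at each inductive stage I fix a real Whitney extension of the newly constructed $K_j$ to all of $D$ (possible because its jet on $E_\kappa(t)$ is inductively seen to be real); combined with the flatness of $R_j$ on $\T^n \times E_\kappa(t)$, this forces $\text{Im}(p_k^0)$ and all of its $(\theta, I)$-derivatives to vanish on $\T^n \times E_\kappa(t)$ for every index $k$ for which reality of $p_k^0$ has already been established.

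Assume now that $w_k$ is $\theta$-independent on $E_\kappa(t)$ and $p_k^0$ is real on $E_\kappa(t)$ for every $k < j$. I extract the $h^j$ coefficient of the displayed identity and restrict to $\T^n \times E_\kappa(t)$. Almost every Moyal correction collapses there: any Moyal bracket of order $n \geq 1$ applied to $(w_k, p_l^0)$ with $k, l < j$ applies at least one $\theta$-derivative to a symbol that is $\theta$-independent on $E_\kappa(t)$ (via the inductive hypothesis for $w_k$ and the decomposition $p_l^0 = K_l + R_l$ with $R_l$ flat for $p_l^0$), while every Moyal term built from $\text{Im}(p_k^0)$ with $k \leq j-1$ vanishes by the infinite-order flatness above. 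What survives on $\T^n \times E_\kappa(t)$ is the clean identity
\[
    i\,\omega(I)\cdot\partial_\theta w_{j-1}(\theta, I) = -2i\,\text{Im}(p_j^0)(\theta, I),
\]
and the inductive hypothesis $\partial_\theta w_{j-1} = 0$ on $E_\kappa(t)$ annihilates the left-hand side, forcing $\text{Im}(p_j^0) = 0$ on $E_\kappa(t)$; flatness of $R_j$ then gives the reality of $K_j$ on $E_\kappa(t)$.

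To obtain the $\theta$-independence of $w_j$, I extract the $h^{j+1}$ coefficient of the same identity. The identical collapse of Moyal corrections, now valid up through index $j$ by the step just completed, reduces the equation on $\T^n \times E_\kappa(t)$ to
\[
    i\,\omega(I)\cdot\partial_\theta w_j(\theta, I) = -2i\,\text{Im}(p_{j+1}^0)(\theta, I).
\]
The crucial structural input is that $p_{j+1}^0 = K_{j+1}(I)$ on $E_\kappa(t)$, so the right-hand side depends on $I$ alone; averaging both sides over $\theta \in \T^n$ kills the left-hand side and forces $\text{Im}(K_{j+1}) = 0$ on $E_\kappa(t)$. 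Back-substitution yields $\omega(I) \cdot \partial_\theta w_j = 0$ on $\T^n \times E_\kappa(t)$, and the Diophantine property of $\omega(I)$ for $I \in E_\kappa(t)$ makes $\omega(I) \cdot \partial_\theta$ injective on non-constant $\theta$-Fourier modes, so $w_j(\cdot, I)$ is constant on $\T^n$ for each $I \in E_\kappa(t)$, completing the induction. The main difficulty will be combinatorial rather than conceptual: verifying that every Moyal cross-term at orders $h^j$ and $h^{j+1}$ actually vanishes on $\T^n \times E_\kappa(t)$ requires organising the Moyal expansion so that the joint $(\theta, I)$-flatness of each $R_k$ produced by Theorem \ref{symbolconst} and the strict $\theta$-independence of each $K_k$ together exhaust every derivative pattern that appears.
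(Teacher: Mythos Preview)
The paper does not give its own proof of this lemma; it simply cites Popov \cite{popov2}. Your argument is correct and is essentially the natural one: exploit the self-adjointness of $V_h^* P_h V_h = W_h P_h^0 + O(h^\infty)$ together with $W_h^* = W_h$ to obtain $W_h P_h^0 = (P_h^0)^* W_h + O(h^\infty)$, pass to Weyl symbols, and peel off orders in $h$ inductively, using the Diophantine property of $\omega(I)$ for $I\in E_\kappa(t)$ to annihilate the non-constant Fourier modes of $w_j(\cdot,I)$. The collapse of the Moyal cross-terms on $\T^n\times E_\kappa(t)$ works exactly as you say, because every such term carries either a $\theta$-derivative of some $w_k$ with $k<j$ (flat there by induction and the density-point property of $E_\kappa(t)$) or a $\theta$-derivative of some $p_l^0$ (flat there since $K_l$ is $\theta$-independent and $R_l$ is flat).

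Two minor remarks. First, the Whitney-extension preliminary is harmless but unnecessary: once $\mathrm{Im}(p_k^0)$ vanishes on $\T^n\times E_\kappa(t)$, the density-point property already forces all $(\theta,I)$-derivatives to vanish there, so you never need to modify $K_k$ off $E_\kappa(t)$. Second, your scheme proves $\mathrm{Im}(K_{j+1})=0$ on $E_\kappa(t)$ twice (once as a byproduct of the $h^{j+1}$ extraction at stage $j$, and again at the $h^{j+1}$ extraction at stage $j+1$); this is redundant but not wrong, and you could streamline by running the induction solely through the $h^{j+1}$ coefficient.
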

A proof of this lemma is contained in \cite{popov2}.


If we now define $Q_h(t)=W_h^{-1/2}(t)$, Lemma \ref{lemblah} implies that its symbol $q\in S_\ell(\mathbb{T}^n\times D)$ satisfies
\begin{equation}
q(\theta,I;t,h)=q(0,I;t,h)
\end{equation}
to infinite order at $\mathbb{T}^n\times E_\kappa(t)$, and choosing $U_h(t)=V_h(t)\circ Q_h(t)$ completes the construction of a unitary operator satisfying the requirements in Theorem \ref{main2}.
\end{proof}

\section{Homological equations and the proof of Theorem \ref{symbolconst}}
\label{homeqsec}

In this section, we outline the proof of Theorem \ref{symbolconst}. The key ingredient is the following result which solves the homological equation \eqref{homeqn}.
\begin{thm}
\label{homeqthm}
Suppose $f(\cdot,\cdot;t)\in G^{\rho,\mu}(\mathbb{T}^n\times D)$ satisfies the estimate
\begin{equation}
\label{homeqhyp}
|\partial_\theta^\alpha\partial_I^\beta f(\theta,I;t)|\leq d_0 C^{|\alpha|+\mu|\beta|}\Gamma(\rho|\alpha|+\mu|\beta|+q)
\end{equation}
uniformly in the smooth parameter $t\in (-1,1)$ for some $q>0$ and some $C\geq 1$ and that for each $I\in D$, we have
\begin{equation}
\label{homeqavg}
\int_{\T^n}f(\theta,I;t)\, d\theta=0.
\end{equation}

Then for any smooth family $\omega(\cdot;t) \in G^{\rho'}_{L_0}(D,\Omega)$ there is a solution $u(\cdot,\cdot;t)\in G^{\rho,\mu}(\T^n\times D)$ to the equation
\begin{eqnarray}
\label{homeqn}
\mathcal{L}_\omega u(\theta,I;t)&=&f(\theta,I;t)\quad (\theta,I)\in \T^n\times E_\kappa(t)\\
u(0,I;t)&=&0\quad I\in D
\end{eqnarray}
where $\mathcal{L}_\omega=\langle \omega(I;t),\frac{\partial}{\partial \theta}\rangle $.
Moreover, $u$ is smooth in the parameter $t$ and satisfies the estimate
\begin{equation}
|\partial_\theta^\alpha \partial_I^\beta u(\theta,I;t)|\leq Ad_0C^{n+\tau+|\alpha|+\mu|\beta|+1 }\Gamma(\rho|\alpha|+\mu|\beta|+\rho(n+\tau+1)+q)
\end{equation}
where $A$ depends only on $n,\rho,\tau$ and $\mu$.
\end{thm}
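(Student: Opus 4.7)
The plan is to solve \eqref{homeqn} formally by Fourier series in $\theta$ on the Cantor-like set $\T^n\times E_\kappa(t)$, control the small divisors using the nonresonance condition inherited from $\omega(I;t)\in \tilde\Omega_\kappa$, and then extend the resulting jet to a genuine Gevrey function on $\T^n\times D$ via the anisotropic Whitney extension result already invoked in Section \ref{whitneysec} (Theorem \ref{whitneythm}).

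First I would expand $f(\theta,I;t)=\sum_{k\neq 0}\hat f_k(I;t)e^{i\langle k,\theta\rangle}$, where the $k=0$ mode is absent by \eqref{homeqavg}. Standard integration-by-parts using the $\theta$-derivative bound from \eqref{homeqhyp} (optimising the number of integrations by parts as a function of $|k|$, as in the passage between Gevrey decay and exponential-type Fourier decay) yields, for every $\beta$,
\begin{equation}
|\partial_I^\beta \hat f_k(I;t)| \;\leq\; A\,d_0\, C^{\mu|\beta|}\,\Gamma(\mu|\beta|+q)\,\exp\bigl(-c|k|^{1/\rho}\bigr)
\end{equation}
for constants $A,c$ depending only on $n,\rho$. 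On $E_\kappa(t)$ the Diophantine inequality $|\langle k,\omega(I;t)\rangle|\geq \kappa|k|^{-\tau}$ holds, so the formal solution
\begin{equation}
u(\theta,I;t)\;=\;\sum_{k\neq 0}\frac{\hat f_k(I;t)}{i\langle k,\omega(I;t)\rangle}\,e^{i\langle k,\theta\rangle}
\end{equation}
is unambiguously defined on $\T^n\times E_\kappa(t)$, with the divisor contributing a factor $|k|^\tau/\kappa$ that is harmless against the $\exp(-c|k|^{1/\rho})$ decay.

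The next, and technically heaviest, step is to differentiate term-by-term in $I$. Each $I$-derivative falling on $\langle k,\omega(I;t)\rangle^{-1}$ produces (via Faà di Bruno) a sum of products of derivatives $\partial_I^\gamma \omega$, which are controlled in $G^{\rho'}_{L_0}$ thanks to the hypothesis on $\omega$. Since $\rho'=\rho(\tau+1)+1\leq \mu$, these contributions get absorbed into the $\Gamma(\mu|\beta|+\cdots)$ factor, but each small divisor carries a new power $|k|^\tau$, so that after taking $|\beta|$ derivatives and $|\alpha|$ $\theta$-derivatives one picks up an extra $|k|^{|\alpha|+\tau(|\beta|+1)}$. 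Summing over $k\in\Z^n\setminus\{0\}$ against $\exp(-c|k|^{1/\rho})$ produces, by the standard bound $\sum_k |k|^N e^{-c|k|^{1/\rho}}\leq C^{N+1}\Gamma(\rho(N+n))$, exactly a shift of the $\Gamma$-argument by $\rho(n+\tau+1)$, matching the statement of the theorem. Smoothness in $t$ follows from differentiating under the sum, since $\omega(I;t)$ depends smoothly on $t$ and the estimates are uniform.

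Finally, the function $u$ is only defined on the Cantor-like set $\T^n\times E_\kappa(t)$, so to obtain a genuine element of $G^{\rho,\mu}(\T^n\times D)$ I would construct the jet $(\partial_\theta^\alpha\partial_I^\beta u)$ on this set (the $\theta$-derivatives of the Fourier series above automatically converge; the $I$-derivatives are defined by the formal Faà di Bruno expression on $E_\kappa(t)$) and verify the Whitney compatibility estimates in $I$ by a Taylor-remainder computation analogous to the one used to establish \eqref{impgevest2} for the KAM jet $\mathcal{H}$. Once these estimates are in place, Theorem \ref{whitneythm} (more precisely its anisotropic variant used in Section \ref{whitneysec}) produces the required Gevrey extension to $\T^n\times D$ with the stated bounds, and the normalisation $u(0,I;t)=0$ can be enforced by subtracting the $I$-dependent constant $u(0,I;t)$, which by the Fourier construction already vanishes on $E_\kappa(t)$ and hence can be absorbed into the extension. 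I expect the bookkeeping in the Faà di Bruno step, particularly extracting the sharp shift $\rho(n+\tau+1)$, to be the main technical obstacle.
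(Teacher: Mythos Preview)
Your outline is a viable alternative, but it differs from the paper's proof in one structural choice that is worth flagging. The paper does \emph{not} use Whitney extension at all. Instead of defining $u$ only on $\T^n\times E_\kappa(t)$ and then extending, it regularises the small divisor globally: with a cutoff $\psi\in G^{1+\delta}(\R)$ equal to $1$ on $[0,\kappa/4]$ and $0$ on $[\kappa/2,\infty)$ (where $\delta>0$ is chosen small so that $(2+\rho)\delta\le\mu-\rho(\tau+1)-1$), it sets
\[
u_k(I;t)=\Bigl[\langle\omega(I;t),k\rangle+i\kappa|k|^{-\tau}\psi\bigl(|\langle\omega(I;t),k\rangle|\,|k|^\tau\bigr)\Bigr]^{-1}\hat f_k(I;t).
\]
On $E_\kappa(t)$ the Diophantine condition forces $\psi=0$, so this agrees with your formula there; for all $I\in D$ the imaginary part keeps the modulus of the bracket bounded below by $\kappa|k|^{-\tau}/4$. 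Thus $u=\sum_{k\neq 0}u_ke^{i\langle k,\theta\rangle}$ is a genuine Gevrey function on $\T^n\times D$ from the outset. The derivative bounds then come from Fa\`a di Bruno applied to this regularised denominator combined with a \emph{non-optimised} integration-by-parts bound $|k^\beta\langle k\rangle_m\partial_I^\alpha\hat f_k|\le(n+1)d_0C^{|\beta|+\mu|\alpha|+m}\Gamma(\rho|\beta|+\mu|\alpha|+\rho m+q)$, where the number $m=m(j)$ of parts is chosen as an explicit function of how many $I$-derivatives fall on the divisor; this is what produces the precise shift $\rho(n+\tau+1)$.

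Your Whitney route faces one real obstacle that the paper's method sidesteps: the Cantor set $E_\kappa(t)=\omega^{-1}(\tilde\Omega_\kappa;t)$ depends on $t$, so Theorem~\ref{whitneythm} (stated for a fixed compact $K$) does not apply directly, and smoothness of the extension in $t$ would need a separate argument. The regularised-divisor construction handles $t$ for free because nothing in it is restricted to $E_\kappa(t)$. What your approach buys is conceptual transparency---it makes explicit that the equation is only being solved on the Cantor set---at the cost of the extension step and the $t$-uniformity issue.
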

\begin{proof}
We begin by taking the Fourier expansions of $f$ and $u$ in the $\theta$-variable. That is, we have
\begin{equation}
f(\theta,I;t)=\sum_{k\in \Z^n} e^{i\langle k,\theta\rangle} f_k(I;t)
\end{equation} 
and
\begin{equation}
\label{ufourierseries}
u(\theta,I;t)=\sum_{k\in \Z^n}e^{i\langle k, \theta\rangle} u_k(I;t).
\end{equation}
where $u_k$ is to be determined.

From \eqref{homeqavg}, we have that $f_0=0$. We set $u_0=0$ and assemble the solution $u(\theta,I;t)$ to the homological equation by choosing suitable Fourier coefficients.

We first fix $\delta>0$ so that 
\begin{equation}
\label{deltabound}
(2+\rho)\delta \leq \mu-\rho(\tau+1)-1
\end{equation}
and choose $\psi \in G^{1+\delta}(\R)$ such that $0\leq \psi \leq 1$ and
\begin{equation}
\psi(x)=\begin{cases} 0 &\mbox{if } x\geq \kappa/2 \\ 
1 & \mbox{if } x \leq \kappa/4 \end{cases}.
\end{equation}
We note that this is possible from the construction of Gevrey bump functions in \cite{gevbump}.

We then define 
\begin{equation}
u_k(I;t):=[\langle \omega(I;t),k\rangle+i\kappa |k|^{-\tau}\psi(|\langle\omega(I;t),k\rangle||k|^\tau)]^{-1}f_k(I;t)
\end{equation}
for $I\in D$ and nonzero $k$. The definition of $E_\kappa(t)=\omega^{-1}(\tilde{\Omega}_\kappa;t)$ immediately implies that
\begin{equation}
g_k(I;t)=\langle\omega(I;t),k\rangle\textrm{ for }I\in E_\kappa(t).\end{equation}
Moreover, since we either have
\begin{equation}
|\langle\omega(I;t),k\rangle||k|^\tau \geq \kappa/4
\end{equation}
or 
\begin{equation}
\kappa |k|^{-\tau}\psi(|\langle\omega(I;t),k\rangle||k|^\tau)=\kappa |k|^{-\tau}
\end{equation}
for any fixed $k\in \Z^n$ and $I\in D$, it follows that
\begin{equation}
\label{gfourierbound}
|\langle \omega(I;t),k\rangle+i\kappa |k|^{-\tau}\psi(|\langle\omega(I;t),k\rangle||k|^\tau)|\geq \kappa |k|^{-\tau}\geq \kappa |k|^{-\tau}/4.
\end{equation}
The rapid decay of Fourier coefficients together with the estimate \eqref{gfourierbound} imply that the Fourier series \eqref{ufourierseries} is convergent and that $u$ is smooth in $t$. It remains to estimate the derivatives of $u$.

Since $\omega(\cdot;t)\in G^{\rho'}=G^{\rho(\tau+1)+1}$, we have
\begin{equation}
|\partial_I^\alpha \langle \omega(I;t),k\rangle |\leq  C_1^{|\alpha|}|k|\alpha!(|\alpha|-1)!^{\rho(\tau+1)}
\end{equation}
for $\tilde{C}_1=L_0\max(c(n,\rho,\tau),\|\omega\|_{L_0})$.
We also have that the function $\psi_k(x)=\kappa |k|^{-\tau}\psi(x|k|^\tau)$ satisfies
\begin{equation}
|\partial_x^p \psi_k(x)|\leq \tilde{C}_2^{p+1}p!^{1+\delta}|k|^{\tau(p-1)}.
\end{equation}

Using the Faa di Bruno formula, we can now bound the derivatives of the second term in 
\begin{equation}
\langle \omega(I;t),k\rangle+i\kappa |k|^{-\tau}\psi(|\langle\omega(I;t),k\rangle||k|^\tau).
\end{equation}
Indeed, we obtain
\begin{eqnarray}
& &|\partial_I^\alpha(\kappa |k|^{-\tau}\psi(|\langle\omega(I;t),k\rangle||k|^\tau))|\\
&\leq & \sum_{p=1}^{|\alpha|}\sum \tilde{C}_1^{|\alpha|}\tilde{C}_2^{p+1}|k|^{\tau p+p-\tau}p!^\delta \alpha! \prod_{j=1}^p (|\beta_j|-1)!^{\rho(\tau+1)}\\
\end{eqnarray}
where the second summation is taken over all $p$-tuples $(\beta_1,\ldots,\beta_p)$ of nonzero multi-indices such that $\sum \beta_j=\alpha$.

From elementary combinatorics, we know that the number of $p$ element multi-indices of order $m$ is given by
\begin{equation}
\binom{m+p-1}{p-1}
\end{equation}
and so the number of such $p$-tuples is
\begin{equation}
\prod_{j=1}^n \binom{\alpha_j+p-1}{p-1}\leq 2^{|\alpha|}2^{n(p-1)}.
\end{equation}
Hence, by taking $C_1$ sufficiently large (dependent on $n,\rho,\tau,\|\omega\|$) and utilising the inequality
\begin{equation}
n^{|\alpha|}=\sum_{\alpha}\frac{|\alpha|!}{\alpha!}\geq \frac{|\alpha|!}{\alpha!}
\end{equation} 
we arrive at an estimate for $g_k$
\begin{equation}
\label{homeqngbound}
|\partial_I^\alpha g_k|\leq C_1^{|\alpha|}\alpha!^{1+\delta}\max_{1\leq p \leq |\alpha|}(|k|^{\tau \rho+\rho-\tau}(|\alpha|-p)^{\rho(\tau+1)})
\end{equation}
for nonzero $k\in \Z^n$ and nonzero multi-indices $\alpha$.

We can then apply the Faa di Bruna formula again together with the estimate \eqref{homeqngbound} to obtain 
\begin{eqnarray}
\label{ginvboundhom}
|\partial^\alpha (g_k(I;t))^{-1}| &\leq & \sum_{p=1}^{\alpha}\frac{\alpha!}{c^{p+1}}\sum \prod_{j=1}^p \frac{|\partial^{\beta_j}g(I;t)|}{\beta_j!}\\
&\leq & \sum_{p=1}^{|\alpha|} \frac{2^{|\alpha|+n(p-1)}}{c^{p+1}}\prod_{j=1}^p |\partial^{\beta_j}g|\\
& \leq & \tilde{C}_0^{|\alpha|+1}\alpha!^{1+\delta}\max_{1\leq j \leq |\alpha|}(|k|^{\tau j+j+\tau}(|\gamma|-j)!^{\rho(\tau+1)})
\end{eqnarray}
for sufficiently large $\tilde{C}_0$, dependent only on $n,\rho,\tau,\|\omega\|_{L_0}$ and our choice of cutoff function $\psi$.

We introduce the following quantity that simplifies the notation of the estimates to come 
\begin{equation}
\langle k\rangle_m:=1+\sum_{j=1}^n |k_j|^m
\end{equation}
for multi-indices $k$.

For $m=1$, we have $|k|\leq \langle k\rangle_1$, and for nonzero $k$ we additionally have $\langle k\rangle_1\leq 2|k|$. By splitting $\langle k\rangle_m$ into its $n+1$ summands and integrating \eqref{homeqhyp} by parts, the estimate $|k|^m\leq n^m\langle k \rangle_m$ yields
\begin{equation}
\label{ffourierboundhom}
|k^\beta\langle k\rangle_m \partial_I^\alpha f_k(I;t)|\leq (n+1)d_0C^{|\beta|+\mu|\alpha|+m}\Gamma(\rho|\beta+\mu|\alpha|+\rho m +q)
\end{equation}
for $m\in\N$ and any multi-indices $\alpha,\beta$. This estimate is uniform in $I\in D$.

We will need to apply \eqref{ffourierboundhom} multiple times with different choices of $m$, so we define
\begin{equation}
m(j)=\lfloor (\tau+\delta)j+\tau\rfloor+j+n+1
\end{equation}
for $j\in \N$. By choosing $\delta<1+\lfloor \tau\rfloor-\tau$, we have the following bound for $m(j)$.
\begin{equation}
\tau j + j + \tau +n+\delta<m(j)\leq (\tau+\delta+1)j+\tau+n+1.
\end{equation}
Setting $W(k):=\langle k\rangle_1^{-n-\delta}$, we proceed to bound
\begin{equation}
A_{k,\alpha,\beta}:=|\langle k\rangle_1^{n+\delta}k^\alpha \partial_I^\beta u_k(I;t)|
\end{equation}
using \eqref{ffourierboundhom}, \eqref{ginvboundhom}. This bound will be made $k$-independent, and so summation will complete the proof.

The Leibniz rule implies
\begin{eqnarray}
\nonumber & &A_{k,\alpha,\beta}\\
\nonumber &\leq & \langle k\rangle_1^{n+\delta}\sum_{\gamma \leq \beta}\binom{\beta}{\gamma}\tilde{C}_0^{|\gamma|+1}\gamma!^{1+\delta}\max_{0\leq j \leq |\gamma|}|k|^{\tau j +\tau +j}(|\gamma-j|)!^{\rho(\tau+1)}\langle k \rangle_{m(j)}^{-1}\left|k^\alpha \langle k \rangle_{m(j)} \partial_I^{\beta-\gamma}(f_k(I))\right|\\
\nonumber &\leq & A\sum_{\gamma \leq \beta}\binom{\beta}{\gamma}\tilde{C}_0^{|\gamma|+1}\gamma!^{1+\delta}\max_{0\leq j \leq |\gamma|}|k|^{\tau j +\tau +j+n+\delta}(|\gamma-j|)!^{\rho(\tau+1)}\langle k \rangle_{m(j)}^{-1}\left|k^\alpha \langle k \rangle_{m(j)} \partial_I^{\beta-\gamma}(f_k(I))\right|\\
\nonumber &\leq & A\sum_{\gamma \leq \beta}\binom{\beta}{\gamma}\tilde{C}_0^{|\gamma|+1}\gamma!^{1+\delta}\max_{0\leq j \leq |\gamma|}\left((|\gamma-j|)!^{\rho(\tau+1)}n^{m(j)}\right)\left|k^\alpha \langle k \rangle_{m(j)} \partial_I^{\beta-\gamma}(f_k(I))\right|\\
\nonumber &\leq & A\sum_{\gamma \leq \beta}\frac{\beta!}{(\beta-\gamma)!}\tilde{C}_0^{|\gamma|+1}\gamma!^{\delta}\max_{0\leq j \leq |\gamma|}\hat{C}_0^{|\gamma|+1}\left((|\gamma-j|)!^{\rho(\tau+1)}\right)\left|k^\alpha \langle k \rangle_{m(j)} \partial_I^{\beta-\gamma}(f_k(I))\right|\\
\nonumber &\leq & Ad_0 \sum_{\gamma \leq \beta}\frac{\beta!}{(\beta-\gamma)!}\hat{C}_0^{|\gamma|+1}\gamma!^\delta\max_{0\leq j \leq |\gamma|}C^{|\alpha|+\mu|\beta-\gamma|+m(j)}(|\gamma|-j)!^{\rho(\tau+1)}\Gamma(s)
\end{eqnarray}
where
\begin{equation}
s:=\rho|\alpha|+\mu|\beta-\gamma|+\rho m(j)+q\leq \rho |\alpha|+\mu |\beta-\gamma|+\rho(\tau+\delta+1)j+\rho (\tau+n+1)+1.
\end{equation}
and $A,\hat{C}_0$ are constants only dependent on $n,\rho,\tau,\delta$.

Stirling's formula implies that $\gamma!^\delta \leq \Gamma(\delta |\gamma|) $ and $(|\gamma|-j)!^{\rho(\tau+1)}\leq \Gamma (\rho(\tau+1)(|\gamma|-j))$. Since $s\geq 1+|\beta-\gamma|\geq 1$, we can estimate
\begin{eqnarray}
& &B_{j,\beta,\gamma}\\
&:= &\frac{\beta!}{(\beta-\gamma)!}\gamma!^\delta(|\gamma|-j)!^{\rho(\tau+1)}\Gamma(s)\\
&\leq & \frac{\beta!}{(\beta-\gamma)!}(C_2C_3)^{|\gamma|} \Gamma(s+\rho(\tau+1)(|\gamma|-j)+\delta|\gamma|)\\
&\leq & s(s+1)\ldots (s+|\gamma|-1)(C_2C_3)^{|\gamma|}\Gamma(s+\rho(\tau+1)(|\gamma|-j)+\delta|\gamma|)\\
&\leq & \label{preveqhom} (C_2C_3)^{|\gamma|}\Gamma(s+\rho(\tau+\delta+1)(|\gamma|-j)+(1+\delta)|\gamma|)
\end{eqnarray}
where the last inequality follows from replacing the first $|\gamma|$ factors in the previous line with larger factors that can be absorbed into the Gamma function.
From \eqref{deltabound}, we have $\mu > 2\delta+\rho(\tau+\delta+1)+1$. Together with \eqref{preveqhom}, we can further estimate
\begin{eqnarray}
B_{j,\beta,\gamma}&\leq & (C_2C_3)^{|\gamma|}\Gamma(\rho|\alpha|+\mu|\beta|+\rho(\tau+n+1)+1+q-\delta|\gamma|)\\
&\leq & \frac{(C_2C_3)^{|\gamma|}\Gamma(\rho|\alpha|+\mu|\beta|+\rho(\tau+n+1)+1+q)}{\Gamma(\delta|\gamma|)}
\end{eqnarray}
for nonzero $\gamma$.

Combining our estimates for $A_{k,\alpha,\beta}$ and $B_{j,\beta,\gamma}$, and redefining $A(n,\rho,\tau)$ appropriately, we obtain
\begin{eqnarray}
& &\nonumber A_{k,\alpha,\beta}\\
&\leq &\nonumber Ad_0\sum_{\gamma\leq \beta}\hat{C}_0^{|\gamma|+1}C^{|\alpha|+\mu|\beta-\gamma|+(\tau+\delta+1)|\gamma|+(\tau+n+1)}\max_j(B_{j,\beta,\gamma})\\
\nonumber&\leq & Ad_0C^{|\alpha|+\mu|\beta|+\tau+n+1}\Gamma(\rho|\alpha|+\mu|\beta|+\rho(\tau+n+1)+1+q))\left(1+\sum_{\gamma\neq 0} \frac{\hat{C}_0^{|\gamma|+1}(C_2C_3)^{|\gamma|}}{\Gamma(\delta|\gamma|)}\right)\\
&\leq & Ad_0C^{|\alpha|+\mu|\beta|+\tau+n+1}\Gamma(\rho|\alpha|+\mu|\beta|+\rho(\tau+n+1)+1+q)).
\end{eqnarray}

Hence, we can conclude that
\begin{eqnarray}
& &\nonumber|\partial_\theta^\alpha\partial_I^\beta u|\leq \sum_{k\in \Z^n} |k^\alpha \partial_I^\beta u_k(I)|\\
&\leq &\nonumber\left[\sum_{k\in \Z^n} \langle k\rangle_{n+\delta}^{-1}\right]\cdot Ad_0C^{|\alpha|+\mu|\beta|+\tau+n+1}\Gamma(\rho|\alpha|+\mu|\beta|+\rho(\tau+n+1)+1+q))\\
&=& Ad_0C^{|\alpha|+\mu|\beta|+\tau+n+1}\Gamma(\rho|\alpha|+\mu|\beta|+\rho(\tau+n+1)+1+q))
\end{eqnarray}
where $A$ depends only on $n,\rho,\tau$ as required.
\end{proof}

\begin{proof}[Completion of proof of Theorem \ref{symbolconst}]
To prove Theorem \ref{symbolconst}, we need to find $a,K^0\in S_\ell(\mathbb{T}^n\times D)$ such that $r=p\circ a - a\circ K^0$ is flat on $E_\kappa(t)$, where $\ell=(\rho,\mu,\nu)$ by continuing the iterative procedure outlined in the discussion preceding the theorem.

This can be done by using the composition formula \eqref{compform} and induction using Theorem \ref{homeqthm} to assemble the symbols $a,K^0$ in terms of their semiclassical expansions \eqref{aexpansion} and \eqref{Kexpansion}.

We begin by setting $a_0=0,K_1=0,r_0=R_0$ and recalling that $p_0(I;t)=K_0(I;t)+R_0(\theta,I;t)$ and $p_1=0$ from the construction of the semiclassical pseudodifferential operator $\tilde{P}_h$ in Theorem \ref{fioconjthm}.

The composition formula \eqref{compform} then implies that 
\begin{equation}
r_0(\theta,I;t)=R_0(\theta,I;t)
\end{equation}
and
\begin{equation}
r_1(\theta,I;t)=a_1(\theta,I;t)R_0(\theta,I;t)
\end{equation}
which are both flat in $I$ at $E_\kappa(t)$ since $R_0$ is.

For $j\geq 2$, the composition formula \eqref{compform} yields
\begin{equation}
\label{rjayexp}
r_j(\theta,I;t)=-i(\mathcal{L}_\omega a_{j-1})(\theta,I;t)+p_j(\theta,I;t)-K_j(I;t)+F_{j1}(\theta,I;t)-F_{j2}(\theta,I;t)
\end{equation}
where
\begin{equation}
F_{j1}(\theta,I;t)=\sum_{s=1}^{j-2}\sum_{r+|\gamma|=j-s}\frac{1}{\gamma !}\partial_I^\gamma p_r(\theta,I;t)\partial_\theta^{\gamma}a_s(\theta,I;t)
\end{equation}
and
\begin{equation}
F_{j2}(\theta,I;t)=\sum_{s=1}^{j-2} a_s(\theta,I;t)K_{j-s}^0(I;t).
\end{equation}

We solve the equations $r_j=0$ by using the expression \eqref{rjayexp} and Theorem \ref{homeqthm}.

Setting
\begin{equation}
K_j(I;t):=(2\pi)^{-n}\int_{\T^n}(p_j(\theta,I;t),+F_j(\theta,I;t))\, d\theta
\end{equation}
we can apply Theorem \ref{homeqthm} to obtain $a_{j-1}\in G^{\rho,\mu}(\T^n\times D)$
such that
\begin{equation}
-i\mathcal{L}_\omega a_{j-1}(\theta,I;t)=K_j(I;t)-p_j(\theta,I;t)-F_{j1}(\theta,I;t)+F_{j2}(\theta,I;t)
\end{equation}
on $\T^n\times E_\kappa(t)$ and
\begin{equation}
\int_{\T^n}a_{j-1}(\theta,I;t)\, d\theta=0
\end{equation}
on $D$ for each $t\in (-1,1)$.

Since our estimates are uniform in $t$, the argument in Section 2.3 of \cite{popovquasis} shows that for each $t\in (-1,1)$, the estimate \eqref{homeqhyp} is inductively satisfied for each application of Theorem \ref{homeqthm}. This allows us to complete the construction of the symbols of $a,K^0$ and $r$ as required.
\end{proof}

\section{Quasimode construction}
\label{gevquasisec}
Having completed the proof of Theorem \ref{main2}, we now complete Popov's construction of a family of Gevrey quasimodes for $P_h(t)$ that are semiclassically supported on a family of nonresonant tori. Moreover, these quasimodes are smooth in the parameter $t\in (-1,1)$.

\begin{defn}
\label{gevmodes}
A $G^\rho$ family of quasimodes $\mathcal{Q}(t)$ for $P_h(t)$ is a family
\begin{equation}
\{(u_m(x;t,h),\lambda_m(t,h)):m\in\mathcal{M}_h(t)\} \subset \mathcal{C}^\infty(M\times \mathcal{D}_h(m))\times \mathcal{C}^\infty(\mathcal{D}_h(m))
\end{equation}
parametrised by $h\in (0,h_0]$ where
\begin{itemize}
\item $\mathcal{M}_h(t)\subset \Z^n$ is a $h$-dependent finite index set;
\item $\mathcal{D}_h(m)=\{t\in (0,1):m\in \mathcal{M}_h(t)\}$
\item each $u(\cdot;t,h)$ is uniformly of class $G^\rho$;
\item $\|P_h(t)u_m(\cdot;t,h)-\lambda_m(t;h)u_m(\cdot;t,h)\|_{L^2}\leq Ce^{-c/h^{1/\rho}} \quad \forall m\in\mathcal{M}_h(t) $;
\item $|\langle u_m(\cdot;t,h),u_l(\cdot;t,h)\rangle -\delta_{ml}|\leq Ce^{-c/h^{1/\rho}} \quad \forall m,l\in\mathcal{M}_h(t) $.
\end{itemize}
\end{defn}

\begin{thm}
\label{gevquasisthm}
Suppose now that $t\in (-1,1)$ is fixed and $S\subset E_\kappa(t)$ is a closed collection of nonresonant actions. For an arbitrary constant $L>1$, we define the index set
\begin{equation}
\label{indexsetref}
\mathcal{M}_h:=\{m\in \Z^n: \textrm{dist}(S,h(m+\vartheta/4))< Lh\}
\end{equation}
where $\vartheta\in \Z^n$ is the Maslov class of any Lagrangian tori $\{\chi(\T^n\times \{I\})\}$ with $I\in S$. Note that this class is independent of choice of torus by the local constancy of the Maslov class.

Then 
\begin{equation}
\{(u_m(x;t,h),\lambda_m(t;h)):m\in\mathcal{M}_h(t)\}:= (U_h(t)e_m,K^0(h(m+\vartheta/4);t,h)
\end{equation}
defines a $G^\rho$ family of quasimodes for $P_h(t)$ that has Gevrey microsupport on the family of tori 
\begin{equation}
\displaystyle
\Lambda_S=\bigcup_{I\in S} \Lambda_{\omega(I;t)}=\bigcup_{I\in S}\chi(\T^n\times \{I\})\subset T^*M
\end{equation}
where $\{e_m\}_{m\in\Z^n}$ is the orthonormal basis of $L^2(\T^n;\mathbb{L})$ associated to the quasiperiodic functions
\begin{equation}
\tilde{e}_m(x):=\exp(i\langle m+\vartheta/4,x \rangle)
\end{equation}
\end{thm}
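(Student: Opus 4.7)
The plan is to verify the three defining properties of a $G^\rho$ family of quasimodes (the Gevrey regularity, the quasimode equation, and the almost-orthonormality) directly from the structural information provided by Theorem \ref{main2}. The key observation is that the basis $e_m$ of $L^2(\T^n;\mathbb{L})$ is microlocalised on the torus $\T^n\times\{h(m+\vartheta/4)\}$: these functions are simultaneous eigendistributions of the action operator $hD_\theta$ with eigenvalue $h(m+\vartheta/4)$ (interpreting sections of $\mathbb{L}$ through the quasiperiodic identification \eqref{maslovcanonical}), so $a(\theta,hD_\theta) e_m$ is computed by evaluating the full symbol at $I = h(m+\vartheta/4)$ up to exponentially small Gevrey error.

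First I would write
\begin{equation}
(P_h(t)-\lambda_m(t;h))u_m = U_h(t)\bigl(K^0_h(t)+R^0_h(t)-\lambda_m(t;h)\bigr)e_m + \mathcal{R}_h(t) e_m,
\end{equation}
using the intertwining identity from Theorem \ref{main2}, and then estimate the two contributions separately. The residual $\mathcal{R}_h(t) e_m$ is $O(\exp(-c h^{-1/\nu}))$ in $L^2$ by the exponential smallness of $S_\ell^{-\infty}$ operators, and by choosing $\rho$ (i.e.\ the first Gevrey index) at most $\nu$ this is swallowed into the $\exp(-ch^{-1/\rho})$ bound required in Definition \ref{gevmodes}. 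The genuinely new content is the estimate on $(K^0_h+R^0_h-\lambda_m)e_m$, which I would handle by a Gevrey Taylor expansion of each symbol in $I$ around the base point $h(m+\vartheta/4)$. For $K^0_h$, the choice $\lambda_m = K^0(h(m+\vartheta/4);t,h)$ eliminates the zeroth order term, and the remainder is bounded pointwise by a convergent Gevrey Taylor series in $I-h(m+\vartheta/4)$; applying the symbol to $e_m$ amounts to evaluating this series at $I = h(m+\vartheta/4)$, which yields $0$ modulo exponentially small terms coming from the cutoff of the Borel resummation. For $R^0_h$, the flatness condition \eqref{qbnfflatness} on $E_\kappa(t)$ together with Proposition \ref{flatissmall} gives super-exponential decay of $R_j(\theta,I;t)$ on any $I$ within distance $O(h)$ of $E_\kappa(t)$, and since $m\in\mathcal{M}_h$ ensures $\mathrm{dist}(h(m+\vartheta/4),S)<Lh$ with $S\subset E_\kappa(t)$, all the action points relevant to $e_m$ satisfy this bound; plugging this into the Borel-resummed realisation of $R^0$ yields an $\exp(-ch^{-1/\nu})$ bound.

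For almost-orthonormality, I would use the first conclusion of Theorem \ref{main2}: $U_h(t)^*U_h(t)-\mathrm{Id}$ has symbol in $S_\ell(\T^n\times D)$ restricting to $S_\ell^{-\infty}(\T^n\times Y)$ on a neighbourhood $Y$ of $E_\kappa(t)$. Writing
\begin{equation}
\langle u_m,u_l\rangle - \delta_{ml} = \langle (U_h^*U_h-\mathrm{Id})e_m,e_l\rangle,
\end{equation}
the same microlocal concentration of $e_m$ on $\{I=h(m+\vartheta/4)\}\subset \T^n\times Y$ (for $h$ small enough that this point lies in $Y$) together with the Gevrey flatness of the symbol on $\T^n\times Y$ produces an exponentially small bound, again via Proposition \ref{flatissmall}. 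For the statement about microsupport, I would invoke the fact that $U_h(t)$ is a semiclassical Fourier integral operator associated with the graph of $\chi_1\circ\chi(t)$; combined with the Gevrey microsupport of $e_m$ being contained in $\T^n\times\{h(m+\vartheta/4)\}$, the propagation property for $h$-FIOs in the Gevrey class (established in Section \ref{fioconj}) sends this microsupport to $\chi_1\circ\chi(t)(\T^n\times\{h(m+\vartheta/4)\})\subset T^*M$, which accumulates on $\Lambda_S$ as $h\to 0$.

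The main technical obstacle will be the quantitative propagation of Gevrey flatness through the Borel resummation and the $h$-FIO: one must justify that evaluating a symbol whose formal series is flat at $I_0\in E_\kappa(t)$ on a function microlocalised at $I_0+O(h)$ actually produces an exponentially small output in $L^2$, not just pointwise, and that the rates $1/\rho$, $1/\mu$, $1/\nu$ appearing in the various Gevrey estimates combine to give the uniform rate $\exp(-ch^{-1/\rho})$ demanded by Definition \ref{gevmodes}. This bookkeeping mirrors the estimates already present in Popov \cite{popovquasis}, so no qualitatively new ingredient is required; the only additional point is uniformity in $t\in(-1,1)$, which follows immediately because all the Gevrey constants in Theorem \ref{main2} were obtained uniformly in this parameter.
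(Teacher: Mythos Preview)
Your proposal is correct and follows essentially the same line as the paper's proof: apply the intertwining relation from Theorem \ref{main2}, observe that $P_h^0(t)e_m=(\lambda_m+R^0(\theta,h(m+\vartheta/4)))e_m$ exactly because $e_m$ is an eigenfunction of $hD_\theta$, control the $R^0$ contribution via flatness on $E_\kappa(t)$ and Proposition \ref{flatissmall}, and deduce almost-orthonormality from the almost-unitarity in Theorem \ref{main2}. Your Taylor-expansion framing for the $K^0$ term is slightly over-elaborate (the evaluation is exact, with no remainder, since $K^0$ is $\theta$-independent), and your exponent bookkeeping should note that $\rho<\nu$ is forced by the setup rather than chosen; but neither point affects the validity of the argument.
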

\begin{proof}
From the definition of the functions $e_m$, it follows that
\begin{eqnarray}
P_h^0(t)(e_m)(\theta)&=& \sigma(P_h^0(t))(\theta,h(m+\vartheta/4))e_m(\theta)\\
&=& (K^0(h(m+\vartheta/4);t,h)+R^0(\theta,h(m+\vartheta/4);t,h))e_m(\theta)\\
&=& (\lambda_m(t;h)+R^0(\theta,h(m+\vartheta/4))e_m(\theta).
\end{eqnarray}

From the definition \eqref{indexsetref} of the index set $\mathcal{M}_h(t)$ and from $\eqref{flatissmall}$, it thus follows that 
\begin{equation}
P_h(t)(U_h(t)e_m)=U_h(t)P_h^0(t)e_m=O(e^{-c/h^{1/\rho}})
\end{equation}
upon an application of Theorem \ref{main2}. This establishes that the $U_h(t)e_m$ are exponentially accurate quasimodes.

The almost-orthogonality of the $U_h(t)e_m$ then follows from the fact that $U_h(t)$ is almost unitary from Theorem \ref{main2}, and the $e_m$ are exactly orthogonal by construction.

This completes the proof.
\end{proof}

These quasimodes are as numerous as we could hope for, indeed the index set $\mathcal{M}_h(t)$ satisfies the local Weyl asymptotic
\begin{equation}
\label{indexsetasymp}
\lim_{h\rightarrow 0}(2\pi h)^n \# \mathcal{M}_h =m(\T^n\times S)= \mu(\Lambda_S)
\end{equation}
where $m$ denotes the $(2n)$-dimensional Lebesgue measure and $\mu$ denotes the symplectic measure $d\xi\,dx$. To see this, we can denote by $U$ the union of $n$-cubes centred at the lattice points in $\mathcal{M}_h$ with side length $h$. The containment
\begin{equation}
\label{latter}
S\subset U \subset \{I:\textrm{dist}(I,S)<\tilde{L}h\}
\end{equation}
for a constant $\tilde{L}$ then yields the claim by monotone convergence of measures, noting that since $S$ is closed we have
\begin{equation}
S=\overline{S}=\cap_{h>0} \{I:\textrm{dist}(I,S)<\tilde{L}h\}.
\end{equation}

In the special case of $S=\{I\}$, we have a family of $G^\rho$ quasimodes with microsupport on an individual torus $\chi(\T^n\times \{I\})$. 




\chapter{Eigenvalue Localisation Results}
\section{Introduction}
\label{6intro}
In this section, we consider a one-parameter family of elliptic semiclassical pseudodifferential operators obtained by quantising a one-parameter family of KAM Hamiltonian systems. As an application of the quasimode construction in Chapter 5, we prove that the quantum dynamics generated by this family of operators can only be quantum ergodic for a Lebesgue null subset of the possible parameter values.\\

Suppose that $M$ is an $n$-dimensional compact boundaryless Riemannian manifold of regularity $G^\rho$. The family of Hamiltonian systems in question is then given by
\begin{equation}
\label{formofhamiltonian}
H(x,\xi;t)=\|\xi\|_g^2+V(x,\xi)+tQ(x,\xi)
\end{equation}
where $H(x,\xi;0)\in G^\rho(T^*M)$ is a completely integrable system satisfying the nondegeneracy condition \eqref{nondegen} for a suitable choice of action-angle coordinates $(\theta,I)\in \T^n\times D$, and $Q\in G^\rho(T^*M)$ is a non-negative and compactly supported symbol with positive quantisation. In particular, these symbols live in $S_\ell(T^*M)$ in the notation of Definition \ref{selldef} with $\ell=(\rho,\mu,\eta)$, with $\rho(\tau+n)+1>\mu>\rho'=\rho(\tau+1)+1$ and $\nu=\rho(\tau+n+1)$ as in Chapter 5.

The quantisation of this Hamiltonian is given by the elliptic operator
\begin{equation}
\label{semschrod}
\mathcal{P}_h(t):=h^2\Delta_g+V(x)+tQ(x,hD).
\end{equation}
The operator $P_h(t)$ then acts on $L^2(M)$ with domain $H^2_h(M)$, the semiclassical Sobolev space defined as in \cite{zworski} Chapter 8. 
The inverse of $P_h(t)$ is compact and self-adjoint, and so the eigenfunctions of $P_h(t)$ form an orthonormal basis for $L^2(M)$ with an unbounded purely discrete spectrum consisting of the eigenvalues $0\leq E_1(t;h)<\ldots.$\\

If the Hamiltonian flow corresponding to the Hamiltonian $H(x,\xi,t)=\sigma(P_h)=|\xi|_g^2+V(x)+tQ(x,\xi)$ were ergodic on the energy surface $\Sigma_E=H^{-1}(E)\subset T^*M$ for a regular value $E$ of the Hamiltonian, then the quantum ergodicity theorem \cite{HRM} asserts that $P_h(t)$ is quantum ergodic on $\Sigma_E$ in the sense of Definition \ref{qesurface}.

However, by rewriting \eqref{formofhamiltonian} locally in the action angle variables $(\theta,I)$ for the completely integrable Hamiltonian $H(x,\xi;0)$, we see that for fixed $Q\in G^\rho_{L_1,L_2}(\T^n\times D)$ with $L_2\geq L_1\geq 1$, the results of Section \ref{mainresultssec} imply that $H(x,\xi;t)$ generates KAM dynamics for every $t\in [0,\delta]$, for sufficiently small $\delta>0$, dependent on $Q$. As KAM dynamics are far from ergodic dynamics in character, I expect that $P_h(t)$ is  typically \emph{not} quantum ergodic, and that there could even exist sequences of eigenfunctions for $P_h(t)$ with semiclassical mass entirely supported on individual invariant tori.\\

As a first result in this area of investigation, we prove that $P_h(t)$ is not quantum ergodic by arguing by contradiction. A key assumption is the existence of what we shall call a \emph{slow torus} for our family of perturbations.
\begin{defn}
\label{slowtorusdef}
A \emph{slow torus} in the energy band $[a,b]$ for the Hamiltonian 
\begin{equation}
H(\theta,I;t)=H^0(I)+tH^1(\theta,I)
\end{equation}
in action-angle coordinates, is a Lagrangian invariant torus $\Lambda_{\omega_0}$ with nonresonant frequency $\omega_0\in \tilde{\Omega}_\kappa$ and energy ${H^0(I(\omega_0))=H^0(\nabla g^0(\omega_0))\in [a,b]}$ in the notation of Theorem \ref{main1} that satisfies
\begin{equation}
\label{slowtorus}
(2\pi)^{-n}\int_{\T^n} H^1(\theta,I(\omega_0))\, d\theta < \inf_{E\in [a,b]}\frac{1}{\mu_E(\Sigma_E)}\int_{\Sigma_E} Q(x,\xi)\, d\mu_E
\end{equation}
at $t=0$.
\end{defn}

\begin{remark}
It should be mentioned that the Lagrangian tori that foliate the completely integrable space $\mathcal{D}_t\setminus U_t$ in Chapter 3 are analogously \emph{slow} with respect to the perturbation parameter $t$, making the spectral non-concentration arguments in Section 6.3 and Section 3.5 rather similar in spirit. 

However, in the KAM setting we do not have a result analogous to Theorem \ref{galkowski} that precisely describes the restriction of semiclassical measures to the non-integrable region. This makes it more difficult to prove results on eigenfunction localisation in KAM systems compared to mixed systems, at least via the methods used in this thesis.
\end{remark}

The assumption \eqref{slowtorus} holds in a small energy band $[a,b]$ whenever the completely integrable Hamiltonian system $H(x,\xi;0)$ posesses a nonresonant torus on some energy surface $\Sigma_E$ on which the perturbation $Q$ is smaller than it is on average on $\Sigma_E$. We call such a torus a \emph{slow torus} to emphasise the key feature that the quasi-eigenvalues associated to such a torus will increase at a slower rate than the typical increase of eigenvalues at the same energy. The intuition behind this stems from the Hadamard variational formula \eqref{qequantity}, and the fact that the associated quasimodes localise onto $\Lambda_{\omega_0}$ in phase space.\\

This condition is rather mild, and will typically be satisfied for some torus provided that $Q$ is not constant on energy surfaces. An example of a Hamiltonian system with a slow torus could be explicitly constructed by taking a completely integrable surface of revolution, and choosing $Q(x,\xi)$ to be small near a collar containing a stable geodesic.

We are now in a position to state our main theorem.
\begin{thm}
\label{nonqe}
Suppose $M$ is a compact boundaryless $G^\rho$-smooth Riemannian manifold, and the Hamiltonian $H(x,\xi;t)$ is regular on the energy interval $[a,b]$ and can be written as 
\begin{equation}
H(x,\xi;t)=H^0(I)+tH^1(\theta,I)\in G^{\rho,\rho,1}_{L_1,L_2,L_2}(\T^n\times D \times (-1,1))
\end{equation}
in a subdomain $\T^n\times D\times (-1,1)\subset \T^n\times D^0\times (-1,1)$ of local action-angle variables for the non-degenerate completely integrable Hamiltonian $H^0\in G^\rho_{L_0}(D^0)$ with Legendre transform $g^0\in G^\rho_{L_0}(\Omega^0)$.

Suppose further that $a<b$ and there exists a \emph{slow torus} $\Lambda_{\omega_0}$ in the energy band $[a,b]$.




Then there exist $\epsilon,\delta>0$ such that for almost all $t\in[0,\delta]$ the quantisation $P_h(t)$ of $H(x,\xi;t)$ is quantum ergodic over the energy surface $\Sigma_E$ for at most a Lebesgue  measure $(1-\epsilon)(b-a)$ subset of $E\in [a,b]$.
\end{thm}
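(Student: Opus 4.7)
I would follow the overall strategy of Theorem \ref{mainthm}, with the perturbation parameter $t$ playing the role of the stalk length. For each $t \in [0,\delta]$ with $\delta > 0$ to be chosen, Theorem \ref{gevquasisthm} applied with $S$ a small closed neighbourhood of $I(\omega_0)$ inside $E_\kappa(t)$ yields a $G^\rho$ family of exponentially accurate quasimodes $\{(u_m(x;t,h), \lambda_m(t;h))\}_{m \in \mathcal{M}_h(t)}$ with $\lambda_m = K^0(h(m+\vartheta/4); t, h)$, Gevrey-microlocalised on the family $\Lambda_S$ of nonresonant Lagrangian tori near the slow torus. The local Weyl asymptotic \eqref{indexsetasymp} ensures $(2\pi h)^n |\mathcal{M}_h(t)| \to \mu(\Lambda_S) > 0$, and by Proposition \ref{bnf2} the quasi-eigenvalues move according to
\[
\partial_t \lambda_m(t;h) = (2\pi)^{-n}\int_{\T^n} H^1(\theta, I_m) \, d\theta + o(1),
\]
where $I_m = h(m+\vartheta/4)$. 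Continuity of this average and the slow torus hypothesis \eqref{slowtorus} then permit a choice of $S$ and $\delta$ so that there exists $\epsilon_0 > 0$ with
\[
\partial_t \lambda_m(t;h) < \inf_{E \in [a,b]}\mu_E(\Sigma_E)^{-1} \int_{\Sigma_E} Q \, d\mu_E - 2\epsilon_0
\]
uniformly in $m, t, h$, and so that the $\lambda_m(t;h)$ remain throughout $t \in [0,\delta]$ in a subinterval $[a',b'] \subset (a,b)$ of positive Lebesgue measure.

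Next I would adapt Proposition \ref{spectral} to this setting and apply it cluster-by-cluster to the $\lambda_m(t;h)$, paralleling the proof of Theorem \ref{mainthm}. For those $t \in [0,\delta]$ at which the number of eigenvalues in each cluster of quasi-eigenvalues does not exceed the number of quasi-eigenvalues by more than a small relative factor, a positive-density subset of the eigenfunctions with eigenvalues in $[a',b']$ is approximable in $L^2$ by linear combinations of the quasimodes. Since these quasimodes have Gevrey microsupport contained in $\Lambda_S$, the corresponding eigenfunctions concentrate semiclassically onto $\Lambda_S$; together with $L^2$-boundedness of $Q(x,hD) \in \Psi^0(M)$, this gives
\[
\limsup_{h\to 0}\, \langle Q(x,hD) u_j(t;h), u_j(t;h) \rangle \leq \mu_{E_j}(\Sigma_{E_j})^{-1}\int_{\Sigma_{E_j}} Q \, d\mu_{E_j} - \epsilon_0.
\]
Quantum ergodicity of $P_h(t)$ at any $E \in [a',b']$ would force a density-one subsequence of the eigenfunctions at energy $E$ to satisfy $\langle Q u_j, u_j\rangle \to \mu_E(\Sigma_E)^{-1}\int_{\Sigma_E} Q \, d\mu_E$, contradicting our positive-density subset. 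Hence QE fails on all of $[a',b']$ for every such ``good'' $t$, and $[a',b']$ can be arranged to have measure at least $\epsilon(b-a)$ for a small enough $\epsilon > 0$.

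The remaining task is to show that the set of exceptional $t \in [0,\delta]$ at which the spectral non-concentration hypothesis fails is Lebesgue-null, which I would argue by contradiction along the lines of Propositions \ref{propo} and \ref{propo2}. Assuming a positive-measure exceptional set, I would isolate a subinterval $\mathcal{I} \subset [0,\delta]$ on which a positive proportion of eigenvalues linger within small windows about quasi-eigenvalues, then apply the Hadamard-type identity $\dot{E}_j(t) = \langle Q(x,hD) u_j(t), u_j(t) \rangle$ --- valid since $\partial_t P_h(t) = Q(x,hD)$ --- to bound the average flow speed of such eigenvalues above by the slow-torus average of $Q$ plus $\epsilon_0$. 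Comparing this with the total variation $E_j(t_2) - E_j(t_1)$ dictated by Weyl's law applied at the endpoints of $\mathcal{I}$ would then yield the required contradiction.

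The main obstacle I foresee is the absence in the KAM setting of a clean analogue of Galkowski's Theorem \ref{galkowski}, which in Chapter 3 allowed fine control of semiclassical measures on the ergodic region. Here the corresponding flow-speed bound on a full-density subsequence of eigenfunctions must instead be extracted purely from the positivity of $Q(x,hD)$ and the $O(h)$-localised Weyl asymptotic of Petkov--Robert, combined with the Gevrey flatness estimate \eqref{qbnfflatness} of the quantum Birkhoff normal form. Handling the $t$-dependence of the quasimode family and of the energy localisation simultaneously, while preserving the exponential accuracy required for the spectral approximation, is the most delicate technical point.
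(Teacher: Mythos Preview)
Your proposal contains a genuine gap at exactly the point you flag as the ``main obstacle,'' and your suggested workaround does not succeed. In Chapter~3 the flow-speed bound $\dot E_j \approx -\dot A/(A(1-d))$ comes from Galkowski's theorem, which is an \emph{unconditional} equidistribution statement; it is what makes the spectral non-concentration argument of Propositions~\ref{propo}--\ref{propo2} run. In the KAM setting you have no such input. Positivity of $Q(x,hD)$ gives only $\dot E_j \ge 0$, the Petkov--Robert Weyl asymptotic controls counting but says nothing about $\langle Q u_j, u_j\rangle$ for an individual $j$, and the flatness~\eqref{qbnfflatness} concerns the quantum normal form, not the eigenfunctions of $P_h(t)$. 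Without a flow-speed bound you cannot show that eigenvalues do not linger in the quasi-eigenvalue windows, so you cannot prove spectral non-concentration for almost every $t$, and the Chapter~3 structure collapses. Your sentence ``bound the average flow speed of such eigenvalues above by the slow-torus average of $Q$'' is circular: that bound would follow once you know these eigenfunctions localise on $\Lambda_S$, but localisation is precisely what spectral non-concentration plus Proposition~\ref{spectral} would give you.

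The paper resolves this by inverting the logical order. It argues by contradiction: assume that for a positive-measure set $\mathcal B_\epsilon \subset [0,\delta]$ the operator $P_h(t)$ \emph{is} quantum ergodic on almost all of $[a,b]$. Quantum ergodicity itself then supplies the flow-speed bound, since for a density-one subset of eigenfunctions one has $\dot E_j = \langle Q u_j,u_j\rangle \to \mu_E(\Sigma_E)^{-1}\int_{\Sigma_E}Q\,d\mu_E \in [Q_-,Q_+]$ (Propositions~\ref{pointwisesurface}--\ref{almostuniformgevrey}). Since the slow-torus quasi-eigenvalues move at a rate strictly below $Q_-$ (Proposition~\ref{quasispeedprop}), a time-averaging argument over a small interval $J\subset[0,\delta]$ forces some $t_*$ at which fewer than half as many eigenvalues as quasi-eigenvalues lie in the windows $W(t_*;h)$ (Proposition~\ref{specnonconckamprop}). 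The endgame is also simpler than your Proposition~\ref{spectral} route: one just observes that the projections of the quasimodes onto the eigenspace $U$ are linearly independent (Proposition~\ref{spectralcont}), giving $\#\mathcal M_h(t_*)$ independent vectors in a space of dimension $N(t_*;h)<\#\mathcal M_h(t_*)/2$, a direct contradiction. No eigenfunction localisation statement is needed.
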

We prove Theorem \ref{nonqe} by using an argument based on controlling the flow speed of eigenvalues in the parameter $t$ as in Chapter 3. 

We begin by using the slow torus condition and choosing $\epsilon_1>0$ sufficiently small so that
\begin{equation}
(2\pi)^{-n}\int_{\T^n} H^1(\theta,I(\omega_0))\, d\theta < \inf_{E\in [a,b]}\frac{1}{\mu_E(\Sigma_E)}\int_{\Sigma_E} Q(x,\xi)\, d\mu_E-2\epsilon_1
\end{equation}
is satisfied at $t=0$.

We next observe that it suffices to prove the theorem for a small interval $[a,b]$ containing the energy $H^0(I(\omega_0))$, and so we can scale our interval $[a,b]$ by a small factor $\lambda$ to ensure that the condition
\begin{equation}
\label{qplusqminus}
\sup_{E\in[a,b]}\frac{1}{\mu_E(\Sigma_E)}\int_{\Sigma_E} Q\, d\mu_E - \inf_{E\in [a,b]}\frac{1}{\mu_E(\Sigma_E)}\int_{\Sigma_E} Q\, d\mu_E =: Q_+(0)-Q_-(0) < \epsilon_2<\epsilon_1.
\end{equation}
is satisfied.

Now Theorem \ref{bnf2} applies, and we obtain a family of symplectic maps $\chi\in G^{\rho,\rho',\rho'}(\T^n\times D\times (-1/2,1/2), \T^n\times D)$ and a family of diffeomorphisms $\omega\in G^{\rho',\rho'}(D\times (-1/2,1/2),\Omega)$ such that 
\begin{equation}
\label{BNF6}
H(\chi(\theta,I;t);t)=K(I;t)+R(\theta,I;t) 
\end{equation}
where $R$ is flat in $I$ at the nonresonant actions $I\in E_\kappa(t)$.

Defining the action map $I\in G^{\rho',\rho'}(\Omega\times (-1/2,1/2))$ implicitly by 
\begin{equation}
\omega:=\omega(I(\omega;t) ; t)
\end{equation}
we can specify the action coordinates of a nonresonant torus at $t\in [0,\delta]$ in the Birkhoff normal form furnished by $\chi(\cdot,\cdot;t)$.

We begin by expanding the nonresonant slow torus $\Lambda_{\omega_0}$ to a positive measure family of slow tori.
\begin{prop}
\label{fattori}
For any $0<c<\min(\frac{b-a}{2},\textrm{dist}(H^0(I(\omega_0)),[a,b]^c)$, there exists $r>0$ and $\delta>0$ such that for any $\omega\in \overline{\Omega}:= B(\omega_0,r)\cap \tilde{\Omega}_\kappa$, the torus $\Lambda_\omega=\chi(\T^n\times \{I(\omega,t)\})$ has energy  
\begin{equation}
\label{inenergywindow}
K(I(\omega;t),t)\in[K(I(\omega_0;t);t)-c,K(I(\omega_0;t);t)+c]
\end{equation}
for all $t\in [0,\delta]$ and the torus $\Lambda_{\omega_0}(t)$ has energy 
\begin{equation}
K(I(\omega_0;t);t)\in[a+c,b-c]
\end{equation}
for all $t\in[0,\delta]$.

In particular, the family of tori
\begin{equation}
\Lambda(t):=\bigcup_{\omega\in \overline{\Omega}} \Lambda_{\omega}
\end{equation}
is a positive measure family of tori entirely contained within the energy band $[a,b]$.

Moreover, $r$ and $\delta$ can be chosen small enough to ensure
\begin{eqnarray}
\nonumber (2\pi)^{-n}\int_{\T^n} H^1(\theta,I(\omega;t))\, d\theta &<&  (2\pi)^{-n}\int_{\T^n} H^1(\theta,I(\omega))\, d\theta+\epsilon_1\\ &<& \label{slowpostori}Q_--\epsilon_1.
\end{eqnarray}
for each $\omega\in \overline{\Omega}$ and each $t\in [0,\delta]$.

We can also choose $\delta>0$ small enough to ensure that
\begin{equation}
\label{ergodicboundsclose}
Q_+(t)-Q_-(t)<2\epsilon_2.
\end{equation}
for all $t\in[0,\delta]$.
\end{prop}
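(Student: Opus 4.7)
The plan is to obtain every statement in the proposition as a consequence of joint continuity of the relevant quantities in $(\omega,t)$ near the base point $(\omega_0,0)$, combined with the strict inequalities that are known to hold at $t=0$.

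First I would record the regularity provided by Theorem \ref{bnf2}: the maps $(\omega,t)\mapsto I(\omega;t)$, $(\omega,t)\mapsto K(I(\omega;t);t)$, and $(\omega,t)\mapsto (2\pi)^{-n}\int_{\T^n}H^1(\theta,I(\omega;t))\,d\theta$ are $G^{\rho',\rho'}$ on $\Omega\times(-1/2,1/2)$, hence uniformly continuous on compact subsets. In addition, at $t=0$ the transformation $\chi(\cdot;0)$ is the identity in the action-angle variables, so $I(\omega;0)=\nabla g^0(\omega)=I(\omega)$ and $K(I;0)=H^0(I)$.

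Next I would exploit the choice of $c$. Since $c<\mathrm{dist}(H^0(I(\omega_0)),[a,b]^c)$, the value $K(I(\omega_0;0);0)=H^0(I(\omega_0))$ lies in the interior sub-interval $[a+c,b-c]$ with strict margin. Uniform continuity of $(\omega,t)\mapsto K(I(\omega;t);t)$ on a compact neighbourhood of $(\omega_0,0)$ then yields constants $r_1,\delta_1>0$ such that for all $|\omega-\omega_0|\le r_1$ and $t\in[0,\delta_1]$,
\begin{equation*}
|K(I(\omega;t);t)-K(I(\omega_0;0);0)|<c/2,
\end{equation*}
which immediately gives both $K(I(\omega_0;t);t)\in[a+c,b-c]$ and the energy-window condition \eqref{inenergywindow} (after possibly halving $c$ in one of the two applications of the triangle inequality).

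For the slow-torus inequality \eqref{slowpostori}, I would apply continuity of $\omega\mapsto (2\pi)^{-n}\int_{\T^n}H^1(\theta,I(\omega))\,d\theta$ at $\omega_0$: by the definition of a slow torus and the choice of $\epsilon_1$, this quantity equals $(2\pi)^{-n}\int_{\T^n}H^1(\theta,I(\omega_0))\,d\theta<Q_-(0)-2\epsilon_1$. Shrinking $r_1$ to some $r_2\le r_1$ makes the left-hand side less than $Q_-(0)-3\epsilon_1/2$ for $\omega\in B(\omega_0,r_2)\cap\tilde\Omega_\kappa$. Then uniform continuity of $(\omega,t)\mapsto(2\pi)^{-n}\int H^1(\theta,I(\omega;t))\,d\theta$ — which at $t=0$ agrees with the integral over $I(\omega)$ — yields $\delta_2\le\delta_1$ so that the first inequality in \eqref{slowpostori} holds, while the second follows from the estimate already secured at $t=0$.

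Finally, for \eqref{ergodicboundsclose}, I would note that the energy surfaces $\Sigma_E(t)=H(\cdot;t)^{-1}(E)$ vary smoothly in $(E,t)$ over the compact parameter region where $E$ is a regular value, so the functions $t\mapsto \mu_{E,t}(\Sigma_E(t))^{-1}\int_{\Sigma_E(t)}Q\,d\mu_{E,t}$ are jointly continuous; taking the supremum and infimum over the compact set $E\in[a,b]$ preserves continuity, so $Q_\pm(t)$ are continuous at $t=0$. Since $Q_+(0)-Q_-(0)<\epsilon_2$ by \eqref{qplusqminus}, there is $\delta_3$ with $Q_+(t)-Q_-(t)<2\epsilon_2$ for $t\in[0,\delta_3]$. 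Setting $r:=r_2$ and $\delta:=\min(\delta_1,\delta_2,\delta_3)$ then simultaneously satisfies every claim of the proposition. The only non-routine point is ensuring that the averaged $H^1$ at the perturbed actions $I(\omega;t)$ remains controlled by its $t=0$ value up to an $\epsilon_1$ slack, but this is handled cleanly by uniform continuity of the Birkhoff-normal-form action map on a compact neighbourhood of $(\omega_0,0)$.
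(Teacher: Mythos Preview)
Your proposal is correct and follows essentially the same approach as the paper: both arguments reduce every claim to continuity (in $(\omega,t)$) of the Birkhoff normal form data $I(\omega;t)$, $K(I(\omega;t);t)$, and the torus average of $H^1$, together with continuity in $t$ of $Q_\pm(t)$, to propagate the strict inequalities known at $(\omega_0,0)$ to a small product neighbourhood. The paper invokes the regularity from Theorem~\ref{main1} rather than Theorem~\ref{bnf2} and is somewhat terser, but the substance is identical.
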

\begin{proof}
From the regularity of $\chi,I,$ and $K$ established in Theorem \ref{main1}, it follows that we can take $r<Lc$ for some constant $L>0$ to ensure that \eqref{inenergywindow} is satisfied at $t=0$ for sufficiently small $c>0$. Similarly, we can ensure that 
\begin{equation}
(2\pi)^{-n}\int_{\T^n} H^1(\theta,I)\, d\theta <  (2\pi)^{-n}\int_{\T^n} H^1(\theta,I(\omega_0))\, d\theta+\epsilon_1/2
\end{equation}
holds for $|I-I(\omega_0)|$ sufficiently small.
\eqref{slowpostori} is satisfied at $t=0$. In particular this implies that 
\begin{equation}
(2\pi)^{-n}\int_{\T^n} H^1(\theta,I(\omega))\, d\theta <  (2\pi)^{-n}\int_{\T^n} H^1(\theta,I(\omega_0))\, d\theta+\epsilon_1/2
\end{equation}
for all $\omega\in \overline{\Omega}=B(\omega_0,r)\cap \tilde{\Omega}_\kappa$ upon taking $r$ sufficiently small.

The regularity of $\chi,I$ and $K$ in the parameter $t$ then allow us to then deduce that \eqref{inenergywindow} and \eqref{slowpostori} are satisfied for $t\in[0,\delta]$, for sufficiently small $\delta>0$ and for each $\omega \in \overline{\Omega}$.

Finally, the estimate \eqref{ergodicboundsclose} for small $\delta$ follows from the regularity of 
\begin{equation}
\frac{1}{\mu_E(\Sigma_E)}\int_{\Sigma_E} Q d\mu_E
\end{equation}
in $t$.
\end{proof}

From the Birkhoff normal form \eqref{BNF6}, we construct a quantum Birkhoff normal form as in Chapter 5. The constructions of Section \ref{gevquasisec} then provide us with a family of quasimodes localising onto $\Lambda(t)$ by taking $S(t)=\{I(\omega;t):\omega\in \overline{\Omega}\}$ and defining the index set $\mathcal{M}_h(t)$ as in \eqref{indexsetref}.

We can now define $t$-dependent energy bands
\begin{equation}
\mathcal{I}(t):=[K_0(I(\omega_0,t),t)-c,K_0(I(\omega_0,t),t)+c]\subseteq [a,b]
\end{equation}
and the union of $h^\alpha$-width quasi-eigenvalue windows
\begin{equation}
\label{wthdef}
W(t;h):=\mathcal{I}(t)\cap \bigcup_{m\in \mathcal{M}_h(t)} [K^0(h(m+\vartheta/4),t;h)-h^\alpha,K^0(h(m+\vartheta/4),t;h)+h^\alpha]
\end{equation}
where $\alpha>2n$ and $K_0=K$ and $K^0$ are as in Theorem \ref{main2}.

For the sake of brevity, we introduce the notation 
\begin{equation}
\mu_m(t;h):=K^0(h(m+\vartheta/4),t;h)
\end{equation} 
for the quasi-eigenvalues under consideration.

The number of exact eigenvalues lying in the union of quasi-eigenvalue windows $W(t;h)$ is given by
\begin{equation}
\label{Ndef}
N(t;h):=\#\{E_k(t;h)\in W(t;h)\}.
\end{equation}

Next we obtain asymptotic estimates for the number of eigenvalues and the number of quasi-eigenvalues contained in the interval $\mathcal{I}(t)$ as $h\rightarrow 0$.

\begin{prop}
We have the asymptotic estimate
\begin{equation}
\label{indexsetcount}
\#\mathcal{M}_h(t)\sim (2\pi h)^{-n}\mu(\T^n\times \{I(\omega,t):\omega\in \overline{\Omega}\}).
\end{equation}
for each $t\in [0,\delta]$.

Furthermore, we have
\begin{eqnarray}
\label{eigencount}
& &\limsup_{h\rightarrow 0}(2\pi h)^n\#\{k\in \N: E_k(t;h)\in [a,b]\textrm{ for some }t\in [0,\delta]\}\\
&\leq& \mu(\{(x,\xi):H(x,\xi;0)\in [a-M\delta,b]\})
\end{eqnarray}
where $M=\|Q(x,hD)\|_{L^2\rightarrow L^2}$.

Here $\mu$ denotes the symplectic measure $d\xi\, dx$ on $T^*M$.
\end{prop}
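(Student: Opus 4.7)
The plan is to prove the two estimates separately: \eqref{indexsetcount} by adapting the lattice-counting argument used to justify \eqref{indexsetasymp} at the end of Section \ref{gevquasisec}, and \eqref{eigencount} by exploiting positivity of $tQ(x,hD)$ to reduce to a semiclassical Weyl law for the unperturbed operator $P_h(0)$. Neither piece involves any substantial new idea; the main point is bookkeeping.

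For \eqref{indexsetcount}, set $S(t) := \{I(\omega,t) : \omega \in \overline{\Omega}\}$. Since $\overline{\Omega} = B(\omega_0,r)\cap\tilde{\Omega}_\kappa$ is closed and $I(\cdot,t)$ is a Gevrey diffeomorphism, $S(t)$ is closed. Let $U_h$ be the union of disjoint half-open $n$-cubes of side $h$ centred at the translated lattice points $h(m+\vartheta/4)$ with $m \in \mathcal{M}_h(t)$, so that $|U_h| = h^n\,\#\mathcal{M}_h(t)$. The defining condition $\mathrm{dist}(S(t),h(m+\vartheta/4)) < Lh$ together with elementary geometry gives, as in \eqref{latter}, a constant $\tilde L$ depending only on $L$ and $n$ with
$$
S(t) \subset U_h \subset \{I \in D : \mathrm{dist}(I, S(t)) < \tilde L h\}.
$$
Since $S(t)$ is closed, the right-hand side decreases to $S(t)$ as $h \to 0$, and monotone convergence yields $|U_h| \to m(S(t))$. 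As the symplectic volume satisfies $\mu(\T^n \times S(t)) = (2\pi)^n m(S(t))$, this gives \eqref{indexsetcount}.

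For \eqref{eigencount}, I would use the hypothesis that $Q(x,hD)$ is a positive operator on $L^2(M)$. Writing $P_h(t) = P_h(0) + tQ(x,hD)$, the min-max principle gives, for every $k$, every $h$, and every $t \in [0,\delta]$,
$$
E_k(0;h) \leq E_k(t;h) \leq E_k(0;h) + tM,
$$
where $M := \sup_{h \in (0,h_0]}\|Q(x,hD)\|_{L^2 \to L^2} < \infty$ by the standard $L^2$-boundedness of zero-th order semiclassical pseudodifferential operators with symbol in $S^0$. Consequently, if $E_k(t;h) \in [a,b]$ for some $t \in [0,\delta]$, then $E_k(0;h) \in [a-M\delta, b]$, and so
$$
\#\{k : E_k(t;h) \in [a,b] \text{ for some } t \in [0,\delta]\} \leq \#\{k : E_k(0;h) \in [a-M\delta, b]\}.
$$
Shrinking $\delta$ if necessary so that $[a-M\delta,b]$ remains a regular energy band for $H(\cdot,\cdot;0)$ (permitted by the regularity assumption on $[a,b]$ together with compactness), the semiclassical Weyl law \eqref{zworskiweyl} applied to $P_h(0)$ gives
$$
\lim_{h\to 0}(2\pi h)^n\#\{k : E_k(0;h) \in [a-M\delta, b]\} = \mu(\{(x,\xi) \in T^*M : H(x,\xi;0) \in [a-M\delta, b]\}),
$$
and combining with the previous display yields \eqref{eigencount}. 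The only mild subtlety is confirming that $M$ is genuinely $h$-uniform (which is why compact support and the $S^0$ estimates on $Q$ matter), but this is standard and presents no real obstacle.
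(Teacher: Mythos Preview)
Your proof is correct and follows essentially the same approach as the paper. The paper's own proof is extremely terse---it simply cites \eqref{indexsetasymp} for the first estimate and \eqref{globalboundgevrey} together with the Weyl law \eqref{zworskiweyl} for the second---and you have unpacked precisely those references: the lattice-counting argument behind \eqref{indexsetasymp} for \eqref{indexsetcount}, and the monotonicity/speed bound on eigenvalues (which the paper obtains from the Hadamard formula $\dot E_k = \langle Q u_k, u_k\rangle$ rather than min-max, but with identical effect) followed by Weyl's law for \eqref{eigencount}.
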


\begin{proof}
The estimate \eqref{indexsetcount} is a consequence from \eqref{indexsetasymp}, and \eqref{eigencount} follows from \eqref{globalboundgevrey} and an application of the Weyl law \eqref{zworskiweyl}.
\end{proof}

A crucial fact is that if we scale the interval $[a,b]$ and $c$ about $K_0(I(\omega_0,0),0)$ by a factor of $\lambda>0$, then  
the $r>0$ chosen in Proposition \ref{fattori} can be chosen to be scaled by the factor $\lambda>0$ as well. Doing this for small $\lambda>0$ will make the quantity 
\begin{equation}
\sup_{t\in [0,\delta]}Q_+(t)-\inf_{t\in [0,\delta]}Q_-(t)< 3\epsilon_2=o_\lambda(1).
\end{equation}
However we will still have 
\begin{equation}
\limsup_{h\rightarrow 0}\frac{\#\{k\in \N:E_k(t)\in [a,b]\textrm{ for some }t\in [0,\delta]\}}{\inf_{t\in [0,\delta]}\#\mathcal{M}_h(t)}
\end{equation}
uniformly bounded in $t$ and $\lambda$.


\section{Eigenvalue and quasieigenvalue variation}
\label{flowspeedsec}

For each fixed $h>0$, the operators $P_h(t)$ comprise an holomorphic family of type A in the sense of \cite{kato} and so we can choose eigenvalues and corresponding eigenprojections holomorphic in the parameter $t$. Thus if at each time $t$ we order our eigenpairs $E_k(t;h)$ in order of increasing energy, by holomorphy it follows that $E_k$ will be continuous, piecewise smooth, and have multiplicity $1$ for all but finitely many $t\in [0,\delta]$. On this cofinite set, we have
\begin{eqnarray}
\dot{E}_k(t;h)&=&\nonumber\langle\dot{P}_h(t)u_k(t;h),u_k(t;h)\rangle\\
&=& \label{qequantity} \langle Q(x,hD)u_k(t;h),u_k(t;h) \rangle 
\end{eqnarray}
from $(u_k)$ being an orthonormal basis. We will control \eqref{qequantity} using an assumption of quantum ergodicity.

Suppose, for the sake of contradiction, that for every $\epsilon>0$ there exists a positive measure subset $\mathcal{B}_\epsilon$ of $[0,\delta]$ such that for each $t\in \mathcal{B}_\epsilon$, $P_h(t)$ is quantum ergodic on a set of energies $\mathcal{E}_t\subset [a,b]$ with $m(\mathcal{E}_t)>(b-a)(1-\epsilon/2)$.

Analogously to Section \ref{eigflowsec}, we can use quantum ergodicity to deduce bounds on the speed at which the eigenvalues monotonically increase in $t$ under the assumption of eigenfunction equidistribution.

We first note that we have a global in time bound 
\begin{equation}
\label{globalboundgevrey}
E_k'(t)\leq M=\|Q\|_{L^2\rightarrow L^2}<\infty
\end{equation}
from differentiation of the expression
\begin{equation}
E_k(t)=\langle P_h(t)u_k(t),u_k(t) \rangle.
\end{equation}

Now, our first result estimate on eigenvalue variation is one localised to individual energy surfaces on which we have quantum ergodicity.

\begin{prop}
\label{pointwisesurface}
For each $t\in \mathcal{B}_\epsilon$, each $E\in \mathcal{E}_t$, and each $\hat{\epsilon}>0$, there exists an $h$-dependent subset $S_E(t;h)\subset \{k\in \N: E_k(t;h)\in [E-h,E+h]\}$ and $h_E>0$ such that
\begin{equation}
\label{densityatsurface}
\frac{\#S_E(t;h)}{\#\{E_k(t;h)\in [E-h,E+h]\}}>1-\hat{\epsilon}
\end{equation}
and
\begin{equation}
\label{flowspeedatsurface}
\dot{E}_k(t;h)\in [Q_--\epsilon_2,Q_++\epsilon_2]
\end{equation} 
for all $h<h_E$ and all $k\in S_E(t;h)$.
\end{prop}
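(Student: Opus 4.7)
The plan is to apply the quantum ergodicity hypothesis directly to the Hadamard-type formula \eqref{qequantity}. Fix $t\in\mathcal{B}_\epsilon$ and $E\in\mathcal{E}_t$, so that by assumption $P_h(t)$ is quantum ergodic on the energy surface $\Sigma_E$ in the sense of Definition \ref{qesurface}. Applying this definition to the zero-th order semiclassical pseudodifferential operator $Q(x,hD)$ itself produces a subfamily $S(h)\subset \{k\in \N : E_k(t;h)\in [E-h,E+h]\}$ of natural density tending to $1$ such that
\begin{equation}
\langle Q(x,hD)u_k(t;h),u_k(t;h)\rangle \;\longrightarrow\; \frac{1}{\mu_E(\Sigma_E)}\int_{\Sigma_E} Q\, d\mu_E
\end{equation}
uniformly in $k\in S(h)$ as $h\to 0$.

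By the very definition of $Q_\pm=Q_\pm(t)$ in \eqref{qplusqminus}, the right-hand side of the above limit lies in the interval $[Q_-(t),Q_+(t)]$. Hence there exists $h_E>0$ such that for all $h<h_E$ and all $k\in S(h)$ we have
\begin{equation}
\langle Q(x,hD)u_k(t;h),u_k(t;h)\rangle \in [Q_--\epsilon_2,\;Q_++\epsilon_2].
\end{equation}
Combining this with \eqref{qequantity}, which identifies this inner product with $\dot{E}_k(t;h)$ at times where the eigenvalue branch is smooth, immediately yields \eqref{flowspeedatsurface}. Finally, the density condition \eqref{densityatsurface} is satisfied by taking $S_E(t;h):=S(h)$ for all $h$ sufficiently small, since $\#S(h)/\#\{E_k(t;h)\in [E-h,E+h]\}\to 1$.

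There is no substantial obstacle here; the proposition is essentially a direct transcription of the single-energy-surface quantum ergodicity hypothesis via the eigenvalue derivative formula \eqref{qequantity}. The only mild subtlety is that \eqref{qequantity} is valid only on a cofinite subset of $[0,\delta]$ where the eigenvalue branches are smooth and simple, but since we are working at a fixed $t\in\mathcal{B}_\epsilon$, we may restrict $\mathcal{B}_\epsilon$ (without loss of its positive measure) to the cofinite-in-$[0,\delta]$ subset on which \eqref{qequantity} holds for every $k$. The bounds \eqref{flowspeedatsurface} and the density estimate \eqref{densityatsurface} are then as claimed.
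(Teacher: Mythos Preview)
Your proof is correct and follows exactly the same approach as the paper, which simply cites the Hadamard variational formula \eqref{qequantity}, Definition \ref{qesurface}, and \eqref{qplusqminus} as an immediate consequence. You have faithfully unpacked these three ingredients, and your added remark about the cofinite set on which \eqref{qequantity} holds is a reasonable caution, though the paper does not dwell on this point either.
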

\begin{proof}
This claim follows immediately from the Hadamard variational formula $\eqref{qequantity}$, Definition \eqref{qesurface}, and \eqref{qplusqminus}.
\end{proof}

Proposition \ref{pointwisesurface}, together with the definition of $\mathcal{B}_\epsilon$ allows us to obtain a similar result over the whole energy band $[a,b]$.

\begin{prop}
\label{pointwiseband}
For each $t\in \mathcal{B}_\epsilon$, there exists an $h$-dependent subset $S(t;h)\subset \{k\in \N: E_k(t;h)\in [a,b]\}$ and $h_0>0$ such that
\begin{equation}
\label{firstC}
\frac{\#S(t;h)}{\#\{E_k(t;h)\in [a,b]\}}>1-\frac{C(n,Q,b-a)\epsilon}{\epsilon_1^2}
\end{equation}
and
\begin{equation}
\dot{E}_k(t;h)\in [Q_--\epsilon_2,Q_++\epsilon_2]
\end{equation} 
for all $h<h_0$ and all $k\in S(t;h)$.
\end{prop}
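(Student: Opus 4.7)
The plan is to reduce the claim to a concentration statement for the Hadamard flow speed $\dot E_k(t;h)=\langle Q(x,hD)u_k,u_k\rangle$ about the microcanonical averages $\bar Q_E := \mu_E(\Sigma_E)^{-1}\int_{\Sigma_E} Q \, d\mu_E$. Since $\bar Q_{E_k}\in [Q_-(t),Q_+(t)]$ whenever $E_k\in[a,b]$, any violator $k\in \{E_k\in[a,b]\}\setminus S(t;h)$ necessarily satisfies $|\dot E_k-\bar Q_{E_k}|>\epsilon_2$. It therefore suffices to bound the cardinality of the ``bad'' set
\begin{equation*}
\mathcal{B}(t;h):=\{k\in\N: E_k(t;h)\in [a,b],\ |\dot E_k(t;h)-\bar Q_{E_k}|>\epsilon_2\}.
\end{equation*}

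To exploit the pointwise-in-energy ergodicity furnished by Proposition \ref{pointwisesurface}, the next step is to tile $[a,b]$ by disjoint windows $W_j=[E_j-h,E_j+h]$ with $E_j=a+(2j-1)h$, and partition the window indices into $J_g:=\{j:E_j\in\mathcal{E}_t\}$ and $J_b:=\{j:E_j\notin \mathcal{E}_t\}$. Since the centres are $2h$-spaced, the hypothesis $m(\mathcal{E}_t)\geq (b-a)(1-\epsilon/2)$ forces $|J_b|\leq (b-a)\epsilon/(4h)+O(1)$. For $j\in J_g$, Proposition \ref{pointwisesurface} supplies the desired concentration inside $W_j$, but only for $h<h_{E_j}$, with a threshold that a priori depends on the centre. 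The main obstacle of the argument is promoting this pointwise control to a bound uniform over $j\in J_g$. This will be resolved via Egorov's theorem applied to the quantities $f(E,h):=\#(\mathcal{B}(t;h)\cap[E-h,E+h])/\#\{k:E_k\in[E-h,E+h]\}$, which by Proposition \ref{pointwisesurface} tend to $0$ as $h\to 0$ for every $E\in \mathcal{E}_t$: shrinking $\mathcal{E}_t$ by a subset of measure at most $(b-a)\epsilon/2$ yields a set on which the convergence is uniform, and the discarded centres are absorbed into $J_b$ at the expense of at most doubling its cardinality.

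Summing contributions window-by-window and using the local Weyl bound $\#\{E_k\in W_j\}\leq C(n,V,Q)h^{1-n}$ together with the global Weyl asymptotic $\#\{E_k\in[a,b]\}\sim c_0(b-a)h^{-n}$ then yields
\begin{equation*}
|\mathcal{B}(t;h)|\leq \hat\epsilon\cdot Ch^{1-n}\cdot\tfrac{b-a}{2h}+Ch^{1-n}|J_b|\leq C'h^{-n}\bigl(\hat\epsilon(b-a)+(b-a)\epsilon\bigr),
\end{equation*}
so that the bad fraction is at most $C''(\hat\epsilon+\epsilon)/(b-a)$, where $\hat\epsilon$ is the parameter chosen when invoking Proposition \ref{pointwisesurface}. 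The specific form $C(n,Q,b-a)\epsilon/\epsilon_1^2$ in the statement arises by taking $\hat\epsilon$ of order $\epsilon/\epsilon_1^2$; this choice is not forced by the present argument (where any $\hat\epsilon$ of order $\epsilon$ would do) but is dictated by the downstream use of the proposition in the comparison of eigenvalue and quasi-eigenvalue variation, where a Chebyshev estimate against the slow-torus gap $2\epsilon_1$ produces the factor $\epsilon_1^{-2}$. The only conceptually delicate point is the Egorov-style passage from pointwise to uniform quantum ergodicity across $\mathcal{E}_t$; everything else is a routine counting argument built on the Hadamard formula, microcanonical averaging, and two-sided Weyl asymptotics on $2h$-windows inside the regular band $[a,b]$.
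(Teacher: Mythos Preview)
Your approach differs from the paper's in two respects. The paper does not tile $[a,b]$ by discrete windows or invoke Egorov; instead it forms the variance quantity
\[
f(E,k;h)=1_{|E-E_k|<h}\cdot h^{n-1}\bigl|\langle Q(x,hD)u_k,u_k\rangle-\bar Q_E\bigr|^2,
\]
integrates $\sum_k f(E,k;h)$ over $E\in[a,b]$, and passes to the limit $h\to 0$ by bounded convergence (the integrand tends to $0$ pointwise on $\mathcal{E}_t$ and is dominated uniformly by the local Weyl constant $N$ of \cite{robert}). Swapping $\int dE$ and $\sum_k$, and using $\bar Q_E-\bar Q_{E_k}=O(h)$ on the support of the indicator, yields $h^n\sum_{E_k\in[a,b]}|\dot E_k-\bar Q_{E_k}|^2\lesssim N(b-a)\epsilon$; a Chebyshev inequality at threshold $\epsilon_2$ then bounds the bad fraction by $C\epsilon/\epsilon_2^2$. (The $\epsilon_1^2$ in the displayed statement is therefore a typo for $\epsilon_2^2$; your explanation of it as a downstream slow-torus Chebyshev is not what happens --- the factor arises right here.)

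Your decision to count bad eigenvalues directly rather than control their squared deviations is legitimate and, if carried through, would even eliminate the $\epsilon_2^{-2}$ Chebyshev loss. The genuine gap is the assertion $|J_b|\leq (b-a)\epsilon/(4h)+O(1)$: a measurable set of Lebesgue measure $(b-a)\epsilon/2$ can contain an arbitrarily large fraction of a $2h$-spaced grid, since only the \emph{centres} $E_j$ are required to lie in $[a,b]\setminus\mathcal{E}_t$, not the full windows $W_j$. Egorov does not rescue this: it produces a set on which convergence is uniform but gives no structural control on the complement, so ``absorbing the discarded centres into $J_b$ at the expense of at most doubling its cardinality'' fails for the same reason. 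The clean fix is exactly what the paper's integral argument achieves --- replace the Riemann sum over grid centres by a continuous integral in $E$, so that non-ergodic energies are weighted by $m([a,b]\setminus\mathcal{E}_t)$ rather than by the number of grid points they happen to capture. Alternatively one could further approximate the post-Egorov good set from inside by a \emph{finite} union of closed intervals, after which the grid-point count in the complement genuinely is $m/(2h)+O_\epsilon(1)$; but this step is absent from your sketch.
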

\begin{proof}
Noting that this result is pointwise in time, we omit the parameter $t$ in our notation and we define 
\begin{equation}
f(E,k;h):= 1_{|E-E_k(h)|<h}\cdot h^{n-1}\left|\langle Q(x,hD)u_k(h),u_k(h)\rangle - \frac{1}{\mu_E(\Sigma_E)}\int_{\Sigma_E} Q \, d\mu_E \right|^2.
\end{equation}
From Definition \ref{qesurface} of quantum ergodicity, we have 
\begin{equation}
\sum_{k\in \mathbb{N}}f(E,k;h)\rightarrow 0
\end{equation}
as $h\rightarrow 0$ for each $E\in \mathcal{E}$.
Hence, for any $\hat{\epsilon}>0$, we can find an $h_0>0$ such that
\begin{equation}
\int_a^b \left(\sum_{k\in\N} f(E,k;h)\right)\, dE < \hat{\epsilon}(b-a)(1-\epsilon)+N(b-a)\epsilon
\end{equation}
where $N$ is an upper bound for
\begin{eqnarray}
& &h^{n-1}\sum_{E_k(h)\in [E-h,E+h]} \left|\langle Q(x,hD)u_k(h),u_k(h)\rangle-\frac{1}{\mu_E(\Sigma_E)}\int_{\Sigma_E}Q d\mu_E\right|^2\\
&\leq &\nonumber 4\max\left(\sup_{H^{-1}([a,b])}(Q),\|Q(x,hD)\|_{L^2\rightarrow L^2}\right)^2 \cdot h^{n-1} \#\{k\in \N: E_k(h)\in [E-h,E+h]\}
\end{eqnarray}
which exists due to the uniform Weyl law in $h$-energy bands proven in \cite{robert}.
Taking $\hat{\epsilon}=N\epsilon $, we obtain
\begin{equation}
\int_a^b \left(\sum_{k\in\N} f(E,k;h)\right)\, dE < 2N(b-a)\epsilon
\end{equation}
for $0<h<h_0$.

Now, by interchanging the order of integration and noting that 
\begin{equation}
\left|\frac{1}{\mu_{E_k(h)}(\Sigma_{E_k(h)})}\int_{\Sigma_{E_k(h)}} Q \, d\mu_{E_k(h)} - \frac{1}{\mu_E(\Sigma_E)}\int_{\Sigma_E} Q \, d\mu_E\right|=O(h)
\end{equation}
uniformly for $E_k(h)\in [E-h,E+h]$, we have
\begin{equation}
\label{fubinieqn}
4h^n\sum_{E_k(h)\in [a,b]} \left|\langle Qu_k(h),u_k(h)\rangle-\frac{1}{\mu_{E_k(h)}(\Sigma_{E_k(h)})} \int_{\Sigma_{E_{k}(h)}} Q d\mu_{E_k(h)}\right|^2\leq 2N(b-a)\epsilon
\end{equation}
for $h<h_0$, where $h_0$ has been redefined.

Denoting by $S(t;h)$ the collection of all indices $k\in \N$ with
\begin{equation}
\dot{E}_k(t;h)\in [Q_--\epsilon_2,Q_++\epsilon_2],
\end{equation}
we can deduce from \eqref{fubinieqn} that
\begin{equation}
2h^n\sum_{k\in S(t;h)^c}\epsilon_2^2\leq N(b-a)\epsilon
\end{equation}
which implies
\begin{equation}
\frac{\#S(t;h)}{\#\{E_k(t;h)\in [a,b]\}}>1-\frac{C(n,Q,b-a)\epsilon}{\epsilon_2^2}
\end{equation}
from Weyl's law, as required.









\end{proof}

From the outer regularity of the Lebesgue measure, we can find an open interval $J=(t_1,t_2)\subset [0,\delta]$ such that
\begin{equation}
\frac{m(\mathcal{B}_\epsilon\cap J)}{m(J)}>1-\tilde{\epsilon}.
\end{equation}
for any $\tilde{\epsilon}>0$.
We can strengthen the pointwise bound on eigenvalue flow speed from Proposition \ref{pointwiseband} for $t\in \mathcal{B}_\epsilon$ to an almost-uniform bound on the interval $J$.

\begin{prop}
\label{almostuniformgevrey}
There exists a subset $\tilde{\mathcal{B}}\subseteq \mathcal{B}_\epsilon\cap J$ and a $h_0>0$ such that
\begin{equation}
\frac{m(\tilde{\mathcal{B}})}{m(\mathcal{B}_\epsilon\cap J)}>1-\tilde{\epsilon}
\end{equation}
and for any $h<h_0$ and any $t\in \tilde{\mathcal{B}}$, there exists a subset $Z(t,h)\subset \{k\in \N: E_k(t,h)\in [a,b]\}$ such that
\begin{equation}
\frac{\#Z(t,h_*)}{\#\{k\in \N:E_k(t,h_*)\in [a,b]\}} >  1- C\epsilon\quad \forall 0<h_* < h_0
\end{equation}
and 
\begin{equation}
\dot{E}_k(t,h)\in [Q_--\epsilon_2,Q_++\epsilon_2]\quad \forall k \in Z(t,h).
\end{equation}
where $C$ is the same constant as in \eqref{firstC}.
\end{prop}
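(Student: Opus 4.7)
The plan is to upgrade the pointwise-in-$t$ conclusion of Proposition \ref{pointwiseband} to an almost-uniform statement by an elementary measure-theoretic continuity argument (essentially Egorov's theorem in disguise). For each $t\in \mathcal{B}_\epsilon \cap J$, Proposition \ref{pointwiseband} provides a threshold $h_0(t)>0$ such that the desired density bound and flow speed window hold for all $h<h_0(t)$. The content of the claim is that this threshold can be chosen uniformly in $t$ at the cost of discarding an arbitrarily small proportion of times.

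First I would introduce the function $t\mapsto h_0(t)$, defined say as the supremum of all thresholds for which the conclusion of Proposition \ref{pointwiseband} holds with the constant $C$ absorbing the $\epsilon_2^{-2}$ factor. I would then check measurability: the eigenvalue branches $E_k(\cdot;h)$ are continuous and piecewise smooth in $t$, so for each fixed $h$ the ratio $\#Z(t,h)/\#\{k:E_k(t,h)\in[a,b]\}$ is a measurable function of $t$ (in fact continuous off a Lebesgue-null set). It follows that the sublevel sets
\begin{equation}
A_\eta:=\{t\in \mathcal{B}_\epsilon \cap J : h_0(t)\geq \eta\}
\end{equation}
are measurable. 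By the pointwise conclusion of Proposition \ref{pointwiseband}, $h_0(t)>0$ on $\mathcal{B}_\epsilon \cap J$, so $A_\eta$ increases to $\mathcal{B}_\epsilon \cap J$ as $\eta\downarrow 0$. Continuity from below of Lebesgue measure then furnishes an $h_0>0$ with
\begin{equation}
m(A_{h_0})\geq (1-\tilde{\epsilon})\,m(\mathcal{B}_\epsilon \cap J),
\end{equation}
and I would simply set $\tilde{\mathcal{B}}:=A_{h_0}$.

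To conclude, I would fix any $t\in \tilde{\mathcal{B}}$ and any $h<h_0$; by construction $h<h_0\leq h_0(t)$, so Proposition \ref{pointwiseband} applies at this $(t,h)$ and delivers a set $Z(t,h)$ of eigenvalue indices having density at least $1-C\epsilon$ in $\{k:E_k(t,h)\in[a,b]\}$ and on which the flow speed lies in $[Q_--\epsilon_2,Q_++\epsilon_2]$. This produces the whole family $\{Z(t,h_*)\}_{0<h_*<h_0}$ with the quantifier $\forall h_*<h_0$ as stated, since the single threshold $h_0$ now governs every scale below it.

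The main obstacle is the measurability step: a priori the threshold $h_0(t)$ is only shown to exist pointwise via a $t$-dependent choice of $h_0$ inside the proof of Proposition \ref{pointwiseband}. I expect one needs to verify that the integral estimate $\int_a^b \sum_k f(E,k;h)\,dE\to 0$ underlying Proposition \ref{pointwiseband} is in fact uniform on compact subsets of $\mathcal{B}_\epsilon \cap J$ in a suitable sense, or alternatively to invoke Egorov's theorem directly to the sequence of measurable functions $t\mapsto (2\pi h)^n\#(Z(t,h)^c)$ as $h\downarrow 0$, which converges to zero pointwise on $\mathcal{B}_\epsilon \cap J$ by Proposition \ref{pointwiseband}. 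Either route yields the needed uniform threshold on a subset of relative measure $>1-\tilde{\epsilon}$, and the rest is bookkeeping.
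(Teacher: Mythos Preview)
Your approach is essentially identical to the paper's: the paper defines $G(h)=\{t\in\mathcal{B}_\epsilon\cap J:\text{the density bound holds for all }0<h_*<h\}$, observes via Proposition~\ref{pointwiseband} that $\bigcup_{n\in\mathbb{N}^+}G(1/n)=\mathcal{B}_\epsilon\cap J$, and applies monotone convergence of measure to extract a suitable $h_0$ --- which is exactly your construction with $A_\eta=G(\eta)$. Your discussion of measurability is in fact more careful than the paper's terse argument, which simply takes measurability of $G(h)$ for granted.
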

\begin{proof}
We set
\begin{eqnarray}
G(h)&:=&\nonumber \{t\in \mathcal{B}_\epsilon\cap J:\frac{\#\{k\in\N:E_k(t,h_*)\in [a,b]\textrm{ and }\dot{E}_k(t,h_*)\in [Q_--\epsilon_2,Q_++\epsilon_2]\}}{\#\{k\in \N:E_k(t,h_*)\in [a,b]\}}\\
& \geq& 1-C\epsilon\quad \forall 0<h_*<h\}.
\end{eqnarray}
From Proposition \ref{pointwiseband}, we know that
\begin{equation}
\mathcal{B}_\epsilon\cap J=\bigcup_{h>0} G(h)=\bigcup_{n\in\mathbb{N}^+} G(1/n)
\end{equation}
and so the claim follows from monotone convergence.
\end{proof}

On the other hand, we also have an upper bound for the variation of the quasi-eigenvalues.
\begin{prop}
\label{quasispeedprop}
For all sufficiently small $\delta>0$ and all $t\in[0,\delta]$, we have 
\begin{equation}
\limsup_{h\rightarrow 0}\partial_t \mu_m(t;h)\leq Q_- - \epsilon_1/2.
\end{equation}
for all $m\in \cup_{t\in [0,\delta]}\mathcal{M}_h(t)$ uniformly in $t$.
\end{prop}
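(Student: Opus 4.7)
The plan is to reduce $\partial_t\mu_m$ to the torus average of the perturbation $H^1$ using the quantum Birkhoff normal form, and then invoke the slow torus condition. Since the index $m$ is independent of $t$, we start with
\[
\partial_t\mu_m(t;h)=(\partial_t K^0)(h(m+\vartheta/4),t;h).
\]
By Theorem \ref{main2}, $K^0$ has the Gevrey asymptotic expansion $K^0(I;t,h)\sim\sum_j K_j(I;t)h^j$ in $S_\ell(\T^n\times D)$. Differentiating the realisation \eqref{fioconjflatness} termwise in $t$ and using the Gevrey derivative bounds, we get
\[
(\partial_t K^0)(I,t;h)=\partial_t K_0(I;t)+O(h)
\]
uniformly for $(I,t)$ in any compact subset of $D\times(-1/2,1/2)$. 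Recall that $K_0=K$, the classical Birkhoff normal form of Proposition \ref{bnf2}.

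Next, by the explicit derivative formula \eqref{chp4tderiv} of Proposition \ref{bnf2},
\[
\partial_t K(I;t)=(2\pi)^{-n}\int_{\T^n}H^1(\theta,I)\,d\theta+o(1)
\]
uniformly in $I$ as $t\to 0^+$ (here $H^1$ is $t$-independent in our setting). For any $m\in\bigcup_{t\in[0,\delta]}\mathcal{M}_h(t)$ there exist $t_\ast\in[0,\delta]$ and $\omega_\ast\in\overline{\Omega}$ such that $|h(m+\vartheta/4)-I(\omega_\ast;t_\ast)|<Lh$, by the definition \eqref{indexsetref} of $\mathcal{M}_h(t)$. Writing $I_m:=h(m+\vartheta/4)$ and using the Lipschitz continuity of the torus average $I\mapsto (2\pi)^{-n}\int_{\T^n}H^1(\theta,I)\,d\theta$ (inherited from the Gevrey regularity of $H^1$ on the compact set where $I_m$ lives for small $h,\delta$), we obtain
\[
(2\pi)^{-n}\!\int_{\T^n}\!H^1(\theta,I_m)\,d\theta=(2\pi)^{-n}\!\int_{\T^n}\!H^1(\theta,I(\omega_\ast;t_\ast))\,d\theta+O(h).
\]

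The slow-torus bound \eqref{slowpostori} of Proposition \ref{fattori} applies because $\omega_\ast\in\overline{\Omega}$ and $t_\ast\in[0,\delta]$, giving
\[
(2\pi)^{-n}\int_{\T^n}H^1(\theta,I(\omega_\ast;t_\ast))\,d\theta<Q_--\epsilon_1.
\]
Combining the three displays above yields
\[
\partial_t\mu_m(t;h)\le Q_--\epsilon_1+O(h)+o_{t\to 0}(1),
\]
with all constants independent of $m$ and $t\in[0,\delta]$. Shrinking $\delta$ so that the $o_{t\to 0}(1)$ term is uniformly below $\epsilon_1/2$ for $t\in[0,\delta]$, and then taking $\limsup_{h\to 0}$, gives the desired uniform bound $Q_--\epsilon_1/2$.

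The routine calculations are the termwise differentiation of the Borel realisation in $t$ and the Lipschitz estimate on torus averages; the only real point requiring care is that every error term be uniform in $m$ and $t$. Uniformity in $t$ comes from the joint Gevrey regularity $G^{\rho,\rho',\rho'}$ of $K$ on $\T^n\times\tilde{D}\times(-1/2,1/2)$ provided by Proposition \ref{bnf2}, and uniformity in $m$ follows because the set $\{I_m:m\in\bigcup_{t\in[0,\delta]}\mathcal{M}_h(t)\}$ is confined, for small $h$ and $\delta$, to a fixed compact neighbourhood of $S(0)\subset\tilde D$; this is the step that would be most delicate to make quantitative if one wanted an effective version.
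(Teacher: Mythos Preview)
Your proof is correct and follows essentially the same approach as the paper: reduce $\partial_t\mu_m$ to $\partial_t K_0$ up to $O(h)$, use Proposition~\ref{bnf2} (specifically \eqref{chp4tderiv}) to identify $\partial_t K_0$ with the torus average of $H^1$ up to an $o_{t\to 0}(1)$ error absorbed by shrinking $\delta$, pass from $I_m$ to $I(\omega_\ast;t_\ast)$ via the definition of $\mathcal{M}_h$ and Lipschitz continuity, and conclude by \eqref{slowpostori}. The only cosmetic difference is that the paper differentiates the explicit expansion $K_0(I;t)=H^0(I)+t\,(2\pi)^{-n}\!\int_{\T^n}H^1\,d\theta+O(t^{9/8})$ rather than citing \eqref{chp4tderiv}, which amounts to the same thing.
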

\begin{proof}
From Proposition \ref{bnf2}, we have 
\begin{equation}
K_0(I;t)=H^0(I)+t\cdot (2\pi)^{-n}\int_{\T^n} H^1(\theta,I)\, d\theta +O(t^{9/8})
\end{equation}
for any $I\in D$.
Hence we have
\begin{equation}
\partial_t(K_0(h(m+\vartheta/4);t))< (2\pi)^{-n}\int_{\T^n} H^1(\theta,h(m+\vartheta/4)) \, d\theta + \epsilon_1/2
\end{equation}
for all $t\in[0,\delta]$, taking $\delta$ sufficiently small.

From the definition of $\mathcal{M}_h(t)$, we know that $|h(m+\vartheta/4)-I(\omega;t)|< Lh$ for some $\omega\in \overline{\Omega}$, and so from the regularity of $I$ in $t$ it follows that
\begin{equation}
\partial_t(K_0(h(m+\vartheta/4);t)) < (2\pi)^{-n}\int_{\T^n} H^1(\theta,I(\omega;t)) \, d\theta+\epsilon_1/2+O(h)
\end{equation}
for some $\omega\in \overline{\Omega}$. This allows us to use \eqref{slowpostori}.

Indeed, we have 
\begin{eqnarray}
\partial_t\mu_m(t;h)&=&\partial_t(K^0(h(m+\vartheta/4);t,h))\\
&=& \partial_t(K_0(h(m+\vartheta/4);t))+O(h)\\
\Rightarrow \limsup_{h\rightarrow 0} \partial_t\mu_m(t;h)& \leq & Q_- - \epsilon_1/2.
\end{eqnarray}
\end{proof}
In particular, we can choose $B>0$ and $h_0>0$ such that 
\begin{equation}
\label{Bdefn}
\partial_t \mu_m(t;h)<B < Q_- -\epsilon_1/4.
\end{equation}
for all $t\in J$ and all $h<h_0$.

\begin{remark}
We have abused notation slightly here by writing $\mu_m(t;h)$ even when $m\notin \mathcal{M}_h(t)$. That is, we track the behaviour of $K^0(h(m+\vartheta/4),t;h)$ even for $t\in[0,\delta]$ such that this does not correspond to a quasi-eigenvalue in our family. This is a necessity due to the rough nature of the set $\{I(\omega;t):\omega\in\overline{\Omega}\}$ of nonresonant actions. Indices $m\in \Z^n$ will typically be elements of $\mathcal{M}_h(t)$ for only $O(h)$-sized $t$-intervals at a time.
\end{remark}

\section{Spectral non-concentration}

We can now complete proof of Theorem \ref{nonqe} by proving a spectral non-concentration result that follows from the results of Section \ref{flowspeedsec}.

\begin{prop}
\label{specnonconckamprop}
There exists sufficiently small $\epsilon$, such that 
\begin{equation}
\label{specnonconckam}
\frac{N(t_*;h)}{\#\mathcal{M}_h(t_*)}<1/2
\end{equation}
for some $t_*\in J$ and all $0<h<h_0$, where $N$ is as in \eqref{Ndef}.
\end{prop}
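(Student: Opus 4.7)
The plan is to argue by contradiction. Suppose there is a sequence $h_j\downarrow 0$ along which $N(t;h_j) \geq \frac{1}{2}\#\mathcal{M}_{h_j}(t)$ for every $t\in J$. Integrating over $J$ and invoking the Weyl-type asymptotic \eqref{indexsetcount} together with the uniform positivity (continuous in $t$) of $\mu(\mathbb{T}^n\times\{I(\omega,t):\omega\in\overline{\Omega}\})$ on $J\subseteq[0,\delta]$, this yields a lower bound
\[
\int_J N(t;h)\,dt \;\geq\; C_1\,|J|\,h^{-n}
\]
for some $C_1>0$ and all sufficiently small $h$ along the sequence. The plan is to derive a competing upper bound which, for $\epsilon$ and $\tilde{\epsilon}$ chosen small, contradicts this.

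Write $\int_J N(t;h)\,dt = \sum_k |S_k|$ with $S_k=\{t\in J: E_k(t;h)\in W(t;h)\}$, the sum effectively being over the $O(h^{-n})$ indices $k$ for which $E_k(t;h)\in[a,b]$ at some $t\in J$, by the eigenvalue count \eqref{eigencount}. For each $k$ set $A_k := \{t\in\tilde{\mathcal{B}}: k\in Z(t,h)\}$ and split $S_k=(S_k\cap A_k)\cup(S_k\setminus A_k)$. Fubini combined with Proposition \ref{almostuniformgevrey} (which controls both the size of $J\setminus\tilde{\mathcal{B}}$ and of $Z(t,h)^c$) gives the bad-pair bound
\[
\sum_k |S_k\setminus A_k| \;\leq\; |J\setminus\tilde{\mathcal{B}}|\cdot O(h^{-n}) + \int_{\tilde{\mathcal{B}}}\#Z(t,h)^c\,dt \;\leq\; C_2(\tilde{\epsilon}+\epsilon)|J|h^{-n}.
\]

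For the good part, I exploit the relative speed. Pick $\epsilon_2<\epsilon_1/8$: by Proposition \ref{almostuniformgevrey} we have $\dot{E}_k(t;h) \geq Q_- - \epsilon_2$ on $A_k$, while by Proposition \ref{quasispeedprop} together with \eqref{Bdefn} we have $\partial_t\mu_m(t;h) \leq B < Q_- - \epsilon_1/4$ uniformly in $m\in\bigcup_{t\in[0,\delta]}\mathcal{M}_h(t)$, so $\partial_t(E_k-\mu_m) \geq v := \epsilon_1/8 > 0$ on $A_k$. Since $\dot{E}_k\geq 0$ throughout $J$ (the perturbing operator $Q(x,hD)$ being positive), on each connected component $I$ of $A_k$ the function $t\mapsto E_k(t)-\mu_m(t)$ is strictly increasing with derivative $\geq v$, so the dwell set $\{t\in I:|E_k(t)-\mu_m(t)|\leq h^{\alpha}\}$ is a single subinterval of length $\leq 2h^{\alpha}/v$. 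On the component $I$, only those $m$ contribute for which $\mu_m$ meets the energy range swept by $E_k$ over $I$, an interval of length $\leq (M+B)|I|+2h^{\alpha}$; by the quasi-eigenvalue density $O(h^{-n})$ per unit energy, this yields $O(h^{-n}(|I|+h^{\alpha}))$ relevant indices. Multiplying, summing over components of $A_k$, and then over $k$, one obtains
\[
\sum_k |S_k\cap A_k| \;=\; O\!\left(h^{\alpha-2n}|J|/v\right) \;=\; o(h^{-n}|J|)
\]
since $\alpha>2n$. Combining with the bad-pair bound and choosing $\epsilon$ (and hence $\tilde{\epsilon}$) so small that $C_2(\tilde{\epsilon}+\epsilon)<C_1/2$ contradicts the lower bound, establishing \eqref{specnonconckam}.

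The most delicate step is controlling the geometry of $A_k$: the per-component estimate leaves a parasitic contribution proportional to the number $K(k)$ of connected components of $A_k$ times $h^{\alpha}$, and one must ensure that $\sum_k K(k)\cdot h^{2\alpha-n}$ does not dominate. This ultimately requires a uniform bound (in both $k$ and $h$) on the number of times $\dot{E}_k(t;h)$ crosses the thresholds $Q_{-}-\epsilon_2$ and $Q_{+}+\epsilon_2$ on $J$. Since $P_h(t)$ is an analytic family of type A, the eigenvalue branches $E_k(t;h)$ are piecewise real-analytic in $t$ and only finitely many such crossings can occur on the fixed compact interval $J$; checking that this regularity furnishes the required uniform bound and absorbs the parasitic term into the $o(h^{-n})$ estimate is the main technical obstacle of the argument.
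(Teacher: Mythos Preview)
Your overall strategy---average $\int_J N(t;h)\,dt$, split into a ``bad'' part controlled by $\tilde\epsilon$ and $C\epsilon$, and a ``good'' part where the relative speed $\partial_t(E_k-\mu_m)\geq v>0$ forces short dwell times---is the same as the paper's. The gap is exactly the one you flag at the end, and it is not recoverable by real-analyticity.

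The set $A_k=\{t\in\tilde{\mathcal{B}}:k\in Z(t,h)\}$ is an intersection with $\tilde{\mathcal{B}}$, which in Proposition~\ref{almostuniformgevrey} is produced by a monotone-convergence (Egorov-type) argument and carries no structure beyond being measurable with large measure; in particular it can have infinitely many components. Even if you replace $A_k$ by $\hat A_k=\{t\in J:\dot E_k(t;h)\in[\tilde Q_-,\tilde Q_+]\}$ and absorb $\hat A_k\setminus A_k\subset J\setminus\tilde{\mathcal{B}}$ into the bad part, real-analyticity of $t\mapsto\dot E_k(t;h)$ only tells you that $\hat A_k$ has finitely many components for each fixed $(k,h)$. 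It gives no bound that is uniform as $h\to 0$: the analytic family is parametrised by $h$, and the number of threshold crossings can blow up along the sequence $h_j$. So the parasitic term $h^{2\alpha-n}\sum_k K(k)$ is genuinely uncontrolled.

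The paper bypasses this by decomposing the window set $\{t:E_k(t;h)\in W(t;h)\}$ not by components of $A_k$ but by which quasi-eigenvalue $\mu_{m(j)}$ the eigenvalue is currently close to. After the shear $\tilde E=E-Bt$, $\tilde\mu_m=\mu_m-Bt$, the shifted quasi-eigenvalues are non-increasing, so for each $m$ the set of times where $E$ lies in the $m$-th window contributes at most one interval $I_j$, and crucially the $m(j)$ are pairwise distinct. Hence the number of intervals is at most $\#\bigcup_t\mathcal{M}_h(t)=O(h^{-n})$, with no reference to $A_k$ at all. The bound on $\sum_j|I_j|$ then comes not from a dwell-time estimate but from a global energy balance: $E(t_2)-E(t_1)$ is bounded below by $(1-\eta^{1/2})m(J)\tilde Q_-$ (since eigenvalues in $\mathcal{F}$ spend most of $J$ at speed $\geq\tilde Q_-$) and above by $B\sum_j|I_j|+O(h^{\alpha-n})+\tilde Q_+(\ldots)+M\cdot\eta^{1/2}m(J)$; rearranging and using $B<\tilde Q_+$ forces $\sum_j|I_j|$ small. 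Your relative-speed argument is local and per-window, whereas the paper's is global and per-eigenvalue; the latter is what eliminates the component-counting obstacle.
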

\begin{proof}
We begin by defining
\begin{equation}
A(t;h)=\{k\in\N : E_k(t;h)\in W(t;h)\}.
\end{equation}
where $W(t;h)$ is as in \eqref{wthdef}.
It then suffices to show that
\begin{equation}
\frac{\#A(t;h)}{\#\mathcal{M}_h(t)}
\end{equation}
can be made arbitrarily small for some $t\in J$ by taking $\epsilon$ arbitrarily small in the definition of $\mathcal{B}_\epsilon$ given in the introduction of Section \ref{flowspeedsec}.

We do this by averaging in $t$ and exploiting Proposition \ref{almostuniformgevrey}. First we define
\begin{equation}
B_k(h):=\{t\in J: k\in Z(t;h)\}
\end{equation} 
where $Z$ is as defined in Proposition \ref{almostuniformgevrey}, and consider the problem of bounding 
\begin{equation}
\sum_{k\in \N} \int_J 1_{B_k}(t)\, dt
\end{equation}
noting that only finitely many terms of the sum are nonzero. This quantity controls the average amount of time that the individual eigenvalues that meet $W(t;h)$ for some $t\in J$ spend travelling at the approximately the ergodic rate given by the small interval $[\tilde{Q}_-,\tilde{Q}_+]:=[Q_--\epsilon_2,Q_++\epsilon_2]$.

By definition, this can be re-written as 
\begin{eqnarray}
& &\int_{\tilde{\mathcal{B}}} \#Z(t;h)\, dt\\
&>& (1-C\epsilon)\cdot \int_{\tilde{\mathcal{B}}}\#\{k\in \N:E_k(t;h)\in [a,b]\} \, dt\\
&>& (1-C\epsilon)(1-\tilde{\epsilon})m(J)\cdot \#\{k\in \N: E_k(0;h)\in [a,b-M\delta]\}
\end{eqnarray}
where $\tilde{\mathcal{B}}$ is the set constructed in Proposition \ref{almostuniformgevrey}.

Hence the average amount of time spent by eigenvalues that meet $W(t;h)$ for some $t\in J$ is bounded below by 
\begin{equation}
(1-C\epsilon)(1-\tilde{\epsilon})m(J)\cdot \frac{\#\{k\in \N: E_k(0;h)\in [a,b-M\delta]\}}{\#\{k\in\N:E_k(0;h)\in [a-M\delta,b]\}}=:(1-\eta)m(J).
\end{equation}
where $\eta\rightarrow 0$ if $\epsilon,\tilde{\epsilon}$, and $\delta$ do.

It follows that at least $(1-\eta^{1/2})$ proportion of the eigenvalues $E_k(t;h)$ that meet $W(t;h)$ for some $t\in J$ have speed $E_k'(t;h)\in [\tilde{Q}_-,\tilde{Q}_+]$ for at least $m(J)(1-\eta^{1/2})$ time. We denote the collection of such indices $k$ by $\mathcal{F}$.

Taking $E(t;h):=E_k(t;h)$ for some $k\in \mathcal{F}$, we have
\begin{equation}
\label{eigenjumplower}
E(t_2;h)-E(t_1;h)> m(J)(1-\eta^{1/2})\tilde{Q}_-.
\end{equation}

On the other hand, we now bound $E(t_2;h)-E(t_1;h)$ above.
To do this, we define $\tilde{E}(t;h)=E(t;h)-Bt$ and $\tilde{\mu}_m(t;h)=\mu_m(t;h)-Bt$
where $B$ was the upper bound in \eqref{Bdefn}.

Then the transformed quasi-eigenvalue windows $\tilde{\mu}_m(t;h)$ are non-increasing. From this it follows that if $\tilde{E}(s;h)\in [\tilde{\mu}_m(s;h)-h^\alpha,\tilde{\mu}_m(s;h)+h^\alpha]$ and $m\in\mathcal{M}_h(s)$ for some $s\in J$, then $\tilde{E}(s';h)-\tilde{E}(s;h)<2h^{\alpha}$, where $s'$ is the final time $t\in J$ such that $m\in\mathcal{M}_h(t)$ and $\tilde{E}(t;h)\in [\tilde{\mu}_m(t;h)-h^\alpha,\tilde{\mu}_m+h^\alpha]$. This implies that $E(s';h)-E(s;h)<2h^\alpha+B(s'-s)$.

Generalising this idea, we can cover $\{t\in J:E(t;h)\in W(t;h)\}$ as a finite union of almost-disjoint intervals $\cup_jI_j$ with $I_j=[s_j,s_j']$ defined as follows:

\begin{enumerate}
\item We define $s_0:=\inf\{t\in J:E(t;h)\in W(t;h)\}$, and we choose an $m(0)\in\mathcal{M}_h(s_0)$ such 
that $E(t;h)\in [\mu_{m(0)}(t;h)-h^\alpha,\mu_{m(0)}(t;h)+h^\alpha]$ and $m(0)\in \mathcal{M}_h(t)$ for 
all sufficiently small $t-s_0>0$.
\item We then define $s_0':=\sup\{t\in J:E(t;h)\in [\mu_{m(0)}(t;h)-h^\alpha,\mu_{m(0)}(t;h)+h^\alpha]\}$.
\item If $t\in \{J: t>s_{j-1}' \textrm{ and } E(t;h)\in W(t;h)\}$ is empty, we terminate the inductive 
process, otherwise we proceed inductively by defining $s_j:=\inf\{t\in J: t>s_{j-1}' \textrm{ and } E
(t;h)\in W(t;h)\}$ and choosing a corresponding $m(j)\in\mathcal{M}_h(s_j)$ such that $E(t;h)\in [\mu_
{m(j)}(t;h)-h^\alpha,\mu_{m(j)}(t;h)+h^\alpha]$ and $m(j)\in \mathcal{M}_h(t)$ for all sufficiently 
small $t-s_{j-1}>0$.
\item We then define $s_j':=\sup\{t\in J:E(t;h)\in [\mu_{m(j)}(t;h)-h^\alpha,\mu_{m(j)}(t;h)+h^\alpha]\}$.
\end{enumerate}
From the Weyl asymptotics, this procedure must terminate after finitely many iterations.

\begin{remark}
In the case that $E(t;h)$ is still in a quasi-eigenvalue window after the window corresponding to $\mu_{m(j)}$, we will have $s_{j+1}=s_j'$. This is the only kind of overlap possible between the intervals $I_j$. We also remark that the $m(j)$ are necessarily distinct, by the nature of this construction.
\end{remark}
For each such interval $I_j=[s_j,s_j']$, we have that $E(s_j';h)-E(s_j;h)<2h^\alpha+B(s_j'-s_j)$.

As there can be at most $O(h^{-n})$ intervals $I_j$, we obtain:
\begin{equation}
\label{coolbound}
\sum_{j}E(s_j';h)-E(s_j;h)< B\sum_j (s_j'-s_j)+O(h^{\alpha-n}).
\end{equation}

For such eigenvalues, we thus obtain the upper bound
\begin{eqnarray}
& &\nonumber E(t_2;h)-E(t_1;h)\\
&<&\nonumber \sum_j (E(s_j';h)-E(s_j;h))+\left(m(J)(1-\eta^{1/2})-\sum_j (s_j'-s_j)\right)\tilde{Q}_++m(J)\eta^{1/2}M\\
&<& \label{eigenjumpupper} (B-\tilde{Q}_+)\sum_j (s_j'-s_j)+m(J)\eta^{1/2}M+m(J)(1-\eta^{1/2})\tilde{Q}_+
\end{eqnarray}
in the limit $h\rightarrow 0$.

Rearranging \eqref{eigenjumpupper} and using \eqref{eigenjumplower}, we arrive at
\begin{equation}
(\tilde{Q}_+-B)\sum_j (s_j'-s_j) < m(J)\eta^{1/2}M+m(J)(1-\eta^{1/2})(\tilde{Q}_+-\tilde{Q}_-).
\end{equation}
By taking $\epsilon, \tilde{\epsilon}$ and $\delta$ small and then passing to sufficiently small $h>0$ we can thus bound 
\begin{equation}
\frac{1}{|J|}\int_J 1_{A_k}(t;h)\, dt
\end{equation}
by an arbitrarily small positive constant $\gamma$ for all $k\in\mathcal{F}$.

Hence we have
\begin{eqnarray}
& &\frac{1}{|J|}\int_J \frac{1}{\#\mathcal{M}_h(t)}\sum_{k\in\N} 1_{A(t;h)}(k)\, dt\\
&\leq & \frac{1}{\inf_{t\in [0,\delta]}\#\mathcal{M}_h(t)}\sum_{k\in \N} \frac{1}{|J|}\int 1_{A(t;h)}(k)\, dt\\
& <& (\eta^{1/2}+(1-\eta^{1/2})\gamma)\cdot \frac{\#\{k\in\N:E_k(t;h)\in[a,b]\textrm{ for some }t\in [0,\delta]\}}{\inf_{t\in[0,\delta]}\#\mathcal{M}_h(t)} \\
&<& \frac{1}{2}
\end{eqnarray}
by using the remark at the end of Section \ref{6intro}, taking $\eta$ small and then passing to sufficiently small $h>0$. 

This average being less than $1/2$ implies the existence of a $t_*\in J$ satisfying the claims of the proposition (possibly dependent on the arbitrarily small parameter $h>0$).
\end{proof}

We proceed as in the beginning of Proposition \ref{spectral}. We denote by $U$, the $h$-dependent span of all eigenfunctions with eigenvalues in $\mathcal{I}(t_*,h)$.

\begin{prop}
\label{spectralcont}
For sufficiently small $h>0$, the projections
\begin{equation}
w_m(t_*,h)=\pi_U(v_m(t_*,h))
\end{equation}
are linearly independent.
\end{prop}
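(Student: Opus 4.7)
The plan is to follow the strategy used at the start of Proposition \ref{spectral}: establish quantitatively that the Gram matrix $G_{ml} = \langle w_m, w_l \rangle$ indexed by $m, l \in \mathcal{M}_h(t_*)$ differs from the identity by an operator of exponentially small norm, so that $G$ is invertible for small $h$ and the $w_m$ are linearly independent. The ingredients I would use are the exponential quasimode accuracy and exponential almost-orthonormality provided by Definition \ref{gevmodes}, together with a uniform spectral gap between the quasi-eigenvalues $\mu_m$ and the complement of $\mathcal{I}(t_*)$.

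First I would revisit the construction in Proposition \ref{fattori} and shrink the radius $r$ if necessary so that $K_0(I(\omega, t), t) \in [K_0(I(\omega_0, t), t) - c/2,\, K_0(I(\omega_0, t), t) + c/2]$ for every $\omega \in \overline{\Omega}$ and every $t \in [0, \delta]$; this does not affect any earlier conclusion, since $\Lambda(t)$ remains of positive symplectic measure. Since Theorem \ref{symbolconst} gives $K^0(I; t, h) = K_0(I; t) + O(h^2)$ and $|h(m+\vartheta/4) - I(\omega, t_*)| < Lh$ for some $\omega \in \overline{\Omega}$, we obtain $\mu_m = K_0(I(\omega, t_*), t_*) + O(h)$, and hence
\begin{equation*}
\mathrm{dist}(\mu_m, \mathbb{R} \setminus \mathcal{I}(t_*)) \geq c/4
\end{equation*}
for every $m \in \mathcal{M}_h(t_*)$ and every sufficiently small $h > 0$.

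Next I would bound $\|\pi_{U^\perp} v_m\|$. Expanding in the eigenbasis $v_m = \sum_k c_{m,k}\, u_k(t_*;h)$ and using self-adjointness of $P_h(t_*)$ yields
\begin{equation*}
\sum_k (E_k(t_*; h) - \mu_m)^2 |c_{m,k}|^2 = \|(P_h(t_*) - \mu_m) v_m\|^2 \leq C e^{-2c_0/h^{1/\rho}}
\end{equation*}
by Definition \ref{gevmodes}. Restricting the sum to $E_k \notin \mathcal{I}(t_*)$ and applying the gap estimate above gives $\|\pi_{U^\perp}(v_m)\|^2 \leq (4/c)^2 C e^{-2c_0/h^{1/\rho}}$. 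Combining this with $|\langle v_m, v_l\rangle - \delta_{ml}| \leq C e^{-c_0/h^{1/\rho}}$ and the self-adjointness identity $\langle w_m, w_l\rangle = \langle v_m, v_l\rangle - \langle \pi_{U^\perp} v_m, \pi_{U^\perp} v_l\rangle$ yields $|\langle w_m, w_l\rangle - \delta_{ml}| = O(e^{-c'/h^{1/\rho}})$ uniformly in $m, l$ for some $c' > 0$.

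Since $\#\mathcal{M}_h(t_*) = O(h^{-n})$ by \eqref{indexsetcount}, the Hilbert--Schmidt norm satisfies $\|G - I\|_{HS}^2 = O(h^{-2n} e^{-2c'/h^{1/\rho}}) \to 0$ as $h \to 0$. Hence $\|G - I\| < 1$ for small $h$, so $G$ is invertible, which is equivalent to the linear independence of $\{w_m\}_{m \in \mathcal{M}_h(t_*)}$. The main technical point is securing the uniform spectral gap displayed above, which is what necessitates the mild shrinking of $r$; aside from this, the proof is a direct adaptation of the opening argument of Proposition \ref{spectral}, with the exponential decay coming from Gevrey regularity effortlessly dominating the polynomial growth of $\#\mathcal{M}_h(t_*)$.
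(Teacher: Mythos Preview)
Your proposal is correct and follows essentially the same route as the paper: bound $\|\pi_{U^\perp}v_m\|$ via the quasimode estimate and a spectral gap, deduce that the Gram matrix of the $w_m$ is an exponentially small perturbation of the identity, and conclude invertibility by Neumann series. The only real difference is how you secure the gap: you shrink $r$ so that every $\mu_m$ sits at distance at least $c/4$ from $\partial\mathcal{I}(t_*)$, whereas the paper works with the weaker gap $h^\alpha$ (noting that $E_j\notin\mathcal{I}(t_*)$ forces $|E_j-\mu_m|>h^\alpha$), and then absorbs the resulting polynomial loss $h^{-2\alpha}$ into the exponential decay. Your version is slightly cleaner and makes the gap explicit, but the two arguments are otherwise the same.
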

\begin{proof}
First, we show that the estimate from Definition \ref{gevmodes} on the error of quasimodes implies that the projections $\pi_U(v_m(t_*,h))$ are small. In particular, for $m\in \mathcal{M}_h(t_*)$, we have
\begin{eqnarray*}
\|(P_h(t_*)-\mu_m(t_*,h))\sum_{j\in\mathbb{N}}\langle v_m(t_*,h),u_j(t_*,h)\rangle u_j\|^2 & < & Ce^{-ch^{-1/\rho}}\\ 
\Rightarrow \sum_{|E_j-\mu_m|>h^\alpha}^\infty |E_j(t,h)-\mu_m(t,h)|^2|\langle v_m(t_*,h),u_j(t_*,h)\rangle|^2 &<& Ce^{-ch^{-1/\rho}} \\
\Rightarrow \sum_{E_j\notin \mathcal{I}(t_*,h)}^\infty |\langle v_m(t_*,h),u_j(t_*,h)\rangle|^2 &<& \frac{Ce^{-ch^{-1/\rho}}}{h^{2\alpha}}.
\end{eqnarray*}
Hence there exist constants $c,C>0$ such that for sufficiently small $h$, we have 
\begin{equation}
\|\pi_U(v_m(t_*,h))\|^2\geq 1-Ce^{-ch^{-1/\rho}}.
\end{equation}
It follows that the $\pi_U(v_m(t,h))$ are almost orthogonal for distinct $m,k\in\mathcal{M}_h(t)$.
\begin{eqnarray*}
|\langle \pi_U(v_m(t_*,h)),\pi_U(v_k(t_*,h))\rangle|&\leq& |\langle v_m(t_*,h),v_k(t_*,h)\rangle|+|\langle\pi_{U^\perp}(v_m(t_*,h)),\pi_{U^\perp}(v_k(t,h))\rangle|\\
& < & Ce^{-ch^{-1/\rho}} + \sqrt{(1-\|\pi_U(v_m(t,h))\|^2)(1-\|\pi_U(v_k(t,h))\|^2)}.
\end{eqnarray*}

Hence there exist constants $c,C>0$ such that we have 
\begin{equation}
|\langle\pi_U(v_m(t_*,h)),\pi_U(v_k(t_*,h))\rangle-\delta_{k,m}|\leq Ce^{-ch^{-1/\rho}}.
\end{equation}
for all sufficiently small $h$.

If we enumerate the quasimodes $v_m(t_*,h)$ by positive integers rather than $m\in \Z^n$, we can then form the Gram matrix $M(h)\in \textrm{Mat}(\#\mathcal{M}_h(t_*),\R)$, with entries given by
\begin{equation}
M_{ij}(h)=\langle \pi_U(v_i(t_*,h)),\pi_U(v_j(t_*,h))\rangle.
\end{equation}

Since
\begin{equation}
\label{approxid2}
\|M-I\|^2_{HS}\leq C(\#\mathcal{M}_h(t_*))^2e^{-ch^{-1/\rho}}
\end{equation}
for some constants $c,C>0$, it follows as in Proposition \ref{spectral} that we can invert $M=I+(M-I)$ as a Neumann series. Since $M$ is nonsingular, we can therefore conclude that the collection of functions
\begin{equation}
\{\pi_U(v_m(t_*,h)):m\in \mathcal{M}_h(t_*)\}
\end{equation}
is linearly independent.
\end{proof}


We are now in a position to complete the proof of Theorem \ref{nonqe}.

\begin{proof}[Completion of proof of Theorem \ref{nonqe}]
Having chosen $\delta>0$ in Proposition \ref{quasispeedprop}, we have shown in Proposition \ref{specnonconckamprop} that if there does not exist an $\epsilon>0$ satisfying the claim of Theorem \ref{nonqe}, then these exists a $t_*\in [0,\delta]$ at which we have the spectral non-concentration result \eqref{specnonconckam}.

On the other hand, we have shown in Proposition \ref{spectralcont} that the projections $\pi_U(v_m(t_*,h))$ are $\#\mathcal{M}_h(t_*)$ linearly independent vectors in a vector space of dimension $\dim(U)=N(t_*,h)<\#\mathcal{M}_h(t_*)/2$.

This contradiction completes the proof.
\end{proof}

\newpage

\counterwithin{thm}{chapter}
\counterwithin{equation}{chapter}

\appendix

\chapter{Estimates for analytic functions}
\counterwithin{thm}{chapter}
\counterwithin{equation}{chapter}

In this appendix we prove several elementary but important estimates for analytic functions.

\begin{prop}
\label{cauchyappendix}
Suppose $\tilde{\Omega}_j \subset \C$ are open sets and $\Omega_j\subset \tilde{\Omega}_j$ are such that $\textrm{dist}(\Omega_j,\C\setminus \tilde{\Omega_j})<r_j$. 

Define
\begin{equation}
\Omega=\prod_{j=1}^n \Omega_j
\end{equation}
and
\begin{equation}
\tilde{\Omega}=\prod_{j=1}^n \tilde{\Omega}_j.
\end{equation}

If the analytic function $f:\tilde{\Omega}^n\rightarrow \mathbb{C}$ satisfies 
\begin{equation}
\|f\|_{\Omega}=A < \infty
\end{equation}
then we have
\begin{equation}
\|\partial^\alpha_zf\|_\Omega \leq Ar^{-\alpha} \alpha!
\end{equation}
for each multi-index $\alpha$.
\end{prop}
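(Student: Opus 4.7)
The plan is to reduce to the one-variable Cauchy estimate by applying the multivariable Cauchy integral formula on a polydisc adapted to the product structure of $\Omega$ and $\tilde{\Omega}$. (I read the hypothesis as saying $\operatorname{dist}(\Omega_j,\mathbb{C}\setminus\tilde\Omega_j)\geq r_j$, so that a polydisc of polyradius $r=(r_1,\ldots,r_n)$ centred at any point of $\Omega$ is contained in $\tilde\Omega$, and I interpret $\|f\|_\Omega=A$ as the supremum of $|f|$ over the closure of $\tilde\Omega$ — the standard setup for Cauchy estimates.)

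First I would fix $z_0=(z_{0,1},\ldots,z_{0,n})\in\Omega$ and let $T_j$ denote the positively oriented circle of radius $r_j$ about $z_{0,j}$. By the distance hypothesis, the closed polydisc $\overline{\Delta}(z_0,r):=\prod_{j=1}^n\overline{\Delta}(z_{0,j},r_j)$ lies inside $\tilde\Omega$. Iterating the one-variable Cauchy formula in each coordinate (which is justified by analyticity of $f$ in each variable separately and the boundedness of the integrand) gives
\begin{equation}
f(w)=\frac{1}{(2\pi i)^n}\int_{T_1}\cdots\int_{T_n}\frac{f(\zeta_1,\ldots,\zeta_n)}{\prod_{j=1}^n(\zeta_j-w_j)}\,d\zeta_n\cdots d\zeta_1
\end{equation}
for $w$ in a neighbourhood of $z_0$ inside $\Delta(z_0,r)$.

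Next I would differentiate under the integral sign $\alpha_j$ times in each variable $w_j$; this is permissible because the integrand and all of its partials in $w$ are jointly continuous in $(w,\zeta)$ on the compact set where $w$ ranges over a small closed neighbourhood of $z_0$ and $\zeta$ ranges over $T_1\times\cdots\times T_n$. Evaluating at $w=z_0$ yields
\begin{equation}
\partial_z^\alpha f(z_0)=\frac{\alpha!}{(2\pi i)^n}\int_{T_1}\cdots\int_{T_n}\frac{f(\zeta)}{\prod_{j=1}^n(\zeta_j-z_{0,j})^{\alpha_j+1}}\,d\zeta_n\cdots d\zeta_1.
\end{equation}

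Finally, I would estimate the integrand pointwise: on $T_j$, $|\zeta_j-z_{0,j}|=r_j$, while $|f(\zeta)|\leq A$ by hypothesis. The length of each contour $T_j$ is $2\pi r_j$, so the total integral is bounded by $A\prod_j(2\pi r_j)\cdot\prod_j r_j^{-(\alpha_j+1)}$, which combined with the prefactor $\alpha!/(2\pi)^n$ gives exactly $A\alpha!\,r^{-\alpha}$ where $r^{-\alpha}:=\prod_j r_j^{-\alpha_j}$. Taking the supremum over $z_0\in\Omega$ completes the argument. The only conceptual content is the application of the one-variable Cauchy formula in each slot and the observation that the distance hypothesis guarantees the required polydisc lies inside $\tilde\Omega$; there is no real obstacle.
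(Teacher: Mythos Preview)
Your proof is correct and follows exactly the same route as the paper's own proof: apply the iterated Cauchy integral formula on a polydisc of polyradius $r$ centred at $z_0\in\Omega$, differentiate under the integral, and bound trivially. Your version is in fact more careful, and your remarks on the intended reading of the distance hypothesis and of $\|f\|_\Omega$ correctly identify minor typos in the statement.
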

\begin{proof}
From the Cauchy integral formula, we have
\begin{equation}
f(z)=\frac{1}{(2\pi i)^n}\oint_{\partial B(z_1,r_1)}\oint_{\partial B(z_2,r_2)}\ldots \oint_{\partial B(z_n,r_n)} \frac{f(w)}{w-z}\, dw_1\, dw_2 \ldots \, dw_n.
\end{equation}
which yields
\begin{equation}
\partial^\alpha_z f(z)=\frac{\alpha!}{(2\pi i)^n}\oint_{\partial B(z_1,r_1)}\oint_{\partial B(z_2,r_2)}\ldots \oint_{\partial B(z_n,r_n)} \frac{f(w)}{(w-z)^{\alpha+1}}\, dw_1\, dw_2 \ldots \, dw_n.
\end{equation}
upon repeated differentiation, where $1$ denotes the multi-index $(1,1,\ldots,1)$.
Hence 
\begin{equation}
\|\partial^\alpha_zf\|_\Omega \leq Ar^{-\alpha} \alpha!
\end{equation}
as required.
\end{proof}

\begin{prop}
\label{fourierappendix}
Suppose $f$ is a $1$-periodic analytic function on the complex strip 
\begin{equation}
S_\sigma=\{z:|\textrm{Im}(z)|<\sigma\}\subset \C^n
\end{equation}
with the estimate 
\begin{equation}
\|f\|_{S_\sigma}=A<\infty.
\end{equation}
Then for $m\in \mathbb{Z}^n$, the $m$-th Fourier coefficient
\begin{equation}
\label{cauchyintegral}
\hat{f}(m)=\int_{[0,1]^n} e^{-2\pi i m\cdot x}f(x)\, dx
\end{equation}
satisfies the estimate
\begin{equation}
|\hat{f}(m)|\leq e^{-2\pi |m| \sigma}A
\end{equation}
\end{prop}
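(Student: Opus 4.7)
The plan is a standard contour-shift argument, performed one coordinate at a time, exploiting the joint $1$-periodicity and analyticity of $f$ on the open strip $S_\sigma$. Fix any $\sigma' \in (0,\sigma)$ and define the shift vector $\tau \in \R^n$ by $\tau_j = \sigma' \operatorname{sgn}(m_j)$ (with $\tau_j = 0$ if $m_j = 0$). The goal is to show that the original integral equals
\[
\hat f(m) = \int_{[0,1]^n} e^{-2\pi i m \cdot (x - i\tau)} f(x - i\tau)\, dx,
\]
after which the bound follows by taking absolute values and letting $\sigma' \nearrow \sigma$.

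I would obtain this identity by iterating a one-variable contour shift. Fix a coordinate $j$, freeze the remaining variables, and apply Cauchy's theorem in the $x_j$-plane to the rectangle with vertices $0,\, 1,\, 1 - i\tau_j,\, -i\tau_j$. The integrand $z_j \mapsto e^{-2\pi i m \cdot x} f(x)$ (with $x_j = z_j$) is holomorphic in $S_\sigma$, and the two vertical sides of the rectangle cancel exactly by $1$-periodicity in $x_j$. Thus the integral over $[0,1]$ along the real axis equals the integral along $[0,1] - i\tau_j$. Repeating this for each $j$ produces the displayed identity.

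Taking absolute values gives $|e^{-2\pi i m \cdot (x - i\tau)}| = e^{-2\pi m \cdot \tau} = e^{-2\pi \sigma' \sum_j |m_j|} = e^{-2\pi |m| \sigma'}$, using the $\ell^1$ convention $|m| = \sum_j |m_j|$ fixed in the chapter, while $|f(x - i\tau)| \leq A$ since $x - i\tau \in S_\sigma$. Hence $|\hat f(m)| \leq A e^{-2\pi |m| \sigma'}$, and sending $\sigma' \to \sigma$ yields the claim. No step is really an obstacle here; the only item worth verifying carefully is the cancellation of the vertical sides of the rectangular contour, which reduces to $1$-periodicity of $f$ in each real variable combined with holomorphy (so that $f(z + e_j) = f(z)$ on all of $S_\sigma$, not merely on $\R^n$).
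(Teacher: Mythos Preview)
Your proof is correct and is essentially identical to the paper's own argument: both shift the contour by $-i\delta q$ with $q_j=\operatorname{sgn}(m_j)$ using Cauchy's theorem and periodicity, bound the shifted integrand, and let $\delta\to\sigma$. You simply supply more detail on the rectangular contour and the cancellation of the vertical sides.
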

\begin{proof}
From Cauchy's theorem and the periodicity of $f$, we can replace the integral \eqref{cauchyintegral} with
\begin{equation}
\int_0^1 e^{-2\pi i m\cdot(x-i\delta q)}f(x-i\delta q)\, dx
\end{equation}
for any $0<\delta < \sigma$, where $q_j=\textrm{sgn}(m_j)$.

The desired estimate then follows immediately from the triangle inequality and letting $\delta\rightarrow\sigma$.
\end{proof}

As a consequence of Proposition \ref{fourierappendix}, it is a straightforward matter to control the error incurred by truncating the Fourier series of an analytic function.

\begin{prop}
\label{truncateappendix}
Suppose $f$ is a $1$-periodic analytic function on the complex strip 
\begin{equation}
S_\sigma=\{z:|\textrm{Im}(z)|<\sigma\}\subset \C^n
\end{equation}
with the estimate 
\begin{equation}
\|f\|_{S_\sigma}=A<\infty.
\end{equation}

Then we have
\begin{equation}
\left\|f(x)-\sum_{|m|\leq K}\hat{f}(m)e^{2\pi i m\cdot x}\right\|_{S_{\sigma-\delta}}\leq C(n)K^ne^{-K\delta}\|f\|_{S_{\sigma}}.
\end{equation}
for any $0<\delta < \sigma$.
\end{prop}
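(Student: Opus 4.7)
The plan is to reduce the truncation error to a tail estimate on the Fourier coefficients, using Proposition \ref{fourierappendix} together with a trivial count of lattice points. First, for any $x \in S_{\sigma - \delta}$ and any $m \in \mathbb{Z}^n$, I would combine the decay bound $|\hat{f}(m)| \leq e^{-2\pi |m| \sigma} \|f\|_{S_\sigma}$ coming from Proposition \ref{fourierappendix} with the trivial growth estimate $|e^{2\pi i m \cdot x}| \leq e^{2\pi |m|(\sigma - \delta)}$ on the shifted strip. Here $|m|$ denotes the $\ell^1$ norm, consistent with the proof of Proposition \ref{fourierappendix} (the contour shift $x \mapsto x - i\delta q$ with $q_j = \operatorname{sgn}(m_j)$ picks out $m \cdot q = |m|_{\ell^1}$). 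Summing over the omitted indices yields
\begin{equation*}
\left\| f(x) - \sum_{|m|\leq K} \hat{f}(m) e^{2\pi i m \cdot x} \right\|_{S_{\sigma-\delta}} \leq \|f\|_{S_\sigma} \sum_{|m| > K} e^{-2\pi |m| \delta}.
\end{equation*}

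The remaining task is to estimate this tail sum. I would group the lattice points by their $\ell^1$-norm: the number of $m \in \mathbb{Z}^n$ with $|m| = k$ is bounded by $C(n) k^{n-1}$ (a volume-counting argument, or the elementary identity $\binom{k+n-1}{n-1} \cdot 2^n$ for sign choices). This reduces the estimate to a one-dimensional tail
\begin{equation*}
\sum_{k > K} k^{n-1} e^{-2\pi k \delta},
\end{equation*}
which is straightforwardly bounded by comparing to a geometric series (or an integral) and absorbing a fixed constant factor to obtain $C(n) K^{n-1} e^{-2\pi K \delta}$. Weakening this slightly (absorbing $e^{-(2\pi-1)K\delta}$ and picking up an extra factor of $K$) gives the stated bound $C(n) K^n e^{-K\delta}\|f\|_{S_\sigma}$.

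There is no real obstacle here; the only mild subtlety is being careful that the exponential decay rate one extracts for the Fourier coefficients on $S_\sigma$ exactly cancels the exponential growth of $e^{2\pi i m \cdot x}$ on the smaller strip $S_{\sigma-\delta}$, leaving a net decay of $e^{-2\pi |m| \delta}$. This is the whole point of losing a margin $\delta$ from the strip width, and is essentially the standard Paley--Wiener style argument behind smooth truncation in semiclassical analysis.
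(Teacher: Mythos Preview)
Your proposal is correct and matches the paper's approach exactly: the paper does not give a detailed proof but simply states that the result is a straightforward consequence of Proposition~\ref{fourierappendix}, which is precisely the reduction you carry out. One small remark: the bound $\sum_{k>K} k^{n-1} e^{-2\pi k\delta} \leq C(n) K^{n-1} e^{-2\pi K\delta}$ with constant depending only on $n$ is not quite true as stated, since the geometric tail contributes a factor of order $\delta^{-1}$; this is harmless in the applications (where $K\delta$ is bounded below), and the slack between $e^{-2\pi K\delta}$ and $e^{-K\delta}$ together with the extra factor of $K$ absorbs it once $K\delta \gtrsim 1$, but you may want to be explicit about this regime or allow the constant to depend on $\delta$.
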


We also have an implicit function theorem for real analytic functions. Defining
\begin{equation}
O_h=\{\omega\in \C^n:\textrm{dist}(\omega,\Omega)<h\}
\end{equation}
where distances in $\C^n$ are taken with the sup-norm, we have the following.
\begin{prop}
\label{popovlemma}
Suppose $f:O_h\times (-1,1)\rightarrow \C^n$ is real analytic, and we have the estimate
\begin{equation}
|f|_{h}<\infty,
\end{equation}
then for any $0<v<1/6$ such that
\begin{equation}
|f-id|_h\leq vh
\end{equation}
the function has a real analytic inverse $g:O_{(1/2-3v)h}\times (-1,1)\rightarrow O_{(1-4v)h}$ that satisfies the estimate
\begin{equation}
\label{appendixnormderiv}
\max(|g-id|_{(1/2-3v)h},3vh|D\phi-Id|_{(1/2-3v)h})\leq |f-id|_h
\end{equation}
uniformly in $t\in (-1,1).$
The matrix norm in \eqref{appendixnormderiv} is the norm induced by equipping $\C^n$ with the sup-norm.
\end{prop}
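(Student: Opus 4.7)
The plan is to solve the equation $f \circ g = \mathrm{id}$ by a Banach fixed point iteration, then read the derivative bound off the resulting identity. Writing $f = \mathrm{id} + \phi$ with $|\phi|_h \leq vh$, the inverse formally satisfies $g(w) = w - \phi(g(w))$, so I would introduce the operator
\begin{equation*}
T(g)(w) := w - \phi(g(w)), \qquad w \in O_{(1/2-3v)h}
\end{equation*}
acting on the space $\mathcal{X}$ of holomorphic maps $g : O_{(1/2-3v)h} \times (-1,1) \to O_{(1-4v)h}$ that are real analytic in $t$, equipped with the sup-norm. Everything reduces to two standard Cauchy-estimate bookkeeping computations.

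First I would verify $T : \mathcal{X} \to \mathcal{X}$. For $g \in \mathcal{X}$, $g(w) \in O_{(1-4v)h} \subset O_h$, so $|\phi(g(w))| \leq vh$, and hence $T(g)(w)$ lies within $vh$ of $w$; since $w \in O_{(1/2-3v)h}$, the triangle inequality places $T(g)(w)$ in $O_{(1/2-2v)h}$, which is contained in $O_{(1-4v)h}$ because $v < 1/4$ (this is where the condition $v < 1/6$ gives some slack). Next, Proposition \ref{cauchyappendix} applied on the annulus $O_{(1-4v)h} \subset O_h$ of width $4vh$ yields $|D\phi|_{(1-4v)h} \leq |\phi|_h/(4vh) \leq 1/4$. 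Consequently $T$ is a strict contraction on $\mathcal{X}$ with Lipschitz constant at most $1/4$, and the Banach fixed point theorem produces a unique fixed point $g \in \mathcal{X}$. Holomorphy in $\omega$ is preserved by uniform limits of Picard iterates, analyticity in $t$ propagates through the iteration since $\phi$ is jointly real analytic in $(\omega,t)$, and realness on $\R^n$ follows from $f$ being real analytic.

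Finally, the two claimed estimates fall out with no extra work. The identity $g(w) - w = -\phi(g(w))$, together with $g(w) \in O_{(1-4v)h} \subset O_h$, gives immediately
\begin{equation*}
|g - \mathrm{id}|_{(1/2-3v)h} \leq |\phi|_h = |f - \mathrm{id}|_h.
\end{equation*}
Differentiating $f \circ g = \mathrm{id}$ yields $Dg(w) = (I + D\phi(g(w)))^{-1}$, and the Neumann-series identity $(I+A)^{-1} - I = -A(I+A)^{-1}$ combined with the sharper Cauchy bound $|D\phi(g(w))| \leq |\phi|_h/(4vh) =: \alpha \leq 1/4$ gives $|Dg - I| \leq \alpha/(1-\alpha) \leq (4/3)\alpha = |\phi|_h/(3vh)$, which rearranges to $3vh\,|Dg - I|_{(1/2-3v)h} \leq |f - \mathrm{id}|_h$.

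There is no serious obstacle: the proof is entirely routine once one chooses the three nested radii $h$, $(1-4v)h$, $(1/2-3v)h$ so that the image radius leaves room for an $vh$-perturbation ($v < 1/4$) and the Cauchy loss $4vh$ is genuine. The only point requiring a little care is the joint real analyticity in $(\omega,t)$, but since the contraction operator $T$ acts on spaces of jointly holomorphic functions and analyticity is preserved under the uniform limit, no additional argument is needed.
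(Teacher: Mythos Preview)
Your contraction-mapping proof is correct and is precisely the standard argument underlying Lemma~3.4 of \cite{popovkam}, to which the paper simply defers; the only point worth tightening is that the Cauchy bound $|D\phi|_{(1-4v)h}\le |\phi|_h/(4vh)$ in the \emph{induced operator norm} follows from the one-variable estimate applied to $z\mapsto\phi(\omega+zv)$ for sup-norm unit vectors $v$, rather than from the coordinatewise Proposition~\ref{cauchyappendix} directly. The sole methodological difference is in the treatment of the parameter $t$: the paper proposes applying the parameter-free result to the extended map $\tilde f(\omega,t)=(f(\omega,t),t)$ on $O_{\lambda h}\times B_1^{\mathbb C}$, whereas you carry $t$ through the Picard iteration and invoke preservation of joint analyticity under uniform limits --- both routes are equally valid and yield the same uniform estimates.
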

Proposition \ref{popovlemma} can be proven in the same way as in Lemma 3.4 of \cite{popovkam}. The only difference is that we need to work on domains of the form $O_{\lambda h}\times B^\mathbb{C}_1$, and invert maps of the form 
\begin{equation}
\tilde{f}(\omega,t):=(f(\omega,t),t)
\end{equation}
for given $f$ satisfying the assumptions of the proposition uniformly in $t$.

\chapter{Properties of anisotropic Gevrey classes}
\counterwithin{thm}{chapter}
\counterwithin{equation}{chapter}

In this appendix, we collect several results on anisotropic Gevrey classes from the appendix of \cite{popovkam}. The first of these is an implicit function theorem of Komatsu.

\begin{prop}
\label{komatsuprop}
Suppose that $F\in G^{\rho,\rho'}_{L_1,L_2}(X\times \Omega^0,\R^n)$ where $X\subset \R^n$, $\Omega^0\subset \R^m$ and  $L_1\|F(x,\omega)-x\|_{L_1,L_2}\leq 1/2$. Then there exists a local solution $x=g(y,\omega)$ to the implicit equation
\begin{equation}
F(x,\omega)=y
\end{equation} 
defined in a domain $Y\times \Omega$.
Moreover, there exist constants $A,C$ dependent only on $\rho,\rho',n,m$ such that $g\in G^{\rho,\rho'}_{CL_1,CL_2}(Y\times \Omega,X)$ with $\|g\|_{CL_1,CL_2}\leq A\|F\|_{L_1,L_2}$.
\end{prop}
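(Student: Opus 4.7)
The plan is to cast the problem as a Banach fixed-point iteration and then propagate uniform Gevrey estimates along the iterates by a Faà di Bruno composition lemma for anisotropic Gevrey classes.

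\textbf{Step 1 (Setup and fixed point equation).} Write $F(x,\omega) = x + H(x,\omega)$ where $H := F - \mathrm{id}$ satisfies $L_1 \|H\|_{L_1,L_2} \leq 1/2$. The equation $F(x,\omega)=y$ becomes the fixed-point equation
\begin{equation*}
x = y - H(x,\omega).
\end{equation*}
By translating in $x$ it suffices to construct a local inverse near a chosen base point $(x_\ast,\omega_\ast)\in X\times\Omega^0$; we work in a product neighbourhood $Y\times\Omega$ whose $y$-radius is a small multiple of $L_1^{-1}$ and whose $\omega$-radius is a small multiple of $L_2^{-1}$, chosen so that the iterates below stay inside the original domain.

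\textbf{Step 2 (Existence by contraction).} Define $g_0(y,\omega):=y$ and inductively $g_{k+1}(y,\omega):= y - H(g_k(y,\omega),\omega)$. The hypothesis $L_1\|H\|_{L_1,L_2}\leq 1/2$ gives $\sup|\partial_x H|\leq 1/2$, so on $Y\times\Omega$ the map $x\mapsto y-H(x,\omega)$ is a $\tfrac{1}{2}$-contraction for each fixed $(y,\omega)$. A standard telescoping argument shows $\|g_{k+1}-g_k\|_\infty\leq 2^{-k}\|H\|_\infty$, so the $g_k$ converge uniformly to a continuous $g:Y\times\Omega\to X$ solving $g(y,\omega)=y-H(g(y,\omega),\omega)$, and $\|g-\mathrm{id}\|_\infty\leq 2\|H\|_\infty\leq A\|F-\mathrm{id}\|_{L_1,L_2}$.

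\textbf{Step 3 (Gevrey estimates along the iteration).} The crux is a composition lemma in anisotropic Gevrey classes: if $h\in G^{\rho,\rho'}_{L_1,L_2}(X\times\Omega^0)$ and $\phi\in G^{\rho,\rho'}_{K_1,K_2}(Y\times\Omega)$ maps into $X$ with $K_1\geq L_1$ and $K_2\geq L_2$, then
\begin{equation*}
(y,\omega)\mapsto h(\phi(y,\omega),\omega)\in G^{\rho,\rho'}_{CK_1,CK_2}(Y\times\Omega),
\end{equation*}
with $C=C(n,m,\rho,\rho')$ and norm controlled by $\|h\|_{L_1,L_2}$. This is proved by expanding $\partial_y^\alpha\partial_\omega^\beta(h\circ\phi)$ via Faà di Bruno, separating derivatives in the first slot (which pick up only $\phi$-derivatives, producing products $\prod_j(\gamma_j)!^\rho$) from derivatives in the second slot of $h$ (producing $\rho'$-type factors). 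The combinatorial sums over set partitions are estimated by Stirling: convexity of $t\mapsto t\log t$ yields $\prod_j\gamma_j!^{\rho}\leq |\alpha|!^{\rho}\cdot B^{|\alpha|}$, which after rebookkeeping is absorbed into a larger Gevrey constant of the form $CK_1$. Applying this lemma inductively to $g_{k+1}=\mathrm{id}-H(g_k(\cdot),\cdot)$ together with the contraction $L_1\|H\|\leq 1/2$ (which ensures the Gevrey constants do \emph{not} blow up geometrically) gives uniform bounds
\begin{equation*}
|\partial_y^\alpha\partial_\omega^\beta(g_k-\mathrm{id})|\leq A\,\|F-\mathrm{id}\|_{L_1,L_2}\,(CL_1)^{|\alpha|}(CL_2)^{|\beta|}\alpha!^\rho\beta!^{\rho'}
\end{equation*}
for all $k$, with constants $A,C$ depending only on $n,m,\rho,\rho'$.

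\textbf{Step 4 (Passing to the limit).} Since the $g_k$ converge uniformly to $g$ and all partial derivatives satisfy uniform bounds, Arzelà--Ascoli and a diagonal argument upgrade convergence to locally uniform convergence of every partial derivative. Passing to the limit in the Gevrey estimate of Step 3 yields $g\in G^{\rho,\rho'}_{CL_1,CL_2}(Y\times\Omega,X)$ with the claimed norm bound $\|g\|_{CL_1,CL_2}\leq A\|F\|_{L_1,L_2}$.

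The main obstacle is Step 3: the Faà di Bruno bookkeeping for an anisotropic class has to combine many $\gamma_j!^\rho$ factors coming from partitions of $\alpha$ into a single $\alpha!^\rho$, and simultaneously track the mixed contributions from $\omega$-derivatives hitting both $\phi$ and the second argument of $h$. The key point making this tractable is the smallness hypothesis $L_1\|H\|\leq 1/2$: without it, each iteration would multiply the Gevrey constant by a fixed factor $>1$ and no uniform bound would survive the limit $k\to\infty$.
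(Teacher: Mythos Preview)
The paper does not actually supply a proof of this proposition; it is stated in Appendix~B as a result collected from the appendix of \cite{popovkam}, attributed there to Komatsu. So there is no proof in the paper to compare against.

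That said, your proposed argument has a genuine gap in Step~3. You apply a composition lemma iteratively to the recursion $g_{k+1} = \mathrm{id} - H(g_k(\cdot),\cdot)$ and assert that the smallness hypothesis $L_1\|H\|\leq 1/2$ ``ensures the Gevrey constants do not blow up geometrically''. But the composition lemma you invoke (compare Proposition~\ref{gevcomp2}) produces a \emph{new} Gevrey constant $CK_1$ from the input constant $K_1$, with $C>1$ a fixed structural constant. Iterating $k$ times therefore yields a Gevrey constant of order $C^k L_1$, which diverges as $k\to\infty$. The smallness of $\|H\|$ controls the sup-norm contraction and hence the $C^0$ convergence of $g_k$, but it does not by itself prevent the Gevrey constants of the iterates from deteriorating exponentially. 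You have identified the right obstacle in your closing paragraph but not overcome it.

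The standard way around this (Komatsu's) does not iterate in $k$. One works directly with the limit $g$, differentiates the identity $g = y - H(g,\omega)$, and proves the bound on $\partial_y^\alpha\partial_\omega^\beta g$ by strong induction on $|\alpha|+|\beta|$. When one expands $\partial_y^\alpha\partial_\omega^\beta[H(g,\omega)]$ by Fa\`a di Bruno, the only term on the right involving the top-order derivative $\partial_y^\alpha\partial_\omega^\beta g$ carries the matrix factor $(\partial_x H)(g,\omega)$, whose operator norm is at most $1/2$ by hypothesis; moving it to the left and inverting $I+\partial_x H$ expresses $\partial_y^\alpha\partial_\omega^\beta g$ in terms of strictly lower-order derivatives of $g$ and derivatives of $H$, and the induction closes with a \emph{fixed} Gevrey constant $CL_1$. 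The smallness of $\partial_x H$ thus enters not as a contraction along a Picard iteration in a Gevrey norm, but as the invertibility of a linear factor at each derivative level.
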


A consequence of this theorem is established by Popov in \cite{popovkam}.

\begin{cor}
\label{komatsucor}
Suppose $F\in G^{\rho,\rho'}_{L_1,L_2}(\T^n\times \Omega,\T^n)$ where $\Omega^0\subset \R^m$ and  $L_1\|F(\theta,\omega)-\theta\|_{L_1,L_2}\leq 1/2$. Then there exists a local solution $x=g(y,\omega)$ to the implicit equation
\begin{equation}
F(x,\omega)=y
\end{equation} 
defined on $\T^n\times \Omega$.
Moreover, there exist positive constants $A,C$ dependent only on $\rho,\rho',n,m$ such that $g\in G^{\rho,\rho'}_{CL_1,CL_2}(\T^n\times \Omega)$ with $\|g\|_{CL_1,CL_2}\leq A\|F\|_{L_1,L_2}$.
\end{cor}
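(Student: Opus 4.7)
The strategy is to pass to the universal cover $\R^n\to\T^n$, apply the Euclidean Komatsu theorem of Proposition \ref{komatsuprop} there, and then descend the inverse back to the torus using the equivariance inherited from periodicity.

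First I would lift $F$ as follows. Write $F(\theta,\omega)=\theta+f(\theta,\omega)$, where $f\in G^{\rho,\rho'}_{L_1,L_2}(\T^n\times\Omega,\R^n)$ by the smallness hypothesis $L_1\|F-\mathrm{id}\|_{L_1,L_2}\leq 1/2$ (which in particular ensures $F$ is isotopic to the identity and has degree one). Regard $f$ as a $2\pi$-periodic map $\R^n\times\Omega\to\R^n$ and set $\tilde F(x,\omega):=x+f(x,\omega)$. Then $\tilde F$ is a real Gevrey map on $\R^n\times\Omega$ satisfying the equivariance $\tilde F(x+2\pi k,\omega)=\tilde F(x,\omega)+2\pi k$ for every $k\in\Z^n$, and the hypothesis on $F$ translates verbatim into the hypothesis of Proposition \ref{komatsuprop} applied to $\tilde F$ restricted to any bounded open $X\subset\R^n$ (with $\|\tilde F-\mathrm{id}\|_{L_1,L_2}=\|F-\mathrm{id}\|_{L_1,L_2}$).

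Next I would apply Proposition \ref{komatsuprop} to $\tilde F$ on a neighbourhood of the fundamental domain $[0,2\pi]^n$, which produces a local inverse $\tilde g\in G^{\rho,\rho'}_{CL_1,CL_2}$ satisfying $\|\tilde g\|_{CL_1,CL_2}\leq A\|\tilde F\|_{L_1,L_2}=A\|F\|_{L_1,L_2}$ with $A,C$ depending only on $\rho,\rho',n,m$. The equivariance of $\tilde F$ together with uniqueness of local inverses forces $\tilde g(y+2\pi k,\omega)=\tilde g(y,\omega)+2\pi k$ on the domain of definition, so $\tilde g$ extends unambiguously to a global inverse $\tilde g:\R^n\times\Omega\to\R^n$ of $\tilde F$ simply by translating the local inverse. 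This extension preserves the Gevrey estimates because they are translation-invariant, and the periodicity of $\tilde g-\mathrm{id}$ means that $\tilde g$ descends to a well-defined Gevrey map $g:\T^n\times\Omega\to\T^n$ inverting $F$.

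The main (essentially cosmetic) obstacle is verifying that the constants $A,C$ produced really depend only on $\rho,\rho',n,m$ and not on the covering of $\T^n$ one uses; this is ensured by working with the single global lift so that only one application of Proposition \ref{komatsuprop} is needed, and by noting that the Gevrey norm of a $2\pi$-periodic function on any translate of $[0,2\pi]^n$ in $\R^n$ coincides with its Gevrey norm on $\T^n$. The bound $\|g\|_{CL_1,CL_2}\leq A\|F\|_{L_1,L_2}$ on $\T^n\times\Omega$ is then inherited directly from the bound on $\tilde g$, completing the proof.
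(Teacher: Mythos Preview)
Your approach is correct and is the natural way to deduce the torus statement from the Euclidean one: lift to the universal cover via $F(\theta,\omega)=\theta+f(\theta,\omega)$ with $f$ periodic, apply Proposition~\ref{komatsuprop} on a neighbourhood of a fundamental domain, and use the $\Z^n$-equivariance of $\tilde F$ together with uniqueness of the local inverse to propagate $\tilde g$ globally and descend. The paper itself does not supply a proof of this corollary; it simply records it as a consequence of Proposition~\ref{komatsuprop} established in Popov~\cite{popovkam}, so there is no ``paper's own proof'' to compare against beyond the citation. Your argument fills in exactly the expected details, and your observation that working with a single global lift (rather than a covering by charts) keeps the constants $A,C$ manifestly dependent only on $\rho,\rho',n,m$ is the right way to handle the one potential subtlety.
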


Finally, we have two results on the composition of functions of Gevrey regularity, which can also be found in \cite{popovkam}.

\begin{prop}
\label{gevcomp1}
Let $X\subset \R^n$, $Y\subset\R^m$, and $\Omega\subset \R^k$ be open sets.
Suppose $g\in G^{\rho'}_{L_1}(\Omega,Y)$ with $\|g\|_{L_1}=A_1$ and $f\in G^{\rho,\rho'}_{B,L_2}(X\times Y)$ with $\|f\|_{B,L_2}=A_2$.
Then the composition $F(x,\omega):=f(x,g(\omega))$ is in $G^{\rho,\rho'}_{B,L}(X\times \Omega)$, where $$L=2^{l+\rho'}l^{\rho'}L_1\max(1,A_1L_2)$$ with $l=\max(k,m,n)$.
Moreover we have the Gevrey norm estimate $$\|F\|_{B,L}\leq A_2$$.
\end{prop}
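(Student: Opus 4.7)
The plan is to establish the Gevrey bound on $\partial_x^\alpha \partial_\omega^\beta F(x,\omega)$ via the Faà di Bruno formula, controlling the combinatorics with the standard inequalities that make anisotropic Gevrey classes closed under composition.

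First, note that pure $x$-derivatives are immediate: $\partial_x^\alpha F(x,\omega) = (\partial_x^\alpha f)(x, g(\omega))$, so the hypothesis on $f$ gives the bound $|\partial_x^\alpha F| \leq A_2 B^{|\alpha|} \alpha!^\rho$, which is the required estimate with no $\beta$ factors. The main work is therefore the case $|\beta|\geq 1$. I would apply the multivariate Faà di Bruno formula to write
\begin{equation*}
\partial_\omega^\beta \partial_x^\alpha F(x,\omega) = \sum_{p=1}^{|\beta|} \sum_{|\gamma|=p} \frac{1}{\gamma!}\,\partial_x^\alpha \partial_y^\gamma f(x,g(\omega)) \sum_{\pi\in \Pi(\beta,\gamma)} \beta! \prod_{j=1}^{p} \frac{\partial_\omega^{\beta_j} g^{i_j}(\omega)}{\beta_j!},
\end{equation*}
where the innermost sum runs over unordered partitions $\pi$ of $\beta$ into $p$ nonzero multi-indices $\beta_1,\ldots,\beta_p$ together with a labelling by indices $i_1,\ldots,i_p$ compatible with $\gamma$.

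Next, I would insert the Gevrey bounds: $|\partial_x^\alpha \partial_y^\gamma f| \leq A_2 B^{|\alpha|} L_2^{|\gamma|}\alpha!^\rho \gamma!^{\rho'}$ and $|\partial_\omega^{\beta_j} g^{i_j}| \leq A_1 L_1^{|\beta_j|} \beta_j!^{\rho'}$. Using the standard sub-multiplicativity inequality $\prod_j \beta_j!^{\rho'} \leq \beta!^{\rho'}$ (which holds because $\sum |\beta_j|=|\beta|$ and factorials satisfy $\beta_1!\cdots\beta_p! \leq \beta!$, then raising to the $\rho'$ power) collects all of the $\rho'$-factorials into a single $\beta!^{\rho'}$. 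Similarly $\gamma!^{\rho'} \leq p!^{\rho'}\leq |\beta|!^{\rho'}$ can be absorbed. The combinatorial count of partitions $\pi$ and labellings is crudely bounded by $l^{|\beta|} 2^{|\beta|}$ (via the standard estimate $\#\Pi(\beta,\gamma) \leq \binom{|\beta|}{p}\prod \binom{|\beta|}{|\beta_j|}$ and the Bell-number type bound), which together with the factors $A_1^p L_2^p \leq (A_1 L_2)^{|\beta|}$ (or $1$ if $A_1L_2 < 1$, whence the $\max(1,A_1L_2)$) produces the claimed constant $L = 2^{l+\rho'} l^{\rho'} L_1 \max(1,A_1L_2)$.

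Summing over $p$ and over partitions then yields
\begin{equation*}
|\partial_\omega^\beta \partial_x^\alpha F(x,\omega)| \leq A_2 B^{|\alpha|} L^{|\beta|} \alpha!^\rho \beta!^{\rho'},
\end{equation*}
which is exactly the required bound, and in particular $\|F\|_{B,L}\leq A_2$ as stated. The main obstacle, and the point that requires a careful bookkeeping rather than a single slick inequality, is precisely the combinatorial step: ensuring that the number of partitions of $\beta$ and compatible labellings are absorbed into the factor $2^{l+\rho'}l^{\rho'}$ rather than into a larger $|\beta|$-dependent constant, so that the resulting Gevrey constant $L$ is truly independent of $\alpha,\beta$. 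Everything else is a routine consequence of the Faà di Bruno identity and the Gevrey estimates for $f$ and $g$.
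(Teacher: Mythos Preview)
The paper does not give its own proof of this proposition; it simply records the statement and cites \cite{popovkam}. Your Fa\`a di Bruno approach is the standard one and is almost certainly what Popov does, so at the level of strategy there is nothing to compare.

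That said, one step in your sketch is imprecise and, as written, would not close. After inserting the Gevrey bounds and the Fa\`a di Bruno coefficient $\beta!/\bigl(\gamma!\prod_j\beta_j!\bigr)$, the net factorial contribution is $\beta!\cdot\gamma!^{\rho'-1}\cdot\prod_j\beta_j!^{\rho'-1}$, and you need this bounded by $C^{|\beta|}\beta!^{\rho'}$. Your argument bounds $\prod_j\beta_j!^{\rho'-1}\le\beta!^{\rho'-1}$ (fine) and then says ``$\gamma!^{\rho'}\le p!^{\rho'}\le|\beta|!^{\rho'}$ can be absorbed''. But $|\beta|!^{\rho'-1}$ is \emph{not} $\le C^{|\beta|}$, so this alone leaves an uncontrolled factor. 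The correct device is to handle the $p!$ and the $\prod_j\beta_j!$ \emph{together}: since each $|\beta_j|\ge1$ and $\sum_j|\beta_j|=|\beta|$, one has the integer inequality $p!\prod_j|\beta_j|!\le|\beta|!$, whence
\[
p!^{\rho'-1}\prod_j\beta_j!^{\rho'-1}\le p!^{\rho'-1}\prod_j|\beta_j|!^{\rho'-1}\le|\beta|!^{\rho'-1}\le l^{(\rho'-1)|\beta|}\beta!^{\rho'-1},
\]
using $\gamma!\le p!$, $\beta_j!\le|\beta_j|!$, and $|\beta|!\le l^{|\beta|}\beta!$ in the last step. This is exactly where the factor $l^{\rho'}$ in $L$ comes from; the remaining $2^{l}$ and $\max(1,A_1L_2)$ arise from the count of ordered compositions and the $m^p(A_1L_2)^p$ factors, as you indicate. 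With this correction your outline is complete.
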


\begin{prop}
\label{gevcomp2}
Let $X\subset \R^n$, $Y\in\R^m$, and $\Omega\subset \R^k$ be open sets.
Suppose $g\in G^{\rho,\rho'}_{B_1,L_1}(X\times \Omega,Y)$ with $\|g\|_{B_1,L_1}=A_1$ and $f\in G^{\rho,\rho'}_{B_2,L_2}(Y\times \Omega)$.
Then the composition $F(x,\omega):=f(g(x,\omega),\omega)$ is in $G^{\rho,\rho'}_{B,L}(X\times \Omega)$, where $$B=4^l(4l)^\rho B_1 \max(1+A_1B_2)$$ and $$L=L_2+4^l(4l)^\rho L_1 \max(1,A_1B_2)$$ with $l=\max(k,m,n)$.
Moreover we have the Gevrey norm estimate $$\|F\|_{B,L}\leq A_2.$$
\end{prop}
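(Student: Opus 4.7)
The plan is to prove Proposition \ref{gevcomp2} by direct application of the multivariate Fa\`{a} di Bruno formula together with the Gevrey estimates on $f$ and $g$, taking care to separate contributions of $x$-derivatives (governed by $\rho$) from those of $\omega$-derivatives (governed by $\rho'$).

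First I would expand $\partial_x^\alpha \partial_\omega^\beta F(x,\omega)$ as a sum indexed by multi-indices $K\in\mathbb{N}^m$ (the total order of differentiation of $f$ in its first slot), $L \leq \beta$ (the derivatives of $f$ in its second slot), together with partitions of $\alpha$ into $|K|$ blocks $\{p_i\}$ and of $\beta - L$ into $|K|$ blocks $\{q_i\}$ describing how derivatives distribute across the copies of $g$ produced by the chain rule. Each resulting term is an $(|K|,|L|)$-th mixed derivative of $f$ evaluated at $(g(x,\omega),\omega)$, times a product of mixed derivatives of the components of $g$, times an explicit multinomial coefficient. The total number of such terms is bounded by $c^{|\alpha|+|\beta|}$ for a constant $c$ depending only on $l = \max(n,m,k)$.

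Next I substitute the Gevrey estimates
\[
|\partial_y^K \partial_\omega^L f| \leq A_2 B_2^{|K|} L_2^{|L|} K!^\rho L!^{\rho'}, \qquad |\partial_x^{p_i}\partial_\omega^{q_i} g_j| \leq A_1 B_1^{|p_i|} L_1^{|q_i|} p_i!^\rho q_i!^{\rho'},
\]
and then use the elementary inequality $r_1!^\rho \cdots r_s!^\rho \leq (r_1+\cdots+r_s)!^\rho$, valid for $\rho \geq 1$, to collect the factorials of the $p_i$ into $\alpha!^\rho$ and the factorials of the $q_i$ together with $L!^{\rho'}$ into $\beta!^{\rho'}$. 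After this step every term is bounded by $A_2\, \alpha!^\rho\, \beta!^{\rho'}$ times monomials in $B_1, B_2, L_1, L_2$, with $B$-exponents summing to $|\alpha|$ and picking up a factor of $A_1 B_2$ for each copy of $g$, and with the $L$-exponents exhibiting an analogous structure.

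Summing over $K$ and over the Fa\`{a} di Bruno partitions, after absorbing multinomial factors of the form $\alpha!/\prod p_i!$ into geometric constants bounded by $l^{|\alpha|}$ and summing convergent geometric series in $A_1 B_2$, yields the stated bound $\|F\|_{B,L} \leq A_2$ with the claimed values of $B$ and $L$. The additive structure $L = L_2 + 4^l(4l)^\rho L_1 \max(1, A_1 B_2)$ reflects the two disjoint ways an $\omega$-derivative can appear: acting directly on the outer slot of $f$ (producing the $L_2$ term) versus propagating through $g$ (producing the second term). The main obstacle is purely combinatorial bookkeeping: one must avoid double-counting between these two sources of $\omega$-derivatives and ensure that the asymmetric Gevrey indices $(\rho,\rho')$ are preserved exactly by the composition, which is precisely why the formula for $L$ is additive in $L_2$ while the formula for $B$ is multiplicative.
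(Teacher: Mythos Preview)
The paper does not actually prove Proposition~\ref{gevcomp2}; it is stated in the appendix with the remark that it ``can also be found in \cite{popovkam}''. So there is no in-paper proof to compare against. Your Fa\`{a} di Bruno approach is precisely the standard argument for Gevrey composition estimates and is what underlies Popov's proof in the cited reference, so the strategy is correct and on target.

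One small inaccuracy: you speak of ``summing convergent geometric series in $A_1B_2$'', but no convergence is actually needed (and none would hold when $A_1B_2\ge 1$). What really happens is that each of the $|K|$ copies of $g$ contributes a factor $A_1$, and the outer $f$ contributes $B_2^{|K|}$, giving $(A_1B_2)^{|K|}$; since every derivative hitting $f$ in its first slot must be fed by at least one derivative acting on $g$, one has $|K|\le |\alpha|+|\beta-L|$, so $(A_1B_2)^{|K|}\le \max(1,A_1B_2)^{|\alpha|+|\beta-L|}$ is simply absorbed into the new Gevrey constants $B$ and $L$. This is why $\max(1,A_1B_2)$ appears rather than $(1-A_1B_2)^{-1}$. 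With that correction the argument goes through and the additive/multiplicative structure of $L$ and $B$ that you identified is exactly right.
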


\chapter{Whitney extension theorem}
\counterwithin{thm}{chapter}
\counterwithin{equation}{chapter}

In this appendix, we prove a version of the Whitney extension theorem for anisotropic Gevrey classes. The proof is adapted from the work of Bruna \cite{bruna} in the case without in the non-anisotropic case.

\begin{defn}
\begin{equation}
\label{anisspace}
\Cm=\{f\in\mathcal{C}^\infty(X\times Y,\R): \sup_{(x,y)\in X\times Y}\sup_{\alpha,\beta}\left(\frac{|(\partial_x^\alpha \partial_y^\beta f)(x,y)|}{L_1^{|\alpha|}L_2^{|\beta|}M_{|\alpha|}\tilde{M}_{|\beta|}}\right) < \infty \textrm{ for some }L_j>0\}
\end{equation}
\end{defn}
where $X,Y$ are open sets in Euclidean spaces of possibly differing dimension, $\alpha,\beta$ are multi-indices of the appropriate dimension, and $M$ and $\tilde{M}$ are positive sequences satisfying
\begin{enumerate}
\item $M_0=1$
\item $M_k^2\leq M_{k-1}M_{k+1}$
\item $M_k\leq A^k M_j M_{k-j}$
\item $M_{k+1}^k \leq A^k M_k^{k+1}$
\item $M_{k+1}/(kM_k)$ is increasing
\item $\sum_{k\geq 0}M_k/M_{k+1}\leq ApM_p/M_{p+1}$ for $p>0$
\end{enumerate}
where $A>0$ is a positive constant.

In the Gevrey case of interest to us, $M_k=k!^{\rho_1},\tilde{M}_k=k!^{\rho_2}$.

For fixed $L_j>0$, the supremum in \eqref{anisspace} defines a norm which equips a subspace of $\Cm$ with a Banach space structure.

The space $\Cm$ is then the inductive limit of these spaces as $L=L_1=L_2\rightarrow\infty$, which identifies it a Silva space.

For $f\in \Cm$, and $z=(z_1,z_2)\in X\times Y, x\in X$ we define
\begin{defn}
\label{mixedtaylor}
\begin{equation}
(T^{m}_xf)(z):=\sum_{|\alpha|\leq m}\frac{(\partial_x^{\alpha} f)(x,z_2)}{\alpha!}(z_1-x)^{\alpha}
\end{equation}
\end{defn}

\begin{defn}
\label{mixedremainder}
\begin{equation}
(R^{m}_xf)(z):=f(z)-(T^{m}_xf)(z).
\end{equation}
\end{defn}

To slightly generalise this notation, for a jet $f^{\alpha,\beta}$ of continuous functions, we write

\begin{defn}
\label{jetremainder}
\begin{equation}
(R^{m}_{x}f)_{\alpha,\beta}(z):=f^{\alpha,\beta}(z)-(T^{m-|\alpha|}_{x}f^{\alpha,\beta})(z)
\end{equation}
\end{defn}

We can now pose the central question:

\medskip 
\fbox{\parbox{\textwidth}{
Given a compact set $K\subset X$, under what conditions is it true that an arbitrary continuous jet $(f^{\alpha,\beta}):K\times Y\rightarrow \R$ is the jet of a function $\tilde{f}\in\Cm$?
}}
\medskip

We assume without loss of generality here that the set $X$ is a full Euclidean space $\R^d$, rather than just an open subset thereof.

This question is the anisotropic non quasi-analytic analogue of Whitney's extension theorem from classical analysis, which deals with the $\mathcal{C}^\infty$ case.

We begin by finding \emph{necessary} conditions for the existence of such an extension, before proving that these conditions are indeed sufficient.

\begin{prop}
\label{anistaylor}
Suppose $f\in \Cm$ with Gevrey constants $L_1,L_2$. Then there exists a constant $A$ dependent only on the dimensions of $X,Y$ and on $M,\tilde{M}$ such that the jet $f^{\alpha,\beta}=\partial_z^{(\alpha,\beta)}f$ satisfy
\begin{equation}
\label{gevcond}
|f^{\alpha,\beta}|\leq AL_1^{|\alpha|}L_2^{|\beta|}M_{|\alpha|}\tilde{M}_{|\beta|}
\end{equation}
and
\begin{equation}
\label{gevcond2}
|(R^n_xf)_{k,l}(z)|\leq A\tilde{L}_1^{n+1}M_{n+1}L_2^{|l|}\tilde{M}_{|l|}\cdot \frac{|z_1-x|^{n+1}}{(n+1)!}
\end{equation}
for all non-negative integers $m,n$ and all multi-indices $|k|\leq m,|l|\leq n$, where $\tilde{L}_1=CL_1$ with the $C$ dependent only on the dimension of $X$.
\end{prop}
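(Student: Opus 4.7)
The first estimate \eqref{gevcond} is essentially immediate from Definition \ref{anisspace} of $\Cm$: by the very membership of $f$ in this space there exist constants $A, L_1, L_2$ (depending on $f$) such that $|\partial_x^\alpha \partial_y^\beta f(x,y)| \leq AL_1^{|\alpha|}L_2^{|\beta|}M_{|\alpha|}\tilde M_{|\beta|}$ uniformly on $X \times Y$. Setting $f^{\alpha,\beta} = \partial_x^\alpha \partial_y^\beta f$ is exactly \eqref{gevcond}. So the real content is the Taylor-type remainder bound \eqref{gevcond2}.

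For \eqref{gevcond2}, my plan is to apply the multivariable Taylor theorem with integral remainder to $f^{k,l}$ viewed as a function of its first argument $z_1$, with $z_2$ held fixed. Since $(T^{n-|k|}_x f^{k,l})(z) = \sum_{|\gamma|\leq n-|k|} \frac{f^{k+\gamma,l}(x,z_2)}{\gamma!}(z_1-x)^\gamma$ is precisely the order $n-|k|$ Taylor polynomial of $f^{k,l}(\cdot,z_2)$ about $x$, the integral form of Taylor's remainder yields
\[
(R^n_x f)_{k,l}(z) = (n-|k|+1)\!\!\sum_{|\gamma|=n-|k|+1}\!\! \frac{(z_1-x)^\gamma}{\gamma!} \int_0^1 (1-t)^{n-|k|}\, f^{k+\gamma,l}(x+t(z_1-x),z_2)\, dt.
\]
The total $x$-order of differentiation in each integrand is $|k|+|\gamma|=n+1$, so by the already established \eqref{gevcond} each integrand is bounded by $AL_1^{n+1}L_2^{|l|}M_{n+1}\tilde M_{|l|}$.

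The $t$-integral contributes a factor $1/(n-|k|+1)$, and summing over multi-indices $\gamma$ with $|\gamma|=n-|k|+1$ via the standard multinomial identity $\sum_{|\gamma|=m} 1/\gamma! = d^{m}/m!$ (where $d=\dim X$) produces a combinatorial factor of the form $\frac{d^{n-|k|+1}}{(n-|k|+1)!}|z_1-x|^{n-|k|+1}$. Absorbing the factor $d^{n-|k|+1}$ into a redefined Gevrey constant $\tilde L_1 = CL_1$ with $C$ dependent only on $\dim X$ then recasts the bound into the shape displayed in \eqref{gevcond2}.

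The bookkeeping step I expect to be the main technical obstacle is reconciling the precise algebraic form of the right-hand side: the exponent of $|z_1-x|$ that arises naturally from Taylor's theorem is $n-|k|+1$ with denominator $(n-|k|+1)!$, whereas the target \eqref{gevcond2} displays $|z_1-x|^{n+1}/(n+1)!$. The discrepancy is absorbed by trading powers of $|z_1-x|$ against extra powers of $\tilde L_1$ and repackaging the factorials using the log-convexity of $M_k$ (property (2)) and quasi-submultiplicativity $M_k\leq A^k M_j M_{k-j}$ (property (3)) of the weight sequence; these are precisely the structural hypotheses on $M$ that make the rebalancing possible and that force the constant $C$ in $\tilde L_1 = CL_1$ to depend only on $\dim X$ and on $M, \tilde M$. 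Once these factorial identities are applied, \eqref{gevcond2} in the stated form follows.
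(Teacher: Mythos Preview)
Your approach is essentially identical to the paper's: apply the integral-remainder form of Taylor's theorem in the $z_1$-variable to $f^{k,l}$, bound the order-$(n+1)$ derivatives by \eqref{gevcond}, and absorb the multinomial sum $\sum_{|\gamma|=m}|(z_1-x)^\gamma|/\gamma!\leq C^m|z_1-x|^m/m!$ into a dimensional constant. The paper's proof does exactly this, in the same order, and no more.

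The discrepancy you flag between the naturally obtained factor $|z_1-x|^{n-|k|+1}/(n-|k|+1)!$ and the displayed $|z_1-x|^{n+1}/(n+1)!$ is real, but it is a typo in the statement rather than a genuine obstacle: the paper's own proof does \emph{not} perform any rebalancing of factorials, and when the estimate is actually used later (in the proof of Theorem~\ref{whitneymainthm}, equation~\eqref{diffoftaylor}) it appears with the correct exponent $n-|k|+1$ and denominator $(n-|k|+1)!$. Your proposed fix via properties (2)--(3) of the weight sequence would not work anyway, since it would require trading $|z_1-x|^{|k|}$ for a constant, and $|z_1-x|$ is not a priori bounded. So drop that final paragraph: the bound you obtain directly from Taylor's theorem is the intended one.
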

\begin{proof}
The first estimate \eqref{gevcond} follows immediately from the definition of $\Cm$.

We prove the second claim \eqref{gevcond2} by making use of the estimate \eqref{gevcond} on the jet $f^{\alpha,\beta}=\partial_x^\alpha \partial_y^\beta f$ and Taylor expansion.
\begin{eqnarray}
R_x^nf(z)&=& \sum_{|\alpha|=n+1}\frac{n+1}{\alpha!} (z_1-x)^{\alpha}\int_0^1 (1-t)^nf^{\alpha,0}(x+t(z_1-x),z_2)\, dt\\
&\leq & \left(\sup_{|\alpha|=n+1}\sup_{z\in X\times Y} |f^{\alpha,0}(z)|\right)\cdot \sum_{|\alpha|=n+1}\left|\frac{(z_1-x)^\alpha}{\alpha!}\right|\\
&\leq & \left(\sup_{|\alpha|=n+1}\sup_{z\in X\times Y} |f^{\alpha,0}(z)|\right)\cdot \frac{C^{n+1}|z_1-x|^{n+1}}{(n+1)!}\\
\end{eqnarray}

Hence
\begin{equation}
|(R^n_xf)_{k,l}(z)|=|(R_x^{n-|k|}f)(z)|\leq A\tilde{L}_1^{n+1}M_{n+1}L_2^{|l|}\tilde{M}_{|l|}\cdot \frac{|z_1-x|^{n+1}}{(n+1)!}
\end{equation}
as required.
\end{proof}

Subsequently, for simplicity of notation, we omit the tilde in $\tilde{L}_1$ with the understanding that we are allowed to absorb constants that are dependent only on the dimensions of $X,Y$ and on the sequences $M,\tilde{M}$.

\begin{thm}
\label{whitneymainthm}
Suppose $(f^{\alpha,\beta}):K\times Y\rightarrow \R$ is a jet of continuous functions smooth in $y$ that satisfies
\begin{equation}
\partial_y^\gamma(f^{\alpha,\beta})=f^{\alpha,\beta+\gamma}
\end{equation} 
as well as the conditions \eqref{gevcond} and \eqref{gevcond2} on $K\times Y$. Then there exists a function $f\in \Cm$ such that $\partial^{\alpha,\beta}_x f=f^{\alpha,\beta}$ on $K\times Y$.

Moreover, there exist constants $C_0,C_1$ dependent only on the dimensions of $X$ and $Y$ and the weight sequences $(M_k),\tilde{M}_k$ such that
\begin{equation}
\|f\|_{C_1L_1,L_2}\leq C_0A.
\end{equation}
\end{thm}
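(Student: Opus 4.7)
The plan is to adapt Bruna's approach to the anisotropic Whitney extension problem. The construction only needs to extend across the $K$-direction in the $X$-variable, since the given jet already encodes all $y$-derivatives at every point of $K\times Y$. First I would perform a Whitney decomposition of $X\setminus K$ into a family of closed cubes $\{Q_i\}$ with pairwise disjoint interiors whose diameters $d_i := \operatorname{diam}(Q_i)$ are comparable to $\operatorname{dist}(Q_i,K)$. For each $i$, choose $x_i\in K$ with $|x_i - c_i|\leq C\operatorname{dist}(Q_i,K)$ where $c_i$ is the center of $Q_i$. The extension is then defined by
\begin{equation}
f(z_1,z_2) := \begin{cases} f^{0,0}(z_1,z_2) & z_1\in K,\\ \sum_i \varphi_i(z_1)\,(T^{m_i}_{x_i}f)(z_1,z_2) & z_1\in X\setminus K,\end{cases}
\end{equation}
where $\{\varphi_i\}$ is a partition of unity subordinate to a slight enlargement of $\{Q_i\}$ and $m_i$ is an integer order that grows as $d_i\to 0$ in a manner matched to the weight sequence $M$.

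The construction of the $\varphi_i$ is the central non-quasi-analytic ingredient: condition (6) on $M$ (a Denjoy--Carleman type non-quasi-analyticity assumption) is precisely what permits Gevrey bump functions, and one obtains $\varphi_i\in G^{M}$ with the scaled bounds $|\partial^\alpha \varphi_i(z_1)|\leq (C/d_i)^{|\alpha|}M_{|\alpha|}$. The truncation order $m_i$ should be chosen so that $L_1 d_i \cdot (M_{m_i+1}/M_{m_i})\lesssim 1$, which, thanks to the regularity assumptions on $M$, ensures that the Taylor remainder estimate \eqref{gevcond2} dominates the derivative-of-partition terms on each cube.

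Next I would verify that $f$ is $\mathcal{C}^\infty$ in $z_1$ on $X\setminus K$ with matching boundary values, and that $\partial^{\alpha}_{x}\partial^{\beta}_y f(x,y) = f^{\alpha,\beta}(x,y)$ at every $(x,y)\in K\times Y$. The $y$-differentiation passes through the sum trivially because each Taylor polynomial $T^{m_i}_{x_i} f$ inherits its $y$-dependence from the jet, and the hypothesis $\partial_y^\gamma f^{\alpha,\beta}=f^{\alpha,\beta+\gamma}$ makes differentiation under the sum licit. For $x$-derivatives at a point $x_0\in K$, the standard trick is to write, for $z_1\in X\setminus K$ near $x_0$,
\begin{equation}
\partial^\alpha_x f(z) = \sum_i \sum_{\gamma\leq \alpha}\binom{\alpha}{\gamma}(\partial^\gamma \varphi_i)(z_1)\,\partial^{\alpha-\gamma}_x(T^{m_i}_{x_i}f)(z),
\end{equation}
split $T^{m_i}_{x_i}f = T^{m_i}_{x_0}f + (T^{m_i}_{x_i}f - T^{m_i}_{x_0}f)$, and use $\sum_i \varphi_i\equiv 1$ together with the remainder bound \eqref{gevcond2} applied between $x_i$ and $x_0$ to identify the limit as $z_1\to x_0$ with $f^{\alpha,\beta}(x_0,z_2)$.

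The main obstacle, and the bulk of the work, is the global Gevrey estimate. For $z_1\in X\setminus K$ one must bound $|\partial^\alpha_x \partial^\beta_y f(z)|$ by $C_0 A (C_1 L_1)^{|\alpha|}L_2^{|\beta|}M_{|\alpha|}\tilde{M}_{|\beta|}$ uniformly. The $y$-part is harmless because differentiation in $y$ only acts on the jet components and inherits the factor $L_2^{|\beta|}\tilde{M}_{|\beta|}$ from \eqref{gevcond}. The $x$-part requires, for each cube $Q_i$ meeting a neighborhood of $z_1$, a careful splitting of $\partial^\gamma\varphi_i \cdot \partial^{\alpha-\gamma}_x(T^{m_i}_{x_i}f - T^{m_i}_{x_j}f)$ (where $Q_j$ is a fixed neighboring cube) and use of the remainder estimate \eqref{gevcond2} with $n = m_i$. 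The finite-overlap property of the Whitney cubes makes the sum over $i$ locally finite, and the balance between $d_i^{m_i+1-|\alpha-\gamma|}M_{m_i+1}$ (from the remainder) and $d_i^{-|\gamma|}M_{|\gamma|}$ (from $\partial^\gamma\varphi_i$) collapses, thanks to properties (3)--(5) of $M$, to the desired $M_{|\alpha|}$-bound. The constants $C_0,C_1$ arising depend only on the ambient dimensions and on the sequences $M,\tilde{M}$, as claimed.
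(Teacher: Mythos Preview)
Your proposal is essentially correct and follows a well-known alternative strategy, but it is \emph{not} the route taken in the paper. The paper follows Bruna much more closely: instead of patching truncated Taylor polynomials $T^{m_i}_{x_i}f$ with a cube-dependent order $m_i$, it first invokes an anisotropic Carleman theorem (Proposition~\ref{carlemanvariant}) to produce for every $x\in K$ a \emph{full} $\Cm$ function $f_x$ with $\partial_z^{\alpha,\beta}f_x(x,z_2)=f^{\alpha,\beta}(x,z_2)$ and uniform Gevrey bounds, and then glues these via a partition of unity whose scale is governed by a single parameter $\eta$ rather than by the individual cube sizes. The key analytic device is the associated function $h(t)=\sup_k k!/(t^kM_k)$: the difference estimate $|\partial_z^{k,l}(f_x-f_y)|$ is obtained by optimising \eqref{gevcond2} over all orders $n$ simultaneously, and the balance between the partition-of-unity derivatives and the remainder is closed using Bruna's inequality $h(t)/h(ct)\leq A/h(t)$ together with a specific choice of $\eta$.

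Your approach avoids the Carleman lemma entirely and is in that sense more elementary; the price is that you must manage varying truncation orders $m_i$ on neighbouring cubes. Your sketch glosses over this: when you subtract $T^{m_j}_{x_j}f$ (for a fixed cube $Q_j$ containing $z_1$) from $T^{m_i}_{x_i}f$, the orders $m_i$ and $m_j$ generally differ, so the telescoping identity and the remainder bound \eqref{gevcond2} do not apply directly --- you need an extra step handling the Taylor terms of orders between $\min(m_i,m_j)$ and $\max(m_i,m_j)$, using $\delta_i\sim\delta_j$ for intersecting $Q_i^*,Q_j^*$. This is fixable but should be made explicit. The paper's use of infinite-order Carleman extensions $f_x$ sidesteps exactly this bookkeeping, since the estimate \eqref{fxfyn} then holds for every $n$ and one optimises afterwards.
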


Before proving Theorem \ref{whitneymainthm}, we need to collect some lemmas, the proofs of which can be found in \cite{bruna}.

\begin{prop}
\label{cubesprop}
Suppose $K\subset \mathbb{R}^d$ is compact. Then there exists a collection of closed cubes $\{Q_j\}_{j\in\mathbb{N}}$ with sides parallel to the axes such that
\begin{enumerate}
\item $\R^d\setminus K=\cup_j Q_j$;
\item $\textrm{int}(Q_j)$ are disjoint;
\item $\delta_j:=\textrm{diam}(Q_j)\leq d_j:= d(Q_j,K)\leq 4\delta_j$;
\item For $0<\lambda < 1/4$, $d(z,K)\sim\delta_j$ for $z\in Q_j^*:= (1+\lambda)Q_j$;
\item Each $Q_i^*$ intersects at most $D=(12)^{2d}$ cubes $Q_j^*$;
\item $\delta_i\sim\delta_j$ if $Q_i^*\cap Q_j^*\neq \emptyset$.
\end{enumerate}
\end{prop}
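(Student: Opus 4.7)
The plan is to realise the cubes $Q_j$ as a Whitney decomposition of the open set $\mathbb{R}^d\setminus K$ via the standard dyadic lattice construction, then verify the six listed properties in sequence.

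First I would set up the dyadic machinery. For each $k\in\mathbb{Z}$ let $\mathcal{F}_k$ denote the collection of closed dyadic cubes of side length $2^{-k}$ with vertices on the lattice $2^{-k}\mathbb{Z}^d$, so each cube has diameter $\sqrt{d}\,2^{-k}$. Define
\begin{equation*}
\mathcal{F}_k^*=\{Q\in\mathcal{F}_k:\operatorname{diam}(Q)\leq d(Q,K)\leq 4\operatorname{diam}(Q)\}
\end{equation*}
and set $\mathcal{F}^*=\bigcup_{k\in\mathbb{Z}}\mathcal{F}_k^*$. The candidate Whitney cubes are the \emph{maximal} elements of $\mathcal{F}^*$ under inclusion; call this refined family $\{Q_j\}_{j\in\mathbb{N}}$ (it is countable since $\mathbb{R}^d\setminus K$ is $\sigma$-compact). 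Property (3) then holds by construction, and property (2) is immediate because two distinct maximal dyadic cubes either are disjoint in their interiors or one is contained in the other.

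Next I would establish the covering property (1). Given $x\in\mathbb{R}^d\setminus K$, set $r=d(x,K)>0$ and pick the unique integer $k$ with $\sqrt{d}\,2^{-k}\leq r/2 < \sqrt{d}\,2^{-k+1}$; the dyadic cube $Q\in\mathcal{F}_k$ containing $x$ satisfies $d(Q,K)\geq r-\operatorname{diam}(Q)\geq r/2\geq\operatorname{diam}(Q)$ and $d(Q,K)\leq r\leq 4\operatorname{diam}(Q)$, so $Q\in\mathcal{F}_k^*$. Hence $x$ lies in some element of $\mathcal{F}^*$, and consequently in the maximal enlargement $Q_j\supseteq Q$.

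For property (4) I would simply observe that if $z\in Q_j^*=(1+\lambda)Q_j$ with $\lambda<1/4$, then $d(z,K)\leq d(Q_j,K)+\operatorname{diam}(Q_j^*)\leq 4\delta_j+(1+\lambda)\delta_j$ and $d(z,K)\geq d(Q_j,K)-\operatorname{diam}(Q_j^*)\geq \delta_j-(1+\lambda)\delta_j/2\gtrsim\delta_j$ (the lower bound using that $\lambda<1/4$ keeps $Q_j^*$ away from $K$), giving $d(z,K)\sim\delta_j$.

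For the size comparison (6), suppose $Q_i^*\cap Q_j^*\neq\emptyset$ and pick $z$ in the intersection. Property (4) gives $d(z,K)\sim\delta_i$ and $d(z,K)\sim\delta_j$, whence $\delta_i\sim\delta_j$. Finally for the overlap bound (5): once (6) is known, every $Q_j^*$ meeting a fixed $Q_i^*$ has side length within a bounded factor of $\delta_i$ and lies within distance $(1+\lambda)\delta_i$ of $Q_i$; a volume packing argument inside the expanded cube of side length $\leq 12\delta_i$ (using that the interiors of $Q_j$ are disjoint and $\operatorname{vol}(Q_j)\gtrsim\delta_i^d$) bounds the number of such $j$ by a purely dimensional constant, which one checks gives the stated value $D=12^{2d}$.

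The main obstacle is book-keeping rather than substance: making sure the constant in the comparability $d(z,K)\sim\delta_j$ in (4) is uniform for all $\lambda<1/4$ (so that the size comparison in (6) does not degenerate), and then pinning down the explicit overlap constant $12^{2d}$ in (5) from the volume-packing estimate. Both are elementary once the dyadic framework is in place.
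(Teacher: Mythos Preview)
The paper does not actually prove this proposition; it lists it among several lemmas whose proofs ``can be found in \cite{bruna}''. Your argument is precisely the classical Whitney decomposition via maximal dyadic cubes (as in Stein or Bruna), so you have supplied the cited proof rather than an alternative one.

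One minor write-up issue: in your lower bound for (4), the inequality $d(z,K)\geq d(Q_j,K)-\operatorname{diam}(Q_j^*)$ is not quite the right triangle inequality, since $\operatorname{diam}(Q_j^*)=(1+\lambda)\delta_j>\delta_j$ would make the right-hand side negative. What you want is $d(z,K)\geq d(Q_j,K)-d(z,Q_j)$ with $d(z,Q_j)\leq(\lambda/2)\delta_j$ (the maximal distance from a point of $Q_j^*$ to $Q_j$), or alternatively route through the centre $c$ of $Q_j$ to get $d(z,K)\geq d(c,K)-\tfrac{1}{2}\operatorname{diam}(Q_j^*)\geq \delta_j-\tfrac{1+\lambda}{2}\delta_j$. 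Either way the conclusion $d(z,K)\gtrsim\delta_j$ with a uniform constant for $\lambda<1/4$ stands, and the rest of your outline (covering, disjointness, size comparison, and the volume-packing bound for overlap) is correct.
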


\begin{prop}
\label{pou}
For each $\eta >0$, there exists a family of functions $\phi_i\in \mathcal{C}^\infty_M(\R^d)$ such that
\begin{enumerate}
\item $0\leq \phi_i$;
\item $\textrm{supp}(\phi_i)\subset Q_i^*$;
\item $\sum_i \phi_i(z)=1$ for $z\in \R^d$;
\item $|\partial^\alpha \phi_i(z)|\leq Ah(B\eta d(z,K))\eta^{|\alpha|} M_{|\alpha|}$ for $z\in Q_i^*$.
\end{enumerate}
where $A,B>0$ are constants and 
\begin{equation}
h(t):=\sup_k\frac{k!}{t^kM_k}.
\end{equation}
\end{prop}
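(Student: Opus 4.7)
The plan is to build the partition of unity by constructing a suitable bump function on each Whitney cube and then normalising so the bumps sum to $1$. The crucial input is non-quasi-analyticity of the class $\mathcal{C}^\infty_M$, which follows from property (6) of the sequence $M$ (the Denjoy--Carleman condition); this guarantees the existence of compactly supported master bumps in $\mathcal{C}^\infty_M(\mathbb{R}^d)$ with quantitative derivative control.

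First I would associate to each Whitney cube $Q_i$ of diameter $\delta_i$ a bump $\psi_i$ supported in $Q_i^*$ and equal to $1$ on $Q_i$, built from a suitably scaled master $\mathcal{C}^\infty_M$ bump. Rather than naively rescaling a single bump (which would give only $(C/\delta_i)^{|\alpha|} M_{|\alpha|}$ estimates), one should build $\psi_i$ as an iterated convolution of mollifiers whose number of factors is tied to $\eta \delta_i$; it is precisely this flexibility that lets the derivative estimates interpolate between the symbol-class form $\eta^{|\alpha|} M_{|\alpha|}$ (when $|\alpha|$ is not too large) and the bump estimate $(C/\delta_i)^{|\alpha|} M_{|\alpha|}$ (when $|\alpha|$ is large), the interpolation being mediated by the associated function $h(t) = \sup_k k!/(t^k M_k)$. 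Setting $\phi_i := \psi_i / S$ with $S := \sum_j \psi_j$ produces the partition of unity: property (5) of Proposition \ref{cubesprop} makes the sum locally finite with at most $D = 12^{2d}$ nonzero terms at any point, and $S \geq 1$ on $\mathbb{R}^d \setminus K$ since each $z$ lies in some $Q_i$ where $\psi_i(z) = 1$, so the quotient is well-defined and in $\mathcal{C}^\infty_M$ locally. Derivatives of $\phi_i$ are then controlled via the Leibniz and Faà di Bruno formulas combined with the stability axioms (2)--(5) of the sequence $M$, which are designed precisely to be preserved under products, compositions, and reciprocals.

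The main obstacle is obtaining the precise factor $h(B\eta d(z,K))$ in the final estimate. By property (4) of the Whitney decomposition, $\delta_i \sim d(z,K)$ on $Q_i^*$, so one can express the bump estimates in terms of $d(z,K)$. The delicate part is that a given derivative $\partial^\alpha \phi_i$ can only "see" finitely many of the convolution factors used to build $\psi_i$ before the trade-off between the number of factors and the scale $1/\eta$ forces the estimate to degrade; the associated function $h$ is precisely the extremal envelope that encodes this trade-off. Carrying this through requires a careful combinatorial argument matching the number of mollifier factors to $\eta \delta_i$, controlling the Faà di Bruno expansion of $1/S$ using assumption (3) to split $M_{|\alpha|}$ across derivatives hitting numerator and denominator, and finally consolidating into the claimed form $A h(B\eta d(z,K)) \eta^{|\alpha|} M_{|\alpha|}$ by choosing $B$ large enough to absorb the accumulated combinatorial constants. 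This combinatorial bookkeeping is the only step that goes beyond routine application of the Whitney decomposition and the standard properties of ultradifferentiable classes.
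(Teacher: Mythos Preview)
The paper does not actually supply a proof of this proposition: it is one of the lemmas introduced with the sentence ``the proofs of which can be found in \cite{bruna}'', so there is no in-paper argument to compare against. Your outline is the standard construction that Bruna carries out --- build $\mathcal{C}^\infty_M$ bumps on each Whitney cube by iterated convolution of mollifiers (with the number of factors calibrated to the cube's scale), normalise by the locally finite sum using the bounded overlap property of the Whitney cubes, and control derivatives of the quotient via Leibniz and Fa\`a di Bruno together with the stability axioms on $M$; the associated function $h$ appears exactly as the envelope encoding the trade-off you describe. So your sketch is correct and aligned with the cited source, though since the paper itself only cites the result, there is nothing further to compare.
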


\begin{prop}
\label{openmapping}
Suppose $T\in \mathcal{L}(E,F)$ is a continuous linear surjection between Silva spaces. Then for any bounded set $B\subset F$, there exists a bounded set $C\subset E$ with $T(C)=B$.
\end{prop}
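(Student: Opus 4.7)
The proposition is a ``bounded-lifting'' strengthening of the classical open mapping theorem, tailored to the inductive-limit structure of Silva spaces. Recall $E=\varinjlim E_n$ and $F=\varinjlim F_n$, where the $E_n,F_n$ are Banach spaces with compact linking maps, and a subset of $F$ is bounded iff it is contained and bounded in some step $F_n$. Let $U_m\subset E_m$ and $V_n\subset F_n$ denote the closed unit balls. The plan is to reduce the proposition to a Banach-space lifting statement of the form
\begin{equation*}
V_n \subset T(\lambda U_m)\quad\text{for some }m\geq 1,\ \lambda>0,
\end{equation*}
from which the conclusion is immediate: if $B\subset F$ is bounded, then $B\subset rV_n$ for some $r,n$, so $B\subset T(r\lambda U_m)$, and picking a preimage $c_b\in r\lambda U_m$ for each $b\in B$ yields a bounded $C:=\{c_b\}_{b\in B}$ in $E_m\subset E$ with $T(C)=B$.

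\textbf{Establishing the Banach-level lift.} Fix $n$. Since $T$ is surjective and $E=\bigcup_m mU_m$, we have
\begin{equation*}
F_n = F_n\cap T(E) = \bigcup_{k\geq 1}\bigl(F_n\cap T(kU_k)\bigr).
\end{equation*}
The Banach space $F_n$ is Baire, so some closure $A_k:=\overline{F_n\cap T(kU_k)}^{F_n}$ has nonempty interior. Here one must use that $T(kU_k)$ is a bounded (absolutely convex, symmetric up to rescaling) subset of $F$ contained in some step $F_{n(k)}$, and that the compact linking $F_{n(k)}\hookrightarrow F_{n(k)+1}$ makes intersections with $F_n$ well-behaved. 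A standard Banach open-mapping symmetrization argument (replacing $kU_k$ by its balanced convex hull, differencing $A_k-A_k$ to kill the shift, and using compactness of the linking to absorb closures into larger balls $\ell U_{m}$ for suitable $m\geq k$) then upgrades ``nonempty interior'' to the desired inclusion $V_n\subset T(\lambda U_m)$ for some $m$ and $\lambda>0$.

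\textbf{Finishing.} With this inclusion in hand, given an arbitrary bounded $B\subset F$, choose $n,r>0$ with $B\subset rV_n$, apply the inclusion to get $B\subset T(r\lambda U_m)$, and select preimages $c_b\in r\lambda U_m$ (one for each $b\in B$, using the axiom of choice). The set $C=\{c_b:b\in B\}$ lies in the bounded set $r\lambda U_m$, hence is bounded in $E$, and satisfies $T(C)=B$ by construction.

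\textbf{Main obstacle.} The delicate step is the passage from the Baire-category statement ``$\overline{F_n\cap T(kU_k)}^{F_n}$ has interior'' to the genuine (unclosed) inclusion $V_n\subset T(\lambda U_m)$. In the classical Banach open mapping theorem one closes back onto a smaller ball by a geometric-series iteration inside a single complete Banach space; here the iteration takes place in the Silva space $F$, and each step produces a remainder living in a larger step $F_{n'}$. The compactness of the linkings $F_{n'}\hookrightarrow F_{n'+1}$ is what keeps the iteration bounded and lets the series converge to a preimage in a fixed step $E_m$ rather than drifting out to infinity in the inductive limit. This is precisely the feature that distinguishes Silva (DFS) spaces among $(LB)$-spaces and makes the open-mapping theorem apply in this strengthened form.
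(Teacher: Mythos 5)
The paper does not actually prove this proposition: it is one of the lemmas collected before Theorem \ref{whitneymainthm} whose proofs are deferred to \cite{bruna}, so there is no in-text argument to compare yours against. That said, your outline is essentially the correct and standard one: localise a bounded $B\subset F$ into a single Banach step $F_n$ using the regularity of the DFS inductive limit, write $F_n=\bigcup_k\bigl(F_n\cap T(kU_k)\bigr)$, apply Baire in the Banach space $F_n$ to find an absolutely convex set $\overline{F_n\cap T(kU_k)}^{F_n}$ with nonempty interior, hence containing some $\epsilon V_n$, remove the closure by the usual geometric-series iteration to conclude $V_n\subset T(\lambda U_k)$, and then lift $B$ pointwise.

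The one place your analysis goes astray is the ``main obstacle'' paragraph. Once the Baire step is set up with closures taken in the norm topology of $F_n$, the iteration does \emph{not} drift through larger steps of $F$: given $y\in\epsilon V_n\subset\overline{F_n\cap T(kU_k)}^{F_n}$, you pick $x_1\in kU_k$ with $\|y-Tx_1\|_{F_n}<\epsilon/2$, so the remainder lies in $(\epsilon/2)V_n=\tfrac12(\epsilon V_n)\subset\overline{F_n\cap T((k/2)U_k)}^{F_n}$ by homogeneity of $T$ and of the balls, and the entire iteration takes place inside the two fixed Banach spaces $F_n$ and $E_k$; the series $\sum_j x_j$ converges in $E_k$ to a preimage of $y$ lying in $2kU_k$, and continuity of $T$ on $E_k$ identifies its image with $y$. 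The compactness of the linking maps is therefore not what rescues the iteration --- it is used only to guarantee that the inductive limits are regular, i.e.\ that every bounded subset of $F$ is contained and bounded in a single step $F_n$ (and that the lifted set, sitting in a multiple of $U_k$, is bounded in $E$). Since the step you single out as delicate in fact goes through by the ordinary Banach open-mapping argument, your proof is correct; you have merely attributed the role of the Silva hypothesis to the wrong place, and the ``standard symmetrization argument'' you invoke should be written out as above rather than asserted.
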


We also require an anisotropic version of Carleman's theorem, which is the special case of \ref{whitneymainthm} with $K=\{0\}$, and Gevrey analogue of Borel's theorem from classical analysis.

\begin{prop}
\label{carlemanvariant}
Let $(g_\alpha)_{\alpha\in \N^{d}}$ be a multisequence of functions in $\mathcal{C}^\infty_{\tilde{M}}(Y)$ such that
\begin{equation}
|\partial_y^l g_\alpha(y)|\leq KL_1^{|\alpha|}L_2^{|l|}M_{|\alpha|}\tilde{M}_{|l|}.
\end{equation}
for some constant $K>0$.

Then there exists a function $f\in \Cm$ such that $g_\alpha(y)=\partial_x^{\alpha}f(0,y)$ for all $y\in Y$. Moreover, $\|f\|_{CL_1,L_2}\leq AK$ for some constants $A,C>0$ independent of $f,L_1,$ and $L_2$.
\end{prop}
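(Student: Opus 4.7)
The plan is to establish this as a Borel-type theorem adapted to the anisotropic non quasi-analytic setting, following the classical scheme of summing the formal Taylor series against a sequence of rapidly shrinking cutoffs.

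First I would use the non quasi-analyticity of the sequence $(M_k)$ (which follows from condition $(6)$, that is $\sum_k M_k/M_{k+1} < \infty$) to produce a cutoff function $\phi \in \mathcal{C}^\infty_M(\mathbb{R}^d)$ with $\phi \equiv 1$ on $\{|x|\leq 1\}$, $\mathrm{supp}(\phi) \subset \{|x|\leq 2\}$, and Gevrey estimates $|\partial^\gamma \phi(x)|\leq C_0 B^{|\gamma|} M_{|\gamma|}$. Using the monotonicity in condition $(5)$ and the summability in $(6)$, I would then select a sequence of scales $\lambda_k \uparrow \infty$ of the form $\lambda_k = C L_1 \, k \, M_k / M_{k+1}$, chosen so that the tail sum $\sum_{k\geq N} \lambda_k^{-1}$ behaves like $M_N/M_{N+1}$. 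The candidate extension is
\begin{equation}
f(x,y) := \sum_{\alpha\in\mathbb{N}^d} \frac{x^\alpha}{\alpha!}\, g_\alpha(y)\, \phi(\lambda_{|\alpha|} x).
\end{equation}
By construction, at $x=0$ only the $\alpha$-th summand contributes to $\partial_x^\alpha f(0,y)$, and since $\phi(0)=1$ this immediately yields $\partial_x^\alpha f(0,y) = g_\alpha(y)$.

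Next I would verify that $f\in \mathcal{C}^\infty_{M,\tilde{M}}(X\times Y)$ with the claimed norm bound. Since derivatives in $y$ only hit the factor $g_\alpha(y)$ and the hypothesis gives $|\partial_y^l g_\alpha|\leq K L_1^{|\alpha|}L_2^{|l|} M_{|\alpha|}\tilde{M}_{|l|}$, the $y$-side of the estimate is uniform in $\alpha$ up to the factor $L_1^{|\alpha|}M_{|\alpha|}$; this reduces the problem to estimating $\partial_x^\beta (x^\alpha \phi(\lambda_{|\alpha|} x)/\alpha!)$ in $L^\infty$. I would split the sum over $\alpha$ according to whether $|\alpha|\leq |\beta|$ or $|\alpha|>|\beta|$. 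For $|\alpha|\leq |\beta|$ the Leibniz rule gives at most $2^{|\beta|}$ terms, each bounded by a product involving $\lambda_{|\alpha|}^{|\beta|-|\alpha|}$ and the supports shrink with $|\alpha|$, so on the support $|x|\leq 2\lambda_{|\alpha|}^{-1}$ the resulting factor $|x|^{|\alpha|-|\gamma|}$ absorbs the powers of $\lambda_{|\alpha|}$ cleanly. For $|\alpha|>|\beta|$ one uses the cutoff support and the elementary bound $|x|^{|\alpha|}/\alpha! \leq (2/\lambda_{|\alpha|})^{|\alpha|}/\alpha!$, together with the stability axioms $(2)$--$(4)$ for $(M_k)$ and the summability of $(\lambda_k^{-1})$, to show that the tail series converges absolutely in $\mathcal{C}^\infty_{M,\tilde{M}}$ norm.

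The combinatorial bookkeeping in the second sum is the main technical obstacle: one must show that the ratio $\lambda_{|\alpha|}^{|\beta|-|\alpha|} M_{|\alpha|}/M_{|\beta|}$, accumulated over $\alpha$, produces only a geometric factor $C^{|\beta|}$ rather than a divergent one. This is exactly where the choice $\lambda_k \sim L_1 k M_k/M_{k+1}$ together with the logarithmic convexity $(2)$ and the condition $M_k \leq A^k M_j M_{k-j}$ pay off: they imply $\lambda_{|\alpha|}^{-(|\alpha|-|\beta|)} M_{|\alpha|}/M_{|\beta|} \leq (C L_1)^{|\alpha|-|\beta|}$ times a factor summable in $|\alpha|$. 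Collecting these estimates over $\alpha$ and adjusting constants, one arrives at $|\partial_x^\beta \partial_y^l f(x,y)| \leq A K (CL_1)^{|\beta|} L_2^{|l|} M_{|\beta|} \tilde{M}_{|l|}$, which is precisely the claim. Continuous dependence on $K$ in the norm estimate is built into the construction since every step is linear in the input multisequence.
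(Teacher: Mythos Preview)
Your approach via scaled cutoffs is a genuine alternative to the paper's proof. The paper instead invokes a result of Petzsche to obtain, for each $p\geq 0$, a compactly supported one-variable function $\chi_p\in\mathcal{C}^\infty_M(\mathbb{R})$ with $\chi_p^{(k)}(0)=\delta_{k,p}$ and a built-in norm bound $\|\chi_p\|_{L(2+A^{-1})}\leq M_p^{-1}(Ae/L)^p$; it then sets $\chi_\alpha(x)=\prod_j\chi_{\alpha_j}(x_j)$ and $f(x,y)=\sum_\alpha\chi_\alpha(x)g_\alpha(y)$, so that the convergence and Gevrey estimate reduce to a geometric series in $|\alpha|$ with essentially no combinatorics. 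Your construction is more self-contained but requires carrying out precisely the Leibniz bookkeeping you flag as the main obstacle; the paper's route trades that work for a black-box citation.

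There is, however, a slip in your choice of scales. You write $\lambda_k=CL_1\,kM_k/M_{k+1}$ and claim $\lambda_k\uparrow\infty$, but condition~(5) says $M_{k+1}/(kM_k)$ is increasing, so your $\lambda_k$ is \emph{decreasing}; in the Gevrey case $M_k=k!^\rho$ with $\rho>1$ one has $\lambda_k\sim CL_1\,k^{1-\rho}\to 0$, and the cutoffs $\phi(\lambda_{|\alpha|}x)$ spread out rather than concentrate, so the tail estimate for $|\alpha|>|\beta|$ collapses. The formula is simply inverted: you want $\lambda_k\sim CL_1\,M_{k+1}/M_k$ (up to a harmless factor of $k$), which is exactly what makes your own tail-sum claim $\sum_{k\geq N}\lambda_k^{-1}\lesssim M_N/M_{N+1}$ consistent with condition~(6). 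With that correction the outline goes through.
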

\begin{proof}
We adapt the solution of \cite{pet} of the classical Carleman problem to this setting. Key is that the assumptions on $M$ imply that the hypotheses of \cite{pet} are satisfied. 
Hence as in the proof of \cite{pet} Theorem 2.1 (ai), we can construct compactly supported $\chi_p(x)\in \mathcal{C}^\infty_{M_p}(\R)$ for each non-negative integer $p$ such that
\begin{equation}
\chi^{(k)}_p(0)=\delta(k,p)
\end{equation}
and
\begin{equation}
\|\chi_p\|_{L(2+A^{-1})}\leq \frac{1}{M_p}\cdot \left(\frac{Ae}{L}\right)^p
\end{equation}
for some dimensional constant $A$ and any $L>0$.

Hence we can define
\begin{equation}
\chi_\alpha(x):=\prod_{j=1}^d \chi_{\alpha_j}(x_j)
\end{equation}
for $\alpha\in\N^d$ which satisfies
\begin{equation}
\chi_\alpha^{(\beta)}(0)=\delta(\beta,\alpha).
\end{equation}
Moreover, we have the estimate
\begin{eqnarray}
|\chi_\alpha^{(\beta)}|&= & \prod_{j=1}^d |\chi_{\alpha_j}^{\beta_j}|\\
&\leq & \prod_{j=1}^d \frac{1}{M_{\alpha_j}}\left(\frac{Ae}{L}\right)^{\alpha_j}(L(2+A^{-1}))^{\beta_j}M_{\beta_j}\\
&\leq & \left(\frac{Aec(d,M)}{L}\right)^{|\alpha|}\cdot M_{|\alpha|}^{-1}(L(2+A^{-1}))^{|\beta|}M_{|\beta|}.
\end{eqnarray}

By taking $L=2CL_1=2Aec(d,M)L_1$, we can estimate
\begin{eqnarray}
|\partial_x^k\partial_y^l (\chi_\alpha(x)g_\alpha(y))|&\leq & K((C/L)^{|\alpha|}M_{|\alpha|}^{-1}(L(2+A^{-1}))^{|k|}M_{|k|})\cdot(L_1^{|\alpha|}L_2^{|l|}M_{|\alpha|}\tilde{M}_{|l|})\\
&\leq & K\cdot 2^{-|\alpha|}(2CL_1(2+A^{-1}))^{|k|}L_2^{|l|}M_{|k|}\tilde{M}_{|l|}.
\end{eqnarray}

Where $A,C,$ and $K$ are constants independent of $f,L_1,$ and $L_2$.

Hence we have that $\|\chi_\alpha(x)g_\alpha(y)\|_{2CL_1(2+A^{-1}),L_2}\leq K\cdot 2^{-|\alpha|}$. It follows that 
\begin{equation}
f(x,y):=\sum_{\alpha\in \N^d}\chi_\alpha(x)g_\alpha(y)
\end{equation}
converges in the $\Cm$ sense, and satisfies $\partial_x^\alpha f(0,y)=g_\alpha(y)$ as required.
\end{proof}

Equipped with these tools, we are ready to prove Theorem \ref{whitneymainthm}.

\begin{proof}[Proof of Theorem \ref{whitneymainthm}]
We begin by estimating the difference in Taylor expansions about different points in $K$.

Using the identity
\begin{equation}
(T_x^{n}f)(z)-(T_y^{n}f)(z)=\sum_{|\alpha|\leq n} \frac{(z_1-x)^\alpha}{\alpha!}(R_y^nf)_{\alpha,0}(x,z_2)
\end{equation}
we can estimate
\begin{eqnarray}
& &\partial_z^{k,l}((T_x^{n}f)(z)-(T_y^{n}f)(z))\\
&=& \sum_{|\alpha|\leq n-|k|}\frac{(z_1-x)^\alpha}{\alpha!}(R_y^{n}f)_{k+\alpha,l}(x)
\end{eqnarray}
using the assumed estimate \eqref{gevcond2} for $(R_y^{m,n}f)_{k,l}$. 

This yields
\begin{equation}
\label{diffoftaylor}
|\partial_z^{k,l}((T_x^{n}f)(z)-(T_y^{n}f)(z))|\leq AL_1^{n+1}M_{n+1}L_2^{|l|}\tilde{M}_{|l|}\frac{(|z_1-x|+|z_1-y|)^{n-|k|+1}}{(n-|k|+1)!}.
\end{equation}

We now invoke Proposition \ref{carlemanvariant}.

For $x\in X$ consider the map $T_x:\Cm\rightarrow G_x$ given by $(T_xf)_\alpha(y):=f^{\alpha,0}(x,y)$ where the space $G_x$ consists of all multisequences of analytic functions $f_\alpha:Y\rightarrow \R$ satisfying $|f_\alpha|\leq AL_1^{|\alpha|}L_2^{|\beta|}M_{|\alpha|}\tilde{M}_{|\beta|}$ for some $A>0$.

From the assumed estimate \eqref{gevcond} on $f^{\alpha,\beta}$, Proposition \ref{carlemanvariant} applies, and for each $x\in K$, we can find a function $f_x\in \Cm$ such that 




\begin{equation}
\label{fxsolvescarleman}
\partial^{\alpha,\beta}_zf_x(x,z_2)=f^{\alpha,\beta}(x,z_2)
\end{equation}
for each $\alpha,\beta$. Moreover, the conclusion of Proposition \ref{carlemanvariant} implies that there exist constants $B=C_0A,K_1=C_1L_1,K_2=L_2>0$ such that the estimate
\begin{equation}
\label{boundedcarleman}
|(\partial^{\alpha,\beta}_zf_x)(z)|\leq BK_1^{|\alpha|}K_2^{|\beta|}M_{|\alpha|}\tilde{M}_{|\beta|}
\end{equation} 
holds uniformly, where $C_j$ depend only on the dimensions of $X$ and $Y$ and the weight sequences $M_k,\tilde{M}_k$.


Hence we can bound 
\begin{equation}
\partial_z^{k,l}(f_x(z)-(T_x^{m,n}f_x)(z))=(R^{m,n}f_x)_{k,l}(z)
\end{equation}
using the same calculation as in Proposition \ref{anistaylor}.
We obtain
\begin{eqnarray}
|\partial_x^{k,l}(f_x(z)-(T_x^nf)(z))|&=&|(R^nf_x)_{k,l}(z)|\\
&\leq& A(C_1L_1)^{n+1}M_{n+1}L_2^{|l|}\tilde{M}_{|l|}\frac{|z_1-x|^{n-|k|+1}}{(n-|k|+1)!}.
\end{eqnarray}

The upshot of this estimate is that we can replace $T_x^nf$ and $T_y^nf$ in \eqref{diffoftaylor} with $f_x$ and $f_y$ respectively. 

That is, we have
\begin{equation}
\label{fxfyn}
|\partial_z^{k,l}(f_x(z)-f_y(z))|\leq A(C_1L_1)^{n+1}M_{n+1}L_2^{|l|}\tilde{M}_{|l|}\frac{(|z_1-x|+|z_1-y|)^{n-|k|+1}}{(n-|k|+1)!}.
\end{equation}

We now fix $k,l$ and vary $n\geq k$ in order to optimise the upper bound \eqref{fxfyn}.

By defining the quantity 
\begin{equation}
h(t):=\sup_{k\geq 0} \frac{k!}{t^k M_k}
\end{equation}
as in \cite{bruna} we obtain
\begin{equation}
\label{fxfy}
|\partial_z^{k,l}(f_x(z)-f_y(z))|\leq A(C_1L_1)^{|k|}M_{|k|}L_2^{|l|}\tilde{M}_{|l|}h((C_1L_1)(|z_1-x_1|+|z_1-y|))^{-1}.
\end{equation}
by using property (3) following Definition \ref{anisspace}.

The next step in the construction is to use Proposition \ref{pou} to piece together the functions $f_x$ using a $\mathcal{C}^\infty_{M}$ partition of unity subordinate to the cover arising from the decomposition of $X \setminus K$ by cubes in Proposition \ref{cubesprop}.

Taking the collection $\{Q_j\}_{j\in\N}$ of cubes in $X=\mathbb{R}^d$ constructed by Proposition \ref{cubesprop}, we choose $x_j\in K$ such that $d(x_j,Q_j)=d(Q_j,K)$.

Note that the conclusion of Proposition \ref{cubesprop} implies that 
\begin{equation}
|z-x_j|\sim d(z,K)
\end{equation}
for all $z\in Q_j^*$.

Now taking $\phi_j$ as in Proposition \ref{pou}, we define:

\begin{equation}
\tilde{f}(z):= \begin{cases} f(z) &\mbox{if } z_1\in K \\ 
\sum_i \phi_i(z_1)f_{x_j}(z) & \mbox{if } z_1\in X\setminus K.\end{cases}
\end{equation}

Note that since the partition of unity $\{\phi_j\}$ is locally finite, the function $\tilde{f}(z)$ is smooth in $(X\setminus K)\times Y$.

It remains to check that $\tilde{f}$ is smooth elsewhere, and moreover that $\tilde{f}\in \Cm$.

To this end, for $x\in K$ and $z_1\in X\setminus K$, we estimate
\begin{equation}
\partial_z^{\alpha,\beta}(\tilde{f}(z)-f_x(z))=\sum_{k\leq \alpha}\binom{\alpha}{k}\sum_i (\partial^k\phi_i)(z_1)\cdot \partial_z^{\alpha-k,\beta}(f_{x_i}(z)-f_x(z)).
\end{equation}

First we estimate the $k=0$ term. If $z_1\in \textrm{spt}(\phi_i)=Q_i^*$, we have 
\begin{equation}
d(z_1,x_i)\sim d(z_1,K)\leq d(z_1,x)
\end{equation}
and hence we have 
\begin{equation}
\label{kzero}
|\sum_i \phi_i(z_1)\cdot \partial_z^{\alpha,\beta}(f_{x_i}(z)-f_x(z))|\leq A(C_1L_1)^{|\alpha|}M_{|\alpha|}L_2^{|\beta|}\tilde{M}_{|\beta|}h((C_1L_1)|z_1-x|)^{-1}
\end{equation}
from \eqref{fxfy}.

We now estimate the terms with $|k|>0$. For $x\in X\setminus K$, we choose $\bar{x}\in K$ with $d(x,\bar{x})=d(x,K)$.

Since $\sum_i \partial^k\phi_i=0$, we have
\begin{equation}
\sum_i (\partial^k\phi_i)(z_1)\cdot \partial_z^{\alpha-k,\beta}(f_{x_i}(z)-f_x(z))=\sum_i (\partial^k\phi_i)(z_1)\cdot \partial_z^{\alpha-k,\beta}(f_{x_i}(z)-f_{\bar{z_1}}(z)).
\end{equation}

Now as before, we exploit the fact that $d(z_1,x_i)\sim d(z_1,K)$ to bound
\begin{equation}
|\partial_z^{\alpha-k,\beta}(f_{x_i}(z)-f_{\bar{z_1}}(z))|\leq A(C_1L_1)^{|\alpha|-|k|}M_{|\alpha|-|k|}L_2^{|\beta|}\tilde{M}_{|\beta|}h((C_1L_1)d(z_1,K))^{-1}.
\end{equation}

Since $\log(M_j)$ is an increasing convex sequence with first term $0$, it is also superadditive, and we have $M_{|k|}M_{|l|}\leq M_{|k|+|l|}$. Hence for $|k|\geq 1$, we can use property (4) in Proposition \ref{pou} to conclude that
\begin{equation}
\left|\sum_i (\partial^k\phi_i)(z_1)\cdot \partial_z^{\alpha-k,\beta}(f_{x_i}(z)-f_x(z))\right|\leq AM_{|\alpha|}\tilde{M}_{|\beta|}(C_1L_1)^{|\alpha|-|k|}L_2^{|\beta|}\eta^{|k|}\frac{h(B\eta d(z_1,K))}{h((C_1L_1)d(z_1,K))}
\end{equation}
where $\eta$ remains to be chosen.

Equation (15) from \cite{bruna} implies the existence of a constant $c>0$ such that
\begin{equation}
\frac{h(t)}{h(ct)}\leq \frac{A}{h(t)}
\end{equation}
for some $A>0$.

Hence we choose $\eta =(C_1L_1)/cB$ to arrive at the estimate 
\begin{equation}
\label{knotzero}
\left|\sum_i (\partial^k\phi_i)(z_1)\cdot \partial_z^{\alpha-k,\beta}(f_{x_i}(z)-f_x(z))\right| \leq A(C_1L_1)^{|\alpha|-|k|}L_2^{|\beta|}M_{|\alpha|}\tilde{M}_{|\beta|}\eta^{|k|}h((C_1L_1)|z_1-x|)^{-1}.
\end{equation}

Combining \eqref{kzero} and \eqref{knotzero}, we arrive at
\begin{equation}
\label{whitneykey}
|\partial_z^{\alpha,\beta}(\tilde{f}(z)-f_x(z))|\leq AL_2^{|\beta|}M_{|\alpha|}\tilde{M}_{|\beta|}((C_1L_1)+\eta)^{|\alpha|}h((C_1L_1)|z_1-x|)^{-1}
\end{equation}
for $z\in (X\setminus K) \times Y$.

The estimate \eqref{whitneykey} is key to proving $\tilde{f}\in\mathcal{C}^\infty(X\times Y)$ (and that the derivatives coincide with the those given by the jet $f^{\alpha,\beta}$), as well as the subsequent deduction of $\mathcal{C}^\infty_{M,\tilde{M}}$ regularity.

We write
\begin{equation}
\tilde{f}^{\alpha,\beta}(z):=\begin{cases} \partial_z^{\alpha,\beta}\tilde{f}(z) &\mbox{if } z_1\in X\setminus K \\ 
f^{\alpha,\beta}(z) & \mbox{if } z_1\in K.\end{cases}
\end{equation}

The smoothness of each $\tilde{f}^{\alpha,\beta}:X\times Y\rightarrow \R$ readily follows from the fact that  each $f^{\alpha,\beta}:K\times Y\rightarrow \R$ is smooth in $y$, together with the estimate
\begin{equation}
\label{smoothestimate}
|\tilde{f}^{\alpha,\beta}(z)-\partial_z^{\alpha,\beta}T_x^mf(z)|=o(|z_1-x|^{m-|\alpha|}).
\end{equation}

For $z$ with $z_1\in K$, the estimate \eqref{smoothestimate} comes immediately from \eqref{gevcond2} on $K\times Y$. Otherwise, it is a consequence of the estimate \eqref{whitneykey}, the defining property  \eqref{fxsolvescarleman} of the functions $f_x$, and the fact that the function $h(t)$ increases faster than any polynomial in $t^{-1}$ as $t\rightarrow 0$.

Finally, we need to check $\mathcal{C}^\infty_{M,\tilde{M}}$ regularity. That is, we need to verify that the Gevrey estimate
\begin{equation}
\label{endofwhitney}
\|f\|_{C_1L_1,L_2}\leq C_0A.
\end{equation}
for some constants $C_0,C_1$ dependent only on the dimensions of the spaces $X$ and $Y$ and the weight sequences $M_k,\tilde{M}_k$.

In light of \eqref{gevcond}, it only remains to prove \eqref{endofwhitney} on $(X\setminus K)\times Y$, and by multiplication by a cutoff function we may assume $d(z_1,K)$ is bounded.

Then, by applying \eqref{whitneykey} with $x=\bar{z}_1$ we can further reduce the problem to verifying \eqref{endofwhitney} for $f_x$, uniformly in $x\in K$. However this was established earlier in \eqref{boundedcarleman}. 

Hence, the proof is complete.
\end{proof}

\chapter{Miscellaneous}
\counterwithin{thm}{chapter}
\counterwithin{equation}{chapter}
In this section, we prove the following abstract lemma that we have used several times to assemble full density subsequences along which a given function has limit $0$.

\begin{lem}
\label{densitylemma}
If there exists a function $g:\mathbb{N}\rightarrow \R^+$ and a family of subsets $S_j\subset \mathbb{N}$ such that
\begin{equation}
\liminf_{n\rightarrow\infty} \frac{\#\{k\leq n:k\in S_j\}}{n}>d-\epsilon_j
\end{equation}
and 
\begin{equation}
\limsup_{n\in S_j\rightarrow\infty} g(n)<\epsilon_j'
\end{equation}
where $\epsilon_j,\epsilon_j'\searrow 0$, then there exists a subset $S\subset \mathbb{N}$ such that
\begin{equation}
\liminf_{n\rightarrow\infty} \frac{\#\{k\leq n:k\in S\}}{n}\geq d
\end{equation}
and
\begin{equation}
\label{gconv}
\lim_{n\in S \rightarrow\infty} g(n)=0.
\end{equation}
\end{lem}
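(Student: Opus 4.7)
The plan is to construct $S$ by a standard diagonal extraction from the nested family $\{S_j\}$, picking up longer and longer initial segments of each $S_j$ in turn. The two defining properties of $S$ behave quite differently under such an extraction: the convergence $g(n)\to 0$ along $S$ is easy once we make sure we eventually commit to $S_j$ only after $g$ is uniformly small on $S_j$, but controlling the lower density of $S$ requires a little more care, since truncating and switching between the $S_j$ could in principle destroy density. This is the one step I expect to be the main (mild) obstacle.

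First I would choose, for each $j$, an integer $N_j$ such that simultaneously
\[
\frac{\#\{k\leq n:k\in S_j\}}{n}>d-2\epsilon_j\quad\text{for all }n\geq N_j,
\]
and
\[
g(n)<2\epsilon_j'\quad\text{for all }n\in S_j\text{ with }n\geq N_j.
\]
The first is possible by the $\liminf$ hypothesis and the second by the $\limsup$ hypothesis. By discarding finitely many indices I may assume $N_1<N_2<\cdots$ with $N_j\to\infty$, and also that $N_{j+1}/N_j\to\infty$ (this will absorb the errors from the switching points).

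Next, define
\[
S:=\bigcup_{j\geq 1} \bigl(S_j\cap [N_j,N_{j+1})\bigr).
\]
Then for any $n\in S$ with $n\geq N_j$, we have $n\in S_i$ for some $i\geq j$, whence $g(n)<2\epsilon_i'\leq 2\epsilon_j'$. Since $\epsilon_j'\searrow 0$, this gives $g(n)\to 0$ as $n\to\infty$ in $S$.

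The final step is to verify $\liminf_{n\to\infty}\#\{k\leq n:k\in S\}/n\geq d$. Given $n$, pick $j$ with $N_j\leq n<N_{j+1}$. Since $S\cap [N_j,n]=S_j\cap [N_j,n]$, we have
\[
\#\{k\leq n:k\in S\}\geq \#\{k\leq n:k\in S_j\}-N_j>(d-2\epsilon_j)n-N_j,
\]
using the density estimate at the level $n\geq N_j$. Dividing by $n$ and using $N_j/n\leq N_j/N_j=1$ is too crude; this is precisely the point where I need the rapid growth $N_{j+1}/N_j\to\infty$, which forces $N_j/n\leq N_j/N_j$ to be replaced by something that vanishes for most $n\in [N_j,N_{j+1})$. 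More carefully, I would strengthen the choice of $N_j$ so that $N_j<\epsilon_j N_{j+1}$; then for $n\in [N_j,N_{j+1})$ one has $N_j/n<\epsilon_j$, giving
\[
\frac{\#\{k\leq n:k\in S\}}{n}>d-2\epsilon_j-\epsilon_j=d-3\epsilon_j,
\]
and since $j\to\infty$ as $n\to\infty$, the $\liminf$ is at least $d$. This completes the construction.
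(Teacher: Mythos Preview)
Your density estimate has a genuine gap. The assertion ``for $n\in [N_j,N_{j+1})$ one has $N_j/n<\epsilon_j$'' is simply false: at $n=N_j$ the ratio equals $1$, regardless of how fast $N_{j+1}/N_j$ grows. Rapid growth of $N_{j+1}/N_j$ controls $N_j/n$ only for $n$ near the \emph{right} end of the interval, not the left. So your inequality
\[
\frac{\#\{k\leq n:k\in S\}}{n}>d-2\epsilon_j-\frac{N_j}{n}
\]
gives nothing useful at (or near) $n=N_j$, and the $\liminf\geq d$ conclusion does not follow. The underlying obstacle is that the $S_j$ are not nested in either direction, so at each switching point $N_j$ you may discard up to $(1-d)N_j$ elements that $S_j$ happens to carry in $[1,N_j)$ but that your $S$ does not see; these losses are of the same order as $n$ and cannot be absorbed by making the intervals long.

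The paper sidesteps this entirely by passing to the complementary ``bad'' sets
\[
B_j=\{k\in\mathbb N:\ g(k)\geq 2\epsilon_j'\},
\]
which \emph{are} nested increasing because $\epsilon_j'\searrow 0$. One then takes $B=\bigcup_j B_j\cap[N_j,\infty)$ and $S=\mathbb N\setminus B$. The nestedness gives, for $n\in[N_j,N_{j+1})$, the clean inclusion $B\cap[1,n]\subseteq B_j\cap[1,n]$, whence $d_n(B)\leq d_n(B_j)<1-d+2\epsilon_j$ once $N_j$ is chosen so that this density bound holds for all $n\geq N_j$. This yields $\limsup d_n(B)\leq 1-d$, i.e.\ $\liminf d_n(S)\geq d$, with no loss at the transitions. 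If you want to keep working on the ``good'' side, the equivalent fix is to replace $S_j$ by the nested decreasing sets $T_j=\{k:g(k)<2\epsilon_j'\}\supseteq S_j'$ before performing the diagonal extraction; then for $n\in[N_j,N_{j+1})$ one has $S\cap[1,n]\supseteq T_j\cap[1,n]$, and the density bound is immediate. The convergence $g\to 0$ along $S$ is as you wrote.
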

\begin{proof}
For ease of notation, we define
\begin{equation}
d_n(A)=\frac{\#\{k\leq n:k\in A\}}{n}
\end{equation}
for $A\subseteq\mathbb{N}$ and $n\in\mathbb{N}$.

We have $g(n)< 2\epsilon_j'$ for cofinitely many elements of $S_j$, and we denote these sets by $S_j'$.
Now let
\begin{equation}
B_j=\{k\in\mathbb{N}:g(k)\geq 2\epsilon_j'\}\subseteq \mathbb{N}\setminus S_j'.
\end{equation}
Since each $d_n$ respects the partial ordering of set inclusion and is additive with respect to disjoint unions, we can construct a strictly increasing sequence $(N_j)_{j\in\mathbb{N}}$ such that $N_1=1$ and $d_n(B_j)< 1-d+2\epsilon_j$ for all $n\geq N_j$.

We define
\begin{equation}
B=\bigcup_{j\in \mathbb{N}}B_j\cap [N_j,\infty).
\end{equation}

If $n\in [N_j,N_{j+1})$, then any $k\in [1,n]\cap B$ must lie in $B_i$ for some $i\leq j$ and hence in $B_j$.

This implies that for $n\in [N_j,N_{j+1})$ we have $d_n(B)\leq d_n(B_j) <1-d+2\epsilon_j$ and consequently, that $\displaystyle \limsup_{n\rightarrow\infty} d_n(B)\leq 1-d$.

We now take $S:=\mathbb{N}\setminus B$, with the required density bound 

\begin{equation}
\liminf_{n\rightarrow\infty }d_n(S)\geq d.
\end{equation}

To complete the proof we observe that if $n\in [N_j,\infty) \cap S$, then $n\in \mathbb{N}\setminus B_i$ for each $i\leq j$, and hence $g(n)<2\epsilon_j'$. This establishes \eqref{gconv}.
\end{proof}

For sequences without well-defined natural densities, an analogue of Lemma \ref{densitylemma} holds for the notion of upper density, with an easier proof.

\begin{lem}
\label{densitylemma2}
If there exists a function $g:\mathbb{N}\rightarrow \R^+$ and a family of subsets $S_j\subset \mathbb{N}$ such that
\begin{equation}
\limsup_{n\rightarrow\infty} \frac{\#\{k\leq n:k\in S_j\}}{n}>d-\epsilon_j
\end{equation}
and 
\begin{equation}
\limsup_{n\in S_j\rightarrow\infty} g(n)<\epsilon_j'
\end{equation}
where $\epsilon_j,\epsilon_j'\searrow 0$, then there exists a subset $S\subset \mathbb{N}$ such that
\begin{equation}
\limsup_{n\rightarrow\infty} \frac{\#\{k\leq n:k\in S\}}{n}\geq d
\end{equation}
and
\begin{equation}
\label{gconv2}
\lim_{n\in S \rightarrow\infty} g(n)=0.
\end{equation}
\end{lem}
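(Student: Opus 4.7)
The plan is to mimic the strategy of Lemma \ref{densitylemma} but exploit the weaker notion of upper density, which only requires \emph{infinitely many} witnessing indices $n$ rather than cofinitely many. This gives us a much more flexible construction: instead of needing to cover all tails of $\mathbb{N}$ with small-$g$ regions (as is required for lower density), we only need to produce a sparse sequence $N_j \to \infty$ at which the partial density of $S$ approaches $d$.

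First, for each $j$ I would truncate the high-$g$ part of $S_j$. Since $\limsup_{n \in S_j \to \infty} g(n) < \epsilon_j'$, there exists $M_j$ such that $g(n) < 2\epsilon_j'$ for every $n \in S_j$ with $n \geq M_j$. Setting $S_j' := S_j \cap [M_j,\infty)$, this truncation removes only finitely many elements, so $S_j'$ inherits the upper density bound $\limsup_n d_n(S_j') > d - \epsilon_j$ from $S_j$. Next, since this limsup is witnessed by arbitrarily large $n$, I can choose a strictly increasing sequence $N_1 < N_2 < \cdots$ with $N_j > M_{j+1}$ and large enough that
\begin{equation*}
d_{N_j}(S_j') > d - 2\epsilon_j \quad \text{and} \quad M_j/N_j < \epsilon_j.
\end{equation*}

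Now I would assemble $S$ by glueing the truncated sets along these blocks:
\begin{equation*}
S := \bigcup_{j=1}^\infty \bigl(S_j' \cap [M_j,N_j]\bigr).
\end{equation*}
For the convergence of $g$ along $S$, any $n \in S$ lies in some $S_j' \cap [M_j,N_j]$ with $j$ at least as large as the smallest such index, and since $N_j \to \infty$, the blocks with small $j$ contain only finitely many integers; thus for each $\eta > 0$, picking $j_0$ with $2\epsilon_{j_0}' < \eta$ and then excluding the finite set $S \cap [1, N_{j_0-1}]$ ensures every remaining $n \in S$ satisfies $g(n) < 2\epsilon_j' < \eta$ for some $j \geq j_0$. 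Hence $g(n) \to 0$ along $S$.

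For the upper density bound, at $n = N_j$ we have $S \cap [1, N_j] \supseteq S_j' \cap [M_j, N_j]$, whose cardinality is at least $\#\{k \leq N_j : k \in S_j'\} - M_j \geq (d - 2\epsilon_j)N_j - M_j$, so $d_{N_j}(S) \geq d - 2\epsilon_j - M_j/N_j > d - 3\epsilon_j$. Letting $j \to \infty$ gives $\limsup_n d_n(S) \geq d$, as required. The only mild subtlety in the argument is ensuring the blocks $[M_j, N_j]$ can be chosen consistently with $N_j \to \infty$ while simultaneously realising the density lower bound at $n = N_j$; this is handled by the freedom in upper density, and is the only place where the argument is genuinely easier than that of Lemma \ref{densitylemma}, where the absence of gaps in the tail is forced by the lower density condition.
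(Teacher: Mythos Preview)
Your proof is correct and follows essentially the same block-diagonal strategy as the paper: pick an increasing sequence $N_j$ at which the density of (a tail-truncated) $S_j$ is witnessed, glue the pieces together, and read off the upper density at the $N_j$ while $g$-smallness follows because late elements of $S$ live only in late blocks. The only cosmetic difference is that the paper places $\bigl(\bigcup_{i\geq k} S_i\bigr)\cap(N_{k-1},N_k]$ into $S$ rather than just $S_k'\cap[M_k,N_k]$, which makes the density bound $d_{N_k}(S)\geq d_{N_k}(S_k)$ immediate without your (harmless but unnecessary) $M_j/N_j$ correction; your variant in turn makes the $g$-convergence slightly more transparent.
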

\begin{proof}
For each $j\in\N$, we have $d_n(S_j)>d-\epsilon_j$ for infinitely many $N$. We define $N_0=0$ and inductively choose an increasing sequence $(N_k)$ of positive integers such that $d_{N_k}(S_k)>d-\epsilon_k$ and such that $g(n)<\epsilon_k'$ for $n>N_k$ in $S_k$.

We can then construct the set
\begin{equation}
S:=\bigcup_{k=1}^\infty \left(\bigcup_{j\geq k} S_j\right)\cap (N_{k-1},N_k].
\end{equation}

Since $d_{N_k}(S)\geq d_{N_k}(S_k)>d-\epsilon_k$, we thus obtain
\begin{equation}
\limsup_{n\rightarrow\infty}d_n(S)\geq d.
\end{equation}

Moreover, for $n\in S$ with $n>N_k$, we have $g(n)<\epsilon_{k}'$, and hence $g(n)\rightarrow 0$ along $S$. This completes the proof.
\end{proof}

\bibliographystyle{plain}
\bibliography{gomesthesis}

\end{document}